\documentclass[12pt]{amsart}
\usepackage{amsmath,amsthm,eufrak,mathrsfs,amssymb,dsfont,graphicx,bbm,multicol,blkarray,nicematrix,multirow,color,soul,calc,tikz-cd,scalerel,url,setspace,kbordermatrix,scalerel,mathtools,cancel,bm}
\usepackage[all]{xy}
\usepackage[margin=1in]{geometry}
\usepackage[hidelinks]{hyperref}
\usepackage[percent]{overpic}

\author[Manion]{Andrew Manion}
\address{North Carolina State University \\ Department of Mathematics \\ 2108 SAS Hall \\ Box 8205 \\ Raleigh, NC 27695}
\email{ajmanion@ncsu.edu}

\author[Rutter]{Elijah Rutter}
\address{North Carolina State University \\ Department of Mathematics \\ 2108 SAS Hall \\ Box 8205 \\ Raleigh, NC 27695}
\email{esrutter@ncsu.edu}

\newtheorem{theorem}{Theorem}[section]
\newtheorem{lemma}[theorem]{Lemma}
\newtheorem{proposition}[theorem]{Proposition}
\newtheorem{corollary}[theorem]{Corollary}
\theoremstyle{definition} 
\newtheorem{definition}[theorem]{Definition}
\newtheorem{example}[theorem]{Example}
\newtheorem{remark}[theorem]{Remark}

\allowdisplaybreaks
\newcommand{\A}{\mathcal{A}}
\newcommand{\BSDA}{\widehat{\mathrm{BSDA}}}
\newcommand{\BSD}{\widehat{\mathrm{BSD}}}
\newcommand{\BSDD}{\widehat{\mathrm{BSDD}}}
\newcommand{\CFDA}{\widehat{\mathrm{CFDA}}}
\newcommand{\Dc}{\mathcal{D}}

\newcommand{\R}{\mathbb{R}}
\newcommand{\C}{\mathbb{C}}
\newcommand{\Q}{\mathbb{Q}}

\newcommand{\Z}{\mathbb{Z}}
\newcommand{\F}{\mathbb{F}}
\newcommand{\s}{\mathfrak{s}}

\newcommand{\Zc}{\mathcal{Z}}

\newcommand{\Hc}{\mathcal{H}}
\newcommand{\Vc}{\mathcal{V}}
\newcommand{\gr}{\mathrm{-gr}}
\newcommand{\x}{\mathbf{x}}
\newcommand{\Zb}{\mathbf{Z}}
\newcommand{\boldotimes}{\mathbin{\pmb{\otimes}}}

\newcommand{\rank}{\operatorname{rank}}
\newcommand{\im}{\operatorname{im}}

\newcommand{\Spinc}{\mathrm{Spin}^c}

\DeclareMathOperator{\Ab}{Ab}
\DeclareMathOperator{\cell}{cell}
\DeclareMathOperator{\Cob}{Cob}
\DeclareMathOperator{\comb}{comb}
\DeclareMathOperator{\DA}{DA}
\DeclareMathOperator{\DD}{DD}
\DeclareMathOperator{\Eul}{Eul}
\DeclareMathOperator{\FN}{FN}
\DeclareMathOperator{\Lift}{Lift}
\DeclareMathOperator{\id}{id}
\DeclareMathOperator{\std}{std}
\DeclareMathOperator{\inrm}{in}
\DeclareMathOperator{\nb}{nb}

\DeclareMathOperator{\out}{out}

\DeclareMathOperator{\rel}{rel}
\DeclareMathOperator{\SFH}{SFH}
\DeclareMathOperator{\sing}{sing}

\DeclareMathOperator{\Sut}{Sut}
\DeclareMathOperator{\sut}{sut}

\DeclareMathOperator{\tors}{tors}
\DeclareMathOperator{\norm}{norm}

\title{A symmetric monoidal Frohman--Nicas TQFT for sutured manifolds}

\begin{document}
\begin{abstract}
By analyzing the decategorification of bordered sutured Heegaard Floer homology, we reinterpret and generalize the classical Frohman--Nicas TQFT for the Alexander polynomial in the setting of 3d sutured cobordisms between sutured surfaces. In this setting, the Frohman--Nicas TQFT maps for arbitrary cobordisms between surfaces, with no connectivity restrictions, get interpreted as part of an honest symmetric monoidal functor (under disjoint union) with no half-projectivity zeroes. We also relate the decategorified bordered sutured theory with $\Spinc$ structures to a sutured version of Florens--Massuyeau's $G$-analogue of the Frohman--Nicas TQFT.
\end{abstract}
\maketitle

\tableofcontents

\section{Introduction}

Heegaard Floer homology, introduced by Ozsv{\'a}th and Szab{\'o} \cite{Ozsvath:aa,Ozsvath:ab}, is an invariant of closed 3-manifolds and 4-dimensional cobordisms. It satisfies gluing properties reminiscent of a $(3+1)$-dimensional TQFT in the sense of Atiyah \cite{AtiyahTQFT}; the modern perspective on Atiyah's TQFT axioms defines a TQFT to be a symmetric monoidal functor from a category of closed $n$-manifolds and $(n+1)$-dimensional cobordisms to some other symmetric monoidal category (see e.g. \cite[Section 1.1]{LurieCobordism}). However, it is known that Heegaard Floer homology does not literally satisfy these TQFT axioms.

Bordered Heegaard Floer homology \cite{LOT} and sutured Floer homology \cite{Juhasz} offer two distinct approaches to extending Heegaard Floer homology to 3-manifolds with boundary. Bordered sutured Floer homology, developed in \cite{Zarev}, unifies these approaches, recovering each as a special case. One can study the ``decategorification'' of all of these theories, removing some of the technical complexity and hopefully enabling a more complete understanding of their TQFT structure. From general patterns, one could hope that decategorifications of these Heegaard Floer theories involving 2d surfaces and 3-manifolds with boundary form something like a (2+1)-dimensional Atiyah-style TQFT.

The decategorification of bordered Heegaard Floer homology was studied by Petkova \cite{Pet18} and Hom--Lidman--Watson \cite{HLW}; in particular, Hom--Lidman--Watson show that decategorified bordered Heegaard Floer homology recovers a subset of a classical (2+1)-dimensional TQFT framework for the Alexander polynomial, the Frohman--Nicas TQFT \cite{FN} (see also \cite{don99,KerlerHomologyTQFT,FMFunctorial}). However, the Frohman--Nicas TQFT does not literally satisfy the TQFT axioms either; when formulated for general 3d cobordisms whose inputs and outputs can be disconnected, it has a non-standard ``half-projective'' functoriality property that is analyzed by Kerler in \cite{KerlerHomologyTQFT}. 

In a different direction, the decategorification of sutured Floer homology was studied by Friedl--Juhasz--Rasmussen \cite{FJR}, but this theory on its own does not incorporate gluing 3-manifolds along their boundary surfaces as in a (2+1)-dimensional TQFT. Such gluing in the context of sutured Floer homology is the topic of Zarev's bordered sutured theory \cite{Zarev}. The decategorification of bordered sutured invariants for 2-manifolds has been studied by the first author \cite{man22,man23}, but 3-manifolds are not considered in these papers. In this paper we study the decategorification of bordered sutured invariants in dimension 3, generalizing results of \cite{Pet18,HLW} to the bordered sutured setting.

As a result, we show that the Frohman--Nicas TQFT maps for arbitrary cobordisms between closed surfaces with no connectedness restrictions can be reinterpreted in the setting of 3d sutured cobordisms between 2d sutured surfaces, fixing the half-projective functoriality. The sutured Frohman--Nicas TQFT that we define and relate to the 3-dimensional decategorified bordered sutured invariants is a symmetric monoidal functor into a category where morphisms are linear maps modulo sign, i.e. it is ``projective'' but not ``half-projective.'' The main difference from the usual Atiyah setting is that our domain category of sutured surfaces and sutured cobordisms between them is different from the usual category of closed surfaces and cobordisms between them. Neither cobordism category is a subcategory of the other; in particular, closed surfaces are not themselves objects of our category. Objects and morphisms in the usual ``closed'' cobordism category (with no connectedness restrictions) give rise to objects and morphisms in our sutured cobordism category, by removing neighborhoods of points from the surfaces and neighborhoods of acyclic ribbon graphs from the cobordisms, but composition of these new sutured cobordisms disagrees with composition of the original cobordisms precisely when the half-projectivity zero appears in the usual Frohman--Nicas TQFT. See Remark~\ref{rem:FixingHalfProj} below.

\begin{remark}
    We note that other symmetric monoidal versions of the Frohman--Nicas TQFT have appeared previously in the literature; in particular, the versions defined by Frohman--Nicas in \cite[Section 4]{FN} on categories $\mathcal{SC}^+$ and $\mathcal{SC}^-$ of monotone increasing and monotone decreasing cobordisms are symmetric monoidal with disjoint union as the symmetric monoidal category on the domain. These restricted cobordism categories $\mathcal{SC}^{\pm}$ are chosen so that they do not have compositions in which the half-projectivity zero would appear. However, they have a corresponding limitation: in the usual (oriented) cobordism category, the dual of an object $X$ is the orientation reversal $-X$ of $X$, with evaluation and coevaluation morphisms for the duality given by $X \times [0,1]$ viewed as a cobordism from $X \sqcup -X$ to $\emptyset$ and from $\emptyset$ to $-X \sqcup X$. This duality is often the starting point for reasoning about the basic properties of Atiyah-style TQFTs, e.g. as in \cite[Section 1.1]{LurieCobordism}. For any object $X$ of $\mathcal{SC}^{\pm}$, the evaluation cobordism is monotone decreasing and thus in $\mathcal{SC}^-$, while the coevaluation cobordism is monotone increasing; thus, neither $\mathcal{SC}^+$ nor $\mathcal{SC}^-$ contain both of the usual cobordisms witnessing the duality between $X$ and $-X$. In our version, by contrast, for a sutured surface $(F,\Lambda)$, the sutured cobordism $(F,\Lambda) \times [0,1]$ does give valid evaluation and coevaluation cobordisms witnessing a duality between $(F,\Lambda)$ and $-(F,\Lambda)$; see Remark~\ref{rem:Duals}.

    We also note that when the Frohman--Nicas TQFT is formulated on the Crane--Yetter category $\mathrm{Cob}$ where the objects are connected surfaces with one boundary component and the morphisms are relative cobordisms, as in Kerler \cite{KerlerHomologyTQFT} and the $G=1$ case of Florens--Massuyeau \cite{FMFunctorial}, then it gives a braided monoidal functor where the domain category is braided (but not symmetric) monoidal under boundary connected sum and admits duals of objects given by a relative analogue of the usual $X \times [0,1]$ construction; see e.g. \cite[Figure 5.2]{CHM} for an example of a coevaluation cobordism in this setting. 
\end{remark} 

We also bring $\Spinc$ structures into the analysis of the decategorified bordered theory for the first time; we relate the decategorified bordered sutured invariant with $\Spinc$ structures to a sutured version of Florens--Massuyeau's Alexander functor \cite{FMFunctorial}. This allows, for example, the Alexander polynomial of a knot in $S^3$ to be directly visible from the decategorified bordered sutured invariant of the knot complement.

\vspace{0.15in}

\noindent \textbf{Frohman--Nicas TQFT.} We begin with a review of the Frohman--Nicas TQFT, which we will call $\Vc^{\FN}$.

\begin{definition}\label{def:FNTQFT}
Let $\Sigma$ be a closed oriented surface and let $g$ be the sum of the genera of all components of $\Sigma$. To $\Sigma$, the \emph{Frohman--Nicas TQFT (with $\Z$ coefficients)} assigns $\Vc^{\FN}(\Sigma) := \wedge^* H_1(\Sigma)$, a free abelian group of rank $2^{2g}$. 

Let $W$ be an oriented cobordism from $\Sigma_0$ to $\Sigma_1$; by convention $\partial W$ is identified with $\Sigma_1 \sqcup -\Sigma_0$. To $W$, the Frohman--Nicas TQFT assigns a $\Z$-linear map from $\wedge^* H_1(\Sigma_0)$ to $\wedge^* H_1(\Sigma_1)$, defined up to sign, as follows.
\begin{itemize}
    \item If $i_* \colon H_1(\partial W;\Q) \to H_1(W;\Q)$ is not surjective (i.e. $W$ is not ``rationally homologically trivial''), $\Vc^{\FN}(W)$ is defined to be zero.

    \item If $W$ is rationally homologically trivial, consider the up-to-sign element
    \begin{align*}
    |K_{W}| &:= \pm |\tors(H_1(W))| \wedge^{\mathrm{top}} \mathrm{ker}(i_* \colon H_1(\partial W) \to H_1(W)) \\
    &\in \wedge^* H_1(\Sigma_0) \otimes \wedge^* H_1(\Sigma_1);
    \end{align*}
    note that $\ker i_*$ has rank $g_0 + g_1$, so $\wedge^{\mathrm{top}} = \wedge^{g_0 + g_1}$ in the above expression. The up-to-sign map $\Vc^{\FN}(W)$ is defined to be the composition
    \[
    \wedge^* H_1(\Sigma_0) \xrightarrow{\id \otimes |K_{W}|} \wedge^* H_1(\Sigma_0) \otimes \wedge^* H_1(\Sigma_0) \otimes \wedge^* H_1(\Sigma_1) \xrightarrow{\varepsilon \otimes \id} \wedge^* H_1(\Sigma_1),
    \]
    where $\varepsilon \colon \wedge^* H_1(\Sigma_0) \otimes \wedge^* H_1(\Sigma_0) \to \Z$ sends $\alpha \otimes \beta$ to zero if $\alpha \wedge \beta$ is not a top-degree element of $\wedge^* H_1(\Sigma_0)$, and otherwise sends $\alpha \otimes \beta$ to the coefficient of $\alpha \wedge \beta$ on an arbitrarily chosen basis element $\Omega$ of $\wedge^{\mathrm{top}} H_1(\Sigma_0) \cong \Z$.
\end{itemize}
\end{definition}

By \cite[Lemma 2]{KerlerHomologyTQFT}, the Frohman--Nicas TQFT satisfies the following ``half-projective'' functoriality property when composing cobordisms $W$ and $W'$:
\[
\Vc^{\FN}(W' \circ W) = \pm 0^{\mu(W',W)} \Vc^{\FN}(W') \circ \Vc^{\FN}(W),
\]
where $0^0 := 1$ and $\mu(W',W) \in \Z_{\geq 0}$ is defined by letting $b(W)$ for a cobordism $W$ be the number of components of $W$ minus half the number of components of $\partial W$ and defining $\mu(W', W) := b(W' \circ W) - b(W') - b(W)$. See Figure~\ref{fig:HalfProjectivity} for a prototypical example of when $\mu(W',W)$ is nonzero; in this case we have $b(W) = b(W') = -1/2$ while $b(W' \circ W) = 0$, so $\mu(W',W) = 1$.

\begin{figure}
    \centering
    \begin{overpic}[width=0.7\textwidth]{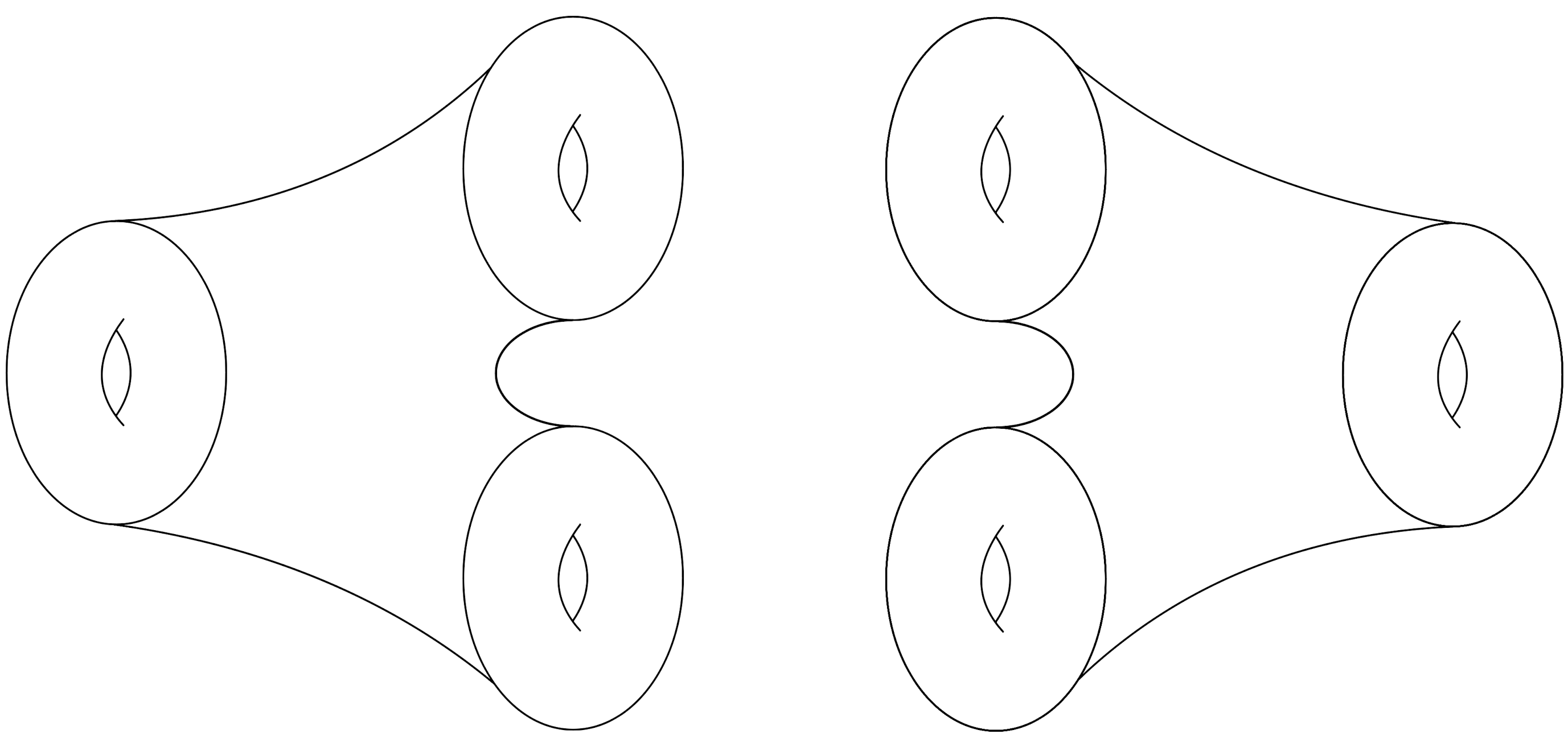}
        \put (18,-4) {$W'$}
        \put (78,-4) {$W$}
    \end{overpic}
    \vspace{0.22in}
    \caption{A prototypical example where the half-projectivity zero appears in the Frohman--Nicas TQFT.}
    \label{fig:HalfProjectivity}
\end{figure}

The relationship in the literature between $\Vc^{\FN}$ and decategorified bordered Heegaard Floer homology holds for cobordisms $W \colon \Sigma_0 \to \Sigma_1$ with $W$ and $\Sigma_i$ connected. In this case there is a bordered Heegaard Floer invariant $\CFDA(W)$ which is a certain type of bimodule. It follows from results of Hom--Lidman--Watson \cite{HLW} that the ``decategorification'' $[\CFDA(W)]$ of $\CFDA(W)$, an up-to-sign map from $\wedge^* H_1(\Sigma_0)$ to $\wedge^* H_1(\Sigma_1)$, agrees with $\Vc^{\FN}(W)$.

Following Petkova and Hom--Lidman--Watson, there are two main steps to establishing the relationship between decategorified bordered Heegaard Floer homology and the Frohman--Nicas TQFT. The first step is to give a combinatorial interpretation, based on local intersection signs of curves in Heegaard diagrams, of certain $\Z_2$ gradings on the bordered Heegaard Floer invariant $\CFDA(W)$. The proof \cite[Appendix A]{HLW} that the abstract and combinatorial $\Z_2$ gradings agree is technical and does not readily generalize to the bordered sutured setting. The second step is to relate $[\CFDA(W)]$, defined with the combinatorial gradings, to the Frohman--Nicas TQFT.

\vspace{0.15in}

\noindent \textbf{Main results over $\Z$, no $\Spinc$ structures.} One aim of this paper is to generalize this second step to the bordered sutured setting, leaving aside the first step. Slightly generalizing \cite{Zarev}, we use the following definitions.

\begin{definition}\label{def:SuturedSurf}
    A \emph{sutured surface} $(F,\Lambda,S^+,S^-)$ is a compact oriented surface $F$ equipped with a finite set $\Lambda$ of points in $\partial F$ dividing $\partial F$ into subsets $S^+$ and $S^-$ that alternate across points of $\Lambda$. In this paper we require that each component of $F$ intersects both $S^+$ and $S^-$ nontrivially; in particular, $F$ has no closed components. We will often write $(F,\Lambda)$ as shorthand for $(F,\Lambda,S^+,S^-)$.
\end{definition}

\begin{definition}\label{def:SuturedCob}
    A \emph{sutured cobordism} $(Y,\Gamma,R^+,R^-)$ (or just $(Y,\Gamma)$) from a sutured surface $(F_0,\Lambda_0)$ to a sutured surface $(F_1,\Lambda_1)$ is a compact oriented 3-manifold $Y$, equipped with an embedding of $F_1\sqcup -F_0$ into $\partial Y$ and a finite set $\Gamma$ of properly embedded arcs and circles in $\partial Y \setminus \mathrm{int} (F_1 \sqcup -F_0)$, with $\partial \Gamma = \Lambda_1 \sqcup \Lambda_0$, dividing $\partial Y \setminus \mathrm{int} (F_1 \sqcup -F_0)$ into subsets $R^+$ and $R^-$ that alternate across curves of $\Gamma$ and satisfy $R^{\pm} \cap F_i = S_i^{\pm}$. We require that each component of $Y$ intersects both $R^+$ and $R^-$ nontrivially; in particular, $Y$ has no closed components.
\end{definition}

\begin{example}\label{ex:SuturedFromOrdinary}
     For a closed oriented surface $\Sigma$ and a set $p$ of framed points in $\Sigma$ (one in each connected component of $\Sigma$), let $\Sut(\Sigma,p)$ be the sutured surface $(F,\Lambda)$ where $F$ is the complement of an open neighborhood of the points $p$ and $\Lambda$ consists of two points (specified by the framing of $p$) on each boundary component of $F$.
    
    For a connected oriented cobordism $W \colon \Sigma_0 \to \Sigma_1$, suppose that $W$ is equipped with an acyclic ribbon graph $\gamma$ embedded in $W$ whose external vertices are the points $p_i$ on $\Sigma_0$ and $\Sigma_1$. In the special case where one $\Sigma_i$ is connected and nonempty and the other is empty, we suppose that $\gamma$ is a single point $p_i$ in the nonempty $\Sigma_i$. When both $\Sigma_i$ are empty, we suppose that $\gamma$ is a single point in $W$. If $W$ is disconnected, we suppose each component of $W$ is equipped with an acyclic ribbon graph as above, and we let $\gamma$ be the union of these graphs.
    
    Let $\Sut(W,\gamma)$ be the sutured cobordism $(Y,\Gamma)$ where $Y$ is the complement in $W$ of a tubular neighborhood of $\gamma$ and the sutures $\Gamma$ on $Y$ are the boundary of the ribbon graph $\gamma$ (after thickening $\gamma$ to form a surface). In a component of $W$ where one $\Sigma_i$ is connected and nonempty and the other is empty, take $\Gamma$ to be a single arc connecting the two points of $\Lambda$. In a closed component of $W$, take $\Gamma$ to be a single circle in the $S^2$ boundary of  the corresponding component of $Y$.
\end{example}

As in \cite{man22,man23}, for a sutured surface $(F,\Lambda)$, we will interpret $\wedge^* H_1(F,S^+)$ as the decategorified bordered sutured Heegaard Floer invariant of $(F,\Lambda)$. Note that for a closed oriented surface $\Sigma$ with $(F,\Lambda) := \Sut(\Sigma,p)$ for any choice of $p$, there is one $S^+$ interval on each boundary component of $F$. We have $H_1(\Sigma) \cong H_1(F) \cong H_1(F,S^+)$, so that $\Vc^{\FN}(\Sigma)$ is canonically identified with $\wedge^* H_1(F,S^+)$.

Let $(Y,\Gamma)$ be a sutured cobordism from a sutured surface $(F_0,\Lambda_0)$ to a sutured surface $(F_1,\Lambda_1)$. In Definition~\ref{def:BSDAZComb}, after making additional choices such as arc diagrams $\Zc_i$ representing $(F_i, \Lambda_i)$ and a Heegaard diagram representing $(Y,\Gamma)$, we will use intersection-based gradings to define a combinatorial stand-in $[\BSDA(Y,\Gamma)]^{\Z}_{\mathrm{comb}}$, a $\Z$-linear map defined up to sign from $\wedge^* H_1(F_0,S^+_0)$ to $\wedge^* H_1(F_1,S^+_1)$, for the ``actual'' decategorified bordered sutured Heegaard Floer invariant $[\BSDA(Y,\Gamma)]$ of $(Y,\Gamma)$ (this latter invariant is only defined given stronger assumptions on $(Y,\Gamma)$ than we have imposed here). 

\begin{remark}
    For a connected cobordism $W$ between closed connected surfaces $\Sigma_i$, the definition of $\CFDA(W)$ depends \emph{a priori} on a choice of $\gamma$ as in Example~\ref{ex:SuturedFromOrdinary}; see e.g. \cite[Section 1.2]{LOTmor}. For example, if both $\Sigma_i$ are nonempty, then $\CFDA(W)$ depends on a choice of framed arc $\gamma$ from $\Sigma_0$ to $\Sigma_1$. If both $\Sigma_i$ are empty, we have the usual dependence of $\CFDA(W) = \widehat{\mathrm{HF}}(W)$ on a choice of basepoint. In general, if we let $(Y,\Gamma) = \Sut(W,\gamma)$, we have
    \[
    [\CFDA(W)] = [\BSDA(Y,\Gamma)]^{\Z}_{\comb} 
    \]
    by construction (using combinatorial gradings for $[\CFDA(W)]$), where the definition of $\CFDA(W)$ implicitly uses the given choice of $\gamma$.
\end{remark}

To state our main theorem relating $[\BSDA(Y,\Gamma)]^{\Z}_{\comb}$ without $\Spinc$ structures to the sutured Frohman--Nicas TQFT, let
\[
K = \rank H_1(F_0,S^+_0) + \rank H_1(F_1,S^+_1) + \chi(Y,R^+).
\]
We will show in Proposition~\ref{prop:RankKerLeqK} that, assuming $H_2(Y,R^-) = 0$, we have $\rank \ker(i_*) \leq K$ with equality if and only if $H_2(Y,R^+) = 0$. Note that by Poincar{\'e}--Lefschetz duality, the condition $H_2(Y,R^-) = 0$ is equivalent to requiring that $H^1(Y,F_1 \cup R^+ \cup F_0) = 0$ or (by the universal coefficient theorem) that $H_1(Y, F_1 \cup R^+ \cup F_0)$ is finite. In this case we have
\[
|H_1(Y,F_1 \cup R^+ \cup F_0)| = |H^2(Y,R^-)| = |\tors(H_1(Y,R^-))|.
\]

\begin{theorem}\label{thm:IntroFirstThm}
         If $H_2(Y,R^-) = 0$, consider the up-to-sign element
    \begin{align*}
    &|K_{Y,\Gamma}| := \pm |H_1(Y,F_1 \cup R^+ \cup F_0)| \wedge^{K} \mathrm{ker}( i_*\colon H_1(F_0 \sqcup F_1, S^+_0 \sqcup S^+_1) \to H_1(Y,R^+)) \\
    &\subset \wedge^* H_1(F_0,S^+_0) \otimes \wedge^* H_1(F_1,S^+_1);
    \end{align*}
    note that if $\rank \ker(i_*) < K$ (i.e. $H_2(Y,R^+) \neq 0$) then $|K_{Y,\Gamma}| = 0$, while if $\rank \ker(i_*) = K$ (i.e. $H_2(Y,R^+) = 0$) then $\wedge^K \ker(i_*)$ can be viewed as an up-to-sign element rather than a subspace. If $H_2(Y,R^-) \neq 0$, let $|K_{Y,\Gamma}| := 0$. The up-to-sign map $[\BSDA(Y,\Gamma)]^{\Z}_{\comb}$ agrees with the composition
    \begin{align*}
    \wedge^* H_1(F_0,S^+_0) & \xrightarrow{\id \otimes |K_{Y,\Gamma}|} \wedge^* H_1(F_0,S^+_0) \otimes \wedge^* H_1(F_0,S^+_0) \otimes \wedge^* H_1(F_1,S^+_1) \\
    & \xrightarrow{\varepsilon \otimes \id} \wedge^* H_1(F_1,S^+_1)
    \end{align*}
    where $\varepsilon$ is defined as in Definition~\ref{def:FNTQFT} and the tensor product of maps $f \otimes g$ is computed using the sign rule $(f \otimes g)(x \otimes y) := (-1)^{|f||y|} f(x) \otimes g(y)$. Here the degree of an element of $\wedge^j H_1(F_i,S^+_i)$ is defined to be $j$; note that the degree of the map $\varepsilon$ is $n_0 := \rank  H_1(F_0,S^+_0)$.
\end{theorem}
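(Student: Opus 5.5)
Since both sides of the identity are $\Z$-linear maps $\wedge^* H_1(F_0,S^+_0)\to\wedge^* H_1(F_1,S^+_1)$, I will compare their matrix coefficients in bases coming from arc diagrams. Fix arc diagrams $\Zc_0,\Zc_1$ for $(F_0,\Lambda_0),(F_1,\Lambda_1)$ and a bordered sutured Heegaard diagram $\Hc=(\Sigma,\boldsymbol\alpha,\boldsymbol\beta,\Zc_0,\Zc_1)$ for $(Y,\Gamma)$. The $\alpha$-arcs of $\Zc_i$ give a basis of $H_1(F_i,S^+_i)$, hence basis wedges $e_{s}$ of $\wedge^* H_1(F_i,S^+_i)$ indexed by subsets $s$ of arcs. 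By Definition~\ref{def:BSDAZComb}, the coefficient of $[\BSDA(Y,\Gamma)]^{\Z}_{\comb}$ from $e_{s_0}$ to $e_{s_1}$ is a signed count of generators $\x$ (tuples of intersection points of $\beta$-circles with $\alpha$-curves) whose occupied arcs are complementary to $s_0$ on the $\Zc_0$ side and equal to $s_1$ on the $\Zc_1$ side, each counted with its combinatorial sign. This is the same signed count as a determinant: a generator is a perfect matching in the bipartite incidence between the $\beta$-circles and the $\alpha$-curves (with the chosen arcs), weighted by local intersection signs. So each coefficient is, up to a global sign, a maximal minor $\det M_{s_0,s_1}$ of the integer matrix $M$ recording algebraic intersection numbers of $\beta$-circles with $\alpha$-circles and $\alpha$-arcs. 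The first step is to check that the combinatorial $\Z_2$ grading in Definition~\ref{def:BSDAZComb} reproduces exactly the permutation sign together with the product of local signs, as in Petkova and Hom--Lidman--Watson.

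The second step is to read $M$ homologically. From $\Hc$, $Y$ is built from $F_0\cup F_1\cup R^+$ (a thickening of $\Sigma$ with the $\alpha$-side attached) by attaching $2$-handles along the $\beta$-circles. This yields a presentation of $H_1(Y,R^+)$, and of $H_1(Y,F_0\cup F_1\cup R^+)$, whose relation matrix is $M$ or a submatrix of it. Concretely, the complex $\Z\langle\beta\rangle\xrightarrow{M^T}\Z\langle\alpha\text{-circles}\rangle\oplus H_1(F_0\sqcup F_1,S^+)$ has cokernel $H_1(Y,R^+)$ and the second map has image related to $\ker i_*$. A count of $\beta$-circles via the Euler characteristic shows that the number of $\beta$-circles equals the number of $\alpha$-circles plus $K$. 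This matches the degree $K$ in $\wedge^K\ker(i_*)$, and it is presumably how Proposition~\ref{prop:RankKerLeqK} is proved, so I will reuse that setup.

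The third step is linear algebra: a Cauchy--Binet/Plücker argument. Take the $K$-dimensional subspace $L$ of $H_1(F_0\sqcup F_1,S^+)$ obtained by eliminating the $\alpha$-circle coordinates from the row space of $M$, that is, the projection of the kernel of the $\alpha$-circle block. Its Plücker coordinates, scaled by the appropriate determinant, are the minors $\det M_{s_0,s_1}$. I then show three things.
\begin{itemize}
\item $L\otimes\Q=\ker(i_*)\otimes\Q$ when $H_2(Y,R^\pm)=0$.
\item The content, i.e.\ the gcd scaling of the Plücker vector, equals $|H_1(Y,F_1\cup R^+\cup F_0)|$, computed as $|\det|$ of the $\alpha$-circle block via the presentation.
\item If $H_2(Y,R^-)\neq0$ or $H_2(Y,R^+)\neq0$, all minors vanish because $M$ drops rank.
\end{itemize}
The contraction $(\varepsilon\otimes\id)(\id\otimes\cdot)$ turns the Plücker vector in $\wedge^{K}(H_0\oplus H_1)$ into exactly the matrix coefficients indexed by the complement of $s_0$ and by $s_1$, with the Koszul signs accounting for the ordering.

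The main obstacle is the second bullet together with the sign bookkeeping. I must show that the integral Plücker vector of $\ker i_*$ (a saturated lattice) times the torsion order $|H_1(Y,F_0\cup F_1\cup R^+)|$ equals the vector of minors on the nose, rather than only up to a rational multiple. For this I would use a Smith normal form argument: the gcd of the maximal minors of the presentation matrix equals the order of the torsion of its cokernel. I would also have to match the global sign conventions for $\varepsilon$ and the $(-1)^{|f||y|}$ rule only up to an overall sign, which the statement allows.
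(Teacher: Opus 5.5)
Steps~2--3 rest on a misidentification, and they fail as written.

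\emph{Occupancy.} You have it reversed. By Definition~\ref{def:BSDAZComb}, the coefficient of $\gamma^{\out}_J$ in the image of $\gamma^{\inrm}_I$ counts generators with $o_R(\x)=I$ and $\overline{o}_L(\x)=J$. So it is $\pm$ the $b\times b$ minor of $M$ on the columns $\alpha^c\cup I\cup J^c$, i.e.\ the \emph{occupied} arcs. The matching coefficient of $|K_{Y,\Gamma}|$ sits on $\gamma_{I^c}\otimes\gamma_J$, which is indexed by the complementary set.

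\emph{Counting.} Accordingly, $b-a=-\chi(Y,R^+)=:d$ counts occupied arcs (proof of Proposition~\ref{prop:DegreeOfBSDA}), and $K=n_0+n_1-d$. Your count $b=a+K$ holds only when $2K=n_0+n_1$, as in the closed Frohman--Nicas setting.

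\emph{The complex.} The complex $\Z\langle\beta\rangle\xrightarrow{M^T}\Z\langle\alpha^c\rangle\oplus\Z\langle\text{arcs}\rangle$ computes $H_1(Y,R^-)$, not $H_1(Y,R^+)$. Relative to $R^-$, the $\alpha$-circles give $1$-handles and each arc strip contributes its co-core. So the arc summand is $H_1(F_0\sqcup F_1,S^-)$, which is dual to $H_1(F_0\sqcup F_1,S^+)$ rather than equal to it. In fact $H_1(Y,R^+)=\mathrm{coker}(M_{\alpha^c}\colon\Z\langle\alpha^c\rangle\to\Z\langle\beta\rangle)$ as in Section~\ref{sec:CellOrientations}, with an arc mapping to its column.

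\emph{The subspace $L$.} Built from the row space, your $L$ is $\ker(H_1(F_0\sqcup F_1,S^-)\to H_1(Y,R^-))$. In the nondegenerate case it has rank $d$ and is rationally the annihilator of $\ker i_*$ under the arc/co-core pairing. So $L\otimes\Q=\ker(i_*)\otimes\Q$ is false.

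\emph{The content claim.} This fails too. $M_{\alpha^c}$ is $b\times a$ and presents $H_1(Y,R^+)$, whereas $H_1(Y,F_1\cup R^+\cup F_0)\cong H_1(Y,R^+)/\im i_*$ is the cokernel of the full matrix $M=[M_{\alpha^c}\,|\,M_{\mathrm{arcs}}]$. Moreover $\ker i_*$ need not be saturated. For example, for the twisted $I$-bundle over the Klein bottle as a cobordism $\emptyset\to T^2$, the rational longitude maps to the element of order $2$ in $H_1$. So the gcd of the minors that actually occur can exceed $|H_1(Y,F_1\cup R^+\cup F_0)|$, and your gcd step would be proving a false intermediate statement.

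What works is the null space. The group $\ker i_*$ is the projection to arc coordinates of the saturated lattice $V=\ker_{\Z}M$. Since $\mathrm{coker}\,M\cong H_1(Y,F_1\cup R^+\cup F_0)$, $M$ has full row rank iff $H_2(Y,R^-)=0$. The projection $V\to\ker i_*$ is injective iff $M_{\alpha^c}$ is, i.e.\ iff $H_2(Y,R^+)=0$; otherwise every minor containing all $\alpha$-circle columns vanishes. Apply the identity $\det M_{T^c}=\pm|\mathrm{coker}\,M|\,p_T(V)$, with $p_T$ the Pl\"ucker coordinates, to $T=I^c\sqcup J$. This gives exactly $\pm|H_1(Y,F_1\cup R^+\cup F_0)|$ times the coefficient of $\wedge^K\ker i_*$ on $\gamma_{I^c}\otimes\gamma_J$.

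You would still need two things.
\begin{itemize}
\item The non-normalized analogue of Proposition~\ref{prop:ExpandingBasisElts}: an arc maps to its column of $\beta$-intersection numbers.
\item A real sign check. $\mathrm{gr}_{\DA}$ contains $\mathrm{inv}(\sigma_{\overline{o}_L(\x)o_L(\x)\leftrightarrow\std})+ak+n_1k$ beyond the permutation and local signs. These, together with the signs from $\varepsilon$, the Koszul rule, and the complementary-minor identity, must combine to an $(I,J)$-independent sign. ``Up to overall sign'' does not absorb entry-dependent signs.
\end{itemize}

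For comparison, the paper proceeds in three steps. It handles the contraction signs by reducing to the one-sided case (Lemma~\ref{lem:BSDAvsBSD}). It identifies the one-sided invariant with the Alexander functor on a normalized diagram $\Hc_{\norm}$, where arcs become $\beta^{\out}$ generators (Theorem~\ref{thm:BSDAAlexanderZ}). It then puts the $\beta^c\times\alpha^c$ block, which presents $H_1(Y,F\cup R^+)$ by Proposition~\ref{prop:SubmatrixPresentation}, into Smith normal form. Each coefficient then factors as $\det(*_3)$ times a minor of $*_1$, whose columns form a $\Z$-basis of $\ker i_*$. Your repaired route would work on an arbitrary diagram and replace the last two of these steps with one linear-algebra identity.
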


We view the composition $(\varepsilon \otimes \id) \circ (\id \otimes |K_{Y,\Gamma}|)$ in Theorem~\ref{thm:IntroFirstThm} as defining the value of the ``sutured Frohman--Nicas TQFT'' $\Vc^{\FN}_{\sut}$ on the sutured cobordism $(Y,\Gamma)$. The functoriality properties of this sutured Frohman--Nicas TQFT are discussed later in the introduction and in Section~\ref{sec:SymmetricMonoidal}; for establishing these properties, the relationship to $[\BSDA]$ from Theorem~\ref{thm:IntroFirstThm} is especially useful.

\begin{example}\label{ex:VFNSutForOrdinary}
    For an ordinary (non-bordered) sutured 3-manifold $(Y,\Gamma)$ satisfying the assumptions of Definition~\ref{def:SuturedCob} (i.e. each component of $Y$ intersects both $R^+$ and $R^-$ nontrivially), view $(Y,\Gamma)$ as a sutured cobordism from $\emptyset$ to itself, so that $\Vc^{\FN}_{\sut}(Y,\Gamma)$ can be viewed as an element of $\Z/_{\pm 1}$. We have
    \[
    \Vc^{\FN}(Y,\Gamma) = \begin{cases}
        \pm |H_1(Y,R^+)| & \textrm{if } H_2(Y,R^+) = H_2(Y,R^-) = 0; \\
        0 & \textrm{otherwise.}
    \end{cases}
    \]
\end{example}

To prove Theorem~\ref{thm:IntroFirstThm}, we establish the following lemma, which is analogous to \cite[Theorem 4]{Pet18}.

\begin{lemma}\label{lem:IntroMainLemma}
    For a ``one-sided'' sutured cobordism $(Y,\Gamma)$ from $(\emptyset, \emptyset)$ to $(F,\Lambda)$, following the usual terminology, write $[\BSD(Y,\Gamma)]^{\Z}_{\comb}$ for the up-to-sign element $[\BSDA(Y,\Gamma)]^{\Z}_{\comb} \in \wedge^* H_1(F,S^+)$. We have
    \[
    [\BSD(Y,\Gamma)]^{\Z}_{\comb} = |K_{Y,\Gamma}|
    \]
    up to sign, where $|K_{Y,\Gamma}| \in \wedge^* H_1(F,S^+)$ is defined as in Theorem~\ref{thm:IntroFirstThm}.
\end{lemma}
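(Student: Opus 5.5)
We follow the strategy of \cite[Theorem 4]{Pet18}, adapted to the bordered sutured setting. Fix an arc diagram $\Zc$ for $(F,\Lambda)$ with $\alpha$-arcs $\alpha^a_1,\dots,\alpha^a_n$, where $n = \rank H_1(F,S^+)$, so that these arcs give a basis $a_1,\dots,a_n$ of $H_1(F,S^+)$. Fix also a bordered sutured Heegaard diagram $\Hc = (\Sigma,\boldsymbol{\alpha}^a,\boldsymbol{\alpha}^c,\boldsymbol{\beta})$ for $(Y,\Gamma)$, with $\alpha$-circles $\alpha^c_1,\dots,\alpha^c_m$ and $\beta$-circles $\beta_1,\dots,\beta_{m+k}$, where $k \le n$ can be arranged after stabilizing. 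A generator $\x$ occupies every $\beta$-circle, every $\alpha$-circle, and a subset $o(\x)$ of exactly $k$ of the $\alpha$-arcs. By Definition~\ref{def:BSDAZComb}, the combinatorial decategorification is
\[
[\BSD(Y,\Gamma)]^{\Z}_{\comb} = \sum_{\x} \pm \,\mathrm{sgn}(\x)\, a_{o(\x)},
\]
where $\mathrm{sgn}(\x)$ is the product of local intersection signs times the sign of the permutation matching the $\beta$-curves to the $\alpha$-curves. Grouping generators by their occupied arcs $I$ and expanding, the coefficient of $a_I$ is the determinant of the $(m+k)\times(m+k)$ matrix of algebraic intersection numbers between the $\beta$-circles and the $\alpha$-curves in $\boldsymbol{\alpha}^c \cup \{\alpha^a_i\}_{i\in I}$. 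This is the usual Laplace-type expansion, and the signs are the ones the combinatorial grading is built to produce.

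Next we identify this vector of maximal minors homologically. The diagram gives a cellular description of the relevant chain complexes: $Y$ is obtained from $\Sigma\times[0,1]$, with $F$ and $R^+$ identified appropriately, by attaching $2$-handles along the $\alpha$-circles and $\beta$-circles. Hence $H_1(Y,R^+)$ is presented by
\[
\Z^{m+k} \xrightarrow{\;M\;} \Z^{m}\oplus\Z^{n} \to H_1(Y,R^+) \to 0,
\]
where the generators are dual to the $\alpha$-curves and $M$ is the intersection matrix of the $\beta$-circles with the $\alpha$-curves. The $\Z^n$ summand corresponds to $H_1(F,S^+)$ via the arcs. Eliminating the $\alpha$-circle block shows that $\ker(i_*\colon H_1(F,S^+)\to H_1(Y,R^+))$ is, up to finite index, the image of the lattice that $M$ determines. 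The Plücker-type identity then says that the vector of maximal minors of $M$ containing all $m$ circle rows equals, up to sign, $c \cdot \wedge^{k}(\ker i_*)$. Here $c$ is the index of the saturation, which is the torsion order $|H_1(Y,F\cup R^+)|$ computed from the same presentation with the arc rows also killed. In the same computation we check that $k = K$ using $\chi$ of the handle decomposition, that the matrix has full rank precisely when $H_2(Y,R^-)=0$, and that the minors are all zero otherwise. We also check the dichotomy of Proposition~\ref{prop:RankKerLeqK}: when $H_2(Y,R^+)\neq 0$, $\ker i_*$ has rank less than $K$, $\wedge^K$ vanishes, and the minors must vanish too, which follows from a rank count on the transpose $M^T$.

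The main obstacle is the middle step: matching the index factor exactly with $|H_1(Y,F\cup R^+)|$ and justifying that $M$ presents the right relative groups with sutures. This requires a careful description of which boundary pieces of $\Sigma$ lie in $R^+$ versus $R^-$ and of how the $\alpha$-arcs end on $\partial\Sigma$. We would first settle the case $m=0$ by direct linear algebra, using Smith normal form for $M$ and the fact that the gcd of the maximal minors of an integer matrix is the index of its image in its saturation. We would then reduce the general case to it by a Schur-complement or row-reduction argument over the $\alpha$-circle rows. The sign bookkeeping is irrelevant, since everything is only up to sign.
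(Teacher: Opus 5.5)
There is a genuine gap: two linked convention errors put your final identity in the wrong exterior degree, and the patch ``$k=K$'' is false.

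\emph{Indexing.} Definition~\ref{def:BSDAZComb} records a generator $\x$ by its \emph{unoccupied} outgoing arcs $J=\overline{o}_L(\x)$, not by $o(\x)$. The number of occupied arcs is $b-a=-\chi(Y,R^+)$, which is your $k$, and stabilization cannot change it. So the invariant lies in $\wedge^{n-k}H_1(F,S^+)=\wedge^{K}H_1(F,S^+)$, since $K=n+\chi(Y,R^+)$. Then $k=K$ would force $n=-2\chi(Y,R^+)$. For the Zarev cap $(Y^R_{\emptyset},\Gamma^R_{\emptyset})$ one has $(Y,R^+)\simeq(F,S^+)$, so $K=0$ while $k=n$.

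\emph{Presentation.} Your matrix is the transpose of the right one. In the paper's conventions the $\alpha$-side gives $R^-$, so relative to $R^+$ the $\beta$-circles give the generators and the $\alpha$-circles give the relations (Section~\ref{sec:CWDecomp}). The $\alpha$-arcs are not generators; they enter only through $i_*$, which sends an arc class to its vector of intersection numbers with the $\beta$-circles (up to a uniform sign). A matrix with generators the $\alpha$-curves and relations the $\beta$-circles has cokernel $H^2(Y,F\cup R^+)\cong H_1(Y,R^-)$. Killing its arc rows gives $H^2(Y,R^+)\cong H_1(Y,F\cup R^-)$, not $H_1(Y,F\cup R^+)$. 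Likewise, the lattice $\Z^n\cap\im M$ that your model calls $\ker i_*$ is really $i^*H^1(Y,R^+)\subset H^1(F,S^+)$. That lattice is rationally the annihilator of $\ker i_*$ and has rank $k$, whereas $\ker i_*$ has rank $K$.

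\emph{What must be shown.} The minors in your first step are the right numbers. The real content is that the minor on the columns $\alpha^c\cup\{\alpha^a_i\}_{i\in J^c}$ equals $\pm|H_1(Y,F\cup R^+)|$ times the coefficient of $\wedge^K\ker i_*$ on the \emph{complementary} index $J$. In a general diagram, in the nondegenerate case $H_2(Y,R^\pm)=0$, this is a Pl\"ucker complement identity for $A=[\,M_{\mathrm{circ}}\mid M_{\mathrm{arc}}\,]$ with rows indexed by $\beta$-circles. It uses $\ker A\cong\ker i_*$ and $|\mathrm{coker}\,A|=|H_1(Y,F\cup R^+)|$.

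The paper instead goes through Theorem~\ref{thm:BSDAAlexanderZ} to a normalized diagram $\Hc_{\norm}$. There each arc class is $-1$ times a $\beta^{\out}$ generator (Proposition~\ref{prop:ExpandingBasisElts}), and the coefficient of $\gamma_J$ is the minor of $M_{\Hc_{\norm}}$ keeping all $\beta^c$ rows and the rows $\beta^{\out}_J$. Your Smith-form reduction then applies to the $\beta^c$ block, which presents $H_1(Y,F\cup R^+)$ by Proposition~\ref{prop:SubmatrixPresentation}. It shows that the remaining columns form an exact $\Z$-basis of $\ker i_*$. Exactness matters here, because $\ker i_*$ need not be saturated.

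\emph{Signs.} Sign bookkeeping is not optional. The statement allows only one global sign for the whole element of $\wedge^K H_1(F,S^+)$, and the complement identity produces a $J$-dependent shuffle sign. That sign must be cancelled by the $\mathrm{inv}(\sigma_{\overline{o}_L(\x)o_L(\x)\leftrightarrow\std})$ term in $\mathrm{gr}_{\DA}$. In the paper this is the step $(-1)^{\mathrm{inv}(\sigma_{JJ^c\leftrightarrow\std})+\mathrm{inv}(\sigma_{J^cJ\leftrightarrow\std})}=(-1)^{c(n-c)}$.
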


To prove Lemma~\ref{lem:IntroMainLemma}, we relate $[\BSDA(Y,\Gamma)]^{\Z}_{\comb}$ for general sutured cobordisms $(Y,\Gamma)$ to a sutured version of Florens--Massuyeau's Alexander functor \cite{FMFunctorial} over $\Z$ in Theorem~\ref{thm:BSDAAlexanderZ}. This relationship also allows us to show that $[\BSDA(Y,\Gamma)]^{\Z}_{\comb}$ is independent up to overall sign of the choices made in its definition, like the choices of arc diagrams for $(F_i,\Lambda_i)$ and Heegaard diagram for $(Y,\Gamma)$. In Section~\ref{sec:SuturedFNTQFT} we deduce Lemma~\ref{lem:IntroMainLemma} from the one-sided case of Theorem~\ref{thm:BSDAAlexanderZ}.

Given Lemma~\ref{lem:IntroMainLemma}, Theorem~\ref{thm:IntroFirstThm} is proved by, for a general sutured cobordism $(Y,\Gamma)$ from $(F_0,\Lambda_0)$ to $(F_1,\Lambda_1)$, concretely analyzing the difference between the definitions of $[\BSDA(Y,\Gamma)]^{\Z}_{\comb}$ and $[\widehat{\mathrm{BSDD}}(Y,\Gamma)]^{\Z}_{\comb} := [\widehat{\mathrm{BSD}}(Y,\Gamma)]^{\Z}_{\comb}$, where the latter is defined by viewing $(Y,\Gamma)$ as a one-sided cobordism from $(\emptyset, \emptyset)$ to $(-F_0 \sqcup F_1, \Lambda_0 \sqcup \Lambda_1)$. This argument largely parallels the approach of \cite{HLW}.

We also show how the sutured Frohman--Nicas TQFT recovers the ordinary Frohman--Nicas TQFT. Let $W \colon \Sigma_0 \to \Sigma_1$ be a cobordism between closed surfaces (no connectedness assumptions), and let $(F_i,\Lambda_i) = \Sut(\Sigma_i)$ and $(Y,\Gamma) = \Sut(W,\gamma)$ for any choice of acyclic ribbon graph $\gamma$ as in Example~\ref{ex:SuturedFromOrdinary}. We will establish the following proposition in Section~\ref{sec:SuturedFNTQFT} below.
\begin{proposition}\label{prop:OrdinaryFNFromSuturedFN}
    Under the identifications $\wedge^* H_1(\Sigma_i) \cong \wedge^* H_1(F_i,S^+_i)$, the ``sutured Frohman--Nicas TQFT'' composition associated to $(Y,\Gamma)$ in Theorem~\ref{thm:IntroFirstThm} agrees with the Frohman--Nicas TQFT invariant $\Vc^{\FN}(W)$.
\end{proposition}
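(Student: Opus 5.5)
The plan is to show that the two up-to-sign elements defining the maps agree. Under the identifications $H_1(\Sigma_i)\cong H_1(F_i,S^+_i)$ I will prove $|K_{Y,\Gamma}| = |K_W|$, with the vanishing cases matching. Since both maps are then given by the same formula $(\varepsilon\otimes\id)\circ(\id\otimes|K|)$, nothing more is needed, except a check that the Koszul sign convention in Theorem~\ref{thm:IntroFirstThm} only changes things by an overall sign.

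\textbf{Step 1: compare homology groups.} The key structural fact is that $\gamma$ is an acyclic ribbon graph, one tree per component of $W$, so its regular neighborhood $N$ is a disjoint union of balls, one per component. The thickened surface of $\gamma$ is a union of disks, so $R^+$ and $R^-$ are each a disjoint union of disks, one per component of $Y$. This gives $H_1(Y,R^\pm)\cong H_1(Y)$ and $H_2(Y,R^\pm)\cong H_2(Y)$ via the long exact sequence, since the $H_0$ map is injective on reduced homology.

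Next I compare $Y$ with $W = Y\cup N$ by Mayer--Vietoris. Each intersection $Y\cap N$ is a sphere with holes, hence connected, and its $H_1$ is generated by the boundary circles of the removed disks in $\Sigma_i$. Each such circle is null-homologous in $Y$, because it is the single boundary circle of a component of $F_i$. Hence $H_1(Y)\cong H_1(W)$. The analogous computation for $H_2$ gives $H_2(Y)\cong H_2(W)$ modulo classes of the spheres $\partial N$, which will not matter below.

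Similarly $H_1(F_i,S^+_i)\cong H_1(F_i)\cong H_1(\Sigma_i)$. These isomorphisms are natural with respect to the inclusions, giving a commuting square. Therefore $\ker(H_1(F_0\sqcup F_1,S^+)\to H_1(Y,R^+))$ is identified with $\ker(H_1(\partial W)\to H_1(W))$.

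\textbf{Step 2: match the vanishing conditions.} Each component of $F_1\cup R^+\cup F_0$ inside a component of $Y$ is connected, because the disk $R^+$ meets every puncture boundary. So finiteness of $H_1(Y,F_1\cup R^+\cup F_0)$ is equivalent to finiteness of the cokernel of $H_1(F_0\sqcup F_1)\to H_1(Y)=H_1(W)$. By the duality remark before Theorem~\ref{thm:IntroFirstThm}, this says that $H_2(Y,R^-)=0$ if and only if $W$ is rationally homologically trivial.

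When this holds, half-lives-half-dies gives that the kernel has rank $g_0+g_1 = \tfrac12(n_0+n_1)$. By Proposition~\ref{prop:RankKerLeqK}, $\rank\ker\le K$ with equality iff $H_2(Y,R^+)=0$. I expect equality to hold automatically here, which I will verify by computing $\chi(Y,R^+) = \chi(W) - \#\pi_0(W)$ and using $\chi(W)=\tfrac12\chi(\partial W)$. Thus in the FN-nonzero case no extra vanishing occurs, and in the FN-zero case $|K_{Y,\Gamma}|=0$ as well.

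\textbf{Step 3: match the scalar factors.} The statement before Theorem~\ref{thm:IntroFirstThm} gives $|H_1(Y,F_1\cup R^+\cup F_0)| = |\tors H_1(Y,R^-)|$, and Step 1 gives $H_1(Y,R^-)\cong H_1(W)$. So the scalar is $|\tors H_1(W)|$, exactly the FN normalization.

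\textbf{Main obstacle.} I expect the main difficulty in Step 1 to be the degenerate components of $W$: those with one $\Sigma_i$ empty, or with both empty, where $\gamma$ is a point and $\Gamma$ is a single arc or circle. There, $R^\pm$ is a disk or hemisphere and the Mayer--Vietoris argument has to be redone by hand. I also need the $H_2(Y)$ versus $H_2(W)$ discrepancy from the $\partial N$ spheres to be harmless, which holds because each such sphere bounds the ball $N$, so its class is killed rather than created. I will check these components separately and verify the identifications component by component, since all constructions are multiplicative under disjoint union up to sign.
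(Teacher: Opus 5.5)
Your outline is essentially the paper's proof: identify $H_1(F_i,S^+_i)\cong H_1(F_i)\cong H_1(\Sigma_i)$ and $H_1(Y,R^{\pm})\cong H_1(Y)\cong H_1(W)$, using that $R^{\pm}$ is one disk per component together with Mayer--Vietoris along $Y\cap N$. Then match rational homological triviality with finiteness of $H_1(Y,F_1\cup R^+\cup F_0)$, match the prefactors through $H^2(Y,R^-)$ and the universal coefficient theorem, and use half-lives-half-dies for $\rank\ker i_*$. The degenerate components are not a real obstacle, since there $Y\cap N$ is a disk or a $2$-sphere and so $H_1(Y\cap N)=0$.

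\textbf{The gap.} The one step where you depart from the paper fails as written: $\chi(Y,R^+)=\chi(W)-\#\pi_0(W)$ is false. For a component of $W$ with $r$ boundary components, $\chi(W)=\chi(Y)+\chi(N)-\chi(Y\cap N)=\chi(Y)+1-(2-r)$. Hence $\chi(Y,R^+)=\chi(Y)-1=\chi(W)-r$, and globally $\chi(Y,R^+)=\chi(W)-\#\pi_0(\partial W)$; this also covers closed components ($r=0$) and one-sided ones ($r=1$). Your formula would give $K=g_0+g_1+\#\pi_0(\partial W)-\#\pi_0(W)$. For example, it gives $K=2g+1$ for $\Sigma_g\times[0,1]$ and $K=-1$ for a closed $W$. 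For the identity cobordism you would then conclude $\rank\ker i_*<K$, hence $|K_{Y,\Gamma}|=0$, although $\Vc^{\FN}(W)\neq 0$ there. With the corrected formula and $\chi(W)=\tfrac12\chi(\partial W)=\#\pi_0(\partial W)-(g_0+g_1)$, you do get $\chi(Y,R^+)=-(g_0+g_1)$ and $K=g_0+g_1$. This Euler-characteristic computation is then a valid, slightly more elementary substitute for the paper's route. The paper instead writes $\chi(Y,R^+)=-\rank H_1(Y)+\rank H_1(Y,\partial Y)$ via Poincar\'e--Lefschetz duality and applies half-lives-half-dies to the image of $H_1(\partial Y)\to H_1(Y)$.

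\textbf{Two smaller points.} First, your $H_2$ remark is backwards and unnecessary. For a component with $r\ge 1$ boundary components, Mayer--Vietoris gives $H_2(Y)\hookrightarrow H_2(W)$ with cokernel $\Z^{r-1}$, accounted for by the components of $\partial W$ rather than by $\partial N$. Nothing in the argument needs a comparison of $H_2(Y)$ with $H_2(W)$: the vanishing of $H_2(Y,R^+)$ follows from Proposition~\ref{prop:RankKerLeqK} once $K$ is known. Second, the Koszul sign in Theorem~\ref{thm:IntroFirstThm} is $(-1)^{n_0|y|}$ on the $\wedge^*H_1(F_1,S^+_1)$ factor. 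In general this depends on degree and is not an overall sign. It is trivial here precisely because $n_0=2g_0$ is even, which is how the paper concludes.
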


\begin{remark}\label{rem:FixingHalfProj}
Proposition~\ref{prop:OrdinaryFNFromSuturedFN} implies that the maps $[\BSDA(Y,\Gamma)]^{\Z}_{\comb}$ recover the whole content of the Frohman--Nicas TQFT, not just the restriction to connected surfaces and cobordisms. In this general setting the half-projective functoriality of the Frohman--Nicas TQFT becomes relevant; for cobordisms $W$ and $W'$, sometimes we have $\Vc^{\FN}(W') \circ \Vc^{\FN}(W) \neq 0$ but $\Vc^{\FN}(W' \circ W) = 0$. On the other hand, functoriality for $[\BSDA(Y,\Gamma)]^{\Z}_{\comb}$ is more transparent and, as discussed below, we always have
\[
[\BSDA((Y',\Gamma') \circ (Y,\Gamma))]^{\Z}_{\comb} = [\BSDA(Y',\Gamma')]^{\Z}_{\comb} \circ [\BSDA(Y,\Gamma)]^{\Z}_{\comb}
\]
up to sign. This discrepancy can be explained as follows: given $W$ and $W'$, let $(Y,\Gamma) = \Sut(W,\gamma)$ and $(Y',\Gamma') = \Sut(W', \gamma')$ for any choice of acyclic ribbon graphs $\gamma$ and $\gamma'$. The condition $\mu(W',W) = 0$ is equivalent to requiring that when gluing $\gamma$ and $\gamma'$, the result $\gamma' \circ \gamma$ is still acyclic; for example, in Figure~\ref{fig:HalfProjectivity}, $\gamma' \circ \gamma$ would not be acyclic. Thus, the half-projectivity zero appears in the functoriality formula for $\Vc^{\FN}$ precisely when $(Y', \Gamma') \circ (Y, \Gamma)$ is no longer the same as $\Sut(W' \circ W, \tilde{\gamma})$ for any acyclic choice of ribbon graph $\tilde{\gamma}$ in $W' \circ W$; the composition $(Y', \Gamma') \circ (Y, \Gamma)$ is $\Sut(W' \circ W, \gamma' \circ \gamma)$, but $\gamma' \circ \gamma$ is not acyclic. In this sense, reinterpreting $\Vc^{\FN}(W)$ as $[\BSDA(\Sut(W,\gamma))]^{\Z}_{\comb} = \Vc^{\FN}_{\sut}(\Sut(W,\gamma))$ fixes the half-projectivity. In \cite{KerlerHomologyTQFT} half-projectivity is considered to be a characteristic feature of ``non-semisimple'' TQFTs; this may be evidence that non-semisimple TQFTs other than $\Vc^{\FN}$ might also be reinterpreted fruitfully in the sutured setting.
\end{remark}

\vspace{0.15in}

\noindent \textbf{Sutured Frohman--Nicas TQFT as a symmetric monoidal functor.} The perspective of $[\BSDA(Y,\Gamma)]^{\Z}_{\comb}$ is useful for investigating functoriality of the sutured analogue of the Frohman--Nicas TQFT in Theorem~\ref{thm:IntroFirstThm}. By concrete computations, we will show that the maps $[\BSDA(Y,\Gamma)]^{\Z}_{\comb}$ associated to sutured cobordisms $(Y,\Gamma)$ are functorial up to sign with no half-projectivity zeroes and no connectedness restrictions. Given this result, it is reasonable to ask whether the maps $[\BSDA(Y,\Gamma)]^{\Z}_{\comb}$ assemble into a symmetric monoidal functor, as in the usual TQFT axioms, from a category of cobordisms with disjoint union as the symmetric monoidal structure to a category of abelian groups with tensor product as the symmetric monoidal structure. We show that this is the case when the domain and codomain symmetric monoidal categories are appropriately chosen.

\begin{definition}\label{def:SutCobCategory}
Let $\Cob^{\sut}_{2+1}$ be the symmetric monoidal category whose objects are $(F,\Lambda)[d]$ where $(F,\Lambda)$ is a sutured surface and $d \in \Z$. We will view $d$ as a degree shift downward by $d$, so that when we define $\Vc^{\FN}_{\sut}((F,\Lambda)[d])$, the summand $\wedge^k H_1(F,S^+)$ will be placed in degree $k-d$.

A morphism from $(F_0,\Lambda_0)[d_0]$ to $(F_1,\Lambda_1)[d_1]$ in $\Cob^{\sut}_{2+1}$ is a sutured cobordism $(Y,\Gamma)$ from $(F_0,\Lambda_0)$ to $(F_1,\Lambda_1)$ as above (disregarding the degree shifts), up to isomorphism of cobordisms. Composition is the usual gluing of cobordisms. The identity cobordism on $(F,\Lambda)[d]$ is $(F \times [0,1], \Lambda \times [0,1]) =: (\id_{(F,\Lambda)},\Gamma_{\id})$, oriented so that $F \times \{0\} = -F$ and $F \times \{1\} = F$ (we think of $[0,1]$ as having $0$ on the right and $1$ on the left). The monoidal structure on objects is disjoint union with addition of degree shifts; the monoidal structure on morphisms is disjoint union. The symmetrizer for $(F,\Lambda)[d]$ and $(F',\Lambda')[d']$ is $((F \sqcup F') \times [0,1], (\Lambda \sqcup \Lambda') \times [0,1])$ with incoming boundary identified with $-(F \sqcup F')$ and outgoing boundary identified with $F' \sqcup F$ using the evident identifications.
\end{definition}

\begin{definition}\label{def:AbZgrCategory}
Let $\Ab^{\Z\gr}_{\pm 1}$ be the symmetric monoidal category whose objects are $\Z$-graded abelian groups $V_* = \oplus_{k \in \Z} V_k$. Morphisms from $V_*$ to $W_*$ are homogeneous maps of arbitrary degree modulo multiplication by $\pm 1$, with the usual composition and identities. The monoidal structure on objects is the usual tensor product of $\Z$-graded abelian groups. The monoidal structure on morphisms is defined by
\[
(f \otimes g)(x \otimes y) := (-1)^{|f||y|} f(x) \otimes g(y);
\]
since we mod out by overall sign on morphisms, the interchange law holds and we have a monoidal category. The symmetrizer for $V_*$ and $W_*$ is the ``super'' symmetrizer $v \otimes w \mapsto (-1)^{|v||w|} w \otimes v$.
\end{definition}

Our main theorem about the symmetric monoidal functor structure of $\Vc^{\FN}_{\sut}$ is as follows.

\begin{theorem}\label{Thm:VFNSymmetricMonoidal}
    The following data define a symmetric monoidal functor $\Vc^{\FN}_{\sut}$ from $\Cob^{\sut}_{2+1}$ to $\Ab^{\Z\gr}_{\pm 1}$.
    \begin{itemize}
        \item On objects, $\Vc^{\FN}_{\sut}((F,\Lambda)[d])$ is the $\Z$-graded abelian group whose summand in degree $k$ is $\wedge^{k+d} H_1(F,S^+)$.

    \item On a morphism $(Y,\Gamma)$ from $(F_0,\Lambda_0)[d_0]$ to $(F_1,\Lambda_1)][d_1]$, $\Vc^{\FN}_{\sut}(Y,\Gamma)$ is the up-to-sign map from $\wedge^* H_1(F_0,S^+_0)$ to $\wedge^* H_1(F_1,S^+_1)$ determined by the composition in Theorem~\ref{thm:IntroFirstThm} involving $|K_{Y,\Gamma}|$; equivalently, $\Vc^{\FN}_{\sut}(Y,\Gamma) =[\BSDA(Y,\Gamma)]^{\Z}_{\comb}$. 
    As a map between ungraded abelian groups, $\Vc^{\FN}_{\sut}(Y,\Gamma)$ is independent of the degree shifts $d_i$. Only the degree of the map $\Vc^{\FN}_{\sut}(Y,\Gamma)$ depends on $d_i$.

    \item The monoidal functor structure map 
    \[
    \Phi_{(F,\Lambda)[d],(F',\Lambda')[d']}: \Vc^{\FN}_{\sut}((F,\Lambda)[d]) \otimes \Vc^{\FN}_{\sut}((F',\Lambda')[d']) \to \Vc^{\FN}_{\sut}((F \sqcup F', \Lambda \sqcup \Lambda')[d + d'])
    \]
    sends $\alpha \otimes \beta$ to $(-1)^{d|\beta|}\alpha \wedge \beta$. Informally, we think of $\alpha$ as ``$\alpha$ times $d$ odd variables'' and $\beta$ as ``$\beta$ times $d'$ odd variables;'' then when forming $\alpha \wedge \beta$, the $d$ odd variables for $\alpha$ must move past $\beta$ to the right, picking up the sign $(-1)^{d|\beta|}$.\footnote{Compare with \cite[Remark 1.1]{man23}, where the conventions differ slightly in that the odd variables are placed on the left instead of the right.}
    \end{itemize}
\end{theorem}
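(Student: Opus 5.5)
Our plan is to check the functor axioms one at a time. We use whichever of the two descriptions of $\Vc^{\FN}_{\sut}(Y,\Gamma)$ makes a given axiom easiest: the formula $(\varepsilon\otimes\id)\circ(\id\otimes|K_{Y,\Gamma}|)$ from Theorem~\ref{thm:IntroFirstThm}, or the combinatorial bimodule description $[\BSDA(Y,\Gamma)]^{\Z}_{\comb}$. Theorem~\ref{thm:IntroFirstThm} says the two agree, and the discussion after Lemma~\ref{lem:IntroMainLemma} (via Theorem~\ref{thm:BSDAAlexanderZ}) says the map is well defined up to sign, independent of diagram choices.

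\textbf{Step 1: well-definedness on morphisms.} The map $\Vc^{\FN}_{\sut}(Y,\Gamma)$ depends only on the isomorphism class of the cobordism, since $|K_{Y,\Gamma}|$ is built from homological data that are invariant under isomorphism. It is homogeneous, because $\varepsilon$ has degree $n_0$ and $|K_{Y,\Gamma}|$ lies in a single total degree $K$. After the shifts, its degree is $K-n_0-d_1+d_0$ (up to the sign convention for the shift).

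\textbf{Step 2: identities.} For $(F\times[0,1],\Lambda\times[0,1])$ we have $R^+=S^+\times[0,1]$. Hence $H_1(Y,R^+)\cong H_1(F,S^+)$, and $H_2(Y,R^\pm)=H_2(F,S^\pm)=0$, since every component of $F$ meets $S^\pm$. The kernel of $i_*$ is therefore the antidiagonal $\{(x,-x)\}$, and $H_1(Y,F_0\cup R^+\cup F_1)=0$. A direct computation with a basis $e_1,\dots,e_n$ then shows that $(\varepsilon\otimes\id)(\id\otimes\bigwedge_j(e_j\otimes1-1\otimes e_j))$ is $\pm\id$. This is the standard Frohman--Nicas identity check, now with $H_1(F,S^+)$ in place of $H_1(\Sigma)$.

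\textbf{Step 3: composition.} This is the main obstacle. We would prove it on the bimodule side. Choose arc diagrams and bordered sutured Heegaard diagrams for $(Y,\Gamma)$ and $(Y',\Gamma')$, and glue them along the shared arc diagram. By Zarev's pairing theorem, the generators of the glued diagram are pairs of generators with complementary idempotents. The combinatorial $\Z_2$ gradings, defined by local intersection signs, are additive under gluing up to a sign depending only on the idempotent. So the decategorified matrix of the composite is the product of the two matrices, with a global correction. The crux is the sign bookkeeping: the orderings of $\alpha$- and $\beta$-curves in the glued diagram must be matched with the orderings used to define each factor, and the resulting sign must be constant. We would try first to choose orderings compatibly, so that the correction is a permutation sign independent of the generator. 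No half-projectivity zero can arise, because the $\BSDA$ side is a matrix product with no connectivity hypotheses.

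\textbf{Step 4: monoidal structure and symmetry.} For a disjoint union, $H_1$ of pairs splits as a direct sum and $H_2(Y\sqcup Y',R^\pm)$ is zero iff both pieces vanish. The kernel is the direct sum, and the torsion orders multiply, so $|K_{Y\sqcup Y'}|=\pm|K_Y|\wedge|K_{Y'}|$. Naturality of $\Phi$ then reduces to commuting $\varepsilon$ and wedge factors past each other. We would verify that the signs match the Koszul rule $(f\otimes g)(x\otimes y)=(-1)^{|f||y|}f(x)\otimes g(y)$, with $(-1)^{d|\beta|}$ absorbing the shifts. Since morphisms are taken up to sign, only the $y$-dependence of the sign needs to be checked. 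The associativity and unit coherence diagrams hold because the sign $(-1)^{d|\beta|}$ is a bilinear cocycle. For the symmetry, the symmetrizer cobordism is a relabelled identity cylinder, so by Step 2 it induces $\alpha\wedge\beta\mapsto\pm\beta\wedge\alpha$. Combined with $\Phi$, this gives the sign $(-1)^{|\alpha||\beta|}$ relating the graded degrees, which matches the super symmetrizer.
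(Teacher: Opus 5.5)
Your proposal is correct in outline, but it splits the work differently from the paper. For composition you follow the paper's Proposition~\ref{prop:BSDARespectsGluing}: glue diagrams along $\Zc_1$, pair the generators, and show that $\mathrm{gr}_{\DA}$ is additive up to a constant once the orderings are chosen compatibly.

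For identities, the naturality and coherence of $\Phi$, and the braiding, you instead work with the formula $(\varepsilon\otimes\id)\circ(\id\otimes|K_{Y,\Gamma}|)$. The paper stays on the $[\BSDA]^{\Z}_{\comb}$ side throughout, using tailored choices $\Xi_{\id}$, $\Xi\sqcup\Xi'$, $\Xi_\gamma$ and crossing-strand diagrams. Your route is more conceptual for the monoidal part. Kernels and torsion orders split under disjoint union, and what remains is exterior-algebra Koszul bookkeeping. One gets $\Vc^{\FN}_{\sut}(Y\sqcup Y')(\gamma_I\wedge\gamma_{I'})=\pm(-1)^{ck'}\,\Vc^{\FN}_{\sut}(Y)(\gamma_I)\wedge\Vc^{\FN}_{\sut}(Y')(\gamma_{I'})$ with $k'=|I'|$. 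The twist in $\Phi$ absorbs exactly this sign; the paper finds the same $k'c$ by counting strands. The associator and braiding are identity cylinders on $F\sqcup F'$, so your Step~2 disposes of them with no new diagram.

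The paper's route keeps everything uniform with the gluing argument. Its monoidal verification needs only choice-independence, not Theorem~\ref{thm:IntroFirstThm}. Its identity check (Proposition~\ref{prop:BSDARespectsIdentity}) is logically upstream of Theorem~\ref{thm:IntroFirstThm}, via Corollary~\ref{cor:CanChooseXiNormalized} and Theorem~\ref{thm:BSDAAlexanderZ}. So your Step~2 is an independent confirmation rather than a needed input.

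Several points need tightening.
\begin{itemize}
\item \emph{Braiding sign.} The identity cylinder on $F\sqcup F'$ induces $\pm\id$ on $\wedge^*H_1(F\sqcup F',S^+\sqcup(S^+)')$ with a \emph{global} sign. In exterior degrees $k,k'$ this means $\alpha\wedge\beta\mapsto\pm(-1)^{kk'}\beta\wedge\alpha$. Read literally with a global $\pm$, your ``$\alpha\wedge\beta\mapsto\pm\beta\wedge\alpha$'' is off by $(-1)^{kk'}$. It is this $(-1)^{kk'}$, combined with the twists $(-1)^{dk'}$ and $(-1)^{d'k}$ from the two structure maps, that reproduces the super symmetrizer on shifted degrees.
\item \emph{Naturality of $\Phi$.} It is not only the $y$-dependence that must be tracked. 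Cross terms $kk'$ arise from $\varepsilon$ on $F_0\sqcup F_0'$ (with volume form $\Omega_0\wedge\Omega_0'$). They also arise from reordering $|K_Y|\wedge|K_{Y'}|$ into $\wedge^*(H_1(F_0)\oplus H_1(F_0'))\otimes\wedge^*(H_1(F_1)\oplus H_1(F_1'))$. These terms cancel, but the cancellation has to be checked.
\item \emph{Zarev's pairing theorem.} It is not available here: no bimodule $\BSDA(Y,\Gamma)$ is defined in this generality. All you need is the elementary bijection between generators of $\tilde\Hc\cup_{\Zc_1}\Hc$ and pairs $(\x,\tilde\x)$ with $\overline{o}_L(\x)=o_R(\tilde\x)$.
\item \emph{The crux of composition.} $\mathrm{gr}_{\DA}$ is not just local intersection signs, and this is the step you left open. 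With the orderings of Definition~\ref{def:ChoicesGluing}, the terms $\mathrm{inv}(\sigma_\x)+\mathrm{inv}(\sigma_{\overline{o}_L(\x)\,o_L(\x)\leftrightarrow\std})$ add on the nose. The correction terms $ak+n_1k$ differ by the constant $(\tilde a+n_2)c$ after substituting $l=k+c$. Without those correction terms the discrepancy would depend on $k$, and the composite would not be a global sign times the product.
\end{itemize}
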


\begin{remark}\label{rem:DegreeShifts}
    For an ordinary cobordism $W \colon \Sigma_0 \to \Sigma_1$ and any acyclic choice of $\gamma$, let $(F_i,\Lambda_i) = \Sut(\Sigma_i, (\partial \gamma) \cap \Sigma_i)$. As shown in Section~\ref{sec:SuturedFNTQFT}, we can interpret $\Vc^{\FN}(W)$ as
    \[
    \Vc^{\FN}_{\sut}(\Sut(W,\gamma)) \colon \Vc^{\FN}_{\sut}((F_0,\Lambda_0)[0]) \to \Vc^{\FN}_{\sut}((F_1,\Lambda_1)[0])
    \]
    without building in any degree shifts. If we do this, then $\Vc^{\FN}(W)$ is sometimes an odd morphism. On the other hand, in Geer--Young \cite{GeerYoung}, the target symmetric monoidal category for the TQFT from unrolled $U_q(\mathfrak{gl}(1|1))$ has only even morphisms between super vector spaces. This may be a sign that $\Vc^{\FN}(W)$ is most naturally interpreted as
    \[
    \Vc^{\FN}_{\sut}(\Sut(W,\gamma)) \colon \Vc^{\FN}_{\sut}((F_0,\Lambda_0)[g_0]) \to \Vc^{\FN}_{\sut}((F_1,\Lambda_1)[g_1]),
    \]
    which is always an even morphism.
\end{remark}

\begin{remark}
    In \cite{man23,man24}, degree shifts ($\Q$-valued in general) are defined for a sutured surface $(F,\Lambda)$ based on the topological structure of $(F,\Lambda)$ plus a choice of values for certain parameters. The setting of \cite{man24} is closest to the current paper; in \cite{man23} there are separate $\Q$-degree and parity shifts, with the $\Q$-degree taken to be irrelevant for the ``super'' signs, and the parity shift is always a genus-independent quantity plus the rank of $H_1(F,S^+)$. In \cite{man24}, by contrast, there is a $\Q$-degree shift depending on topological data plus a single parameter $A \in \Q$; the shift is often integral, and in this case the parity shift for ``super'' signs is taken to be the $\Z$-degree shift mod 2. When $A = 1/2$ the $\Q$-degree shift is a genus-independent quantity minus half the rank of $H_1(F,S^+)$, in line with the shifts of $g_i = (1/2)(2g_i)$ in Remark~\ref{rem:DegreeShifts}.
    
    The $\Q$-degree shift in \cite{man24} is applied to the abelian group $\wedge^* H_1(F,P)$ where $P$ consists of one point in each component of $S^+$ (circle or interval); this group agrees with $\wedge^* H_1(F',(S^+)')$ where $(F',\Lambda')$ is obtained from $(F,\Lambda)$ by replacing all $S^+$ circles with pairs of an $S^+$ interval and $S^-$ interval. Note that if all components of $F$ intersect $S^+$ nontrivially then all components of $F'$ intersect both $(S^+)'$ and $(S^-)'$ nontrivially, so $(F',\Lambda')$ satisfies the topological hypotheses of this paper. If $F$ is connected of genus $g$ with one boundary component, placed entirely in $S^+$, the $A = 1/2$ degree shift of \cite{man24} is $-g$, in agreement with Remark~\ref{rem:DegreeShifts}. This shift applies to $\wedge^* H_1(F',(S^+)')$, and we have $(F',\Lambda') = \Sut(\Sigma_g,P)$ where $\Sigma_g$ is closed and $P$ is viewed as a point in $\Sigma_g$.
\end{remark}

\begin{remark}\label{rem:Duals}
    We briefly discuss duals here. For a sutured surface $(F,\Lambda)$, the various degree-shifted objects $(F,\Lambda)[d]$ of $\Cob^{\sut}_{2+1}$ are isomorphic for all $d \in \Z$, so we will just consider $(F,\Lambda) = (F,\Lambda)[0]$. The objects $(F,\Lambda)$ and $(-F,\Lambda)$ are dual in $\Cob^{\sut}_{2+1}$; specifically, the identity cobordism $(\id_{(F,\Lambda)}, \Gamma_{\id})$ can be viewed as a cobordism from $(F,\Lambda) \sqcup (-F,\Lambda)$ to $\emptyset$ and as a cobordism from $\emptyset$ to $(-F,\Lambda) \sqcup (F,\Lambda)$. These cobordisms satisfy the triangle identities required for them to be the evaluation and coevaluation morphisms for a duality between $(F,\Lambda)$ and $(-F,\Lambda)$ in $\Cob^{\sut}_{2+1}$.

Since $\Vc^{\FN}_{\sut}$ is a symmetric monoidal functor, applying $\Vc^{\FN}_{\sut}$ to these evaluation and coevaluation cobordisms gives the evaluation and coevaluation maps of a duality between $\Vc^{\FN}_{\sut}(F,\Lambda)$ and $\Vc^{\FN}_{\sut}(-F,\Lambda)$ in $\Ab^{\Z\gr}_{\pm 1}$. In bordered-Floer terms, $\Vc^{\FN}_{\sut}$ of these cobordisms amounts to $[\widehat{\mathrm{BSAA}}]$ and $[\widehat{\mathrm{BSDD}}]$ of the identity cobordism; the fact that decategorified $\mathrm{DD}$ and $\mathrm{AA}$ bimodules of identity cobordisms give such a duality is also used by Hom--Lidman--Watson in \cite[Section 4.3]{HLW}. One nice feature of our setup is that, for us, the duality maps ``$[\widehat{\mathrm{BSAA}}]$ and $[\widehat{\mathrm{BSDD}}]$ of the identity'' are themselves part of the functorial TQFT structure; they are $\Vc^{\FN}_{\sut}$ applied to the evaluation and coevaluation cobordisms in $\Cob^{\sut}_{2+1}$.

Finally, we note that for a sutured surface $(F,\Lambda)$, we can view $(F \times S^1, \Lambda \times S^1)$ as a sutured cobordism from $\emptyset$ to $\emptyset$. We have $\Vc^{\FN}_{\sut}((F \times S^1,\Lambda \times S^1)) \in \Z_{\pm 1}$, which we can compute as a composition of evaluation and coevaluation maps. One can show that this composition gives an alternating sum of the ranks of the homogeneous pieces of the graded abelian group $\Vc^{\FN}_{\sut}(F,\Lambda)$; this sum is $\pm 1$ in the degenerate case $H_1(F,S^+) = 0$ and is zero otherwise. Correspondingly, if $H_1(F,S^+)$ is nonzero then $H_2(F \times S^1, S^+ \times S_1)$ is nonzero, so $\Vc^{\FN}_{\sut}((F \times S^1,\Lambda \times S^1)) = 0$ as in Example~\ref{ex:VFNSutForOrdinary}.
\end{remark}

\vspace{0.15in}

\noindent \textbf{$\Spinc$ structures.} Suppose that $W$ is the complement of a knot $K$ in $S^3$, viewed as a cobordism from $\emptyset$ to $\Sigma = T^2$. The up-to-sign element $|K_W| = \wedge^{\mathrm{top}} \ker (i_* \colon H_1(\Sigma) \to H_1(W))$ lives in the middle summand of $\wedge^* H_1(\Sigma) \cong \Z \oplus \Z^2 \oplus \Z$; it is the class of the Seifert longitude of $K$ inside $H_1(\Sigma) = \wedge^1 H_1(\Sigma)$. This class does not encode enough data to recover the Alexander polynomial of $K$; the usual Frohman--Nicas TQFT only sees the Alexander polynomial of $K$ via a more complicated graded-trace construction which is not actually part of the functorial TQFT structure.

Florens--Massuyeau \cite{FMFunctorial} define a twisted version of the Frohman--Nicas TQFT, called the Alexander functor, whose invariant for a knot complement $W$ as above does see the Alexander polynomial of $K$. Their Alexander functor $\mathsf{A}_G$ assigns invariants to cobordisms equipped with homomorphisms from their fundamental groups to a fixed finitely generated free abelian group $G$. In particular, one could focus on a specific cobordism $W$ and take $G = H_1(W) / \tors(H_1(W))$, disregarding the values of the Alexander functor on other cobordisms. 

Let $(Y,\Gamma)$ be a sutured cobordism from $(F_0,\Lambda_0)$ to $(F_1,\Lambda_1)$. For results involving $\Spinc$ structures, we will assume $Y$ is connected; with a bit of work on sign bookkeeping, one could extend to the disconnected case using the sign patterns for disjoint unions that appear in the symmetric monoidal functor structure discussed above.

Let $H := H_1(Y)$ and $G = H / \tors(H)$. Let $\overline{Y} \xrightarrow{p} Y$ be the maximal free abelian cover of $Y$, so that the deck group of $\overline{Y} \xrightarrow{p} Y$ is $G$. Let $\overline{F}_i = p^{-1}(F_i)$ and $\overline{S}^+_i = p^{-1}(S^+_i)$; note that the $\Z[G]$-module $H_1(\overline{F}_i,\overline{S}^+_i)$ is non-canonically isomorphic to $H_1(F_i,S^+_i) \otimes_{\Z} \Z[G]$.

Using $\Spinc$ structures on sutured manifolds associated to $(Y,\Gamma)$, we will define a decategorified bordered sutured invariant 
\[
[\BSDA(Y,\Gamma)]^{\Z[G]}_{\comb} \colon \wedge^*_{\Z[G]} H_1(\overline{F}_0,\overline{S}^+_0) \to \wedge^*_{\Z[G]} H_1(\overline{F}_1,\overline{S}^+_1),
\]
depending a priori on choices such as arc diagrams and Heegaard diagrams, and relate it to a version of Florens--Massuyeau's Alexander functor, for this choice of $G$, applied to $(Y,\Gamma)$. From the relationship with the Alexander functor, we will see that $[\BSDA(Y,\Gamma)]^{\Z[G]}_{\comb}$ is independent of these choices up to multiplication by $\pm G \subset \Z[G]$.

Let $\overline{R}^+ = p^{-1}(R^+)$. The version of the Alexander functor we will consider is based on the finitely presented $\Z[G]$-module $H_1(\overline{Y}, \overline{R}^+)$. Let 
\begin{equation}\label{eq:deficiency}
d = -\chi(Y,R^+) = \rank H_1(Y,R^+) - \rank H_2(Y,R^+).
\end{equation}

Note that we also have
\[
d = \rank_{\Z[G]} H_1(\overline{Y}, \overline{R}^+) - \rank_{\Z[G]} H_2(\overline{Y}, \overline{R}^+) 
\]
because (as we will see) $H_*(\overline{Y}, \overline{R}^+)$ can be computed from a chain complex of free $\Z[G]$-modules with the same number of basis elements in each degree as a chain complex of free $\Z$-modules computing $H_*(Y,R^+)$ (these complexes are only nonzero in degrees 1 and 2).

When $H_2(\overline{Y}, \overline{R}^+) \neq 0$, the Alexander functor will be defined to be zero, and the map  $[\BSDA(Y,\Gamma)]^{\Z[G]}_{\comb}$ will also be zero, so assume that $H_2(\overline{Y}, \overline{R}^+) = 0$ and thus 
\[
d = \rank_{\Z[G]} H_1(\overline{Y}, \overline{R}^+).
\]
In this case, $H_1(\overline{Y}, \overline{R^+})$ admits an injective presentation matrix $M$ of deficiency $d$ over $\Z[G]$ (the deficiency is defined to be the number of generators minus the number of relations, i.e. the number of rows of $M$ minus the number of columns, and ``injective'' means that the kernel of $M$ is zero).
\begin{definition}\label{def:AlexFunction}
As in Lescop \cite{LescopSumFormula}, define the \emph{Alexander function}
\[
\mathcal{A}^{\Z[G]}_{Y,\Gamma} \colon \wedge^d_{\Z[G]} H_1(\overline{Y}, \overline{R}^+) \to \Z[G],
\]
uniquely specified up to multiplication of all values by the same element of $\pm G \subset \Z[G]$, to be zero if $H_2(\overline{Y}, \overline{R}^+) \neq 0$. If $H_2(\overline{Y}, \overline{R}^+) = 0$, choose any injective deficiency-$d$ presentation matrix $M$ for $H_1(\overline{Y}, \overline{R^+})$ as a $\Z[G]$-module. Then $\mathcal{A}^{\Z[G]}_{Y,\Gamma}(u_1 \wedge \cdots \wedge u_d) \in \Z[G]$, where $u_i \in H_1(\overline{Y}, \overline{R}^+)$, is defined as follows. Choose any expression $\overline{u}_i$ of $u_i$ as a $\Z[G]$-linear combination of the generators, form a square matrix from $M$ by adding new columns $\overline{u}_1, \ldots, \overline{u}_d$ to the right, and take the determinant, an element of $\Z[G]$. As we will see in Proposition~\ref{prop:AlexanderFunctionZ[G]IndependentOfM}, changing the presentation matrix $M$ multiplies all values of $\mathcal{A}^{\Z[G]}_{Y,\Gamma}$ by the same element of $\pm G \subset \Z[G]$. 
\end{definition}

Now let $n_1 = \mathrm{rank}\, H_1(F_1,S^+_1)$ and $c = n_1 + \chi(Y,R^+)$. Note that $d = -\chi(Y,R^+) = n_1 - c$. Define a map
\[
\mathsf{A}_{\Z[G]}(Y,\Gamma) \colon \wedge^*_{\Z[G]} H_1(\overline{F}_0, \overline{S}^+_0) \to \wedge^{*}_{\Z[G]} H_1(\overline{F}_1, \overline{S}^+_1),
\]
well-defined up to multiplication by $\pm G$ and homogenous of degree $c$, by the equation
\[
\omega(\wedge(\mathsf{A}_{\Z[G]}(Y,\Gamma) \otimes \id) (x \otimes y)) = \mathcal{A}^{\Z[G]}_{Y,\Gamma}\left(\overline{i}_*(x) \wedge \overline{i}_*(y)\right)
\]
where $x \in \wedge^*_{\Z[G]} H_1(\overline{F}_0,\overline{S}^+_0)$, $y \in \wedge^{*}_{\Z[G]} H_1(\overline{F}_1, \overline{S}^+_1)$, the map
\[
\wedge \colon \wedge^{p}_{\Z[G]} H_1(\overline{F}_1,\overline{S}^+_1) \otimes \wedge^{q}_{\Z[G]} H_1(\overline{F}_1,\overline{S}^+_1) \to \wedge^{n_1}_{\Z[G]} H_1(\overline{F}_1,\overline{S}^+_1)
\]
sends $z \otimes w$ to $z \wedge w$ when $p+q = n_1$ and is zero otherwise, $\omega$ is any choice of volume form on the free $\Z[G]$-module $H_1(\overline{F}_1,\overline{S}^+_1)$, $\overline{i}_*$ denotes the inclusion-induced map from $H_1(\overline{F}_i,\overline{S}^+_i)$ to $H_1(\overline{Y},\overline{R}^+)$, and the Alexander function $\A^{\Z[G]}_{Y,\Gamma}$ is defined to be zero on inputs in $\wedge^{d'} H_1(Y,R^+)$ for $d' \neq d$. Note that by our sign conventions, $(\mathsf{A}_{\Z}(Y,\Gamma) \otimes \id)(x \otimes y)$ in the above equation should be interpreted as $(-1)^{|\mathsf{A}_{\Z}(Y,\Gamma)||y|} \mathsf{A}_{\Z}(Y,\Gamma)(x) \otimes y$, where the degree $|\mathsf{A}_{\Z}(Y,\Gamma)|$ of $\mathsf{A}_{\Z}(Y,\Gamma)$ is $c$.

Our main $\Spinc$ result in the setting of $G = H/\tors(H)$ is as follows.

\begin{theorem}\label{thm:IntroSpinc}
    We have
    \[
    [\BSDA(Y,\Gamma)]^{\Z[G]}_{\comb} = \mathsf{A}_{\Z[G]}(Y,\Gamma)
    \]
    up to multiplication by $\pm G \subset \Z[G]$. In particular, if $H_2(\overline{Y}, \overline{R}^+) \neq 0$, then both sides are zero.
\end{theorem}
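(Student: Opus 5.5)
We will compute both sides from a single presentation matrix built out of a bordered sutured Heegaard diagram, and then compare them by a Laplace expansion. Fix arc diagrams $\Zc_0,\Zc_1$ for $(F_0,\Lambda_0),(F_1,\Lambda_1)$ and a bordered sutured Heegaard diagram $\Hc=(\Sigma,\boldsymbol\alpha,\boldsymbol\beta)$ for $(Y,\Gamma)$. Its $\alpha$ circles are $\alpha^c$, and its $\alpha$ arcs $\alpha^a_0,\alpha^a_1$ correspond to generators of $H_1(F_i,S^+_i)$. The first step is a cellular model for $(Y,R^+)$. Thicken $\Sigma$ and attach $2$-handles along the $\beta$ circles. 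Then $(Y,R^+)$ deformation retracts relative to $R^+$ onto a relative CW complex with
\begin{itemize}
\item one $1$-cell for each $\alpha$ curve or arc (dual to the $\alpha$ compressing disks and half-disks), and
\item one $2$-cell for each $\beta$ circle.
\end{itemize}
The boundary map is recorded by signed intersection numbers $\beta_j\cdot\alpha_k$. Lifting to the maximal free abelian cover $\overline Y$, the intersection points acquire $G$-labels; after fixing lifts of the curves, these labels are $\Z[G]$-coefficients. This gives a presentation $M$ of $H_1(\overline Y,\overline R^+)$ over $\Z[G]$:
\begin{itemize}
\item the rows are $\alpha^c\cup\alpha^a_0\cup\alpha^a_1$;
\item the columns are $\boldsymbol\beta$;
\item each entry is $\sum_{x\in\beta_j\cap\alpha_k}\epsilon(x)g(x)$, where $g(x)\in G$ is determined by a choice of base generator and the $\Spinc$ (relative $H_1$) differences of intersection points.
\end{itemize}
In this model, $\overline i_*$ sends the basis element of $H_1(\overline F_i,\overline S^+_i)$ corresponding to an $\alpha$ arc to the corresponding row generator. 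The counts $|\boldsymbol\beta|=|\alpha^c|+(\text{half the arcs})$ give $d=$ rows minus columns. If $H_2(\overline Y,\overline R^+)\neq0$, then $M$ is not injective and all maximal minors vanish, so both sides are zero; I would check this vanishing on the $\BSDA$ side through the same determinant identity below.

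Next, I would expand the combinatorial invariant. By Definition~\ref{def:BSDAZComb}, with the $\Spinc$ refinement, $[\BSDA(Y,\Gamma)]^{\Z[G]}_{\comb}$ is a sum over generators $\x$. A generator is a tuple of intersection points using every $\beta$ circle once, every $\alpha$ circle once, and a subset of $\alpha$ arcs, with the occupied arcs on the $A$ side. Each generator contributes
\[
\pm g(\x)\cdot(\text{basis element for the unoccupied }\alpha^a_0\text{ arcs})\mapsto(\text{basis element for the occupied }\alpha^a_1\text{ arcs}),
\]
where the sign is the combinatorial $\Z_2$ grading. Grouping generators by which arc sets $s_0\subset\alpha^a_0$ and $s_1\subset\alpha^a_1$ they occupy, the sum over generators with fixed $(s_0,s_1)$ is exactly the Leibniz expansion of the square minor of $M$ with rows $\alpha^c\cup s_0\cup s_1$. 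The key combinatorial identity is that the generator sign times $g(\x)$ equals the permutation sign times the product of entries. This is the analogue of \cite[Theorem 4]{Pet18}, and I would obtain it from the one-sided $\Z$ case, Lemma~\ref{lem:IntroMainLemma} and Theorem~\ref{thm:BSDAAlexanderZ}, by noting that the $G$-weights are multiplicative over the points of $\x$.

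On the other side, $\mathsf A_{\Z[G]}(Y,\Gamma)$ pairs $\overline i_*(x)\wedge\overline i_*(y)$ against the Alexander function of Definition~\ref{def:AlexFunction}. For basis wedges $x,y$ of $\alpha$ arcs, $\mathcal A(\overline i_*x\wedge\overline i_*y)$ is $\det[M\,|\,e_x\,e_y]$. Expanding along the appended unit columns gives $\pm$ the minor of $M$ deleting the rows in $x$ and $y$. Dualizing through $\omega$ and $\wedge$ converts the $F_1$-input $y$ into its complement, which is the occupied set $s_1$. On the $F_0$ side, the deleted rows are the unoccupied arcs, so the surviving rows are $s_0$, as the $A$-side convention requires. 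Hence the coefficients agree minor by minor, up to signs that are independent of $(s_0,s_1)$.

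The main obstacle is showing that the relative signs between different $(s_0,s_1)$ match. Three sources contribute:
\begin{itemize}
\item the Laplace expansion signs;
\item the super sign $(-1)^{|\mathsf A||y|}$ with $|\mathsf A|=c$;
\item the grading conventions on the $A$ side of $\BSDA$, as opposed to the purely $D$ side treated in the one-sided case.
\end{itemize}
My first attempt would be to avoid recomputing these signs. Instead, I would reduce to the one-sided case: view $(Y,\Gamma)$ as a cobordism from $\emptyset$ to $-F_0\sqcup F_1$, identify $[\BSDA]$ with $[\BSDD]$ contracted against the evaluation pairing, as in the HLW-style argument sketched after Lemma~\ref{lem:IntroMainLemma}, and then apply the $\Z[G]$ one-sided identity. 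The sign check then reduces to comparing two explicit contractions. Finally, I would verify independence of choices up to $\pm G$ using Proposition~\ref{prop:AlexanderFunctionZ[G]IndependentOfM}; changing lifts or the base generator rescales by $\pm G$.
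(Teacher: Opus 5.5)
The gap is the step you call the key combinatorial identity. In the paper, $[\BSDA(Y,\Gamma)]^{\Z[G]}_{\comb}$ is not defined by cover labels of intersection points. A generator $\x$ with $o_R(\x)=I$ and $\overline{o}_L(\x)=J$ is weighted by $\mathrm{gr}_H(\x)=\s^{\rel}(\x)-\s_{J^c,I}$. Here $\s^{\rel}(\x)$ is Juh\'asz's relative $\Spinc$ structure of $\x$ in the capped manifold $(Y,\Gamma_{J^c,I})$. The references $\s_{J^c,I}$ for all idempotents are induced simultaneously, via Proposition~\ref{prop:SpincAndLifts}, by a single choice of lifts of cells; each Zarev-cap $2$-cell is lifted so that its entry against its partner $1$-cell is $-1$ on the outgoing side and $+1$ on the incoming side. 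That this weight equals the product of the $G$-labels of the points of $\x$ is the entire content beyond the $\Z$ case. It cannot be extracted from Lemma~\ref{lem:IntroMainLemma} or Theorem~\ref{thm:BSDAAlexanderZ}, which only see the augmentation $G\to 1$, and multiplicativity of $\Spinc$ gradings over intersection points is not formal. The paper proves it as Corollary~\ref{cor:MatrixEntryAndCellBdryDet}: it applies Friedl--Juh\'asz--Rasmussen's $\chi(\SFH)=\tau$ to each $(Y,\Gamma_{J^c,I;R^+\leftrightarrow R^-})$ and identifies $\tau$ at $\s_{\mathrm{ref}}$ with the CW-pair torsion $\det\widehat A$.

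Relatedly, your ``choice of base generator'' normalizes only one idempotent. Generators with different $(I,J)$ live in different torsors $\Spinc(Y,\partial Y,v_{J^c,I})$, and $\s(\x)-\s(\y)=\mathrm{PD}[\epsilon(\x,\y)]$ only compares generators within one torsor. Independent normalizations would therefore leave a separate $\pm G$ ambiguity in each $(I,J)$-block, not the single global one in the statement. Once Corollary~\ref{cor:MatrixEntryAndCellBdryDet} is available, the signs are literally those of the $\Z$ case, and the proof of Theorem~\ref{thm:BSDAAlexanderZ} carries over, with $\Z[G]$ being a domain giving the vanishing when $H_2(\overline Y,\overline R^+)\neq 0$. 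So signs are not the obstacle. Your proposed detour through the one-sided case would need $\Z[G]$ versions of Lemmas~\ref{lem:IntroMainLemma} and~\ref{lem:BSDAvsBSD}, which do not exist and would face the same coherence problem.

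Your bookkeeping is also reversed relative to the paper. Relative to $R^+$ (the $\beta$ side), the $\beta$ circles give $1$-cells, the $\alpha$ circles give $2$-cells, and $\alpha$ arcs are not cells. Your matrix, with rows $\alpha^c\cup\alpha^a_0\cup\alpha^a_1$ and columns $\boldsymbol\beta$, has deficiency $a+n_0+n_1-b$, while $d=-\chi(Y,R^+)=b-a$. These agree only under your count $b-a=\tfrac12(n_0+n_1)$, which is false for sutured cobordisms: a generator occupies $b-a=n_1-c$ arcs with $c$ arbitrary, and $\tfrac12(n_0+n_1)$ need not even be an integer. In addition, Definition~\ref{def:BSDAZComb} sends $\gamma^{\inrm}_{o_R(\x)}$ (occupied incoming arcs) to $\gamma^{\out}_{\overline{o}_L(\x)}$ (unoccupied outgoing arcs), the reverse of what you wrote. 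The two reversals cancel as far as which minors occur, but as stated you would be comparing a complement-dualized map rather than $[\BSDA(Y,\Gamma)]^{\Z[G]}_{\comb}$. The correct setup is:
\begin{itemize}
\item $M$ has rows $\boldsymbol\beta$ and columns $\alpha^c$;
\item $\overline i_*$ of an arc is its column of $\beta$-intersections, a signed unit vector for $\Hc_{\norm}$ by Proposition~\ref{prop:ExpandingBasisElts};
\item $\A^{\Z[G]}_{Y,\Gamma}(\overline i_*\overline e^{\inrm}_I\wedge\overline i_*\overline e^{\out}_{J^c})$ appends those columns, so the occupied arcs $I$ and $J^c$ are included rather than deleted.
\end{itemize}
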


\begin{remark}
Theorem~\ref{thm:BSDAAlexanderZ}, mentioned above as a key step in proving Lemma~\ref{lem:IntroMainLemma}, is the $\Z$-analogue of Theorem~\ref{thm:IntroSpinc}; on the Alexander functor side we have a map $\mathsf{A}_{\Z}(Y,\Gamma)$ based on a $\Z$-coefficient sutured Alexander function $\A^{\Z}_{Y,\Gamma}$. In line with Zarev's interpretation of bordered Floer invariants as sums of sutured Floer invariants \cite{joiningandgluing}, the quantity $\A^{\Z}_{Y,\Gamma}(i_* x \wedge i_* y)$ appearing in the definition of $\mathsf{A}_{\Z}(Y,\Gamma)$ has a natural interpretation as an explicit sign times the Euler characteristic of a sutured Floer homology group; the relevant sutured manifold is obtained from $(Y,\Gamma)$ by pairing with ``Zarev caps'' on the right and left corresponding to the choice of $x$ and $y$. See Corollary~\ref{cor:SFHInterpOfAlexFunctor} below. In the $\Z[G]$ case, this interpretation will form an important part of our proof of Theorem~\ref{thm:IntroSpinc}; we will make use of Friedl--Juhasz--Rasmussen's analysis \cite{FJR} of the Euler characteristic of sutured Floer homology with $\Spinc$ structures taken into account.
\end{remark}

\vspace{0.15in}

\noindent \textbf{Torsion in $H_1(Y).$} As above, let $(Y,\Gamma)$ be a connected sutured cobordism from $(F_0,\Lambda_0)$ to $(F_1,\Lambda_1)$; let $H = H_1(Y)$. Rather than passing to $G = H / T$ where $T = \tors(H)$, we could let $\widehat{Y} \xrightarrow{\pi} Y$ be the maximal abelian cover of $Y$, with $\widehat{R}^+ = \pi^{-1}(R^+), \widehat{F}_i = \pi^{-1}(F_i)$, and $\widehat{S}^+_i = \pi^{-1}(S^+_i)$. We will see that, in the cellular model we will use for $C_*(\widehat{F_i},\widehat{S}^+_i)$, there are only 1-cells and the differential vanishes, so we have a non-canonical isomorphism $H_1(\widehat{F}_i,\widehat{S}^+_i) \cong H_1(F_i,S^+_i) \otimes_{\Z} \Z[H]$ of $\Z[H]$-modules. We will define
\[
[\BSDA(Y,\Gamma)]^{\Z[H]}_{\comb} \colon \wedge^*_{\Z[H]} H_1(\widehat{F}_0, \widehat{S}^+_0) \to \wedge^*_{\Z[H]} H_1(\widehat{F}_1, \widehat{S}^+_1)
\]
depending a priori on choices such as arc diagrams and Heegaard diagrams.

\begin{remark}\label{rem:Z[H]IndependenceAndHeegaardMoves}
    It will follow from our results that $[\BSDA(Y,\Gamma)]^{\Z[H]}_{\comb}$ is independent of the choices in its definition up to multiplication by units in $\Q[H]$. This is not the strongest independence statement one could hope for; we expect that $[\BSDA(Y,\Gamma)]^{\Z[H]}_{\comb}$ is independent of choices up to multiplication by $\pm H \subset \Z[H]$, and that this could be proved by analyzing how $[\BSDA(Y,\Gamma)]^{\Z[H]}_{\comb}$ changes under Heegaard moves. We leave this as future work, however, since it is largely tangential to the goals of the present paper. 
\end{remark}

As in Turaev's theory of maximal abelian torsion, we can understand the group ring $\Q[H]$ as follows. For any character $\chi \colon T \to \C^*$, there is a corresponding algebra homomorphism $\Q[T] \to \C$ whose image is a cyclotomic field $\F_{\chi}$. If we choose a splitting $H \cong T \oplus G$, we get a homomorphism from $\Q[H] = (\Q[T])[G]$ to $\F_{\chi}[G]$, which we will still call $\chi$. In particular, $\F_{\chi}[G]$ is a module over $\Z[H] \subset \Q[H]$.

Let $(\chi_1, \ldots, \chi_m)$ be a complete set of representatives modulo equivalence for characters $\chi \colon T \to \C^*$, where characters $\chi$ and $\chi'$ are equivalent if $\F_{\chi} = \F_{\chi'}$ and $\chi'$ is the composition of $\chi$ with a Galois automorphism of $\F_{\chi}$ over $\Q$. In particular, we take $\chi_1$ to be the trivial character, so $\F_{\chi_1} = \Q$. Then
\[
    (\chi_1,\ldots,\chi_m) \colon \Q[H] \to \bigoplus_{i=1}^m \F_{\chi_i}[G]
\]
is an isomorphism of $\Q$-algebras (and of $\Z[H]$-modules).

Write $H_*(Y,R^+;\Q[H])$ (resp. $H_*(Y,R^+;\F_{\chi_i}[G])$) for the homology groups of the complex $C_*(\widehat{Y},\widehat{R}^+) \otimes_{\Z[H]} \Q[H]$ (resp. $C_*(\widehat{Y},\widehat{R}^+) \otimes_{\Z[H]} \F_{\chi_i}[G]$).

\begin{definition}
    Let $d$ be defined as in equation~\eqref{eq:deficiency}. For $1 \leq i \leq n$, define the Alexander function
    \[
    \A^{\F_{\chi_i}[G]}_{Y,\Gamma} \colon \wedge^d_{\F_{\chi_i}[G]} H_1(Y, R^+; \F_{\chi_i}[G]) \to \F_{\chi_i}[G],
    \]
    well-defined up to (global) multiplication by $\F_{\chi_i}^* \cdot G \subset \F_{\chi_i}[G]$, to be zero if $H_2(Y,R^+;\F_{\chi_i}[G]) \neq 0$. If $H_2(Y,R^+;\F_{\chi_i}[G]) = 0$, define $\A^{\F_{\chi_i}[G]}_{Y,\Gamma}$ as in Definition~\ref{def:AlexFunction}, replacing the $\Z[G]$-module $H_1(\overline{Y},\overline{R}^+)$ with the $\F_{\chi_i}[G]$-module $H_1(Y,R^+;\F_{\chi_i}[G])$. 
\end{definition}

Note that the module $H_1(Y,R^+;\F_{\chi_i}[G])$, and thus the Alexander function $\A^{\F_{\chi_i}[G]}_{Y,\Gamma}$, depends on the splitting of $H$ as $T \oplus G$. However, the below definition gives an Alexander function that we will show is independent of the choice of splitting.

\begin{definition}
    Define the Alexander function
    \[
    \A^{\Q[H]}_{Y,\Gamma} \colon \wedge^d_{\Q[H]} H_1(Y, R^+; \Q[H]) \to \Q[H],
    \]
    well-defined up to (global) multiplication by units in $\Q[H]$, to be the composition
    \begin{align*}
    \wedge^d_{\Q[H]} H_1(Y, R^+; \Q[H]) &\xrightarrow{\cong} \bigoplus_i \wedge^d_{\F_{\chi_i}[G]} H_1(Y, R^+; \F_{\chi_i}[G]) \\
    &\xrightarrow{\bigoplus_i \A^{\F_{\chi_i}[G]}_{Y,\Gamma}} \bigoplus_i \F_{\chi_i}[G]\\
    &\xrightarrow{\cong} \Q[H]
    \end{align*}
    where all arrows are defined using the same choice of splitting $H \cong T \oplus G$.
\end{definition}

Write $H_*(F_i,S^+_i;\Q[H])$ for the homology groups of the complex $C_*(\widehat{F_i},\widehat{S}^+_i) \otimes_{\Z[H]} \Q[H]$. We will see that $H_*(F_i,S^+_i;\Q[H]) \cong H_*(\widehat{F_i},\widehat{S}^+_i) \otimes_{\Z[H]} \Q[H]$ canonically. Define a map
\[
    \mathsf{A}_{\Q[H]}(Y,\Gamma) \colon \wedge^*_{\Q[H]} H_1(F_0,S^+_0;\Q[H]) \to \wedge^*_{\Q[H]} H_1(F_1,S^+_1; \Q[H])
    \]
    well-defined up to multiplication by units in $\Q[H]$ and homogeneous of degree $c$, by 
    \[
    \omega(\wedge(\mathsf{A}_{\Q[H]}(Y,\Gamma) \otimes \id)(x \otimes y)) = \mathcal{A}^{\Q[H]}_{Y,\Gamma}\left(\widehat{i}_*(x) \wedge \widehat{i}_*(y)\right)
    \]
    as in the $\Z[G]$ case; here $\widehat{i}_*$ denotes the inclusion-induced map from $H_1(F_i,S^+_i;\Q[H])$ to $H_1(Y,R^+;\Q[H])$ and $\omega$ is any choice of volume form for the free $\Q[H]$-module $H_1(F_1,S^+_1;\Q[H])$.

On the $[\BSDA]$ side, from $[\BSDA(Y,\Gamma)]^{\Z[H]}_{\comb}$ we will define
\[
[\BSDA(Y,\Gamma)]^{\Q[H]}_{\comb} \colon \wedge^*_{\Q[H]} H_1(F_0,S^+_0;\Q[H]) \to \wedge^*_{\Q[H]} H_1(F_1,S^+_1;\Q[H])
\]
by passing from $\Z[H]$ to $\Q[H]$. Our main $\Spinc$ result taking torsion in $H$ into account is as follows.

\begin{theorem}\label{thm:IntroSpincTorsion}    
    We have
    \[
    [\BSDA(Y,\Gamma)]^{\Q[H]}_{\comb} = \mathsf{A}_{\Q[H]}(Y,\Gamma)
    \]
    up to multiplication by units in $\Q[H]$.
\end{theorem}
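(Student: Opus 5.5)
The proof will go character by character. Fix a splitting $H \cong T \oplus G$ and use the isomorphism $(\chi_1,\ldots,\chi_m)\colon \Q[H] \to \bigoplus_i \F_{\chi_i}[G]$. Both sides of Theorem~\ref{thm:IntroSpincTorsion} are maps between exterior algebras over $\Q[H]$ of free modules, and both are compatible with this base change. For the combinatorial side this holds because $[\BSDA(Y,\Gamma)]^{\Q[H]}_{\comb}$ is obtained from $[\BSDA(Y,\Gamma)]^{\Z[H]}_{\comb}$ by extension of scalars. For the Alexander side it holds by the definition of $\A^{\Q[H]}_{Y,\Gamma}$. So it suffices to prove, for each $i$,
\[
\chi_i\bigl([\BSDA(Y,\Gamma)]^{\Z[H]}_{\comb}\bigr) = \mathsf{A}_{\F_{\chi_i}[G]}(Y,\Gamma)
\]
up to multiplication by $\F_{\chi_i}^*\cdot G$. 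A tuple of such unit ambiguities, one per factor, assembles into a unit of $\Q[H]$. This is why the statement only holds up to units of $\Q[H]$ rather than $\pm H$.

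For a fixed character $\chi = \chi_i$, I would redo the argument of Theorem~\ref{thm:IntroSpinc} with $\Z[G]$ replaced by $\F_\chi[G]$.

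\textbf{Step 1.} From the chosen Heegaard diagram, build a cellular model of $C_*(\widehat Y,\widehat R^+)$. It is concentrated in degrees $1$ and $2$: one $1$-cell for each $\beta$-curve and each arc of the arc diagrams, and $2$-cells coming from $\alpha$-curves. Here $\Z[H]$-coefficients are read off from the $H_1$-classes of the intersection points, i.e.\ from the $\Spinc$ structures relative to a base generator. Tensoring with $\F_\chi[G]$ gives a presentation matrix $M_\chi$ of $H_1(Y,R^+;\F_\chi[G])$ with deficiency $d$. If $H_2(Y,R^+;\F_\chi[G])\ne 0$, then $M_\chi$ is not injective and all relevant maximal minors vanish, so both sides are zero. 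Otherwise $M_\chi$ computes $\A^{\F_\chi[G]}_{Y,\Gamma}$ as in Definition~\ref{def:AlexFunction}; this uses the $\F_\chi[G]$ analogue of Proposition~\ref{prop:AlexanderFunctionZ[G]IndependentOfM}, which goes through verbatim since $\F_\chi[G]$ is a domain.

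\textbf{Step 2.} Expand the matrix coefficient $\omega(\wedge(\cdot\otimes\id)(x\otimes y))$ of $[\BSDA]^{\Z[H]}_{\comb}$ for basis elements $x$ and $y$, given by subsets of arcs. It is a signed sum over generators of the Heegaard diagram capped by the Zarev caps determined by $x$ and $y$, each weighted by its $H$-valued $\Spinc$ grading. This is exactly the determinant expansion of $M$ with the columns $\widehat{i}_*(x)$ and $\widehat{i}_*(y)$ appended. The signs are the combinatorial intersection signs, and they match Leibniz signs exactly as in the $\Z$ and $\Z[G]$ cases, Theorems~\ref{thm:BSDAAlexanderZ} and~\ref{thm:IntroSpinc}. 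Applying $\chi$ then gives the claim.

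The main obstacle I expect is Step 2 with torsion coefficients. In the $\Z[G]$ case we could lean on Friedl--Juhasz--Rasmussen's identification of the $\Spinc$-refined Euler characteristic of $\mathrm{SFH}$ with a torsion over $\Z[H_1]$. Here I must check that the $H$-valued grading of generators (a torsor over $H$, not just $G$) agrees with the deck-transformation coefficients in the maximal abelian cover $\widehat Y$. This needs an identification of the relative $\Spinc$ difference $\epsilon(\mathbf{x},\mathbf{y})\in H_1(Y)$ with the loop formed by the generators, and a check that it is compatible with the caps glued on. Note that FJR's result is itself stated via Turaev torsion over $\Q[H]$, decomposed by characters; this is the natural source of the $\Q[H]$-unit ambiguity. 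I would first establish the one-character identity by directly comparing the Fox-calculus matrix of the Heegaard diagram with the cellular chain complex, bypassing $\mathrm{SFH}$. I would fall back on FJR's character-wise statement only if the sign bookkeeping becomes unmanageable.
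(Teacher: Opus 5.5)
Your reduction to characters and your vanishing argument for factors with $H_2(Y,R^+;\F_{\chi_i}[G])\neq 0$ are exactly what the paper does in proving Theorem~\ref{thm:BSDAAlexanderQ[H]}. It applies $\varphi_i$ to $\omega(\wedge([\BSDA]^{\Q[H]}_{\comb}\otimes\id)(-\widehat e^{\inrm}_I\otimes-\widehat e^{\out}_{J^c}))$ and uses that $\F_{\chi_i}[G]$ is a domain. Otherwise it compares two determinants built from $\widehat M_{\Hc_{\norm}}$ with the single-entry columns for $I$ and $J^c$ appended.

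The gap is your Step~2. It carries all the content, and you assert it rather than prove it. In $[\BSDA(Y,\Gamma;\Xi_{\norm})]^{\Z[H]}_{\comb}$ the weight of a generator is $\mathrm{gr}_H(\x)=\s^{\rel}(\x)-\s_{J^c,I}$. Here $\s^{\rel}(\x)$ is Juh\'asz's relative $\Spinc$ structure of the capped generator. By Definition~\ref{def:BSDAIndOfRefSpinc}, $\s_{J^c,I}$ comes from \emph{one} coherent choice of cell lifts, pushed through the FJR bijection $\Spinc(Y,\partial Y,-v_{J^c,I})\cong\Lift(\mathfrak Y_{J^c,I},\mathfrak R^+_{J^c,I})$ of Proposition~\ref{prop:SpincAndLifts}. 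In that choice the cap $2$-cell lifts are forced by entries $-1$ on the outgoing side and $+1$ on the incoming side, and the same lifts also define $\Phi_0,\Phi_1$.

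Your Step~1 conflates the deck-transformation coefficients of the lifted cellular matrix with $\Spinc$ data ``relative to a base generator''; identifying the two is precisely what must be proved. Your proposed direct check, comparing a Fox-calculus matrix with the cellular chain complex, never touches $\s^{\rel}(\x)$. Matching differences $\s^{\rel}(\x)-\s^{\rel}(\mathbf{y})$ with $\epsilon(\x,\mathbf{y})$ would only show that each entry is $\pm\det\widehat A$ times an unknown $(I,J)$-dependent element of $H$. Base generators need not even exist for every $(I,J)$. Such entrywise factors do not assemble into the single global unit the theorem asserts.

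What you need is Corollary~\ref{cor:MatrixEntryAndCellBdryDet}: the $(I,J)$ entry equals $(-1)^{\mathrm{inv}(\sigma_{JJ^c\leftrightarrow\std})+n_1k+ak}\det\widehat A$. Here $\widehat A$ is the boundary matrix of $(\widehat{\mathfrak Y}_{J^c,I},\widehat{\mathfrak R}^+_{J^c,I})$ in lifts representing the class corresponding to $\s_{\mathrm{ref}}=-\s_{J^c,I}$. The paper gets this in four steps:
\begin{itemize}
\item rewrite the entry as a $\Spinc$-graded $\chi(\SFH)$ of the capped manifold (Lemma~\ref{lem:MatrixEntrySFHSpinc});
\item apply FJR's Theorem~1, whose proof extends to the weakly balanced but unbalanced capped manifolds that occur (Corollary~\ref{cor:MatrixEntryAndTFunction});
\item use the $H$-equivariance of FJR's torsion to pass from $T$ to $\tau\in\Z[H]$ (Corollary~\ref{cor:MatrixEntryAndTauTorsion});
\item apply FJR's Lemma~3.6 to turn CW-pair torsion into $\det\widehat A$, treating the degenerate case of Remark~\ref{rem:SpecialCaseLiftSpinc} separately.
\end{itemize}
Since the reference structures are \emph{defined} through FJR's bijection, FJR is not an optional fallback. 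Any argument avoiding it must reprove their identification of the lift class corresponding to $-\s^{\rel}(\x)$.

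A smaller point: the $\Q[H]$-unit ambiguity does not come from FJR's torsion, which the paper uses as an element of $\Z[H]$. It enters only through the presentation-matrix dependence of each $\A^{\F_{\chi_i}[G]}_{Y,\Gamma}$.
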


\begin{remark}\label{rem:IntroWhyPassToQ[H]}
    The passage to $\Q[H]$ is essential to establish the above relationship for general connected cobordisms $(Y,\Gamma)$ when $H$ has torsion. If we instead worked over $\Z[H]$, whenever $H_2(\widehat{Y},\widehat{R}^+) \neq 0$, an analogous result relating $[\BSDA(Y,\Gamma)]^{\Z[H]}_{\comb}$ to $\mathsf{A}_{\Z[H]}(Y,\Gamma)$ would fail to hold; only when $H_2(\widehat{Y},\widehat{R}^+) = 0$ does the theory go through. See also Remark~\ref{rem:ZHAlexanderWhenH2Zero}. 
\end{remark}

\vspace{0.15in}
\noindent \textbf{Additional remarks.} Here we make some additional remarks.

\begin{remark}\label{rem:MotivationForUpToSign}
    Most of the TQFT maps in this paper are only well-defined up to an overall sign. One reason we do not attempt to resolve this sign ambiguity is that to have a symmetric monoidal functor, we need a symmetric monoidal target category to map into. As morphisms in this target category, we want to allow both even and odd maps between $\Z$-graded abelian groups (analogous to super vector spaces); this may be avoidable in the ordinary non-sutured Frohman--Nicas TQFT as in Remark~\ref{rem:DegreeShifts}, but not in the sutured version. If one does not work modulo an overall sign, even and odd maps together do not form a monoidal category because the interchange law does not hold; rather, a ``super'' interchange law holds as in Brundan--Ellis \cite{BrundanEllis}.

    We note that in Kerler \cite{KerlerHomologyTQFT}, the sign ambiguity is resolved in the Hennings TQFT perspective when working with 2-framed cobordisms. It may be possible to add 2-framings in our setting to resolve the sign ambiguity, but because of the above issue with monoidal categories, we do not investigate this possibility further here.

    In another direction, Hom--Lidman--Watson use the choices made in bordered Heegaard Floer homology to specify a sign for the Frohman--Nicas TQFT map of a cobordism; see \cite[Section 2.1]{HLW}. However, with these signs, the cobordism maps are not functorial on-the-nose; composition is respected only up to an overall sign.
\end{remark}

\begin{remark}
    In the introduction of \cite{Pet18}, Petkova mentions that a bordered manifold (say with boundary $\Sigma$) equipped with an additional knot in its interior can be assigned bordered Heegaard Floer invariants (minus and hat versions) with an additional ``internal'' $\Z$-grading, whose decategorification lives in $\wedge^* H_1(\Sigma) \otimes_{\Z} \Z[t,t^{-1}]$. From the bordered sutured perspective, one should be able to interpret the hat version as the bordered sutured invariant of a related sutured cobordism obtained by cutting out the knot; the additional $\Z$-grading should come from $\Spinc$ structures on this sutured cobordism. We will not work out the details here, though.
\end{remark}

\begin{remark}\label{rem:TopologicalRestrictions}
In \cite{man22,man23,man24}, the sutured surfaces are more general than the ones considered here; they are allowed to have all-$S^+$ components, all-$S^-$ components, and closed components. Such sutured surfaces cannot be represented by arc diagrams, so we disallow them in general here, but see Remark~\ref{rem:AdditionalModuleStructure} below.
\end{remark}

\begin{remark}\label{rem:FitsWithLiterature}
    The sutured perspective on the Frohman--Nicas TQFT fits well with various phenomena in the literature:
    \begin{itemize}
        \item When relating the Frohman--Nicas TQFT to a Hennings TQFT, Kerler \cite{KerlerHomologyTQFT} works with a category where the objects are (connected) surfaces with one boundary component, rather than closed surfaces. More generally, surfaces with nonempty boundary also play a central role in Kerler--Lyubashenko's theory of non-semisimple TQFTs for 3-manifolds with corners \cite{KerlerLyubashenko}.
        
        \item Florens--Massuyeau also use connected surfaces $F_g$ with one boundary component, and they choose a basepoint $\star$ in $\partial F_g$ (we would thicken $\star$ to get an $S^+$ interval). To $(F_g, \star)$ equipped with a homomorphism $\varphi$ from $H_1(F_g)$ to a finitely generated free abelian group $G$, they associate the $\Z[G]$-module $\wedge^*_{\Z[G]} H_1^{\varphi}(F_g,\star)$, rather than (e.g.) $\wedge^*_{\Z[G]} H_1^{\varphi}(F_g)$, matching our use of the relative homology $H_1(F,S^+)$.

        \item For a two-sided cobordism $W$ between connected closed surfaces, with an arc $\mathbf{z}$ in $W$ going between the boundary components, \cite[Theorem 4.5]{HLW} has the prefactor $|H_1(W,\partial W \cup \mathbf{z})|$; one cannot replace this with $|H_1(W,\partial W)|$ because $H_1(W,\partial W)$ is infinite. If we let $\gamma$ denote any framing of $\mathbf{z}$ and $(Y,\Gamma) = \Sut(W,\gamma)$, with $(F_i,\Lambda_i)$ defined accordingly, then $|H_1(W,\partial W \cup \mathbf{z})|$ agrees with our sutured prefactor $|H_1(Y, F_1 \cup R^+ \cup F_0)|$.

        \item The ribbon graphs in Example~\ref{ex:SuturedFromOrdinary} are reminiscent of the ribbon graphs in the type of non-semisimple TQFT in de Renzi \cite{deRenziNSS}, of which one example is Geer--Young's $\Dc^{q,int}$ TQFT from \cite{GeerYoung}. In the framework of \cite{deRenziNSS} and \cite{GeerYoung}, morphisms in the domain category of the TQFT are 3d cobordisms equipped with colored ribbon graphs and cohomology classes satisfying admissibility criteria. Our extension of the Frohman--Nicas formalism to sutured cobordisms gives, in particular, Frohman--Nicas-style maps for 3d cobordisms equipped with ribbon graphs via the evident generalization of Example~\ref{ex:SuturedFromOrdinary}. 
    \end{itemize}    
\end{remark}

\begin{remark}\label{rem:CohomologyClasses}
     There is a basic issue with functoriality for the maps in Theorem~\ref{thm:IntroSpinc} and Theorem~\ref{thm:IntroSpincTorsion}; the state space associated to a sutured surface $(F,\Lambda)$ depends on the cobordism $(Y,R^+)$, not just on $(F,\Lambda)$. Florens--Massuyeau's approach is to assign invariants to cobordisms decorated with a homomorphism $\varphi$ from their first homology group to a fixed finitely generated free abelian group (call it $G_{\mathrm{FM}}$). Since for any fixed $G_{\mathrm{FM}}$, there are cobordisms whose $H_1$ has rank larger than the rank of $G_{\mathrm{FM}}$, this approach will always lose data for some cobordisms.

    A more universal approach, as in Pontryagin duality, would be to try taking $G_{\mathrm{FM}} = U(1)$ or $\C^*$, even though these abelian groups are not finitely generated. Then the extra data carried by surfaces and cobordisms would be cohomology classes as in \cite{GeerYoung}. We note that the ``decorated TQFTs'' discussed in general terms by Costantino--Gukov--Putrov in \cite[Section 5.2]{CGP-NSS} are also defined on surfaces and cobordisms decorated by cohomology classes.
\end{remark}

\begin{remark}\label{rem:ModuleStructure}
    This paper is about TQFT structure in dimensions 2+1; by contrast, the papers \cite{man22,man23,man24} of the first author show that the state spaces $\wedge^* H_1(F,S^+)$ for sutured surfaces satisfy gluing properties as in an open-closed TQFT of dimension 1+1, valued in algebras and bimodules. In this lower-dimensional TQFT structure, an $S^+$ interval gets assigned the algebra $\Z[E]/(E^2)$; a sutured surface with $m$ $S^+$ intervals gets assigned $\wedge^* H_1(F,S^+)$ viewed as a module over $(\Z[E]/(E^2))^{\otimes m}$, with an explicitly described algebra action.

    It would be desirable to unify the constructions of this paper with \cite{man22,man23,man24} into some type of 2-functorial ``extended'' TQFT structure in dimension 1+1+1. A first step would be to investigate when, for a sutured cobordism $(Y,\Gamma)$ from $(F_0,\Lambda_0)$ to $(F_1,\Lambda_1)$, the up-to-sign map $\Vc^{\FN}_{\sut}(Y,\Gamma)$ intertwines any of the $\Z[E]/(E^2)$ actions on $\wedge^* H_1(F_0,S^+_0)$ with those on $\wedge^* H_1(F_1,S^+_1)$. For example, if the $R^+$ region of $(Y,\Gamma)$ contains a strip connecting an $S^+$ interval $I_0$ of $F_0$ to an $S^+$ interval $I_1$ of $F_1$, one could ask if $\Vc^{\FN}_{\sut}(Y,\Gamma)$ intertwines the $\Z[E]/(E^2)$ action for $I_0$ with the $\Z[E]/(E^2)$ action for $I_1$. We leave this question for future work.
\end{remark}

\begin{remark}\label{rem:AdditionalModuleStructure}
        When $F$ has $S^+$ circles, there is a more elaborate module structure in \cite{man24} involving actions for circles as well as intervals. Choose a set of points $P$, one on each $S^+$ circle; then $\wedge^* H_1(F,P)$ has an action of $\Z[E]/(E^2) = U^+_{\Z}(\mathfrak{psl}(1|1))$ for each $S^+$ interval and an action of $\Z\langle E,F\rangle /(E^2,F^2, EF + FE) = U_{\Z}(\mathfrak{psl}(1|1))$ for each $S^+$ circle. 
        
        Equivalently, we can form a sutured surface $(F',\Lambda')$ by replacing each $S^+$ circle of $F$ by a pair of an $S^+$ interval and an $S^-$ interval; we have $\wedge^* H_1(F,P) = \wedge^* H_1(F',(S^+)')$, so this latter group has actions of $U^+_{\Z}(\mathfrak{psl}(1|1))$ for $S^+$ intervals of $F$ and $U_{\Z}(\mathfrak{psl}(1|1))$ for $S^+$ circles of $F$.

        We could then choose colorings of the $S^+$ components of $F$, namely a module over $U_{\Z}^+(\mathfrak{psl}(1|1))$ for each $S^+$ interval of $F$ and a module over $U_{\Z}(\mathfrak{psl}(1|1))$ for each $S^+$ circle of $F$. If we tensor $\wedge^* H_1(F',(S^+)')$ with these modules over the appropriate algebras, we get a TQFT state space for a ``colored sutured surface.'' 
        
        These state spaces are getting a bit closer to being comparable to those in \cite{GeerYoung}, where punctures in surfaces are colored by certain representations of Geer--Young's ``unrolled'' version $U_q^E(\mathfrak{gl}(1|1))$ of $U_q(\mathfrak{gl}(1|1))$. If we incorporate cohomology classes via homomorphisms $\varphi \colon H_1(F) \to \C^*$ as in Remark~\ref{rem:CohomologyClasses}, it is reasonable to suppose that we get actions of something resembling $U_q^E(\mathfrak{gl}(1|1))$ on the state spaces $\wedge^* H_1^{\varphi}(F',(S^+)')$, and thus (by tensor product over $U_q^E(\mathfrak{gl}(1|1))$) state spaces for colored sutured surfaces with colors as in \cite{GeerYoung}. We would hope that these state spaces recover at least some subset of Geer--Young's state spaces; see \cite{man23,man24} for more specific conjectures when $\varphi$ is trivial. We also note that in \cite[Section 5.2.2]{Mikhaylov}, Mikhaylov describes the state spaces of the ``$\mathfrak{psl}(1|1)$ Chern--Simons TQFT'' (a physics construction of which Geer--Young's TQFT is meant to be a mathematical instantiation) on genus-zero punctured surfaces as exterior algebras of twisted relative homology groups. 
        
        Unlike in \cite{man22,man23,man24}, this paper deals with invariants for 3-dimensional cobordisms, not just invariants in dimensions $\leq 2$, so we hope that the results of this paper can go beyond state spaces and lead to an elementary homological construction in the style of Frohman--Nicas of some of the 3-dimensional content of Geer--Young's TQFT, which is currently defined by more indirect methods involving a universal construction. We leave this investigation for future work, however.
\end{remark}

\vspace{0.15in}
\noindent \textbf{Organization.} In Section~\ref{sec:Background} we review the relevant material on arc diagrams and Heegaard diagrams representing sutured surfaces and sutured cobordisms. In Section~\ref{sec:CWDecomp} we discuss CW decompositions coming from Heegaard diagrams. In Section~\ref{sec:BSDAwithoutSpinc} we define $[\BSDA(Y,\Gamma)]^{\Z}_{\comb}$ using additional sets $\Xi$ of choices. In Section~\ref{sec:SuturedAlexanderOverZ} we relate $[\BSDA(Y,\Gamma)]^{\Z}_{\comb}$ to a sutured version of Florens--Massuyeau's Alexander functor over $\Z$ and prove that $[\BSDA(Y,\Gamma)]^{\Z}_{\comb}$ is independent of the choices $\Xi$ up to overall sign. In Section~\ref{sec:SuturedFNTQFT} we prove Lemma~\ref{lem:IntroMainLemma} and Theorem~\ref{thm:IntroFirstThm} relating $[\BSDA(Y,\Gamma)]^{\Z}_{\comb}$ to the sutured Frohman--Nicas TQFT. In Section~\ref{sec:SymmetricMonoidal} we establish the symmetric monoidal structure of the sutured Frohman--Nicas TQFT. In Section~\ref{sec:RelationshipWithSFH} we discuss Zarev caps and the relationship with sutured Floer homology. In Section~\ref{sec:SpincStructures} we define $\Spinc$-versions $[\BSDA(Y,\Gamma)]^{\Z[G]}_{\comb}$ and $[\BSDA(Y,\Gamma)]^{\Q[H]}_{\comb}$ of $[\BSDA(Y,\Gamma)]^{\Z}_{\comb}$, and in Section~\ref{sec:AlexFunctorsInOtherSettings} we relate these to sutured versions of Florens--Massuyeau's Alexander functor over $\Z[G]$ and $\Q[H]$.

\vspace{0.15in}
\noindent \textbf{Acknowledgments.} The authors would like to thank Tye Lidman and Gw{\'e}na{\"e}l Massuyeau for useful conversations. The first author was supported by NSF grants DMS-2151786 and DMS-2502205.

\section{Background}\label{sec:Background}

\subsection{Arc diagrams and sutured surfaces}

Sutured surfaces satisfying the assumptions of Definition~\ref{def:SuturedSurf} can be represented by combinatorial data called arc diagrams. We start by reviewing how this works.

\begin{definition}\label{def:ArcDiagram}
An \emph{arc diagram} $\mathcal{Z} = (\mathbf{Z}, \mathbf{a}, \sim, t)$ is the data of:
\begin{itemize}
    \item a compact oriented 1-manifold $\mathbf{Z}=\bigsqcup_i Z_i$ consisting of finitely many oriented closed intervals and oriented circles $Z_i$;
    \item a set $\mathbf{a}$ of finitely many distinct points in $\mathbf{Z}$ (none on the boundary of interval components of $\mathbf{Z}$) with $|\mathbf{a}| = 2k$ for some $k \geq 0$;
    \item an equivalence relation (``2-to-1 matching'') $\sim$ on $\mathbf{a}$ whose equivalence classes have size 2;
    \item a choice $t$ of whether $\Zc$ is an ``$\alpha$-arc diagram'' (we say $t = \alpha$) or a ``$\beta$-arc diagram'' (we say $t = \beta$).
\end{itemize}
\end{definition}

We draw the intervals and circles of $\mathbf{Z}$ vertically and (for intervals) pointed upwards. For $\alpha$-arc diagrams we draw the circles oriented clockwise; for $\beta$-arc diagrams we draw the circles oriented counterclockwise. We indicate the equivalence relation $\sim$ by joining the two points in each equivalence class by an arc, called a \emph{matching arc}. For $\alpha$-arc diagrams, we draw the matching arcs in red and to the left of $\mathbf{Z}$; for $\beta$-arc diagrams, we draw the matching arcs in blue and to the right of $\mathbf{Z}$. 

See the left side of Figure~\ref{fig:dual arc diagram} for an example of an $\alpha$-arc diagram and the right side of Figure~\ref{fig:dual arc diagram} for an example of a $\beta$-arc diagram. To highlight that we allow circles as well as intervals in $\mathbf{Z}$, the $\alpha$-arc diagram in Figure~\ref{fig:dual arc diagram} has both a circle and an interval in $\mathbf{Z}$. 

\begin{remark}
    Figure 7 in Zarev \cite{joiningandgluing} shows some more examples of $\alpha$-arc diagrams and $\beta$-arc diagrams. Zarev uses slightly different visual conventions; he draws $\alpha$-arcs to the right of $\mathbf{Z}$ and $\beta$-arcs to the left of $\mathbf{Z}$, so to match our conventions, the diagrams in \cite[Figure 7]{joiningandgluing} should be reflected left-to-right in the plane. See Remark~\ref{rem:ZarevOppositeOrientation} and Remark~\ref{rem:ChangeOfConventions} for an explanation of why we change the conventions.
\end{remark}

\begin{remark}
    We do not include data like orientations of the matching arcs and an ordering of the set of matching arcs in the data of an arc diagram; we will choose such data later when necessary.
\end{remark}

\begin{definition}[cf. Section 2.1 of \cite{Zarev}]\label{def:SuturedSurfaceOfZ}
If $\mathcal{Z} = (\mathbf{Z}, \mathbf{a}, \sim, t = \alpha)$ is an $\alpha$-arc diagram, the \emph{sutured surface $F(\mathcal{Z}) = (F,\Lambda)$ associated to $\mathcal{Z}$} is defined in our conventions as follows.
\begin{itemize}
    \item Start with $\mathbf{Z} \times [0,1]$, a disjoint union of rectangles and annuli. Orient $\mathbf{Z} \times [0,1]$ so that the induced orientation on $\mathbf{Z} \times \{0\}$ agrees with the orientation of $\mathbf{Z}$ and the induced orientation on $\mathbf{Z} \times \{1\}$ is the opposite of the orientation on $Z_i$. 
    \item Let $\Lambda$ be $\partial \mathbf{Z} \times \{1/2\}$. Let $S^+$ be
    \[
    (\partial \mathbf{Z} \times [0,1/2]) \cup (\mathbf{Z} \times \{0\});
    \]
    note that $S^+$ is homeomorphic to $\mathbf{Z}$, and the orientation on $\mathbf{Z}$ agrees with the boundary orientation on $S^+$.
    \item For each equivalence class of $\sim$, attach a 2-dimensional 1-handle to $\mathbf{Z} \times \{1\}$ at the two points of the equivalence class, in an orientable way. Note that if such a handle addition is contained in a single component of the surface being built, then there is only one orientable way to add the handle, while if the handle addition connects two previously separate components of the surface, then the two ways to add the handle give isomorphic surfaces. Thus, we do not need to specify a framing on the points of the equivalence class to do the handle addition.
\end{itemize}
Note that $S^-$ is the 1-manifold obtained by surgery (the boundary effect of the handle additions) on
\[
(\partial \mathbf{Z} \times [1/2,1]) \cup (\mathbf{Z} \times \{1\}).
\]
If $\Zc$ is a $\beta$-arc diagram rather than an $\alpha$-arc diagram ($t = \beta$), we define $F(\Zc)$ by the following modification to the definition in the $\alpha$ case: we keep the orientation of $\mathbf{Z} \times [0,1]$ the same but we do the handle additions to $\mathbf{Z} \times \{0\}$ rather than to $\mathbf{Z} \times \{1\}$. In this case $S^-$ is homeomorphic to $\mathbf{Z}$ while $S^+$ is obtained by surgery; the orientation on $\mathbf{Z}$ is opposite to the boundary orientation on $S^-$.
\end{definition}

See Figure~\ref{fig:dual arc diagram} for an example of the sutured surface associated to an $\alpha$-arc diagram; the same sutured surface is associated to the $\beta$-arc diagram shown in the figure. We often depict $F(\Zc)$ with codimension-2 corners in order to emphasize the manner by which $F(\Zc)$ is constructed. These corners can be smoothed out uniquely (up to diffeomorphism) to produce a smooth surface with $S^1$ boundary components.

\begin{remark}\label{rem:ZarevOppositeOrientation}
    Zarev's orientation on $F(\Zc)$ is the opposite of the one specified in Definition~\ref{def:SuturedSurfaceOfZ}; see Remark~\ref{rem:ChangeOfConventions} for an explanation. If we follow our visual conventions for drawing arc diagrams $\Zc$ in the plane, then if we thicken the arc diagram in the plane and give it the standard orientation as a region of $\R^2$, we get the correct orientation on $F(\Zc)$. The same is true with Zarev's visual conventions and Zarev's definition of $F(\Zc)$. 
    
    Figure 8 of \cite{joiningandgluing} shows the sutured surfaces associated to the arc diagrams of \cite[Figure 7]{joiningandgluing}; to get our conventions, one should reflect the surfaces of \cite[Figure 8]{joiningandgluing} left-to-right in the plane and reverse their orientation, so that after reflection they are given the standard orientation as regions in $\R^2$.
\end{remark}

All sutured surfaces $F(\Zc)$ constructed by Definition~\ref{def:SuturedSurfaceOfZ} satisfy the topological assumptions of Definition~\ref{def:SuturedSurf}; indeed, in $\mathbf{Z} \times [0,1]$ before handle additions, each component $Z_i \times [0,1]$ has both $S^+$ boundary (e.g. $Z_i \times \{0\}$) and $S^-$ boundary (e.g. $Z_i \times \{1\}$), and the handle additions never produce a surface component whose boundary avoids $S^+$ or $S^-$. In the other direction, we have the following proposition, which we prove below.

\begin{proposition}\label{prop:SuturedSurfaceRepresentable}
    Every sutured surface $(F,\Lambda)$ satisfying the assumptions of Definition~\ref{def:SuturedSurf} can be written as $F(\Zc)$ for some arc diagram $\Zc$, which can be taken to be either $\alpha$ or $\beta$ type.
\end{proposition}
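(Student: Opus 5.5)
We work one connected component of $F$ at a time, since $F(\Zc)$ of a disjoint union of arc diagrams is the disjoint union of the associated sutured surfaces. So assume $F$ is connected. We treat the $\alpha$ case first and then get the $\beta$ case by a symmetry.

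\emph{Setting up the collar.} The $\alpha$ construction exhibits $F(\Zc)$ as a collar $\mathbf{Z}\times[0,1]$ on $S^+$ together with 2-dimensional 1-handles attached away from $S^+$. We want to reverse this.
\begin{itemize}
\item Each component of $S^+$ is either an arc (when its boundary component of $F$ contains points of $\Lambda$) or a full boundary circle.
\item Take a closed collar neighborhood $N$ of $S^+$ in $F$.
\item Around each arc component, the collar is a rectangle $Z_i\times[0,1]$. Its sides $\partial Z_i\times[0,1]$ lie in $\partial F$, with $\Lambda$ at height $1/2$.
\item Around each circle component, the collar is an annulus $Z_i\times[0,1]$.
\item Orient $\mathbf{Z}$ by the boundary orientation of $S^+$. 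Then the orientation conventions of Definition~\ref{def:SuturedSurfaceOfZ} match.
\end{itemize}

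\emph{Building $F$ by handles relative to the collar.} Next we show that $F$ is obtained from $N$ by attaching only 1-handles to $\mathbf{Z}\times\{1\}$, with no 0-handles and no 2-handles. Consider a handle decomposition of $F$ relative to $N$, so that handles are attached away from $S^+$.
\begin{itemize}
\item Each component of $F$ meets $S^+$, so $F$ is connected to $N$. Each 0-handle can therefore be cancelled against a 1-handle joining it to the rest of the surface.
\item Dually, each component of $F$ meets $S^-$. So $F$ deformation retracts onto $N\cup S^-$, or equivalently $F$ has a relative handle decomposition on $N$ built from handles of index at most 1. To see this, turn the decomposition upside down relative to a collar of $S^-$: 2-handles relative to $N$ correspond to 0-handles relative to $\partial F\setminus \mathrm{int}\,N$. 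These cancel in the same way, because every component meets $S^-$ and hence that boundary part.
\item Concretely, choose a Morse function $f$ on $F$ with $f=0$ on $S^+$ and $f=1$ on $S^-$, suitably adapted near $\Lambda$. Then cancel index-0 and index-2 critical points using the connectivity hypotheses, as in standard handle cancellation.
\end{itemize}

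\emph{Reading off the arc diagram.} Each remaining 1-handle has its two feet on $\mathbf{Z}\times\{1\}$. After isotoping the feet off $\partial\mathbf{Z}\times\{1\}$, they give a matched pair of points of $\mathbf{a}$. Orientability of $F$ forces each handle to be attached orientably, so $F\cong F(\Zc)$. Under this identification, $S^+$, $S^-$, and $\Lambda$ correspond to the data in Definition~\ref{def:SuturedSurfaceOfZ}.

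\emph{The $\beta$ case.} Apply the same argument with the roles of $S^+$ and $S^-$ swapped, using a collar of $S^-$ and attaching handles to $\mathbf{Z}\times\{0\}$. Orient $\mathbf{Z}$ opposite to the boundary orientation of $S^-$.

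\emph{Main obstacle.} The hardest step is the handle cancellation in the second paragraph. We must carry it out carefully near the corners at $\Lambda$ and check that cancellations never separate $S^+$ from $S^-$. We plan to do this by Euler characteristic and connectivity bookkeeping on each component.
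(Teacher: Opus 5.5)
Your argument is correct in outline, but it takes a different route from the paper. The paper reduces to connected $F$ and then relies on the classification of surfaces. A connected sutured surface is determined by its genus, the numbers of $S^+$ and $S^-$ boundary circles, and, when there are $S^+$ intervals, the number of mixed boundary circles and the number of $S^+$ intervals on each. The paper exhibits an explicit $\alpha$-arc diagram for each of these two families. It gets the $\beta$-type diagram as the dual $\Zc^*$, using the already-established fact $F(\Zc^*) = F(\Zc)$.

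You instead show directly, by relative Morse theory and cancellation of $0$- and $2$-handles, that $F$ is a collar of $S^+$ with only $1$-handles attached. You then observe that such a decomposition \emph{is} an $\alpha$-arc diagram. Your route avoids both the classification and the case split, and it explains why arc diagrams exist: any handle decomposition of this kind produces one. The cost is the standard but fiddly relative handle theory near the corners at $\Lambda$. The paper's route is quicker given the classification and produces explicit normal forms. However, it leaves the check that the pictured diagrams have the right $S^\pm$ data to inspection. For your $\beta$ case, turning your $1$-handle decomposition upside down (using co-cores) gives exactly $\Zc^*$. So you could cite the dual-diagram identity instead of rerunning the argument with $S^-$.

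Two small points to fix.
\begin{itemize}
\item The sentence ``$F$ deformation retracts onto $N\cup S^-$'' is false as stated. For a disk with a single $S^+$ arc, $N\cup S^-$ has the homotopy type of a circle. What you need, and what you then actually argue by turning the decomposition upside down, is the ``equivalently'' claim: a relative decomposition on $N$ with handles of index at most $1$.
\item To read off an arc diagram, every $1$-handle must attach directly to $\mathbf{Z}\times\{1\}$, not to earlier $1$-handles. This follows from the usual rearrangement of same-index handles: a $0$-dimensional attaching sphere can be slid off the $0$-dimensional belt spheres of earlier $1$-handles inside the $1$-dimensional upper boundary. You should state this step explicitly.
\end{itemize}
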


There are moves called \emph{arc-slides} that can relate any two arc diagrams for the same sutured surface $(F,\Lambda)$, but we will not need them here.

\begin{definition}\label{def:ArcDiagramOps}
    Let $\Zc = (\mathbf{Z},\mathbf{a},\sim,t)$ be an arc diagram.
    \begin{itemize}
        \item Let $-\Zc$ be the diagram $(-\mathbf{Z},\mathbf{a},\sim,t)$ where the orientation on $\mathbf{Z}$ has been reversed.
        \item Let $\overline{\Zc}$ be the same diagram as $\Zc$ with the type $t$ changed from $\alpha$ to $\beta$ or vice-versa.
        \item Let $\Zc^* = (\mathbf{Z}^*,\mathbf{a}^*,\sim^*,t^*)$ be the diagram defined as follows; first, form the sutured surface $F(\Zc)$. If $\Zc$ is an $\alpha$-arc diagram, let $\mathbf{Z}^*$ denote the $S^-$ boundary of $F(\Zc)$ with the opposite of the boundary orientation. If $\Zc$ is a $\beta$-arc diagram, let $\mathbf{Z}^*$ denote the $S^+$ boundary of $F(\Zc)$ with the boundary orientation. Let $\mathbf{a}^*$ denote the union of the belt spheres (two points each) of the 1-handles added in the definition of $F(\Zc)$. Let $\sim^*$ be the equivalence relation whose equivalence classes are these belt spheres. Let $t^*$ be the opposite of $t$, so if $t = \alpha$ then $t^* = \beta$ and vice-versa. We say that $\Zc^*$ is \emph{the arc diagram dual to $\Zc$.}
    \end{itemize}
\end{definition}

\begin{figure}
    \centering
    \begin{overpic}[width=0.8\textwidth]{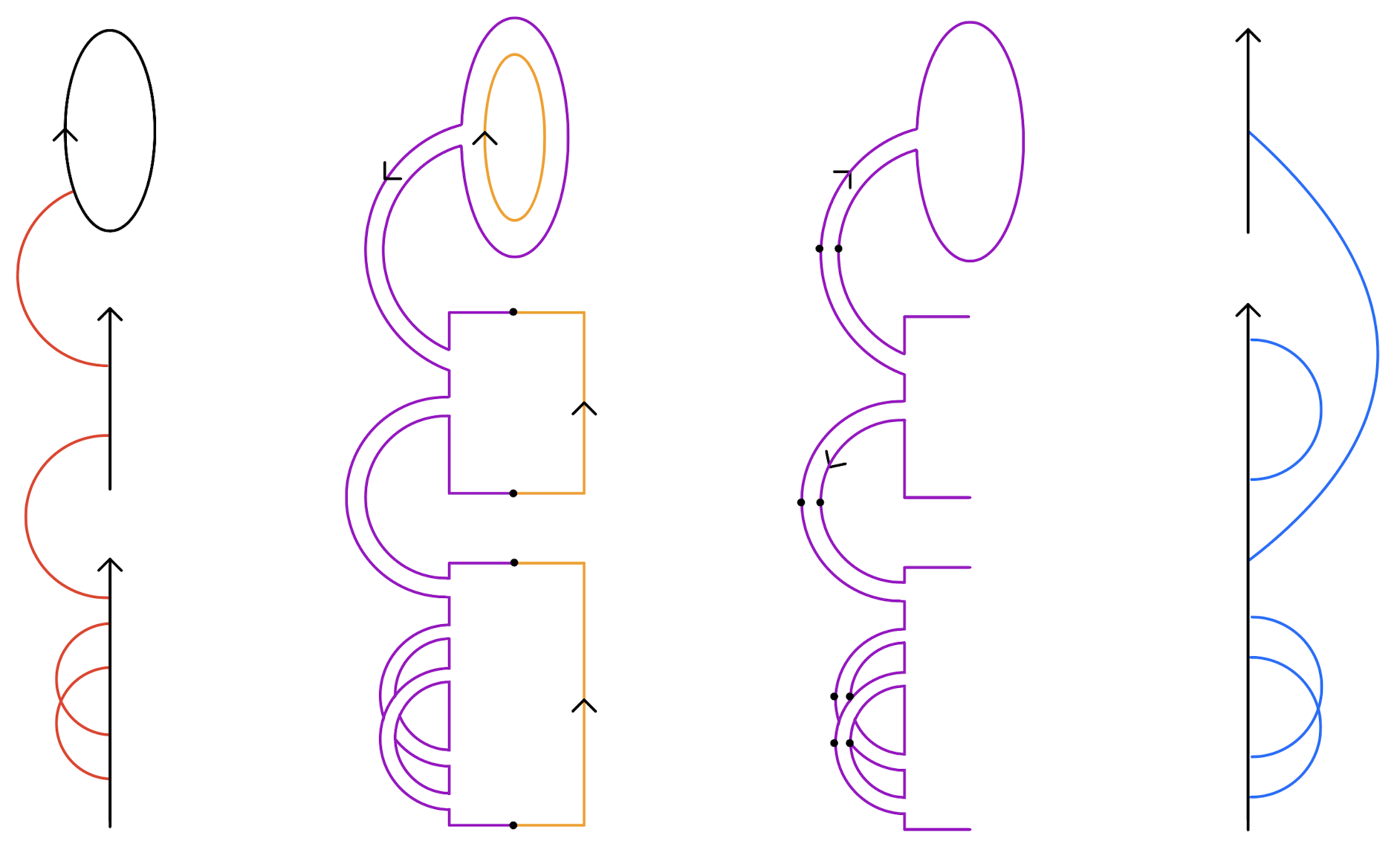}
    \put (13,10) {$\mathcal{Z}$} 
    \put (45,10) {$F(\mathcal{Z})$} 
    \put (30,-2) {$1 \:\: \longleftarrow$} 
    \put (42,-2) {$0$} 
    \put (44,36) {$S^+$} 
    \put (35.5,21.5) {$\Lambda$} 
    \put (42,49) {$S_-$} 
    \put (98,10) {$\mathcal{Z}^*$} 
    \end{overpic}
    \vspace{0.1in}
    \caption{Left to right: An $\alpha$-arc diagram $\mathcal{Z}$; the sutured surface $F(\mathcal{Z})$ associated to $\mathcal{Z}$ (we draw $S^+$ in orange and $S^-$ in purple); $S_-$ with belt spheres indicated and boundary orientation reversed; the $\beta$-arc diagram $\mathcal{Z}^*$ dual to $\mathcal{Z}$.}
    \label{fig:dual arc diagram}
\end{figure}

Figure 7 in \cite{joiningandgluing} shows, for an $\alpha$-arc diagram $\Zc$, the arc diagrams $-\Zc$, $\overline{\Zc}$, and $-\overline{\Zc}$. The construction of $\Zc^*$ from $\Zc$ is shown in Figure~\ref{fig:dual arc diagram}.

\begin{proposition}
    The operations on an arc diagram $\Zc$ in Definition~\ref{def:ArcDiagramOps} have the following effect on the sutured surface $F(\Zc)$.
    \begin{itemize}
        \item We have $F(-\Zc) = -F(\Zc)$, where for a sutured surface $(F,\Lambda)$, we let $-(F,\Lambda)$ denote the same sutured surface with the orientation on $F$ reversed but the roles of $S^+$ and $S^-$ left unchanged.
        \item We have $F(\overline{\Zc}) = \overline{F(\Zc)}$, where for a sutured surface $(F,\Lambda)$, we let $\overline{(F,\Lambda)}$ denote $(F,\Lambda)$ with the orientation on $F$ reversed and the roles of $S^+$ and $S^-$ reversed. 
        \item We have $F(\Zc^*) = F(\Zc)$.
    \end{itemize}
\end{proposition}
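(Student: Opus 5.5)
The plan is to check each of the three bullets by tracking what each operation does to the construction of Definition~\ref{def:SuturedSurfaceOfZ}: the orientation of $\mathbf{Z}\times[0,1]$, which side ($\mathbf{Z}\times\{0\}$ or $\mathbf{Z}\times\{1\}$) receives the handles, and which boundary pieces become $S^+$ and $S^-$. Each bullet will be handled for an $\alpha$-arc diagram $\Zc$; the $\beta$ case is symmetric with the roles of the sides exchanged.

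\emph{First bullet.} Reversing the orientation of $\mathbf{Z}$ reverses the orientation of $\mathbf{Z}\times[0,1]$. This is because the orientation of $\mathbf{Z}\times[0,1]$ is pinned down by requiring that $\mathbf{Z}\times\{0\}$ have the orientation of $\mathbf{Z}$. The handles are attached at the same points of $\mathbf{Z}\times\{1\}$, orientably, so the resulting surface is $F(\Zc)$ with its orientation reversed. The subsets $\Lambda$, $S^+$ and $S^-$ are defined by the same formulas and are therefore unchanged, which gives $F(-\Zc) = -F(\Zc)$.

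\emph{Second bullet.} For $\overline{\Zc}$, the orientation of $\mathbf{Z}\times[0,1]$ is kept, but the handles go on $\mathbf{Z}\times\{0\}$. Consider the reflection $r\colon \mathbf{Z}\times[0,1]\to\mathbf{Z}\times[0,1]$, $(z,s)\mapsto(z,1-s)$. It reverses orientation and exchanges the two sides, and it extends over the handles. So $r$ identifies the $\beta$-construction with the $\alpha$-construction, but with the opposite orientation. Under $r$, the $S^-$ of the $\beta$-construction, namely $(\partial\mathbf{Z}\times[1/2,1])\cup(\mathbf{Z}\times\{1\})$ for the piece homeomorphic to $\mathbf{Z}$, maps to the old $S^+$, and conversely the surgered side maps to the old $S^-$. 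This gives $F(\overline{\Zc}) = \overline{F(\Zc)}$. The one check here is that the definition's stated convention (the orientation of $\mathbf{Z}$ is opposite to the boundary orientation of $S^-$) is consistent with this identification.

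\emph{Third bullet.} This is the main step. Write $F=F(\Zc)$ and look at $F$ from the $S^-$ side. Cutting $F$ along the cocores of the 1-handles turns $F$ into a collar of $S^-$: a copy of $S^-\times[0,1]$ (rectangles over intervals, annuli over circles) in which the cocores become pairs of points on the far side. Dually, $F$ is recovered from a collar $S^-\times[0,1]$ by attaching 2-dimensional 1-handles along the belt-sphere pairs $\mathbf{a}^*$. Under this decomposition, $S^+$ is exactly the surgered boundary. So $F$ is built from $\mathbf{Z}^*=S^-$ (with the opposite of the boundary orientation) as in the $\beta$-construction, which is the recipe for $F(\Zc^*)$ since $t^*=\beta$.

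The points that need care in the third bullet are the following.
\begin{itemize}
\item Each equivalence class of $\mathbf{a}^*$ is a genuine pair of points in the interior of $\mathbf{Z}^*$, away from $\Lambda$.
\item The collar, with the orientation induced from $F$, matches the orientation convention for $\beta$-diagrams: $\mathbf{Z}^*$ equals $S^-$ with the opposite of the boundary orientation, which is precisely the $\beta$ convention that the orientation of $\mathbf{Z}$ is opposite to the boundary orientation of $S^-$.
\item The $S^\pm$ labels agree.
\end{itemize}
The $\beta$ case, where $\mathbf{Z}^*$ is $S^+$ with the boundary orientation, is identical with $\pm$ exchanged.
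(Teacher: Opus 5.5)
Your first two bullets are correct. Reversing $\mathbf{Z}$ flips the orientation of $\mathbf{Z}\times[0,1]$ while leaving the handle attachments and the sets $\Lambda$, $S^\pm$ unchanged. The reflection $(z,s)\mapsto(z,1-s)$ is an orientation-reversing identification of the $\alpha$- and $\beta$-constructions that exchanges the surgered and unsurgered sides. The paper gives no written argument for this proposition beyond pointing to Zarev's figures, so spelling these out is worthwhile.

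\textbf{The gap is in your mechanism for the third bullet.} Cutting $F$ along the \emph{cocores} of the 1-handles does not give a collar of $S^-$. It just undoes the handles: what remains is $\mathbf{Z}\times[0,1]$ with the half-handles absorbed, which is a collar of $S^+$ with one component per component of $\mathbf{Z}$. The two copies of each cocore appear as arcs on the $S^-$ side, not as points on the far side. Regluing them is a band attachment whose core is parallel to the \emph{old} core, so you simply recover the original decomposition.

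A concrete example: let $\mathbf{Z}$ be a single interval with one matching arc. Then $F(\Zc)$ is an annulus, and $S^-$ is an arc together with a circle. Cutting along the cocore gives a single disk, whereas $S^-\times[0,1]$ is a disk together with an annulus.

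\textbf{What you need is the dual decomposition, with core and cocore exchanged.} One concrete route is as follows.
\begin{itemize}
\item Cut $F$ along the matching arcs of $\Zc$, i.e.\ the cores of the handles extended by the vertical segments $\mathbf{a}\times[0,1]$ down to $\mathbf{Z}\times\{0\}\subset S^+$.
\item The pieces of $\mathbf{Z}\times[0,1]$ cut at $\mathbf{a}\times[0,1]$ are then joined by half-bands in chains or cycles that follow the components of $S^-$. So the cut surface is a product $S^-\times[0,1]$ with $S^-\times\{0\}=S^-$.
\item The far side of this product consists of the remaining pieces of $S^+$ together with two copies of each matching arc.
\item Regluing the two copies of a matching arc is a band attachment on the far side, whose core is parallel to the old cocore. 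After shrinking its attaching arcs, it is a 1-handle attached at the two points of $S^-\times\{1\}$ lying across the collar from the belt-sphere pair. The resulting far boundary is $S^+$.
\end{itemize}
Equivalently, take a Morse function $f$ with $f^{-1}(0)=S^+$, $f^{-1}(1)=S^-$ and one index-1 critical point per handle. Then $1-f$ again has only index-1 critical points, whose descending arcs are the old cocores and whose attaching spheres are the belt spheres in $S^-$.

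With this replacing your cutting step, your remaining checks go through as you state them: that $\mathbf{a}^*$ lies in the interior of $\mathbf{Z}^*$, that the orientation matches the $\beta$ convention, and that the $S^\pm$ labels agree.
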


Figure 8 in \cite{joiningandgluing} shows the sutured surfaces associated to the arc diagrams $\Zc$, $-\Zc$, $\overline{\Zc}$, and $-\overline{\Zc}$ of \cite[Figure 7]{joiningandgluing}; to get our conventions, Zarev's visual conventions should be modified as in Remark~\ref{rem:ZarevOppositeOrientation}.

\begin{proof}[Proof of Proposition~\ref{prop:SuturedSurfaceRepresentable}]
    It suffices to prove the result for $(F,\Lambda)$ connected. First suppose that $F$ has no $S^+$ intervals. In this case, by the classification of surfaces, $(F,\Lambda)$ is characterized by the following data:
    \begin{itemize}
        \item a genus $g \geq 0$;
        \item some number $p \geq 1$ of $S^+$ boundary circles;
        \item some number $q \geq 1$ of $S^-$ boundary circles.
    \end{itemize}
    The $\alpha$-arc diagram $\Zc$ shown at the top of Figure~\ref{fig:classification of surfaces} represents $(F,\Lambda)$; the corresponding diagram $\Zc^*$ is a $\beta$-arc diagram representing $(F,\Lambda)$. 

    \begin{figure}
    \centering
    \vspace{0.3in}
    \begin{overpic}[width=\textwidth]
    {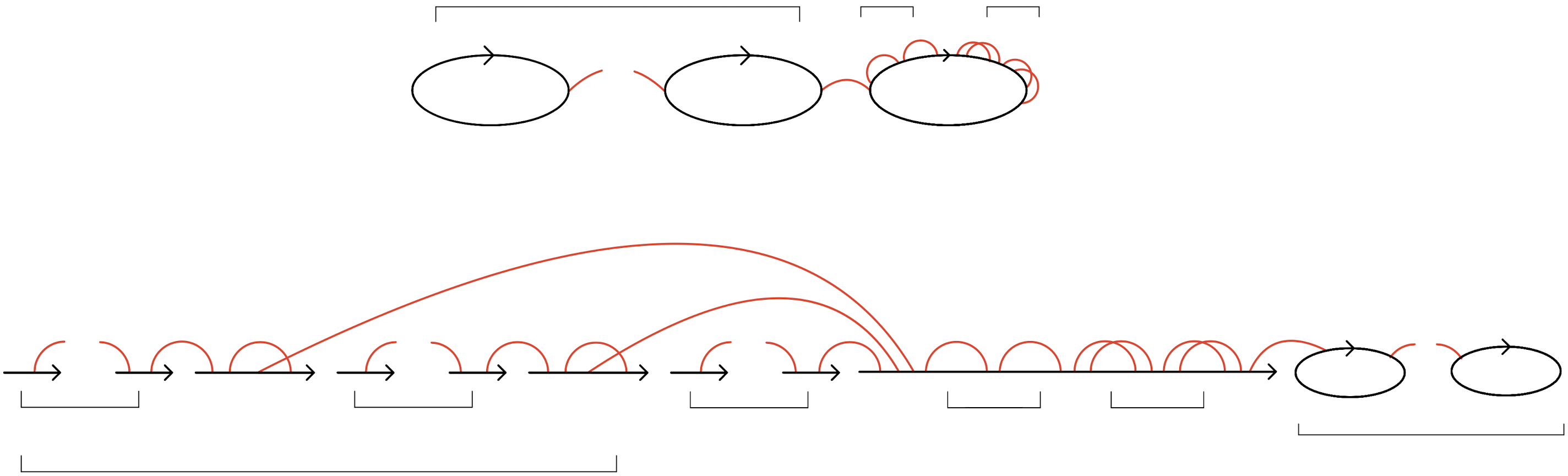}
    \put (37,31.5) {$p-1$} 
    \put (54.3,31.5) {\scalebox{0.8}{$q-1$}} 
    \put (64,31.5) {\scalebox{0.8}{$g$}}
    \put (38.5,26.5) {\scalebox{0.7}{$\cdots$}}
    \put (55.5,27.1) {\rotatebox{23}{\scalebox{0.75}{$\cdots$}}}
    \put (63.5,27.9) {\rotatebox{-28}{\scalebox{0.7}{$\cdots$}}}

    \put (2.5,2) {\scalebox{0.8}{$n_1-1$}}
    \put (23,2) {\scalebox{0.8}{$n_{r-1}-1$}}
    \put (45,2) {\scalebox{0.8}{$n_r-1$}}
    \put (18,-2.5) {$r-1$}
    \put (42.6,12.1) {\scalebox{0.75}{$r-1$}}
    \put (63,2) {$q$} 
    \put (73,2) {$g$} 
    \put (91,0) {$p$}
    
    \put (4.2,9) {\scalebox{0.7}{$\cdots$}}
    \put (20.6,7.3) {\scalebox{0.6}{$\cdots$}}
    \put (25.6,9) {\scalebox{0.7}{$\cdots$}}
    \put (46.7,9) {\scalebox{0.7}{$\cdots$}}
    
    \put (47.8,12.1) {\scalebox{0.7}{$\vdots$}}

    \put (62.3,9) {\scalebox{0.7}{$\cdots$}}
    \put (72.8,9) {\scalebox{0.7}{$\cdots$}}
    \put (90,9) {\scalebox{0.7}{$\cdots$}}
    \end{overpic}
    \vspace{0.1in}
    \caption{Top: an arc diagram for a genus $g$ sutured surface $(F,\Lambda)$ with no $S^+$ intervals. Bottom: an arc diagram for a genus $g$ sutured surface $(F,\Lambda)$ with $\left(\sum_{i=1}^r n_i\right)$-many $S^+$ intervals on $r$ mixed boundary circles. To conserve space, the above arc diagrams have been rotated $90^{\circ}$ clockwise from the conventions under Definition~\ref{def:ArcDiagram}.}
    \label{fig:classification of surfaces}
    \end{figure}
    
    Now suppose that $F$ has at least one $S^+$ interval. In this case, by the classification of surfaces, $(F,\Lambda)$ is characterized by the following data:
    \begin{itemize}
        \item a genus $g \in \Z_{\geq 0}$;
        \item some number $p \geq 0$ of $S^+$ boundary circles;
        \item some number $q \geq 0$ of $S^-$ boundary circles;
        \item some number $r \geq 1$ of ``mixed'' boundary circles with both $S^+$ and $S^-$ intervals;
        \item for $1 \leq i \leq r$, some number $n_i \geq 1$ of $S^+$ intervals for the $i^{th}$ mixed boundary circle.
    \end{itemize}
    The $\alpha$-arc diagram $\Zc$ shown at the bottom of Figure~\ref{fig:classification of surfaces} represents $(F,\Lambda)$;  the corresponding diagram $\Zc^*$ is a $\beta$-arc diagram representing $(F,\Lambda)$.
    \end{proof}

\subsection{Heegaard diagrams and sutured cobordisms}

\begin{definition}[cf. Definition 4.1 of \cite{Zarev}]\label{def:AlphaBorderedDiagram}
An \emph{$\alpha$-$\alpha$ bordered sutured Heegaard diagram} $\Hc = (\Sigma, \boldsymbol{\alpha}, \boldsymbol{\beta}, \mathcal{Z}_0, \mathcal{Z}_1, \psi)$ from the arc diagram $\Zc_0$ (drawn on the right) to the arc diagram $\Zc_1$ (drawn on the left) is the data of:
\begin{itemize}
    \item a compact oriented surface $\Sigma$ called the Heegaard surface;
    \item two $\alpha$-arc diagrams $\mathcal{Z}_0 = (\mathbf{Z}_0, \mathbf{a}_0, \sim_0, t_0 = \alpha)$ and $\mathcal{Z}_1 = (\mathbf{Z}_1, \mathbf{a}_1, \sim_1, t_1 = \alpha)$;
    \item an orientation-preserving identification $\psi$ of $\mathbf{Z}_1$ and $-\mathbf{Z_0}$ with two disjoint subsets of $\partial \Sigma$; let 
    \[
    \partial_{\mathrm{non-gluing}}(\Sigma) := \partial \Sigma \setminus \psi(\mathbf{Z}_1 \sqcup -\mathbf{Z}_0).
    \]
    \item A finite set $\boldsymbol{\alpha}^{a,\mathrm{in}}$ of pairwise disjoint embedded arcs (\emph{incoming alpha arcs}) in $\Sigma$, intersecting $\partial \Sigma$ only in their endpoints, which are $\psi(\mathbf{a}_0) \subset \psi(\mathbf{Z}_0)$, such that two points of $\mathbf{a}_0$ are matched by $\sim_0$ if and only if under $\psi$ they are two endpoints of the same incoming alpha arc;
    \item a finite set $\boldsymbol{\alpha}^{a,\mathrm{out}}$ of pairwise disjoint embedded arcs (\emph{outgoing alpha arcs}) in $\Sigma$, intersecting $\partial \Sigma$ only in their endpoints, which are $\psi(\mathbf{a}_1) \subset \psi(\mathbf{Z}_1)$, such that two points of $\mathbf{a}_1$ are matched by $\sim_1$ if and only if under $\psi$ they are two endpoints of the same outgoing alpha arc, and which are pairwise disjoint from the incoming alpha arcs;
    \item a finite set $\boldsymbol{\alpha}^c$ of pairwise disjoint simple closed curves (\emph{alpha circles}) in the interior of $\Sigma$ that are also pairwise disjoint with the set $\boldsymbol{\alpha}^a := \boldsymbol{\alpha}^{a,\mathrm{out}} \cup \boldsymbol{\alpha}^{a,\mathrm{in}}$ of alpha arcs;
    \item a finite set $\boldsymbol{\beta} = \boldsymbol{\beta}^c$ of pairwise disjoint simple closed curves (\emph{beta circles}) in the interior of $\Sigma$ that intersect $\boldsymbol{\alpha} := \boldsymbol{\alpha}^a \cup \boldsymbol{\alpha}^c$ transversely.
\end{itemize}
\end{definition}

Figure~\ref{fig:Heegaard diagram defn} shows an example of an $\alpha$-$\alpha$ bordered sutured Heegaard diagram.

\begin{remark}
    We do not include data such as ordering or orientations of the $\alpha$- and $\beta$-curves in the data of a Heegaard diagram; we will choose such data later when necessary.
\end{remark}

\begin{definition}\label{def:OtherTypesOfHeegaardDiagrams}
    The following are other types of Heegaard diagrams that arise as simple adaptations or special cases of Definition~\ref{def:AlphaBorderedDiagram}:
    \begin{enumerate}
        \item[(a)] The definitions of $\alpha$-$\beta$ \emph{bordered}, $\beta$-$\alpha$ \emph{bordered}, and $\beta$-$\beta$ \emph{bordered sutured Heegaard diagram} are analogous to Definition~\ref{def:AlphaBorderedDiagram}, except that for the diagram to be $\beta$-bordered on the right (resp. left), $\Zc_0$ (resp. $\Zc_1$) must have $t_0=\beta$ (resp. $t_1=\beta$), and then instead of incoming (resp. outgoing) $\alpha$-arcs, there are incoming (resp. outgoing) $\beta$-arcs; these must intersect $\alpha$ curves transversely and are required to be disjoint from all the other $\beta$ curves in $\Hc$. 
        
        \item[(b)] If $\Zc_i$ has $t_i=\alpha$ (resp. $t_i=\beta$) and $\Zc_j=\emptyset$ where $j=i+1 \text{ mod } 2$, then we say $\Hc$ is an \emph{$\alpha$-bordered sutured Heegaard diagram} (resp. \emph{$\beta$-bordered sutured Heegaard diagram}). These coincide with \cite[Definition 4.1]{Zarev}, (compare \cite[p.~19]{joiningandgluing} for $\beta$-bordered diagrams) aside from the differences explained in Remark~\ref{rem:HDDifferencesFromZarev}.
        
        \item[(c)] If $\Zc_0=\Zc_1=\emptyset$, we say that $\Hc$ is a \emph{sutured Heegaard diagram}. These coincide with the sutured Heegaard diagrams of \cite[Definition 2.7]{Juhasz} and \cite[Definition 2.4]{FJR}.
    \end{enumerate}
\end{definition}

\begin{remark}\label{rem:HDDifferencesFromZarev}
    Definition~\ref{def:OtherTypesOfHeegaardDiagrams}(b) is more general than Zarev's definition of bordered sutured Heegaard diagram \cite[Definition 4.1]{Zarev} both in that we allow circles as well as intervals in $\mathbf{Z}_i$ and in that we do not impose any homological linear independence restrictions (in particular, our Heegaard surface $\Sigma$ may have closed components).
\end{remark}

\begin{figure}
    \centering
    \vspace{0.2in}
    \begin{overpic}[scale=0.25]
    {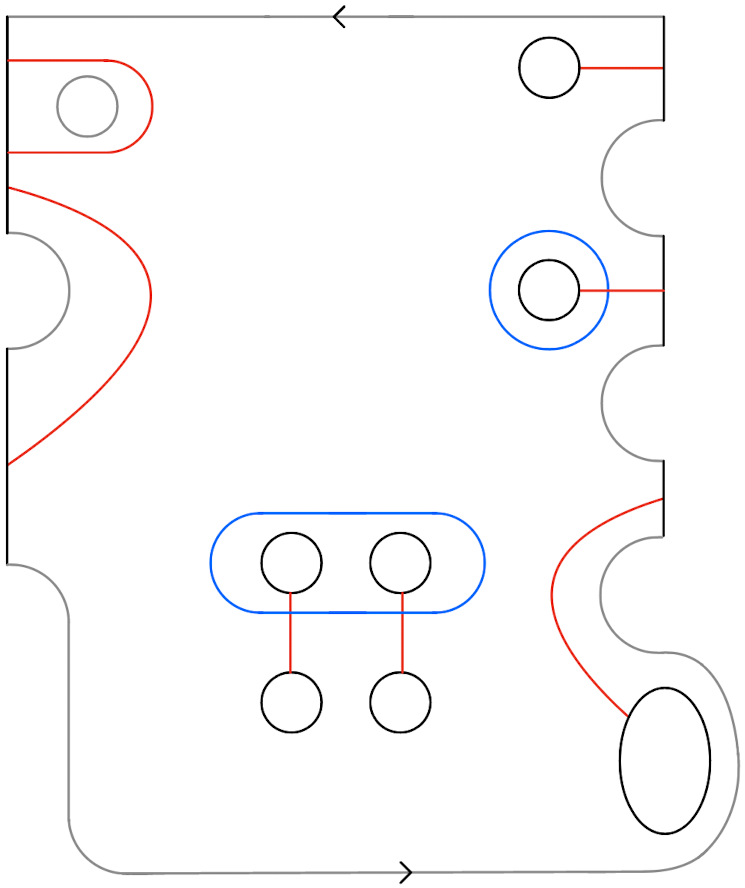}
    \put (31,22) {\scalebox{0.8}[-0.8]{$C$}} 
    \put (31,35) {\scalebox{0.8}{$C$}}
    \put (43.5,22) {\scalebox{0.8}[-0.8]{$B$}} 
    \put (43.5,35) {\scalebox{0.8}{$B$}} 
    \put (60.5,68.5) {\scalebox{0.8}[-0.8]{$A$}} 
    \put (60.5,91) {\scalebox{0.8}{$A$}} 
    \put (-1,103) {$\Zc_1$} 
    \put (72,103) {$-\Zc_0$} 
    \end{overpic}
    \caption{
    An $\alpha$-$\alpha$ bordered sutured Heegaard diagram $\Hc$ from $\Zc_0$ to $\Zc_1$. The subset $\psi(\mathbf{Z}_1 \sqcup -\mathbf{Z}_0) \subset \partial \Sigma$ is drawn in black, and $\partial_{\mathrm{non-gluing}}(\Sigma) \subset \partial \Sigma$ is drawn in grey. The pairs of circles (also drawn in black) labeled $A$, $B$, and $C$ should be glued together to form cylinders; see Figure~\ref{fig:CobordismOrientations} for a more 3-dimensional picture of such a cylinder. We draw $\Sigma$ with the standard orientation as a region of $\R^2$, consistent with Figure~\ref{fig:CobordismOrientations} and Remark~\ref{rem:ChangeOfConventions}. With this choice, the arc diagrams are drawn pointing downward rather than upward as in the conventions below Definition~\ref{def:ArcDiagram}; this is purely a matter of presentation, since arc diagrams have no preferred planar direction.}
    \label{fig:Heegaard diagram defn}
\end{figure}

\begin{definition}[cf. \cite{Zarev}, proof of Proposition 4.4]\label{def:AssocBorderedSuturedCob}
 Let $\Hc$ be a bordered sutured Heegaard diagram from $\mathcal{Z}_0$ to $\mathcal{Z}_1$ with Heegaard surface $\Sigma$. \emph{The sutured cobordism $Y(\Hc)$ from $F(\mathcal{Z}_0)$ to $F(\mathcal{Z}_1)$ associated to $\Hc$} is formed by the following steps.
\begin{itemize}
    \item Thicken $\Sigma$ to get $Y^0 := \Sigma \times [0,1]$, oriented so that with the induced boundary orientation, $\Sigma \times \{1\}$ has the same orientation as $\Sigma$ and $\Sigma \times \{0\}$ has the opposite orientation.
    \item Define $\Gamma$ to be $\partial_{\mathrm{non-gluing}}(\Sigma) \times \{1/2\}$.
    \item Identify $F(\Zc_0)$ with the subset of $\partial Y^0$ given by the union of $\psi(\mathbf{Z}_0) \times [0,1]$ with a small strip around each incoming $\alpha$-arc in $\Sigma \times \{1\}$, or around each incoming $\beta$-arc in $\Sigma \times \{0\}$, depending on whether $\Hc$ is $\alpha$-bordered or $\beta$-bordered on the incoming side. Specifically, the subset $\mathbf{Z}_0 \times [0,1] \subset F(\Zc_0)$ should be identified with $\psi(\mathbf{Z}_0) \times [0,1] \subset \partial Y^0$ by the map $\psi \times \id_{[0,1]}$, and the 1-handles in $F(\Zc_0)$ should be identified with the strips around the incoming $\alpha$-arcs in $\Sigma \times \{1\}$ or $\beta$-arcs in $\Sigma \times \{0\}$; then the identification is unique up to inessential choices and is orientation-reversing. 
    \item Similarly, identify $F(\Zc_1)$ with the subset of $\partial Y^0$ given by the union of $\psi(\mathbf{Z}_1) \times [0,1]$ with a small strip around each outgoing $\alpha$-arc in $\Sigma \times \{1\}$ or each outgoing $\beta$-arc in $\Sigma \times \{0\}$. The identification is orientation-preserving. 
    \item Let
    \[
    R^+_0 := (\partial_{\mathrm{non-gluing}}(\Sigma) \times [0,1/2])\cup (\Sigma \times \{0\} \setminus \mathrm{int}(F(\Zc_1) \cup F(\Zc_0)))
    \]
    and
    \[
    R^-_0 := (\partial_{\mathrm{non-gluing}}(\Sigma) \times [1/2,1]) \cup (\Sigma \times \{1\} \setminus \mathrm{int}(F(\Zc_1) \cup F(\Zc_0))).
    \]
    Note that if $\Hc$ is $\alpha$-$\alpha$ bordered, then $\Sigma \times \{0\}$ is disjoint from $\mathrm{int}(F(\Zc_1) \cup F(\Zc_0))$, so that in this case we have $R^+_0 = (\partial_{\mathrm{non-gluing}}(\Sigma) \times [0,1/2])\cup (\Sigma \times \{0\})$. Similarly, if $\Hc$ is $\beta$-$\beta$ bordered, then $R^-_0 = (\partial_{\mathrm{non-gluing}}(\Sigma) \times [1/2,1]) \cup (\Sigma \times \{1\})$.    
    \item Add a 3d 2-handle to $Y^0$ for each $\alpha$-circle and $\beta$-circle, with the $\alpha$ 2-handles attached to $\alpha$-circles viewed inside $\Sigma \times \{1\}$ and the $\beta$ 2-handles attached to $\beta$-circles viewed inside $\Sigma \times \{0\}$. Let $Y$ denote the resulting 3-manifold. These handle additions leave $F(\Zc_i)$, $\Lambda_i$, and $\Gamma$ unchanged; the effect of the $\alpha$ handle additions on $\partial Y^0$ is a surgery on $R^-_0$. Let $R^- \subset \partial Y$ denote the result of this surgery. Similarly, the effect of the $\beta$ handle additions is a surgery on $R^+_0$; let $R^+ \subset \partial Y$ denote the result of this surgery.
\end{itemize}
\end{definition}

Figure~\ref{fig:CobordismOrientations} illustrates the construction of a sutured cobordism $Y(\Hc)$ from a bordered sutured Heegaard diagram $\Hc$, focusing on the outgoing or $\mathbf{Z}_1$ side of $\Hc$ and $Y(\Hc)$.

\begin{figure}
    \centering
    \vspace{0.2in}
    \begin{overpic}[width=0.95\textwidth]
    {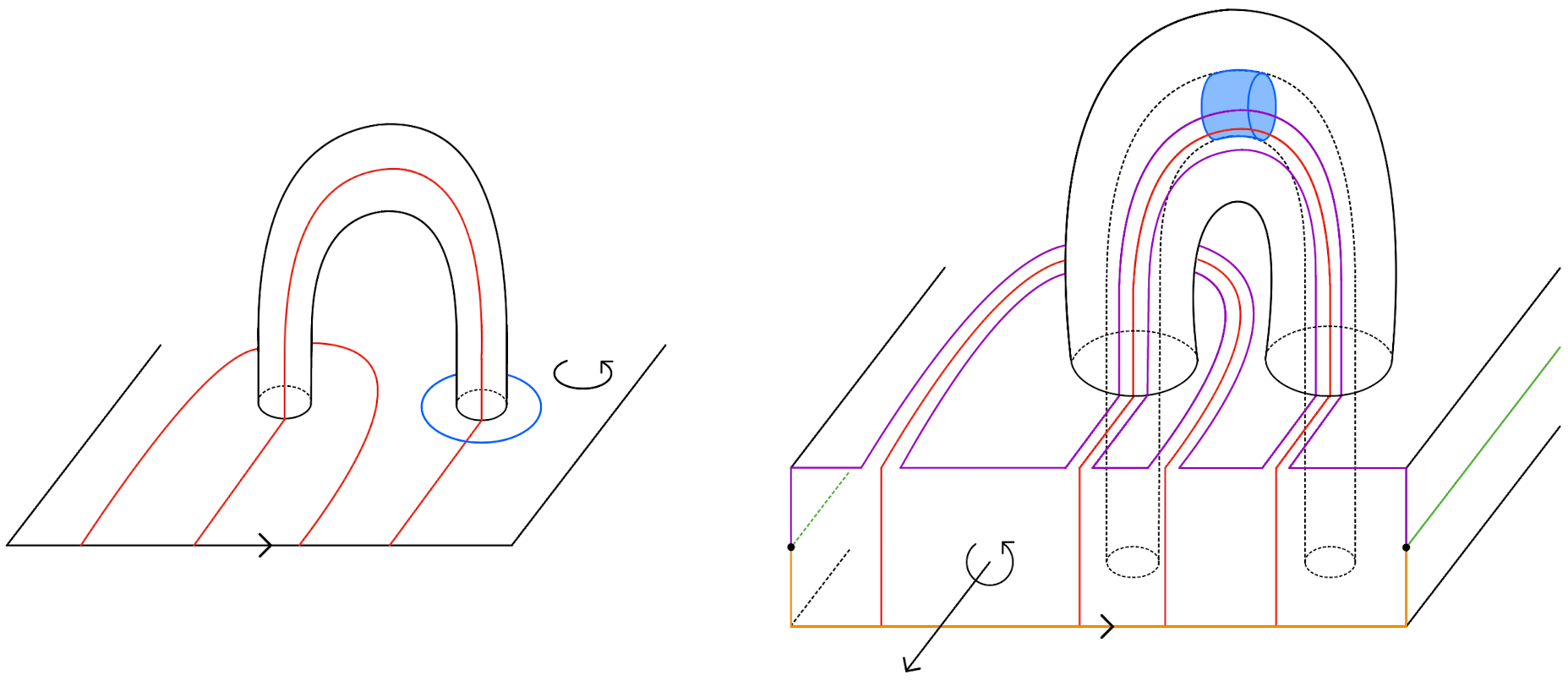}
    \put (16,3) {$\mathbf{Z}_1$} 
    \put (36.5,23) {$\Sigma$} 
    
    \put (63,-2) {$\mathbf{Z}_1\times [0,1]$} 
    \end{overpic}
    \vspace{0.1in}
    \caption{Left: portion of a bordered sutured Heegaard diagram $\Hc$ with Heegaard surface $\Sigma$. Right: outgoing or $\mathbf{Z}_1$ side of the sutured cobordism $Y(\Hc)$; the front rectangular face is $\mathbf{Z}_1 \times [0,1]$. By convention we will usually shade $R^+$ in grey, but this convention may be relaxed in more intricate figures such as this one.}
    \label{fig:CobordismOrientations}
    \end{figure}

\begin{remark}\label{rem:ChangeOfConventions}
    Our choices of conventions stem from our decision to view the decategorified bordered sutured invariant of a sutured surface $(F,\Lambda)$ as $\wedge^* H_1(F,S^+)$ rather than $\wedge^* H_1(F,S^-)$. Equivalently, we choose to view $\alpha$-arc diagrams as the primary way to represent sutured surfaces, rather than $\beta$-arc diagrams; the matching arcs of an $\alpha$-arc diagram give elements of $H_1(F,S^+)$ while the matching arcs of a $\beta$-arc diagram give elements of $H_1(F,S^-)$. Correspondingly, for a sutured cobordism $(Y,\Gamma)$, we will be more concerned with $H_1(Y,R^+)$ than with $H_1(Y,R^-)$, and from an $\alpha$-$\alpha$ Heegaard diagram for $(Y,\Gamma)$, we will extract a relative CW decomposition of a space homotopy equivalent to $Y$ that is relative to $R^+$ (rather than $R^-$ as in e.g. \cite{FJR}).
    
    Definition~\ref{def:AssocBorderedSuturedCob} also uses the usual convention that when constructing a sutured cobordism from a Heegaard diagram, the $\alpha$-side of the diagram gives $R^-$ while the $\beta$-side gives $R^+$. This choice corresponds to our association of relative $\Spinc$ structures to Heegaard diagram generators in Definition~\ref{def:SpincStrsOfGens} below, which also agrees with the usual conventions. As a result, in CW decompositions relative to $R^+$, we want $\beta$-circles of the diagram to give 1-cells and $\alpha$-circles to give 2-cells. In terms of handle decompositions, we want $\beta$-circles to give 1-handles and $\alpha$-circles to give 2-handles, the opposite of the usual convention. 

    These choices, plus the convention that relative handle decompositions or CW decompositions are built from the ``bottom up,'' determine our orientation conventions as follows. Given a bordered sutured Heegaard diagram $\Hc$, when we thicken $\Sigma$ to begin forming $Y(\Hc)$, we imagine that (a local piece of) $\Sigma$ is embedded in the $z=0$ plane in $\R^3$ (with its usual ``counterclockwise'' orientation as a subset of $\R^2$), and that $\Sigma \times [0,1]$ is embedded in the region from $z = 0$ to $z = 1$ in $\R^3$. We give $\Sigma \times [0,1]$ the usual ``right-hand rule'' orientation for regions in $\R^3$; this choice determines the orientation on $Y(\Hc)$. The boundary orientation on $\partial (Y(\Hc))$ is then visualized as the usual ``outward-pointing normal last'' orientation on the boundary of a region in $\R^3$.

    For the relationship between arc diagram orientations and Heegaard diagram orientations, we make the additional choice that the outgoing or left subset of $\partial \Sigma$ is $\mathbf{Z}_1$ while the incoming or right subset of $\partial \Sigma$ is $-\mathbf{Z}_0$. Thus, on the left side of Figure~\ref{fig:CobordismOrientations}, $\mathbf{Z}_1$ should be oriented from left to right.
    
    We can compare with the boundary orientation on the subset $F_1$ of $\partial(Y(\Hc))$ corresponding to $F(\Zc_1)$; by the outward-normal rule, $F_1$ is given the counterclockwise orientation from the perspective of Figure~\ref{fig:CobordismOrientations}. Thus, if $\Hc$ is $\alpha$-bordered on the left, the bottom portion of $F_1$ (in the $z=0$ plane) should be oriented from left to right, in agreement with the orientation on $\mathbf{Z}_1$. In other words, for an $\alpha$-arc diagram $\Zc$, the $S^+$ region of $F(\Zc)$ should be oriented the same as $\mathbf{Z}$. 

    By contrast, the usual conventions are the same as above except that the $\alpha$ handles are added on the bottom of $\Sigma \times [0,1]$ and the $\beta$ handles are added on the top; see \cite[Figure 4.1]{LOT} and \cite[Figure 3]{LOTmor} as well as \cite[Figure 6]{Zarev}. The result is that, for an $\alpha$-arc diagram $\Zc$, the $S^+$ region of $F(\Zc)$ should be oriented oppositely to $\mathbf{Z}$, explaining our change of convention in Definition~\ref{def:SuturedSurfaceOfZ}.
\end{remark}

\begin{remark}\label{rem:FJROppositeOrientation}    
    In the special case that $\Hc$ is an ordinary sutured Heegaard diagram (when $\Zc_0=\Zc_1=\emptyset$, as in Definition~\ref{def:OtherTypesOfHeegaardDiagrams}(c)), the sutured cobordism $Y(\Hc)$ associated to $\Hc$ is an ordinary sutured manifold with no bordered boundary, i.e. satisfying $F_0=F_1=\emptyset$. Relative to the conventions of \cite{FJR} and \cite{Juhasz}, our construction of $Y(\Hc)$ yields the sutured manifold with the opposite overall orientation, but is such that the regions $R^+$ and $R^-$ coincide. 
\end{remark}

All sutured cobordisms $Y(\Hc)$ constructed by Definition~\ref{def:AssocBorderedSuturedCob} satisfy the topological assumptions of Definition~\ref{def:SuturedCob}; indeed, each component of the 3-manifold $Y^0$ in Definition~\ref{def:AssocBorderedSuturedCob} intersects both $R^+_0$ and $R^-_0$ nontrivially, because both $\Sigma \times \{0\} \setminus \mathrm{int}(F(\Zc_1) \cup F(\Zc_0))$ and $\Sigma \times \{1\} \setminus \mathrm{int}(F(\Zc_1) \cup F(\Zc_0))$ are nonempty. The handle additions never produce a component of $Y$ whose boundary avoids $R^+$ or $R^-$. In the other direction, we have the following proposition.

\begin{proposition}\label{prop:RepresentabilityByHeegaardDiagrams}
    Every sutured cobordism $(Y,\Gamma)$ satisfying the assumptions of Definition~\ref{def:SuturedCob} can be written as $Y(\Hc)$ for some bordered sutured Heegaard diagram $\Hc$, which can be taken to be of type $\alpha$-$\alpha$, $\alpha$-$\beta$, $\beta$-$\alpha$, or $\beta$-$\beta$.
\end{proposition}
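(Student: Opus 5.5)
The plan is to reduce to Zarev's representability result for bordered sutured manifolds, via a relative handle decomposition adapted to the sutures and the bordered boundary. First reduce to $Y$ connected, since disjoint unions of Heegaard diagrams give disjoint unions of cobordisms. Using Proposition~\ref{prop:SuturedSurfaceRepresentable}, choose $\alpha$-arc diagrams $\Zc_0,\Zc_1$ with $F(\Zc_i)\cong(F_i,\Lambda_i)$. We treat the $\alpha$-$\alpha$ case first and obtain the other types afterward.

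For the $\alpha$-$\alpha$ case, build a collar neighborhood of $R^+\cup F_0\cup F_1$, thickened near $\Gamma$. Concretely, take $R^+\times[0,1]$ together with the $\mathbf{Z}_i\times[0,1]$ portions of the $F_i$. Attach 3d 1-handles whose cores are the matching arcs of $\Zc_i$, pushed slightly into $Y$; this recovers the handle parts of $F_i$ as strips around the $\alpha$-arcs. Next, choose a self-indexing Morse function on $Y$ relative to this piece, equal to $0$ on $R^+$ and $3$ on $R^-$, with the $F_i$ and $\Gamma$ vertical. The standard cancellation arguments, as in Juh\'asz's proof for sutured manifolds, remove index $0$ and $3$ critical points. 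This uses that every component of $Y$ meets both $R^+$ and $R^-$ nontrivially, which is exactly the hypothesis in Definition~\ref{def:SuturedCob}. The level surface in the middle is then the Heegaard surface $\Sigma$. Its boundary consists of $\Gamma$ together with the arcs $\mathbf{Z}_1$ and $-\mathbf{Z}_0$. The $\beta$-circles are the attaching circles of the index-1 handles from the $R^+$ side, and the $\alpha$-circles are the belt circles of the index-2 handles, following the conventions of Remark~\ref{rem:ChangeOfConventions} (these are reversed relative to Zarev's). The matching arcs of $\Zc_i$ traced to $\Sigma$ give the $\alpha$-arcs. Comparing with Definition~\ref{def:AssocBorderedSuturedCob} then gives $Y(\Hc)\cong(Y,\Gamma)$. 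One must check that the orientation and side conventions of Remark~\ref{rem:ChangeOfConventions} match, so that $F_1$ comes out orientation-preserving and $F_0$ orientation-reversing.

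The other types follow by dualizing the arc diagrams. Replacing $\Zc_i$ by the $\beta$-arc diagram $\Zc_i^*$, which satisfies $F(\Zc_i^*)=F(\Zc_i)$, means building $F_i$ from the $R^-$ side instead. Equivalently, apply the same construction to the Morse function $3-f$, treating the $F_i$ boundary on that side using the belt arcs. Since the construction is local near each $F_i$, the choice can be made independently on the incoming and outgoing sides, giving all four types.

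The main obstacle is the bordered part: making sure the handle strips in $F_i$ match the chosen arc diagram while the Morse function stays compatible near $\partial F_i\subset\Gamma$. We also need the $\alpha$-arcs to be disjoint from the $\alpha$-circles. Our definition imposes no homological linear-independence conditions (Remark~\ref{rem:HDDifferencesFromZarev}), so this is easier than in Zarev's setting. I would follow Zarev's proof of \cite[Proposition 4.4]{Zarev} essentially verbatim, with orientations reversed, and note that allowing circles in $\mathbf{Z}$ changes nothing, since annular collars are handled the same way as rectangular ones.
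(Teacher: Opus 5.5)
There is a genuine gap, and it is exactly at the point you call the main obstacle: which side of the Heegaard surface the bordered pieces $F_i$ go on.

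In the conventions of Definition~\ref{def:AssocBorderedSuturedCob}, the $\alpha$-curves, and the strips of $F(\Zc_i)$ around the $\alpha$-arcs, lie in $\Sigma\times\{1\}$ on the $R^-$ side. For an $\alpha$-arc diagram, the part $\psi(\mathbf Z_i)$ of $\partial\Sigma$ corresponds to $S^+_i$. You instead collar $F_0\cup F_1$ together with $R^+$ and take the Morse function relative to that piece. This has three consequences:
\begin{itemize}
\item $S^+_i$ becomes interior to the base.
\item The part of the level surface's boundary coming from $F_i$ is $S^-_i$, which for an $\alpha$-arc diagram is $\mathbf Z_i^*$ up to orientation, not $\mathbf Z_i$.
\item Arcs pushed from $F_i$ up to the level surface avoid the $\beta$-circles (the belt circles of the 1-handles), but will in general cross the $\alpha$-circles (the attaching circles of the 2-handles; you have attaching and belt swapped).
\end{itemize}
So the step ``the matching arcs traced to $\Sigma$ give the $\alpha$-arcs'' fails. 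Definition~\ref{def:AlphaBorderedDiagram} requires $\alpha$-arcs to be disjoint from the $\alpha$-circles, and nothing in your construction arranges this. What you have described is really the $\beta$-$\beta$ construction, with the dual diagrams $\Zc_i^*$. Your other requirement, that $F_i$ be ``vertical,'' is incompatible with that collar. It also cannot hold literally: $F_i$ is not a product $S^+_i\times I$, so $f|_{F_i}$ needs a saddle for each matching arc. How those saddles produce arcs disjoint from the $\alpha$-circles is exactly what you defer to ``follow Zarev verbatim.''

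The fix, which is the paper's proof, needs no new Morse theory. Regard $Y$ as an ordinary sutured manifold with $\widetilde R^+=R^+$ and $\widetilde R^-=F_1\cup R^-\cup F_0$; every component still meets both. Take a sutured Heegaard diagram from Juh\'asz's existence result. The matching arcs of $\Zc_i$ then lie in $F_i\subset\widetilde R^-$, with endpoints on $S^+_i\subset\partial\widetilde\Sigma$. Since $\widetilde R^-$ is $\widetilde\Sigma$ surgered along the $\alpha$-circles, isotoping the arcs off the newly added disks puts them in $\widetilde\Sigma$, disjoint from the $\alpha$-circles. This is what removes your obstacle; the absence of linear-independence conditions plays no role.

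For the other types, absorb $F_i$ into $\widetilde R^+$ on a $\beta$ side and into $\widetilde R^-$ on an $\alpha$ side. Alternatively, glue half-identity diagrams as in Corollary~\ref{cor:NormalizedHeegaardDiagrams}; with that route your collar construction, relabeled as $\beta$-$\beta$, could be salvaged. Replacing $f$ by $3-f$ is not equivalent to either of these: it swaps the roles of $R^+$ and $R^-$ globally, so by itself it does not give a diagram for the same sutured cobordism.
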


\begin{proof}
    We first construct an $\alpha$-$\alpha$ bordered sutured Heegaard diagram representing $(Y,\Gamma)$. Let $(\widetilde{Y}, \widetilde{\Gamma}, \widetilde{R}^+,\widetilde{R}^-)$ be the sutured 3-manifold such that $\widetilde{Y} = Y$, $\widetilde{R}^+ = R^+$, and $\widetilde{R}^- = F_1 \cup R^- \cup F_0$. By assumption, every component of $\widetilde{Y}$ intersects both $\widetilde{R}^+$ and $\widetilde{R}^-$ nontrivially. By Juhasz \cite[Proposition 2.13]{Juhasz}, there exists a sutured Heegaard diagram $\widetilde{\Hc} = (\widetilde{\Sigma}, \widetilde{\alpha}, \widetilde{\beta})$ defining $(\widetilde{Y}, \widetilde{\Gamma}, \widetilde{R}^+,\widetilde{R}^-)$. Note that the subset $\widetilde{R}^-$ of $\partial \widetilde{Y}$ is obtained by surgery (boundary effect of $\alpha$ handle addition) on the Heegaard surface $\widetilde{\Sigma}$. 

    Pick $\alpha$-arc diagrams $\Zc_i$ and identifications of $(F_i, \Lambda_i)$ with $F(\Zc_i)$. Under these identifications, the matching arcs of $\Zc_i$ become arcs in $F_i$ and thus in $\widetilde{R}^-$. After an isotopy, we can arrange that these arcs in $\widetilde{R}^-$ avoid the ``new disks'' of $\widetilde{R}^-$ that are not present in $\widetilde{\Sigma}$ and instead are added during the surgery. Viewing these arcs in $\widetilde{\Sigma}$ as incoming and outgoing $\alpha$-arcs, we get a bordered sutured Heegaard diagram representing $(Y,\Gamma)$. See Figure~\ref{fig:HDiagRepresentabilityIsotopy}.
    
    For $\beta$-$\beta$ diagrams, the argument is parallel, but we define $(\widetilde{Y}, \widetilde{\Gamma}, \widetilde{R}^+,\widetilde{R}^-)$ as the sutured 3-manifold such that $\widetilde{R}^- = R^-$ and $\widetilde{R}^+ = F_1 \cup R^+ \cup F_0$, pick $\beta$-arc diagrams $\Zc_i$, and isotope away $\beta$-arcs in $\widetilde{R}^+$ from the boundary effect of the $\beta$ handle additions. For $\alpha$-$\beta$ and $\beta$-$\alpha$ diagrams, we would define $\widetilde{R}^- =F_1 \cup R^- $ (resp. $\widetilde{R}^- = R^- \cup F_0$) and $\widetilde{R}^+ = R^+ \cup F_0$ (resp. $\widetilde{R}^+ = F_1 \cup R^+$), pick a $\beta$ (resp. $\alpha$) arc diagram $\Zc_0$ and an $\alpha$ (resp. $\beta$) arc diagram $\Zc_1$, then isotope away $\alpha$-arcs in $\widetilde{R}^-$ from the boundary effect of the $\alpha$ handle additions, and isotope away $\beta$-arcs in $\widetilde{R}^+$ from the boundary effect of the $\beta$ handle additions.
\end{proof}

\begin{figure}
    \centering
    \vspace{0.2in}
    \begin{overpic}[width=0.87\textwidth]
    {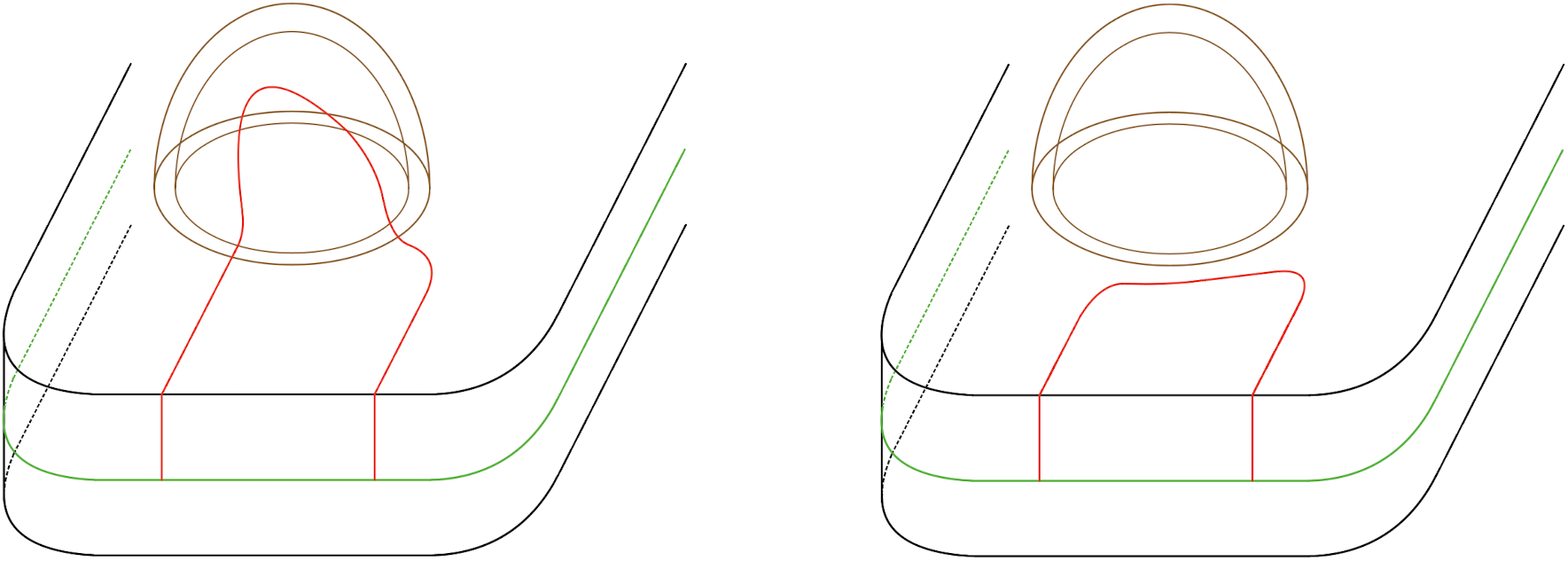}
    \put (45,10) {$\longrightarrow$} 
    
    \put (16,2) {$R^+$} 
    \put (16,7) {$R^-$} 
    \put (73,2) {$R^+$} 
    \put (73,7) {$R^-$} 
    \end{overpic}
    \vspace{0.1in}
    \caption{Performing an isotopy on an arc of $F_1$ away from the disks added to $\widetilde{R}^- \setminus \widetilde{\Sigma}$ during surgery, as in the proof of Proposition~\ref{prop:RepresentabilityByHeegaardDiagrams}}
    \label{fig:HDiagRepresentabilityIsotopy}
\end{figure}
    
Note that once the second paragraph in the above proof is established, type $\alpha$-$\beta$, $\beta$-$\alpha$, or $\beta$-$\beta$ representability follows as well by appealing to the half-identity Heegaard diagrams introduced below in Example~\ref{ex:HalfIdentityDiags}.

There are Heegaard moves that can relate any two choices of Heegaard diagram for a given sutured cobordism, but we will not need them here. 

\begin{definition}[cf. Section 4.6 of \cite{Zarev}]\label{def:BorderedSuturedHDGluing}
    Let $\Hc$ and $\Hc'$ be bordered sutured Heegaard diagrams from $\mathcal{Z}_0$ to $\mathcal{Z}_1$ and from $\mathcal{Z}_1$ to $\mathcal{Z}_2$ respectively, with Heegaard surfaces $\Sigma$ and $\Sigma'$. Let $\mathcal{Z}_1 = (\mathbf{Z}_1, \mathbf{a}_1, \sim_1, t_1)$. Assume that on their $\mathcal{Z}_1$-sides, $\Hc$ and $\Hc'$ are either both $\alpha$-bordered or both $\beta$-bordered. We can form a bordered sutured Heegaard diagram $\Hc' \cup_{\Zc_1} \Hc$ from $\mathcal{Z}_0$ to $\mathcal{Z}_2$ by gluing the outgoing (left) boundary $\mathbf{Z}_1 \subset \Sigma$ of $\Hc$ to the incoming (right) boundary $-\mathbf{Z}_1 \subset \Sigma'$ of $\Hc'$ using the given identifications between $\mathbf{Z}_1$ and subsets of the boundary of $\Sigma$ and $\Sigma'$. The $\alpha$-arcs or $\beta$-arcs at the interface between $\Sigma$ and $\Sigma'$ glue to give closed circles. 
\end{definition}

\begin{definition}\label{def:SuturedCobGluing}
    If $(Y,\Gamma)$ is a sutured cobordism from $(F_0,\Lambda_0)$ to $(F_1,\Lambda_1)$ and $(Y',\Gamma')$ is a sutured cobordism from $(F_1,\Lambda_1)$ to $(F_2,\Lambda_2)$, define a sutured cobordism $Y' \cup_{F_1} Y$ from $F_0$ to $F_2$ by using the embeddings of $F_1$ into $\partial Y$ and of $-F_1$ into $\partial Y'$ to glue $Y'$ and $Y$ along the given orientation-reversing identification between $-F_1 \subset \partial Y'$ and $F_1 \subset \partial Y$. 
\end{definition}

\begin{proposition}
    If we have two bordered sutured Heegaard diagrams $\Hc$ and $\Hc'$ as in Definition~\ref{def:BorderedSuturedHDGluing}, then
    \[
    Y(\Hc' \cup_{\Zc_1} \Hc) \cong Y(\Hc') \cup_{F(\Zc_1)} Y(\Hc).
    \]
\end{proposition}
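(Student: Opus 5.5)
I would prove this by working directly from Definition~\ref{def:AssocBorderedSuturedCob}, building both sides out of the same pieces and checking that the pieces glue compatibly. Write $\Sigma'' = \Sigma' \cup_{\mathbf{Z}_1} \Sigma$ for the Heegaard surface of $\Hc' \cup_{\Zc_1} \Hc$. Then
\[
\Sigma'' \times [0,1] = (\Sigma' \times [0,1]) \cup_{\psi(\mathbf{Z}_1) \times [0,1]} (\Sigma \times [0,1]).
\]
The two boundary strips $\psi(\mathbf{Z}_1) \times [0,1] \subset \partial(\Sigma \times [0,1])$ and $\psi'(-\mathbf{Z}_1) \times [0,1] \subset \partial(\Sigma' \times [0,1])$ are identified by the map $\psi' \circ \psi^{-1} \times \id_{[0,1]}$. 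This map is orientation-reversing on the boundary, so the product orientations on the two pieces extend to the product orientation on $\Sigma'' \times [0,1]$. This observation handles the thickened surfaces before any handles are attached.

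Next I would account for the interface arcs. Suppose both diagrams are $\alpha$-bordered along $\Zc_1$; the $\beta$ case is identical after exchanging the roles of $\Sigma \times \{0\}$ and $\Sigma \times \{1\}$. Each matched pair of points of $\mathbf{a}_1$ is the endpoint set of one outgoing $\alpha$-arc $a \subset \Sigma$ and one incoming $\alpha$-arc $a' \subset \Sigma'$. In the glued diagram these two arcs join into an $\alpha$-circle $a \cup a'$, and $Y(\Hc' \cup_{\Zc_1} \Hc)$ attaches a 2-handle along this circle in $\Sigma'' \times \{1\}$. On the other side, $Y(\Hc') \cup_{F(\Zc_1)} Y(\Hc)$ is obtained by gluing $\Sigma \times [0,1]$ to $\Sigma' \times [0,1]$ along all of $F(\Zc_1)$. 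This means gluing along the strip $\psi(\mathbf{Z}_1) \times [0,1]$ and also gluing a thin strip $N(a) \times \{1\}$ to the strip $N(a') \times \{1\}$ for each pair of arcs, where each strip is the 1-handle of $F(\Zc_1)$ corresponding to that matching.

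The key local claim is the following. Take the product $\Sigma'' \times [0,1]$ and additionally glue, for each arc pair, the two halves $N(a) \times \{1\}$ and $N(a') \times \{1\}$ of a band $N(a \cup a') \times \{1\}$ in the top boundary. The result is homeomorphic to $\Sigma'' \times [0,1]$ with a 3d 2-handle attached along $a \cup a'$. Both operations fill in a neighborhood of a disk bounded by $a \cup a'$. The folding picture makes this concrete: the self-gluing collapses the band onto a core disk $D^2 \times I$, which is exactly the 2-handle. I would state this as a small lemma and prove it in a local model, $[-1,1] \times [-1,1] \times [0,1]$ glued with a mirror copy.

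The remaining $\alpha$- and $\beta$-circles in $\Sigma$ and $\Sigma'$ contribute identical handles on both sides. The suture data also agree. On both sides $\Gamma$ is $\partial_{\mathrm{non-gluing}} \times \{1/2\}$, where on the glued side $\Lambda_1 \times [0,1]$ becomes interior. The regions $R^\pm$ agree because their pieces match, and the remaining boundary surfaces $F_0$ and $F_2$ are untouched.

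I expect the main obstacle to be the local handle lemma, together with checking that the boundary surgery it induces on $R^-_0$ matches what the self-gluing produces. In particular, I need to verify that the leftover boundary after folding is exactly the surgered $R^-$, so that $\partial$-identifications of $R^\pm$ and the orientation conventions of Remark~\ref{rem:ChangeOfConventions} are respected. I would handle this in an explicit coordinate model near one matching pair, then note that all the constructions are local and commute.
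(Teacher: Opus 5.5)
Your proposal is correct and follows essentially the paper's route. You glue the thickened Heegaard surfaces along $\mathbf{Z}_1 \times [0,1]$ and observe that attaching 2-handles along the new circles $a \cup a'$ of $\Hc' \cup_{\Zc_1} \Hc$ accomplishes exactly the remaining gluing of the strips of $F(\Zc_1) \setminus (\mathbf{Z}_1 \times [0,1])$ in $Y(\Hc)$ and $Y(\Hc')$; your folding lemma (a boundary annulus glued to itself by reflection is a 2-handle along its core, with the induced boundary surgery giving the surgered $R^-$) just makes explicit the local step the paper states in one sentence.
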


\begin{proof}
    The main apparent difference between the two sides is that if one glues the diagrams first to form $\Hc' \cup_{\Zc_1} \Hc$ and then takes $Y(\Hc' \cup_{\Zc_1} \Hc)$, the result at first glance looks glued from $Y(\Hc')$ and $Y(\Hc)$ along $\mathbf{Z}_1 \times [0,1]$, rather than along the larger surface $F_1 = F(\Zc_1)$ that also contains strips around the $\alpha$ or $\beta$-arcs being glued on either side. However, there is also a new $\alpha$ or $\beta$-circle in $\Hc' \cup_{\Zc_1} \Hc$ whenever an arc in $\Hc$ gets joined to an arc in $\Hc'$ by the diagram gluing. When adding 3d 2-handles to these new circles, the effect is to glue $(F_1 \setminus (\mathbf{Z}_1 \times [0,1])) \subset Y(\Hc)$ to $(F_1 \setminus (\mathbf{Z}_1 \times [0,1])) \subset Y(\Hc')$, producing $Y(\Hc') \cup_{F(\Zc_1)} Y(\Hc)$ up to isomorphism of sutured cobordisms.
\end{proof}

\begin{example}[cf. Construction 8.18 of \cite{LOTmor}]\label{ex:HalfIdentityDiags}
Let $\mathcal{Z} = (\mathbf{Z},\mathbf{a},\sim, t = \alpha)$ be an $\alpha$-arc diagram, and let $(F,\Lambda) = F(\Zc)$. The $\alpha$-$\beta$ \emph{half-identity Heegaard diagram} for $\mathcal{Z}$ is the $\alpha$-$\beta$ bordered sutured Heegaard diagram $\Hc^{\alpha \beta}_{1/2} =(\Sigma,\boldsymbol{\alpha},\boldsymbol{\beta}, \mathcal{Z}^*,\mathcal{Z}, \psi)$ where:
\begin{itemize}
    \item $\Sigma$ is the oriented surface $F$; note that we can write $\partial \Sigma$ as $\Zb \cup_{\Lambda} -\Zb^*$. Thicken each point $p$ in $\Lambda$ into a small closed interval $p \times [0,1]$ in $\partial \Sigma$, shrinking $\Zb$ and $-\Zb^*$ accordingly, so that
    \[
    \partial \Sigma = \Zb \cup_{\Lambda \times \{0\}} (\Lambda \times [0,1]) \cup_{\Lambda \times \{1\}} -\Zb^*.
    \]
    \item On the output side (left), $\psi$ identifies $\mathbf{Z}$ with $\mathbf{Z} \subset \partial \Sigma$ in the evident way (this identification is orientation-preserving);
    \item On the input side (right), $\psi$ identifies $-\mathbf{Z}^*$ with $-\mathbf{Z}^* \subset \partial \Sigma$ in the evident way (this identification is orientation-preserving);
    \item There are no $\alpha$-circles, and the outgoing arcs are $\alpha$-arcs given by the cores of the 2d 1-handles used to build $\Sigma = F$ from $\Zc$;
    \item There are no $\beta$-circles, and the incoming arcs are $\beta$-arcs given by the co-cores of the 2d 1-handles used to build $\Sigma = F$ from $\Zc$.
\end{itemize}
The $\beta$-$\alpha$ \emph{half-identity Heegaard diagram} for $\mathcal{Z}$ is the $\beta$-$\alpha$ bordered sutured Heegaard diagram $\Hc^{\beta \alpha}_{1/2} =(\Sigma,\boldsymbol{\alpha},\boldsymbol{\beta}, \mathcal{Z},\mathcal{Z}^*, \psi)$ where:
\begin{itemize}
    \item $\Sigma$ is $F$ with its orientation reversed; note that we can write $\partial \Sigma$ as $\Zb^* \cup_{\Lambda} -\Zb$. Thicken each point $p$ in $\Lambda$ into a small closed interval $p \times [0,1]$ in $\partial \Sigma$, shrinking $\Zb^*$ and $-\Zb$ accordingly, so that
    \[
    \partial \Sigma = \Zb \cup_{\Lambda \times \{0\}} (\Lambda \times [0,1]) \cup_{\Lambda \times \{1\}} -\Zb^*.
    \]
    \item On the output side (left), $\psi$ identifies $\mathbf{Z}^*$ with $\mathbf{Z}^* \subset \partial \Sigma$ in the evident way (this identification is orientation-preserving);
    \item On the input side (right), $\psi$ identifies $-\mathbf{Z}$ with $-\mathbf{Z} \subset \partial \Sigma$ in the evident way (this identification is orientation-preserving);
    \item There are no $\beta$-circles, and the outgoing arcs are $\beta$-arcs given by the co-cores of the 2d 1-handles used to build $\Sigma = F$ from $\Zc$;
    \item There are no $\alpha$-circles, and the incoming arcs are $\alpha$-arcs given by the cores of the 2d 1-handles used to build $\Sigma = F$ from $\Zc$.
\end{itemize}
\end{example}

Both types of half-identity Heegaard diagrams for $\mathcal{Z}$ represent the identity sutured cobordism on $F(\mathcal{Z})$. Figure~\ref{fig:Half identity diagram} shows the $\alpha$-$\beta$ and $\beta$-$\alpha$ half-identity Heegaard diagrams for the arc diagram $\mathcal{Z}$ from Figure~\ref{fig:dual arc diagram}. 

\begin{example}[cf. Definition 5.35 of \cite{LOTbimod}]\label{ex:identityHD} 
Let $\mathcal{Z}$ be an arc diagram. The $\alpha$-$\alpha$ \emph{identity Heegaard diagram} $\Hc_{\id}$ for $\mathcal{Z}$ is the bordered sutured Heegaard diagram $\Hc^{\alpha \beta}_{1/2} \cup_{\Zc^*} \Hc^{\beta \alpha}_{1/2}$.
\end{example}

\begin{figure}
    \centering
    \vspace{0.33in}
    \begin{overpic}[width=0.925\linewidth]
    {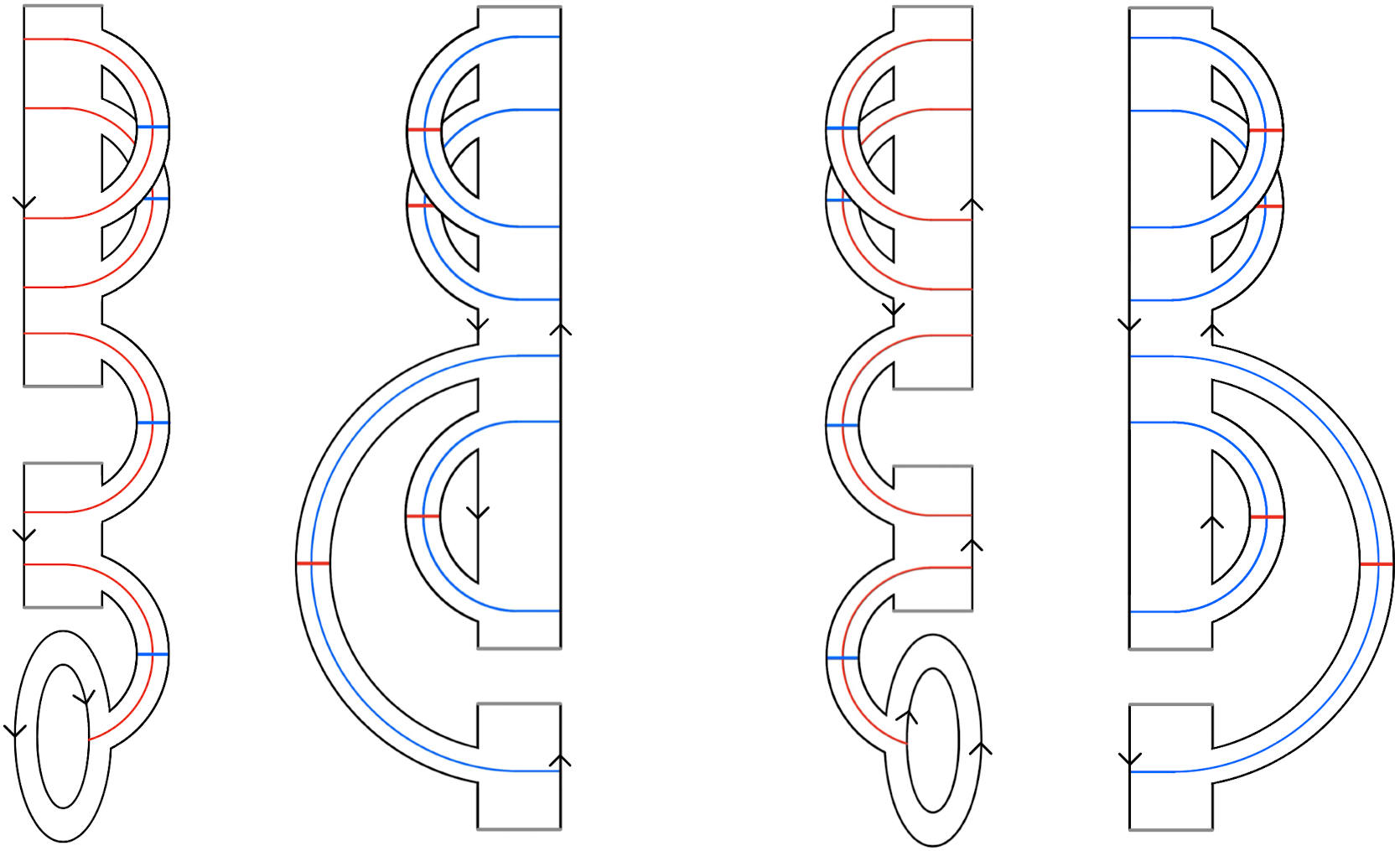}
    \put (18,-4) {$\Hc^{\alpha \beta}_{1/2}$} 
    \put (73,-4) {$\Hc^{\beta \alpha}_{1/2}$} 

    \put (74,30) {$=$} 
    \put (17,30) {$=$} 
    
    \put (0,62) {$\Zc$} 
    \put (6.5,62) {$-\Zc^*$} 
    \put (27,62) {$\Zc$} 
    \put (38,62) {$-\Zc^*$} 

    \put (59,62) {$\Zc^*$} 
    \put (67,62) {$-\Zc$} 
    \put (79,62) {$\Zc^*$} 
    \put (88,62) {$-\Zc$} 
    \end{overpic}
    \vspace{0.3in}
    \caption{Leftmost pair: Two representations of the $\alpha$-$\beta$ half-identity diagrams for the arc diagram $\mathcal{Z}$ from Figure~\ref{fig:dual arc diagram}. Rightmost pair: Two representations of the $\beta$-$\alpha$ half-identity diagrams for the same arc diagram $\mathcal{Z}$.
    The construction outlined in Example~\ref{ex:HalfIdentityDiags} \emph{a priori} produces the first and third figures. We choose to display the equivalent second and fourth diagrams, as these are more convenient for the construction of $\Hc_{\norm} =\Hc^{\alpha \beta}_{1/2} \cup_{\Zc_1^*} \Hc' \cup_{\Zc_0^*} \Hc^{\beta \alpha}_{1/2}$ appearing in Corollary~\ref{cor:NormalizedHeegaardDiagrams}.}
    \label{fig:Half identity diagram}
\end{figure}

Since the half-identity diagrams for $\Zc$ represent the identity sutured cobordism on $F(\Zc)$, so does the identity Heegaard diagram. We could define $\beta$-$\beta$ identity Heegaard diagrams similarly, although we will not use them here.

Below, we show that every sutured cobordism is representable by a Heegaard diagram with a distinguished decomposition. We view this decomposition as placing an $\alpha$-$\alpha$ bordered sutured Heegaard diagram into a \emph{normal form}. This form will be used throughout the paper to streamline many arguments, including the proof of our main result over $\Z$, Theorem~\ref{thm:IntroFirstThm}. This form also primes our constructions for bridging over to the purely sutured setting (see Proposition~\ref{prop:BSDAandSFH}), and for defining the $\Spinc$-aware version of $[\BSDA]_{\comb}$ (see Section~\ref{sec:SpincStructures}). We show in Corollary~\ref{cor:CanChooseXiNormalized} that assuming this form in the setting over $\Z$ is irrelevant up to overall sign. 

\begin{corollary}\label{cor:NormalizedHeegaardDiagrams}
    Every sutured cobordism $(Y,\Gamma)$ satisfying the assumptions of Definition~\ref{def:SuturedCob} can be written as $Y(\Hc)$ for some $\alpha$-$\alpha$ bordered sutured Heegaard diagram $\Hc$ of the form $\Hc_{\norm} \coloneq \Hc^{\alpha \beta}_{1/2} \cup_{\Zc_1^*} \Hc' \cup_{\Zc_0^*} \Hc^{\beta \alpha}_{1/2}$, where $\Hc'$ is a $\beta$-$\beta$ bordered Heegaard diagram for $Y$ with left arc diagram $\Zc_1^*$ and right arc diagram $\Zc_0^*$ for two $\alpha$-arc diagrams $\Zc_0,\Zc_1$.
\end{corollary}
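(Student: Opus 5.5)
We prove the corollary by applying Proposition~\ref{prop:RepresentabilityByHeegaardDiagrams} to the $\beta$-$\beta$ case and then capping off both sides with half-identity diagrams. First pick $\alpha$-arc diagrams $\Zc_0, \Zc_1$ with identifications $F(\Zc_i) \cong (F_i,\Lambda_i)$; these exist by Proposition~\ref{prop:SuturedSurfaceRepresentable}. Their duals $\Zc_i^*$ are $\beta$-arc diagrams with $F(\Zc_i^*) = F(\Zc_i)$, so we may view $(F_i,\Lambda_i)$ as represented by $\Zc_i^*$. By Proposition~\ref{prop:RepresentabilityByHeegaardDiagrams}, specifically the $\beta$-$\beta$ version of its proof, in which we pick the $\beta$-arc diagrams to be exactly $\Zc_0^*$ and $\Zc_1^*$ with the induced identifications, there is a $\beta$-$\beta$ bordered sutured Heegaard diagram $\Hc'$ from $\Zc_0^*$ to $\Zc_1^*$ with $Y(\Hc') \cong (Y,\Gamma)$, compatibly with the identifications of the boundary surfaces.

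Next we glue on the half-identity diagrams of Example~\ref{ex:HalfIdentityDiags}.
\begin{itemize}
\item $\Hc^{\alpha\beta}_{1/2}$ for $\Zc_1$ goes from $\Zc_1^*$ to $\Zc_1$ and is $\beta$-bordered on its incoming side.
\item $\Hc^{\beta\alpha}_{1/2}$ for $\Zc_0$ goes from $\Zc_0$ to $\Zc_0^*$ and is $\beta$-bordered on its outgoing side.
\end{itemize}
So the hypotheses of Definition~\ref{def:BorderedSuturedHDGluing} hold at both interfaces: both sides are $\beta$-bordered along $\Zc_1^*$ and along $\Zc_0^*$. This gives $\Hc_{\norm} = \Hc^{\alpha\beta}_{1/2} \cup_{\Zc_1^*} \Hc' \cup_{\Zc_0^*} \Hc^{\beta\alpha}_{1/2}$, which is $\alpha$-bordered on both its outer sides, from $\Zc_0$ to $\Zc_1$.

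Applying the gluing proposition twice gives
\[
Y(\Hc_{\norm}) \cong Y(\Hc^{\alpha\beta}_{1/2}) \cup_{F(\Zc_1^*)} Y(\Hc') \cup_{F(\Zc_0^*)} Y(\Hc^{\beta\alpha}_{1/2}).
\]
Each half-identity diagram represents the identity sutured cobordism, as stated after Example~\ref{ex:HalfIdentityDiags}, and gluing identity cobordisms does not change a cobordism up to isomorphism. Hence $Y(\Hc_{\norm}) \cong (Y,\Gamma)$.

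The main point requiring care is that the identifications match. The identity cobordism $Y(\Hc^{\alpha\beta}_{1/2})$ must identify $F(\Zc_1^*)$ with $F(\Zc_1)$ via the canonical identification of $F(\Zc^*) = F(\Zc)$, which is the same identification used to fix the boundary parametrization of $\Hc'$; the analogous statement must hold on the $\Zc_0$ side. I would check this directly from the construction of $Y(\Hc)$ in Definition~\ref{def:AssocBorderedSuturedCob}. In the half-identity diagram the $\alpha$-arcs are cores and the $\beta$-arcs are co-cores of the same 1-handles of $F$. The thickened surface $F\times[0,1]$ therefore has its two parametrized boundary surfaces related by the obvious product structure, and that product structure realizes the canonical identification $F(\Zc^*) = F(\Zc)$. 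Even if the identification turned out to be off by a diffeomorphism, we could absorb it by reparametrizing the boundary of $\Hc'$, since in Proposition~\ref{prop:RepresentabilityByHeegaardDiagrams} the boundary identifications of $\Hc'$ can be chosen freely.
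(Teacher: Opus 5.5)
Your proof is correct, but it starts from a different point than the paper's proof.

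The paper starts from an arbitrary $\alpha$-$\alpha$ diagram $\widetilde{\Hc}$ for $(Y,\Gamma)$. This is the case of Proposition~\ref{prop:RepresentabilityByHeegaardDiagrams} that is argued in detail. The paper glues a full identity diagram $\Hc_{\id}=\Hc^{\alpha\beta}_{1/2}\cup_{\Zc^*}\Hc^{\beta\alpha}_{1/2}$ onto each side and then regroups, setting $\Hc' := \Hc^{\beta\alpha}_{1/2}\cup_{\Zc_1}\widetilde{\Hc}\cup_{\Zc_0}\Hc^{\alpha\beta}_{1/2}$. This $\Hc'$ is exactly the half-identity construction of a $\beta$-$\beta$ diagram mentioned after that proposition.

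You instead invoke the $\beta$-$\beta$ case directly, which the paper only sketches as ``parallel,'' and glue half-identities onto it. For the existence statement both routes work, and yours is slightly more economical: no full identity diagrams and no regrouping.

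What the paper's version buys is that it normalizes any \emph{given} $\alpha$-$\alpha$ diagram by gluing identity diagrams, and it produces $\Hc'$ in an explicit form. Corollary~\ref{cor:CanChooseXiNormalized} relies on exactly this. There, $\Xi_{\norm}$ is obtained by gluing $\Xi_{\id}$ onto an arbitrary $\Xi$, so that Propositions~\ref{prop:BSDARespectsGluing} and \ref{prop:BSDARespectsIdentity} compare the two.

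Your worry about boundary identifications is harmless, for the reason you give: the parametrizations $F(\Zc_i)\cong F_i$ are not fixed in the statement. The paper's proof passes over this point in the same way.
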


\begin{proof}
    By Proposition~\ref{prop:RepresentabilityByHeegaardDiagrams}, we can write $Y$ as $Y(\widetilde{\Hc})$ for some $\alpha$-$\alpha$ bordered sutured Heegaard diagram $\widetilde{\Hc}$, say with left arc diagram $\Zc_1$ and right arc diagram $\Zc_0$. Let
    \begin{align*}
    \Hc_{\norm} &= \Hc_{\id} \cup_{\Zc_1} \widetilde{\Hc} \cup_{\Zc_0} \Hc_{\id} \\
    &= \Hc^{\alpha \beta}_{1/2} \cup_{\Zc_1^*} \Hc^{\beta \alpha}_{1/2} \cup_{\Zc_1} \widetilde{\Hc} \cup_{\Zc_0} \Hc^{\alpha \beta}_{1/2} \cup_{\Zc_0^*} \Hc^{\beta \alpha}_{1/2} \\
    &= \Hc^{\alpha \beta}_{1/2} \cup_{\Zc_1^*} \Hc' \cup_{\Zc_0^*} \Hc^{\beta \alpha}_{1/2}
    \end{align*}
    where $\Hc' := \Hc^{\beta \alpha}_{1/2} \cup_{\Zc_1} \widetilde{\Hc} \cup_{\Zc_0} \Hc^{\alpha \beta}_{1/2}$.
\end{proof}

\begin{remark}
    The $\beta$-$\beta$ bordered diagram $\Hc'$ in Corollary~\ref{cor:CanChooseXiNormalized} also represents $(Y,\Gamma)$, since the half-identity diagrams represent identity sutured cobordisms. 
\end{remark}

\section{CW decompositions}\label{sec:CWDecomp}

\subsection{Decompositions from Heegaard diagrams}

Let $(Y,\Gamma)$ be a sutured cobordism from $(F_0,\Lambda_0)$ to $(F_1,\Lambda_1)$ satisfying the assumptions of Definition~\ref{def:SuturedCob}. Choose an $\alpha$-$\alpha$ Heegaard diagram $\Hc$ to represent $(Y,\Gamma)$. Let $\Sigma$ be the Heegaard surface of $\Hc$. Recall that a sutured cobordism $Y$ is built from the Heegaard diagram $\Hc$ as follows.
\begin{itemize}
    \item Start with $\Sigma  \times [0,1]$ and add 2-handles on the bottom side for all $\beta$-circles of $\Sigma$. The thickened bottom boundary is now $R^+ \times [0,1]$.
    \item Add 2-handles on top for all $\alpha$-circles of $\Sigma$.
\end{itemize}
Equivalently:
\begin{itemize}
    \item Start with $R^+ \times [0,1]$ and add 1-handles on the top side for all $\beta$-circles of $\Sigma$. The thickened top boundary is now $\Sigma  \times [0,1]$.
    \item Add 2-handles on top for all $\alpha$-circles of $\Sigma$.
\end{itemize}
This second construction presents $Y$ as a \emph{sutured handle complex} 
\[
\mathcal{A} = (Y,R^+ \times I,\mathcal{E})
\] 
as defined in \cite[Definition 3.8]{FJR}. In particular, for any choice of orderings of the sets of 1-handles and 2-handles, $\mathcal{E}$ is the ordered list of 3-dimensional handles
\[
\mathcal{E} = (e_1,\ldots,e_b,e_{b+1},\ldots,e_{b+a})
\]
where:
\begin{itemize}
    \item $e_1,\ldots,e_b$ are the 3d 1-handles corresponding to the $\beta$-circles of $\Hc$;
    \item $e_{b+1},\ldots,e_{b+a}$ are the 3d 2-handles corresponding to the $\alpha$-circles of $\Hc$.
\end{itemize}
The 1-handles appear before the 2-handles in the ordering, the 1-handles are pairwise disjoint, and the 2-handles are pairwise disjoint, so $\mathcal{A}$ is a nice sutured handle complex in the language of \cite[Section 3.5]{FJR}. Since $\mathcal{A}$ is nice, the ordering of handles is actually irrelevant at this point, although later we will make various choices of orderings. As discussed in \cite[Remark 3.11]{FJR}, by shrinking handles onto their cores we get a CW decomposition of a space $\mathfrak{Y}$ homotopy equivalent to $Y$ relative to $R^+ = R^+ \times \{0\} \subset R^+ \times [0,1]$. 

The 1-cells of this relative CW decomposition consist of $b$ arcs, each of whose endpoints is in the interior of $R^+$. Similarly to \cite[Remark 3.11]{FJR}, pick a CW decomposition $\mathfrak{R}^+$ of $R^+$ (with cells of dimension 0, 1, and 2); we will furthermore assume this CW decomposition is obtained as follows (see Figure~\ref{fig:CWDecompFromH}).
\begin{itemize}
    \item Pick a 1-dimensional CW decomposition of $\partial \Sigma$ such that all points of $\Lambda \subset \partial \Sigma$ and all endpoints of $\alpha$ arcs in $\partial \Sigma$ are 0-cells of the decomposition.
    \item Let $\Sigma^{\circ}$ denote $\Sigma$ with a tubular neighborhood of each $\beta$ circle (open annulus) removed.
    \item Note that $\partial \Sigma^{\circ}$ is the disjoint union of $\partial \Sigma$ with two circles $C,C'$ for each $\beta$ circle of $\Sigma$. Consider a $\beta$ circle of $\Sigma$; let $x_1,\ldots,x_m$ be the points of intersection of this $\beta$ circle with $\alpha$ arcs and circles. Since $C$ and $C'$ are parallel pushoffs of the $\beta$ circle, each point $x_i$ determines a point $x_i^C$ in $C$ and a point $x_i^{C'}$ in $C'$. Choose CW decompositions of $C$ and $C'$ such that the points $x_i^C$ are $0$-cells of $C$ and the points $x_i^{C'}$ are $0$-cells of $C'$. We get a CW decomposition of $\partial \Sigma^{\circ}$.
    
    \item Extend this CW decomposition of $\partial \Sigma^{\circ}$ to a CW decomposition of all of $\Sigma^{\circ}$ in such a way that each $\alpha$ arc and circle of $\Sigma$ that intersects a $\beta$ circle at least once, (so the $\alpha$ arc or circle is now cut into $\geq 1$ arcs of $\Sigma^{\circ}$, each ending on 0-cells of $\partial \Sigma^{\circ}$), has each of these $\geq 1$ arcs of $\Sigma^{\circ}$ as a 1-cell of the CW decomposition. For an $\alpha$ arc of $\Sigma$ that does not intersect any $\beta$ circles, make the arc into a 1-cell of $\Sigma^{\circ}$. For an $\alpha$ circle of $\Sigma$ that does not intersect any $\beta$ circles, choose a point on the $\alpha$ circle to make into a 0-cell of $\Sigma^{\circ}$, and make the rest of the $\alpha$ circle into a 1-cell of $\Sigma^{\circ}$. 
    
    \item We obtain $R^+$ from $\Sigma^{\circ}$ by gluing in one disk along each boundary component of $\Sigma^{\circ}$ that is not on the boundary of $\Sigma$. For such a boundary component $C$, we have already chosen a CW decomposition of $C$, say with $m$ 0-cells. Extend to a CW decomposition of $D^2$ with $(m+1)$-many 0-cells ($m$ on the boundary and one in the center), $2m$ 1-cells ($m$ on the boundary and $m$ connecting the boundary 0-cells to the center), and $m$ 2-cells. Together with our CW decomposition of $\Sigma^{\circ}$, these CW decompositions of $D^2$ glue together into a CW decomposition $\mathfrak{R}^+$ of $R^+$.
\end{itemize}
We thus have an absolute CW decomposition of $\mathfrak{Y}$, and $(\mathfrak{Y}, \mathfrak{R}^+)$ is a CW pair. 

\begin{figure}
    \centering
    \vspace{1em}
    \begin{overpic}[width=\textwidth]
    {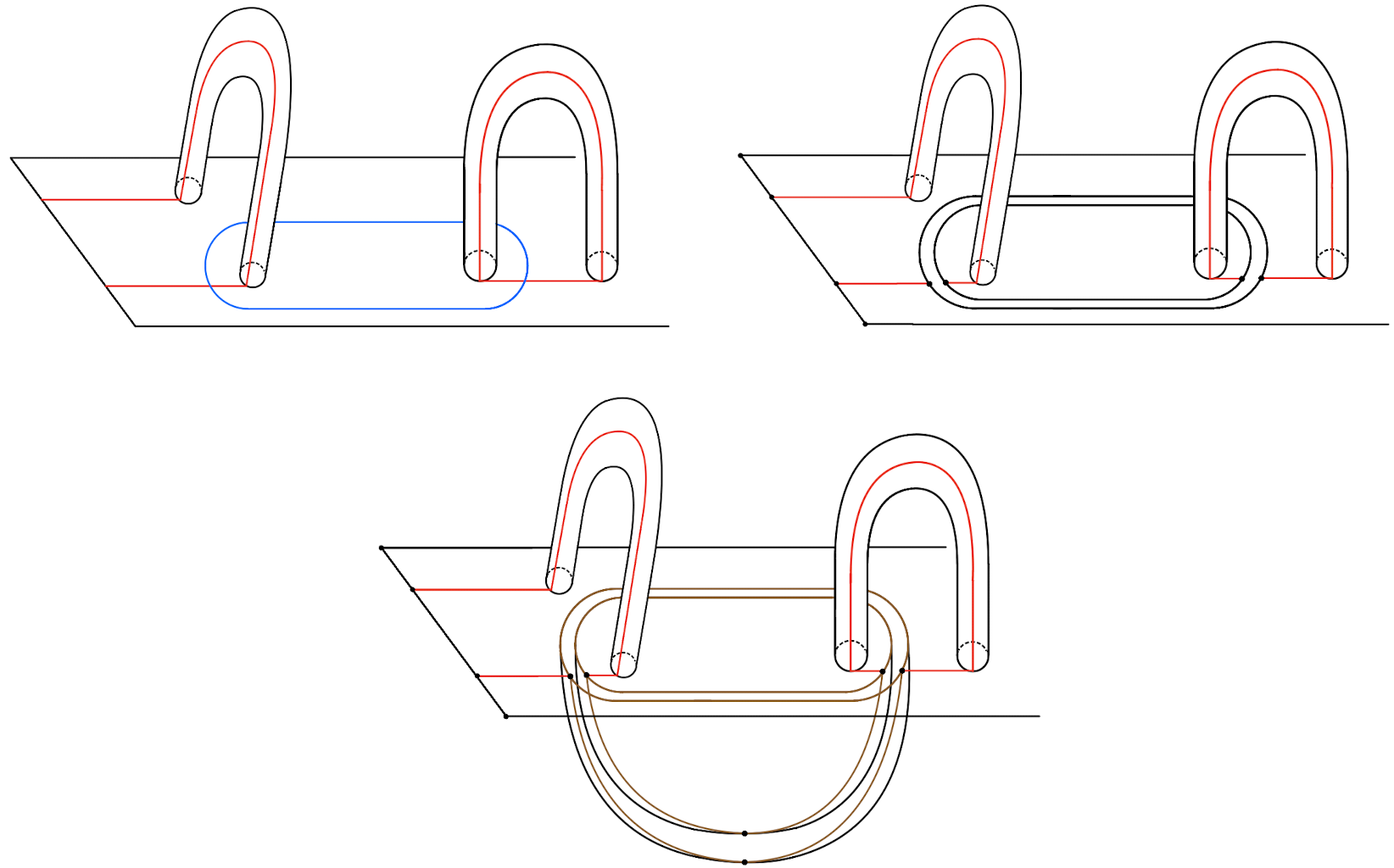} 
    \put (26,54) {$\Hc$} 
    \put (0,43) {$\Zc_1$} 
    \put (78,54) {$\Sigma^{\circ}$} 
    \put (52,26) {$\mathfrak{R}^+$} 
    \end{overpic}
    \caption{Top left: a local portion of $\Hc$ showing the outgoing boundary. Top right: the CW decomposition of $\Sigma^{\circ}$. Bottom: the CW decomposition $\mathfrak{R}^+$. The brown arcs are the 1-cells in the CW decompositions of the $D^2$'s; the neighboring black curves depict the disks themselves drawn in perspective, which are not part of the CW structure. In some subsequent figures we relax this distinction and draw the brown $1$-cells overlapping these black curves for simplicity.}
    \label{fig:CWDecompFromH}
\end{figure}

\subsection{Orientations of cells}\label{sec:CellOrientations}

In the above CW complexes, we have not yet chosen orientations of cells. We need to do this in order to talk about cellular chain complexes below.

In the Heegaard diagram $\Hc$, choose an orientation for each closed $\alpha$ circle and $\beta$ circle. We use these orientations to orient the cells of $\mathfrak{Y} \setminus \mathfrak{R}^+$ as follows.
\begin{itemize}
    \item The 2-cells of $\mathfrak{Y} \setminus \mathfrak{R}^+$ correspond to the $\alpha$ circles of $\Hc$; the boundary of the 2-cell is the $\alpha$ circle. Orient the 2-cell so its oriented boundary agrees with the given orientation on the $\alpha$ circle.
    \item The 1-cells of $\mathfrak{Y} \setminus \mathfrak{R}^+$ correspond to the $\beta$ circles of $\Hc$. The 3d $\beta$ 1-handle corresponding to a $\beta$ circle has co-core $D^2$ with boundary the $\beta$ circle; it has core $D^1$ and this $D^1$ is the 1-cell of $\mathfrak{Y} \setminus \mathfrak{R}^+$. Orient the co-core so its oriented boundary agrees with the given orientation on the $\beta$ circle. Then orient the core so that if $0$ denotes the center point of the 3d $\beta$ handle and $T_0$ denotes the tangent space at $0$, then (oriented basis for $T_0(\textrm{core})$) $\cup$ (oriented basis for $T_0(\textrm{co-core})$) is an oriented basis for the tangent space of the 3d $\beta$ handle at $0$. See Figure~\ref{fig:CWOrientations}. 
\end{itemize}
Orient the cells of $\mathfrak{R}^+$ arbitrarily; nothing we do will depend on these orientations. 

Given our choices of orientations, we have a cellular chain complex $C_*^{\cell}(\mathfrak{Y},\mathfrak{R}^+)$ that is only nonzero in degrees 1 and 2, and we have a basis of cells for the chain groups $C_1$ and $C_2$. We have canonical identifications
\[
H_1^{\cell}(\mathfrak{Y},\mathfrak{R}^+) \cong H_1^{\sing}(\mathfrak{Y},\mathfrak{R}^+) \cong H_1^{\sing}(Y,R^+).
\]

To have an explicit matrix for the cellular differential $\partial_2$, we choose orderings of the sets of $\alpha$ and $\beta$ circles of $\Hc$. Given these choices of ordered bases, let $M_{\Hc}$ be the matrix for the cellular differential $\partial_2$ in this cellular chain complex. Note that $M_{\Hc}$ is a presentation matrix for $H_1(Y,R^+)$. 

\begin{figure}
    \centering
    \begin{overpic}[width=0.8\textwidth]
    {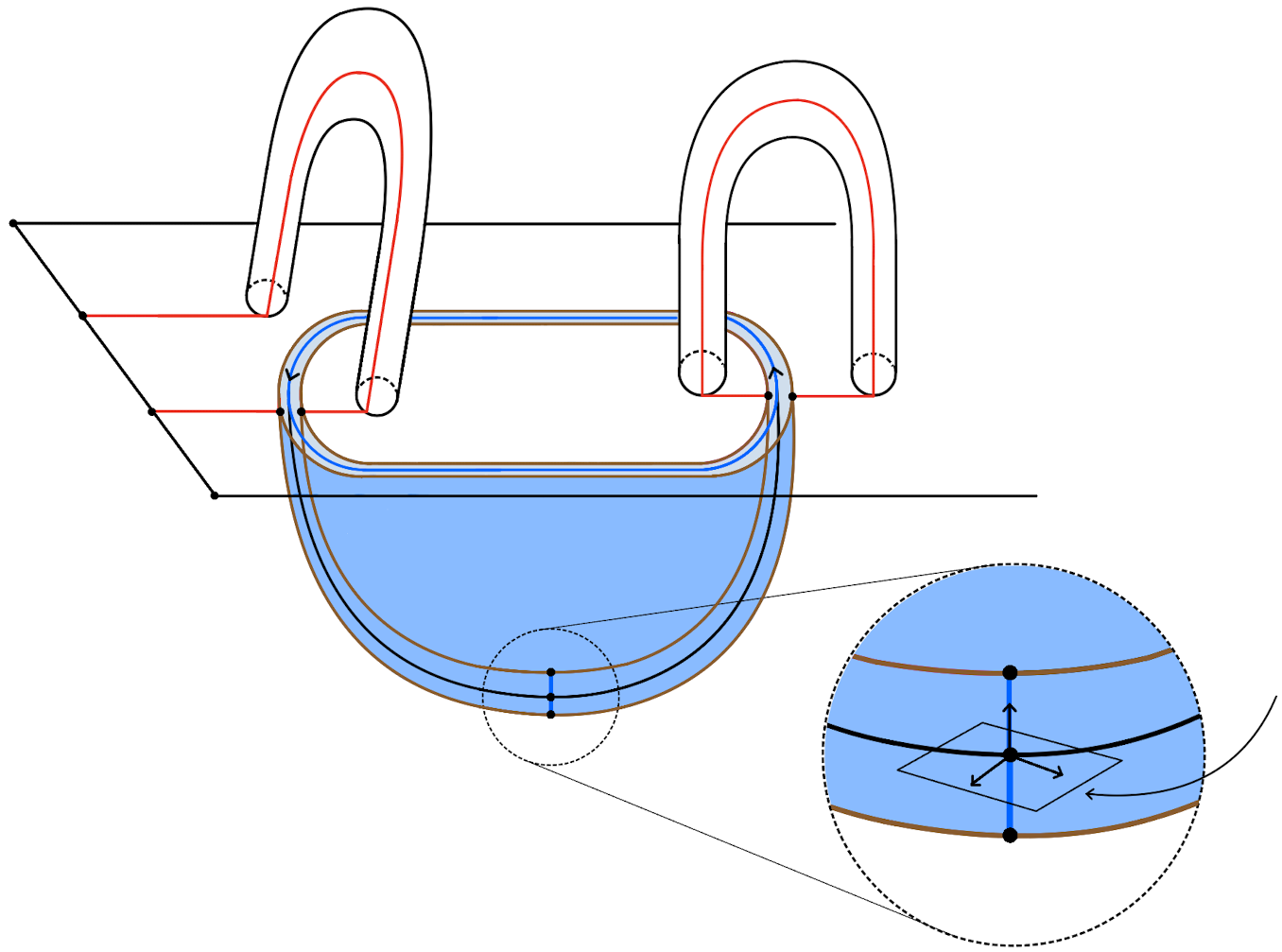}
    \put (3.5,60) {$R^+ \times \{1\}$} 
    \put (42,40.5) {$\beta$}
    \put (80.3,18.3) {\scalebox{0.75}{$T_0(\textrm{core})$}} 
    \put (95,22) {\scalebox{0.87}{$T_0(\textrm{co-core})$}}
    \end{overpic}
    \caption{The 1-handle corresponding to the $\beta$ circle of $\Hc$ shown in Figure~\ref{fig:CWDecompFromH} is shaded in blue; the attaching boundary consists of the two brown circles and its co-core is $\beta$. If, as pictured, the chosen orientation on the $\beta$ circle is counterclockwise when viewed from above, the 1-handle's co-core must be oriented using the inward–pointing normal, so that the induced boundary orientation agrees with this given orientation on the $\beta$ circle.}
    \label{fig:CWOrientations}
\end{figure}

\begin{proposition}\label{prop:SignsInCellularDiff}
    In the matrix $M_{\Hc}$ for the cellular differential $\partial_2$ for $C_*^{\cell}(\mathfrak{Y},\mathfrak{R}^+)$, the entry corresponding to an input $\alpha$ circle $C$ and an output $\beta$ circle $C'$ is the sum over intersection points $x \in C \cap C'$ of the local intersection sign $C'_x \cdot C_x$ (i.e. ``$+1$ if $\beta \cdot \alpha = +1$ and $-1$ if $\beta \cdot \alpha = -1$'').
\end{proposition}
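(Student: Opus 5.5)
The plan is to compute the cellular boundary of an $\alpha$ 2-cell directly as a degree count. In the relative complex $C_*^{\cell}(\mathfrak{Y},\mathfrak{R}^+)$, the coefficient of the 1-cell $e_{C'}$ in $\partial_2 e_C$ is the degree of the composite
\[
\partial e_C = C \to \mathfrak{Y}^{(1)} \to \mathfrak{Y}^{(1)}/(\mathfrak{Y}^{(1)} \setminus \mathrm{int}\, e_{C'}) \cong S^1 .
\]
Here $\mathfrak{Y}^{(1)}$ is the 1-skeleton relative to $\mathfrak{R}^+$, i.e. $\mathfrak{R}^+$ together with the cores of the $\beta$ 1-handles. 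Before shrinking handles to cores, the attaching circle of the $\alpha$ 2-handle is the $\alpha$ circle $C \subset \Sigma \times \{1\}$. Here $\Sigma\times\{1\}$ is the top of $R^+\times[0,1]$ with the $\beta$ 1-handles attached. The collapse map onto the core of the 1-handle for $C'$ factors through projecting the handle $D^1\times D^2$ onto its core $D^1$. Everything outside a neighborhood of the co-core goes to the complement of the open core.

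So the degree equals the signed count of points where $C$ crosses the co-core disk $D^2$ of that handle, with sign comparing the orientation of $C$ with the orientation of the core $D^1$. These crossings are exactly the points of $C\cap C'$ on $\Sigma$, because $C'=\partial(\text{co-core})$ is where the co-core meets the top surface. Transversality of $C$ and $C'$ makes $C$ transverse to the co-core there. The co-core is a disk, so the count is well defined (its preimage is finite). The result is a sum of local contributions $\pm1$ over $x\in C\cap C'$.

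The main obstacle is the local sign check: that the contribution at $x$ is $+1$ exactly when $C'_x\cdot C_x=+1$ in $\Sigma$. Crossing contributes $+1$ when $T_x C$ points along the positive core direction. By the convention of Section~\ref{sec:CellOrientations}, (core, co-core) is an oriented basis of the handle. So this happens precisely when $(T_xC,\ \text{co-core orientation at }x)$ is an oriented frame of $Y$ at $x$.

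Near $x$, the co-core is locally the vertical strip $C'\times[\text{downward}]$ hanging below $\Sigma\times\{1\}$ into the handle. Its boundary orientation is $C'$, so its oriented basis is (outward normal, $T_xC'$), where the outward normal points up in the $z$-direction, out of the co-core toward $\Sigma$. I would fix the local model of Remark~\ref{rem:ChangeOfConventions} ($\Sigma$ in the $z=0$ plane with counterclockwise orientation, right-hand rule on $Y$, $\beta$ handles added below) and check in coordinates. Take $C'$ along the $y$-axis and the co-core normal along $z$. Then $(T_xC,\ \partial_z\text{-type vector},\ T_xC')$ is positive iff $(T_xC', T_xC)$ is positive in $\Sigma$, i.e. iff $C'\cdot C=+1$. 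I will need to be careful with Figure~\ref{fig:CWOrientations}'s note that the co-core uses the inward normal when viewed from above, and possibly adjust which side the normal points. Any error here is a global sign flip that would contradict the stated convention, so I would verify it on the pictured example.

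Finally, summing the local contributions over $x\in C\cap C'$ gives the matrix entry. It depends only on the chosen orientations of the $\alpha$ and $\beta$ circles and not on the orientations chosen for cells of $\mathfrak{R}^+$, since those cells are quotiented out.
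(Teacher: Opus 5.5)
Your proposal is correct and follows essentially the same route as the paper, which also reduces the entry ``on general principles'' to a sum of local signs over $C \cap C'$ and then checks the local sign. The paper does that check pictorially in Figure~\ref{fig:CWIntersectionSigns}; you do it via the degree formula and an explicit coordinate computation. Your determinant check is right as written, since $\det(T_xC,\partial_z,T_xC') = \det_\Sigma(T_xC',T_xC)$, so no sign adjustment is needed. The ``inward-pointing normal'' remark in Figure~\ref{fig:CWOrientations} refers to the 3d normal that orients the co-core via the right-hand rule, and this is consistent with your (outward 2d normal, $T_xC'$) boundary convention.
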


\begin{proof}
    On general principles, the matrix entry is the sum over $x \in C \cap C'$ of some local sign associated to $x$; we check that the sign is as described. The argument is shown in Figure~\ref{fig:CWIntersectionSigns}; the left side shows a local intersection of the type $\beta \cdot \alpha = +1$ and checks that the boundary of the $\alpha$ 2-cell traverses the $\beta$ 1-cell in the positive direction. The right side shows a local intersection of the type $\beta \cdot \alpha = -1$ and checks that the boundary of the $\alpha$ 2-cell traverses the $\beta$ 1-cell in the negative direction.
\end{proof}

\begin{figure}
    \centering
    \begin{overpic}[width=0.78\textwidth]
    {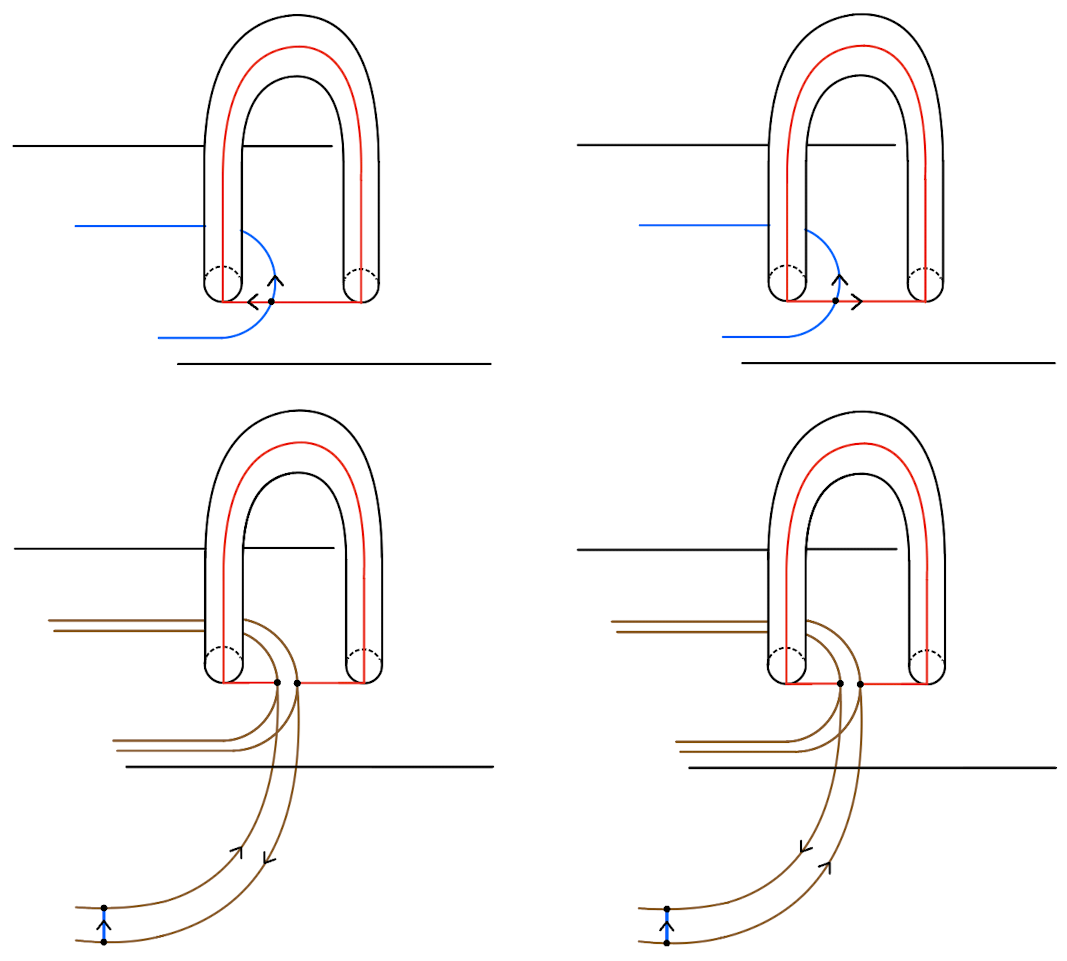}
    \put (26,57.5) {\scalebox{0.9}{$\beta \cdot \alpha = +1$}}
    \put (80,57.5) {\scalebox{0.9}{$\beta \cdot \alpha = -1$}}
    \end{overpic}
    \vspace{0.5em}
    \caption{The visual argument for Proposition~\ref{prop:SignsInCellularDiff} on a local portion of the CW complex $\mathfrak{Y}$ associated to $\Hc$ from Figures~\ref{fig:CWDecompFromH} and \ref{fig:CWOrientations}.}
    \label{fig:CWIntersectionSigns}
\end{figure}

Now assume $\Hc = \Hc_{\norm}$ as in Corollary~\ref{cor:NormalizedHeegaardDiagrams}. When choosing the ordering of $\beta$ circles of $\Hc_{\norm}$, we will require that the $\beta$ circles of $\Hc_{\norm}$ glued from outgoing $\beta$ arcs of $\Hc'$ come first, followed by $\beta$ circles of $\Hc'$, followed by $\beta$ circles of $\Hc_{\norm}$ glued from incoming $\beta$ arcs of $\Hc'$. We can write $M_{\Hc_{\norm}}$ schematically as 
\[
M_{\Hc_{\norm}} = \kbordermatrix{ 
& \alpha^c  \\
\beta^{\out} & * \\
\beta^c & * \\
\beta^{\inrm} & * }.
\]
In Section~\ref{sec:SuturedFNTQFT} we will encounter the submatrix $\kbordermatrix{ 
& \alpha^c  \\
\beta^c & * \\}$ of $M_{\Hc_{\norm}}$.

\begin{proposition}\label{prop:SubmatrixPresentation}
    The submatrix $\kbordermatrix{ 
     & \alpha^c  \\
     \beta^c & * \\}$ of $M_{\Hc_{\norm}}$ is a presentation matrix for $H_1(Y,F_1 \cup R^+ \cup F_0)$.
\end{proposition}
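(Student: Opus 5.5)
The plan is to realize $H_1(Y, F_1 \cup R^+ \cup F_0)$ as the homology of a relative cellular complex built from the same CW structure on $\mathfrak{Y}$, and to show that passing from $\mathfrak{R}^+$ to the larger subcomplex carrying $F_1 \cup R^+ \cup F_0$ amounts to deleting the $\beta^{\out}$ and $\beta^{\inrm}$ rows of $M_{\Hc_{\norm}}$.

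First I would identify the subspaces concretely in the normalized diagram. In $\Hc_{\norm} = \Hc^{\alpha\beta}_{1/2} \cup_{\Zc_1^*} \Hc' \cup_{\Zc_0^*} \Hc^{\beta\alpha}_{1/2}$, each $\beta$ circle in $\beta^{\out}$ (resp. $\beta^{\inrm}$) is the union of a co-core arc of a 1-handle of $F_1$ (resp. $F_0$), coming from the half-identity piece, with an outgoing (resp. incoming) $\beta$-arc of $\Hc'$. The half-identity pieces contain no $\alpha$ circles and no $\beta$ circles of their own. So the regions of $Y$ corresponding to $\Hc^{\alpha\beta}_{1/2}$ and $\Hc^{\beta\alpha}_{1/2}$ are collars $F_1 \times I$ and $F_0 \times I$ attached to $Y(\Hc')$. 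Up to homotopy, the 3d $\beta$ 1-handles for $\beta^{\out}$ and $\beta^{\inrm}$, together with $R^+$, form exactly the part of these collars that does not already lie in $R^+$. More precisely, I would check that $R^+$ union the cores of the $\beta^{\out}$ and $\beta^{\inrm}$ 1-handles is a subcomplex $\mathfrak{B}$ of $\mathfrak{Y}$. It is homotopy equivalent, as a pair inside $\mathfrak{Y}$, to $F_1 \cup R^+ \cup F_0$. The reason is that $F_i$ deformation retracts onto the union of its $S^+_i$ part (which lies in $R^+$) and the cores of its 1-handles, and the core of each 1-handle of $F_i$ is homotopic rel endpoints in $Y$ to the core of the corresponding $\beta^{\out}$ or $\beta^{\inrm}$ 3d 1-handle. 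The $\alpha$-arcs on the half-identity side sit in $R^-$-type regions, so they do not interfere.

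Granting this, excision and homotopy invariance give $H_*(Y, F_1 \cup R^+ \cup F_0) \cong H_*^{\cell}(\mathfrak{Y}, \mathfrak{B})$. The cellular complex $C_*^{\cell}(\mathfrak{Y}, \mathfrak{B})$ is the quotient of $C_*^{\cell}(\mathfrak{Y}, \mathfrak{R}^+)$ by the span of the $\beta^{\out}$ and $\beta^{\inrm}$ 1-cells. It is therefore concentrated in degrees 1 and 2, with basis $\beta^c$ in degree 1 and $\alpha^c$ in degree 2. Its differential is $\partial_2$ followed by projection to the $\beta^c$ coordinates, which is exactly the displayed submatrix. Hence the submatrix presents $H_1$.

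The main obstacle is the second step: making the homotopy equivalence between $F_1 \cup R^+ \cup F_0$ and $\mathfrak{B}$ precise, and compatible with the CW structure. I would do this using the explicit model of $Y(\Hc)$ from Definition~\ref{def:AssocBorderedSuturedCob}. In the half-identity region, $F_1$ consists of $\mathbf{Z}_1 \times [0,1]$ together with strips around the outgoing $\alpha$-arcs on $\Sigma \times \{1\}$. Each such strip crosses exactly one $\beta^{\out}$ circle once, namely the co-core paired with it. Sliding the strip down through $\Sigma \times [0,1]$ to the bottom, and across the corresponding $\beta$ 1-handle, exhibits the required homotopy. This uses that the 3d $\beta$ 1-handle connects $R^+$ on the two sides of the co-core arc.

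If this local argument becomes messy, I would fall back on a direct comparison. Compute $H_1(Y, F_1\cup R^+\cup F_0)$ from the long exact sequence of the triple $(Y, F_1\cup R^+\cup F_0, R^+)$. Then show that $H_1(F_1\cup R^+\cup F_0, R^+) \cong H_1(F_0\sqcup F_1, S_0^+\sqcup S_1^+)$ is free on the 1-handle cores, and that it maps isomorphically onto the span of the $\beta^{\out}$ and $\beta^{\inrm}$ generators in $C_1$.
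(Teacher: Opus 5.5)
Your proposal is correct, but it takes a different route from the paper.

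\textbf{The paper's route.} The paper never compares $F_1\cup R^+\cup F_0$ with a cellular model inside $\mathfrak{Y}$. It reads the handle decomposition off the $\beta$-$\beta$ diagram $\Hc'$ instead of off $\Hc_{\norm}$. Because $\Hc'$ is $\beta$-bordered on both sides, surgering $\Sigma'\times\{0\}$ along the $\beta$-circles gives $F_1\cup R^+\cup F_0$, up to the collars along $\partial\Sigma'$. So $Y(\Hc')$ is a sutured handle complex relative to $(F_1\cup R^+\cup F_0)\times I$. Its 1-handles are exactly the $\beta^c$ and its 2-handles exactly the $\alpha^c$: the half-identity pieces contribute no closed curves of their own, and $\alpha^c\cap\beta^c$ is the same in $\Hc'$ as in $\Hc_{\norm}$. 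Proposition~\ref{prop:SignsInCellularDiff} then outputs the $\beta^c\times\alpha^c$ block directly.

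\textbf{Your route.} You stay inside the CW complex of $\Hc_{\norm}$ and enlarge $\mathfrak{R}^+$ by the $\beta^{\out}$ and $\beta^{\inrm}$ cores. This subcomplex is $\mathfrak{R}^+\cup\mathfrak{F}_0\cup\mathfrak{F}_1$ from Section~\ref{sec:CellularChainCx}.

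\textbf{The comparison step.} It is not literally a homotopy equivalence of pairs, since $\mathfrak{B}\not\subset F_1\cup R^+\cup F_0$. To make it rigorous you need either the zigzag of pairs in Figure~\ref{fig:FInclusions} plus the five lemma on the triples, or your fallback. The fallback is the cleanest version, and it runs as follows.
\begin{itemize}
\item $H_0(F_1\cup R^+\cup F_0,R^+)=0$, because every component of $F_i$ meets $S^+_i$.
\item Hence $H_1(Y,F_1\cup R^+\cup F_0)$ is $H_1(Y,R^+)$ modulo the image of $H_1(F_0\sqcup F_1,S^+_0\sqcup S^+_1)$.
\item By Corollary~\ref{cor:SingCellSquareCommutes} and Proposition~\ref{prop:ExpandingBasisElts}, that image equals the image in $H_1(Y,R^+)$ of the span of the $\beta^{\out}$ and $\beta^{\inrm}$ 1-cells.
\end{itemize}
State the last point this way rather than as an isomorphism onto a span in $C_1$, since the target is a quotient of $C_1$. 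This is essentially the exact-sequence argument of Proposition~\ref{prop:RankKerLeqK} made cellular.

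\textbf{What each approach buys.} Yours gives naturality: it realizes the map $H_1(Y,R^+)\to H_1(Y,F_1\cup R^+\cup F_0)$, at the level of presentations, as deleting the $\beta^{\out}$ and $\beta^{\inrm}$ rows. This is the viewpoint implicit when $M_{\Hc_{\norm}}$ and its $\beta^c$ block are manipulated together in the proof of Lemma~\ref{lem:IntroMainLemma}. The paper's argument buys brevity, since all the topology is absorbed into the choice of diagram.
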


\begin{proof}
    View $Y$ as the sutured cobordism associated to the $\beta$-$\beta$ bordered sutured Heegaard diagram $\Hc'$ and apply the above analysis, noting that the construction of $Y$ from $\Hc'$ presents $Y$ as a sutured handle complex relative to $(F_1 \cup R^+ \cup F_0) \times I$.
\end{proof}

\subsection{Normalized choices and subcomplexes for \texorpdfstring{$F_i$}{Fi}}\label{sec:CellularChainCx}

Continue to assume $\Hc = \Hc_{\norm}$ as in Corollary~\ref{cor:NormalizedHeegaardDiagrams}. In this case we will define subcomplexes $\mathfrak{F}_i \subset \mathfrak{Y}$ and $\mathfrak{S}^+_i \subset \mathfrak{F}_i$ such that $\mathfrak{F}_i \cap \mathfrak{R}^+ = \mathfrak{S}^+_i$ and $(\mathfrak{F}_i,\mathfrak{S}^+_i)$ is homotopy equivalent to $(F_i,S^+_i)$. 

First note that since $\partial \Sigma^{\circ}$ and thus $\partial \Sigma$ are subcomplexes of $\mathfrak{R}^+$, and all points of $\Lambda$ are 0-cells of $\partial \Sigma$, we already have CW decompositions of $S^+_0$ and $S^+_1$ as subcomplexes of $\mathfrak{R}^+$. We will define $\mathfrak{S}^+_i$ below by enlarging these subcomplexes slightly.

Now, since we took $\Hc = \Hc_{\norm}$, each incoming $\alpha$ arc of $\Hc_{\norm}$ intersects exactly one $\beta$ circle in the Heegaard surface $\Sigma$. Thus, the intersection between the $\alpha$ arc and $\Sigma^{\circ}$ consists of two intervals, each of which is a 1-cell of $\mathfrak{R}^+$ with one endpoint in $S^+$ and the other endpoint on one of the two circles $C,C' \subset \partial \Sigma^{\circ}$ obtained from parallel pushoffs of the $\beta$ circle intersecting the $\alpha$ arc. There are two more 1-cells of $\mathfrak{R}^+$ that live in the $D^2$s that are glued to $\Sigma^{\circ}$ along $C$ and $C'$ to form $R^+$, and run from the centers of the $D^2$s to the endpoints of the cut $\alpha$ arc on $C$ and $C'$. Finally, there is a 1-cell of $\mathfrak{Y} \setminus \mathfrak{R}^+$ connecting the centers of the two $D^2$s.

Let $\mathfrak{F}_0$ denote the union of all these cells and their boundary 0-cells, over all incoming $\alpha$ arcs of $\Hc$, together with the given CW decomposition of $S^+_0$ inside $\mathfrak{R}^+$. Let $\mathfrak{S}^+_0$ denote the union of all cells of $\mathfrak{F}_0$ except the ones in $\mathfrak{Y} \setminus \mathfrak{R}^+$ (i.e. let $\mathfrak{S}^+_0 = \mathfrak{F}_0 \cap \mathfrak{R}^+$). Note that $\mathfrak{S}^+_0$ contains our original CW decomposition of $S^+_0$ along with four extra 1-cells for each incoming $\alpha$ arc of $\Hc_{\norm}$. Define $\mathfrak{F}_1$ similarly, using outgoing $\alpha$ arcs of $\Hc_{\norm}$, and let $\mathfrak{S}^+_1 = \mathfrak{F}_1 \cap \mathfrak{R}^+$. See Figure~\ref{fig:CWDecompNormalized}. 

\begin{figure}
    \centering
    \vspace{1em}
    \begin{overpic}[width=0.6\textwidth]
    {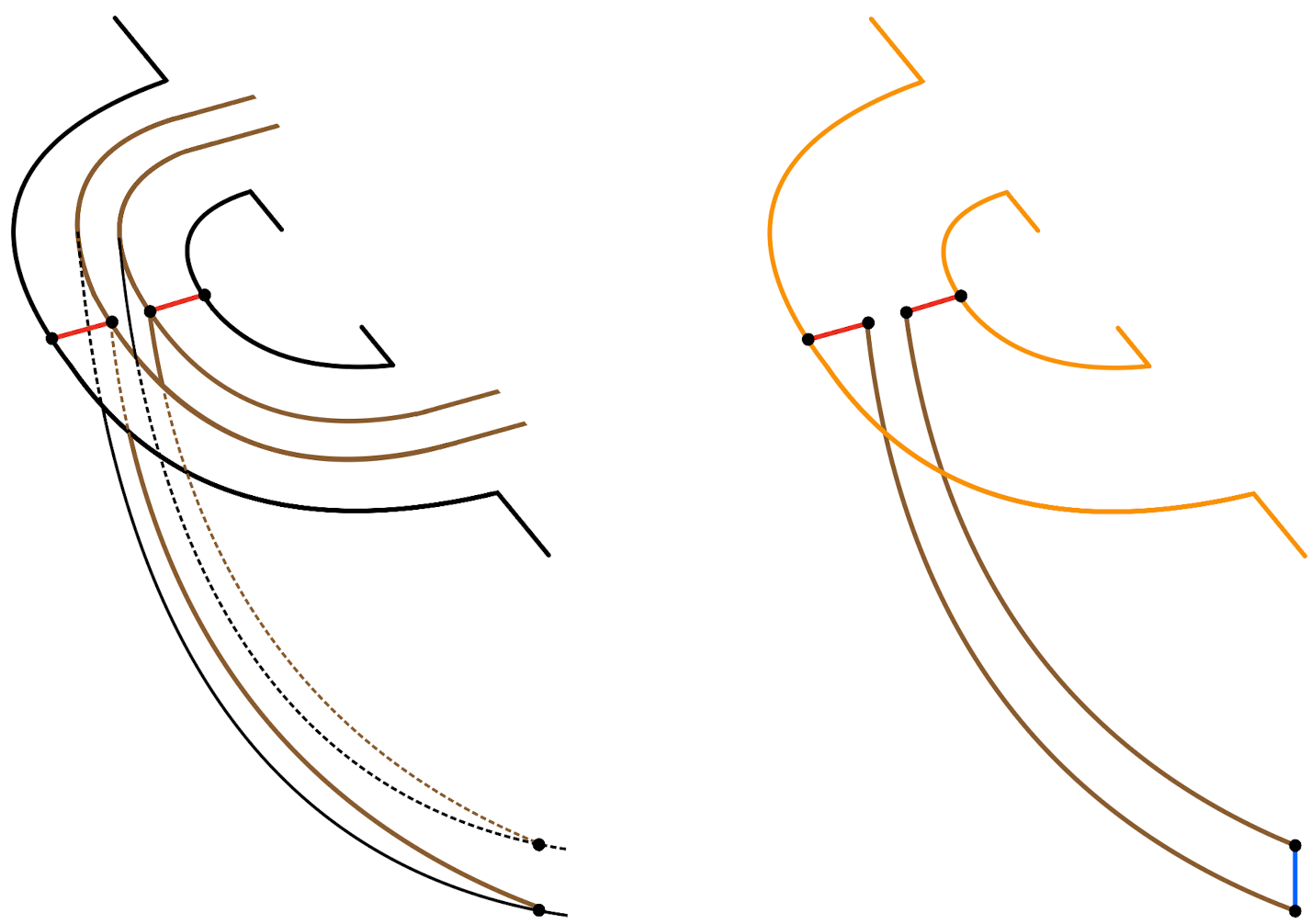} 
    \put (-5,35) {$\Sigma_{\norm}^{\circ}$} 
    \put (56.5,35) {$\mathfrak{F}^+_1$} 
    \put (23,51) {\rotatebox{-45}{$\ldots$}}
    \put (80.5,50.5) {\rotatebox{-45}{$\ldots$}}
    \end{overpic}
    \vspace{0.1in}
    \caption{Left: a local view of the CW decomposition of $\Sigma_{\norm}^{\circ}$ near an outgoing $\alpha$ arc. Right: the CW decompositions $\mathfrak{F}^+_1$ and $\mathfrak{S}^+_1$. All 0- and 1-cells shown lie in $\mathfrak{F}^+_1$; the subcomplex $\mathfrak{S}^+_1$ is obtained by deleting the blue 1-cell.}
    \label{fig:CWDecompNormalized}
\end{figure}

\begin{figure}
    \centering
    \begin{overpic}[width=0.89\textwidth]
    {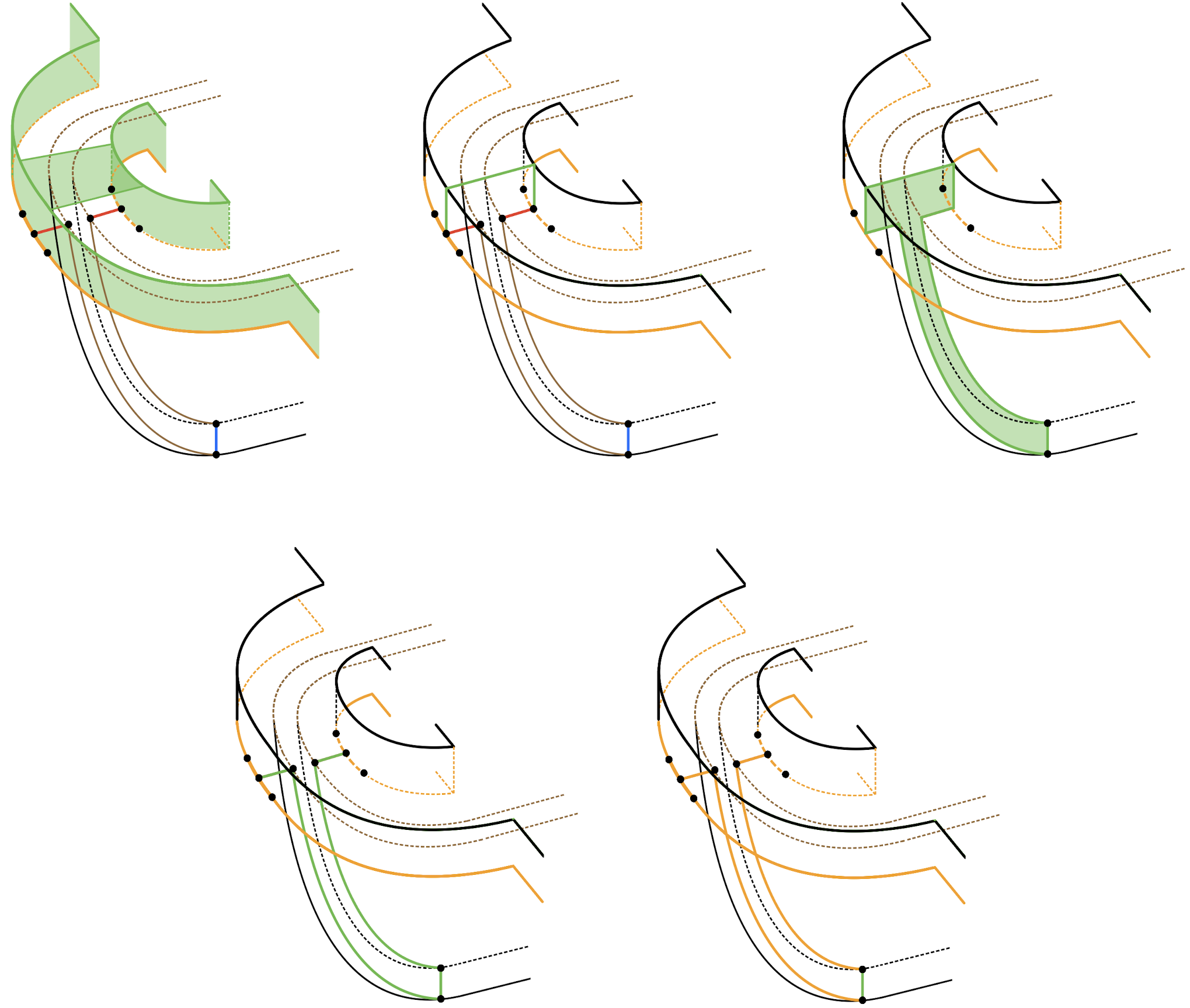}
    \put (14,42) {$(F_i,S^+_i)$}
    \put (30,65) {$\longleftarrow$}
    \put (46,42) {$(F_i^{\mathrm{shrunk}},S^+_i)$}
    \put (65,65) {$\longrightarrow$}
    \put (76,42) {$(F_i^{\mathrm{shrunk}} \cup (\mathrm{disks}),S^+_i)$}
    
    \put (13,20) {$\longleftarrow$}
    \put (33,-4) {$(\mathfrak{F}_i,S^+_i)$}
    \put (49,20) {$\longrightarrow$}
    \put (68,-4) {$(\mathfrak{F}_i,\mathfrak{S}^+_i)$}

    \put (15,72) {\rotatebox{-45}{$\ldots$}}
    \put (49,72) {\rotatebox{-45}{$\ldots$}}
    \put (84.5,72) {\rotatebox{-45}{$\ldots$}}
    \put (33.5,25.5) {\rotatebox{-45}{$\ldots$}}
    \put (68.5,25.5) {\rotatebox{-45}{$\ldots$}}
    \end{overpic}
    \vspace{1em}
    \caption{Inclusions of pairs. In each case, the larger space in the pair is drawn in green (not to be confused with our convention of drawing $\Gamma$ in green), and the smaller space is drawn in orange.}
    \label{fig:FInclusions}
\end{figure}

Consider the inclusions of pairs shown in Figure~\ref{fig:FInclusions}. All of these inclusions induce isomorphisms on relative first homology groups, and we have a commutative diagram

\begin{equation}\label{eq:FSYRCommutativeDiag}
\xymatrix{
H_1(F_i, S^+_i) \ar[r]^{i_*} & H_1(Y,R^+) \\
H_1(F_i^{\,\mathrm{shrunk}},S^+_i) \ar[u]^{i_*} \ar[d]_{i_*} \ar[r]^{i_*} & H_1(Y,R^+) \ar[u]_{=} \ar[d]^{=} \\
H_1(F_i^{\,\mathrm{shrunk}} \cup (\mathrm{disks}), S^+_i) \ar[r]^-{i_*} & H_1(Y,R^+) \\
H_1(\mathfrak{F}_i, S^+_i) \ar[u]^{i_*} \ar[d]_{i_*} \ar[r]^{i_*} & H_1(\mathfrak{Y},\mathfrak{R}^+) \ar[u]_{i_*} \ar[d]^{=} \\
H_1(\mathfrak{F}_i,\mathfrak{S}^+_i) \ar[r]^{i_*} & H_1(\mathfrak{Y},\mathfrak{R}^+)
}
\end{equation}

\begin{corollary}\label{cor:SingCellSquareCommutes}
    The square
    \[
    \xymatrix{
        H_1(F_i, S^+_i) \ar[r]^{i_*}  & H_1(Y, R^+)  \\
        H_1(\mathfrak{F}_i, \mathfrak{S}^+_i) \ar[r]_{i_*} \ar[u]^{\cong} & H_1(\mathfrak{Y}, \mathfrak{R}^+) \ar[u]_{i_*}
        }
    \]
    commutes, where the map on the left is the composition of the isomorphisms on the left side of diagram~\eqref{eq:FSYRCommutativeDiag}.
\end{corollary}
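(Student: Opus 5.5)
The corollary follows from diagram~\eqref{eq:FSYRCommutativeDiag} by chasing around its boundary. The plan is to prove two things: that every small square in \eqref{eq:FSYRCommutativeDiag} commutes, and that the vertical maps along the left side are isomorphisms. Composing the squares then gives the result.

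First, commutativity of each small square. Each horizontal arrow is induced by an inclusion of pairs. Each vertical arrow on the right is either the identity or induced by an inclusion $(\mathfrak{Y},\mathfrak{R}^+) \hookrightarrow (Y,R^+)$ (or the identity). So each square is induced by a commutative square of inclusions of pairs of spaces. By functoriality of singular relative homology, each square commutes. Here I am regarding $\mathfrak{Y}$ as a subspace of $Y$ containing $R^+$: it is the union of $R^+ \times [0,1]$ with the cores of the handles, and we identify $R^+$ with $R^+ \times \{0\}$. For the inclusions in Figure~\ref{fig:FInclusions} to be inclusions into $(Y,R^+)$ or $(\mathfrak{Y},\mathfrak{R}^+)$, two facts are needed:
\begin{enumerate}
\item $F_i^{\mathrm{shrunk}}$ and the added disks lie in $Y$, with $S^+_i \subset R^+$;
\item $\mathfrak{F}_i \subset \mathfrak{Y}$ with $\mathfrak{S}^+_i \subset \mathfrak{R}^+$.
\end{enumerate}
Both hold by the construction in Section~\ref{sec:CellularChainCx}.

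Second, the left-hand maps. As noted just before \eqref{eq:FSYRCommutativeDiag}, each vertical arrow on the left is an isomorphism. The justification is that each inclusion of pairs there is a relative homotopy equivalence, or is obtained by adding contractible pieces glued along contractible sets. One way to check this is via the five lemma on the long exact sequences of the pairs, using that the absolute spaces deformation retract onto one another and the subspaces are unchanged or enlarged by contractible arcs. Some arrows point upward and some point downward, so the composite left-hand isomorphism $H_1(\mathfrak{F}_i,\mathfrak{S}^+_i) \to H_1(F_i,S^+_i)$ has to be formed from these maps and their inverses.

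Finally, combine the squares. Start at $H_1(\mathfrak{F}_i,\mathfrak{S}^+_i)$ and go up the left column, inverting downward arrows where necessary. At each step, commutativity of the corresponding square carries the horizontal map $i_*$ along with it. On the right column we pass through identities and the single map $H_1(\mathfrak{Y},\mathfrak{R}^+) \to H_1(Y,R^+)$. Inverting an isomorphism on the left while the right-hand map is the identity preserves commutativity of that square. The outer rectangle therefore gives the stated square.

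The only genuine point is step two: confirming the isomorphism claims and that the spaces really sit as subspaces compatibly. I would handle it by the five-lemma argument sketched above.
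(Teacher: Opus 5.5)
Your proposal is correct and follows the paper's own route: the corollary is just the outer rectangle of diagram~\eqref{eq:FSYRCommutativeDiag}, obtained by composing its squares. You also correctly observe that the only left-hand arrows you must invert are those opposite identities on the right.

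One small fix concerns your model of $\mathfrak{Y}$. It is not literally $R^+\times[0,1]$ together with the handle cores: the $\beta$-cores end on $R^+\times\{1\}$, and the boundary circle of an $\alpha$-core crosses the $\beta$-handles along arcs that lie outside that union. Instead, embed $\mathfrak{Y}$ by extending each $\beta$-core with vertical segments through the collar, and each $\alpha$-core by the trace of its boundary circle under the retraction. Then $\mathfrak{F}_i\subset\mathfrak{Y}\subset Y$ holds literally, and your naturality argument applies verbatim.
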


We can describe the composite isomorphism between $H_1(F_i,S^+_i)$ and $H_1(\mathfrak{F}_i,\mathfrak{S}^+_i)$ on the left side of diagram~\eqref{eq:FSYRCommutativeDiag} as follows: our usual basis elements of $H_1(F_i,S^+_i)$ are the classes of the arcs $\gamma$ in $(F_i,S^+_i)$ corresponding to the matching arcs of the diagram $\mathcal{Z}_i$. The map from the top-left of diagram~\eqref{eq:FSYRCommutativeDiag} to the bottom-left of diagram~\eqref{eq:FSYRCommutativeDiag} sends such a basis element $\gamma$ to the homology class of the corresponding one-cell in $H_1(\mathfrak{F}_i,\mathfrak{S}^+_i)$, times an appropriate sign. If we use the orientations on matching arcs of $\Zc_i$ to choose orientations on $\beta$ circles of $\Hc_{\norm}$ as in the below proposition, we can ensure the sign will always be $-1$.

\begin{proposition}\label{prop:ExpandingBasisElts}
    Choose an orientation for each matching arc of $\Zc_0$ and $\Zc_1$. Orient the $\alpha$-arcs of the Heegaard diagram $\Hc_{\norm} = \Hc^{\alpha \beta}_{1/2} \cup_{\Zc_1^*} \Hc' \cup_{\Zc_0^*} \Hc^{\beta \alpha}_{1/2}$ for $(Y,\Gamma)$ such that:
    \begin{itemize}
        \item on the left side of the diagram (outgoing), the $\alpha$ arcs are oriented the same as the matching arcs of $\Zc_1$.
        \item on the right side of the diagram (incoming), the $\alpha$ arcs are oriented oppositely to the matching arcs of $\Zc_0$.
    \end{itemize}
    Also assume that the $\beta$ circles of $\Hc_{\norm}$ are oriented such that
    \begin{itemize}
        \item on the left side of the diagram (outgoing), the local intersections in $\Hc^{\alpha \beta}_{1/2}$ have $\beta \cdot \alpha^{a,\out} = -1$;
        \item on the right side of the diagram (incoming), the local intersections in $\Hc^{\beta \alpha}_{1/2}$ have $\beta \cdot \alpha^{a,\inrm} = +1$.
    \end{itemize}
    Then the basis element $\gamma$ of $H_1(F_i,S^+_i)$ corresponding to an oriented matching arc of $\Zc_i$ gets sent, under the isomorphism $H_1(F_i,S^+_i) \cong H_1(\mathfrak{F}_i,\mathfrak{S}^+_i)$ from the left side of diagram~\eqref{eq:FSYRCommutativeDiag}, to $-1$ times the homology class of the corresponding one-cell of $\mathfrak{F}_i \setminus \mathfrak{S}^+_i$.
\end{proposition}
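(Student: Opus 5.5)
The plan is to reduce the statement to one local sign computation per matching arc. The composite isomorphism on the left of diagram~\eqref{eq:FSYRCommutativeDiag} is built from inclusions of pairs. I will follow a single basis element $\gamma$ through this chain, namely the class of an oriented arc in $(F_i,S^+_i)$ dual to a matching arc of $\Zc_i$, or equivalently the core of the corresponding 2d 1-handle. I will track it down to a relative cycle in $(\mathfrak{F}_i,\mathfrak{S}^+_i)$.

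\textbf{Step 1: transport $\gamma$ to a concrete relative cycle.} First I isotope $\gamma$ into $F_i^{\mathrm{shrunk}}$. Since the strips around the $\alpha$ arcs sit in $\Sigma\times\{1\}$, I push $\gamma$ down through the thickening $\Sigma\times[0,1]$. In $\Hc_{\norm}$ the relevant $\alpha$ arc meets exactly one $\beta$ circle, once. So the pushed-down arc, read in $\mathfrak{F}_i$, is the concatenation of four pieces:
\begin{itemize}
\item the 1-cell of $\Sigma^{\circ}$ from $S^+_i$ to the pushoff circle $C$;
\item the radial 1-cell in the disk bounded by $C$;
\item the $\beta$ core 1-cell of $\mathfrak{Y}\setminus\mathfrak{R}^+$;
\item the radial and $\Sigma^{\circ}$ 1-cells from $C'$ back to $S^+_i$.
\end{itemize}
Modulo $\mathfrak{S}^+_i$, which contains all of these pieces except the core, this relative cycle is homologous to $\pm$ the core 1-cell. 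The orientation of the arc, traversed from one endpoint to the other, is fixed by the chosen orientation of the $\alpha$ arc. For outgoing arcs this agrees with the matching arc of $\Zc_1$; for incoming arcs it is opposite to the matching arc of $\Zc_0$, which compensates for the orientation reversal $F_0\subset -\partial Y$.

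\textbf{Step 2: compute the sign.} The core is oriented by the rule ``core then co-core equals the orientation of the handle,'' where the co-core's boundary is the $\beta$ circle. The $\alpha$ arc crosses $\beta$ at one point $x$. Locally near $x$ I take $\Sigma$ to be the $xy$-plane with the handle below it, as in Figure~\ref{fig:CWOrientations}. I then check that the direction in which the pushed-down $\alpha$ arc traverses the core is the positive core direction exactly when $\beta\cdot\alpha=+1$. The argument is the same local picture as in the proof of Proposition~\ref{prop:SignsInCellularDiff}: there, the boundary of an $\alpha$ 2-cell traverses the core positively exactly when $\beta\cdot\alpha=+1$.

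\textbf{Step 3: conclude.} On the outgoing side the hypothesis $\beta\cdot\alpha^{a,\out}=-1$ then gives the sign $-1$. On the incoming side two reversals appear: the $\alpha$ arc is oriented opposite to the matching arc, and the boundary orientation is reversed. Combined with $\beta\cdot\alpha^{a,\inrm}=+1$, they should again give $-1$.

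The main obstacle is the orientation bookkeeping in Step 2, and in particular the incoming side. There I must check that the reversed orientation of $F_0$ inside $\partial Y$ really combines with the two sign flips to give $-1$ rather than $+1$. I would settle this by drawing the explicit local models of $\Hc^{\alpha\beta}_{1/2}$ and $\Hc^{\beta\alpha}_{1/2}$ from Figure~\ref{fig:Half identity diagram}, using the right-hand-rule conventions of Remark~\ref{rem:ChangeOfConventions}.
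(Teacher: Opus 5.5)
\textbf{A gap in the incoming case.} Your Steps 1 and 2 follow the paper's route. You transport $\gamma$ to the single relative 1-cell of $(\mathfrak{F}_i,\mathfrak{S}^+_i)$ and read off the sign from the local picture of Figure~\ref{fig:CWIntersectionSigns}, the one used for Proposition~\ref{prop:SignsInCellularDiff}: the pushed-down $\alpha$ arc traverses the $\beta$ core positively exactly when $\beta\cdot\alpha=+1$. The outgoing case is then fine. The incoming case is not established, and Step~3 as written gives the wrong sign. You list two reversals (the $\alpha$ arc is opposite to the matching arc, and the boundary orientation of $F_0$ is reversed) on top of $\beta\cdot\alpha^{a,\inrm}=+1$. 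These multiply out to $+1$, yet you assert the result ``should again'' be $-1$. Since the whole content of the proposition is this sign, leaving it unresolved is a genuine gap.

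\textbf{The missing observation.} The orientation reversal of $F_0$ inside $\partial Y$ contributes no sign. The basis element $\gamma\in H_1(F_0,S^+_0)$ is the class of an oriented arc. That class depends only on the direction of the arc, not on the orientation of the surface containing it. The orientation-reversing identification of $F(\Zc_0)$ with a subset of $\partial Y^0$ in Definition~\ref{def:AssocBorderedSuturedCob} carries the oriented matching arc to an arc with a definite direction. The hypothesis ``oriented oppositely'' compares exactly these directions, via the endpoint identification $\psi$. For the same reason, your remark in Step~1 that the opposite orientation ``compensates'' for $F_0\subset-\partial Y$ does no work; it is simply the convention fixed in the hypothesis.

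\textbf{Completing the argument.} The incoming $\alpha$ arcs also lie in $\Sigma\times\{1\}$ with the $\beta$ handles attached below, so the local model of Step~2 applies verbatim. Thus $\beta\cdot\alpha^{a,\inrm}=+1$ means the $\alpha$ arc traverses the core positively. The single reversal between the $\alpha$ arc and the matching arc then gives $-1$. This is exactly the paper's short argument; with this correction your proposal coincides with it.
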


\begin{proof}
    The argument is similar to the proof of Proposition~\ref{prop:SignsInCellularDiff}, except that instead of a closed $\alpha$ circle we have an $\alpha$ arc. The same diagram, Figure~\ref{fig:CWIntersectionSigns}, shows that on the left/outgoing side of the diagram where local intersections $\beta \cdot \alpha$ are $-1$, the $\alpha$ arcs in the Heegaard diagram are oriented oppositely to the relevant $\beta$ 1-cells. Since the $\alpha$ arcs on this side are oriented the same as the matching arcs of $\Zc_1$, the sign in the statement of the proposition is $-1$ in this case. 
    
    The figure also shows that on the right/incoming side of the diagram where local intersections $\beta \cdot \alpha$ are $+1$, the $\alpha$ arcs in the Heegaard diagram are oriented the same as the relevant $\beta$ 1-cells. Since the $\alpha$ arcs on this side are oriented oppositely to the matching arcs of $\Zc_0$, the sign in the statement of the proposition is also $-1$ in this case. 
\end{proof}

\section{\texorpdfstring{$[\BSDA]$}{[BSDA]} without \texorpdfstring{$\Spinc$}{Spin-c} structures}\label{sec:BSDAwithoutSpinc}

\begin{definition}
    Let $\Hc = (\Sigma, \boldsymbol{\alpha}, \boldsymbol{\beta}, \mathcal{Z}_0, \mathcal{Z}_1, \psi)$ be a  bordered sutured Heegaard diagram. A \emph{generator} for $\Hc$ is a set $\mathbf{x}$ of intersection points in $\boldsymbol{\alpha} \cap \boldsymbol{\beta}$ such that each $\alpha$-circle and $\beta$-circle is occupied exactly once and each $\alpha$-arc and $\beta$-arc is occupied at most once. We call the set of generators $\mathfrak{S}(\Hc)$.
    
    For $\mathbf{x} \in \mathfrak{S}(\Hc)$, we let $o_R(\mathbf{x})$ be the set of incoming arcs ($\alpha$ or $\beta$ as appropriate) of $\Hc$ that are occupied in $x$, and we let $\overline{o}_R(\mathbf{x})$ be the set of incoming arcs that are unoccupied in $\mathbf{x}$. Similarly, we let $o_L(\mathbf{x})$ be the set of outgoing arcs that are occupied in $\mathbf{x}$, and we let $\overline{o}_L(\mathbf{x})$ be the set of outgoing arcs that are unoccupied in $\mathbf{x}$. By a standard abuse of notation, we also use $o_R(\mathbf{x}), \overline{o}_R(\mathbf{x}), o_L(\mathbf{x})$, and $\overline{o}_L(\mathbf{x})$ to denote the corresponding index sets. 
\end{definition}

\begin{remark}\label{rem:HDconstants}
    In the below definition and elsewhere, we will use various constants associated to an $\alpha$-$\alpha$ bordered sutured Heegaard diagram $\Hc$; we list these here for convenience.
    \begin{itemize}
        \item For $i = 0,1$, we let $n_i$ be the rank of $H_1(F_i,S^+_i)$.
        \item For a basis element of $\wedge^* H_1(F_0,S^+_0)$, we let $k$ be the number of wedge factors.
        \item For a basis element of of $\wedge^* H_1(F_1,S^+_1)$, we let $l$ be the number of wedge factors.
        \item We let $a$ and $b$ be the number of $\alpha$ and $\beta$-circles of $\Hc$ respectively.
        \item We let $c = n_1 + \chi(Y,R^+)$; we will show that the map $[\BSDA(Y,\Gamma; \Xi)]^{\Z}_{\mathrm{comb}}$ we define below has degree $c$.
    \end{itemize}
\end{remark}

\begin{definition}\label{def:BSDAZComb}
    Let $(Y,\Gamma)$ be a sutured cobordism from $(F_0,\Lambda_0)$ to $(F_1,\Lambda_1)$. Define a map
    \[[\BSDA(Y,\Gamma; \Xi)]^{\Z}_{\mathrm{comb}} \colon \wedge^* H_1(F_0,S^+_0) \to \wedge^* H_1(F_1,S^+_1)
    \]
    as follows, where $\Xi$ denotes the \emph{set of choices} made below.  
    \begin{itemize}
        \item Choose $\alpha$-arc diagrams $\Zc_i$ representing $(F_i,\Lambda_i)$, and choose an $\alpha$-$\alpha$ bordered sutured Heegaard diagram $\Hc = (\Sigma,\boldsymbol{\alpha},\boldsymbol{\beta},\Zc_0,\Zc_1,\psi)$ representing $(Y,\Gamma)$.
        \item Choose an ordering of the matching arcs of $\Zc_0$, and choose an orientation of each matching arc. Let $\gamma^{\inrm}_i$ denote the element of $H_1(F_0,S^+_0)$ represented by the $i^{th}$ matching arc with its chosen orientation. For each $k$ between $0$ and $n_0$, the elements
        \[
        \gamma^{\inrm}_I = \gamma^{\inrm}_{i_1} \wedge \cdots \wedge \gamma^{\inrm}_{i_k}
        \]
        for $I = (i_1,\ldots,i_k)$, $1 \leq i_1 < \cdots < i_k \leq n_0$, form a basis for $\wedge^k H_1(F_0,S^+_0)$ over $\Z$.
        \item Choose an ordering of the matching arcs of $\Zc_1$, and choose an orientation of each matching arc. Let $\gamma^{\out}_i$ denote the element of $H_1(F_1,S^+_1)$ represented by the $i^{th}$ matching arc with its chosen orientation. For each $l$ between $0$ and $n_1$, the elements
        \[
        \gamma^{\out}_J = \gamma^{\out}_{j_1} \wedge \cdots \wedge \gamma^{\out}_{j_l}
        \]
        for $J = (j_1,\ldots,j_l)$, $1 \leq j_1 < \cdots < j_l \leq n_1$, form a basis for $\wedge^l H_1(F_1,S^+_1)$ over $\Z$.
        \item Choose an ordering and orientations of the closed $\alpha$-circles $\alpha^c_1,\ldots,\alpha^c_a$ of $\Hc$, and similarly for the $\beta$-circles $\beta_1,\ldots,\beta_b$.
        \item As in Proposition~\ref{prop:ExpandingBasisElts}, orient the incoming $\alpha$-arcs $\alpha^{a,\mathrm{in}}_i$ of $\Hc$ oppositely to the matching arcs of $\Zc_0$. Orient the outgoing $\alpha$-arcs $\alpha^{a,\mathrm{out}}_i$ of $\Hc$ the same as the matching arcs of $\Zc_1$.
        \item Order the full set of $\alpha$ curves of $\Hc$ by
        \[ (\alpha^{a,\mathrm{out}}_1,\ldots,\alpha^{a,\mathrm{out}}_{n_1},\alpha^c_1,\ldots,\alpha^c_a, \alpha^{a,\mathrm{in}}_1,\ldots,\alpha^{a,\mathrm{in}}_{n_0})
        \]
        (i.e. outgoing $\alpha$-arcs first, then $\alpha$-circles, then incoming $\alpha$-arcs).
    \end{itemize}
    Based on these choices $\Xi$, which we will later show affect the construction only up to an overall sign, we now define the map $[\BSDA(Y,\Gamma; \Xi)]^{\Z}_{\mathrm{comb}}$ (defined strictly, not up to sign) as follows.    
    \begin{itemize}
        \item Given a generator $\mathbf{x} \in \mathfrak{S}(\Hc)$, define $\sigma_{\mathbf{x}}$ to be the permutation between ordered sets of size $b$ from $\{\beta$-circles of $\Hc\}$ to $\{\alpha$-curves of $\Hc$ occupied in $\mathbf{x}\}$ sending $\beta_i$ to the $\alpha$-curve sharing a point of $\mathbf{x}$ with $\beta_i$.
        \item Let $k$ be the number of occupied incoming $\alpha$-arcs in $\x$ and let $l$ be the number of unoccupied outgoing $\alpha$-arcs in $\x$. 
        \item Let $S_1 \sqcup S_2$ be any partition of $\{1,\ldots,n_i\}$ such that $S_1$ and $S_2$ inherit the induced orderings from $\{1,\ldots,n_i\}$. Then define $\sigma_{S_1,S_2 \leftrightarrow \std}$ as the permutation that puts the elements of $S_1$ in increasing order in positions $1,\ldots,|S_1|$, and puts the elements of $S_2$ in increasing order in positions $|S_2|+1,\ldots,n_i$. In particular, $\sigma_{\overline{o}_L(\mathbf{x}) o_L(\mathbf{x}) \leftrightarrow \std}$ is the permutation of $\{1,\ldots, n_1\}$ sending the indices of unoccupied outgoing $\alpha$-arcs (i.e. $\overline{o}_L(\x)$) in order to $\{1,\ldots,l\}$ and sending the indices of occupied outgoing $\alpha$-arcs (i.e. $o_L(\x)$) in order to $\{l+1,\ldots,n_1\}$. 
        \item For $x \in \mathbf{x}$, let $i(x)$ denote the element of $\Z/2\Z$ defined by 
        \[
        i(x)=\begin{cases}
            0 & \text{if} \quad \beta_x \cdot \alpha_x=1 \\
            1 & \text{if} \quad \beta_x \cdot \alpha_x=-1
        \end{cases}
        \]
        where $\beta_x \cdot \alpha_x$ is the local intersection sign of $x$, which is $1$ if $(\beta_x,\alpha_x)$ gives an oriented basis for the tangent plane of $\Sigma$ at $x$, and is $-1$ if $(\beta_x,\alpha_x)$ gives an anti-oriented basis. Note that $(-1)^{i(x)}$ is the local intersection sign of $x$.
        \item Let $\mathrm{gr}_{\DA}(\mathbf{x}) \in \Z/2\Z$ be defined by
        \begin{equation}\label{eq:DAGrading}
        \mathrm{gr}_{\DA}(\mathbf{x}) =  \sum_{x \in \mathbf{x}} i(x)  + \mathrm{inv} (\sigma_{\mathbf{x}})  + \mathrm{inv}(\sigma_{\overline{o}_L(\x) o_L(\x) \leftrightarrow \std}) + ak + n_1 k  \mod 2
        \end{equation}
        where $\mathrm{inv}$ denotes the number of inversions of each permutation. 
        \item Finally, we let $[\BSDA(Y,\Gamma; \Xi)]^{\Z}_{\comb}$ be the $\Z$-linear map determined by its action on a basis element $\gamma^{\inrm}_I$ of $\wedge^k H_1(F_0,S^+_0)$, given by
        \begin{equation}\label{eq:DefOfBSDA}
        \gamma^{\inrm}_I \mapsto \sum_{\substack{\mathbf{x} \in \mathfrak{S}(\Hc): \\o_R(\mathbf{x}) = I, \\ \overline{o}_L(\mathbf{x}) = J}} (-1)^{\mathrm{gr}_{\DA}(\mathbf{x})} \, \gamma^{\out}_J,
        \end{equation}
        where we identify $o_R(\mathbf{x})$ and $\overline{o}_L(\mathbf{x})$ with subsets of $\{1,\ldots,n_0\}$ and $\{1,\ldots,n_1\}$ respectively. 
    \end{itemize}
\end{definition}

\begin{remark}
    The notation $[\BSDA(Y,\Gamma;\Xi)]^{\Z}_{\comb}$ should not be taken to mean that an underlying bordered Floer bimodule $\BSDA(Y,\Gamma;\Xi)$ has been defined in the literature; such bimodules have only been defined given more restrictive topological assumptions than we impose here.
\end{remark}

\begin{remark}
    Equation~\eqref{eq:DAGrading} should be compared with \cite[Definition 3.20]{HLW}. In Hom--Lidman--Watson's setting, $n_1 = 2g_1$ is always even, so the term $n_1 k$ from \eqref{eq:DAGrading} does not appear in their results. We also note that, in our notation, the term $ak$ corresponds to their term $t(g - k_L - k_R)$. There, $g - k_L - k_R$ counts the number of $\alpha$–circles of $\Hc$, while $t = |\text{Im}(\sigma)\cap A|$ counts the number of occupied right $\alpha$–arcs for any generator $\x$. 
\end{remark}

\begin{remark}
    We could in principle define $i(x)$ by replacing the local intersection sign $\beta_x \cdot \alpha_x$ with its negative $\alpha_x \cdot \beta_x$. Since $|\x|=b$, this change would be harmless up to an overall sign of $(-1)^b$. We choose to adopt the convention $\beta_x\cdot\alpha_x$, motivated partly by the swapping of $\alpha$ and $\beta$ curves that appears in Section~\ref{sec:RelWithSFH} when relating to the conventions of \cite{FJR}. Note that fixing one convention or the other is important to ensure that the definition of $[\BSDA(Y,\Gamma;\Xi)]^{\Z}_{\comb}$ is defined on-the-nose, not just up to sign, once the choices $\Xi$ are made.
\end{remark}

\begin{proposition}\label{prop:DegreeOfBSDA}
    The degree of the map $[\BSDA(Y,\Gamma; \Xi)]^{\Z}_{\comb}$ is $c := n_1 + \chi(Y,R^+)$.
\end{proposition}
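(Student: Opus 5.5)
We need to show: for any generator $\x$ contributing to $\gamma^{\inrm}_I \mapsto \gamma^{\out}_J$ with $|I| = k$ and $|J| = l$, we have $l - k = n_1 + \chi(Y,R^+)$. The plan is a pure counting argument on generators, followed by an Euler characteristic computation from the relative CW decomposition of Section~\ref{sec:CWDecomp}.

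\textbf{Counting occupied curves.} A generator $\x$ contains exactly $b$ points, one on each $\beta$-circle. Each $\alpha$-circle is occupied exactly once, and each $\alpha$-arc at most once. The occupied $\alpha$-curves therefore consist of
\begin{itemize}
  \item all $a$ $\alpha$-circles,
  \item the $k = |o_R(\x)|$ occupied incoming arcs,
  \item the $n_1 - l$ occupied outgoing arcs, since $l = |\overline{o}_L(\x)|$.
\end{itemize}
Because each point of $\x$ lies on exactly one $\alpha$-curve and one $\beta$-circle, and distinct points occupy distinct curves, this gives
\[
b = a + k + (n_1 - l), \qquad\text{so}\qquad l - k = n_1 + a - b.
\]
We also use that the number of outgoing $\alpha$-arcs equals $n_1 = \rank H_1(F_1,S^+_1)$. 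This holds because the matching arcs of $\Zc_1$ give a basis of $H_1(F_1,S^+_1)$, as stated in Definition~\ref{def:BSDAZComb}; likewise there are $n_0$ incoming arcs.

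\textbf{Euler characteristic.} By Section~\ref{sec:CWDecomp}, $(Y,R^+)$ is homotopy equivalent to the CW pair $(\mathfrak{Y},\mathfrak{R}^+)$. Its relative cellular chain complex $C_*^{\cell}(\mathfrak{Y},\mathfrak{R}^+)$ has:
\begin{itemize}
  \item $b$ cells in degree $1$, one for each $\beta$-circle,
  \item $a$ cells in degree $2$, one for each $\alpha$-circle,
  \item no cells in any other degree.
\end{itemize}
Hence
\[
\chi(Y,R^+) = \chi(\mathfrak{Y},\mathfrak{R}^+) = a - b,
\]
and so $l - k = n_1 + \chi(Y,R^+) = c$. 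The degree is thus independent of $\x$, $I$ and $J$, and every nonzero matrix entry of $[\BSDA(Y,\Gamma;\Xi)]^{\Z}_{\comb}$ maps $\wedge^k$ to $\wedge^{k+c}$. If no generators exist, the map is zero and is trivially homogeneous of degree $c$.

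\textbf{Main obstacle.} The only subtle point is the CW identification. We must use that the sutured handle complex is relative to $R^+\times I$, so that only $\beta$ 1-handles and $\alpha$ 2-handles contribute. The arcs must not be counted as cells: their strips lie in $F_i \subset \partial Y$ and add no handles. This is exactly the construction of the relative CW decomposition in Section~\ref{sec:CWDecomp}, which is stated for an arbitrary $\alpha$-$\alpha$ diagram $\Hc$, not only $\Hc_{\norm}$. So no normalization of the diagram is needed.
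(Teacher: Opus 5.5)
Your proposal is correct and follows essentially the same argument as the paper. You count occupied $\alpha$-curves to get $b = a + k + (n_1 - l)$, and you use the relative CW decomposition relative to $R^+$ (with $b$ one-cells and $a$ two-cells) to get $\chi(Y,R^+) = a - b$; together these give $l = k + c$.
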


\begin{proof}
    Let $\gamma^{\inrm}_I$ be a basis element of $\wedge^* H_1(F_0, S^+_0)$ with $|I| = k$. Let $\x \in \mathfrak{S}(\Hc)$ be any generator with $o_R(\x) = I$, and let $J = \overline{o}_L(\x)$ and $l = |J|$. We will show that $l = k + c$.
    
    The number of occupied outgoing $\alpha$-arcs in $\x$ is $n_1 - l$, and the number of occupied incoming $\alpha$-arcs is $k$. Thus, the total number of occupied $\alpha$-arcs in $\x$ is $n_1 - l + k$. This number of occupied $\alpha$-arcs is the same as $b - a$, the number of $\beta$-circles of $\Hc$ minus the number of $\alpha$-circles. By Section~\ref{sec:CWDecomp}, $\Hc$ gives us a CW decomposition of a space $\mathfrak{Y}$ homotopy equivalent to $Y$, relative to $R^+$, with only 1-cells and 2-cells, where the 1-cells correspond to $\beta$-circles of $\Hc$ and the 2-cells correspond to $\alpha$-circles of $\Hc$. It follows that $\chi(Y,R^+) = a - b$, so
    \[
    n_1 - l + k = -\chi(Y,R^+)
    \]
    and thus $l = k + n_1 + \chi(Y,R^+) = k + c$.
\end{proof}

\begin{remark}
    To match Hom--Lidman--Watson more closely, we could replace $\gamma^{\inrm}_I$ with $(-1)^{|I|} \gamma^{\inrm}_I$ and $\gamma^{\out}_J$ with $(-1)^{|J|} \gamma^{\out}_J$ in equation~\eqref{eq:DefOfBSDA}; see \cite[Proof of Theorem 4.13]{HLW}. By Proposition~\ref{prop:DegreeOfBSDA}, the result would differ by the overall sign $(-1)^c$, and we will disregard overall signs in most situations.
\end{remark}

\begin{example}\label{ex:BSDAForOrdinary}
    Let $(Y,\Gamma)$ be a sutured 3-manifold, i.e. a sutured cobordism from $\emptyset$ to $\emptyset$. The choices $\Xi$ of Definition~\ref{def:BSDAZComb} in this case amount to a sutured Heegaard diagram $\Hc$ (as in Definition~\ref{def:OtherTypesOfHeegaardDiagrams}(c)) representing $(Y,\Gamma)$, an orientation of each $\alpha$ and $\beta$ circle of $\Hc$, and orderings of the sets of $\alpha$ circles and $\beta$ circles of $\Hc$.

    Given such choices $\Xi$, we have a map $[\BSDA(Y,\Gamma; \Xi)]^{\Z}_{\comb}$ from $\Z$ to $\Z$ (identifying $\Z$ with $\wedge^0 (0)$), which we can view as a single integer.  
    This integer is the sum over all $\x \in \mathfrak{S}(\Hc)$ of $(-1)^{\mathrm{gr}_{\DA}(\x)}$, and such $\x$ naturally correspond to bijections between the sets of $\alpha$ circles and $\beta$ circles of $\Hc$. 

    If the numbers $a$ and $b$ of $\alpha$ and $\beta$ circles of $\Hc$ are not equal, then there are no such bijections, so $[\BSDA(Y,\Gamma; \Xi)]^{\Z}_{\comb} = 0$. Otherwise, since we have chosen orderings on the sets of $\alpha$ and $\beta$ circles, such bijections can be identified with permutations on $a$ letters. For $\x \in \mathfrak{S}(\Hc)$, the corresponding permutation is $\sigma_{\x}$ as in Definition~\ref{def:BSDAZComb}. We have
    \[
    \mathrm{gr}_{\DA}(\x) = \sum_{x \in \x} i(x) + \mathrm{inv}(\sigma_{\x})
    \]
    (the other terms in $\mathrm{gr}_{\DA}(\x)$ are zero) and thus
    \[
    (-1)^{\mathrm{gr}_{\DA}(\x)} = \left(\prod_{x \in \x} (-1)^{i(x)} \right) (-1)^{\mathrm{inv}(\sigma_{\x})};
    \]
    note as before that if $x \in \beta_x \cap \alpha_x$ then $(-1)^{i(x)} = \beta_x \cdot \alpha_x$. 
    
    Recall that by Proposition~\ref{prop:SignsInCellularDiff}, the matrix $M_{\Hc}$ representing the cellular differential $\partial_2$ of $C_*(\mathfrak{Y},\mathfrak{R}^+)$ in the ordered bases determined by the choices $\Xi$ has entry in the row corresponding to $\beta_i$ and column corresponding to $\alpha_j$ given by the intersection sign $\beta_i \cdot \alpha_j$. By the permutation expansion of the determinant, we get
    \[
    [\BSDA(Y,\Gamma; \Xi)]^{\Z}_{\comb} = \det M_{\Hc}.
    \]
\end{example}

\subsection{Functoriality of \texorpdfstring{$[\BSDA]$}{[BSDA]} over $\Z$}

We now prove that $[\BSDA(Y,\Gamma;\Xi)]^{\Z}_{\comb}$ respects gluing and identity cobordisms given appropriate choices $\Xi$. Let $(Y,\Gamma)$ be a sutured cobordism from $(F_0,\Lambda_0)$ to $(F_1,\Lambda_1)$ and $(\tilde{Y},\tilde{\Gamma})$ be a sutured cobordism from $(F_1,\Lambda_1)$ to $(F_2,\Lambda_2)$. Fix a set of choices $\tilde{\Xi}$ for $(\tilde{Y},\tilde{\Gamma})$ as in Definition~\ref{def:BSDAZComb}. 

\begin{remark}\label{rem:HDconstantsTilde}
    For bookkeeping purposes, as in Remark~\ref{rem:HDconstants}, we record the constants associated with the sutured surface $(F_2,\Lambda_2)$ and the Heegaard diagram $\tilde{\Hc}$ chosen in $\tilde{\Xi}$.
    \begin{itemize}
        \item We let $n_2$ be the rank of $H_1(F_2,S^+_2)$.
        \item For a basis element of $\wedge^* H_1(F_2,S^+_2)$, we let $m$ be the number of wedge factors. 
    \item We let $\tilde{\gamma}^{\out}_k$ denote the element of $H_1(F_2,S^+_2)$ represented by the $k^{th}$ matching arc of $\Zc_2$ with its chosen orientation, with index $k$ determined by the ordering fixed by $\tilde{\Xi}$. For each $m$ between $0$ and $n_2$, the elements \[ \tilde{\gamma}^{\out}_K = \tilde{\gamma}^{\out}_{k_1} \wedge \cdots \wedge \tilde{\gamma}^{\out}_{k_m} \] for $K=(k_1,\ldots,k_m)$, $1 \leq k_1 < \cdots < k_m \leq n_2$, form a basis for $\wedge^m H_1(F_2,S^+_2)$ over $\Z$ (not to be confused with the constant $K$ appearing in Theorem~\ref{thm:IntroFirstThm}). 
    \item We let $\tilde{a}$ and $\tilde{b}$ be the number of $\alpha$ and $\beta$-circles of $\tilde{\Hc}$, respectively.
    \item We let $d = n_2 + \chi(\tilde{Y},\tilde{R^+})$, which equals the degree of $[\BSDA(\tilde{Y},\tilde{\Gamma}; \tilde{\Xi})]^{\Z}_{\comb}$ by Proposition~\ref{prop:DegreeOfBSDA}.
    \end{itemize}
\end{remark}

\begin{definition}\label{def:ChoicesGluing}
    We say that the sets of choices $\Xi$ for $(Y,\Gamma)$ and $\tilde{\Xi}$ for $(\tilde{Y},\tilde{\Gamma})$ are \emph{composable} if the following restrictions hold. 
    \begin{itemize}
        \item The choice of arc diagram $\Zc_1$ representing $(F_1,\Lambda_1)$ must be the same in $\Xi$ and $\tilde{\Xi}$.
        \item The choices of ordering and orientations of the matching arcs of $\Zc_1$ must be the same in $\Xi$ and $\tilde{\Xi}$. 
    \end{itemize}
    For composable sets of choices $\Xi$ and $\tilde{\Xi}$, we define a specific set of choices, denoted $\tilde{\Xi} \cup \Xi$ for the glued cobordism $(\tilde{Y}\cup_{F_1}Y,\tilde{\Gamma}\cup_{\Lambda_1}\Gamma)$ from $(F_0,\Lambda_0)$ to $(F_2,\Lambda_2)$ as follows.
    \begin{itemize}
        \item Let the choices of arc diagrams $\Zc_i$ representing $(F_i,\Lambda_i)$ for $i=0,2$, together with the choices of ordering and orientations of the arcs in $\Zc_i$, be the same in $\tilde{\Xi} \cup \Xi$ as they are in $\Xi$ and $\tilde{\Xi}$. 
        \item Choose the Heegaard diagram $\tilde{\Hc} \cup_{\Zc_1} \Hc$ as in Definition~\ref{def:BorderedSuturedHDGluing} to represent the cobordism $(\tilde{Y}\cup_{F_1}Y,\tilde{\Gamma}\cup_{\Lambda_1}\Gamma)$, with the additional restriction that the ordering and orientations of the closed $\alpha$- and $\beta$-circles are the same in $\tilde{\Hc}\cup_{\Zc_1}\Hc$ as they are in $\Hc$ and $\tilde{\Hc}$. 
        \item Let $\alpha^{c,\mathrm{new}}$ denote the closed $\alpha$-circles of $\tilde{\Hc} \cup_{\Zc_1} \Hc$ formed by gluing the outgoing (left) arcs $\alpha^{a,\mathrm{out}}$ of $\Hc$ to the incoming (right) arcs $\tilde{\alpha}^{a,\mathrm{in}}$ of $\tilde{\Hc}$ along $\psi(\mathbf{a}_1)$. Give these new $\alpha$-circles the ordering and orientations uniquely determined by this gluing. 
        \item Order the full set of $\alpha$-circles in $\tilde{\Hc} \cup_{\Zc_1} \Hc$ by 
        \[ (\tilde{\alpha}^c_1,\ldots,\tilde{\alpha}^c_{\tilde{a}},\alpha_1^{c,\mathrm{new}},\dots,\alpha_{n_1}^{c,\mathrm{new}},\alpha^c_1,\ldots,\alpha^c_a).
        \]
        It follows that the set of all the $\alpha$-curves in $\tilde{\Hc} \cup_{\Zc_1} \Hc$ has total ordering
        \[ (\tilde{\alpha}^{a,\mathrm{out}}_1,\ldots,\tilde{\alpha}^{a,\mathrm{out}}_{n_2},\tilde{\alpha}^c_1,\ldots,\tilde{\alpha}^c_{\tilde{a}},\alpha_1^{c,\mathrm{new}},\dots,\alpha_{n_1}^{c,\mathrm{new}},\alpha^c_1,\ldots,\alpha^c_a,\alpha^{a,\mathrm{in}}_1,\ldots,\alpha^{a,\mathrm{in}}_{n_0}).
        \]
        \item If the ordering on the $\beta$-circles of $\Hc$ is given by $\Xi$ as $(\beta_1,\ldots,\beta_b)$, and the ordering on the $\beta$-circles of $\tilde{\Hc}$ is given by $\tilde{\Xi}$ as $(\tilde{\beta}_1,\ldots,\tilde{\beta}_{\tilde{b}})$, we order the full set of $\beta$-circles of $\tilde{\Hc} \cup_{\Zc_1} \Hc$ by
        \[
        (\tilde{\beta}_1,\ldots,\tilde{\beta}_{\tilde{b}},\beta_1,\ldots,\beta_b).
        \]
    \end{itemize}
\end{definition}

\begin{proposition}\label{prop:BSDARespectsGluing}
    Let $(Y,\Gamma)$ be a sutured cobordism from $(F_0,\Lambda_0)$ to $(F_1,\Lambda_1)$ and $(\tilde{Y},\tilde{\Gamma})$ be a sutured cobordism from $(F_1,\Lambda_1)$ to $(F_2,\Lambda_2)$. Fix composable sets of choices $\Xi$ and $\tilde{\Xi}$ for $(Y,\Gamma)$ and $(\tilde{Y},\tilde{\Gamma})$, respectively. Given these choices, we can define a map 
    \[
    [\BSDA(\tilde{Y}\cup_{F_1}Y,\tilde{\Gamma}\cup_{\Lambda_1}\Gamma;\tilde{\Xi}\cup \Xi)]^{\Z}_{\comb}\colon \wedge^* H_1(F_0,S^+_0) \to \wedge^* H_1(F_2,S^+_2)
    \]
    as in equation~\eqref{eq:DefOfBSDA}, using the set of choices $\tilde{\Xi} \cup \Xi$. In this case, the composition rule
    \begin{equation}\label{eq:BSDAgluing}
    [\BSDA(\tilde{Y}\cup_{F_1}Y,\tilde{\Gamma}\cup_{\Lambda_1}\Gamma;\tilde{\Xi}\cup \Xi)]^{\Z}_{\comb}=[\BSDA(\tilde{Y},\tilde{\Gamma},\tilde{\Xi})]^{\Z}_{\comb}\circ [\BSDA(Y,\Gamma;\Xi)]^{\Z}_{\comb}
    \end{equation}
    holds up to overall sign. 
\end{proposition}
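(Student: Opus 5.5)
Both sides will be expanded as signed sums over generators, and the plan is to match them term by term. Write $\Hc^{\cup} := \tilde{\Hc} \cup_{\Zc_1} \Hc$. First we set up a bijection. A generator $\mathbf{y} \in \mathfrak{S}(\Hc^{\cup})$ restricts to a pair $(\tilde{\x},\x)$ with $\tilde{\x} = \mathbf{y} \cap \tilde{\Sigma}$ and $\x = \mathbf{y} \cap \Sigma$. Each new circle $\alpha^{c,\mathrm{new}}_j$ is occupied exactly once in $\mathbf{y}$. Hence exactly one of $\alpha^{a,\out}_j$ (in $\Hc$) and $\tilde{\alpha}^{a,\inrm}_j$ (in $\tilde{\Hc}$) is occupied. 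This gives $\overline{o}_L(\x) = o_R(\tilde{\x})$, and all other occupancy conditions restrict correctly. Conversely, any pair $(\tilde{\x},\x)$ with $\overline{o}_L(\x) = o_R(\tilde{\x}) =: J$ glues to a generator of $\Hc^{\cup}$ with $o_R(\mathbf{y}) = o_R(\x)$ and $\overline{o}_L(\mathbf{y}) = \overline{o}_L(\tilde{\x})$. Expanding the composition via \eqref{eq:DefOfBSDA}, the coefficient of $\tilde{\gamma}^{\out}_K$ in the image of $\gamma^{\inrm}_I$ is $\sum_J\sum_{\x,\tilde{\x}} (-1)^{\mathrm{gr}_{\DA}(\x)+\mathrm{gr}_{\DA}(\tilde{\x})}$. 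This runs over exactly the same index set as the glued sum. It therefore suffices to show
\[
\mathrm{gr}_{\DA}(\mathbf{y}) \equiv \mathrm{gr}_{\DA}(\x) + \mathrm{gr}_{\DA}(\tilde{\x}) + \epsilon \pmod 2,
\]
where $\epsilon$ may depend only on the global data ($n_0,n_1,n_2,a,b,\tilde a,\tilde b$) and possibly on $k=|I|$. A $k$-dependence is fine because the map is homogeneous and we only need equality up to overall sign on each graded piece. Any $k$-dependent sign has to be absorbed into a global sign, so we must track it carefully; one expects $\epsilon$ to depend on $k$ only through an even multiple that cancels.

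Second, we compare the terms in \eqref{eq:DAGrading}. The local intersection terms $\sum i(x)$ are additive. The orientations on $\alpha^{c,\mathrm{new}}_j$ restrict to the orientations of $\alpha^{a,\out}_j$ (same as the matching arcs of $\Zc_1$) and of $\tilde{\alpha}^{a,\inrm}_j$ (opposite to them). The gluing is orientation-compatible, so the new circle's orientation agrees with the arc on one side and is reversed on the other. We need to check that this creates no discrepancy, or at most a fixed sign per occupied arc on the $\tilde{\Hc}$ side, which contributes $|J|$ times a constant. That is precisely the kind of $l$-dependent term that the $n_1 k$ and $ak$ terms must absorb. The main work is the permutation count. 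Under the orderings of Definition~\ref{def:ChoicesGluing}, $\sigma_{\mathbf{y}}$ sends $(\tilde{\beta},\beta)$ into the $\alpha$-ordering
\[
(\tilde{\alpha}^{a,\out},\tilde{\alpha}^c,\alpha^{c,\mathrm{new}},\alpha^c,\alpha^{a,\inrm}).
\]
The $\tilde{\beta}$'s land in $\tilde{\alpha}^{a,\out} \cup \tilde{\alpha}^c \cup \{\alpha^{c,\mathrm{new}}_j : j \in J\}$ and the $\beta$'s land in $\{\alpha^{c,\mathrm{new}}_j : j \notin J\} \cup \alpha^c \cup \alpha^{a,\inrm}$.

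We then count inversions in blocks. Inversions within the $\tilde{\beta}$ block equal $\mathrm{inv}(\sigma_{\tilde{\x}})$ exactly, since the relative order of those targets is $\tilde{\Hc}$'s own ordering. Inversions within the $\beta$ block equal $\mathrm{inv}(\sigma_{\x})$ up to a correction: in $\Hc$ the occupied outgoing arcs come before the $\alpha^c$ exactly as here, so there is no correction. Cross-block inversions come only from pairs in which a $\tilde{\beta}$ maps to $\alpha^{c,\mathrm{new}}_j$ with $j \in J$ and a $\beta$ maps to $\alpha^{c,\mathrm{new}}_{j'}$ with $j' \notin J$ and $j' < j$. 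Their number is $\mathrm{inv}(\sigma_{\overline{o}_L(\x) o_L(\x) \leftrightarrow \std})$ up to complement parity. This cross term should cancel against the $\mathrm{inv}(\sigma_{\overline{o}_L o_L \leftrightarrow \std})$ term of $\x$, while the analogous term of $\tilde{\x}$ is exactly that of $\mathbf{y}$.

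Finally, the remaining terms are $a k$, $n_1 k$, $\tilde a l$, $n_2 l$ versus $(\tilde a + n_1 + a)k + n_2 k$ for $\mathbf{y}$. Using $l = k + c$ from Proposition~\ref{prop:DegreeOfBSDA}, these differ by an expression in $k$ and constants. I expect the $k$-dependence to cancel mod 2, for example $n_1 k + \tilde a (k+c) + \cdots$ against $(\tilde a + n_1)k$, leaving a constant. The main obstacle is this bookkeeping: getting the complement-parity of the cross inversions and the orientation flip of the new circles to combine with the $ak+n_1k$ terms so that only a $k$-independent constant remains. If a stray $k$-linear term survives, I would redo the cross-inversion count using $\mathrm{inv}(\sigma_{S_1S_2\leftrightarrow\std}) + \mathrm{inv}(\sigma_{S_2S_1\leftrightarrow\std}) = |S_1||S_2|$ to convert it into the needed form.
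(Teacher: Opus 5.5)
Your proposal is the paper's proof. It uses the same generator bijection and exact additivity of the intersection terms. Your explicit block count of inversions replaces the paper's crossing-strands diagrams, and the cross-block term equals $\mathrm{inv}(\sigma_{\overline{o}_L(\x)o_L(\x)\leftrightarrow\std})$ exactly, not just up to complement parity. The correction terms differ by $(\tilde a+n_2)(k-l)\equiv(\tilde a+n_2)c \pmod 2$, a constant.

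Your other hedges also resolve cleanly. Each new $\alpha$-circle is coherently oriented by both arcs, since one runs along the matching arc of $\Zc_1$ and the other against it, so no orientation correction appears. Contrary to your remark, however, a genuinely $k$-dependent sign would \emph{not} be acceptable: the statement, and its use in Corollary~\ref{cor:BSDAZFunctorial}, requires a single global sign.
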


\begin{proof}
    Let $\gamma_I^{\inrm}$ be an arbitrary basis element of $\wedge^k H_1(F_0,S^+_0)$. The left hand side of equation~\eqref{eq:BSDAgluing} acts on $\gamma_I^{\inrm}$ as
    \begin{equation}\label{eq:BSDAgluingLHS}
        \sum_{\substack{\tilde{\x} \cup \x \in \mathfrak{S}(\tilde{\Hc} \cup_{\Zc_1} \Hc): \\ o_R(\tilde{\x} \cup \x)=I, \\ \bar{o}_L(\tilde{\x} \cup \x)=K}} (-1)^{\mathrm{gr}_{\DA}(\tilde{\x} \cup \x)}\gamma_K^{\out}
    \end{equation}
    The right hand side acts on $\gamma_I^{\inrm}$ as
    \begin{equation}\label{eq:BSDAgluingRHS}
        \sum_{\substack{\tilde{\x} \in \mathfrak{S}(\tilde{\Hc}): \\ o_R(\tilde{\x})=J, \\ \bar{o}_L(\tilde{\x})=K}} \sum_{\substack{\x \in \mathfrak{S}(\Hc): \\ o_R(\x)=I, \\ \bar{o}_L(\x)=J}} (-1)^{\mathrm{gr}_{\DA}(\tilde{\x})} (-1)^{\mathrm{gr}_{\DA}(\x)}\gamma_K^{\out} = \sum_{\substack{\x,\tilde{\x}: \\ o_R(\x)=I, \\ \bar{o}_L(\x)=o_R(\tilde{\x}), \\ \bar{o}_L(\tilde{\x})=K}} (-1)^{\mathrm{gr}_{\DA}(\tilde{\x})+\mathrm{gr}_{\DA}(\x)}\gamma_K^{\out}.        
    \end{equation}
    
    Our goal is to show equality of the formulas \ref{eq:BSDAgluingLHS} and \ref{eq:BSDAgluingRHS} up to an overall factor of $\pm 1$. We first argue that there is a one-to-one correspondence between the index sets
    \[
    \begin{array}{c}
    \{\x,\tilde{\x} \mid 
    o_R(\x)=I,\ 
    \overline{o}_L(\x)=o_R(\tilde{\x}),\ 
    \overline{o}_L(\tilde{\x})=K\} \\[0.6em]
    \updownarrow \\[0.6em]
    \{\tilde{\x}\cup \x \in \mathfrak{S}(\tilde{\Hc}\cup \Hc) \mid
    o_R(\tilde{\x}\cup \x)=I,\ 
    \overline{o}_L(\tilde{\x}\cup \x)=K\}.
    \end{array}
    \]
    This follows from the fact that $o_R(\tilde{\x}\cup \x)=o_R(\x)$ and $\overline{o}_L(\tilde{\x}\cup \x)=\overline{o}_L(\tilde{\x})$, together with the manner by which generators $\tilde{\x}\cup \x$ are formed. Precisely, for each $i$, if the $\alpha$–arc on the $\tilde{\Hc}$ side that forms $\alpha^{c,\mathrm{new}}_i$ (namely, $\tilde{\alpha}^{a,\mathrm{in}}_i$) is occupied in $\tilde{\x}$, then for $\tilde{\x}\cup \x$ to be a generator, the corresponding $\alpha$–arc on the $\Hc$ side (namely, $\alpha^{a,\mathrm{out}}_i$) must be unoccupied, and vice-versa, so the condition $\overline{o}_L(\x)=o_R(\tilde{\x})$ is implicit for each generator $\tilde{\x}\cup \x$. 
    
    Next we will show that up to a $k$-independent sign, $\mathrm{gr}_{\DA}(\tilde{\x} \cup \x)=\mathrm{gr}_{\DA}(\tilde{\x})+\mathrm{gr}_{\DA}(\x)$, rather,  
    \begin{align*}
        &\sum_{\tilde{x} \cup x \in \tilde{\x} \cup \x} i(\tilde{x} \cup x) + \mathrm{inv} (\sigma_{\mathbf{\tilde{x} \cup x}})  + \mathrm{inv}(\sigma_{\overline{o}_L(\tilde{\x} \cup \x) o_L(\tilde{\x} \cup \x) \leftrightarrow \std}) + (a+\tilde{a}+n_1)k + n_2 k \\
        &=\sum_{\tilde{x} \in \tilde{\x}} i(\tilde{x})+ \mathrm{inv} (\sigma_{\tilde{\x}})  + \mathrm{inv}(\sigma_{\overline{o}_L(\tilde{\x}) o_L(\tilde{\x}) \leftrightarrow \std}) + \tilde{a}l + n_2 l \\
        &+\sum_{x \in \x} i(x)+ \mathrm{inv} (\sigma_{\x})  + \mathrm{inv}(\sigma_{\overline{o}_L(\x) o_L(\x) \leftrightarrow \std}) + ak + n_1 k.
    \end{align*}  
    We first notice it follows by construction that
    \[
    \sum_{\tilde{x} \cup x \in \tilde{\x} \cup \x} i(\tilde{x} \cup x)=\sum_{\tilde{x} \in \tilde{\x}} i(\tilde{x})+\sum_{x \in \tilde{\x} \cup \x} i(x).
    \]
    We now want to show that the inversion counts are additive. To do so, we represent the relevant permutations with \emph{crossing strands diagrams}, which are defined as follows. 
    \begin{itemize}
        \item List the indices $\{1,\dots,b\}$ of the $\beta$–circles of $\Hc$ in the order specified by $\Xi$, bottom to top, along the $y$-axis in the plane; list the indices $\{1,\dots,n_0+a+n_1\}$ of the $\alpha$–curves of $\Hc$ in the order specified by $\Xi$, bottom to top, along any line $x=\varepsilon$ in the plane for $\varepsilon>0$. Partition the list of $\alpha$ indices into blocks by type: $\boldsymbol{\alpha}^{a,\mathrm{out}} \sqcup \boldsymbol{\alpha}^{c} \sqcup \boldsymbol{\alpha}^{a,\mathrm{in}}$ with the $\boldsymbol{\alpha}^{a,\mathrm{out}}$ block on bottom.
        
        \item For each $i\in\{1,\dots,b\}$, draw a solid horizontal strand from $i$ on the left to $\sigma_{\x}(i)$ on the right. 
        \item For each $j\in\overline{o}_L(\x) \subset \boldsymbol{\alpha}^{a,\mathrm{out}}$, draw a vertical dotted strand that begins at $j$ on the right and ends at the bottom of the diagram. When drawing dotted strands, ensure they do not cross among themselves.
    \end{itemize}
    Two solid strands $i\to\sigma_{\x}(i)$ and $j\to\sigma_{\x}(j)$ cross if and only if $i<j$ and $\sigma_{\x}(i)>\sigma_{\x}(j)$. Hence the number of ``solid-solid'' crossings equals the number of inversions in $\sigma_{\x}$. A solid strand $i \to \sigma_{\x}(i)$ crosses a dotted strand at $j\in J$ when $\sigma_{\x}(i)<j$, i.e. when the occupied outgoing index $\sigma_{\x}(i)\in J^c$ lies before an unoccupied one $j\in J$. Hence the number of ``solid-dotted'' crossings equals the number of inversions in $\sigma_{J J^c \leftrightarrow \std} = \sigma_{\overline{o}_L(\x)\,o_L(\x)\leftrightarrow \std}$. We think of dotted-solid crossings as the \emph{left idempotent contribution} to $\mathrm{gr}_{\DA}(\x)$. 
    
    Using the ordering from $\tilde{\Xi}$, construct an analogous diagram for $\tilde{\x}$ computing $\mathrm{inv}(\sigma_{\tilde{\x}})+\mathrm{inv}(\sigma_{\overline{o}_L(\tilde{\x})\,o_L(\tilde{\x})\leftrightarrow\id})$, with blocks $\boldsymbol{\tilde{\alpha}}^{a,\mathrm{out}} \sqcup \boldsymbol{\tilde{\alpha}}^{c} \sqcup  \boldsymbol{\tilde{\alpha}}^{a,\mathrm{in}}$ on the right. The crossing strands diagrams for $\x$ and $\tilde{\x}$ are shown in Figure~\ref{fig:permutation diagram gluing}.
    
    Given crossing strands diagrams for $\x$ and $\tilde{\x}$, we form a crossing strands diagram for $\tilde{\x} \cup \x$ as follows. 
    \begin{itemize}
        \item Stack the crossing strands diagram for $\x$ above that for $\tilde{\x}$. Identify the blocks $\boldsymbol{\alpha}^{a,\mathrm{out}}$ and $\boldsymbol{\tilde{\alpha}}^{a,\mathrm{in}}$, relabeling the identified block $\boldsymbol{\alpha}^{c,\mathrm{new}}$. Concatenate the $\beta$–axes of the two diagrams to form the full $\beta$–axis for $\tilde{\x}\cup\x$. The new diagram lists, bottom to top, the $\beta$ (left) and $\alpha$ (right) indices of $\tilde{\mathcal H}\cup\mathcal H$ in the order specified by $\tilde{\Xi} \cup \Xi$.
        \item Under the identification $\boldsymbol{\alpha}^{a,\mathrm{out}}\sim\boldsymbol{\tilde{\alpha}}^{a,\mathrm{in}}$, all the $l$ dotted strands at $j\in\overline{o}_L(\x)$ in the upper diagram become solid strands connecting a subset of $\boldsymbol{\tilde{\beta}}$ to $\boldsymbol{\tilde{\alpha}}^{c,\mathrm{new}}$, representing the new occupancy pattern in the glued generator $\tilde{\x} \cup \x$. 
    \end{itemize}
    The crossing strands diagram for $\tilde{\x} \cup \x$ is shown in Figure~\ref{fig:permutation diagram gluing}. It follows by construction that 
    \begin{align*}
    &\mathrm{inv} (\sigma_{\mathbf{\tilde{x} \cup x}})  + \mathrm{inv}(\sigma_{\overline{o}_L(\tilde{\x} \cup \x) o_L(\tilde{\x} \cup \x) \leftrightarrow \std}) \\
    &=\mathrm{inv} (\sigma_{\mathbf{x}})+ \mathrm{inv} (\sigma_{\tilde{\x}})+\mathrm{inv}(\sigma_{\overline{o}_L(\x) o_L(\x) \leftrightarrow \std})+\mathrm{inv}(\sigma_{\overline{o}_L(\tilde{\x}) o_L(\tilde{\x}) \leftrightarrow \std});
    \end{align*}
    the key point is that the number of crossings between the two strands labeled $n_1 - l$ and $l$ in the rightmost diagram of Figure~\ref{fig:permutation diagram gluing} agrees with the number of crossings between the solid strand labeled $n_1 -l$ and the dotted strand labeled $l$ in the leftmost diagram of Figure~\ref{fig:permutation diagram gluing}.
    
    \begin{figure}
    \centering
    \vspace{0.05in}
    \begin{overpic}[width=\textwidth] 
    {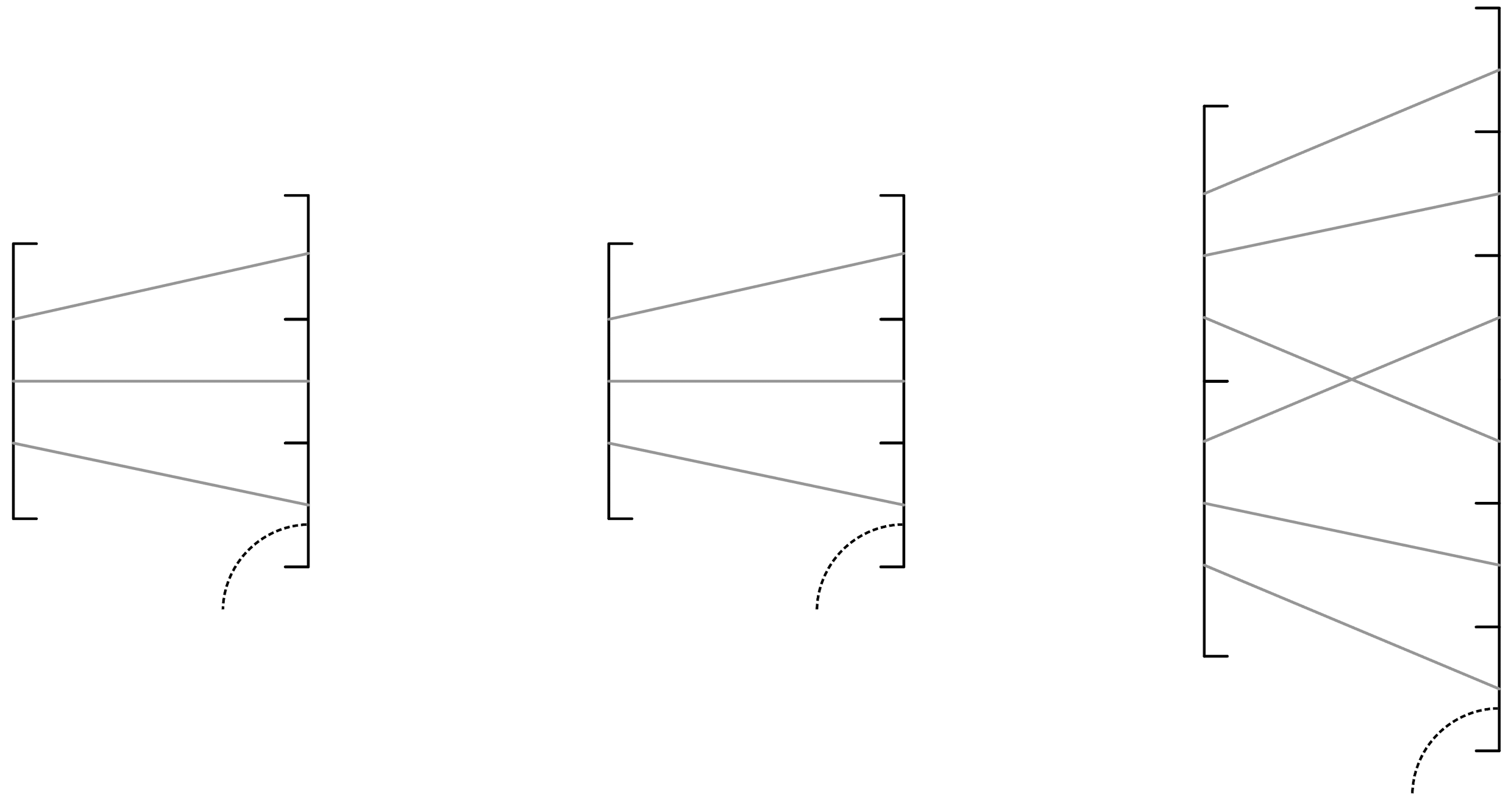}
    \put (-3,27) {$\boldsymbol{\beta}$} 
    \put (22,35) {$\boldsymbol{\alpha}^{a,\mathrm{in}}$}
    \put (22,27) {$\boldsymbol{\alpha}^{c}$}
    \put (22,19) {$\boldsymbol{\alpha}^{a,\mathrm{out}}$}
    \put (10,35) {$k$} 
    \put (10,29) {$a$} 
    \put (6,18) {$n_1-l$} 
    \put (14,10) {$l$} 
    
    \put (36,27) {$\boldsymbol{\tilde{\beta}}$}
    \put (62,35) {$\boldsymbol{\tilde{\alpha}}^{a,\mathrm{in}}$}
    \put (62,27) {$\boldsymbol{\tilde{\alpha}}^{c}$}
    \put (62,19) {$\boldsymbol{\tilde{\alpha}}^{a,\mathrm{out}}$}
    \put (50,35) {$l$} 
    \put (50,29) {$\tilde{a}$} 
    \put (46,18) {$n_2-m$} 
    \put (53,10) {$m$} 

    \put (75,36) {$\boldsymbol{\beta}$} 
    \put (75,19) {$\boldsymbol{\tilde{\beta}}$}
    \put (101,48) {$\boldsymbol{\alpha}^{a,\mathrm{in}}$}
    \put (101,39) {$\boldsymbol{\alpha}^{c}$}
    \put (101,27) {$\boldsymbol{\alpha}^{c,\mathrm{new}}$}
    \put (101,15) {$\boldsymbol{\tilde{\alpha}}^{c}$}
    \put (101,7) {$\boldsymbol{\tilde{\alpha}}^{a,\mathrm{out}}$}
    \put (89,46) {$k$} 
    \put (89,39) {$a$} 
    \put (85,31) {$n_1-l$} 
    \put (86,24) {$l$} 
    \put (89,19) {$\tilde{a}$} 
    \put (84,8) {$n_2-m$} 
    \put (92.5,-2) {$m$} 
    \end{overpic}
    \vspace{0.035in}
    \caption{Left to right: The crossing strands diagrams for $\x$, $\tilde{\x}$, and $\tilde{\x} \cup \x$. The indices indicate clusters of strands that have been grouped to avoid visual clutter. Note that such groups may contain internal solid-solid or solid-dotted crossings that are not shown. Also note that the ordering along the $\beta$-axis is purely schematic: for instance, the $k$ strands ending on $\alpha^{a,\mathrm{in}}$ need not originate from the topmost $\beta$–indices. Lastly, note that the $n_1-l$ and $l$ strands in the rightmost figure need not always all cross. Rather, this is drawn to suggest that there may be new crossings here where there were not previously.}
    \label{fig:permutation diagram gluing}
    \end{figure}

    Lastly, we show that the correction terms satisfy
    \begin{align*}
        (a+\tilde{a}+n_1)k + n_2 k &= \left(\tilde{a}l + n_2 l\right) +\left(  ak + n_1 k\right)  \\
        (a+\tilde{a}+n_1)k + n_2 k +\left(\tilde{a}l + n_2 l\right) +\left(  ak + n_1 k\right) &= 0 \mod 2 
    \end{align*} 
    To do this, we will simplify this expression mod 2, discarding any terms which do not depend on $k$. These such signs ultimately factor out of $[\BSDA(\tilde{Y}\cup_{F_1}Y,\tilde{\Gamma}\cup_{\Lambda_1}\Gamma;\tilde{\Xi}\cup \Xi)]^{\Z}_{\comb}$ as a global sign, which we disregard. In computing, we utilize the relations $l=k+c$ and $m=l+d$ to reduce any factor of $l$ or $m$ to a factor of $k$ modulo terms which do not depend on $k$. We have
    \begin{align*}
        (a+\tilde{a}+n_1)k + n_2 k +\left(\tilde{a}l + n_2 l\right) +\left(  ak + n_1 k\right)
        &= (\tilde{a}+n_2)k + \tilde{a}l + n_2 l \\
        &= (\tilde{a}+n_2)k + \tilde{a}(k+c) + n_2(k+c) \\
        &= 2(\tilde{a}+n_2)k + (\tilde{a}+n_2)c \\
        &= 0 + \text{($k$-independent terms)} \mod 2.
    \end{align*}
\end{proof}

\begin{proposition}\label{prop:BSDARespectsIdentity}
    Write $(\id_{(F,\Lambda)},\Gamma_{\id})$ for the identity sutured cobordism $(F \times [0,1], \Lambda \times [0,1])$ from $(F,\Lambda)$ to itself, as in Definition~\ref{def:SutCobCategory}. We can equip the identity cobordism with a set of choices $\Xi_{\id}$, defined as follows. 
    \begin{itemize}
        \item Choose an $\alpha$-arc diagram $\Zc$ to represent $(F,\Lambda)$, and fix an ordering and orientations of the matching arcs in $\Zc$. Assume that both the incoming and outgoing copy of $\Zc$ have the same ordering and orientation of matching arcs. 
        \item Choose the identity Heegaard diagram $\Hc_{\id}=\Hc^{\alpha \beta}_{1/2} \cup_{\Zc^*} \Hc^{\beta \alpha}_{1/2}$ from Example~\ref{ex:identityHD} to represent $(\id_{(F,\Lambda)},\Gamma_{\id})$. 
        \item Make all the choices for $\Hc_{\id}$ required by Definition~\ref{def:BSDAZComb}, deferring the orderings and orientations of the $\beta$-circles, which we specify below. 
        \item Order the $\beta$-circles of $\Hc_{\id}$ so that the $i^{th}$ $\beta$-circle intersects the $i^{th}$ incoming alpha arc of $\Hc^{\beta \alpha}_{1/2}$ non-trivially. Orient the $\beta$-circles of $\Hc_{\id}$ such that the local intersection signs $\beta \cdot \alpha^{a,\mathrm{in}}$ in $\Hc^{\beta \alpha}_{1/2}$ are always $+1$; this forces all the local intersection signs $\beta \cdot \alpha^{a,\mathrm{out}}$ in $\Hc^{\alpha \beta}_{1/2}$ to be $-1$, since the orientations of the arcs in both copies of $\Zc$ must agree.
    \end{itemize}
    Given these choices $\Xi_{\id}$, the map 
    \[[\BSDA(\id_{(F,\Lambda)},\Gamma_{\id}; \Xi_{\id})]^{\Z}_{\mathrm{comb}} \colon \wedge^* H_1(F,S^+) \to \wedge^* H_1(F,S^+)
    \]
    coincides with the identity map on $\wedge^* H_1(F,S^+)$ up to overall sign. 
\end{proposition}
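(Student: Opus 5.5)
The plan is to enumerate the generators of $\Hc_{\id}$ directly and check that each basis element $\gamma_I$ is sent to $\pm\gamma_I$, with a sign independent of $I$. First I will record the structure of $\Hc_{\id} = \Hc^{\alpha\beta}_{1/2}\cup_{\Zc^*}\Hc^{\beta\alpha}_{1/2}$.

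\begin{itemize}
\item There are no $\alpha$-circles, so $a=0$.
\item There are $n := \rank H_1(F,S^+)$ $\beta$-circles, obtained by gluing the $\beta$-arcs of the two half-identity diagrams (the co-cores of the 1-handles).
\item The $i$-th $\beta$-circle meets the $i$-th incoming $\alpha$-arc (a core in $\Hc^{\beta\alpha}_{1/2}$) exactly once, and the $i$-th outgoing $\alpha$-arc (a core in $\Hc^{\alpha\beta}_{1/2}$) exactly once.
\item It meets no other $\alpha$-arcs, since cores and co-cores of distinct handles are disjoint.
\end{itemize}

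Hence a generator $\x$ is a choice, for each $i$, of one of two intersection points on $\beta_i$: either the one on $\alpha^{a,\inrm}_i$ or the one on $\alpha^{a,\out}_i$. The occupancy conditions then give a bijection between generators and subsets $I\subset\{1,\ldots,n\}$, namely $I = o_R(\x)$, with $\overline{o}_L(\x) = I$ as well. Thus $\gamma_I \mapsto \pm\gamma_I$, with exactly one term, and $c=0$ as expected.

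It remains to show that $(-1)^{\mathrm{gr}_{\DA}(\x_I)}$ is independent of $I$ up to a global constant. I will compute each term of \eqref{eq:DAGrading} for $\x_I$ with $|I|=k$.

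\begin{itemize}
\item \emph{Intersection signs.} By the choice of orientations in $\Xi_{\id}$, the points on incoming arcs contribute $i(x)=0$ and the points on outgoing arcs contribute $i(x)=1$. So $\sum i(x) = n-k$.
\item \emph{The term $ak$.} This vanishes since $a=0$.
\item \emph{The term $n_1k$.} This equals $nk$.
\item \emph{The permutation $\sigma_{\x}$.} The ordering of $\alpha$ curves is (outgoing arcs $1,\ldots,n$, then incoming arcs $1,\ldots,n$). The map $\sigma_{\x}$ sends $\beta_i$ to position $i$ if $i\notin I$, and to position $n+i$ if $i\in I$. An inversion is a pair $i<j$ with $\sigma(i)>\sigma(j)$, which happens exactly when $i\in I$ and $j\notin I$. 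So $\mathrm{inv}(\sigma_{\x}) = \#\{(i,j): i<j,\ i\in I,\ j\notin I\}$.
\item \emph{The idempotent permutation.} For $\sigma_{\overline{o}_L o_L\leftrightarrow\std}$, unoccupied (the set $I$) come first, so its inversions are the pairs $i<j$ with $i\notin I$, $j\in I$.
\end{itemize}

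The two inversion counts add up to the number of pairs $\{i,j\}$ with exactly one element in $I$, which is $k(n-k)$. So modulo $2$,
\[
\mathrm{gr}_{\DA}(\x_I) \equiv (n-k) + k(n-k) + nk \equiv n - k + k - k^2 \equiv n \pmod 2,
\]
using $k^2\equiv k$ and dropping $2nk$. This is independent of $I$, giving the identity up to the overall sign $(-1)^n$.

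The main step needing care is the claim that each $\beta$-circle meets only its own pair of arcs, once each, with the stated signs. I would verify this from the construction in Example~\ref{ex:HalfIdentityDiags}: cores and co-cores of the 2d 1-handles meet transversely in one point. The consistency of the sign convention (that $+1$ on the incoming side forces $-1$ on the outgoing side) is asserted in the statement of $\Xi_{\id}$ and I would take it as given. Getting the inversion bookkeeping right is routine once the ordering conventions are fixed.
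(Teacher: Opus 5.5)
Your proof is correct and is essentially the paper's: generators of $\Hc_{\id}$ correspond to subsets $I$ with $o_R(\x)=\overline{o}_L(\x)=I$, and the terms $\sum_{x\in\x} i(x)=n-k$, $\mathrm{inv}(\sigma_{\x})+\mathrm{inv}(\sigma_{\overline{o}_L(\x)o_L(\x)\leftrightarrow\std})=k(n-k)$, $ak=0$, $n_1k=nk$ combine to a $k$-independent grading. The paper counts the inversions with crossing-strands diagrams and discards $k$-independent terms, while you obtain the exact value $n$. One slip: the middle expression in your final congruence should be $n-k-k^2+2nk$, not $n-k+k-k^2$, which is $\equiv n-k$. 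Since $k^2+k$ is even, your conclusion $\mathrm{gr}_{\DA}(\x_I)\equiv n \pmod 2$ still stands.
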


\begin{proof}
    Associate to $(F,S^+)$ the same constants as in Remark~\ref{rem:HDconstants} assigned to the incoming surface $F_0$, and rename $n_0$ by $n \coloneq \rank H_1(F,S^+)$. Note that 
    \[
    c = n + \chi(F \times [0,1], S^+ \times [0,1]) = n + \chi(F,S^+) = n - n = 0.
    \]
    We want to show that $\mathrm{gr}_{\DA}(\mathbf{x})=0$ mod 2 for each $\x$. First notice in $\Xi_{\id}$ that the local intersection contribution satisfies 
    \begin{align*}
        \sum_{x \in \mathbf{x}} i(x)&=|o_L(\x)|\cdot 1 +|o_R(\x)|\cdot 0 \\
        &=n-l \\
        &= n-k-c \\
        &= k + \text{($k$-independent terms)} \mod 2.
    \end{align*}
    
    \begin{figure}
        \centering
        \vspace{0.035in}
        \begin{overpic}[scale=0.4] 
        {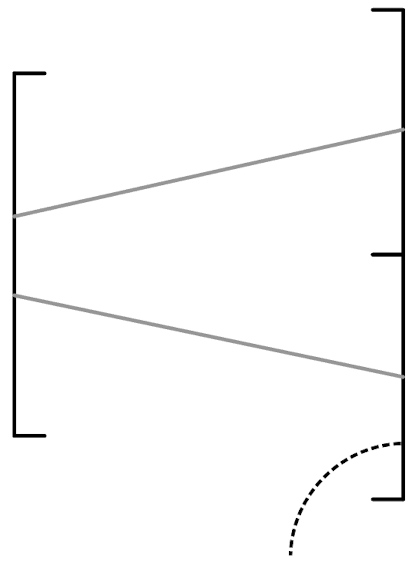}
        \put (-15,50) {$\boldsymbol{\beta}$} 
        \put (80,75) {$\boldsymbol{\alpha}^{a,\mathrm{in}}$} 
        \put (80,30) {$\boldsymbol{\alpha}^{a,\mathrm{out}}$} 
    
        \put (28,75) {$k$} 
        \put (20,25) {$n-k$} 
        \put (48,-10) {$k$} 
        \end{overpic}
        \vspace{0.1in}
        \caption{The crossing–strands diagram for $\x \in \mathfrak{S}(\Hc_{\id})$ under the choices $\Xi_{\id}$ from Proposition~\ref{prop:BSDARespectsIdentity}. Within each block of $k$ or $n-k$ solid strands, all strands are parallel, so no internal crossings occur, consistent with the ordering imposed by $\Xi_{\id}$.}
        \label{fig:permutation diagram identity}
    \end{figure}

    Next we examine the crossing strands diagrams for $\x$ as in Figure~\ref{fig:permutation diagram identity}. By the ordering conventions in $\Xi_{\id}$, it follows that none of the $2^n$ crossing strands diagrams have internal crossings within the clusters of $k$ and $n-k$ solid strands. Instead, crossings occur only as ``cross-block'' solid-solid or solid-dotted intersections. The solid–solid crossings therefore count the inversions 
    \[
    \mathrm{inv}(\sigma_{\x})=\mathrm{inv}(\sigma_{o_L(\x)\overline{o}_L(\x)\leftrightarrow \std}), 
    \]
    while the solid-dotted crossings count $\mathrm{inv}(\sigma_{\overline{o}_L(\x)\,o_L(\x)\leftrightarrow \std})$. See Figure~\ref{fig:permutation diagram identity}. We can simplify the total inversion count to
    \begin{align*}
        \mathrm{inv}(\sigma_{o_L(\x)\overline{o}_L(\x)\leftrightarrow \std})+
        \mathrm{inv}(\sigma_{\overline{o}_L(\x)\,o_L(\x)\leftrightarrow \std})&=\#\{(i,j):i\in J^c,\ j\in J,i>j\} \\
        &\quad+\#\{(i,j):i\in J^c,\ j\in J,j>i\} \\
        &=\#\{(i,j):i\in J,\ j\in J^c\} \\
        &=|J|\cdot|J^c| \\
        &=(k+c)(n-k-c) \\
        &=k(n-k) \\
        &=kn-k^2 \\
        &=kn-k \mod 2.
    \end{align*}
    For the correction terms, since there are no $\alpha$-circles in $\Hc_{\id}$, we have $ka=0$. Lastly, the $n_1 k$ term is simply $nk$. Hence,
    \[
    \mathrm{gr}_{\DA}(\x)=k+kn-k+nk=0 \mod 2.
    \]
\end{proof}

\begin{remark}
    The orientation conventions in $\Xi_{\id}$ make the above sign verification especially simple and ensure compatibility with Corollary~\ref{cor:CanChooseXiNormalized} below. A global orientation reversal of all the curves in $\Hc_{\id}$ gives another easily checked case: one has $\sum_{x \in \mathbf{x}} i(x)=k$ without the need to mod out by any $k$-independent terms, and hence still $\mathrm{gr}_{\DA}(\x)=0$ for all $\x$. 
    
    In general, given more complicated orientation patterns in $\Hc_{\id}$ or arbitrary Heegaard diagrams, there is likely an argument that works for arbitrary choices. However, since we will later show that $[\BSDA(Y,\Gamma;\Xi)]^{\Z}_{\comb}$ is independent of such choices up to overall sign, we take this simpler approach here.
\end{remark}

\subsection{Choices for $\Hc_{\norm}$ and disjoint unions}

Recall that we will often work with $\alpha$-$\alpha$ bordered sutured Heegaard diagrams of the form $\Hc_{\norm}=\Hc^{\alpha \beta}_{1/2} \cup_{\Zc_1^*} \Hc' \cup_{\Zc_0^*} \Hc^{\beta \alpha}_{1/2}$ where $\Hc'$ is a $\beta$-$\beta$ bordered sutured diagram, as in Corollary~\ref{cor:NormalizedHeegaardDiagrams}. We record here a set of choices which assumes such a decomposition, and assumes the orientation patterns of Proposition~\ref{prop:ExpandingBasisElts}. To conclude the present section, we discuss choices for disjoint unions of cobordisms, which will arise in the proof of Theorem~\ref{Thm:VFNSymmetricMonoidal}. 

\begin{corollary}\label{cor:CanChooseXiNormalized}
    For any set $\Xi$ of choices in Definition~\ref{def:BSDAZComb}, the map $[\BSDA(Y,\Gamma;\Xi)]^{\Z}_{\comb}$ agrees up to sign with the map $[\BSDA(Y,\Gamma;\Xi_{\norm})]^{\Z}_{\comb}$ computed using any set of choices $\Xi_{\norm}$ satisfying the below criteria. 
    \begin{itemize}
        \item As in Corollary~\ref{cor:NormalizedHeegaardDiagrams}, the Heegaard diagram is of the form $\Hc_{\norm}=\Hc^{\alpha \beta}_{1/2} \cup_{\Zc_1^*} \Hc' \cup_{\Zc_0^*} \Hc^{\beta \alpha}_{1/2}$ for some $\beta$-$\beta$ bordered sutured Heegaard diagram $\Hc'$;
        \item the orientations and ordering of the arcs of $\Zc_0$ and $\Zc_1$ are the same in $\Xi_{\norm}$ as they are in $\Xi$;
        \item as in Proposition~\ref{prop:ExpandingBasisElts}, on the left side of the diagram (outgoing), the local intersections in $\Hc^{\alpha \beta}_{1/2}$ have $\beta \cdot \alpha = -1$; on the right side of the diagram (incoming), the local intersections in $\Hc^{\beta \alpha}_{1/2}$ have $\beta \cdot \alpha = +1$;
        \item in the ordering of the $\beta$-circles of $\Hc_{\norm}$, circles glued from $\beta$-arcs of $\Hc^{\alpha \beta}_{1/2}$ and $\Hc'$ are ordered as the arcs of $\Zc_1$ (equivalently, as the arcs of $\Zc_1^*$) and come before the $\beta$-circles of $\Hc'$, while circles glued from $\beta$-arcs of $\Hc'$ and $\Hc^{\beta \alpha}_{1/2}$ are ordered as the arcs of $\Zc_0$ or $\Zc_0^*$ and come after the $\beta$-circles of $\Hc'$.
    \end{itemize} 
\end{corollary}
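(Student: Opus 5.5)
The plan is to get $\Xi_{\norm}$ from $\Xi$ by gluing identity diagrams onto both ends of $\Hc$, and then to check that the remaining freedom in $\Xi_{\norm}$ only changes the map by an overall sign. Functoriality (Proposition~\ref{prop:BSDARespectsGluing}) and the identity computation (Proposition~\ref{prop:BSDARespectsIdentity}) do the main work.

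\emph{Step 1: sandwich the diagram.} Let $\Hc$ be the diagram in $\Xi$, with arc diagrams $\Zc_0,\Zc_1$. Equip the identity cobordisms of $F(\Zc_1)$ and $F(\Zc_0)$ with the choices $\Xi_{\id}$ of Proposition~\ref{prop:BSDARespectsIdentity}. Their arc orderings and orientations are taken to match those of $\Zc_1$ and $\Zc_0$ in $\Xi$, so that $\Xi_{\id}\cup\Xi\cup\Xi_{\id}$ is composable in the sense of Definition~\ref{def:ChoicesGluing}. Applying Proposition~\ref{prop:BSDARespectsGluing} twice and then Proposition~\ref{prop:BSDARespectsIdentity} gives, up to sign,
\[
[\BSDA(Y,\Gamma;\Xi_{\id}\cup\Xi\cup\Xi_{\id})]^{\Z}_{\comb} = [\BSDA(Y,\Gamma;\Xi)]^{\Z}_{\comb}.
\]
As in the proof of Corollary~\ref{cor:NormalizedHeegaardDiagrams}, the glued diagram is $\Hc^{\alpha\beta}_{1/2}\cup_{\Zc_1^*}\Hc'\cup_{\Zc_0^*}\Hc^{\beta\alpha}_{1/2}$ with $\Hc'=\Hc^{\beta\alpha}_{1/2}\cup_{\Zc_1}\Hc\cup_{\Zc_0}\Hc^{\alpha\beta}_{1/2}$.

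\emph{Step 2: the glued choices meet the criteria.} I would check each listed condition against the choices produced by Definition~\ref{def:ChoicesGluing}.
\begin{itemize}
\item The arc orderings and orientations on $\Zc_0,\Zc_1$ are those of $\Xi$.
\item The $\beta$-orientations fixed in $\Xi_{\id}$ give $\beta\cdot\alpha^{a,\out}=-1$ on the left and $\beta\cdot\alpha^{a,\inrm}=+1$ on the right.
\item The gluing convention lists the $\beta$-circles of the leftmost factor first, then those of $\Hc$, then the rightmost. Inside each identity factor, the $\beta$-circles are ordered as the arcs of $\Zc_1$, respectively $\Zc_0$.
\item The $\beta$-circles of the two inner half-identity pieces also lie in $\Hc'$, so they sit in the middle block of the ordering. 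This is the required block structure.
\end{itemize}

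\emph{Step 3: residual choices change only the overall sign.} A set $\Xi_{\norm}$ satisfying the criteria may still differ from the one in Step 2 by orderings and orientations of closed $\alpha$- and $\beta$-circles inside $\Hc'$. Each such change alters $\mathrm{gr}_{\DA}(\x)$ by an $\x$-independent amount.
\begin{itemize}
\item Reversing a closed circle flips exactly one local sign $i(x)$ in every generator, because every closed circle is occupied exactly once.
\item Reordering the $\beta$-circles by a permutation $\tau$ precomposes every $\sigma_{\x}$ with $\tau$. This changes $\mathrm{inv}(\sigma_{\x})$ by $\mathrm{inv}(\tau)$ mod $2$.
\item Reordering the $\alpha$-circles permutes a block of positions that is always occupied. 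This postcomposes every $\sigma_{\x}$ with the same block permutation, again a uniform parity change.
\end{itemize}
None of these changes touches the terms $\mathrm{inv}(\sigma_{\overline{o}_L o_L\leftrightarrow\std})$, $ak$, or $n_1k$.

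\emph{Main obstacle.} The hard part is a $\Xi_{\norm}$ whose inner diagram $\Hc'$ is an arbitrary $\beta$-$\beta$ diagram, not one of the form produced in Step 1. I see no way to compare two unrelated $\Hc'$ using gluing alone. One natural attempt is to sandwich $\Hc_{\norm}$ itself, but that only produces another normalized diagram and does not relate it to $\Xi$. So for a genuinely different $\Hc'$ I would either read the statement as concerning normalized diagrams of the form in Step 1, or defer that case to the independence-of-choices result proved later via the Alexander functor (Theorem~\ref{thm:BSDAAlexanderZ}).
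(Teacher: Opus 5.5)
Your Steps 1--2 are exactly the paper's proof. The paper builds $\Xi_{\norm}$ by gluing copies of $\Xi_{\id}$ onto both ends of $\Hc$ as in Definition~\ref{def:ChoicesGluing}, checks the listed criteria, and concludes from Propositions~\ref{prop:BSDARespectsGluing} and~\ref{prop:BSDARespectsIdentity}. One small slip in your Step 2: the half-identity pieces carry only $\beta$-arcs, so the middle block of the $\beta$-ordering is just the $\beta$-circles of $\Hc$, which are exactly those of $\Hc'$. Your Step 3 is a correct addition that the paper does not spell out.

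Your caveat about arbitrary $\Hc'$ is also accurate. The paper's argument likewise only treats this particular sandwiched $\Xi_{\norm}$, and that is all Theorem~\ref{thm:BSDAAlexanderZ} uses. The statement for an arbitrary inner diagram $\Hc'$ therefore follows only afterwards from that theorem. This is not circular, because that theorem's proof works for any normalized choices.
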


\begin{proof}
    Given $\Xi$, let $\Xi_{\norm}$ denote the set of choices in Definition~\ref{def:BSDAZComb} where:
    \begin{itemize}
        \item the Heegaard diagram $\Hc_{\norm}$ is 
        \[
        \Hc_{\norm}=\Hc_{\id} \cup_{\Zc_1^*} \Hc \cup_{\Zc_0} \Hc_{\id} = \Hc^{\alpha \beta}_{1/2} \cup_{\Zc_1^*} (\Hc^{\beta \alpha}_{1/2} \cup_{\Zc_1} \Hc \cup_{\Zc_0} \Hc^{\alpha \beta}_{1/2}) \cup_{\Zc_0^*} \Hc^{\beta \alpha}_{1/2},
        \]
        where $\Hc^{\beta \alpha}_{1/2} \cup_{\Zc_1} \Hc \cup_{\Zc_0} \Hc^{\alpha \beta}_{1/2} =: \Hc'$;
        \item the orientations and ordering of the arcs of $\Zc_0$ and $\Zc_1$ are the same in $\Xi_{\norm}$ as they are in $\Xi$;
        \item the orientations of the $\alpha$- and $\beta$-circles of $ \Hc_{\norm}$ that come from $\alpha$- and $\beta$-circles of $\Hc$ are unchanged;
        \item the orientations of the new $\alpha$-circles of $ \Hc_{\norm}$ that are glued from $\alpha$-arcs of $\Hc^{\beta \alpha}_{1/2}$ and $\Hc$ on the left or $\Hc$ and $\Hc^{\alpha \beta}_{1/2}$ on the right are induced from the $\alpha$-arc orientations of $\Hc$;
        \item the orientations of the new $\beta$-circles of $ \Hc_{\norm}$ that are glued from $\beta$-arcs of $\Hc^{\alpha \beta}_{1/2}$ and $\Hc^{\beta \alpha}_{1/2}$ on the left are chosen so that the local intersection signs $\beta \cdot \alpha$ in $\Hc^{\alpha \beta}_{1/2}$ are always $-1$;
        \item the orientations of the new $\beta$-circles of $ \Hc_{\norm}$ that are glued from $\beta$-arcs of $\Hc^{\alpha \beta}_{1/2}$ and $\Hc^{\beta \alpha}_{1/2}$ on the right are chosen so that the local intersection signs $\beta \cdot \alpha$ in $\Hc^{\beta \alpha}_{1/2}$ are always $+1$;
        \item the ordering of the $\alpha$-circles of $ \Hc_{\norm}$ has the new $\alpha$-circles on the left (ordered as the arcs of $\Zc_1$), then the $\alpha$-circles of $\Hc$ (ordered as in $\Xi$), then the new $\alpha$-circles on the right (ordered as the arcs of $\Zc_0$);
        \item the ordering of the $\beta$-circles of $ \Hc_{\norm}$ has the new $\beta$-circles on the left (ordered as the arcs of $\Zc_1$), then the $\beta$-circles of $\Hc$ (ordered as in $\Xi$), then the new $\beta$-circles on the right (ordered as the arcs of $\Zc_0$).
    \end{itemize}
    Then $\Xi_{\norm}$ satisfies the requirements of the statement with $\Hc' = \Hc^{\beta \alpha}_{1/2} \cup_{\Zc_1} \Hc \cup_{\Zc_0} \Hc^{\alpha \beta}_{1/2}$. Furthermore, $\Xi_{\norm}$ is obtained by gluing the composable sets of choices $\Xi_{\id}$ for $\Hc_{\id}$ as defined in Proposition~\ref{prop:BSDARespectsIdentity} to the choices $\Xi$ for $\Hc$ on the left and right, as in Definition~\ref{def:ChoicesGluing}. It follows from Propositions~\ref{prop:BSDARespectsGluing} and \ref{prop:BSDARespectsIdentity} that
    \begin{align*}
    [\BSDA(Y,\Gamma;\Xi_{\norm})]^{\Z}_{\comb} &= [\BSDA(\id;\Xi_{\id})] \circ [\BSDA(Y,\Gamma;\Xi)]^{\Z}_{\comb} \circ [\BSDA(\id;\Xi_{\id})] \\
    &= [\BSDA(Y,\Gamma;\Xi)]^{\Z}_{\comb}
    \end{align*}
    up to sign.
\end{proof}

\begin{definition}\label{def:ChoicesDisjoint}
    Let $(Y,\Gamma)$ be a sutured cobordism from $(F_0,\Lambda_0)$ to $(F_1,\Lambda_1)$ and $(\tilde{Y},\tilde{\Gamma})$ be a sutured cobordism from $(\tilde{F_0},\tilde{\Lambda}_0)$ to $(\tilde{F_1},\tilde{\Lambda}_1)$. Given sets of choices $\Xi$ for $(Y,\Gamma)$ and $\tilde{\Xi}$ for $(\tilde{Y},\tilde{\Gamma})$, we denote by $\Xi \sqcup \tilde{\Xi}$ a set of choices for the sutured cobordism $(Y \sqcup \tilde{Y},\Gamma \sqcup \tilde{\Gamma})$ from $(F_0\sqcup \tilde{F_0},\Lambda_0\sqcup\tilde{\Lambda_0})$ to $(F_1\sqcup\tilde{F_1},\Lambda_1\sqcup\tilde{\Lambda_1})$, defined as follows.
    \begin{itemize}
        \item Choose the arc diagrams $\Zc_i \sqcup \tilde{\Zc}_i$ to represent $(F_i\sqcup \tilde{F_i},\Lambda_i\sqcup\tilde{\Lambda_i})$ for $i=0,1$, where $\Zc_i$ and $\tilde{\Zc}_i$ are the arc diagrams fixed in $\Xi$ and $\tilde{\Xi}$ respectively. 
        \item Assume that the ordering and orientations of matching arcs in $\Zc_i \sqcup \tilde{\Zc}_i$ are the same as they are in $\Xi$ and $\tilde{\Xi}$ respectively. Put the matching arcs of $\Zc_i$ before the matching arcs of $\tilde{\Zc}_i$ in the ordering.
        \item Choose the Heegaard diagram $\Hc \sqcup \tilde{\Hc}$ to represent $(Y \sqcup \tilde{Y},\Gamma \sqcup \tilde{\Gamma})$, where $\Hc$ and $\tilde{\Hc}$ are the Heegaard diagrams fixed in $\Xi$ and $\tilde{\Xi}$ respectively. 
        \item Assume that the ordering and orientations of the $\alpha$-circles and $\beta$-circles in $\Hc$ and $\tilde{\Hc}$ are the same as they are in $\Xi$ and $\tilde{\Xi}$ respectively. Put the $\alpha$- and $\beta$-circles of $\Hc$ before the $\alpha$- and $\beta$-circles of $\tilde{\Hc}$ in the ordering, so that the full set of $\alpha$-circles is ordered by
        \[
        (a^c_1,\ldots,\alpha^c_a, \tilde{\alpha}^c_1,\ldots,\tilde{\alpha}^c_{\tilde{a}})
        \]
        and the full set of $\beta$-circles is ordered by
        \[
        (\beta_1\ldots,\beta_b,\tilde{\beta}_1\ldots,\tilde{\beta}_{\tilde{b}}).
        \]
    \end{itemize}
    Note that the full set of $\alpha$-curves is ordered by
    \[ (\alpha^{a,\mathrm{out}}_1,\ldots,\alpha^{a,\mathrm{out}}_{n_1},\tilde{\alpha}^{a,\mathrm{out}}_1,\ldots,\tilde{\alpha}^{a,\mathrm{out}}_{\tilde{n_1}},\alpha^c_1,\ldots,\alpha^c_a, \tilde{\alpha}^c_1,\ldots,\tilde{\alpha}^c_{\tilde{a}},\alpha^{a,\mathrm{in}}_1,\ldots,\alpha^{a,\mathrm{in}}_{n_0},\tilde{\alpha}^{a,\mathrm{in}}_1,\ldots,\tilde{\alpha}^{a,\mathrm{in}}_{\tilde{n_0}}). 
    \]
\end{definition}

\section{Sutured Alexander functor over \texorpdfstring{$\Z$}{Z}}\label{sec:SuturedAlexanderOverZ}

In this section we construct a sutured, $\Z$-valued version of the Florens--Massuyeau Alexander functor and show that it coincides with the map $[\BSDA(Y,\Gamma;\Xi)]^{\Z}_{\comb}$ up to sign for any set of choices $\Xi$; it will follow that $[\BSDA(Y,\Gamma;\Xi)]^{\Z}_{\comb}$ is an invariant of $(Y,\Gamma)$ up to overall sign.

\subsection{The Alexander function over $\Z$}

Let $(Y,\Gamma)$ be a sutured cobordism from $(F_0,\Lambda_0)$ to $(F_1,\Lambda_1)$. Let $\Xi$ be a set of choices as in Definition~\ref{def:BSDAZComb}, including an $\alpha$-$\alpha$ bordered Heegaard diagram $\Hc$ for $(Y,\Gamma)$. Say the number of $\alpha$-circles of $\Hc$ is $a$ and the number of $\beta$-circles is $b$.

Let $(\mathfrak{Y},\mathfrak{R}^+)$ be the CW pair we associated to $\Hc$ (plus the additional choice of CW decomposition $\mathfrak{R}^+$ of $R^+$) in Section~\ref{sec:CWDecomp}; the orientations on $\alpha$ and $\beta$ circles of $\Hc$ needed to orient the cells come from the set of choices $\Xi$. The cellular chain complex $C_*^{\cell}(\mathfrak{Y},\mathfrak{R}^+)$ computes $H_*(Y,R^+)$ and depends only on $\Hc$ and $\Xi$ (not on the CW decomposition $\mathfrak{R}^+$ of $R^+$); it is isomorphic to $\Z^b$ in degree 1, $\Z^a$ in degree 2, and zero in other degrees. As discussed in Section~\ref{sec:CellOrientations},  the matrix $M_{\Hc}$ representing the cellular differential $\partial_2$ in the bases of 2-cells and 1-cells is a presentation matrix for $H_1(Y,R^+)$. Recall from Proposition~\ref{prop:SignsInCellularDiff} that the entries of $M_{\Hc}$ are algebraic intersection numbers ``$\beta \cdot \alpha$'' of curves in $\Hc$.

The \emph{deficiency}, $d$, of $M_{\Hc}$ (number of generators minus number of relations) is $b-a$, which in this setting equals $-\chi(Y,R^+)$ (cf. proof of Proposition~\ref{prop:DegreeOfBSDA}). The matrix $M_{\Hc}$ is injective (has zero kernel) if and only if $H_2(Y,R^+) = 0$, since the kernel of $M_{\Hc}$ is isomorphic to $H_2(Y,R^+)$. We summarize these facts in the below lemma; the proof is evident. 

\begin{lemma}\label{lem:InjectivePresentationMatrixZ}
    If $H_2(Y,R^+)=0$, then the deficiency-$d$ presentation matrix $M_{\Hc}$ for $H_1(Y,R^+)$ is injective; in particular, $d \geq 0$. If $H_2(Y,R^+) \neq 0$, the matrix $M_{\Hc}$ is never injective.     
\end{lemma}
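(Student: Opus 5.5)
The plan is to read both claims off the cellular chain complex $C_*^{\cell}(\mathfrak{Y},\mathfrak{R}^+)$ from Section~\ref{sec:CWDecomp}. This complex is concentrated in degrees $1$ and $2$: it is $\Z^b$ in degree $1$ and $\Z^a$ in degree $2$, with $\partial_2$ represented by $M_{\Hc}$. Since there are no $3$-cells, and since the cellular complex computes $H_*(\mathfrak{Y},\mathfrak{R}^+)\cong H_*(Y,R^+)$, we get the identification
\[
H_2(Y,R^+) \cong \ker(\partial_2) = \ker(M_{\Hc} \colon \Z^a \to \Z^b).
\]
Likewise $H_1(Y,R^+) \cong \Z^b/\im(M_{\Hc})$, because the complex is zero in degree $0$. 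So $M_{\Hc}$ is a presentation matrix with $b$ generators and $a$ relations, and its deficiency is $d=b-a$.

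With this identification both directions are immediate. If $H_2(Y,R^+)=0$, then $M_{\Hc}$ has zero kernel, so it is injective. If $H_2(Y,R^+)\neq 0$, then $\ker M_{\Hc}\neq 0$, so $M_{\Hc}$ is not injective. The word ``never'' refers to the fact that this holds for every Heegaard diagram $\Hc$ and every set of choices $\Xi$: the kernel is always isomorphic to $H_2(Y,R^+)$, which is an invariant of the pair $(Y,R^+)$ and does not depend on these choices.

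For $d\geq 0$, I would use that an injective $\Z$-linear map $\Z^a\to\Z^b$ stays injective after tensoring with $\Q$, since $\Z^a$ is torsion-free and $\Q$ is flat over $\Z$. Rank considerations over $\Q$ then give $a\leq b$, that is, $d=b-a\geq 0$.

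I expect no real obstacle. The only step that needs care is confirming that the cellular complex is concentrated in degrees $1,2$, so that $H_2$ is exactly the kernel rather than a kernel modulo the image of a $\partial_3$. This holds because the relative CW structure contains only the $1$-cells coming from the $\beta$-circles and the $2$-cells coming from the $\alpha$-circles, as established in Section~\ref{sec:CWDecomp}.
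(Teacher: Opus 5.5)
Your proposal is correct and is essentially the paper's argument: the relative cellular complex $C_*^{\cell}(\mathfrak{Y},\mathfrak{R}^+)$ is concentrated in degrees $1$ and $2$, so $\ker M_{\Hc} \cong H_2(Y,R^+)$, which gives both directions at once. Your rank argument over $\Q$ for $d = b-a \geq 0$ makes explicit a step that the paper simply calls ``evident.''
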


\begin{definition}\label{def:AlexanderFunctionZ} (cf. \cite[Section 3.1]{LescopSumFormula} and \cite[Definition 2.2]{FMFunctorial})
    Let $d = -\chi(Y,R^+)$. Define the \emph{$\Z$-valued Alexander function}
    \[
    \mathcal{A}^{\Z}_{Y,\Gamma} \colon \wedge^d H_1(Y,R^+) \to \Z,
    \]
    well-defined up to (global) sign, as follows. If $H_2(Y,R^+)$ is nonzero, then $\mathcal{A}^{\Z}_{Y,\Gamma}$ is defined to be the zero function. Otherwise, $d \geq 0$ and $H_1(Y,R^+)$ admits an injective presentation matrix $M$ of deficiency $d$, e.g. the matrix $M_{\Hc}$ associated to a set $\Xi$ of choices as above. Define $\mathcal{A}^{\Z}_{Y,\Gamma}$ by sending an element $u_1 \wedge \cdots \wedge u_d$ of $\wedge^d H_1(Y,R^+)$ to the following element of $\Z$. For each $i$, choose any expression $\overline{u}_i$ of $u_i$ as a $\Z$-linear combination of the generators, form a square matrix from $M$ by adding new columns $\overline{u}_1,\ldots,\overline{u}_d$ to the right, and take the determinant, an element of $\Z$. We define $\mathcal{A}^{\Z}_{Y,\Gamma}(u_1 \wedge \cdots \wedge u_d)$ to be this element of $\Z$.
\end{definition}

\begin{lemma}\label{lem:AlexanderFunctionZWellDefined}
    The Alexander function $\mathcal{A}^{\Z}_{Y,\Gamma}$ of Definition~\ref{def:AlexanderFunctionZ} is independent of the choice of representatives $\overline{u}_1, \ldots, \overline{u}_d$.
\end{lemma}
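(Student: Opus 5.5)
The plan is to use multilinearity of the determinant in its last $d$ columns. We only need to treat the case $H_2(Y,R^+)=0$, since otherwise $\mathcal{A}^{\Z}_{Y,\Gamma}$ is identically zero and there is nothing to prove. So fix an injective presentation matrix $M$ of deficiency $d$, with $g$ rows (generators) and $g-d$ columns (relations). Its column space $\im M \subset \Z^g$ is exactly the kernel of the quotient map $\Z^g \to H_1(Y,R^+)$.

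First, note that any two expressions $\overline{u}_i$ and $\overline{u}_i'$ of the same element $u_i \in H_1(Y,R^+)$ differ by an element of $\im M$. Hence we can write
\[
\overline{u}_i' - \overline{u}_i = \sum_j c_j m_j,
\]
where the $m_j$ are the columns of $M$ and $c_j \in \Z$.

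Second, change one representative at a time; by symmetry it suffices to replace $\overline{u}_i$ by $\overline{u}_i'$ for a single $i$. The determinant is linear in the column occupied by $\overline{u}_i$. Therefore
\[
\det[M \mid \ldots, \overline{u}_i', \ldots] - \det[M \mid \ldots, \overline{u}_i, \ldots] = \sum_j c_j \det[M \mid \ldots, m_j, \ldots].
\]
Each determinant on the right has the column $m_j$ appearing twice, so it vanishes. The value is thus unchanged, and iterating over $i = 1,\ldots,d$ gives the full independence.

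I would also briefly remark that the same multilinearity and alternating property show that the assignment $(\overline{u}_1,\ldots,\overline{u}_d) \mapsto \det[M \mid \overline{u}_1,\ldots,\overline{u}_d]$ is multilinear and alternating. Combined with independence of representatives, this means it descends to a well-defined homomorphism on $\wedge^d H_1(Y,R^+)$, as the definition implicitly requires.

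There is no real obstacle here. The only point to state carefully is that the difference of two lifts lies in $\im M$; this is exactly the statement that $M$ presents $H_1(Y,R^+)$, and injectivity of $M$ is not needed for this step.
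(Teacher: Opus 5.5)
Your proposal is correct and follows essentially the same argument as the paper: two lifts of $u_i$ differ by an element of $\im M$, and adding a $\Z$-linear combination of columns of $M$ to one of the appended columns leaves the determinant unchanged, which you justify via multilinearity and the vanishing of determinants with a repeated column. Your closing remark that the assignment therefore descends to a homomorphism on $\wedge^d H_1(Y,R^+)$ is a harmless addition that the paper leaves implicit.
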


\begin{proof}
    Fix a presentation matrix $M$ for $H_1(Y,R^+)$ to compute $\mathcal{A}^{\Z}_{Y,\Gamma}$. Any two distinct representatives $\overline{u}_i$ and $\overline{u}_i'$ of $H_1(Y,R^+) \cong \Z^b/\im M$ differ by an element of $\im M$, i.e. $\overline{u}_i=\overline{u}_i'+Mv_i$ for some $v_i\in \Z^a$. But $Mv_i$ is a linear combination of the columns of $M$, and adding a linear combination of columns to another column leaves the determinant unchanged (not merely up to sign), so
    \[
    \det([M|\overline{u}_1 \cdots \overline{u}_i' \cdots \overline{u}_d])=\det([M|\overline{u}_1 \cdots \overline{u}_i \cdots \overline{u}_d]).
    \]
    Applying this to each $i$ proves the claim. 
\end{proof}

\begin{proposition}\label{prop:AlexanderFunctionZIndependentOfM}(cf. \cite[Section 3.1]{LescopSumFormula})
    Up to an overall (global) sign, the Alexander function $\mathcal{A}^{\Z}_{Y,\Gamma}$ of Definition~\ref{def:AlexanderFunctionZ} only depends on the abelian group $H_1(Y,R^+)$ and the deficiency $d$, and does not depend on the specific choice of injective presentation matrix $M$.
\end{proposition}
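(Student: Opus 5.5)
The plan is to compare any two injective deficiency-$d$ presentation matrices through the standard Tietze-type moves, and to check that each move multiplies every value of the Alexander function by one common sign. Lemma~\ref{lem:AlexanderFunctionZWellDefined} makes the value $\det[M \mid \overline{u}_1\cdots\overline{u}_d]$ independent of the lifts $\overline{u}_i$ once $M$ is fixed. So it suffices to show that changing $M$ multiplies all values $\mathcal{A}^{\Z}_{Y,\Gamma}(u_1\wedge\cdots\wedge u_d)$ by one sign $\pm1$ that does not depend on the $u_i$. The case $H_2(Y,R^+)\neq 0$ is trivial, since the function is then zero by definition. We therefore assume an injective presentation exists; note that any presentation of $H_1(Y,R^+)$ with deficiency $d$ has rank of cokernel equal to $d$ and is then automatically injective, because its number of columns equals its rank.

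The key fact is that any two finite presentations of the same finitely generated abelian group are related by a sequence of moves:
\begin{enumerate}
\item[(i)] invertible $\Z$-linear change of generators, $M\mapsto PM$ with $P\in GL_b(\Z)$;
\item[(ii)] invertible change of relations, $M\mapsto MQ$ with $Q\in GL_a(\Z)$;
\item[(iii)] stabilization, adding a new generator $g$ together with a relation expressing $g$ in terms of the old generators, i.e. $M\mapsto \begin{pmatrix} M & v\\ 0 & 1\end{pmatrix}$ up to row and column reordering;
\item[(iv)] adding or removing a zero column (a trivial relation).
\end{enumerate}
Move (iv) changes the deficiency. We must avoid it, or pair it so the deficiency stays fixed. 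Because both matrices are injective of the same deficiency, I expect to reach a common refinement using only (i)--(iii). Concretely, write $M\colon \Z^a\to\Z^b$ and $M'\colon\Z^{a'}\to\Z^{b'}$, both with cokernel identified with $H_1(Y,R^+)$. The two short exact sequences $0\to\Z^a\to\Z^b\to H\to0$ are resolutions of $H$, and Schanuel's lemma gives $\Z^a\oplus\Z^{b'}\cong\Z^{a'}\oplus\Z^{b}$ compatibly. This shows that $M\oplus \id_{b'}$ and $M'\oplus\id_b$ differ by left and right multiplication by invertible integer matrices, after stabilizations of type (iii). In other words, the explicit Schanuel construction realizes the comparison as a composite of (i)--(iii).

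We then check each move. For (i), the lifts transform by the same matrix, $\overline{u}_i\mapsto P\overline{u}_i$, so $\det[PM\mid P\overline{u}]=\det P\cdot\det[M\mid\overline{u}]$ with $\det P=\pm1$, a global sign. For (ii), $\det[MQ\mid\overline{u}]=\det Q\cdot\det[M\mid\overline{u}]$, again $\pm1$. For (iii), lift each $u_i$ with $0$ in the new coordinate. The enlarged square matrix then has a row that is zero except for a $1$ in the new column, and cofactor expansion along it returns $\pm\det[M\mid\overline{u}]$. The sign depends only on the positions of the new row and column, not on the $u_i$. Well-definedness of lifts for the new matrix follows from Lemma~\ref{lem:AlexanderFunctionZWellDefined}.

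The main obstacle is the bookkeeping in the comparison step. We must identify both cokernels with $H_1(Y,R^+)$ compatibly, so that the $\overline{u}_i$ in each presentation represent the same classes, and we must confirm that Schanuel's lemma produces invertible integer matrices realizing the transition rather than only an abstract isomorphism. I would handle this by lifting the identity of $H$ to chain maps between the two free resolutions and building the explicit block matrices $\begin{pmatrix} M & 0\\ 0&\id\end{pmatrix}$ and their conjugates. I would then verify that the lift of each $u_i$ is carried to a valid lift in the other presentation.
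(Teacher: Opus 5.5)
Your proposal is correct and is essentially the paper's argument. Your stabilization move (iii), applied once for each generator of the other presentation, is the paper's passage from $M$ to $M_1=\begin{pmatrix} M & -\eta\\ 0 & I\end{pmatrix}$, where $\eta$ lifts the other projection through $\pi$, with lifts $(\overline u_i,0)$ and a $u$-independent sign. Your moves (i)--(ii) are the paper's identity $SM_1=M_2\varphi$, and the bookkeeping you flag is settled exactly as you intend: $SM_1$ and $M_2$ are injective with the same image $\ker(\tilde\pi\oplus\pi)$, so $\varphi$ is invertible over $\Z$, and $S(\overline u_i,0)$ are valid lifts because $(\tilde\pi\oplus\pi)\circ S=\pi\oplus\tilde\pi$.
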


\begin{proof}
    Suppose that $M\neq \widetilde{M}$ are injective presentation matrices for $H_1(Y,R^+)$ of deficiency $d$. Then by definition, $M$ and $\widetilde{M}$ give rise to short exact sequences 
    \begin{equation}\label{eq:SchanuelMHandMHtildePres}
        \begin{aligned}
        0 \to \mathbb{Z}^a &\xrightarrow{M} \mathbb{Z}^b \xrightarrow{\pi} H_1(Y,R^+) \to 0 \\
        0 \to \mathbb{Z}^{\tilde a} &\xrightarrow{\widetilde{M}} \mathbb{Z}^{\tilde b} \xrightarrow{\tilde{\pi}} H_1(Y,R^+) \to 0
        \end{aligned}
    \end{equation}
    where $d=b-a=\tilde{b}-\tilde{a}$. Let $\mathcal{A}^{\Z}_{Y,\Gamma,M}$ and $\mathcal{A}^{\Z}_{Y,\Gamma,M}$ denote the associated Alexander functions defined using $M$ and $\widetilde{M}$ respectively. Let $u_1 \wedge \cdots \wedge u_d \in \wedge^d H_1(Y,R^+)$. Our goal is to show that 
    \[
    \mathcal{A}^{\Z}_{Y,\Gamma,M}(u_1 \wedge \cdots \wedge u_d)=\mathcal{A}^{\Z}_{Y,\Gamma,\widetilde{M}}(u_1 \wedge \cdots \wedge u_d)
    \]
    up to an overall sign independent of $u_1 \wedge \cdots \wedge u_d$. We follow the strategy underlying the proof of Schanuel’s lemma, passing through auxiliary presentations for $H_1(Y,R^+)$ constructed from $M$ and $\widetilde{M}$. Consider the composition
    \begin{equation}\label{eq:SchanuelSESM1}
        0 \to \mathbb{Z}^{a+\tilde{b}} \xrightarrow{M_1} \Z^{b+\tilde{b}} \xrightarrow{\pi\oplus \tilde{\pi}} H_1\left(Y, R^+\right) \to 0 
    \end{equation}
    where
    \[
    M_1 \coloneq \begin{blockarray}{ccc}
    a & \tilde{b}  \\
    \begin{block}{[cc]c}
    M & -\eta & b  \\
      0 & I & \tilde{b}  \\
    \end{block}
    \end{blockarray}
    \]
    and $\eta:\Z^{\tilde{b}}\to \Z^b$ is any lift of $\tilde{\pi}$ through $\pi$, i.e. $\pi \circ \eta = \tilde{\pi}$. Note that $\eta$ is guaranteed to exist since $\Z^{\tilde{b}}$ is free and hence projective. There is an analogous composition 
    \begin{equation}\label{eq:SchanuelSESM2}
        0 \to \mathbb{Z}^{\tilde{a}+b} \xrightarrow{M_2} \Z^{\tilde{b}+b} \xrightarrow{\tilde{\pi} \oplus \pi} H_1\left(Y, R^+\right) \to 0 
    \end{equation}
    where
    \[
    M_2 \coloneq \begin{blockarray}{ccc}
    \tilde{a} & b & \\
    \begin{block}{[cc]c}
    \widetilde{M} & -\eta' & \tilde{b} \\
    0 & I & b \\
    \end{block}
    \end{blockarray}
    \]
    and $\eta':\Z^{b} \to \Z^{\tilde{b}}$ is a map such that $\tilde{\pi}\circ \eta'=\pi$. We will show that ~\eqref{eq:SchanuelSESM1} and ~\eqref{eq:SchanuelSESM2} are free resolutions for $H_1\left(Y, R^+\right)$, i.e. $M_1$ and $M_2$ are injective presentation matrices for $H_1\left(Y, R^+\right)$ of deficiency $b-a$ and $\tilde{b}-\tilde{a}$ respectively. Specifically, we will show $\im M_1=\ker(\pi\oplus \tilde{\pi})$; the fact that $\im M_2=\ker(\tilde{\pi}\oplus \pi)$ follows analogously. 
    
    We can interpret $M_1$ as a linear map $M_1(x,y)=(M(x)-\eta(y),y)$. Then $\im M_1$ is contained in $\ker(\pi\oplus \tilde{\pi})$ since
    \begin{align*}
        (\pi \oplus \tilde{\pi})\left(M_1(x, y)\right)&=(\pi \oplus \tilde{\pi})\left(M(x)-\eta(y), y\right) \\
        &=\left(\pi(M(x))-\pi(\eta(y))\right)+\tilde{\pi}(y) \\
        &=0-\tilde{\pi}(y)+\tilde{\pi}(y) \\
        &=0
    \end{align*}
    for any $(x,y)$, where in the third equality we are using that $\pi \circ M=0$ by the exactness of the sequence defining $M$. 
    
    For the reverse containment, let $(x,y) \in \ker(\pi\oplus \tilde{\pi})$. We have 
    \begin{align*}
        \pi(x)+\tilde{\pi}(y)&=0 \\
        \pi(x)+\pi(\eta(y))&=0 \\
        \pi(x+\eta(y))&=0 
    \end{align*}
    which implies $x+\eta(y) \in \ker \pi = \im M$. So there exists some $z \in \Z^a$ with 
    \begin{align*}
        M(z)&=x+\eta(y) \\
        M(z)-\eta(y)&=x
    \end{align*}
    and hence $(x,y)=(M(z)-\eta(y),y)=M_1(z,y)$, proving the claim. 

    Equipped with the fact that ~\eqref{eq:SchanuelSESM1} and ~\eqref{eq:SchanuelSESM2} are injective deficiency-$d$ presentations for $H_1\left(Y, R^+\right)$, we now show that the corresponding Alexander functions agree (up to sign) with those computed using $M$ and $\widetilde{M}$, respectively:
    \begin{enumerate}
        \item[(I)] ($\A^{\Z}_{Y,\Gamma,M}=\pm \A^{\Z}_{Y,\Gamma,M_1}$): Choose arbitrary representatives $\overline{u}_i \in \Z^b$ to compute $\mathcal{A}^{\Z}_{Y,\Gamma,M}$. By Lemma~\ref{lem:AlexanderFunctionZWellDefined}, we are free to use the representatives
        \[
        \overline{u}'_i=\left[ \begin{array}{c} \overline{u}_i \\ 0 \end{array} \right]        
        \]
        to compute $\mathcal{A}^{\Z}_{Y,\Gamma,M_1}$. Then we have
        \begin{align*}
            \det([M_1|\overline{u}'_1 \cdots \overline{u}'_d])&=\det \left[
            \begin{array}{c|c|ccc}
            M & -\eta & \overline{u}_1 & \cdots & \overline{u}_d \\
            \hline
            0 & I & & 0 &
            \end{array}
            \right] \\
            &=(-1)^{\tilde{b}d} \det \left[
            \begin{array}{c|ccc|c}
            M &  \overline{u}_1 & \cdots & \overline{u}_d & -\eta \\
            \hline
            0 & & 0 & & I
            \end{array}
            \right] \\
            &= (-1)^{\tilde{b}d} \det([M|\overline{u}_1 \cdots \overline{u}_d]) \det(I) \\
            &=(-1)^{\tilde{b}d} \det([M|\overline{u}_1 \cdots \overline{u}_d]).
        \end{align*}
        Thus $\mathcal{A}^{\Z}_{Y,\Gamma,M}=\mathcal{A}^{\Z}_{Y,\Gamma,M_1}$ up to an overall sign of $(-1)^{\tilde{b}d}$.         
        \item[(II)] ($\A^{\Z}_{Y,\Gamma,M_2}= \pm \A^{\Z}_{Y,\Gamma,\widetilde{M}}$): Follows analogously to the preceding point. In this case the overall sign is $(-1)^{bd}$. 
    \end{enumerate}
    To complete the proof, we can therefore show that $\A^{\Z}_{Y,\Gamma,M_1}=\A^{\Z}_{Y,\Gamma,M_2}$ up to sign. We achieve this by assembling the presentations ~\eqref{eq:SchanuelSESM1} and ~\eqref{eq:SchanuelSESM2} into a larger diagram
    \[
    \xymatrix{
    0 \ar[r] & \mathbb{Z}^{a+\tilde{b}} \ar[r]^{M_1} \ar[d]_{\varphi} & \mathbb{Z}^{b+\tilde{b}} \ar[r]^{\pi \oplus \tilde{\pi} \quad} \ar[d]^{S} & H_1(Y, R^+) \ar[r] \ar[d]^{I} & 0 \\
    0 \ar[r] & \mathbb{Z}^{\tilde{a}+b} \ar[r]^{M_2} & \mathbb{Z}^{\tilde{b}+b} \ar[r]^{\tilde{\pi} \oplus \pi \quad} & H_1(Y, R^+) \ar[r] & 0 
    }
    \]
    where $S \colon \mathbb{Z}^{b+\tilde{b}} \to \mathbb{Z}^{\tilde{b}+b}$ is the swap map $S(x,y)=(y,x)$. The right square commutes automatically. We claim that there exists a unique $\Z$-linear map $\varphi$ such that the left square commutes, and furthermore, that this forces $\varphi$ to be an isomorphism. Let $x \in \Z^{a+\tilde b}$. Since $(\pi \oplus \tilde{\pi}) \circ M_1 = 0$ by exactness of the top row, commutativity of the right square implies 
    \[
    (\tilde{\pi} \oplus \pi)\bigl(S(M_1(x))\bigr)
        = I\bigl((\pi \oplus \tilde{\pi})(M_1(x))\bigr) \\
        = (\pi \oplus \tilde{\pi})(M_1(x)) 
        = 0
    \]
    Hence $S(M_1(x)) \in \ker(\tilde{\pi} \oplus \pi)$. By exactness of the bottom row, $\ker(\tilde{\pi} \oplus \pi) = \operatorname{im} M_2$, so there exists some $\varphi(x) \in \Z^{\tilde a + b}$ such that $M_2(\varphi(x)) = S(M_1(x))$. Since $M_2$ is injective, this element $\varphi(x) \in \Z^{\tilde a + b}$ is unique. We define $\varphi \colon \Z^{a+\tilde{b}} \to \Z^{\tilde{a}+b}$ by this assignment; by construction, the left square commutes. To see that $\varphi$ is necessarily an isomorphism, we check injectivity and surjectivity directly (inspired by the proof of \cite[Lemma~A.2.21]{LescopBook}). 
    
    For injectivity, suppose that $\varphi(x)=0$. Commutativity implies 
    \[
    S(M_1(x))=M_2(\varphi(x))=M_2(0)=0,
    \]
    so $M_1(x)=0$, since $S$ is an isomorphism. The injectivity of $M_1$ implies that we must have $x=0$, so $\varphi$ is injective. 
    
    For surjectivity, let $y \in \Z^{\tilde{a}+b}$. Then $M_2(y) \in \ker (\tilde{\pi} \oplus \pi)$ by exactness of the bottom row. Commutativity of the right square gives $(\tilde{\pi} \oplus \pi) \circ S=I \circ (\pi \oplus \tilde{\pi})$, so $(\tilde{\pi} \oplus \pi) =(\pi \oplus \tilde{\pi}) \circ S^{-1}$. This implies 
    \[
    (\tilde{\pi} \oplus \pi)\bigl(M_2(y)\bigr)=(\pi \oplus \tilde{\pi})\bigl(S^{-1}(M_2(y))\bigr)=0,
    \]
    hence $S^{-1}(M_2(y)) \in \ker(\pi \oplus \tilde{\pi})= \im M_1$ by exactness of the top row. Thus, there exists $x \in \Z^{a+\tilde b}$ such that $M_1(x) = S^{-1}(M_2(y))$, or equivalently, $S(M_1(x)) = M_2(y)$. By definition of $\varphi$, this implies $\varphi(x)=y$, so $\varphi$ is surjective. Now that we have established the map $\varphi$, we may proceed with the proof. 
    \begin{enumerate}
        \item[(III)] ($\A^{\Z}_{Y,\Gamma,M_1}=\pm \A^{\Z}_{Y,\Gamma,M_2}$): Choose arbitrary representatives $\overline{u}'_i=\begin{bmatrix} \overline{u}_i \\ 0 \end{bmatrix} \in \Z^{b+\tilde{b}}$ to compute $\mathcal{A}^{\Z}_{Y,\Gamma,M_1}$. 
        Commutativity gives $S \circ M_1=M_2 \circ \varphi$.  Appending the columns $S\overline{u}'_i = \begin{bmatrix} 0 \\ \overline{u}_i \end{bmatrix}$ to each side of this equation implies that
        \begin{align*}
            [SM_1|S\overline{u}'_1 \cdots S\overline{u}'_d]&=[M_2 \varphi |S\overline{u}'_1 \cdots S\overline{u}'_d]  \\
            S \cdot [M_1|\overline{u}'_1 \cdots \overline{u}'_d] &=  [M_2|S\overline{u}'_1 \cdots S\overline{u}'_d] \cdot \left[ \begin{array}{c|c} \varphi & 0 \\ \hline 0 & I_{d \times d} \end{array} \right]. 
        \end{align*}
        Taking determinants, we see that
        \begin{align*}
            \det(S)\det[M_1|\overline{u}'_1 \cdots \overline{u}'_d]&=\det [M_2|S\overline{u}'_1 \cdots S\overline{u}'_d] \det(\varphi)  \\
            \det[M_1|\overline{u}'_1 \cdots \overline{u}'_d] &= \det(\varphi) \det(S^{-1})  \det [M_2|S\overline{u}'_1 \cdots S\overline{u}'_d] \\
            &=u \cdot \det [M_2|S\overline{u}'_1 \cdots S\overline{u}'_d],
        \end{align*}
        where $u=\det(\varphi)\det(S^{-1})$ is a unit in the ground ring $\Z$. Indeed, both $\varphi$ and $S$ are invertible, so their determinants are units, and hence so is their product.\footnote{In particular, $\det(S^{-1})=(-1)^{b\tilde{b}}$. Thus one may also view this equality as holding up to multiplication by $\pm 1$ times an additional unit $\det(\varphi)$ in the ground ring. This perspective foreshadows the $\Z[G]$ setting, where the analogous Proposition~\ref{prop:AlexanderFunctionZ[G]IndependentOfM} will hold up to multiplication by elements of $\pm G$.} That is,
        \[
        \det[M_1|\overline{u}_1 \cdots \overline{u}_d]= \pm \det [M_2|S\overline{u}_1 \cdots S\overline{u}_d] 
        \]
        since the units in $\Z$ are $\pm 1$. Note that $\det(\varphi)$ does not depend on $u_1 \wedge \cdots \wedge u_d$. 

        Since $(\tilde{\pi}\oplus\pi)\circ S=\pi\oplus\tilde{\pi}$, the representatives $\overline{u}'_i$ mapping to $u_i$ under $\pi\oplus\tilde{\pi}$ also map to $u_i$ under $(\tilde{\pi}\oplus\pi)\circ S$. Thus, $S\overline{u}'_i$ are a valid set of columns to use when computing $\mathcal{A}^{\Z}_{Y,\Gamma,M_2}$, and we are free to choose these by Lemma~\ref{lem:AlexanderFunctionZWellDefined}. Therefore,
        \begin{align*}
            \det[M_1|\overline{u}'_1 \cdots \overline{u}'_d] &= \pm \det [M_2|S\overline{u}'_1 \cdots S\overline{u}'_d] \\
            \mathcal{A}^{\Z}_{Y,\Gamma,M_1}(u_1\wedge \cdots \wedge u_d) &= \pm \mathcal{A}^{\Z}_{Y,\Gamma,M_2}(u_1\wedge \cdots \wedge u_d).
        \end{align*}
    \end{enumerate}
    Putting the equalities (I), (II) and (III) together, we obtain
    \[
    \mathcal{A}^{\Z}_{Y,\Gamma,M}(u_1\wedge \cdots \wedge u_d) = \mathcal{A}^{\Z}_{Y,\Gamma,\widetilde{M}}(u_1\wedge \cdots \wedge u_d)
    \]
    up to an overall sign independent of $u_1 \wedge \cdots \wedge u_d$. 
\end{proof}

\subsection{The Alexander functor over $\Z$}
Now that we have established the well-definedness of $\A^{\Z}_{Y,\Gamma}$, we may use it to define the Alexander functor. Recall that over any commutative ring $R$, if $M$ is a rank $n$ free $R$-module, a \emph{volume form} on $M$ is a choice of $R$-linear isomorphism $\omega \colon \wedge^{n}_RM \to R$; equivalently, a choice of generator for the rank-one free module $\wedge^{n}_RM \cong R$. Any two choices of generators, thus volume forms, differ by a unit in the ground ring $R$. 

Now let $n_i = \mathrm{rank} \, H_1(F_i,S^+_i)$ and $c = n_1+ \chi(Y,R^+)$. Note that $d = -\chi(Y,R^+) = n_1 - c$. Write $i_*$ to denote either of the maps 
\[
H_1(F_i,S^+_i) \to H_1(Y,R^+)
\]
induced by the inclusion of pairs $i:(F_i,S^+_i) \to (Y,R^+)$.

\begin{proposition}\label{prop:AlexanderFunctorZDefn}
    There is a $\Z$-linear map
    \[
    \mathsf{A}_{\Z}(Y,\Gamma) \colon \wedge^* H_1(F_0,S^+_0) \to \wedge^* H_1(F_1,S^+_1),
    \]
    unique up to sign and homogeneous of degree $c$, such that
    \begin{equation}\label{eq:AlexanderFunctorZDefinition}
        \omega(\wedge(\mathsf{A}_{\Z}(Y,\Gamma) \otimes \id)(x \otimes y)) = \A^{\Z}_{Y,\Gamma}(i_* x \wedge i_* y)
    \end{equation}
    where $x \in \wedge^* H_1(F_0,S^+_0)$, $y \in \wedge^{*} H_1(F_1,S^+_1)$, the map
    \[
    \wedge \colon \wedge^{p} H_1(F_1,S^+_1) \otimes \wedge^{q} H_1(F_1,S^+_1) \to \wedge^{n_1} H_1(F_1,S^+_1)
    \]
    sends $z \otimes w$ to $z \wedge w$ when $p+q = n_1$ and is zero otherwise, $\omega$ denotes any volume form on the free $\Z$-module $H_1(F_1,S^+_1)$, the tensor product $\mathsf{A}_{\Z}(Y,\Gamma) \otimes \id$ is computed according to the super-sign rule in Definition~\ref{def:AbZgrCategory} with $|\mathsf{A}_{\Z}(Y,\Gamma)| = c$, and the Alexander functor $\A^{\Z}_{Y,\Gamma}$ is defined to be zero on inputs in $\wedge^{d'} H_1(Y,R^+)$ for $d' \neq d$.
\end{proposition}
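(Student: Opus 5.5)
Fix $x \in \wedge^k H_1(F_0,S^+_0)$. The right-hand side of \eqref{eq:AlexanderFunctorZDefinition}, as a function of $y$, is a $\Z$-linear functional on $\wedge^* H_1(F_1,S^+_1)$. Since $\A^{\Z}_{Y,\Gamma}$ vanishes outside $\wedge^d$, this functional is nonzero only on $y \in \wedge^{q} H_1(F_1,S^+_1)$ with $k+q = d$. I will use the wedge pairing given by $\omega$ to represent it by a unique element of $\wedge^{n_1-q} H_1(F_1,S^+_1)$, namely $\wedge^{k+c}$, since $n_1 - q = n_1 - d + k = k+c$. Then I define $\mathsf{A}_\Z(Y,\Gamma)(x)$ as that element, adjusted by the super sign. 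This gives existence, uniqueness, homogeneity of degree $c$ and linearity, essentially in one stroke. The only topological input is that $\A^{\Z}_{Y,\Gamma}$ is well defined up to a global sign, which holds by Lemma~\ref{lem:AlexanderFunctionZWellDefined} and Proposition~\ref{prop:AlexanderFunctionZIndependentOfM}.

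First, I show the wedge pairing $\wedge^p V \otimes \wedge^{n_1-p} V \to \wedge^{n_1} V \xrightarrow{\omega} \Z$, with $V = H_1(F_1,S^+_1)$, is perfect. $V$ is free of rank $n_1$, with basis the classes $\gamma^{\out}_j$ of matching arcs as in Definition~\ref{def:BSDAZComb}. Therefore the bases $\gamma^{\out}_J$ and $\gamma^{\out}_{J^c}$ are dual up to the signs $\pm 1$ coming from $\gamma^{\out}_J\wedge\gamma^{\out}_{J^c} = \pm \gamma^{\out}_{\{1,\dots,n_1\}}$. Unimodularity over $\Z$ is exactly what I need here: every $\Z$-linear functional $\wedge^{q}V \to \Z$ is $w \mapsto \omega(z\wedge w)$ for a unique $z \in \wedge^{n_1-q}V$.

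Second, I define $\mathsf{A}_\Z(Y,\Gamma)$ on basis elements $x = \gamma^{\inrm}_I$ with $|I| = k$. I let $z_I \in \wedge^{k+c}V$ be the unique element with $\omega(z_I \wedge w) = \A^{\Z}_{Y,\Gamma}(i_*\gamma^{\inrm}_I \wedge i_* w)$ for all $w \in \wedge^{d-k}V$. Explicitly,
\[
z_I = \sum_{|J| = k+c} \pm \A^{\Z}_{Y,\Gamma}(i_*\gamma^{\inrm}_I\wedge i_*\gamma^{\out}_{J^c})\,\gamma^{\out}_J .
\]
I would then set $\mathsf{A}_\Z(Y,\Gamma)(\gamma^{\inrm}_I) := (-1)^{c(d-k)} z_I$ and extend linearly. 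For homogeneous $y$ of degree $q$, the super-sign rule gives $(\mathsf{A}\otimes\id)(x\otimes y) = (-1)^{cq}\mathsf{A}(x)\otimes y$. Only $q = d-k$ contributes, and in that case the two signs cancel, so \eqref{eq:AlexanderFunctorZDefinition} holds. For $q \neq d-k$, both sides vanish: the left because of the degree condition on $\wedge$, the right by the convention on $\A^{\Z}_{Y,\Gamma}$. If $H_2(Y,R^+)\neq 0$, then $\A^{\Z}_{Y,\Gamma} = 0$ and $\mathsf{A} = 0$. If $k + c < 0$ or $k+c > n_1$, there are no valid $y$, and $\mathsf{A}(x) = 0$ is consistent with degree $c$.

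Third, for uniqueness, suppose two maps satisfy \eqref{eq:AlexanderFunctorZDefinition} with the same choices of $\omega$ and the same representative of $\A^{\Z}_{Y,\Gamma}$. Their difference $D$, which I may take homogeneous of degree $c$ as the statement demands, satisfies $\omega(D(x)\wedge y) = 0$ for all $y$ of complementary degree. Perfectness of the pairing then forces $D(x) = 0$. Changing $\omega$ or the global sign of $\A^{\Z}_{Y,\Gamma}$ multiplies the solution by a unit $\pm 1$, which gives uniqueness up to sign. I expect no real obstacle. The only point needing care is the sign bookkeeping for the super-sign rule, and the explicit $(-1)^{c(d-k)}$ factor handles it; it matters only for the honest (not up-to-sign) identity for fixed choices, since the global ambiguity is harmless.
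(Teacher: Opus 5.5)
Your proof is correct and is essentially the paper's argument: the paper fixes $\omega(\gamma^{\out}_1\wedge\cdots\wedge\gamma^{\out}_{n_1})=1$ and solves for the coefficients $a_{I,J}=(-1)^{\mathrm{inv}(\sigma_{JJ^c\leftrightarrow\std})+c(n_1-l)}\A^{\Z}_{Y,\Gamma}(i_*\gamma^{\inrm}_I\wedge i_*\gamma^{\out}_{J^c})$, which is the coordinate form of your duality via the unimodular wedge pairing, and your sign $(-1)^{c(d-k)}$ agrees with its $(-1)^{c(n_1-l)}$ because $n_1-l=d-k$. Both arguments handle uniqueness, homogeneity of degree $c$, and the $\pm1$ ambiguity (from the choice of $\omega$ and of the representative of $\A^{\Z}_{Y,\Gamma}$) in the same way.
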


\begin{proof}    
    The proof is organized as follows. Fix a basis $\{ \gamma^{\mathrm{in}}_1,\dots,\gamma^{\mathrm{in}}_{n_0}\}$ for $H_1(F_0, S^+_0)$ and a basis $\{ \gamma^{\mathrm{out}}_1,\dots,\gamma^{\mathrm{out}}_{n_1}\}$ for $H_1(F_1, S^+_1)$ as in Definition~\ref{def:BSDAZComb}. We first show that the matrix coefficients of $\mathsf{A}_{\Z}(Y,\Gamma)$ with respect to the incoming basis $\{\gamma^{\inrm}_I\}$ for $\wedge^* H_1(F_0,S^+_0)$ and outgoing basis $\{\gamma^{\out}_J\}$ for $\wedge^* H_1(F_1,S^+_1)$ are uniquely determined by the defining relation~\eqref{eq:AlexanderFunctorZDefinition}. That is, for each pair of basis elements $(\gamma^{\mathrm{in}}_I,\gamma^{\out}_J)$, we require that~\eqref{eq:AlexanderFunctorZDefinition} holds when $x = \gamma^{\inrm}_I$ and $y = \gamma^{\out}_{J^c}$ (where $J^c$ is the complement of $J$). This forces an explicit formula for the coefficient of $\gamma^{\mathrm{out}}_J$ in $\mathsf{A}_{\Z}(Y,\Gamma)(\gamma^{\mathrm{in}}_I)$, so the map is uniquely determined on the chosen basis. We then verify that the defining relation~\eqref{eq:AlexanderFunctorZDefinition} continues to hold for arbitrary $x$ and $y$. Lastly, we establish the degree of $\mathsf{A}_{\Z}(Y,\Gamma)$ and show that $\mathsf{A}_{\Z}(Y,\Gamma)$ is independent of the choice of volume form up to overall sign.
    
    For convenience, fix the volume form $\omega$ such that
    \[
    \omega(\gamma^{\mathrm{out}}_1 \wedge \cdots \wedge \gamma^{\mathrm{out}}_{n_1}) = +1.
    \]
    Note that  
    \[
    \gamma^{\mathrm{out}}_J \wedge \gamma^{\mathrm{out}}_{J^c} = (-1)^{\mathrm{inv}(\sigma_{JJ^c \leftrightarrow \std})} \cdot  \gamma^{\mathrm{out}}_1 \wedge \cdots \wedge \gamma^{\mathrm{out}}_{n_1}
    \]
    where $\sigma_{JJ^c \leftrightarrow \mathrm{std}}$ is given as in Definition~\ref{def:BSDAZComb}. Applying $\omega$ to both sides gives
    \[
    \omega(\gamma^{\mathrm{out}}_J \wedge \gamma^{\mathrm{out}}_{J^c}) = (-1)^{\mathrm{inv}(\sigma_{JJ^c \leftrightarrow \std})}.
    \]
    
    \emph{Uniqueness}: If we take $x = \gamma^{\mathrm{in}}_I$ and $y = \gamma^{\mathrm{out}}_{J^c}$ in equation~\eqref{eq:AlexanderFunctorZDefinition}, after applying the super-sign rule, we obtain
    \begin{align*}
        \A^{\Z}_{Y,\Gamma}(i_* \gamma^{\mathrm{in}}_I \wedge i_* \gamma^{\mathrm{out}}_{J^c})
        &= \omega\!\left(
            \wedge(\mathsf{A}_{\Z}(Y,\Gamma)\otimes\id)
            (\gamma^{\mathrm{in}}_I\otimes \gamma^{\mathrm{out}}_{J^c})
          \right) \\
        &= \omega\!\left(
            (-1)^{c(n_1-l)}\,
            \mathsf{A}_{\Z}(Y,\Gamma)(\gamma^{\mathrm{in}}_I)
            \wedge \gamma^{\mathrm{out}}_{J^c}
          \right) \\
        &= (-1)^{c(n_1-l)}\,\omega\!\left(
            \sum_{J'} a_{I,J'}\,\gamma^{\mathrm{out}}_{J'}\wedge\gamma^{\mathrm{out}}_{J^c}
          \right) \\
        &= (-1)^{c(n_1-l)}\,\sum_{J'} a_{I,J'}\,
           \omega(\gamma^{\mathrm{out}}_{J'}\wedge\gamma^{\mathrm{out}}_{J^c}) \\
        &= (-1)^{c(n_1-l)}\,a_{I,J}\,
           (-1)^{\mathrm{inv}(\sigma_{JJ^c \leftrightarrow \std})},
    \end{align*}
    since $\omega(\gamma^{\mathrm{out}}_{J'} \wedge \gamma^{\mathrm{out}}_{J^c})=0$ unless $J'=J$.  
    Solving for $a_{I,J}$ gives
    \[
    a_{I,J}
        = (-1)^{\mathrm{inv}(\sigma_{JJ^c \leftrightarrow \std})}
          \,(-1)^{c(n_1-l)}\,
          \A^{\Z}_{Y,\Gamma}(i_*\gamma^{\mathrm{in}}_I \wedge i_*\gamma^{\mathrm{out}}_{J^c}).
    \]
    Since $\A^{\Z}_{Y,\Gamma}$ is well-defined (cf. Lemma~\ref{lem:AlexanderFunctionZWellDefined}, Proposition~\ref{prop:AlexanderFunctionZIndependentOfM}), these coefficients $a_{I,J}$ must be uniquely determined up to overall sign. 
    
    \emph{Existence}: There exists a unique (up to overall sign)
    $\Z$-linear map $\mathsf{A}_{\Z}(Y,\Gamma)$ defined by the sum
    \[
    \mathsf{A}_{\Z}(Y,\Gamma)(\gamma^{\mathrm{in}}_I) = \sum_{J'} a_{I,J'} \cdot \gamma^{\mathrm{out}}_{J'}
    \]
    where each $a_{I,J'}$ is defined by the explicit formula ~\eqref{eq:AlexanderFunctorZDefinition}. In other words, we define $\mathsf{A}_{\Z}(Y,\Gamma)$ to be the unique linear map such that equation~\eqref{eq:AlexanderFunctorZDefinition} holds when $x = \gamma^{\mathrm{in}}_I$ and $y = \gamma^{\mathrm{out}}_{J^c}$. 
    
    \emph{Verifying for arbitrary elements}: Let $x=\sum_I a_I \gamma_I^{\inrm} \in \wedge^* H_1\left(F_0, S_0^{+}\right)$ and $y=\sum_J b_{J^c} \gamma_{J^c}^{\out} \in \wedge^* H_1\left(F_1, S_1^{+}\right)$. Note that $i_*$, $\omega$, and $\mathsf{A}_{\Z}(Y,\Gamma) \otimes \id$ are $\Z$-linear, and $\A^{\Z}_{Y,\Gamma}$ is multilinear. After applying equation~\eqref{eq:AlexanderFunctorZDefinition}, we have
    \begin{align*}
        \A^{\Z}_{Y,\Gamma}(i_* x \wedge i_* y)
        &= \A^{\Z}_{Y,\Gamma}\left(i_* \left( \sum_{I} a_I\, \gamma^{\mathrm{in}}_I \right) \wedge i_* \left( \sum_{J} b_{J^c}\, \gamma^{\mathrm{out}}_{J^c} \right) \right)  \\
        &=\A^{\Z}_{Y,\Gamma}\left( \sum_{I}  a_I\, i_*\gamma^{\mathrm{in}}_I  \wedge  \sum_{J} b_{J^c}\, i_*\gamma^{\mathrm{out}}_{J^c} \right) \\
        &= \sum_{I,J} a_I b_{J^c}\, \A^{\Z}_{Y,\Gamma}\left(i_* \gamma^{\mathrm{in}}_I \wedge i_* \gamma^{\mathrm{out}}_{J^c} \right)  \\
        &= \sum_{I,J} a_I b_{J^c}\,
            \omega\left(\wedge(\mathsf{A}_{\Z}(Y,\Gamma) \otimes \id)(\gamma^{\mathrm{in}}_I \otimes \gamma^{\mathrm{out}}_{J^c})\right)  \\
        &= \omega\left(\wedge\left( \sum_{I,J} a_I b_{J^c}\, (\mathsf{A}_{\Z}(Y,\Gamma) \otimes \id)(\gamma^{\mathrm{in}}_I \otimes \gamma^{\mathrm{out}}_{J^c}) \right)\right)  \\
        &= \omega\left(\wedge\left( (\mathsf{A}_{\Z}(Y,\Gamma) \otimes \id)\left( \sum_{I,J} a_I b_{J^c}\, \gamma^{\mathrm{in}}_I \otimes \gamma^{\mathrm{out}}_{J^c} \right) \right)\right) \\
        &= \omega\left( \wedge \left((\mathsf{A}_{\Z}(Y,\Gamma) \otimes \id)\left(\sum_I a_I \gamma_I^{\text {in }}  \otimes \sum_J b_{J^c} \gamma_{J^c}^{\text {out}} \right) \right) \right) \\
        &= \omega\left( \wedge \left((\mathsf{A}_{\Z}(Y,\Gamma) \otimes \id)(x \otimes y) \right)\right).
    \end{align*}
    Therefore, the defining relation~\eqref{eq:AlexanderFunctorZDefinition} holds for arbitrary $x$ and $y$.

    \emph{Degree}: Let $k = |x|$ and let $l = n_1 - |y|$, so that $|y| = n_1 - l$. The right hand side of equation~\eqref{eq:AlexanderFunctorZDefinition} is nonzero only when $i_* x \wedge i_* y$ lies in $\wedge^d H_1(Y, R^+)$, where $d=-\chi(Y,R^+)$, i.e. only when 
    \[
    |i_* x|+|i_* y|=|x|+|y|=k+n_1 - l=d
    \]
   where the first equality holds since $i_*$ is homogeneous. On the left hand side of equation~\eqref{eq:AlexanderFunctorZDefinition}, we apply a map $\omega$ of degree $-n_1$ to an element $\wedge(\mathsf{A}_{\Z}(Y,\Gamma) \otimes \mathrm{id})(x \otimes y)$ of $\wedge^{k'+n_1 - l} H_1(F_1, S_1^+)$, where $k'$ is the degree of $\mathsf{A}_{\Z}(Y,\Gamma)(x)$; the result is nonzero only when $k' - l = 0$, i.e. $k' = l$. Substituting $k'$ for $l$ in $k+n_1 - l = d$ gives
   \[
   k' - k = n_1 - d = c.
   \]
   Thus, $k' = k + c$, so $\mathsf{A}_{\Z}(Y,\Gamma)$ is homogeneous of degree $c$.

    \emph{Sign ambiguity}: Recall that any other choice of volume form will differ from $\omega$ by a unit in the ground ring $\Z$. In other words, there are only two choices of volume form on $H_1(F_1,S^+_1)$: $\omega$ and $-\omega$. If we had instead chosen the other one, $-\omega$, all the coefficients $a_{I,J}$ would carry an extra global factor of $-1$. Hence, changing the volume form does not affect $\mathsf{A}_{\Z}(Y,\Gamma)$ up to overall sign. 
\end{proof}

\begin{remark}\label{rem:AlexanderZSpecialCaseOfFM}
    Note that our sutured Alexander function $\mathcal{A}^{\Z}_{Y,\Gamma}$ recovers the Alexander function of \cite[Definition~2.2]{FMFunctorial} as a special case: If $G={1}$, $Y=M$ is connected with connected boundary, and $R^+$ is a disk, collapsing $R^+$ to a point and viewing this as a basepoint $\star \in \partial M$ identifies our $H_1(Y,R^+)$ with Florens--Massuyeau's $H_1^{\varphi}(M,\star)$, and our $\mathcal{A}^{\Z}_{Y,\Gamma}$ with their $\mathcal{A}^M_{\varphi}$. 
        
    Similarly, our Alexander functor $\mathsf{A}_{\Z}(Y,\Gamma)$ recovers the Alexander functor $\mathsf{A}(M,\varphi)$ of \cite[Section~2.2]{FMFunctorial} as a special case: If $G={1}$ and $(Y=M,\Gamma)$ is a connected cobordism such that each $S_i^+ \subset F_i$ consists of a single interval, collapsing these intervals to basepoints $\star \in \partial F_{\pm g}=S^1$ identifies our $H_1(F_i,S^+_i)$ with Florens--Massuyeau's $H_1^{\varphi}(F_{\pm g},\star)$, and thus our $\mathsf{A}_{\Z}(Y,\Gamma)$ with their $\mathsf{A}(M,\varphi)$.
\end{remark}

\subsection{Identifying $[\BSDA]$ with the Alexander functor over $\Z$}\label{subsec:BSDAequalsAlexanderZ}

\begin{theorem}\label{thm:BSDAAlexanderZ}
    Let $\Xi$ be any set of choices in Definition~\ref{def:BSDAZComb}. We have
    \[
    [\BSDA(Y,\Gamma;\Xi)]^{\Z}_{\comb} = \mathsf{A}_{\Z}(Y,\Gamma)
    \]
    as maps from $\wedge^* H_1(F_0,S^+_0)$ to $\wedge^* H_1(F_1,S^+_1)$ up to sign.
\end{theorem}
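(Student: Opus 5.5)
By Corollary~\ref{cor:CanChooseXiNormalized}, it suffices to prove the equality for a normalized set of choices $\Xi_{\norm}$. The Heegaard diagram is then $\Hc_{\norm}$, the orientation conventions of Proposition~\ref{prop:ExpandingBasisElts} hold, and the $\beta$-circles are ordered as $\beta^{\out},\beta^c,\beta^{\inrm}$. Both sides are determined by their matrix coefficients $a_{I,J}$ with respect to the bases $\gamma^{\inrm}_I$ and $\gamma^{\out}_J$. By the uniqueness part of Proposition~\ref{prop:AlexanderFunctorZDefn}, the coefficient on the Alexander side is
\[
a_{I,J} = \pm(-1)^{\mathrm{inv}(\sigma_{JJ^c\leftrightarrow\std})+c(n_1-l)}\,\A^{\Z}_{Y,\Gamma}\bigl(i_*\gamma^{\inrm}_I\wedge i_*\gamma^{\out}_{J^c}\bigr).
\]
I will compute this Alexander function value using the presentation matrix $M_{\Hc_{\norm}}$, which Proposition~\ref{prop:AlexanderFunctionZIndependentOfM} permits up to a global sign.

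\emph{Matching the determinant to the generator count.} Corollary~\ref{cor:SingCellSquareCommutes} and Proposition~\ref{prop:ExpandingBasisElts} identify $i_*\gamma^{\out}_j$ with $-1$ times the $\beta^{\out}_j$ 1-cell, and $i_*\gamma^{\inrm}_i$ with $-1$ times the $\beta^{\inrm}_i$ 1-cell. So the determinant to compute is that of $[M_{\Hc}\mid \pm e_{\beta^{\inrm}_I}\mid \pm e_{\beta^{\out}_{J^c}}]$. This matrix can be rewritten as the full intersection matrix of $\Hc_{\norm}$, with rows indexed by $\beta$-circles and columns indexed by \emph{all} $\alpha$-curves, restricted to a suitable column set. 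The column of $\alpha^{a,\out}_j$ is $-e_{\beta^{\out}_j}$, because $\alpha^{a,\out}_j$ meets only the circle $\beta^{\out}_j$, with sign $-1$. Likewise the column of $\alpha^{a,\inrm}_i$ is $+e_{\beta^{\inrm}_i}$. The determinant therefore equals, up to a column permutation and a sign depending only on $k$ and $l$, the determinant of the intersection matrix restricted to the columns $\alpha^{a,\out}_{J^c}\cup\alpha^c\cup\alpha^{a,\inrm}_I$. Expanding this determinant by permutations gives exactly a signed sum over generators $\x$ with $o_L(\x)=J^c$ and $o_R(\x)=I$. Each term has sign $\prod_x(-1)^{i(x)}\cdot(-1)^{\mathrm{inv}(\sigma_\x)}$, computed relative to the ordering of the chosen columns, which is exactly the ordering used in Definition~\ref{def:BSDAZComb}.

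\emph{Sign bookkeeping (main obstacle).} What remains is to show that the leftover sign is independent of $I$ and $J$ up to a global constant, after combining these ingredients: the column reordering, the $\pm1$ factors from Proposition~\ref{prop:ExpandingBasisElts}, the restriction-of-ordering inversions, and the factor $(-1)^{\mathrm{inv}(\sigma_{JJ^c\leftrightarrow\std})+c(n_1-l)}$. Concretely, the reordering of the columns $[\alpha^c\mid\alpha^{a,\inrm}_I\mid\alpha^{a,\out}_{J^c}]$ into the standard order $\alpha^{a,\out}_{J^c},\alpha^c,\alpha^{a,\inrm}_I$ contributes $(-1)^{(n_1-l)(a+k)}$, and the extra $-1$ factors contribute $(-1)^{k+(n_1-l)}$. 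The remaining term of $\mathrm{gr}_{\DA}$ is $\mathrm{inv}(\sigma_{\overline o_L o_L\leftrightarrow\std})$; it should cancel against $\mathrm{inv}(\sigma_{JJ^c\leftrightarrow\std})$, since these are the same permutation. I then plan to substitute $l=k+c$ (Proposition~\ref{prop:DegreeOfBSDA}) and reduce everything modulo $2$, as in the proof of Proposition~\ref{prop:BSDARespectsGluing}. The expected outcome is that the residue equals $ak+n_1k$ plus terms independent of $k$, which matches equation~\eqref{eq:DAGrading}; I will double-check this parity carefully, since a quadratic term such as $k^2\equiv k$ is easy to misattribute.

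\emph{Degenerate cases.} If $H_2(Y,R^+)\neq0$, then $\A^{\Z}_{Y,\Gamma}=0$ by definition. The determinant above also vanishes, since the columns of $M_{\Hc}$ are dependent, so the signed generator count vanishes as well, and both sides are zero. If the degrees do not match, that is, $l\neq k+c$, then both sides vanish by Proposition~\ref{prop:DegreeOfBSDA} and by the degree statement in Proposition~\ref{prop:AlexanderFunctorZDefn}. This completes the plan for all choices $\Xi$, via Corollary~\ref{cor:CanChooseXiNormalized}.
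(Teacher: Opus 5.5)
Your plan follows the paper's proof essentially step for step. You reduce to $\Xi_{\norm}$ via Corollary~\ref{cor:CanChooseXiNormalized} and evaluate $\A^{\Z}_{Y,\Gamma}(i_*\gamma^{\inrm}_I\wedge i_*\gamma^{\out}_{J^c})$ with $M_{\Hc_{\norm}}$. You then identify the $(I,J)$ matrix entry of $[\BSDA(Y,\Gamma;\Xi_{\norm})]^{\Z}_{\comb}$ with $(-1)^{\mathrm{inv}(\sigma_{JJ^c\leftrightarrow\std})+ak+n_1k}\det N_{J^c,I}$, where $N_{J^c,I}$ is the intersection matrix restricted to the columns $\alpha^{a,\out}_{J^c},\alpha^c,\alpha^{a,\inrm}_I$. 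Finally you handle $H_2(Y,R^+)\neq 0$ by column dependence. All of this is correct.

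What is wrong is one factor in your sign-bookkeeping paragraph, which is exactly the step you flagged. You observe correctly that the representative $-e_{\beta^{\out}_j}$ of $i_*\gamma^{\out}_j$ coincides with the column of $\alpha^{a,\out}_j$, since $\beta\cdot\alpha^{a,\out}=-1$. So apart from the reordering, only the $k$ incoming columns change sign, and
$\det[M_{\Hc_{\norm}}\mid -e_{\beta^{\inrm}_I}\mid -e_{\beta^{\out}_{J^c}}]=(-1)^{k+(n_1-l)(a+k)}\det N_{J^c,I}$.
Your factor $(-1)^{k+(n_1-l)}$ counts the $-1$'s from Proposition~\ref{prop:ExpandingBasisElts} on both sides. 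It forgets that the outgoing ones are cancelled by the $-1$ intersection signs already present in the outgoing columns of $N_{J^c,I}$.

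This slip is not harmless. Since $n_1-l=n_1-k-c\equiv k$ modulo $k$-independent terms, your version gives the residue $c(n_1-l)+(n_1-l)(a+k)+k+(n_1-l)\equiv ak+n_1k+k$. That differs from the correction terms $ak+n_1k$ of $\mathrm{gr}_{\DA}$ by the $k$-dependent sign $(-1)^k$, so the comparison would appear to fail. With the correct factor $(-1)^k$ you get $c(n_1-l)+(n_1-l)(a+k)+k\equiv ak+n_1k$ modulo $k$-independent terms, using $l=k+c$ and $k^2\equiv k$. With that correction the argument closes exactly as in the paper.
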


\begin{proof}       
    By Corollary~\ref{cor:CanChooseXiNormalized}, we may assume $\Xi$ is of the form $\Xi_{\norm}$. Let $\gamma^{\inrm}_I$ and $\gamma^{\out}_J$ be basis elements of $\wedge^* H_1(F_0,S^+_0)$ and $\wedge^* H_1(F_1,S^+_1)$ respectively. Let $k = |I|$ and $l = |J|$, and assume that $l = k+c$ where $c = n_1 + \chi(Y,R^+)$ as usual.

    As in the proof of Proposition~\ref{prop:AlexanderFunctorZDefn}, fix the volume form $\omega$ such that
    \[
    \omega(\gamma^{\mathrm{out}}_1 \wedge \cdots \wedge \gamma^{\mathrm{out}}_{n_1}) = +1.
    \]
    We will compute
    \begin{equation}\label{eq:TargetQuantity}
    \omega(\wedge([\BSDA(Y,\Gamma;\Xi_{\norm})]^{\Z}_{\comb} \otimes \id)(\gamma^{\inrm}_I \otimes \gamma^{\out}_{J^c}))
    \end{equation}
    and show the result agrees with $\mathcal{A}^{\Z}_{Y,\Gamma}(i_* \gamma^{\inrm}_I \wedge i_* \gamma^{\out}_{J^c})$ up to an overall sign that is independent of $I$ and $J$. 
    
    By Proposition~\ref{prop:AlexanderFunctionZIndependentOfM}, we may use the presentation matrix $M_{\Hc_{\norm}}$ for $H_1(Y,R^+)$ to compute $\mathcal{A}^{\Z}_{Y,\Gamma}(i_* \gamma^{\inrm}_I \wedge i_* \gamma^{\out}_{J^c})$; by the discussion in Section~\ref{sec:CellularChainCx}, we can understand $i_* \gamma^{\inrm}_I$ and $i_* \gamma^{\out}_{J^c}$ in terms of
    \[
    i_* \colon H_1(\mathfrak{F}_i,\mathfrak{S}^+_i) \to H_1(\mathfrak{Y}, \mathfrak{R}^+_i).
    \]
    
    By our super sign convention, \eqref{eq:TargetQuantity} equals
    \[
    (-1)^{c(n_1-k-c)} \omega([\BSDA(Y,\Gamma;\Xi_{\norm})]^{\Z}_{\comb} (\gamma^{\inrm}_I) \wedge \gamma^{\out}_{J^c}),
    \]
    and up to an overall $k$-independent sign, we can replace $(-1)^{c(n_1 - k -c)}$ with $(-1)^{ck}$. Thus, using our specific choice of volume form $\omega$, we see that \eqref{eq:TargetQuantity} equals $(-1)^{ck + \mathrm{inv}(\sigma_{J J^c \leftrightarrow \std})}$ times the matrix entry $E$ of $[\BSDA(Y,\Gamma;\Xi_{\norm})]^{\Z}_{\comb}$ in column $\gamma^{\inrm}_I$ and row $\gamma^{\out}_J$. 
    
    By the permutation expansion of the determinant, we can write $E$ as
    \begin{equation}\label{eq:EAsDeterminantZ}
    (-1)^{\mathrm{inv}(\sigma_{J J^c \leftrightarrow \std}) + ak + n_1 k} 
    \det \kbordermatrix{
      & \alpha^{\out}_{J^c} & \alpha^c & \alpha^{\inrm}_I \\
    \beta^{\out} & -*_{J^c} & * & 0\\
    \beta^c & 0 & * & 0\\
    \beta^{\inrm} & 0 & * & *_I}
    \end{equation}
    where the matrix entries are defined to be the algebraic intersection numbers $\beta \cdot \alpha$ for the relevant $\alpha$ and $\beta$ curves; each column of the block $-*_{J^c}$ is $-1$ in a single row corresponding to an element of $J^C$ and is zero elsewhere, and each column of the block $*_I$ is $1$ in a single row corresponding to an element of $I$ and is zero elsewhere. 

    The proof now breaks up into two cases. Suppose that $H_2(Y,R^+) \neq 0$. We have $\mathsf{A}_{\Z}(Y,\Gamma)=0$, since we defined $\A^{\Z}_{Y,\Gamma}=0$ in this case in Definition~\ref{def:AlexanderFunctionZ}. On the other hand, Lemma~\ref{lem:InjectivePresentationMatrixZ} implies that $M_{\Hc_{\norm}}$ is not injective; equivalently, the columns of $M_{\Hc_{\norm}}$ are $\Z$-linearly dependent. Since the middle $\alpha^c$ column (of three blocks) in the matrix of equation~\eqref{eq:EAsDeterminantZ} is $M_{\Hc_{\norm}}$, the columns of the matrix of equation~\eqref{eq:EAsDeterminantZ} are linearly dependent. It follows that the matrix entry $E$ in equation~\eqref{eq:EAsDeterminantZ} is zero. Thus,
    \[
    [\BSDA(Y,\Gamma;\Xi_{\norm})]^{\Z}_{\comb}=0.
    \]
    
    So suppose that $H_2(Y,R^+) = 0$. Then $M_{\Hc}$ is injective, and up to a sign that is independent of $k = |I|$, we have
    \begin{align*}
        &\omega(\wedge([\BSDA(Y,\Gamma;\Xi_{\norm})]^{\Z}_{\comb} \otimes \id)(\gamma^{\inrm}_I \otimes \gamma^{\out}_{J^c})) \\
        &= (-1)^{\mathrm{inv}(\sigma_{JJ^c \leftrightarrow \std}) + ck} E \\ 
        &= (-1)^{ak + n_1 k + ck} \det \kbordermatrix{
      & \alpha^{\out}_{J^c} & \alpha^c & \alpha^{\inrm}_I \\
    \beta^{\out} & -*_{J^c} & * & 0\\
    \beta^c & 0 & * & 0\\
    \beta^{\inrm} & 0 & * & *_I} \\
        &= \det \kbordermatrix{
       & \alpha^c & \alpha^{\inrm}_I & \alpha^{\out}_{J^c} \\
    \beta^{\out}  & * & 0 & *_{J^c}\\
    \beta^c & * & 0 & 0\\
    \beta^{\inrm} & * & *_I & 0} \\
        &= \A^{\Z}_{Y,\Gamma}(i_* \gamma^{\inrm}_I \wedge i_* \gamma^{\out}_{J^c});
    \end{align*}
    in the second-to-last step, we use that $(-1)^{(n_1-k-c)(a+k) + (n_1 - k - c)} = (-1)^{n_1 k + ak + ck}$ up to a $k$-independent sign. For the last step, note that expanding $\A^{\Z}_{Y,\Gamma}(i_* \gamma^{\inrm}_I \wedge i_* \gamma^{\out}_{J^c})$ directly using Proposition~\ref{prop:ExpandingBasisElts} (whose hypotheses hold given our assumptions about $\Xi_{\norm}$) would give the determinant from the second-to-last step but with $*_I$ and $*_{J^c}$ replaced with $-*_I$ and $-*_{J^c}$ respectively. In total there is a sign discrepancy of $k + (n_1 - k - c) = n_1 - c$ which is independent of $k$. It follows that $[\BSDA(Y,\Gamma;\Xi_{\norm})]^{\Z}_{\comb}$, and thus $[\BSDA(Y,\Gamma;\Xi)]^{\Z}_{\comb}$ for any set of choices $\Xi$, satisfies the equation characterizing $\mathsf{A}_{\Z}(Y,\Gamma)$ up to sign. 
\end{proof}

\subsection{Independence of $[\BSDA]$ over $\Z$}

Theorem~\ref{thm:BSDAAlexanderZ} relates the decategorified invariant $[\BSDA(Y,\Gamma;\Xi)]^{\Z}_{\comb}$ to a quantity that is independent of $\Xi$. Indeed, $\mathsf{A}_{\Z}(Y,\Gamma)$ is well-defined as an up-to-sign map, by Proposition~\ref{prop:AlexanderFunctorZDefn}. This observation allows us to define a version of $[\BSDA(Y,\Gamma)]^{\Z}_{\comb}$ that is independent of the auxiliary data $\Xi$.

\begin{definition}\label{def:BSDAZIndependentofChoices}
    Let $(Y,\Gamma)$ be a sutured cobordism, and let $\Xi$ be any set of choices as in Definition~\ref{def:BSDAZComb}.
    We define
    \[
        [\BSDA(Y,\Gamma)]^{\Z}_{\comb}
    \]
    to be the up-to-sign map
    \[
        [\BSDA(Y,\Gamma;\Xi)]^{\Z}_{\comb}
        \colon \wedge^* H_1(F_0,S^+_0)
        \to
        \wedge^* H_1(F_1,S^+_1),
    \]
    for any choice of $\Xi$. 
\end{definition}

\begin{corollary}\label{cor:BSDAZFunctorial}
    The assignment $\Cob^{\sut}_{2+1} \to \Ab^{\Z\gr}_{\pm 1}$ defined on objects as 
    \[
    (F,\Lambda) \mapsto \wedge^*H_1(F,S^+)
    \]
    and on morphisms as 
    \[
    (Y,\Gamma) \mapsto [\BSDA(Y,\Gamma)]^{\Z}_{\comb}
    \]
    is functorial. 
\end{corollary}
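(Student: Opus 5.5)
To prove Corollary~\ref{cor:BSDAZFunctorial}, I will combine the choice-dependent results of the previous section with the independence statement just established. The map $[\BSDA(Y,\Gamma)]^{\Z}_{\comb}$ is well defined as an up-to-sign map. Indeed, by Theorem~\ref{thm:BSDAAlexanderZ}, for every set of choices $\Xi$ the map $[\BSDA(Y,\Gamma;\Xi)]^{\Z}_{\comb}$ agrees up to sign with $\mathsf{A}_{\Z}(Y,\Gamma)$. By Proposition~\ref{prop:AlexanderFunctorZDefn}, $\mathsf{A}_{\Z}(Y,\Gamma)$ depends only on $(Y,\Gamma)$ up to sign, and in fact only on the isomorphism class of the cobordism, since isomorphic cobordisms have isomorphic pairs $(Y,R^+)$ compatible with the inclusions of the $(F_i,S^+_i)$. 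This is consistent with morphisms in $\Cob^{\sut}_{2+1}$ being cobordisms up to isomorphism. The map is homogeneous by Proposition~\ref{prop:DegreeOfBSDA}, so it is a legitimate morphism in $\Ab^{\Z\gr}_{\pm 1}$, where any degree is allowed.

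For composition, let $(Y,\Gamma)\colon (F_0,\Lambda_0)\to(F_1,\Lambda_1)$ and $(\tilde Y,\tilde\Gamma)\colon(F_1,\Lambda_1)\to(F_2,\Lambda_2)$. I first pick $\Xi$ for $(Y,\Gamma)$ arbitrarily, using Proposition~\ref{prop:RepresentabilityByHeegaardDiagrams} to get a diagram. I then pick $\tilde\Xi$ for $(\tilde Y,\tilde\Gamma)$ so that the two are composable in the sense of Definition~\ref{def:ChoicesGluing}. This is possible because Proposition~\ref{prop:RepresentabilityByHeegaardDiagrams} lets us choose any $\alpha$-arc diagram $\Zc_1$ for $F_1$, together with its fixed ordering and orientations of matching arcs, when building the $\alpha$-$\alpha$ diagram $\tilde\Hc$. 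The glued diagram $\tilde\Hc\cup_{\Zc_1}\Hc$ represents $\tilde Y\cup_{F_1}Y$ by the gluing proposition following Definition~\ref{def:SuturedCobGluing}, so $\tilde\Xi\cup\Xi$ is a valid set of choices for the composite. Proposition~\ref{prop:BSDARespectsGluing} then gives equation~\eqref{eq:BSDAgluing} up to sign. Applying choice-independence to each of the three terms yields
\[
[\BSDA(\tilde Y\cup_{F_1}Y)]^{\Z}_{\comb}=[\BSDA(\tilde Y)]^{\Z}_{\comb}\circ[\BSDA(Y)]^{\Z}_{\comb}
\]
in $\Ab^{\Z\gr}_{\pm1}$.

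For identities, Proposition~\ref{prop:BSDARespectsIdentity} produces a particular choice $\Xi_{\id}$ for $(\id_{(F,\Lambda)},\Gamma_{\id})$ with $[\BSDA(\id;\Xi_{\id})]^{\Z}_{\comb}=\pm\id$. Its existence again uses Proposition~\ref{prop:SuturedSurfaceRepresentable} to pick an $\alpha$-arc diagram $\Zc$ for $(F,\Lambda)$. By choice-independence, $[\BSDA(\id_{(F,\Lambda)})]^{\Z}_{\comb}=\pm\id$. The degree shifts $d$ on objects of $\Cob^{\sut}_{2+1}$ change only the grading bookkeeping and not the underlying maps, so they do not affect either argument.

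The only real subtlety is the bookkeeping at the composition step. One must check that composable choices always exist and that the glued Heegaard diagram really represents the composite cobordism, including the case of disconnected $F_1$. Both points are covered by the representability results cited above, so I expect no genuine obstacle.
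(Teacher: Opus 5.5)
Your proposal is correct and follows the paper's proof. Like the paper, you combine the choice-independence of $[\BSDA(Y,\Gamma)]^{\Z}_{\comb}$ (via Theorem~\ref{thm:BSDAAlexanderZ}) with Proposition~\ref{prop:BSDARespectsGluing} for composable choices $\tilde{\Xi}\cup\Xi$ and Proposition~\ref{prop:BSDARespectsIdentity} for $\Xi_{\id}$; the extra points you check (existence of composable choices, invariance under isomorphism of cobordisms, homogeneity of the maps) are left implicit in the paper and you handle them correctly.
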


\begin{proof}
    By Proposition~\ref{prop:BSDARespectsIdentity}, we have
    \[
    [\BSDA(\id_{(F,\Lambda)},\Gamma_{\id})]^{\Z}_{\comb}=\id_{\wedge^*H_1(F,S^+)}
    \]
    as up-to-sign maps, where we use the set of choices $\Xi_{\id}$ of Proposition~\ref{prop:BSDARespectsIdentity} to compute $[\BSDA(\id_{(F,\Lambda)},\Gamma_{\id})]^{\Z}_{\comb}$. Similarly, if $(Y,\Gamma)$ and $(\tilde{Y},\tilde{\Gamma})$ are composable sutured cobordisms, Proposition~\ref{prop:BSDARespectsGluing} gives 
    \[
    [\BSDA(\tilde{Y}\cup_{F_1}Y,\tilde{\Gamma}\cup_{\Lambda_1}\Gamma)]^{\Z}_{\comb}=[\BSDA(\tilde{Y},\tilde{\Gamma})]^{\Z}_{\comb}\circ [\BSDA(Y,\Gamma)]^{\Z}_{\comb}
    \]
    as up-to-sign maps, where we use the composable sets of choices $\Xi$ and $\tilde{\Xi}$ and the glued choices $\tilde{\Xi}\cup \Xi$ as in Definition~\ref{def:ChoicesGluing} to compute the maps $[\BSDA(Y,\Gamma)]^{\Z}_{\comb}$, $[\BSDA(\tilde{Y},\tilde{\Gamma})]^{\Z}_{\comb}$, and $[\BSDA(\tilde{Y}\cup_{F_1}Y,\tilde{\Gamma}\cup_{\Lambda_1}\Gamma)]$ respectively.
\end{proof}

\section{Sutured Frohman--Nicas TQFT}\label{sec:SuturedFNTQFT}

Let $(Y,\Gamma)$ be a sutured cobordism from $(F_0,\Lambda_0)$ to $(F_1,\Lambda_1)$. In this section we will relate $[\BSDA(Y,\Gamma)]^{\Z}_{\comb}$ to the sutured Alexander functor defined in the statement of Theorem~\ref{thm:IntroFirstThm}. As in the introduction, let
\[
K = \rank H_1(F_0,S^+_0) + \rank H_1(F_1,S^+_1) + \chi(Y,R^+).
\]
\begin{proposition}\label{prop:RankKerLeqK}
    Assume $H_2(Y,R^-) = 0$. We have $\rank \ker(i_*) \leq K$ with equality if and only if $H_2(Y,R^+) = 0$.
\end{proposition}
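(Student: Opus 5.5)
The plan is to read off $\ker(i_*)$ from the long exact sequence of the triple $(Y,\, F \cup R^+,\, R^+)$, where $F := F_0 \sqcup F_1$ and $S^+ := S^+_0 \sqcup S^+_1$. Put $n := n_0 + n_1 = \rank H_1(F,S^+)$, so that $K = n + \chi(Y,R^+)$.

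\textbf{Homology of $(F,S^+)$.} Since $F \cap R^+ = S^+$, excision gives $H_*(F\cup R^+, R^+) \cong H_*(F,S^+)$, compatibly with $i_*$. By Definition~\ref{def:SuturedSurf}, every component of $F$ meets $S^+$, so $H_0(F,S^+)=0$. Every component also has boundary not contained in $S^+$ (it meets $S^-$), so $H_2(F,S^+)=0$. Hence $H_*(F,S^+)$ is concentrated in degree $1$.

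\textbf{The exact sequence.} The triple sequence therefore reads
\[
0 \to H_2(Y,R^+) \to H_2(Y,F\cup R^+) \to H_1(F,S^+) \xrightarrow{i_*} H_1(Y,R^+) \to H_1(Y,F\cup R^+) \to 0 .
\]
By the discussion before Theorem~\ref{thm:IntroFirstThm}, the hypothesis $H_2(Y,R^-)=0$ is equivalent to $H_1(Y,F\cup R^+)$ being finite. So $\operatorname{coker}(i_*)$ is finite, and $\rank H_1(Y,R^+) = \rank \operatorname{im}(i_*) = n - \rank\ker(i_*)$.

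\textbf{Euler characteristic bookkeeping.} Use the relative CW structure from Section~\ref{sec:CWDecomp}, which has cells only in degrees $1$ and $2$. It gives $H_0(Y,R^+) = H_3(Y,R^+) = 0$ and $\chi(Y,R^+) = \rank H_2(Y,R^+) - \rank H_1(Y,R^+)$. Combining with the previous step,
\[
\rank\ker(i_*) = n - \rank H_1(Y,R^+) = n + \chi(Y,R^+) - \rank H_2(Y,R^+) = K - \rank H_2(Y,R^+).
\]
This gives $\rank\ker(i_*) \le K$, with equality if and only if $\rank H_2(Y,R^+)=0$.

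\textbf{Upgrading rank zero to vanishing.} It remains to see that $\rank H_2(Y,R^+)=0$ forces $H_2(Y,R^+)=0$. In the cellular complex there are no $3$-cells, so $H_2(Y,R^+) = \ker(\partial_2)$. This is a subgroup of a free abelian group, hence free, so it has rank zero only if it vanishes. This torsion-freeness is the one point needing care.
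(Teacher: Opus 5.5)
Your proof is correct and follows the paper's argument essentially step for step. Both use the triple sequence for $(Y, F_1 \cup R^+ \cup F_0, R^+)$, excision to $H_1(F_0 \sqcup F_1, S^+_0 \sqcup S^+_1)$, finiteness of $H_1(Y,F_1\cup R^+\cup F_0)$ from $H_2(Y,R^-)=0$, and the rank count $\rank\ker(i_*) = K - \rank H_2(Y,R^+)$. The only difference is the last step. You get torsion-freeness of $H_2(Y,R^+)$ from the absence of $3$-cells in the relative CW complex of Section~\ref{sec:CWDecomp}, while the paper uses the Poincar{\'e}--Lefschetz isomorphism $H_2(Y,R^+) \cong H^1(Y, F_1\cup R^-\cup F_0)$; either works.
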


    \begin{proof}
    We have exact sequences
    \[
    0 \to \ker i_* \to H_1(F_0 \sqcup F_1, S^+_0 \sqcup S^+_1) \xrightarrow{i_*} H_1(Y,R^+) \to H_1(Y,R^+)/\im(i_*) \to 0
    \]
    and
    \begin{align*}
    \cdots &\to H_1(F_1 \cup R^+ \cup F_0, R^+) \xrightarrow{i_*} H_1(Y, R^+) \to H_1(Y, F_1 \cup R^+ \cup F_0) \\
    &\to H_0(F_1 \cup R^+ \cup F_0, R^+) = 0.
    \end{align*}
    By excision, we have a canonical identification $H_1(F_0 \sqcup F_1, S^+_0 \sqcup S^+_1) \cong H_1(F_1 \cup R^+ \cup F_0, R^+)$, under which the maps $i_*$ to $H_1(Y,R^+)$ agree. It follows that $H_1(Y, F_1 \cup R^+ \cup F_0) \cong H_1(Y, R^+) / \im (i_*)$. On the other hand, the free part of $H_1(Y, F_1 \cup R^+ \cup F_0)$ is $H^1(Y, F_1 \cup R^+ \cup F_0) \cong H_2(Y,R^-)$, which we have assumed to be zero. It follows that $H_1(Y, F_1 \cup R^+ \cup F_0)$, and thus $H_1(Y, R^+) / \im (i_*)$, is finite and has rank zero. Then the first sequence gives
    \begin{align*}
    \rank \ker(i_*) &= \rank H_1(F_0,S^+_0) + \rank H_1(F_1,S^+_1) - \rank H_1(Y,R^+) \\
    &= \rank H_1(F_0,S^+_0) + \rank H_1(F_1,S^+_1) + \chi(Y,R^+) - \rank H_2(Y,R^+) \\
    &= K - \rank H_2(Y,R^+).
    \end{align*}
    It follows that $\rank \ker(i_*) \leq K$ with equality if and only if $\rank H_2(Y,R^+) = 0$. But $H_2(Y,R^+) \cong H^1(Y, F_1 \cup R^- \cup F_0)$ is free abelian, so we have equality if and only if $H_2(Y,R^+) = 0$.
\end{proof}

We now prove Theorem~\ref{thm:IntroFirstThm} from the introduction, starting with the proof of Lemma~\ref{lem:IntroMainLemma}. Given Theorem~\ref{thm:BSDAAlexanderZ}, this lemma can be viewed as a sutured generalization of Florens--Massuyeau's result ``Alexander functor for $G = \{1\}$ recovers the Frohman--Nicas TQFT'' (see \cite[Section 8.3]{FMFunctorial}) for the case of one-sided sutured cobordisms.

\begin{proof}[Proof of Lemma~\ref{lem:IntroMainLemma}]
    By Theorem~\ref{thm:BSDAAlexanderZ}, it suffices to show $\mathsf{A}_{\Z}(Y,\Gamma) = |K_{Y,\Gamma}|$ up to sign, where we are viewing $\mathsf{A}_{\Z}(Y,\Gamma)$ as an element of $\wedge^* H_1(F,S^+)$ up to sign. Writing
    \[
    \mathsf{A}_{\Z}(Y,\Gamma) = \sum_J a_{Y,\Gamma,J} \gamma_J
    \]
    where $a_{Y,\Gamma,J} \in \Z$, we have
    \[
    a_{Y,\Gamma,J} = (-1)^{|\mathsf{A}_{\Z}(Y,\Gamma)||\gamma_{J^c}| + \mathrm{inv}(\sigma_{J J^c \leftrightarrow \std})} \A^{\Z}_{Y,\Gamma}(i_* \gamma_{J^c})
    \]
    by definition of $\mathsf{A}_{\Z}(Y,\Gamma)$. Since $|\mathsf{A}_{\Z}(Y,\Gamma)| = c$ and $|\gamma_{J^c}| = |J^c| = n-c$, the sign $(-1)^{|\mathsf{A}_{\Z}(Y,\Gamma)||\gamma_{J^c}|}$ is independent of $J$ and can be disregarded. 
    
    Make choices $\Xi_{\norm}$ for $(Y,\Gamma)$ satisfying the requirements of Corollary~\ref{cor:CanChooseXiNormalized}. Let $\Hc_{\norm}$ be the Heegaard diagram chosen as part of $\Xi_{\norm}$ and let $M_{\Hc_{\norm}}$ be the corresponding presentation matrix for $H_1(Y,R^+)$; by Proposition~\ref{prop:AlexanderFunctionZIndependentOfM}, we are free to choose $M_{\Hc_{\norm}}$ to compute $\A^{\Z}_{Y,\Gamma}(i_* \gamma_{J^c})$.
    
    Recall by Lemma~\ref{lem:InjectivePresentationMatrixZ}, the presentation matrix $M_{\Hc_{\norm}}$ is injective if and only if $H_2(Y,R^+)=0$. This breaks the proof up into two cases. By definition, if $H_2(Y,R^+) \neq 0$, $\A^{\Z}_{Y,\Gamma}$ is the zero function, implying $\mathsf{A}_{\Z}(Y,\Gamma)=0$. On the other hand, from Proposition~\ref{prop:RankKerLeqK}, if $H_2(Y,R^+) \neq 0$ then we would have either $H_2(Y,R^-) \neq 0$ or $\rank \ker(i_*) < K$. In both of these cases $|K_{Y,\Gamma}|=0$ by the definition of $|K_{Y,\Gamma}|$ in Theorem~\ref{thm:IntroFirstThm}. Thus, the lemma holds in this case. 
        
    So assume that $H_2(Y,R^+) =0$ and thus $M_{\Hc_{\norm}}$ is injective. We can write $M_{\Hc_{\norm}}$ as $\kbordermatrix{ 
     & \alpha^c  \\
     \beta^{\out} & * \\
     \beta^c & * }$. If the bottom block $\kbordermatrix{ 
     & \alpha^c  \\
     \beta^c & * }$ of $M_{\Hc_{\norm}}$ has more rows than columns, then no generators $\x'$ for $\Hc'$ or $\x$ for $\Hc$ can exist, so $[\BSD(Y,\Gamma,\Xi_{\norm})]^{\Z}_{\comb} = 0$. On the other hand, by Proposition~\ref{prop:SubmatrixPresentation}, this bottom block is a presentation matrix for $H_1(Y, F \cup R^+)$, and ``more rows than columns'' in a presentation matrix forces $H_1(Y,F \cup R^+)$ to be infinite. Thus $H_2(Y,R^-) \neq 0$, so $|K_{Y,\Gamma}| = 0$ by definition and the lemma follows. We can hence assume without loss of generality that the bottom block $\kbordermatrix{ 
     & \alpha^c  \\
     \beta^c & * }$ of $M_{\Hc_{\norm}}$ has at least as many columns as rows.
     
     Note that if we apply row and column operations (over $\Z$) to $M_{\Hc_{\norm}}$ or its bottom block, we get other valid presentation matrices for $H_1(Y,R^+)$ or $H_1(Y,F \cup R^+)$ respectively. By applying row and column operations  over $\Z$ to the bottom block $\kbordermatrix{ 
     & \alpha^c  \\
     \beta^c & * }$ of $M_{\Hc_{\norm}}$, we can put it in the form $\kbordermatrix{
     & \widetilde{\alpha}^c_1 & \widetilde{\alpha}^c_2 \\
     \widetilde{\beta}^c & 0 & *_3
     }$ where $*_3$ is a square matrix in Smith normal form. It follows that $*_3$ is a presentation matrix for $H_1(Y,F \cup R^+)$, so we have $\det(*_3) = |H_1(Y,F \cup R^+)|$ (interpreted as zero if $H_1(Y,F \cup R^+)$ is infinite).
     
     Applying the same row and column operations to the larger matrix $M_{\Hc_{\norm}}$, we obtain a presentation matrix $\widetilde{M}_{\Hc_{\norm}}$ for $H_1(Y,R^+)$ of the form
     \[
     \widetilde{M}_{\Hc_{\norm}} = \kbordermatrix{
     & \widetilde{\alpha}^c_1 & \widetilde{\alpha}^c_2 \\
     \beta^{\out} & *_1 & *_2 \\
     \widetilde{\beta}^c & 0 & *_3
     };
     \]
     note that $\widetilde{M}_{\Hc_{\norm}}$ is injective because $M_{\Hc_{\norm}}$ is injective. For convenience, define $ (\widetilde{M}_{\Hc_{\norm}})_i$ for $i \in \{1,2\}$ by
     \[
     \widetilde{M}_{\Hc_{\norm}} = \kbordermatrix{
     & \widetilde{\alpha}^c \\
     \beta^{\out} & (\widetilde{M}_{\Hc_{\norm}})_1 \\
     \widetilde{\beta}^c & (\widetilde{M}_{\Hc_{\norm}})_2
     }.
     \]
     We have 
    \[
    a_{Y,\Gamma,J} = (-1)^{\mathrm{inv}(\sigma_{J J^c \leftrightarrow \std})} \det \kbordermatrix{ 
     & \widetilde{\alpha}^c & \alpha^{\out}_{J^c} \\
     \beta^{\out} & (\widetilde{M}_{\Hc_{\norm}})_1  & *_{J^c} \\
     \widetilde{\beta}^c & (\widetilde{M}_{\Hc_{\norm}})_2 & 0 }
    \]
    where the columns of $*_{J^c}$ each have $1$ in a single row and $0$ elsewhere (the rows with a $1$ are the rows corresponding to $J^c$). We can swap columns to get
    \[
    a_{Y,\Gamma,J} = (-1)^{\mathrm{inv}(\sigma_{J J^c \leftrightarrow \std})} \det \kbordermatrix{ 
     & \alpha^{\out}_{J^c} & \widetilde{\alpha}^c  \\
     \beta^{\out} & *_{J^c} & (\widetilde{M}_{\Hc_{\norm}})_1  \\
     \widetilde{\beta}^c & 0  & (\widetilde{M}_{\Hc_{\norm}})_2 }
    \]
    up to an overall sign. Swapping rows so that the top-left block is the identity matrix, we get 
    \[
    a_{Y,\Gamma,J} = (-1)^{\mathrm{inv}(\sigma_{J J^c \leftrightarrow \std}) + \mathrm{inv}(\sigma_{J^c J \leftrightarrow \std})} \det \kbordermatrix{ 
     & \alpha^{\out}_{J^c} & \widetilde{\alpha}^c  \\
     \beta^{\out}_{J^c} & I & (\widetilde{M}_{\Hc_{\norm}})_{1,J^c}  \\
     \beta^{\out}_J & 0 & (\widetilde{M}_{\Hc_{\norm}})_{1,J} \\
     \widetilde{\beta}^c & 0  & (\widetilde{M}_{\Hc_{\norm}})_2 }
    \]
    We have $|J| = c$ $(:= n + \chi(Y,R^+))$ and $|J^c| = n - c$, so $(-1)^{\mathrm{inv}(\sigma_{J J^c \leftrightarrow \std}) + \mathrm{inv}(\sigma_{J^c J \leftrightarrow \std})}$ amounts to $(-1)^{c(n-c)}$ which is a $J$-independent sign that can be disregarded. Thus, up to a $J$-independent sign, we have 
    \[
    a_{Y,\Gamma,J} =  \det \kbordermatrix{ 
     & \widetilde{\alpha}^c  \\
     \beta^{\out}_J & (\widetilde{M}_{\Hc_{\norm}})_{1,J} \\
     \widetilde{\beta}^c & (\widetilde{M}_{\Hc_{\norm}})_2 },
    \]
    the determinant of the deletion of the chosen presentation matrix $ \widetilde{M}_{\Hc_{\norm}}= \kbordermatrix{ 
     & \widetilde{\alpha}^c  \\
     \beta^{\out} & (\widetilde{M}_{\Hc_{\norm}})_1 \\
     \widetilde{\beta}^c & (\widetilde{M}_{\Hc_{\norm}})_2 }$ in which the rows corresponding to $J^c$ have been deleted.

     In other words, $a_{Y,\Gamma,J}$ is the determinant of the deletion of $\kbordermatrix{
     & \widetilde{\alpha}^c_1 & \widetilde{\alpha}^c_2 \\
     \beta^{\out} & *_1 & *_2 \\
     \widetilde{\beta}^c & 0 & *_3
     }$ in which the rows corresponding to $J^c$ (all part of the top set $\beta^{\out}$ of rows) have been deleted. Equivalently, $a_{Y,\Gamma,J}$ is $\det(*_3)$ times the determinant of the deletion of $*_1$ in which the rows corresponding to $J^c$ have been deleted.  If $\det(*_3) = 0$, we see that $a_{Y,\Gamma,J} = 0$ for all $J$, and also $|K_{Y,\Gamma}| = 0$ by definition since $H_1(Y,F \cup R^+)$ is infinite, so the lemma follows in this case.
     
     Assume that $\det(*_3) \neq 0$. We claim that, up to sign, $a_{Y,\Gamma,J}$ is $\det(*_3)$ times the coefficient of $\wedge^K \ker i_*$ on the basis element $\gamma_J$ of $\wedge^* H_1(F,S^+)$; this claim would prove the lemma.

     To prove the claim, note that by the discussion of Section~\ref{sec:CellularChainCx}, each basis element $\gamma_i$ of $H_1(F,S^+)$ corresponds to the negative of some basis one-cell for $C_1^{\cell}(\mathfrak{Y},\mathfrak{R}^+)$ (this one-cell is one of the ``$\beta^{\out}$'' basis vectors labeling rows of $M_{\Hc_{\norm}}$ and $\widetilde{M}_{\Hc_{\norm}}$). By commutativity of the square in Corollary~\ref{cor:SingCellSquareCommutes}, a linear combination of the $\gamma_i$ is in $\ker i_*$ if and only if the corresponding linear combination of negative basis one-cells ($\beta^{\out}$ basis vectors) for $C_1^{\cell}(\mathfrak{Y},\mathfrak{R}^+)$ is zero in $H_1(\mathfrak{Y},\mathfrak{R}^+)$, i.e. is in the span of the columns of $\widetilde{M}_{\Hc_{\norm}}$. 
     
     Now if some linear combination of $\beta^{\out}$ basis vectors is in the span of the columns of $\widetilde{M}_{\Hc_{\norm}}$ (indexed by $\tilde{\alpha}^c_1$ and $\tilde{\alpha}^c_2$), then actually this linear combination must be in the span of the columns indexed by $\tilde{\alpha}^c_1$ only. Indeed, suppose the coefficients on the $\tilde{\alpha}^c_2$ columns are $c_i$. Passing to the quotient of the span of all $\beta$ basis vectors by the $\beta^c$ basis vectors, we get that the linear combination of the columns of $*_3$ with coefficients $c_i$ is zero. Since $*_3$ is invertible, we conclude that all $c_i$ are zero as desired.

     It follows that the (negatives of) the columns of $*_1$, interpreted as coefficients for linear combinations of the basis vectors $\gamma_i$ of $H_1(F,S^+)$, span $\ker i_*$. Since $\widetilde{M}_{\Hc_{\norm}}$ is injective, these columns are also linearly independent, so they form a basis for $\ker i_*$. We can compute $\wedge^K \ker i_*$ by wedging these columns together; the coefficient of the the result on $\gamma_J$ is the determinant of the submatrix of $*_1$ in which the rows corresponding to $J^c$ have been deleted. The claim (and thus the lemma) follows.
\end{proof}

\begin{lemma}\label{lem:BSDAvsBSD}
    Let $(Y,\Gamma)$ be a sutured cobordism from $(F_0,\Lambda_0)$ to $(F_1,\Lambda_1)$, and use any set of choices $\Xi$ to define $[\BSDA(Y,\Gamma)]^{\Z}_{\comb}$ as in Definition~\ref{def:BSDAZIndependentofChoices}. Viewing $(Y,\Gamma)$ instead as a one-sided cobordism from $(\emptyset,\emptyset)$ to $(-F_0\sqcup F_1,\Lambda_0\sqcup \Lambda_1)$, the same construction yields a map $[\BSDD(Y,\Gamma)]^{\Z}_{\comb}$ (defined up to sign), which we can view as an up-to-sign element of
    \[ 
    \mathrm{Hom}_{\Z}(\Z,\wedge^* H_1(-F_0\sqcup F_1,S^+_0\sqcup S^+_1)) \cong \wedge^* H_1(F_0,S^+_0) \otimes \wedge^* H_1(F_1, S^+_1).
    \]
    We claim that the map $[\BSDA(Y,\Gamma)]^{\Z}_{\comb}$ agrees up to overall sign with the composition appearing in Theorem~\ref{thm:IntroFirstThm}, wherein we substitute the element $[\BSDD(Y,\Gamma)]^{\Z}_{\comb}$ for $|K_{Y,\Gamma}|$.
\end{lemma}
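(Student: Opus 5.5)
The plan is to compare the two constructions matrix-coefficient by matrix-coefficient, using the Alexander functor of Theorem~\ref{thm:BSDAAlexanderZ} as the common intermediary rather than comparing generators directly.

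\textbf{Reduction to Alexander functions.} By Theorem~\ref{thm:BSDAAlexanderZ}, $[\BSDA(Y,\Gamma)]^{\Z}_{\comb} = \mathsf{A}_{\Z}(Y,\Gamma)$ up to sign. Applying the same theorem to the one-sided cobordism $\emptyset \to -F_0 \sqcup F_1$, we get $[\BSDD(Y,\Gamma)]^{\Z}_{\comb} = \mathsf{A}_{\Z}(Y,\Gamma)_{\text{one-sided}}$. Both are characterized by the same Alexander function $\A^{\Z}_{Y,\Gamma}$ on $\wedge^d H_1(Y,R^+)$. This is because the pair $(Y,R^+)$, and hence $H_1(Y,R^+)$ and $d=-\chi(Y,R^+)$, does not change when we reinterpret the incoming boundary as outgoing. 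The inclusion maps $i_*$ from $H_1(F_0,S^+_0)$ and $H_1(F_1,S^+_1)$ are also unchanged.

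Using $H_1(-F_0\sqcup F_1, S^+_0 \sqcup S^+_1) = H_1(F_0,S^+_0)\oplus H_1(F_1,S^+_1)$, I will order the arcs of $F_0$ before those of $F_1$ in the arc diagram for the one-sided cobordism. The characterization in Proposition~\ref{prop:AlexanderFunctorZDefn} then gives the coefficient of $\gamma^{\inrm}_{I'}\otimes\gamma^{\out}_{J}$ in $[\BSDD]$. Up to a sign of the form $(-1)^{\mathrm{inv}(\ldots)}$, this coefficient is
\[
\A^{\Z}_{Y,\Gamma}\bigl(i_*\gamma^{\inrm}_{I'^c}\wedge i_*\gamma^{\out}_{J^c}\bigr).
\]

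\textbf{Evaluating the composition.} Next I will plug this element into $(\varepsilon\otimes\id)\circ(\id\otimes[\BSDD])$ and apply it to $\gamma^{\inrm}_I$. The pairing $\varepsilon(\gamma^{\inrm}_I\otimes\gamma^{\inrm}_{I'})$ is nonzero only when $I'=I^c$, and then it equals $(-1)^{\mathrm{inv}(\sigma_{II^c\leftrightarrow\std})}$. So the coefficient of $\gamma^{\out}_J$ in the composition is, up to sign,
\[
\A^{\Z}_{Y,\Gamma}\bigl(i_*\gamma^{\inrm}_I\wedge i_*\gamma^{\out}_{J^c}\bigr).
\]
By the uniqueness part of Proposition~\ref{prop:AlexanderFunctorZDefn}, this is the coefficient $a_{I,J}$ of $\mathsf{A}_{\Z}(Y,\Gamma)$, again up to sign.

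\textbf{Main obstacle: sign bookkeeping.} The hard step is checking that the accumulated signs are independent of $(I,J)$. The contributions are:
\begin{itemize}
\item the super-sign $(-1)^{|f||y|}$ from $\id\otimes|K|$ and from $\varepsilon\otimes\id$, where $\varepsilon$ has degree $n_0$;
\item the inversion signs $\mathrm{inv}(\sigma_{II^c})$ and $\mathrm{inv}(\sigma_{JJ^c})$ coming from the volume forms;
\item the sign from reordering the wedge $i_*\gamma_{I^c}\wedge i_*\gamma_{J^c}$ versus $i_*\gamma_I\wedge i_*\gamma_{J^c}$;
\item the orientation reversal on $F_0$, which multiplies each $\gamma^{\inrm}$ by $-1$ and so contributes $(-1)^{|I^c|}$.
\end{itemize}
I will reduce every exponent modulo $2$ using $l=k+c$ and the degree of $[\BSDD]$, aiming to show that the remainder is a polynomial in $k$ whose nonconstant part cancels. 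This is the same style of argument as the proof of Proposition~\ref{prop:BSDARespectsGluing}.

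One complication is that $\A^{\Z}$ evaluated on $\gamma_{I^c}$ versus $\gamma_I$ differs in which vectors appear. To handle this, I will write the determinant explicitly with the $\alpha^{\inrm}$ columns, as in equation~\eqref{eq:EAsDeterminantZ}. Complementary occupancy of the incoming arcs then corresponds exactly to the $\varepsilon$ pairing, so the two determinants match up to a column permutation whose parity is $\mathrm{inv}(\sigma_{II^c})$ plus $k$-dependent terms that cancel.

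If the direct computation becomes unwieldy, the fallback is to realize the composition geometrically as a gluing. The map $(\varepsilon\otimes\id)$ is $[\BSDA]$ of the evaluation cobordism built from $\id_{F_0}$, and by Proposition~\ref{prop:BSDARespectsGluing} applied to the composite, which is isomorphic to $(Y,\Gamma)$, the desired identity holds up to overall sign.
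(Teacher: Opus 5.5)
Your proposal is correct, but it follows a different route from the paper. The paper never identifies either side with an Alexander functor in this proof; it uses Theorem~\ref{thm:BSDAAlexanderZ} only implicitly, for choice-independence. Instead, it works generator by generator:
\begin{itemize}
\item It reuses the Heegaard diagram $\Hc$ of $\Xi$ as a one-sided diagram, with choices $\Xi_{\DD}$ in which the arcs of $-\Zc_0$ are oriented oppositely, so that every curve orientation and local intersection sign is unchanged. These arcs are ordered before those of $\Zc_1$.
\item A crossing-strands comparison then gives $\mathrm{gr}_{\DD}(\x)=\mathrm{gr}_{\DA}(\x)+\mathrm{inv}(\sigma_{I^cI\leftrightarrow\std})$ for the same generator $\x$.
\item After the $\varepsilon$ pairing, the orientation-flip sign $(-1)^{n_0-k}$ and the super sign $(-1)^{n_0 l}$, the residual exponent $k(n_0-k)+k+n_0l$ is $k$-independent mod $2$.
\end{itemize}
You instead apply Theorem~\ref{thm:BSDAAlexanderZ} to both the two-sided and the one-sided readings of $(Y,\Gamma)$. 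This is legitimate: the theorem is proved earlier for arbitrary sutured cobordisms, including those with empty incoming boundary, and the paper itself uses it that way for Lemma~\ref{lem:IntroMainLemma}. Both readings are governed by the same $\A^{\Z}_{Y,\Gamma}$ with the same $i_*$, so the lemma reduces to comparing two coefficient formulas from Proposition~\ref{prop:AlexanderFunctorZDefn}. Your route is shorter and shows the lemma is a formal ``partial trace'' property of $\mathsf{A}_{\Z}$, with no crossing-strands combinatorics and no compatible choice of $\Xi_{\DD}$ needed. The paper's route instead yields an explicit matching of generators with a grading shift, which is the form one expects to lift to the bimodule level, in parallel with Hom--Lidman--Watson.

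Your sign list needs two corrections.

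First, orientation reversal. Since $H_1(-F_0,S^+_0)=H_1(F_0,S^+_0)$ and $\mathsf{A}_{\Z}$ is basis-free, expanding $[\BSDD(Y,\Gamma)]^{\Z}_{\comb}$ in the original basis $\gamma^{\inrm}$ produces no orientation-reversal sign at all. If you do switch to negated basis vectors, the switch enters twice, once in the coefficient and once in the argument of $\A^{\Z}_{Y,\Gamma}$, and contributes only $(-1)^{n_0}$. Inserting $(-1)^{|I^c|}$ by itself, as your bullet suggests, would leave an uncancelled $k$.

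Second, the ``complication'' you anticipate does not arise. With $I'=I^c$, the coefficient of $\gamma^{\inrm}_{I^c}\otimes\gamma^{\out}_J$ already involves $\A^{\Z}_{Y,\Gamma}(i_*\gamma^{\inrm}_I\wedge i_*\gamma^{\out}_{J^c})$, exactly the value appearing in $a_{I,J}$. No determinant or column manipulation is needed.

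With these corrections, the ratio between the $\gamma^{\out}_J$-coefficient of the composition and $a_{I,J}$ is $(-1)$ raised to
\[
n_0l+\mathrm{inv}(\sigma_{II^c\leftrightarrow\std})+kl+\mathrm{inv}(\sigma_{I^cI\leftrightarrow\std})+c(n_1-l),
\]
up to a constant. Here the $kl$ comes from moving $\gamma^{\inrm}_I$ past $\gamma^{\out}_J$ inside the one-sided volume form. Using $l=k+c$, this reduces mod $2$ to $c(n_0+n_1+1)$ plus a constant, so it is independent of $(I,J)$.

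Your geometric fallback would require $[\BSDA]$ of the evaluation cobordism and a disjoint-union compatibility, neither of which is available at this point in the paper. You do not need it.
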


\begin{proof}
    We compute the composition $(\varepsilon \otimes \id) \circ (\id \otimes [\widehat{BSDD}(Y,\Gamma)]^{\Z}_{\comb})$ directly and compare it with $[\BSDA(Y,\Gamma)]^{\Z}_{\comb}$. 
    
    To do this, we need choices $\Xi_{\DD}$ compatible with viewing $(Y,\Gamma)$ as a one-sided cobordism. We will choose $\Xi_{\DD}$ so that all the orientations of curves in the Heegaard diagram (and thus all local intersection signs) end up the same as when we view $(Y,\Gamma)$ as two-sided and use the choices $\Xi$. To achieve this, we take the arc diagram representing $-F_0 \sqcup F_1$ in the choices $\Xi_{\DD}$ to be $-\Zc_0 \sqcup \Zc_1$, with the matching arcs of $-\Zc_0$ oriented oppositely to the matching arcs of $\Zc_0$ in $\Xi$ and the matching arcs of $\Zc_1$ oriented the same as in $\Xi$. We take the Heegaard diagram $\Hc_{\DD}$ representing the one-sided cobordism $(Y,\Gamma)$ to be the original Heegaard diagram $\Hc$ chosen as part of $\Xi$, reinterpreted as a one-sided Heegaard diagram. Since all $\alpha$ arcs of $\Hc_{\DD}$ are outgoing, they should be oriented the same as the matching arcs of $-\Zc_0 \sqcup \Zc_1$ in the choices $\Xi_{\DD}$, i.e. the $\alpha$ arcs from $\Zc_0$ should be oriented oppositely to the matching arcs of $\Zc_0$ in $\Xi$, and the $\alpha$ arcs of $\Zc_1$ should be oriented the same as the matching arcs of $\Zc_1$ in $\Xi$. Thus the orientations of curves in $\Hc_{\DD}$ coming from $\Xi_{\DD}$ are the same as the orientations of curves in $\Hc$ coming from $\Xi$. We also order the matching arcs of $-\Zc_0 \sqcup \Zc_1$ in $\Xi_{\DD}$ so that the arcs of $-\Zc_0$ come before the arcs of $\Zc_1$. A schematic depicting our reinterpretation of $(Y,\Gamma)$ as a one-sided cobordism is shown below. 
    
    \begin{center}
    \hspace*{-2cm}
    \begin{overpic}[width=.65\textwidth]{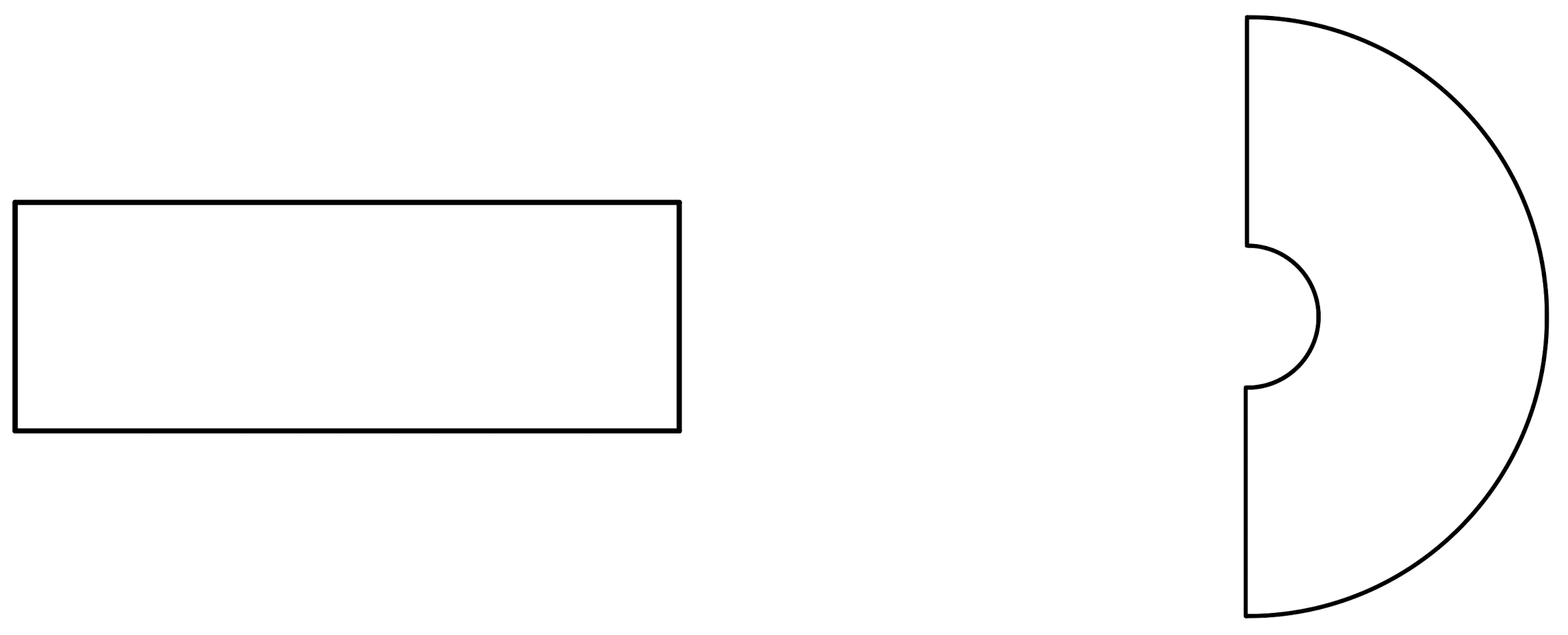}
        \put (-5,19) {$F_1$}
        \put (45.5,19) {$-F_0$}
        \put (17.5,19) {$(Y,\Gamma)$}

        \put (60,19) {\scalebox{1.2}{$\longrightarrow$}}

        \put (70,8) {$-F_0$}
        \put (72.5,31) {$F_1$}
        \put (87,19) {\scalebox{0.9}{$(Y,\Gamma)$}}
        \put (110,8) {$\emptyset$}
        \put (110,31) {$\emptyset$}
    \end{overpic}
    \end{center}
    
    Now, to view $[\BSDD(Y,\Gamma,\Xi_{\DD})]^{\Z}_{\comb}$ as an element of 
    $\wedge^* H_1(F_0,S^+_0) \otimes \wedge^* H_1(F_1,S^+_1)$, we first describe its action as a $\Z$-linear map. It sends the basis element $1 \in \Z$ to
    \[
    \sum_{\x}(-1)^{\mathrm{gr}_{\DD}(\x)} ((-1)^{|\overline{o}_R(\x)|}\gamma^{\inrm}_{\overline{o}_R(\x)})\otimes \gamma^{\out}_{\overline{o}_L(\x)}. 
    \]
    The extra sign $|\overline{o}_R(\x)| = n_0 - k$ arises because the basis elements of $H_1(F_0,S^+_0)$ coming from the orientations on matching arcs of $-\Zc_0$ in the choices $\Xi_{\DD}$ are $-1$ times the basis elements $\gamma^{\inrm}_i$ of $H_1(F_0,S^+_0)$ coming from the orientations on matching arcs of $\Zc_0$ in the choices $\Xi$. We write this extra sign $n_0-k$ as just $k$, since we can ignore any $k$-independent constants. That is, 
    \[
    [\BSDD(Y,\Gamma,\Xi_{\DD})]^{\Z}_{\comb}(1)=\sum_{\x}(-1)^{\mathrm{gr}_{\DD}(\x)+k} \gamma^{\inrm}_{\overline{o}_R(\x)}\otimes \gamma^{\out}_{\overline{o}_L(\x)}.
    \]
    
    Now we would like to compute $\mathrm{gr}_{\DD}(\x)$. Below, the familiar crossing strands diagram used to compute the inversion terms in $\mathrm{gr}_{\DA}(\x)$ is pictured on the left; on the right is the diagram computing $\mathrm{gr}_{\DD}(\x)$.

    \begin{center}
    \begin{overpic}[width=.67\textwidth]{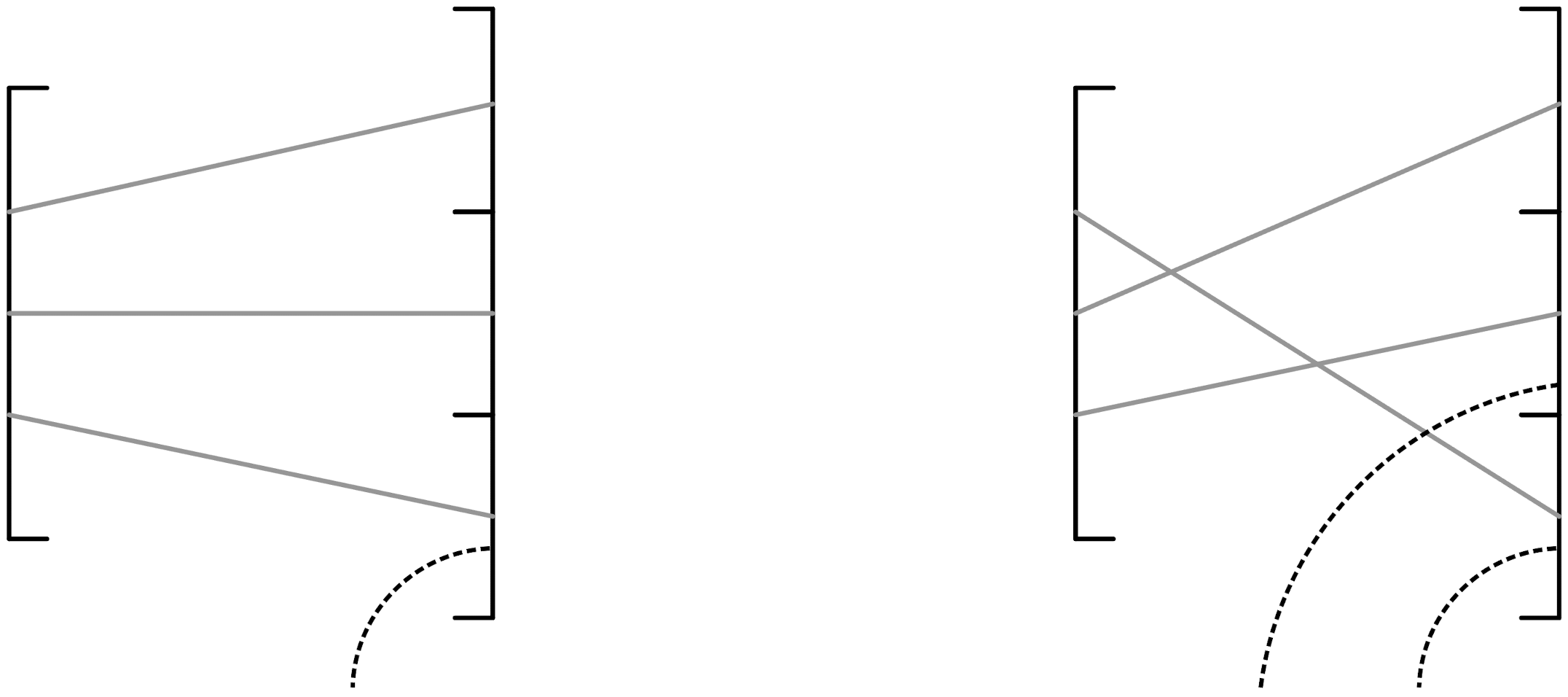}
        \put (-4,24) {$\boldsymbol{\beta}$}
        \put (6,9) {$n_1-l$}
        \put (15,26) {$a$}
        \put (15,36) {$k$}
        \put (33,10) {$\boldsymbol{\alpha}^{a,out}$}
        \put (33,24) {$\boldsymbol{\alpha}^{c}$}
        \put (33,35) {$\boldsymbol{\alpha}^{a,in}$}
        \put (22,-3) {$l$}

        \put (-4,24) {$\boldsymbol{\beta}$}
        \put (72,15) {$n_1-l$}
        \put (71,31) {$k$}
        \put (85,34) {$a$}
        \put (103,10) {$\boldsymbol{\alpha}^{a,in}$}
        \put (103,24) {$\boldsymbol{\alpha}^{a,out}$}
        \put (103,35) {$\boldsymbol{\alpha}^{c}$}
        \put (79.5,-3) {$l$}
        \put (87,-3) {$n_0-k$}
    \end{overpic}
    \end{center}
    In this rightmost diagram, since all arcs $\boldsymbol{\alpha}^{a,out}$ and $\boldsymbol{\alpha}^{a,in}$ have been reinterpreted as outgoing, according to $\Xi_{\DD}$ they all must come before $\boldsymbol{\alpha}^{c}$ in the block-ordering. This produces a total difference in the inversion terms of $\mathrm{gr}_{\DD}(\x)$ and $\mathrm{gr}_{\DA}(\x)$ of
    \[
    ak + (n_1-l)k + lk+\mathrm{inv}(\sigma_{I^c I \leftrightarrow \std})=ak + n_1 k + \mathrm{inv}(\sigma_{I^c I \leftrightarrow \std}).
    \]
    Note that the local intersection contributions of $\mathrm{gr}_{\DD}(\x)$ agree with the corresponding terms of $\mathrm{gr}_{\DA}(\x)$. Additionally, no correction terms appear in $\mathrm{gr}_{\DD}(\x)$, so comparing with 
    equation~\eqref{eq:DAGrading}, we find
    \[
    \mathrm{gr}_{\DD}(\x)=\mathrm{gr}_{\DA}(\x)+\mathrm{inv}(\sigma_{I^c I \leftrightarrow \std}). 
    \]

    Now we compute the composition in Theorem~\ref{thm:IntroFirstThm} with  $|K_{Y,\Gamma}|$ replaced by $[\BSDD(Y,\Gamma)]^{\Z}_{\comb}$; we identify $\wedge^* H_1(-F_0\sqcup F_1,S^+_0\sqcup S^+_1)$ with $\wedge^* H_1(F_0,S^+_0) \otimes \wedge^* H_1(F_1, S^+_1)$ in the usual way. Under these identifications, the first map 
    \[
    \wedge^* H_1(F_0,S^+_0) \xrightarrow{\id \otimes [\widehat{BSDD}(Y,\Gamma)]^{\Z}_{\comb}} \wedge^* H_1(F_0,S^+_0) \otimes \wedge^* H_1(F_1, S^+_1)
    \]
    takes a basis element $\gamma_I^{\inrm}$ of $\wedge^* H_1(F_0,S^+_0)$ to 
    \[
    \sum_{\x}(-1)^{\mathrm{gr}_{\DD}(\x)+k} \gamma_I^{\inrm} \otimes (\gamma_{\overline{o}_R(\x)}^{\inrm}\otimes \gamma_{\overline{o}_L(\x)}^{\out})
    \]
    Next, using the sign rule for morphisms in Definition~\ref{def:AbZgrCategory} and the definition of $\varepsilon$, this is sent under $\varepsilon \otimes \id$ to
    \begin{align*}
        &\sum_{\substack{\x}}(-1)^{\mathrm{gr}_{\DD}(\x)+k+|\varepsilon| l} \varepsilon(\gamma_I^{\inrm} \otimes \gamma_{\overline{o}_R(\x)}^{\inrm}) \otimes \gamma_{\overline{o}_L(\x)}^{\out} \\
        &= \sum_{\substack{\x : \\ \overline{o}_R(\x)=I^c}}(-1)^{\mathrm{gr}_{\DD}(\x)+k+n_0 l} \varepsilon(\gamma_I^{\inrm} \otimes \gamma_{I^c}^{\inrm}) \otimes \gamma_{\overline{o}_L(\x)}^{\out} \\
        &= \sum_{\substack{\x : \\ \overline{o}_R(\x)=I^c}}(-1)^{\mathrm{gr}_{\DD}(\x)+k+n_0 l} (-1)^{\mathrm{inv}(\sigma_{I I^c \leftrightarrow \mathrm{std}})}  \gamma_{\overline{o}_L(\x)}^{\out}.
        \\
        &= \sum_{\substack{\x : \\ \overline{o}_R(\x)=I^c}}(-1)^{\left(\mathrm{gr}_{\DA}(\x)+\mathrm{inv}(\sigma_{I^cI \leftrightarrow \mathrm{std}}) \right)+k+n_0 l+\mathrm{inv}(\sigma_{II^c \leftrightarrow \mathrm{std}})}  \gamma^{\out}_{\overline{o}_L(\x)}. 
    \end{align*}
    However, since $\mathrm{inv}(\sigma_{II^c \leftrightarrow \mathrm{std}})+\mathrm{inv}(\sigma_{I^c I \leftrightarrow \mathrm{std}})=k(n_0-k)$, the above sign simplifies to
    \begin{align*}
        \mathrm{gr}_{\DA}(\x)+k(n_0-k)+k+n_0 l&=\mathrm{gr}_{\DA}(\x)+n_0k-k^2+k+n_0(k+c) \\
        &=\mathrm{gr}_{\DA}(\x)+n_0k+k+k+n_0k+n_0c \mod 2 \\
        &= \mathrm{gr}_{\DA}(\x)+\text{($k$-independent terms)}.
    \end{align*}
    It follows that the composition agrees with $[\BSDA(Y,\Gamma)]^{\Z}_{\comb}$ up to overall sign. 
\end{proof}

Equipped with this result, the proof of Theorem~\ref{thm:IntroFirstThm} follows as an immediate consequence. 

\begin{proof}[Proof of Theorem~\ref{thm:IntroFirstThm}]
    By Lemma~\ref{lem:BSDAvsBSD}, it suffices to show that $[\BSDD(Y,\Gamma)]^{\Z}_{\comb}$ agrees with $|K_{Y,\Gamma}|$ as an up-to-sign element of $\wedge^* H_1(-F_0 \sqcup F_1, S^+_0 \sqcup S^+_1)$, which follows from Lemma~\ref{lem:IntroMainLemma}.
\end{proof}

We now show that the sutured Frohman--Nicas TQFT specializes to the classical Frohman--Nicas TQFT for cobordisms of closed oriented surfaces. Recall from the introduction, given such a cobordism $W \colon \Sigma_0 \to \Sigma_1$, there is an associated sutured cobordism $(Y,\Gamma)=\Sut(W,\gamma)$ formed by choosing an auxiliary acyclic ribbon graph $\gamma$, and this construction admits canonical identifications $H_1(F_i,S^+_i) \cong H_1(\Sigma_i)$. Proposition~\ref{prop:OrdinaryFNFromSuturedFN} from the introduction, which we now prove, says that after these identifications, $\Vc^{\FN}_{\sut}$ recovers the classical Frohman--Nicas TQFT $\Vc^{\FN}$.

\begin{proof}[Proof of Proposition~\ref{prop:OrdinaryFNFromSuturedFN}]
    First note that we have a commutative diagram
    \[
    \xymatrix{
    H_1(F_0 \sqcup F_1, S^+_0 \sqcup S^+_1;\Q) \ar[rr]^{i_*} & & H_1(Y, R^+;\Q) \\
    H_1(F_0 \sqcup F_1; \Q) \ar[rr]^{i_*} \ar[u]^{i_*} \ar[d]_{i_*} & & H_1(Y;\Q) \ar[u]_{i_*} \ar[d]^{i_*} \\
    H_1(\Sigma_0 \sqcup \Sigma_1;\Q) \ar[rr]^{i_*} & & H_1(W;\Q)
    }
    \]
    where the vertical maps are isomorphisms. Indeed, the top-right vertical map is an isomorphism because $H_1(R^+;\Q) = 0$ and $H_0(R^+;\Q) \xrightarrow{i_*} H_0(Y;\Q)$ is an isomorphism (topologically, $R^+$ consists of one 2-disk in each component of $Y$). To see that the bottom-right vertical map is an isomorphism, we may consider each component of $W$ separately and assume without loss of generality that $W$ is connected. First note that in the degenerate cases where $\gamma$ is a single point, the map is an isomorphism. If $\gamma$ is not a single point, consider the Mayer--Vietoris sequence associated to the decomposition of $W$ as $W = Y \cup_{C} \overline{\nb(\gamma)}$, where $C$ is $\partial(\overline{\nb(\gamma)})$ minus a neighborhood of each endpoint of $\gamma$. We have
    \[
    \cdots \to H_1(C) \to H_1(\overline{\nb(\gamma)}) \oplus H_1(Y) \to H_1(W) \to 0,
    \]
    since the map from $H_0(C)$ to $H_0(\overline{\nb(\gamma)}) \oplus H_0(Y)$ is injective. We have $H_1(\overline{\nb(\gamma)}) = 0$, and the map from $H_1(C)$ to $H_1(Y)$ is the zero map. Indeed, while $C$ is topologically a sphere $S^2$ with one puncture for each endpoint of $\gamma$, and thus has a basis for $H_1$ given by loops around all the punctures but one, each of these loops bounds a surface in $Y$ (namely the corresponding component of $F_0$ or $F_1$) and so maps to zero in $H_1(Y)$. It follows that the bottom-right map in the diagram is an isomorphism.

    By definition, $W$ is rationally homologically trivial if and only if the bottom map is surjective, and we see this is true if and only if the top map is surjective. Furthermore, by excision we can identify the top map with
    \[
    H_1(F_1 \cup R^+ \cup F_0, R^+;\Q) \xrightarrow{i_*} H_1(Y,R^+;\Q).
    \]
    We have an exact sequence
    \[
    \cdots \to H_1(F_1 \cup R^+ \cup F_0, R^+;\Q) \xrightarrow{i_*} H_1(Y,R^+;\Q) \to H_1(Y, F_1 \cup R^+ \cup F_0;\Q) \to 0
    \]
    because $H_0(F_1 \cup R^+ \cup F_0, R^+) = 0$ (by assumption, all components of $F_i$ intersect $S^+$ and thus $R^+$ nontrivially). Hence $W$ is rationally homologically trivial if and only if
    \[
    H_1(Y,F_1 \cup R^+ \cup F_0;\Q) = 0,
    \]
    or equivalently if and only if $H_1(Y, F_1 \cup R^+ \cup F_0)$ is finite.

    Now, Poincar{\'e}--Lefschetz duality gives $H_1(Y, F_1 \cup R^+ \cup F_0) \cong H^2(Y, R^-)$. Since $R^-$ is topologically a 2-disk in the boundary of $Y$, we have $H^2(Y,R^-) \cong H^2(Y)$. By the universal coefficient theorem, the free part of $H^2(Y)$ is the free part of $H_2(Y)$, and the torsion part of $H^2(Y)$ is the torsion part of $H_1(Y)$, which we saw above is the torsion part of $H_1(W)$. It follows that when $W$ is rationally homologically trivial, $H_1(Y, F_1 \cup R^+ \cup F_0) \cong \tors(H_1(W))$, so the prefactors in the definitions of $|K_W|$ and $|K_{Y,\Gamma}|$ agree.

    For the integer $K$ in Theorem~\ref{thm:IntroFirstThm}, note that $\rank(H_1(F_i,S^+_i)) = 2g_i$. We also have
    \begin{align*}
        \chi(Y,R^+) &= -\rank(H_1(Y,R^+)) + \rank(H_2(Y,R^+)) \\
        &=-\rank(H_1(Y)) + \rank(H_2(Y)) \\
        &=-\rank(H_1(Y)) + \rank(H^1(Y,\partial Y)) \\
        &= -\rank(H_1(Y)) + \rank(H_1(Y,\partial Y)).
    \end{align*}
    There is an exact sequence
    \[
    0 \to \im(i_* \colon H_1(\partial Y) \to H_1(Y)) \to H_1(Y) \to H_1(Y, \partial Y) \to 0
    \]
    (the last zero is because each component of $Y$ has connected nonempty boundary). The ``half lives, half dies'' theorem implies that $\im(i_* \colon H_1(\partial Y) \to H_1(Y))$ has rank $g_0 + g_1$ (half of $\rank(H_1(\partial Y)) = 2g_0 + 2g_1)$). It follows that
    \[
    \rank(H_1(Y,\partial Y)) = \rank(H_1(Y)) - g_0 - g_1;
    \]
    combining with the above, we get
    \[
    \chi(Y,R^+) = -g_0 - g_1.
    \]
    Thus, the integer $K$ in Theorem~\ref{thm:IntroFirstThm} is $2g_0 + 2g_1 - g_0 - g_1 = g_0 + g_1$, as in the Frohman--Nicas TQFT. Note that the half lives, half dies theorem gives $\rank(\ker i_*) = K$ in this case; the phenomenon $\rank(\ker i_*) < K$ can only occur in the sutured generalization of the Frohman--Nicas TQFT.

    With these numerics in mind, the commutative diagram at the beginning of the proof implies that $\wedge^{g_0 + g_1} \ker i_*$ from the Frohman--Nicas TQFT agrees with $\wedge^K \ker i_*$ from Theorem~\ref{thm:IntroFirstThm} under our identifications $H_1(\Sigma_i) \cong H_1(F_i,S^+_i)$. Putting in the prefactors, we see that $|K_W|$ agrees with $|K_{Y,\Gamma}|$ under these identifications. The integer $n_0$ in Theorem~\ref{thm:IntroFirstThm} is equal to $2g_0$ and thus is even; the proposition follows.
\end{proof}

\section{Symmetric monoidal structure}\label{sec:SymmetricMonoidal}

The goal of this section is to prove Theorem~\ref{Thm:VFNSymmetricMonoidal}, that $\Vc^{\FN}_{\sut}$ is a symmetric monoidal functor. The results of Section~\ref{sec:BSDAwithoutSpinc} combined with the relationship to the sutured Alexander functor established that $[\BSDA(Y,\Gamma)]^{\Z}_{\comb}$ is functorial (cf. Corollary~\ref{cor:BSDAZFunctorial}). Having established Theorem~\ref{thm:IntroFirstThm} in the previous section, we obtain the following consequence.

\begin{corollary}\label{V^FN_sutFunctorial}
    Suppose that $\Vc^{\FN}_{\sut}$ assigns to each object $(F,\Lambda)[d]$ of $\Cob^{\sut}_{2+1}$ 
    the graded abelian group $\wedge^* H_1(F,S^+)$ whose summand in degree $d$ is $\wedge^{d+k} H_1(F,S^+)$, 
    and to each morphism $(Y,\Gamma)\colon (F_0,S^+_0)[d_0] \to (F_1,S^+_1)[d_1]$ 
    the up-to-sign map $\wedge^* H_1(F_0,S^+_0) \to \wedge^* H_1(F_1,S^+_1)$ defined by the composition in Theorem~\ref{thm:IntroFirstThm} involving $|K_{Y,\Gamma}|$. Then this assignment is a functor $\Vc^{\FN}_{\sut}$ from $\Cob^{\sut}_{2+1}$ to $\Ab^{\Z\gr}_{\pm 1}$.
    \end{corollary}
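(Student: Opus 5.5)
The plan is to deduce Corollary~\ref{V^FN_sutFunctorial} from Corollary~\ref{cor:BSDAZFunctorial}, transporting functoriality along the identification of Theorem~\ref{thm:IntroFirstThm}.

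First I will check that the assignment is well defined as a morphism in $\Ab^{\Z\gr}_{\pm 1}$. By Theorem~\ref{thm:IntroFirstThm}, for every sutured cobordism $(Y,\Gamma)$ the composition $(\varepsilon\otimes\id)\circ(\id\otimes|K_{Y,\Gamma}|)$ agrees up to sign with $[\BSDA(Y,\Gamma)]^{\Z}_{\comb}$. By Definition~\ref{def:BSDAZIndependentofChoices} and Theorem~\ref{thm:BSDAAlexanderZ}, the latter is independent of the choices $\Xi$ up to sign. It depends only on the isomorphism class of $(Y,\Gamma)$, since it equals $\mathsf{A}_{\Z}(Y,\Gamma)$, and $\mathsf{A}_{\Z}(Y,\Gamma)$ is defined from homology data that are natural under isomorphisms of cobordisms fixing the boundary identifications. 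By Proposition~\ref{prop:DegreeOfBSDA}, the map is homogeneous of degree $c=n_1+\chi(Y,R^+)$ on the unshifted groups. After applying the shifts $[d_0]$ and $[d_1]$, it becomes homogeneous of degree $c+d_0-d_1$. It is therefore a homogeneous morphism modulo $\pm1$, as required.

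Next come the functor axioms. For the identity cobordism, the values of $\Vc^{\FN}_{\sut}$ and $[\BSDA]^{\Z}_{\comb}$ coincide up to sign, so Proposition~\ref{prop:BSDARespectsIdentity} gives $\Vc^{\FN}_{\sut}(\id_{(F,\Lambda)},\Gamma_{\id})=\pm\id$. For a composite $(\tilde Y,\tilde\Gamma)\circ(Y,\Gamma)$, I apply Theorem~\ref{thm:IntroFirstThm} to each of the three cobordisms and then Corollary~\ref{cor:BSDAZFunctorial}, which rests on Proposition~\ref{prop:BSDARespectsGluing} with composable choices. The result is
\[
\Vc^{\FN}_{\sut}(\tilde Y\cup_{F_1}Y)=\pm[\BSDA(\tilde Y)]^{\Z}_{\comb}\circ[\BSDA(Y)]^{\Z}_{\comb}=\pm\Vc^{\FN}_{\sut}(\tilde Y)\circ\Vc^{\FN}_{\sut}(Y).
\]
Since morphisms in the target category are taken modulo $\pm1$, these sign ambiguities are harmless.

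The only point requiring care is bookkeeping. The independence of choices must be used so that composable choices $\Xi,\tilde\Xi$ exist: the same arc diagram $\Zc_1$, with the same ordering and orientations of its matching arcs, on both sides. This is always possible because $\Xi$ is arbitrary. I must also check that the degree shifts cause no trouble. Ungraded maps ignore $d_i$, and degrees add under composition, so compatibility is automatic. I expect no real obstacle beyond this.
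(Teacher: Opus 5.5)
Your proposal is correct and takes the same route as the paper. The paper deduces this corollary immediately from Corollary~\ref{cor:BSDAZFunctorial} (identity via Proposition~\ref{prop:BSDARespectsIdentity}, gluing via Proposition~\ref{prop:BSDARespectsGluing}, and choice-independence via Theorem~\ref{thm:BSDAAlexanderZ}), transported along the identification of Theorem~\ref{thm:IntroFirstThm}; your checks of the degree $c+d_0-d_1$ and of invariance under isomorphism of cobordisms are details the paper leaves implicit.
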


\subsection{The categories $\Cob^{\sut}_{2+1}$ and $\Ab^{\Z\gr}_{\pm 1}$}

However, to prove that $\Vc^{\FN}_{\sut}$ is not only a functor, but a symmetric monoidal functor, we first need to verify that both the source and target categories are symmetric monoidal (see \cite[Definition 8.1.12]{EGNO}). We begin with the source category, $\Cob^{\sut}_{2+1}$, whose symmetric monoidal structure is relatively intuitive. Nonetheless, we verify all the axioms in full detail, hoping to make clear that all the necessary checks can be done directly and do not depend on any external results. 

\begin{theorem}\label{thm:CobSutSymmetricMonoidal}
    As in Definition~\ref{def:SutCobCategory}, define $\Cob^{\sut}_{2+1}$ to be the category with: 
    \begin{itemize}
        \item Objects: $(F,\Lambda)[d]$, where $(F,\Lambda)$ is a sutured surface and $d \in \Z$ is a formal degree shift.\footnote{At this stage, the shift $[d]$ is purely symbolic; it plays no role until we apply the functor $\Vc^{\FN}_{\sut}$, which will interpret $(F,\Lambda)[d]$ by placing the summand $\wedge^k H_1(F,S^+)$ in degree $k-d$.}    
        
        \item Morphisms from $(F_0,\Lambda_0)[d_0]$ to $(F_1,\Lambda_1)[d_1]$: Sutured cobordisms $(Y,\Gamma)$ from $(F_0,\Lambda_0)$ to $(F_1,\Lambda_1)$, defined up to isomorphism of sutured cobordisms. 
        
        \item Composition: Given morphisms $(Y,\Gamma)$ from $(F_0,\Lambda_0)[d_0]$ to $(F_1,\Lambda_1)[d_1]$ and $(Y',\Gamma')$ from $(F_1,\Lambda_1)[d_1]$ to $(F_2,\Lambda_2)[d_2]$, we define the composite morphism to be the sutured cobordism
        \[
        (Y',\Gamma')\circ (Y,\Gamma) \coloneq (Y' \cup_{F_1} Y,\Gamma' \cup_{\Lambda_1} \Gamma),
        \]
        from $(F_0,\Lambda_0)[d_0]$ to $(F_2,\Lambda_2)[d_2]$ given by gluing $(Y,\Gamma)$ and $(Y',\Gamma')$ along the common boundary $F_1$, as in Definition~\ref{def:SuturedCobGluing}.
        \item Identity Morphism on $(F,\Lambda)[d]$: Write $(\id_{(F,\Lambda)[d]},\Gamma_{\id})$ for the identity sutured cobordism 
        \[
        (F \times [0,1], \Lambda \times [0,1])
        \]
        oriented so that $F \times \{0\} = -F$ and $F \times \{1\} = F$.
    \end{itemize}
    Equipped with:
    \begin{itemize}
        \item Monoidal structure: A bifunctor $\amalg: \Cob^{\sut}_{2+1} \times \Cob^{\sut}_{2+1} \to \Cob^{\sut}_{2+1}$ which we will commonly denote using infix notation, writing $(F,\Lambda) \amalg (F',\Lambda')$ for $\amalg((F,\Lambda), (F',\Lambda'))$ and $(Y,\Gamma) \amalg (Y',\Gamma')$ for $\amalg((Y,\Gamma),(Y',\Gamma'))$. It is defined as follows. 
        \begin{itemize}
            \item On objects: We have 
            \[
            (F,\Lambda)[d] \amalg (F',\Lambda')[d']=(F \sqcup F',\Lambda \sqcup \Lambda')[d+d'].
            \]
            \item On morphisms: $(Y,\Gamma)\amalg (Y',\Gamma')=(Y\sqcup Y',\Gamma \sqcup \Gamma')$. 
        \end{itemize}
        \item Monoidal unit: The empty cobordism with no degree shift, $(\emptyset,\emptyset)[0]$. 
        \item Associator and unitor: The evident identity cobordisms.
        \item Braiding: A natural isomorphism whose $((F,\Lambda)[d],(F',\Lambda')[d'])$-component
        \[
        \gamma_{(F,\Lambda)[d],(F',\Lambda')[d']}:(F,\Lambda)[d] \amalg (F',\Lambda')[d'] \to (F',\Lambda')[d'] \amalg (F,\Lambda)[d]
        \]
        is the identity cobordism on the disjoint union, $((F \sqcup F')\times [0,1],(\Lambda \sqcup \Lambda')\times [0,1])$, but with the boundary identifications twisted so that:
        \begin{itemize}
            \item the incoming boundary is identified with $- (F \sqcup F') = -F \sqcup -F'$, and
            \item the outgoing boundary is identified with $F' \sqcup F$.
        \end{itemize}
    \end{itemize}
    Then $(\Cob^{\sut}_{2+1},\amalg,\gamma_{(F,\Lambda)[d],(F',\Lambda')[d']})$ forms a symmetric monoidal category. 
\end{theorem}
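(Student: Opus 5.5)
The plan is to verify the axioms of a symmetric monoidal category directly, in the order: (1) $\Cob^{\sut}_{2+1}$ is a category; (2) $\amalg$ is a bifunctor; (3) the associator and unitors are natural isomorphisms satisfying the pentagon and triangle identities; (4) the braiding $\gamma$ is a natural isomorphism satisfying the hexagon identities and $\gamma_{B,A}\circ\gamma_{A,B}=\id$.

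For (1), gluing along $F_1$ as in Definition~\ref{def:SuturedCobGluing} is well defined on isomorphism classes. It is associative up to canonical isomorphism because gluing along disjoint surfaces $F_1$ and $F_2$ can be done in either order. One must also check that the result still satisfies Definition~\ref{def:SuturedCob}. Each component of the glued manifold contains a component of $Y$ or $Y'$, and those meet both $R^\pm$. Since $R^\pm$ of the glued cobordism contain the restrictions of $R^\pm$ of the pieces, the glued $R^\pm$ are nonempty on every component. The sutures match because $\partial\Gamma = \Lambda_1 \sqcup \Lambda_0$ and the requirement $R^\pm\cap F_1 = S_1^\pm$ makes the two sides fit together.

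For the unit laws, the step I expect to be the main obstacle is showing $(F\times[0,1])\cup_F Y \cong Y$ as sutured cobordisms. I would handle it with a collar: choose a collar $F\times[0,1]\hookrightarrow Y$ of $F_1\subset\partial Y$. Near $\partial F$, use a collar of $F$ that is compatible with $\Lambda$, so that $\Lambda\times[0,1]$ is carried to the ends of $\Gamma$ and $S^\pm\times[0,1]$ into $R^\pm$. Absorbing the glued identity cylinder into this collar gives a diffeomorphism relative to the rest of $\partial Y$. Because $\partial F$ meets $\partial Y\setminus F_1$ in a corner, this needs the standard corner-rounding care, which I would state briefly rather than belabor.

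For (2), functoriality of $\amalg$ is immediate: disjoint union commutes with gluing, and the disjoint union of identity cylinders is the identity cylinder. The degree shifts add, so objects compose correctly.

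For (3), the associator and unitors are identity cylinders with the evident identifications $(F\sqcup F')\sqcup F''=F\sqcup(F'\sqcup F'')$ and $\emptyset\sqcup F=F$. Each of these is invertible, with inverse the reverse-identified cylinder, by the unit law above. Naturality reduces to the observation that sliding an arbitrary cobordism through an identity cylinder yields isomorphic cobordisms, again by the collar argument. Every coherence diagram in (3) then compares compositions of identity cylinders whose boundary identifications agree as maps of underlying surfaces. Hence both sides are the same cylinder, and the pentagon and triangle identities hold.

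For (4), naturality of $\gamma$ follows the same way: $\gamma\circ(Y\amalg Y')$ and $(Y'\amalg Y)\circ\gamma$ are both isomorphic to $Y\sqcup Y'$ with the twisted boundary identification. For the hexagon identities, both sides are cylinders on $F\sqcup F'\sqcup F''$ whose outgoing identification is the same permutation of components. In the symmetry axiom, $\gamma_{B,A}\circ\gamma_{A,B}$ is a cylinder with the untwisted identification, which is the identity. The degree shifts are carried along formally: $d+d'=d'+d$, and the additive identities make all shifts consistent.
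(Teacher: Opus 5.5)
Your proposal is correct and takes essentially the same route as the paper. Both check each axiom directly, reducing every naturality and coherence diagram to the observation that the two composites are isomorphic sutured cobordisms (cylinders whose boundary identifications agree); the paper phrases this through a graphical calculus. Your explicit collar argument for the unit laws, and your check that gluing preserves the requirement that every component meets both $R^+$ and $R^-$, fill in points the paper leaves to ``the general framework of cobordism categories.''
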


\begin{proof}
    The fact that $\Cob^{\sut}_{2+1}$ is a category follows from the general framework of cobordism categories. The only additional feature here is the presence of the sutures and distinguished subsets $R^\pm$, but Definition~\ref{def:SuturedCobGluing} shows that gluing preserves this structure. The usual arguments show that composition is associative up to isomorphism of sutured cobordisms. Thus $\Cob^{\sut}_{2+1}$ forms a category.

    We will appeal to a simple graphical calculus in order to carry out computations in $\Cob^{\sut}_{2+1}$. It is required to follow the below rules: 
    \begin{itemize}
        \item A sutured surface is represented by a vertical interval. When depicting disjoint unions $F \sqcup F'$, draw $F'$ above $F$.
        \item To depict a cobordism $(Y,\Gamma)$ from $F$ to $F'$, draw $-F$ on the right and $F'$ on the left (matching our incoming/outgoing conventions) and draw a strip connecting $F$ to $F'$, labeled $(Y,\Gamma)$. Unlabeled cobordisms are assumed to be identity cobordisms. 
        \item Composition is given by horizontal gluing along the parallel intervals. 
    \end{itemize}

    We first verify that $\amalg$ is a bifunctor. Fix morphisms 
    \begin{align*}
        (Y_1,\Gamma_1): (F_0,\Lambda_0)[d_0]&\to(F_1,\Lambda_1)[d_1] \\
        (Y_2,\Gamma_2): (F_1,\Lambda_1)[d_1]&\to(F_2,\Lambda_2)[d_2] \\
        (Y_1',\Gamma_1'): (F_0',\Lambda_0')[d'_0]&\to(F_1',\Lambda_1')[d'_1] \\
        (Y_2',\Gamma_2'): (F_1',\Lambda_1')[d'_1]&\to(F_2',\Lambda_2')[d'_2].
    \end{align*}
    The interchange law holds, since disjoint union and gluing commute: 
    \begin{align*}
        &\big((Y_2,\Gamma_2)\amalg(Y_2',\Gamma_2')\big)\circ\big((Y_1,\Gamma_1)\amalg(Y_1',\Gamma_1')\big) \\ 
        &= (Y_2\sqcup Y_2', \Gamma_2\sqcup\Gamma_2') \cup_{F_1\sqcup F_1'} (Y_1\sqcup Y_1', \Gamma_1\sqcup\Gamma_1') \\
        &= \big((Y_2\cup_{F_1}Y_1)\sqcup(Y_2'\cup_{F_1'}Y_1'), (\Gamma_2\cup_{\Lambda_1}\Gamma_1)\sqcup(\Gamma_2'\cup_{\Lambda_1'}\Gamma_1')\big) \\ 
        &= \big((Y_2,\Gamma_2)\circ(Y_1,\Gamma_1)\big) \amalg \big((Y_2',\Gamma_2')\circ(Y_1',\Gamma_1')\big).
    \end{align*}
    Diagramatically, before gluing, both sides of the interchange law look like
    \begin{center}
    \vspace{0.1in}
    \begin{overpic}[width=.55\textwidth]{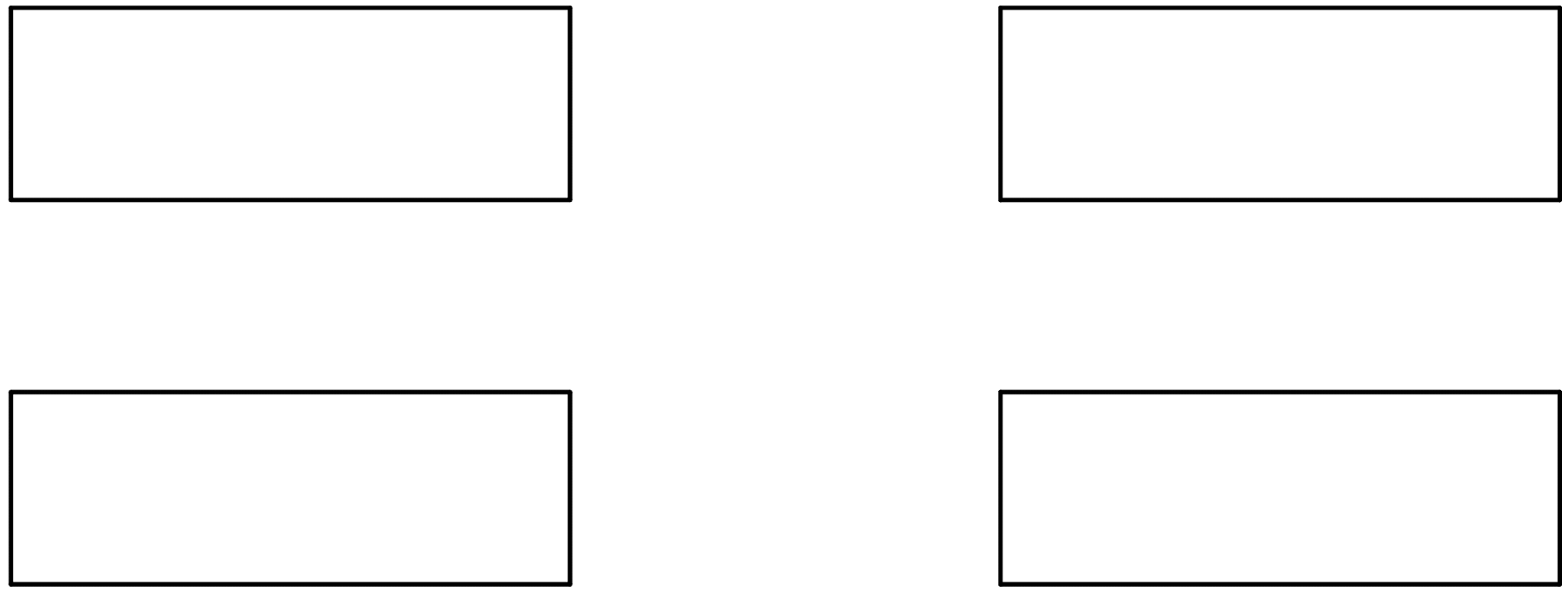}
        \put (-6,5) {\scalebox{0.9}{$F_2$}}
        \put (-6,29.5) {\scalebox{0.9}{$F_2'$}}
        \put (12,5) {\scalebox{0.9}{$(Y_2,\Gamma_2)$}}
        \put (12,29.5) {\scalebox{0.9}{$(Y_2',\Gamma_2')$}}
        \put (38,5) {\scalebox{0.9}{$-F_1$}}
        \put (38,29.5) {\scalebox{0.9}{$-F_1'$}}
        \put (57,5) {\scalebox{0.9}{$F_1$}}
        \put (57,29.5) {\scalebox{0.9}{$F_1'$}}
        \put (75,5) {\scalebox{0.9}{$(Y_1,\Gamma_1)$}}
        \put (75,29.5) {\scalebox{0.9}{$(Y_1',\Gamma_1')$}}
        \put (101,5) {\scalebox{0.9}{$-F_0$}}
        \put (101,29.5) {\scalebox{0.9}{$-F_0'$}}
    \end{overpic}
    \end{center}
    The diagram representing the left-hand side of the interchange law is obtained from this one by gluing horizontally along the top and bottom intervals simultaneously, while the diagram representing the right-hand side is obtained by gluing horizontally along the top and bottom intervals separately; the two diagrams represent isomorphic sutured cobordisms. 
    
    We also see that $\amalg$ respects identity morphisms, since 
    \begin{align*}
        \id_{(F,\Lambda)[d]} \amalg \id_{(F',\Lambda')[d']}
        &= \left((F\times[0,1]) \sqcup (F'\times[0,1]),\,(\Lambda\times[0,1]) \sqcup (\Lambda'\times[0,1])\right) \\
        &\cong \left((F\sqcup F')\times[0,1],\,(\Lambda\sqcup\Lambda')\times[0,1]\right) \\
        &= \id_{(F\sqcup F',\,\Lambda\sqcup\Lambda')[\,d+d']} \\
        &= \id_{(F,\Lambda)[d]\;\amalg\;(F',\Lambda')[d']}.
    \end{align*}
    where $\cong$ denotes isomorphic sutured cobordisms. We omit the trivial graphical diagram. 
    
    The associator in $\Cob^{\sut}_{2+1}$ has component 
    \begin{align*}
        \alpha_{(F,\Lambda)[d],\,(F',\Lambda')[d'],\,(F'',\Lambda'')[d'']} &\colon ((F,\Lambda)[d] \amalg (F',\Lambda')[d']) \amalg (F'',\Lambda'')[d'']  \\
        &\quad \to (F,\Lambda)[d] \amalg ((F',\Lambda')[d'] \amalg (F'',\Lambda'')[d''])
    \end{align*}
    defined to be the identity cobordism on the triple (symmetric) disjoint union
    \[
    (F\sqcup F'\sqcup F'',\Lambda \sqcup \Lambda' \sqcup \Lambda'')
    \]
    with its domain and codomain (nested disjoint unions, parenthesized differently) identified with the symmetric disjoint union in the natural way. Our graphical diagram for the associator is
    \begin{center}
    \vspace{0.1in}
    \begin{overpic}[width=.2\textwidth]{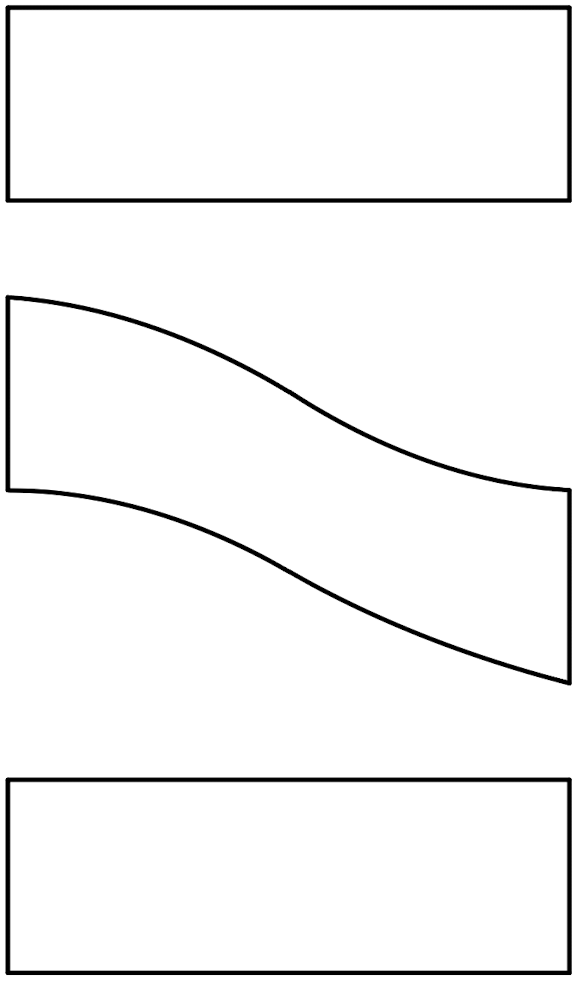}
        \put (-10,8) {\scalebox{0.9}{$F$}}
        \put (-10,58) {\scalebox{0.9}{$F'$}}
        \put (-10,88) {\scalebox{0.9}{$F''$}}
        \put (64,8) {\scalebox{0.9}{$-F$}}
        \put (64,38) {\scalebox{0.9}{$-F'$}}
        \put (64,88) {\scalebox{0.9}{$-F''$}}
    \end{overpic}
    \end{center}
    From this it is clear that the coherence pentagon \cite[Equation 2.2]{EGNO} commutes: after gluing, both directions give the $4$-fold identity cobordism, differing only by how the domain and codomain are parenthesized. 
    
    The left and right unitors in $\Cob^{\sut}_{2+1}$ are the natural isomorphisms with components 
    \begin{align*}
        &\lambda_{(F, \Lambda)[d]}:(\emptyset, \emptyset)[0] \amalg(F, \Lambda)[d] \cong(F, \Lambda)[d], \\ & \rho_{(F, \Lambda)[d]}:(F, \Lambda)[d] \amalg(\emptyset, \emptyset)[0] \cong(F, \Lambda)[d] 
    \end{align*}
    defined as the usual identity cobordisms. Our graphical diagrams for the unitors are 
    \begin{center}
    \begin{overpic}[width=.55\textwidth]{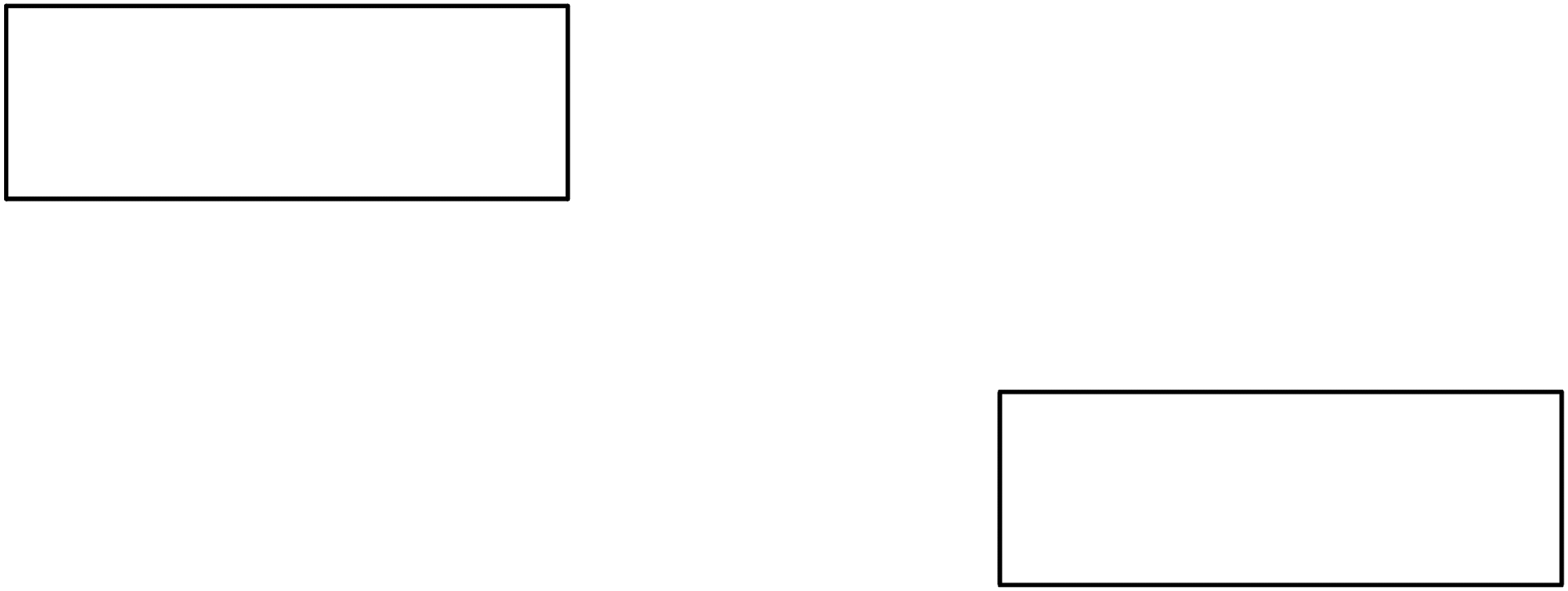}
        \put (-6,30) {\scalebox{0.9}{$F$}}
        \put (40,30) {\scalebox{0.9}{$-F$}}
        \put (41,5) {\scalebox{0.9}{$\emptyset$}}
        \put (58,5) {\scalebox{0.9}{$F$}}
        \put (102,5) {\scalebox{0.9}{$-F$}}
        \put (105,30) {\scalebox{0.9}{$\emptyset$}}
        \put (15,-5) {\scalebox{0.9}{$\lambda_{(F, \Lambda)[d]}$}}
        \put (77,-5) {\scalebox{0.9}{$\rho_{(F, \Lambda)[d]}$}}
    \end{overpic}
    \vspace{0.1in}
    \end{center}
    From this it is clear that the monoidal coherence triangle \cite[Equation 2.10]{EGNO} for the unitors commutes: after gluing, both directions give the $2$-fold identity cobordism, differing only by the placement of the empty component in the disjoint union. 

    Our graphical diagram for the component of the braiding on $((F,\Lambda)[d],(F',\Lambda')[d]')$ is
    \begin{center}
    \vspace{0.1in}
    \begin{overpic}[width=.2\textwidth]{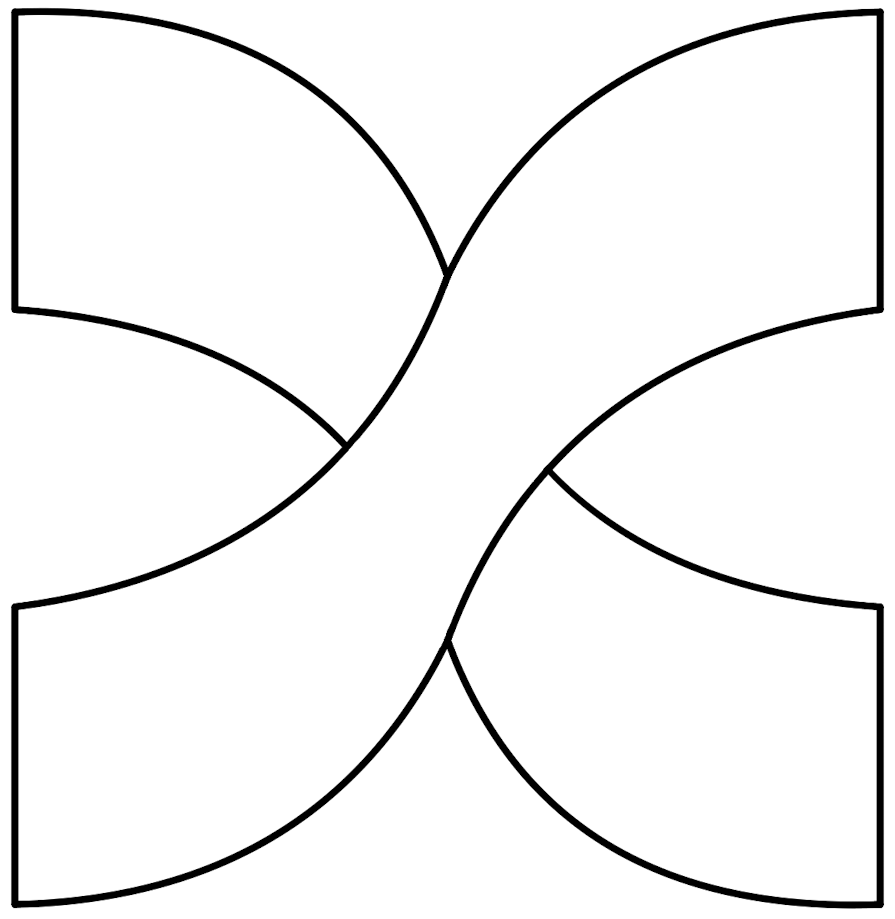}
        \put (-15,14) {\scalebox{0.9}{$F'$}}
        \put (-15,80) {\scalebox{0.9}{$F$}}
        \put (104,14) {\scalebox{0.9}{$-F$}}
        \put (104,80) {\scalebox{0.9}{$-F'$}}
    \end{overpic}
    \end{center}
    
    We now check all the standard properties for the braiding, starting with its naturality in $(F,\Lambda)[d],$ and $(F',\Lambda')[d']$. We expand the value of $\amalg$ on surfaces and cobordisms here for concreteness. 
    \[
    \xymatrix@C=6em{
      (F_0 \sqcup F_0', \Lambda_0 \sqcup \Lambda_0')[d_0+d_0'] \ar[r]^{\gamma_{(F_0,\Lambda_0),(F_0',\Lambda_0')}} \ar[d]_{(Y,\Gamma)\sqcup(Y',\Gamma')} &
      (F_0' \sqcup F_0, \Lambda_0' \sqcup \Lambda_0)[d_0+d_0'] \ar[d]^{(Y',\Gamma')\sqcup(Y,\Gamma)} \\
      (F_1 \sqcup F_1', \Lambda_1 \sqcup \Lambda_1')[d_1+d_1'] \ar[r]^{\gamma_{(F_1,\Lambda_1),(F_1',\Lambda_1')}} &
      (F_1' \sqcup F_1, \Lambda_1' \sqcup \Lambda_1)[d_1+d_1']
    }
    \]
    The graphical diagram establishing naturality is shown below. The top figure corresponds to the clockwise composition; the bottom figure shows the counterclockwise composition. 
    
    \begin{center}
    \begin{overpic}[width=0.55\linewidth]{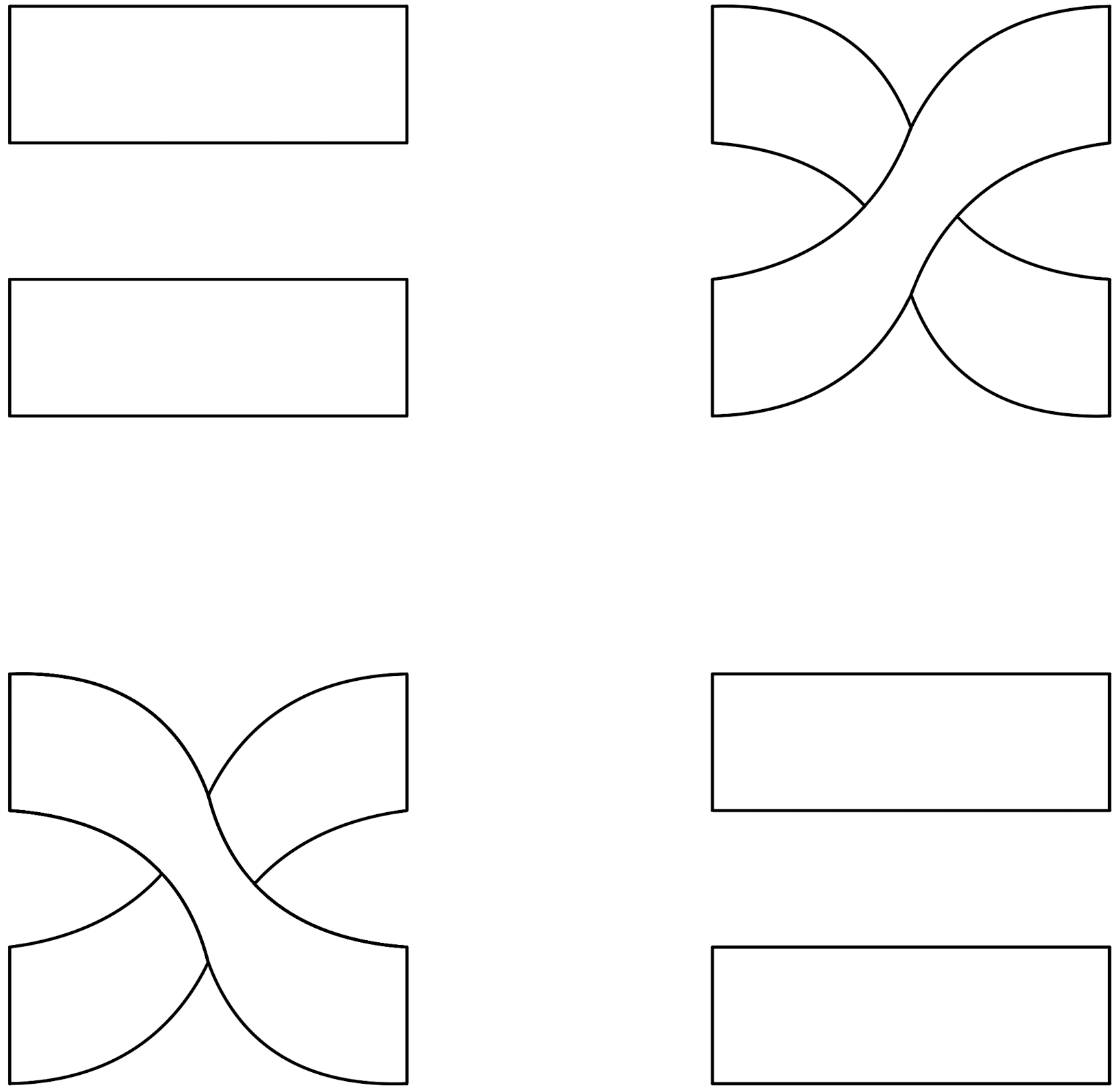}
        \put (-7,5) {\scalebox{0.9}{$F_1'$}} 
        \put (-7,29) {\scalebox{0.9}{$F_1$}} 
        \put (-7,65) {\scalebox{0.9}{$F_1'$}} 
        \put (-7,90) {\scalebox{0.9}{$F_1$}} 
        \put (12.5,65) {\scalebox{0.9}{$(Y',\Gamma')$}} 
        \put (13.5,90) {\scalebox{0.9}{$(Y,\Gamma)$}} 
        \put (38,5) {\scalebox{0.9}{$-F_1$}}
        \put (38,30) {\scalebox{0.9}{$-F_1'$}}
        \put (38,65) {\scalebox{0.9}{$-F_0'$}}
        \put (38,90) {\scalebox{0.9}{$-F_0$}}
        \put (58,5) {\scalebox{0.9}{$F_1$}}
        \put (58,30) {\scalebox{0.9}{$F_1'$}}
        \put (58,65) {\scalebox{0.9}{$F_0'$}}
        \put (58,90) {\scalebox{0.9}{$F_0$}}
        \put (75.6,5.5) {\scalebox{0.9}{$(Y,\Gamma)$}} 
        \put (76.7,30.3) {\scalebox{0.9}{$(Y',\Gamma')$}} 
        \put (100,5) {\scalebox{0.9}{$-F_0$}}
        \put (100,30) {\scalebox{0.9}{$-F_0'$}}
        \put (100,65) {\scalebox{0.9}{$-F_0$}}
        \put (100,90) {\scalebox{0.9}{$-F_0'$}}
        \put (23,-10) {\scalebox{0.9}{$\gamma_{(F_1,\Lambda_1),(F_1',\Lambda_1')} \circ ((Y,\Gamma)\sqcup(Y',\Gamma'))$}}
        \put (23,50) {\scalebox{0.9}{$((Y',\Gamma')\sqcup(Y,\Gamma)) \circ \gamma_{(F_0,\Lambda_0),(F_0',\Lambda_0')}$}}
    \end{overpic}
    \vspace{1em}
    \end{center}
    
    After gluing, the two diagrams represent isomorphic sutured cobordisms, which are equal in $\Cob^{\sut}_{2+1}$. Thus, the braiding is natural. 
    
    The hexagon coherence diagram for the braiding (\cite[diagram (8.1)]{EGNO}) is given by 
    \begin{equation*}
        \scalebox{0.9}{
    \xymatrix@C=6em@R=4em{
    ((F,\Lambda)[d] \amalg (F',\Lambda')[d']) \amalg (F'',\Lambda'')[d''] 
      \ar[r]^{\raise2ex\hbox{$\scriptstyle \gamma_{(F,\Lambda)[d],\,(F',\Lambda')[d']} \amalg \; \id_{(F'',\Lambda'')[d'']}$}} 
      \ar[d]_{\scriptstyle \alpha_{(F,\Lambda)[d],\,(F',\Lambda')[d'],\,(F'',\Lambda'')[d'']}} 
    & ((F',\Lambda')[d'] \amalg (F,\Lambda)[d]) \amalg (F'',\Lambda'')[d''] 
      \ar[d]^{\scriptstyle \alpha_{(F',\Lambda')[d'],\,(F,\Lambda)[d],\,(F'',\Lambda'')[d'']}} \\
    (F,\Lambda)[d] \amalg ((F',\Lambda')[d'] \amalg (F'',\Lambda'')[d'']) 
      \ar[d]_{\scriptstyle \gamma_{(F,\Lambda)[d],\,(F',\Lambda')[d'] \amalg (F'',\Lambda'')[d'']}} 
    & (F',\Lambda')[d'] \amalg ((F,\Lambda)[d] \amalg (F'',\Lambda'')[d'']) 
      \ar[d]^{\scriptstyle \id_{(F',\Lambda')[d']} \amalg \; \gamma_{(F,\Lambda)[d],\,(F'',\Lambda'')[d'']}} \\
    ((F',\Lambda')[d'] \amalg (F'',\Lambda'')[d'']) \amalg (F,\Lambda)[d] 
      \ar[r]_{\lower2ex\hbox{$\scriptstyle \alpha_{(F',\Lambda')[d'],\,(F'',\Lambda'')[d''],\,(F,\Lambda)[d]}$}} 
    & (F',\Lambda')[d'] \amalg ((F'',\Lambda'')[d''] \amalg (F,\Lambda)[d])
    }
    }
    \end{equation*}
    We show that this commutes graphically below, with the counterclockwise path on top and the clockwise path on bottom.

    \begin{center}
    \begin{overpic}[width=.8\textwidth]{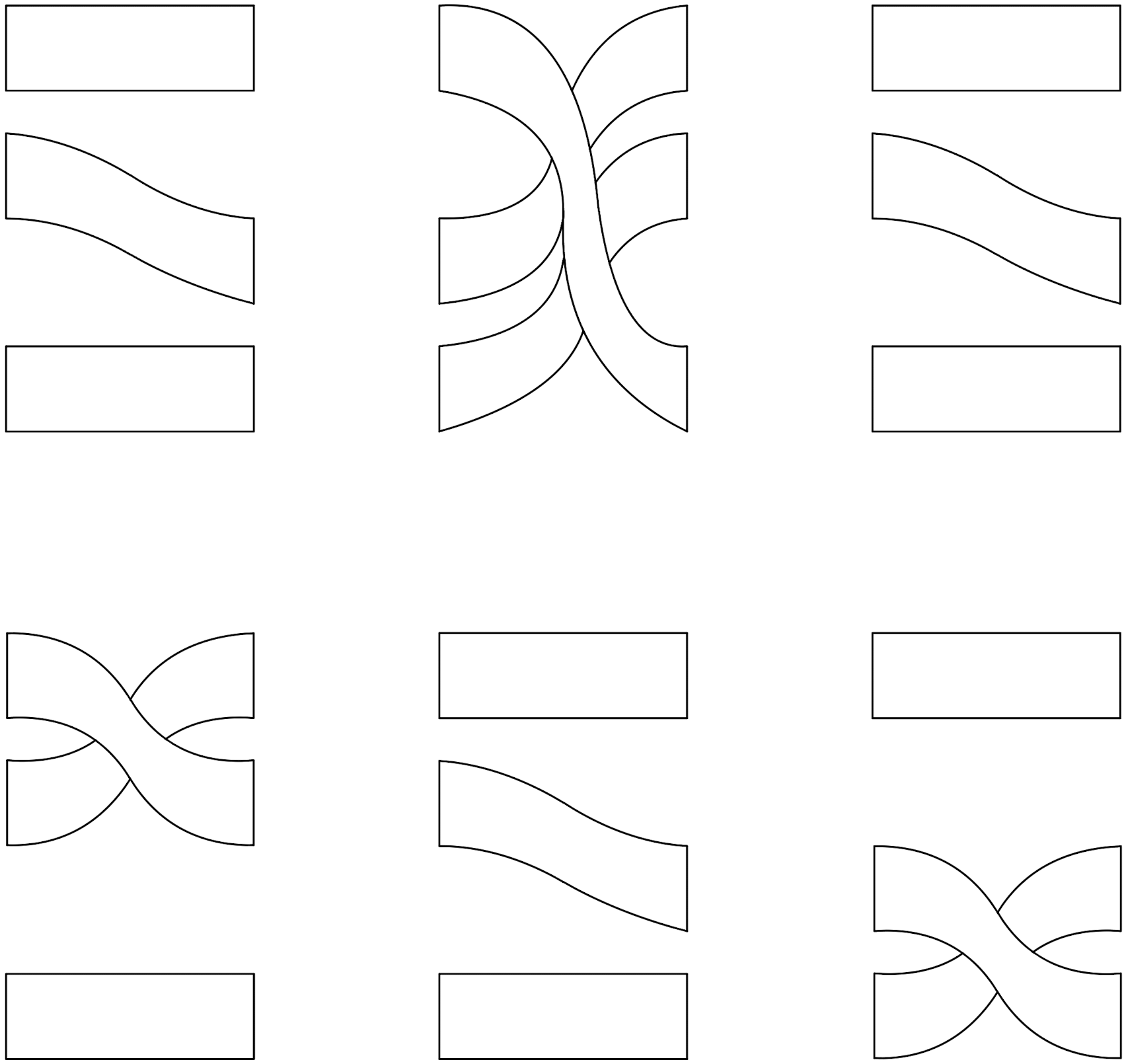}
        \put (-4,3) {\scalebox{0.9}{$F'$}}
        \put (-4,22) {\scalebox{0.9}{$F''$}}
        \put (-4,33) {\scalebox{0.9}{$F$}}
        \put (-4,59) {\scalebox{0.9}{$F'$}}
        \put (-4,77) {\scalebox{0.9}{$F''$}}
        \put (-4,89) {\scalebox{0.9}{$F$}}
        \put (23,3) {\scalebox{0.9}{$-F'$}}
        \put (23,22) {\scalebox{0.9}{$-F$}}
        \put (23,33) {\scalebox{0.9}{$-F''$}}
        \put (23,59) {\scalebox{0.9}{$-F'$}}
        \put (23,70) {\scalebox{0.9}{$-F''$}}
        \put (23,89) {\scalebox{0.9}{$-F$}}
        \put (35,3) {\scalebox{0.9}{$F'$}}
        \put (35,22) {\scalebox{0.9}{$F$}}
        \put (35,33) {\scalebox{0.9}{$F''$}}
        \put (35,59) {\scalebox{0.9}{$F'$}}
        \put (35,70) {\scalebox{0.9}{$F''$}}
        \put (35,89) {\scalebox{0.9}{$F$}}
        \put (62,3) {\scalebox{0.9}{$-F'$}}
        \put (62,15) {\scalebox{0.9}{$-F$}}
        \put (62,33) {\scalebox{0.9}{$-F''$}}
        \put (62,59) {\scalebox{0.9}{$-F$}}
        \put (62,77) {\scalebox{0.9}{$-F'$}}
        \put (62,89) {\scalebox{0.9}{$-F''$}}
        \put (73.5,3) {\scalebox{0.9}{$F'$}}
        \put (73.5,15) {\scalebox{0.9}{$F$}}
        \put (73.5,33) {\scalebox{0.9}{$F''$}}
        \put (73.5,59) {\scalebox{0.9}{$F$}}
        \put (73.5,77) {\scalebox{0.9}{$F'$}}
        \put (73.5,89) {\scalebox{0.9}{$F''$}}
        \put (101,3) {\scalebox{0.9}{$-F$}}
        \put (101,15) {\scalebox{0.9}{$-F'$}}
        \put (101,33) {\scalebox{0.9}{$-F''$}}
        \put (101,59) {\scalebox{0.9}{$-F$}}
        \put (101,70) {\scalebox{0.9}{$-F'$}}
        \put (101,89) {\scalebox{0.9}{$-F''$}}
        \put (4,50) {\scalebox{0.85}{$
        \alpha_{(F',\Lambda')[d'],\,(F'',\Lambda'')[d''],\,(F,\Lambda)[d]} 
        \circ 
        \gamma_{(F,\Lambda)[d],\,(F',\Lambda')[d'] \amalg (F'',\Lambda'')[d'']}
        \circ 
        \alpha_{(F,\Lambda')[d],\,(F',\Lambda')[d'],\,(F'',\Lambda'')[d'']}
        $}}
        \put (-1.5,-6) {\scalebox{0.85}{$
        (\id_{(F',\Lambda')[d']} 
        \amalg 
        \gamma_{(F,\Lambda)[d],\,(F'',\Lambda'')[d'']}) 
        \circ 
        \alpha_{(F',\Lambda')[d'],\,(F,\Lambda)[d],\,(F'',\Lambda'')[d'']} 
        \circ 
        (\gamma_{(F,\Lambda)[d],\,(F',\Lambda')[d']} 
        \amalg 
        \id_{(F'',\Lambda'')[d'']})
        $}}
    \end{overpic}
    \end{center}
    \vspace{1em}
    
    Once again, the top and bottom are give isomorphic sutured cobordisms after gluing. Since our braiding is symmetric (verified below), commutativity for the other hexagon coherence diagram for the braiding (\cite[diagram (8.2)]{EGNO}) follows automatically.
    
    For the braiding--unitor coherence triangle given by
    \[
    \xymatrix@C=7em{
      (F,\Lambda)[d] \amalg (\emptyset,\emptyset)[0]
        \ar[r]^{\gamma_{(F,\Lambda)[d],(\emptyset,\emptyset)[0]}}
        \ar[dr]_{\rho_{(F,\Lambda)[d]}} &
      (\emptyset,\emptyset)[0] \amalg (F,\Lambda)[d]
        \ar[d]^{\lambda_{(F,\Lambda)[d]}} \\
      & (F,\Lambda)[d]
    }
    \]
    we argue commutativity graphically: the top and bottom representations below represent isomorphic sutured cobordisms after gluing. 

    \vspace{1em}
    \begin{center}
    \begin{overpic}[width=.58\textwidth]{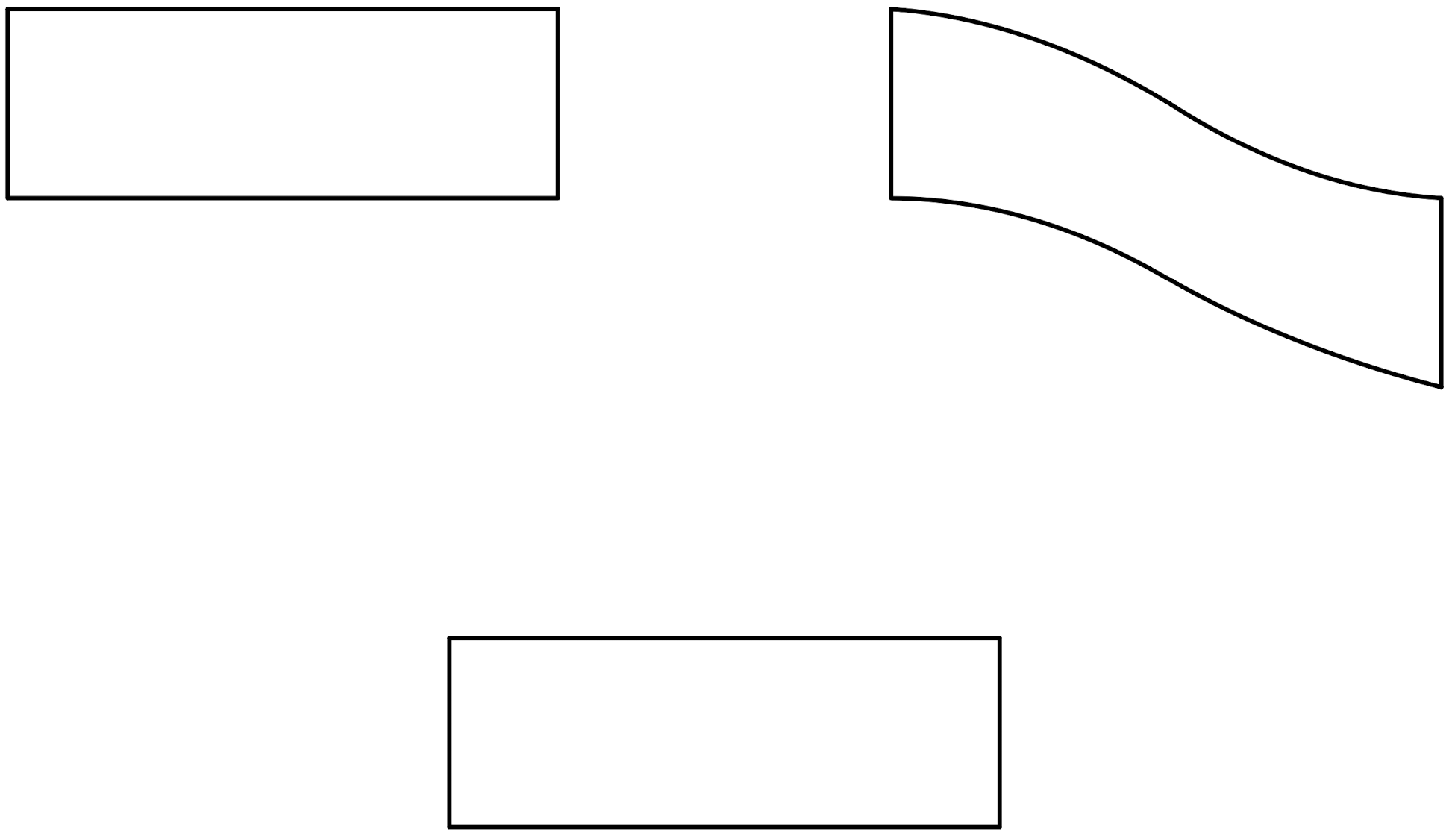}
        \put (-5,49) {\scalebox{0.9}{$F$}}
        \put (40,49) {\scalebox{0.9}{$-F$}}
        \put (42,35) {\scalebox{0.9}{$\emptyset$}}
        \put (56,49) {\scalebox{0.9}{$F$}}
        \put (56,35) {\scalebox{0.9}{$\emptyset$}}
        \put (104,50) {\scalebox{0.9}{$\emptyset$}}
        \put (102,35) {\scalebox{0.9}{$-F$}}
        \put (26,6) {\scalebox{0.9}{$F$}}
        \put (71,6) {\scalebox{0.9}{$-F$}}
        \put (73,19) {\scalebox{0.9}{$\emptyset$}}
        \put (31,27.5) {\scalebox{0.9}{$\lambda_{(F,\Lambda)[d]} \circ \gamma_{(F,\Lambda)[d],(\emptyset,\emptyset)[0]}$}}
        \put (45,-5) {\scalebox{0.9}{$\rho_{(F,\Lambda)[d]}$}}
    \end{overpic}
    \end{center}
    \vspace{1em}
    
    Lastly, to show the braiding is symmetric, we can see that the triangle
    \[
    \xymatrix@C=7em{
      (F,\Lambda)[d] \amalg (F',\Lambda')[d'] 
        \ar[rr]^{\gamma_{(F,\Lambda)[d], (F',\Lambda')[d']}} 
        \ar[rrd]_{\id_{(F\sqcup F',\Lambda \sqcup \Lambda')[d+d']}} 
      && 
      (F',\Lambda')[d'] \amalg (F,\Lambda)[d] 
        \ar[d]^{\gamma_{(F',\Lambda')[d'], (F,\Lambda)[d]}} \\
      && 
      (F,\Lambda)[d] \amalg (F',\Lambda')[d'] 
    }
    \]
    commutes by observing that the below diagram is equivalent to the one for the identity cobordism on $(F,\Lambda)[d] \amalg (F',\Lambda')[d']$. 
    
    \vspace{1em}
    \begin{center}
    \begin{overpic}[width=.55\textwidth]{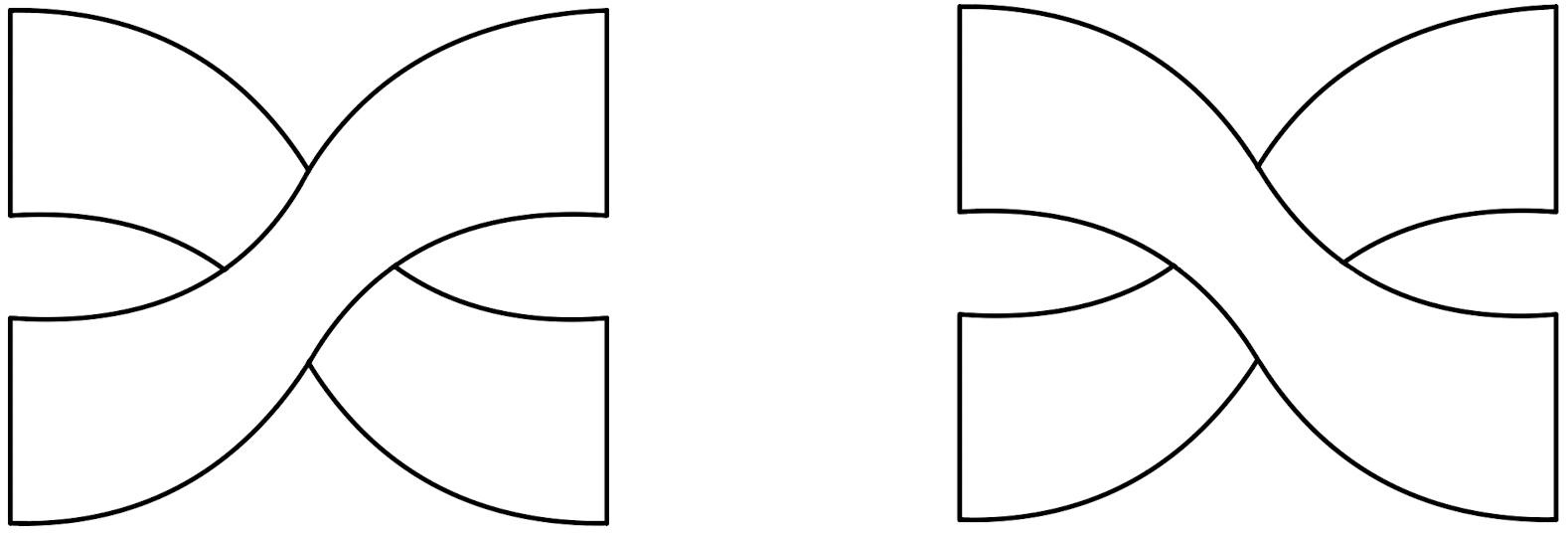}
        \put (-5,6) {\scalebox{0.9}{$F$}}
        \put (-5,26) {\scalebox{0.9}{$F'$}}
        \put (41,6) {\scalebox{0.9}{$-F'$}}
        \put (41,26) {\scalebox{0.9}{$-F$}}
        \put (55,6) {\scalebox{0.9}{$F'$}}
        \put (55,26) {\scalebox{0.9}{$F$}}
        \put (102,6) {\scalebox{0.9}{$-F$}}
        \put (102,26) {\scalebox{0.9}{$-F'$}}
        \put (7,-7) {\scalebox{0.9}{$\gamma_{(F',\Lambda')[d'], (F,\Lambda)[d]}$}}
        \put (67,-7) {\scalebox{0.9}{$\gamma_{(F,\Lambda)[d], (F',\Lambda')[d']}$}}
    \end{overpic}
    \end{center}
    \vspace{1em}
\end{proof}

Having established the topological framework, we now turn to the algebraic side of the theory. We will continue to carry out all the necessary verifications in full detail. 

\begin{theorem}\label{thm:AbZSymmetricMonoidal}
    As in Definition~\ref{def:AbZgrCategory}, define $\Ab^{\Z\gr}_{\pm 1}$ to be the category with: 
    \begin{itemize}
        \item Objects: $\Z$-graded abelian groups $V_* = \bigoplus_{k \in \Z} V_k$
        \item Morphisms: Homogeneous $\Z$-linear maps from $f \colon V_* \to W_*$ of arbitrary degree, modulo multiplication by $\pm 1$, with the evident composition and identity morphism. 
    \end{itemize}
    Equipped with:
    \begin{itemize}
        \item Monoidal structure: A bifunctor $\boldotimes: \Ab^{\Z\gr}_{\pm 1} \times \Ab^{\Z\gr}_{\pm 1} \to \Ab^{\Z\gr}_{\pm 1}$, which we will commonly denote by $V_* \boldotimes W_*$ for $\boldotimes(V_*,W_*)$ and $f \boldotimes g$ for $\boldotimes(f,g)$, defined as follows. 
        \begin{itemize}
            \item On objects: $\boldotimes$ coincides with the ordinary tensor product over the ground ring $\Z$ of $\Z$-graded abelian groups, with grading determined by 
            \[
            \deg(v \otimes w) = \deg(v)+\deg(w)
            \]
            That is, if $V_* = \bigoplus_{i \in \Z} V_i$ and $W_* = \bigoplus_{j \in \Z} W_j$, the tensor product is defined by 
            \[ 
            V_* \boldotimes W_* = \bigoplus_{k \in \Z} (V_* \boldotimes W_*)_k, 
            \] 
            where 
            \[ (V_* \boldotimes W_*)_k \coloneq \bigoplus_{i+j=k} V_i \otimes W_j. 
            \] 
            \item On morphisms: $\boldotimes$ is the signed tensor product of $\Z$-linear maps defined as
            \[
            (f \boldotimes g)(v \otimes w) \coloneq (-1)^{|f||w|} f(v) \otimes g(w).
            \]
        \end{itemize}
        \item Monoidal unit: $(\Z)_0$, the graded abelian group with $(\Z)_0=\Z$ and $(\Z)_k=0$ for $k \neq 0$.
        \item Associator and unitor: The usual maps. 
        \item Braiding: A natural isomorphism given by
        \begin{align*}
            \gamma_{V_*,W_*}: V_* \boldotimes W_* &\to W_*\boldotimes V_* \\
            v \otimes w &\mapsto (-1)^{|v||w|} w \otimes v
        \end{align*}        
    \end{itemize}
    Then $(\Ab^{\Z\gr}_{\pm 1},\boldotimes,\gamma_{V_*,W_*})$ forms a symmetric monoidal category. 
\end{theorem}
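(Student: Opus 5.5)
The plan is to verify the axioms of a symmetric monoidal category directly, organizing everything around one observation. The underlying data (graded abelian groups, homogeneous maps, tensor product, associator, unitors, Koszul braiding) already form a structure in which every coherence diagram commutes on the nose. The only defect of the signed setting is the super interchange law, and it disappears once we quotient morphisms by $\pm 1$.

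First I will check that $\Ab^{\Z\gr}_{\pm 1}$ is a category. Composition of homogeneous maps is homogeneous, with degrees adding. Composition also respects the relation $f \sim -f$, since $(\pm g)\circ(\pm f) = \pm (g\circ f)$. Associativity and the identity laws are inherited from abelian groups.

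Next I will show that $\boldotimes$ is well defined on morphisms and is a bifunctor. The sign rule gives $(\pm f)\boldotimes(\pm g) = \pm (f\boldotimes g)$, and $f \boldotimes g$ is homogeneous of degree $|f|+|g|$. Identities go to identities, because $|\id| = 0$ and hence no sign appears. For the interchange law I will compute both sides on $v \otimes w$:
\begin{align*}
((f'\boldotimes g')\circ(f\boldotimes g))(v\otimes w) &= (-1)^{|f||w| + |f'|(|g|+|w|)} f'f(v)\otimes g'g(w),\\
((f'\circ f)\boldotimes(g'\circ g))(v\otimes w) &= (-1)^{(|f|+|f'|)|w|} f'f(v)\otimes g'g(w).
\end{align*}
These differ by the global sign $(-1)^{|f'||g|}$. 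That sign is independent of $v$ and $w$, so the two maps are equal modulo $\pm1$. This is the step I expect to be the only real obstacle, and it is exactly where the quotient by $\pm 1$ is needed. Everything else is bookkeeping.

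Then I will take the associator $(u\otimes v)\otimes w \mapsto u\otimes(v\otimes w)$ and the unitors $\Z\otimes V\cong V\cong V\otimes\Z$ with no signs. All of these are even isomorphisms. Their naturality with respect to arbitrary homogeneous $f,g,h$ needs the sign rule: for example, $(f\boldotimes g)\boldotimes h$ and $f\boldotimes(g\boldotimes h)$ on $u\otimes v\otimes w$ both carry the sign $(-1)^{|f|(|v|+|w|)+|g||w|}$, so they agree exactly. For the unitors, $\Z$ sits in degree $0$, so no signs arise. The pentagon and triangle hold on the nose, since they already hold for graded abelian groups.

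Finally, for the braiding $\gamma_{V,W}$, I will check three things.
\begin{itemize}
\item $\gamma$ has degree $0$, and naturality means $(g\boldotimes f)\circ\gamma = \gamma\circ(f\boldotimes g)$ up to sign. Evaluating both sides on $v\otimes w$ gives signs $(-1)^{|v||w| + |g||v|}$ and $(-1)^{|f||w| + (|f|+|v|)(|g|+|w|)}$, whose difference is the global sign $(-1)^{|f||g|}$. So naturality holds modulo $\pm 1$.
\item The hexagon holds on the nose, because $(-1)^{|u|(|v|+|w|)} = (-1)^{|u||v|}(-1)^{|u||w|}$.
\item Symmetry holds because $\gamma_{W,V}\circ\gamma_{V,W}$ multiplies $v\otimes w$ by $(-1)^{2|v||w|} = 1$. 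The braiding–unitor compatibility holds because $\Z$ is concentrated in degree $0$.
\end{itemize}
These facts together give the symmetric monoidal structure.
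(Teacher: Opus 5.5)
Your proposal is correct and follows essentially the same route as the paper: a direct check that the interchange law holds only up to the global sign $(-1)^{|f'||g|}$ and that naturality of the braiding holds only up to $(-1)^{|f||g|}$, both absorbed by the quotient by $\pm 1$, while the associator, unitors, pentagon, triangle, hexagon, symmetry and unit compatibility hold on the nose. You also check explicitly that composition and the signed tensor product are well defined on $\pm 1$-classes, which the paper leaves implicit.
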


\begin{proof}
    To verify that $\boldotimes$ is a bifunctor, consider morphisms $f$ and $g$ from $V_*$ to $W_*$ and $f'$ and $g'$ from $V'_*$ to $W'_*$. For all sign checks in this proof, we will assume the domain elements are homogeneous; the same results extend to non-homogeneous generators by linearity. We first check the up-to-sign interchange law: On the right-hand side, we have
    \begin{align*}
        ((f' \circ f)\boldotimes(g' \circ g))(v \otimes w)
        &= (-1)^{|f' \circ f||w|}(f'\circ f)(v) \otimes (g'\circ g)(w) \\
        &= (-1)^{(|f'|+|f|)|w|} f'(f(v)) \otimes g'(g(w)) \\
        &= (-1)^{|f'||w| + |f||w|} f'(f(v)) \otimes g'(g(w)),
    \end{align*}
    whereas on the left-hand side we have
    \begin{align*}
        ((f'\boldotimes g')\circ (f\boldotimes g))(v\otimes w)
        &= (f'\boldotimes g')\big((-1)^{|f||w|} f(v) \otimes g(w)\big) \\
        &= (-1)^{|f||w|} (f'\boldotimes g')\big(f(v) \otimes g(w)\big) \\
        &= (-1)^{|f||w|} (-1)^{|f'||g(w)|} f'(f(v)) \otimes g'(g(w)) \\
        &= (-1)^{|f||w|} (-1)^{|f'|(|g|+|w|)} f'(f(v)) \otimes g'(g(w)) \\
        &= (-1)^{|f||w| + |f'||g| + |f'||w|} f'(f(v)) \otimes g'(g(w)).
    \end{align*}
    The maps differ by an overall sign $(-1)^{|f'||g|}$, which we can ignore in $\Ab^{\Z\gr}_{\pm 1}$.\footnote{As in Remark~\ref{rem:MotivationForUpToSign}, the necessity of treating morphisms only up to sign in our target category is on display here. When $f'$ and $g$ are both odd, this super-interchange law does not hold on-the-nose.} Furthermore, the identity morphism is respected exactly (without the need for sign corrections) since it is an even map: 
    \begin{align*}
        (\id_V\boldotimes\id_W)(v\otimes w)
        &= (-1)^{|\id_V||w|}\id_V(v)\otimes \id_W(w) \\
        &= v\otimes w\\
        &= \id_{V\boldotimes W}(v\otimes w).
    \end{align*}
    Thus, it follows that $\boldotimes$ is a bifunctor. 
    
    We now check that the associator is natural by proving that the following diagram commutes.
    \[
    \xymatrix{
      (V_* \boldotimes W_*) \boldotimes X_* \ar[rr]^{\alpha_{V_*, W_*, X_*}} \ar[d]_{(f \boldotimes g) \boldotimes h} &&
      V_* \boldotimes (W_* \boldotimes X_*) \ar[d]^{f \boldotimes (g \boldotimes h)} \\
      (V'_* \boldotimes W'_*) \boldotimes X'_* \ar[rr]_{\alpha_{V'_*, W'_*, X'_*}} &&
      V'_* \boldotimes (W'_* \boldotimes X'_*)
    }
    \]
    Indeed, the clockwise composition gives
    \begin{align*}
        (v \otimes w) \otimes x &\xmapsto{\alpha} v \otimes (w \otimes x) \\
        &\xmapsto{f \boldotimes (g \boldotimes h)} (-1)^{|f||w \otimes x|} f(v) \otimes ((g \boldotimes h))(w \otimes x)) \\
        &= (-1)^{|f|(|w| + |x|)} f(v) \otimes \left( (-1)^{|g||x|} g(w) \otimes h(x) \right) \\
        &= (-1)^{|f||w| + |f||x| + |g||x|} f(v) \otimes (g(w) \otimes h(x))
    \end{align*}
    whereas the counterclockwise path gives
    \begin{align*}
        (v \otimes w) \otimes x &\xmapsto{(f \boldotimes g) \boldotimes h} (-1)^{|f \boldotimes g||x|} ((f \boldotimes g)(v \otimes w)) \otimes h(x) \\
        &= (-1)^{(|f| + |g|)|x|} ((-1)^{|f||w|} f(v) \otimes g(w)) \otimes h(x) \\
        &\xmapsto{\alpha} (-1)^{|f||w| + |f||x| + |g||x|} f(v) \otimes (g(w) \otimes h(x)).
    \end{align*}
    
    The left and right unitors are 
    \[
    \begin{aligned}
      \lambda_{V_*} : \Z \boldotimes V_* &\to V_* &\quad \rho_{V_*} : V_* \boldotimes \Z &\to V_* \\
      n \otimes v &\mapsto nv &\quad v \otimes n &\mapsto nv.
    \end{aligned}
    \]
    These maps are natural in $V_*$, since we will show that the below squares commute. 
    \[
    \begin{array}{cc}
    \xymatrix{
    \Z \boldotimes V_* \ar[rr]^{\lambda_{V_*}} \ar[d]_{\id_{\Z} \boldotimes f} &&
    V_* \ar[d]^f \\
    \Z \boldotimes V'_* \ar[rr]_{\lambda_{V'_*}} &&
    V'_*
    }
    &
    \xymatrix{
    V_* \boldotimes \Z \ar[rr]^{\rho_{V_*}} \ar[d]_{f \boldotimes \id_{\Z}} &&
    V_* \ar[d]^f \\
    V'_* \boldotimes \Z \ar[rr]_{\rho_{V'_*}} &&
    V'_*
    }
    \end{array}
    \]
    For the the left unitor, clockwise gives
    \[
    n \otimes v \xmapsto{\lambda_{V_*}} nv \xmapsto{f} n f(v)
    \]
    while counterclockwise gives 
    \begin{align*}
    (\id_{\Z} \boldotimes f)(n \otimes v)
    &= (-1)^{|\id_{\Z}||v|} \id_{\Z}(n) \otimes f(v) \\
    &= (-1)^{0 \cdot |v|} n \otimes f(v) \\
    &= n \otimes f(v),
    \end{align*}
    then applying $\lambda_{V'_*}$ to this gives $nf(v)$. The right unitor is similar: the clockwise composition takes $v \otimes n$ to $nf(v)$, whereas 
    \begin{align*}
        (f \boldotimes \id_{\Z})(v \otimes n)
        &= (-1)^{|f||n|} f(v) \otimes \id_{\Z}(n) \\
        &= (-1)^{|f| \cdot 0} f(v) \otimes n \\
        &= f(v) \otimes n,
    \end{align*}
    since $|n|=0$ for all $n\in \Z$. Applying $\rho_{V'_*}$ to this shows that the counterclockwise composite map is also $nf(v)$. 
    
    The monoidal pentagon and triangle coherence diagrams involving $\alpha_{V_*, W_*, X_*}$, $\lambda_{V_*}$, and $\rho_{V_*}$ commute without sign ambiguity. Since these are the standard coherence maps in a symmetric monoidal category of free modules with tensor product as the monoidal structure, we do not repeat the verification here. 
    
    We conclude the proof by showing that $\gamma$ is natural in $V_*$ and $W_*$ up to sign, and satisfies the rest of the necessary coherence properties on the nose. The naturality square is written
    \[
    \xymatrix@C=6em{
      V_* \boldotimes W_* \ar[r]^{\gamma_{V_*,W_*}} \ar[d]_{f \boldotimes g} &
      W_* \boldotimes V_* \ar[d]^{g \boldotimes f} \\
      V'_* \boldotimes W'_* \ar[r]_{\gamma_{V'_*,W'_*}} &
      W'_* \boldotimes V'_*
    }
    \]
    Clockwise gives
    \begin{align*}
    ((g \boldotimes f)\circ \gamma_{V_*,W_*})(v \otimes w)
    &= (g \boldotimes f)\big((-1)^{|v||w|} w \otimes v\big) \\
    &= (-1)^{|v||w|} (-1)^{|g||v|} g(w) \otimes f(v) \\
    &= (-1)^{|v||w|+|g||v|} g(w) \otimes f(v).
    \end{align*}
    Whereas counterclockwise gives
    \begin{align*}
    \gamma_{V'_*,W'_*}(f \boldotimes g)(v \otimes w)
    &= \gamma_{V'_*,W'_*}\big((-1)^{|f||w|} f(v) \otimes g(w)\big) \\
    &= (-1)^{|f||w|} (-1)^{|f(v)||g(w)|} g(w) \otimes f(v) \\
    &= (-1)^{|f||w| + (|f|+|v|)(|g|+|w|)} g(w) \otimes f(v) \\
    &= (-1)^{|f||g| + 2|f||w| + |v||g| + |v||w|} g(w) \otimes f(v) \\
    &= (-1)^{|v||w|+|v||g|+|f||g|} g(w) \otimes f(v).
    \end{align*}
    These differ by an overall sign $(-1)^{|f||g|}$. 
    
    The coherence hexagon \cite[diagram (8.1)]{EGNO} is given by 
    \[
    \xymatrix@C=6em@R=4em{
    ((U_* \boldotimes V_*) \boldotimes W_*) 
      \ar[r]^{\raise1ex\hbox{$\scriptstyle \gamma_{U_*,V_*} \boldotimes \id_{W_*}$}} 
      \ar[d]_{\scriptstyle \alpha_{U_*,V_*,W_*}} 
    & ((V_* \boldotimes U_*) \boldotimes W_*) 
      \ar[d]^{\scriptstyle \alpha_{V_*,U_*,W_*}} \\
    U_* \boldotimes (V_* \boldotimes W_*) 
      \ar[d]_{\scriptstyle \gamma_{U_*,V_* \boldotimes W_*}} 
    & V_* \boldotimes (U_* \boldotimes W_*) 
      \ar[d]^{\scriptstyle \id_{V_*} \boldotimes \gamma_{U_*,W_*}} \\
    (V_* \boldotimes W_*) \boldotimes U_* 
      \ar[r]_{\lower1ex\hbox{$\scriptstyle \alpha_{V_*,W_*,U_*}$}} 
    & V_* \boldotimes (W_* \boldotimes U_*)
    }
    \]
    Clockwise gives
    \begin{align*}
        &((v \otimes w) \otimes x) \xmapsto{\gamma_{V_*,W_*} \boldotimes \id} 
        (-1)^{|v||w|} (w \otimes v) \otimes x \\
        &\xmapsto{\alpha} 
        (-1)^{|v||w|} w \otimes (v \otimes x) \\
        &\xmapsto{\id \boldotimes \gamma} 
        (-1)^{|v||w|} (-1)^{|v||x|} w \otimes (x \otimes v) \\
        &= (-1)^{|v||w| + |v||x|} w \otimes (x \otimes v).
    \end{align*}
    Whereas counterclockwise gives
    \begin{align*}
        &((v \otimes w) \otimes x) \xmapsto{\alpha} 
        v \otimes (w \otimes x) \\
        &\xmapsto{\gamma \boldotimes \id} 
        (-1)^{|v||w \otimes x|} (w \otimes x) \otimes v \\
        &\xmapsto{\alpha} 
        (-1)^{|v||w| + |v||x|} w \otimes (x \otimes v).
    \end{align*}
    The other coherence hexagon \cite[diagram (8.2)]{EGNO} follows automatically from symmetry of the braiding (verified below). The braiding--unitor coherence triangle is
    \[
    \xymatrix@C=7em{
      (V_* \boldotimes \Z)
        \ar[r]^{\gamma_{V_*,\,\Z}}
        \ar[dr]_{\rho_{V_*}} &
      (\Z \boldotimes V_*)
        \ar[d]^{\lambda_{V_*}} \\
      & V_*
    }
    \]
    Counterclockwise gives $v \otimes n \xmapsto{\rho} nv$, while traveling clockwise gives
    \begin{align*}
        v \otimes n &\xmapsto{\gamma} (-1)^{|v||n|} n \otimes v  \\
        &= n \otimes v  \\
        &\xmapsto{\lambda} nv
    \end{align*}
    where the second line holds since $|n|=0$.
    
    Finally, we show the braiding is symmetric in $V_*$ and $W_*$: The diagram
    \[
    \xymatrix@C=6em{
      V_* \boldotimes W_* 
        \ar[rr]^{\gamma_{V_*,W_*}} 
        \ar[rrd]_{\mathrm{id}} 
      && 
      W_* \boldotimes V_* 
        \ar[d]^{\gamma_{W_*,V_*}} \\
      && 
      V_* \otimes W_* 
    }
    \]
    commutes since 
    \begin{align*}
        \gamma_{W_*,V_*} \circ \gamma_{V_*,W_*}(v \otimes w)
        &= \gamma_{W_*,V_*}\left( (-1)^{|v||w|} w \otimes v \right) \\
        &= (-1)^{|v||w|} (-1)^{|w||v|} v \otimes w \\
        &= (-1)^{2|v||w|} v \otimes w \\
        &= v \otimes w.
    \end{align*}
\end{proof}

\subsection{The functor $\Vc^{\FN}_{\sut}$}

We now verify that $\Vc^{\FN}_{\sut}$ is an honest symmetric monoidal functor, establishing a proof of Theorem~\ref{Thm:VFNSymmetricMonoidal}. The definition of a \emph{symmetric monoidal functor} is that of a braided monoidal functor between symmetric monoidal categories. We begin the proof by showing that $\Vc^{\FN}_{\sut}$ is a monoidal functor; for this, it is not necessary to specify a unit compatibility map for $\Vc^{\FN}_{\sut}$. This is consistent with the framework of Etingof--Gelaki--Nikshych--Ostrik: their definition of monoidal functor \cite[Definition 2.4.1]{EGNO} does not build in unit compatibility, but it is equivalent to a definition which does, \cite[Definition 2.4.5]{EGNO}. Next, to prove that $\Vc^{\FN}_{\sut}$ is a braided monoidal functor, we follow \cite[Definition 8.1.7]{EGNO}, which defines a braided monoidal functor as a monoidal functor satisfying a braiding coherence square \cite[Equation 8.5]{EGNO} that we will check directly.  

\begin{theorem}[cf. Theorem \ref{Thm:VFNSymmetricMonoidal}]
The data below defines a symmetric monoidal functor $\Vc^{\FN}_{\sut}$. 
\begin{itemize}
    \item $\Vc^{\FN}_{\sut}$ acts on objects as 
    \[
    (F,\Lambda)[d] \mapsto (\wedge^* H_1(F,S^+))[d]
    \]
    where $[d]$ denotes a degree shift downward by $d$, so that 
    \[
    (\Vc^{\FN}_{\sut}((F, \Lambda)[d]))_k=\wedge^{k+d} H_1(F, S^{+}). 
    \]
    \item $\Vc^{\FN}_{\sut}$ acts on morphisms $(Y,\Gamma)$ from $(F_0,\Lambda_0)[d_0]$ to $(F_1,\Lambda_1)][d_1]$  as the up-to-sign composition 
    \[
    (\varepsilon \boldotimes \id)\circ(\id \boldotimes |K_{Y,\Gamma}|):\wedge^* H_1(F_0,S^+_0) \to \wedge^* H_1(F_1,S^+_1)
    \]
    defined in Theorem~\ref{thm:IntroFirstThm}; equivalently, as the up-to-sign map
    \[
    \Vc^{\FN}_{\sut}(Y, \Gamma)=[\BSDA(Y,\Gamma)]^{\Z}_{\comb}
    \]
    by Theorem~\ref{thm:IntroFirstThm}. As a morphism from $\wedge^* H_1(F_0,S^+_0)[d_0]$ to $\wedge^* H_1(F_1,S^+_1)[d_1]$, the map $\Vc^{\FN}_{\sut}(Y,\Gamma)$ has degree $c + d_0 - d_1$.
    \item $\Vc^{\FN}_{\sut}$ has monoidal structure map determined by
    \begin{align*}
        \Phi_{(F,\Lambda)[d],(F',\Lambda')[d']}: \Vc^{\FN}_{\sut}((F,\Lambda)[d]) \boldotimes \Vc^{\FN}_{\sut}((F',\Lambda')[d']) &\to \Vc^{\FN}_{\sut}((F,\Lambda)[d] \amalg (F',\Lambda')[d']) \\
        \gamma_I \otimes \gamma_{I'} &\mapsto (-1)^{k'd}\gamma_I \wedge \gamma_{I'}
    \end{align*}
    where $\gamma_I$ and $\gamma_{I'}$ are basis elements of $\wedge^k H_1(F,S^+)$ and $\wedge^k H_1(F',(S^+)')$ respectively, as defined in Definition~\ref{def:BSDAZComb}. 
    \end{itemize}
\end{theorem}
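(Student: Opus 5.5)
We take functoriality as given from Corollary~\ref{V^FN_sutFunctorial}, after checking that degree shifts only change the degree of $\Vc^{\FN}_{\sut}(Y,\Gamma)$, which becomes $c+d_0-d_1$ by Proposition~\ref{prop:DegreeOfBSDA}. What remains is the monoidal and braided structure.

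We first check that each $\Phi_{(F,\Lambda)[d],(F',\Lambda')[d']}$ is an isomorphism of graded groups. The map $\gamma_I\otimes\gamma_{I'}\mapsto \gamma_I\wedge\gamma_{I'}$ is the standard isomorphism $\wedge^*A\otimes\wedge^*B\cong\wedge^*(A\oplus B)$, where $H_1(F\sqcup F',S^+\sqcup (S^+)')=H_1(F,S^+)\oplus H_1(F',(S^+)')$. The shifts add, so $\Phi$ has degree $0$, and the sign $(-1)^{k'd}$ keeps it invertible.

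The core step is naturality of $\Phi$ up to sign. For morphisms $(Y,\Gamma)$ and $(Y',\Gamma')$ we must show
\[
\Vc^{\FN}_{\sut}(Y\sqcup Y')\circ\Phi=\pm\,\Phi\circ\bigl(\Vc^{\FN}_{\sut}(Y)\boldotimes\Vc^{\FN}_{\sut}(Y')\bigr),
\]
with a single sign independent of the inputs. We compute both sides with the choices $\Xi\sqcup\tilde\Xi$ of Definition~\ref{def:ChoicesDisjoint}, using Definition~\ref{def:BSDAZIndependentofChoices} so that these choices are allowed. Generators of $\Hc\sqcup\tilde\Hc$ are pairs $(\x,\tilde\x)$, and $o_R$ and $\overline{o}_L$ are concatenations. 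The local intersection terms add, and $\sigma_{\x\sqcup\tilde\x}$ is block diagonal, so $\mathrm{inv}$ adds.

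The sign bookkeeping then reduces to three contributions:
\begin{itemize}
\item the idempotent term $\mathrm{inv}(\sigma_{\overline{o}_L o_L\leftrightarrow\std})$ gains the cross term $(n_1-l)\tilde l$;
\item the correction $(a+\tilde a)(k+\tilde k)+(n_1+\tilde n_1)(k+\tilde k)$ replaces $ak+n_1k+\tilde a\tilde k+\tilde n_1\tilde k$;
\item reordering $\gamma^{\inrm}_I\wedge\gamma^{\inrm}_{\tilde I}$ against the block ordering of all $\alpha$-curves (out, out$'$, circles, circles$'$, in, in$'$) contributes crossing terms such as $\tilde{(n_1-l)}\cdot a$ and $a'k$.
\end{itemize}
The super-sign $(-1)^{|g||v|}$ with $|g|=\tilde c+d'_0-d'_1$ and $|v|=k-d_0$ enters as well, together with the $\Phi$ signs $(-1)^{\tilde k d_0}$ and $(-1)^{\tilde l d_1}$. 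We reduce everything mod 2 using $l=k+c$ and $\tilde l=\tilde k+\tilde c$. The goal is to show that all terms depending on $(k,\tilde k)$ cancel, leaving a global constant.

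This parity computation is the main obstacle. If the crossing-strand bookkeeping becomes unwieldy, we will instead use Theorem~\ref{thm:BSDAAlexanderZ}: the presentation matrix of $Y\sqcup Y'$ is block diagonal, so the Alexander function factors as a product of determinants with a column-reordering sign. This derives multiplicativity of $\mathsf{A}_{\Z}$ directly, with the signs tracked through Proposition~\ref{prop:AlexanderFunctorZDefn}.

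The associativity hexagon for $\Phi$ is a direct check:
\[
(-1)^{k'd}\,(-1)^{k''(d+d')}=(-1)^{k''d'}\,(-1)^{(k'+k'')d}
\]
holds exactly. Unit compatibility is trivial, since $\Vc^{\FN}_{\sut}(\emptyset)=\Z$ in degree $0$.

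For the braiding square we must compute $\Vc^{\FN}_{\sut}$ of the symmetrizer cobordism. It is the identity cobordism on $F\sqcup F'$ with relabelled boundary, so with the choices $\Xi_{\id}$ and reordered matching arcs, Proposition~\ref{prop:BSDARespectsIdentity} gives that it sends $\gamma_I\wedge\gamma_{I'}$ to $\pm\gamma_{I'}\wedge\gamma_I=\pm(-1)^{kk'}\gamma_I\wedge\gamma_{I'}$. The sign here is $k$-independent only after combining with the conventions, so we verify directly that
\[
\Vc(\gamma)\circ\Phi=\pm\,\Phi\circ\gamma^{\Ab},
\]
comparing $(-1)^{kk'}(-1)^{k d'}$ with $(-1)^{k'd}(-1)^{(k-d)(k'-d')}$ up to global sign.

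Symmetry of the braiding then follows, since both source and target categories are symmetric by Theorems~\ref{thm:CobSutSymmetricMonoidal} and~\ref{thm:AbZSymmetricMonoidal}.
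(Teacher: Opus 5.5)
Your outline follows the paper's proof: functoriality from the earlier corollary, disjoint-union choices with a crossing-strands count for naturality of $\Phi$, and direct checks of the hexagon and the braiding square. Your isomorphism check for $\Phi$ and your hexagon identity are fine. The gap is in naturality of $\Phi$, which is the heart of the theorem: you never carry out the parity count, and the one sign you commit to is the wrong one. You take the super-sign to be $(-1)^{|g||v|}$ with $|g|=\tilde c+d_0'-d_1'$ and $|v|=k-d_0$. Definition~\ref{def:AbZgrCategory} instead uses $(f\boldotimes g)(v\otimes w)=(-1)^{|f||w|}f(v)\otimes g(w)$, and the sign $(-1)^{k'd}$ in $\Phi$ is tailored to that rule. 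With your rule, the two paths differ by $(-1)^{k(\tilde c+d_0'+d_1')+\tilde k(c+d_0+d_1)}$ modulo global signs, and this is not a global sign. For instance, take $Y=\id_{(F,\Lambda)}$ with $\rank H_1(F,S^+)=1$, and take $\tilde Y$ to be the empty cobordism regarded as a morphism $\emptyset[0]\to\emptyset[1]$. Then the discrepancy is $(-1)^k$, so the cancellation you set as your goal does not occur.

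With the correct rule, the path $\Phi\circ(\Vc^{\FN}_{\sut}(Y)\boldotimes\Vc^{\FN}_{\sut}(\tilde Y))$ carries $(c+d_0-d_1)(\tilde k-d_0')+\tilde l d_1\equiv\tilde k(c+d_0)$. The other path carries $\tilde k d_0$ plus the grading defect. So what must be shown is
\[
\mathrm{gr}_{\DA}(\x\sqcup\tilde\x)\equiv\mathrm{gr}_{\DA}(\x)+\mathrm{gr}_{\DA}(\tilde\x)+c\tilde k \pmod 2
\]
up to $(k,\tilde k)$-independent terms, which is what the paper proves.

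Your accounting of this defect also needs repair. Definition~\ref{def:ChoicesDisjoint} orders the $\alpha$-blocks as out, out$'$, circles, circles$'$, in, in$'$, with all $\beta$ before all $\beta'$. For this ordering $\sigma_{\x\sqcup\tilde\x}$ is not block diagonal, and $\mathrm{inv}$ is not additive. The cross terms $a(\tilde n_1-\tilde l)+k(\tilde n_1-\tilde l)+k\tilde a$ are extra inversions of $\sigma_{\x\sqcup\tilde\x}$ itself. They do not come from reordering $\gamma^{\inrm}_I\wedge\gamma^{\inrm}_{\tilde I}$, which is already a standard basis element. Add these to $(n_1-l)\tilde l$ and to the correction-term differences $k\tilde a+\tilde k a+k\tilde n_1+\tilde k n_1$. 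Substituting $l=k+c$ and $\tilde l=\tilde k+\tilde c$ then leaves exactly $c\tilde k$.

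For the braiding, Proposition~\ref{prop:BSDARespectsIdentity} only covers identical arc orderings on both ends, so it does not give the value on the symmetrizer. You need one of two arguments:
\begin{itemize}
\item the direct count, as in the paper, which gives $\mathrm{gr}_{\DA}\equiv kk'$ for the reordered choices;
\item the observation, via Theorem~\ref{thm:BSDAAlexanderZ}, that $\mathsf{A}_{\Z}$ of the symmetrizer is $\pm$ the canonical isomorphism, sending $\gamma_I\wedge\gamma_{I'}$ to $(-1)^{kk'}\gamma_{I'}\wedge\gamma_I$.
\end{itemize}
With either in hand, your final comparison of $(-1)^{kk'+kd'}$ with $(-1)^{k'd+(k-d)(k'-d')}$ is correct.
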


\begin{proof}
    Throughout the proof, we expand $\boldotimes$ and $\amalg$ on objects for clarity. The first property to check is naturality of $\Phi_{(F,\Lambda)[d],(F',\Lambda')[d']}$ in $(F,\Lambda)[d],$ and $(F',\Lambda')[d']$, which holds if the below square commutes. 
    \[
    \xymatrix@C=6em@R=5em{
        \wedge^* H_1(F, S^+)[d] \otimes \wedge^* H_1(F', (S^+)')[d']
            \ar[r]^-{\Phi_{(F,\Lambda)[d], (F',\Lambda')[d']}}
            \ar[d]_-{\Vc^{\FN}_{\sut}(Y,\Gamma) \boldotimes \Vc^{\FN}_{\sut}(Y',\Gamma')}
        &
        \wedge^* H_1(F \sqcup F', S^+ \sqcup (S^+)')[d+d']
            \ar[d]^-{\Vc^{\FN}_{\sut}(Y \sqcup Y', \Gamma \sqcup \Gamma')} \\
        \wedge^* H_1(\widetilde{F}, \widetilde{S}^+)[\tilde{d}]
            \otimes
            \wedge^* H_1(\widetilde{F}', \widetilde{(S^+)'})[\tilde{d}']
            \ar[r]_-{\Phi_{(\widetilde{F},\widetilde{\Lambda})[\tilde{d}], (\widetilde{F}',\widetilde{\Lambda}')[\tilde{d}']}}
        &
        \wedge^* H_1(\widetilde{F} \sqcup \widetilde{F}', \widetilde{S}^+ \sqcup \widetilde{(S^+)'})[\tilde{d} + \tilde{d}']
    }
    \]
    Let $\gamma_I$ and $\gamma_{I'}$ be basis elements of $\wedge^* H_1(F, S^+)[d]$ and $\wedge^* H_1(F', (S^+)')[d']$. We first compute the image of the counterclockwise composition on the basis element 
    \[
    \gamma_I \otimes \gamma_{I'} \in \wedge^* H_1(F, S^+)[d] \otimes \wedge^* H_1(F', (S^+)')[d'].
    \]
    We have
    \begin{align*}
        (\Vc^{\FN}_{\sut}(Y,\Gamma) \boldotimes \Vc^{\FN}_{\sut}(Y',\Gamma'))(\gamma_I \otimes \gamma_{I'})&=(-1)^{|\Vc^{\FN}_{\sut}(Y,\Gamma)||\gamma_{I'}|}\Vc^{\FN}_{\sut}(Y,\Gamma)(\gamma_{I})\otimes \Vc^{\FN}_{\sut}(Y',\Gamma')(\gamma_{I'}) \\
        &=(-1)^{(c+d-\tilde{d})(k'-d')}\Vc^{\FN}_{\sut}(Y,\Gamma)(\gamma_{I})\otimes \Vc^{\FN}_{\sut}(Y',\Gamma')(\gamma_{I'}).
    \end{align*}
    The map $\Phi_{(\widetilde{F},\widetilde{\Lambda})[\tilde{d}], (\widetilde{F}',\widetilde{\Lambda}')[\tilde{d}']}$ then sends this to 
    \begin{equation}\label{eq:V^FNNaturalitySign}
        (-1)^{(c+d-\tilde{d})(k'-d')}(-1)^{\tilde{d}(k'+c')}\Vc^{\FN}_{\sut}(Y,\Gamma)(\gamma_{I})\wedge \Vc^{\FN}_{\sut}(Y',\Gamma')(\gamma_{I'}).
    \end{equation}
    After simplifying, this map has a total sign of 
    \begin{align*}
        (c+d-\tilde{d})(k'-d')+\tilde{d}(k'+c')&=c k'-c d' +k'd - d d' - k'\tilde{d} + \tilde{d} d'+k'\tilde{d} +\tilde{d} c' \\
        &=ck'+k'd+(\text{$k$ independent terms}).
    \end{align*}
    
    On the other hand, along the clockwise path the structure map first acts as defined:
    \[
    \Phi_{(F,\Lambda)[d],(F',\Lambda')[d']}(\gamma_I \otimes \gamma_{I'})=(-1)^{k'd}\gamma_I \wedge \gamma_{I'}. 
    \]
    To find the image of this element under the next map, first interpret $\Vc^{\FN}_{\sut}(Y \sqcup Y', \Gamma \sqcup \Gamma')$ as $[\BSDA(Y \sqcup Y', \Gamma \sqcup \Gamma')]^{\Z}_{\comb}$. Then to compute $[\BSDA(Y \sqcup Y', \Gamma \sqcup \Gamma')]^{\Z}_{\comb}$, take sets of choices $\Xi$ and $\Xi'$ for $(Y,\Gamma)$ and $(Y',\Gamma')$ respectively, and use these to define a set of choices $\Xi \sqcup \Xi'$ for the cobordism $(Y \sqcup Y', \Gamma \sqcup \Gamma')$ as in Definition~\ref{def:ChoicesDisjoint}. The resulting map $[\BSDA(Y \sqcup Y', \Gamma \sqcup \Gamma';\Xi \sqcup \Xi')]^{\Z}_{\comb}$ takes the element $(-1)^{k'd}\gamma_I \wedge \gamma_{I'}$ to
    \[
    \sum_{\substack{\x,\x' : \\ o_R(\x)=I, \\ \bar{o}_R(\x')=I'}} (-1)^{\mathrm{gr}_{\DA}(\x \sqcup \x')+k'd}\gamma_{\overline{o}_L(\x \sqcup \x')}
    \]
    In order to compare this expression with ~\eqref{eq:V^FNNaturalitySign}, we would like to expand $[\BSDA]^{\Z}_{\comb}$ over the disjoint union. Since $\x$ and $\x'$ are disjoint, and arcs of $\Zc_1$ come before arcs of $\Zc_1'$ in the ordering of matching arcs on $\Zc_1 \sqcup \Zc_1'$ in the choices $\Xi \sqcup \Xi'$, we know that
    \[
    \gamma_{\overline{o}_L(\x \sqcup \x')}=\gamma_{\overline{o}_L(x)} \wedge \gamma_{\overline{o}_L(x')}.
    \]
    However, we will observe that 
    \[
    \mathrm{gr}_{\DA}(\x \sqcup \x') \neq \mathrm{gr}_{\DA}(\x) + \mathrm{gr}_{\DA}(\x').
    \]
    The crossing strands diagram for a generator $\x \sqcup \x'$ of the chosen $\Hc \sqcup \Hc'$ takes the form pictured below. For clarity, we separate the $\alpha$-arcs and $\beta$-curves from $\Hc$ and $\Hc'$ into sub-blocks. 
    \begin{center}
    \begin{overpic}[scale=.28]{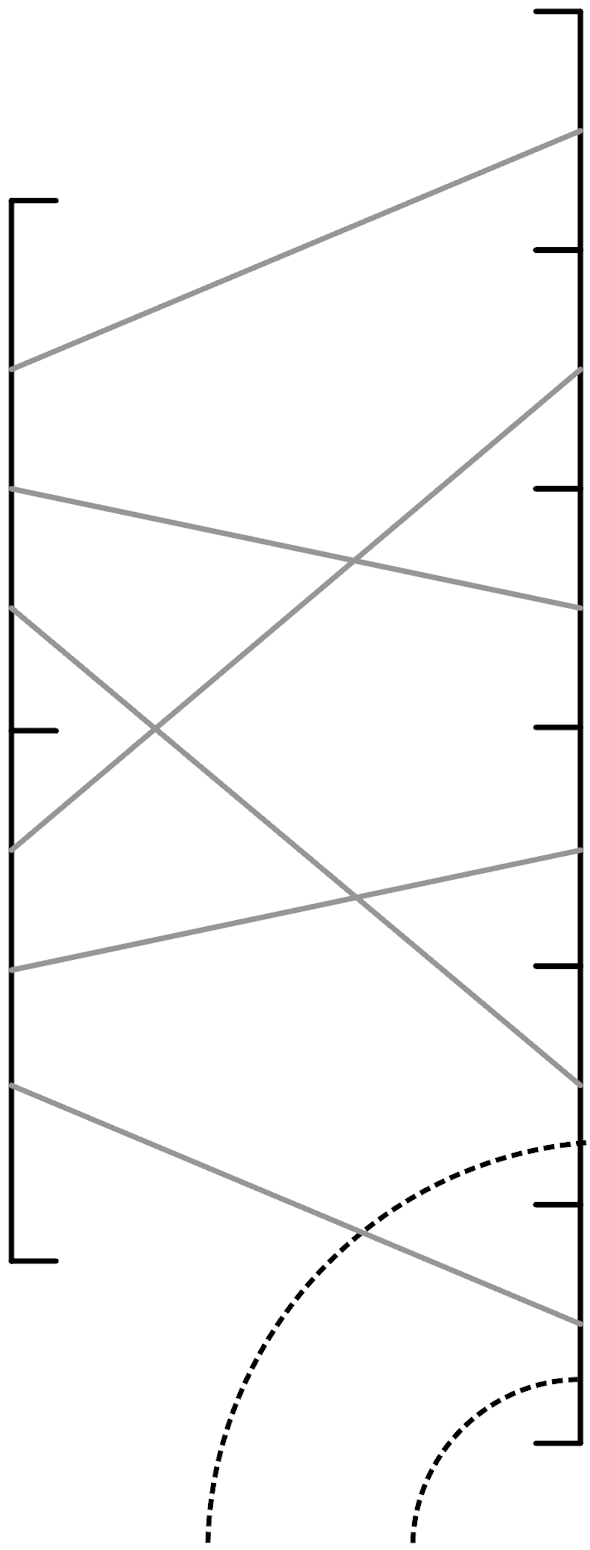}
        \put (-6,33) {$\boldsymbol{\beta}$}
        \put (-6,68) {$\boldsymbol{\beta}'$}
        \put (11.5,-5) {$l'$}
        \put (26,-5) {$l$}
        \put (6.5,19) {\scalebox{0.75}{$n_1-l$}}
        \put (17,32) {\scalebox{0.8}{$n'_1-l'$}}
        \put (28,45) {\scalebox{0.8}{$a$}}
        \put (10,69) {\scalebox{0.8}{$a'$}}
        \put (26,73) {\scalebox{0.8}{$k$}}
        \put (13,88) {\scalebox{0.8}{$k'$}}
        \put (42,15) {$\boldsymbol{\alpha}^{a,\mathrm{out}}$}
        \put (42,30) {$(\boldsymbol{\alpha}^{a,\mathrm{out}})'$}
        \put (42,45) {$\boldsymbol{\alpha}^{c}$}
        \put (42,60) {$(\boldsymbol{\alpha}^{c})'$}
        \put (42,75) {$\boldsymbol{\alpha}^{a,\mathrm{in}}$}
        \put (42,90) {$(\boldsymbol{\alpha}^{a,\mathrm{in}})'$}
        \vspace{0.1in}
    \end{overpic}
    \end{center}
    The ordering conventions imposed by $\Xi \sqcup \tilde{\Xi}$ interleave the indices of $\x$ and $\x'$, producing new crossings. These cause an inversion sign count difference between $\mathrm{gr}_{\DA}(\x \sqcup \x')$ and $\mathrm{gr}_{\DA}(\x) +\mathrm{gr}_{\DA}(\x')$ of 
    \[
    (n_1-l)l'+a(n_1'-l')+k(n_1'-l')+ka'.
    \]
    The intersection contributions of $\mathrm{gr}_{\DA}(\x \sqcup \x')$ and $\mathrm{gr}_{\DA}(\x) + \mathrm{gr}_{\DA}(\x')$ clearly agree. The ``$ak$'' correction term for $\mathrm{gr}_{\DA}(\x \sqcup \x')$ is $(a+a')(k+k')$, so the difference with the sum of ``$ak$'' correction terms for $\mathrm{gr}_{\DA}(\x)$ and $\mathrm{gr}_{\DA}(\x')$ is $ka' + k'a$. The ``$n_1 k$'' correction term $\mathrm{gr}_{\DA}(\x \sqcup \x')$ is $(n_1 + n_1')(k + k')$ so the difference with the sum of ``$n_1 k$'' correction terms for $\mathrm{gr}_{\DA}(\x)$ and $\mathrm{gr}_{\DA}(\x')$ is $n_1' k + n_1 k'$. After simplifying, we find a total sign difference of 
    \begin{align*}
       &(n_1-l)l'+a(n_1'-l')+k(n_1'-l')+ka'+ka'+k'a+n_1' k+n_1 k' \\
       &\quad =k'c +\text{($k$ independent terms)} \mod 2. 
    \end{align*}
    Thus, 
    \[
    \mathrm{gr}_{\DA}(\x \sqcup \x')=\mathrm{gr}_{\DA}(\x) + \mathrm{gr}_{\DA}(\x')+k'c+\text{($k$ independent terms)}
    \]
    So we may expand 
    \[
    \sum_{\substack{\x,\x' : \\ o_R(\x)=I, \\ \bar{o}_R(\x')=I'}} (-1)^{\mathrm{gr}_{\DA}(\x \sqcup \x')+k'd}\gamma_{\overline{o}_L(\x \sqcup \x')}=\sum_{\substack{\x, \x' : \\ o_R(\x) = I, \, o_R(\x') = I'}}
    (-1)^{\mathrm{gr}_{\DA}(\x) + \mathrm{gr}_{\DA}(\x') + k'c + k'd}
    \, \gamma_{\overline{o}_L(\x)} \wedge \gamma_{\overline{o}_L(\x')}
    \]
    up to overall sign. By multilinearity, this equals 
    \begin{align*}
        &(-1)^{k'c + k'd}
        \left( 
            \sum_{\substack{\x \in \mathfrak{S}(\Hc) : \\ o_R(\x) = I}}
            (-1)^{\mathrm{gr}_{\DA}(\x)} \gamma_{\overline{o}_L(\x)}
        \right)
        \wedge
        \left(
            \sum_{\substack{\x' \in \mathfrak{S}(\Hc') : \\ o_R(\x') = I'}}
            (-1)^{\mathrm{gr}_{\DA}(\x')} \gamma_{\overline{o}_L(\x')}
        \right) \\
        &\quad = (-1)^{k'c + k'd}
        [\BSDA(Y,\Gamma)]^{\Z}_{\comb}(\gamma_I)
        \wedge
        [\BSDA(Y',\Gamma')]^{\Z}_{\comb}(\gamma_{I'}) \\
        &\quad = (-1)^{k'c + k'd}
        \Vc^{\FN}(Y,\Gamma)(\gamma_I)
        \wedge
        \Vc^{\FN}(Y',\Gamma')(\gamma_{I'}).
    \end{align*}
    Since the sign $(-1)^{k'c + k'd}$ agrees with the sign in equation~\eqref{eq:V^FNNaturalitySign}, $\Phi_{(F,\Lambda)[d],(F',\Lambda')[d']}$ is a natural transformation. 

    Next, we will prove the coherence hexagon for the monoidal structure map is commutative. 
    \[
    \makebox[\textwidth][c]{
    \scalebox{0.83}{
    \xymatrix@C=6em@R=4em{
    \left( \wedge^* H_1(F,S^+)[d] \otimes \wedge^* H_1(F',(S^+)')[d'] \right)
    \otimes \wedge^* H_1(F'',(S^+)')[d'']
    \ar[r]^-{\raisebox{3ex}{\hbox{$
    \alpha_{\wedge^* H_1(F,S^+)[d],\,
    \wedge^* H_1(F',(S^+)')[d'],\,
    \wedge^* H_1(F'',(S^+)')[d'']}
    $}}}
    \ar[d]_{\scriptstyle
    \Phi_{(F,\Lambda)[d],(F',\Lambda')[d']} \,\boldotimes\, \id}
    &
    \wedge^* H_1(F,S^+)[d] \otimes
    \left( \wedge^* H_1(F',(S^+)')[d'] \otimes
    \wedge^* H_1(F'',(S^+)')[d''] \right)
    \ar[d]^{\scriptstyle
    \id \,\boldotimes\,
    \Phi_{(F',\Lambda')[d'],(F'',\Lambda'')[d'']}}
    \\
    \wedge^* H_1(F \sqcup F', S^+ \sqcup (S^+)')[d+d']
    \otimes \wedge^* H_1(F'',(S^+)')[d'']
    \ar[d]_{\scriptstyle
    \Phi_{(F \sqcup F', \Lambda \sqcup \Lambda')[d+d'],
    (F'',\Lambda'')[d'']}}
    &
    \wedge^* H_1(F,S^+)[d] \otimes
    \wedge^* H_1(F' \sqcup F'', (S^+)' \sqcup (S^+)')[d'+d'']
    \ar[d]^{\scriptstyle
    \Phi_{(F,\Lambda)[d],
    (F' \sqcup F'', \Lambda' \sqcup \Lambda'')[d'+d'']}}
    \\
    \wedge^* H_1((F \sqcup F') \sqcup F'',
    (S^+ \sqcup (S^+)') \sqcup (S^+)')[d+d'+d'']
    \ar[r]_-{\raisebox{-4ex}{\hbox{$
    \Vc^{\FN}_{\sut}\!\left(
    \alpha_{(F,\Lambda)[d],\,
    (F',\Lambda')[d'],\,
    (F'',\Lambda'')[d'']}
    \right)
    $}}}
    &
    \wedge^* H_1(F \sqcup (F' \sqcup F''),
    S^+ \sqcup ((S^+)' \sqcup (S^+)'))[d+d'+d'']
    }
    }
    }
    \]
    The counterclockwise path acts on a basis element $(\gamma_I \otimes \gamma_{I'}) \otimes \gamma_{I''}$ of the source by
    \begin{align*}
        (\gamma_I \otimes \gamma_{I'}) \otimes \gamma_{I''}
            &\xmapsto{\Phi \boldotimes \id}
            (-1)^{k'd} (\gamma_I \wedge \gamma_{I'}) \otimes \gamma_{I''} \\
        &\quad \xmapsto{\Phi}
            (-1)^{k'd + k''(d+d')} \, ((\gamma_I \wedge \gamma_{I'}) \wedge \gamma_{I''}) \\
        &\quad \xmapsto{\Vc^{\FN}_{\sut}(\alpha)}
            (-1)^{k'd + k''d + k''d'} (\gamma_I \wedge (\gamma_{I'} \wedge \gamma_{I''}))
    \end{align*}
    The clockwise path acts on this same basis element by
    \begin{align*}
        (\gamma_I \otimes \gamma_{I'}) \otimes \gamma_{I''}
            &\xmapsto{\alpha}
            \gamma_I \otimes (\gamma_{I'} \otimes \gamma_{I''}) \\
        &\quad \xmapsto{\id \boldotimes \Phi}
             \gamma_I \otimes ((-1)^{k''d'}\gamma_{I'} \wedge \gamma_{I''}) \\
        &\quad \xmapsto{\Phi}
            (-1)^{k''d'+(k'+k'')d} \, (\gamma_I \wedge (\gamma_{I'} \wedge \gamma_{I''})).
    \end{align*}
    The two images agree on the nose, so the hexagon commutes. 
    
    The proof up until this point says that $\Vc^{\FN}_{\sut}$ is a monoidal functor. To establish that it is a symmetric monoidal functor, we need the following diagram to commute: 
    \begin{equation*}
    \makebox[\textwidth][c]{
    \scalebox{0.95}{
    \xymatrix@C=10em@R=4em{
    \wedge^* H_1(F, S^+)[d] \otimes \wedge^* H_1(F', (S^+)')[d'] 
        \ar[r]^-{\gamma_{\wedge^* H_1(F, S^+)[d], \wedge^* H_1(F', (S^+)')[d'] }} 
        \ar[d]_{\Phi_{(F,\Lambda)[d],\,(F',\Lambda')[d']}} 
    & 
    \wedge^* H_1(F', (S^+)')[d'] \otimes \wedge^* H_1(F, S^+)[d]
        \ar[d]^{\Phi_{(F',\Lambda')[d'],\,(F,\Lambda)[d]}} \\
    \wedge^* H_1(F \sqcup F', S^+ \sqcup (S^+)')[d + d'] 
        \ar[r]_-{\Vc^{\FN}_{\sut}(\gamma_{(F,\Lambda)[d],\,(F',\Lambda')[d']})} 
    & 
    \wedge^* H_1(F' \sqcup F, (S^+)' \sqcup S^+)[d' + d]
    }
    }
    }
    \end{equation*}
    Note that $\gamma_I$ is a homogeneous element of degree $k$ in $\wedge^* H_1(F, S^+)$, so in $\wedge^* H_1(F, S^+)[d]$, we have $|\gamma_I|=k-d$. Thus, following the clockwise path:
    \begin{align*}
        \gamma_I \otimes \gamma_{I'}
            &\xmapsto{\gamma}
            (-1)^{(k-d)(k'-d')}\gamma_{I'} \otimes \gamma_I 
            \\
            &\xmapsto{\Phi}
            (-1)^{(k-d)(k'-d')}(-1)^{kd'}\gamma_{I'} \wedge \gamma_I.
    \end{align*}
    This sign amounts to a total of
    \begin{align*}
        (k-d)(k'-d')+kd'&=kk'-kd'-k'd-dd'+kd' \\
        &=kk'+k'd+\text{($k$ independent terms)} \mod 2.
    \end{align*}
    
    For the counterclockwise path, we compute $[\BSDA]^{\Z}_{\mathrm{comb}}$ of the braiding cobordism $\gamma_{(F,\Lambda)[d],\,(F',\Lambda')[d']}$, which we will then apply to $(-1)^{k'd}\gamma_I \wedge \gamma_{I'}$. The manifold $\gamma_{(F,\Lambda)[d],\,(F',\Lambda')[d']}$ is homeomorphic to
    \[
     (\id_{(F \sqcup F',\Lambda \sqcup \Lambda')[d+d']},\Gamma_{\id} \sqcup \Gamma'_{\id})=(\id_{(F,\Lambda)[d]},\Gamma_{\id}) \amalg (\id_{(F',\Lambda')[d]},\Gamma'_{\id}). 
    \]
    To compute $[\BSDA(\id_{(F \sqcup F',\Lambda \sqcup \Lambda')[d+d']},\Gamma_{\id} \sqcup \Gamma'_{\id})]^{\Z}_{\mathrm{comb}}$, use choices $\Xi_{\gamma}$ that are the same as $\Xi_{\id} \sqcup \Xi'_{\id}$ in Definition~\ref{def:ChoicesDisjoint} (where $\Xi_{\id}$ and $\Xi'_{\id}$ are defined as in Proposition~\ref{prop:BSDARespectsIdentity}), except that we use the arc diagram $\Zc' \sqcup \Zc$ (rather than $\Zc \sqcup \Zc'$) to represent the outgoing surface $F' \sqcup F$, and correspondingly, in the ordering of arcs of $\Zc' \sqcup \Zc$, we put the arcs of $\Zc'$ before the arcs of $\Zc$. The Heegaard diagram chosen in $\Xi_{\gamma}$ is still $\Hc_{\id} \sqcup \Hc_{\id}'$, and for a generator $\x \sqcup \x'$ of $\Hc_{\id} \sqcup \Hc_{\id}'$, the intersection patterns assumed by $\Xi_{\gamma}$ imply that
    \[
    \sum_{x \sqcup x' \in \x \sqcup \x'}i(x \sqcup x')=n-k+n'-k'=k+k'
    \]
    up to a $k$- and $k'$-independent sign. For the inversion counts, we appeal to the crossing strands diagram for $\x \sqcup \x'$. 
    \begin{center}
    \begin{overpic}[scale=.25]{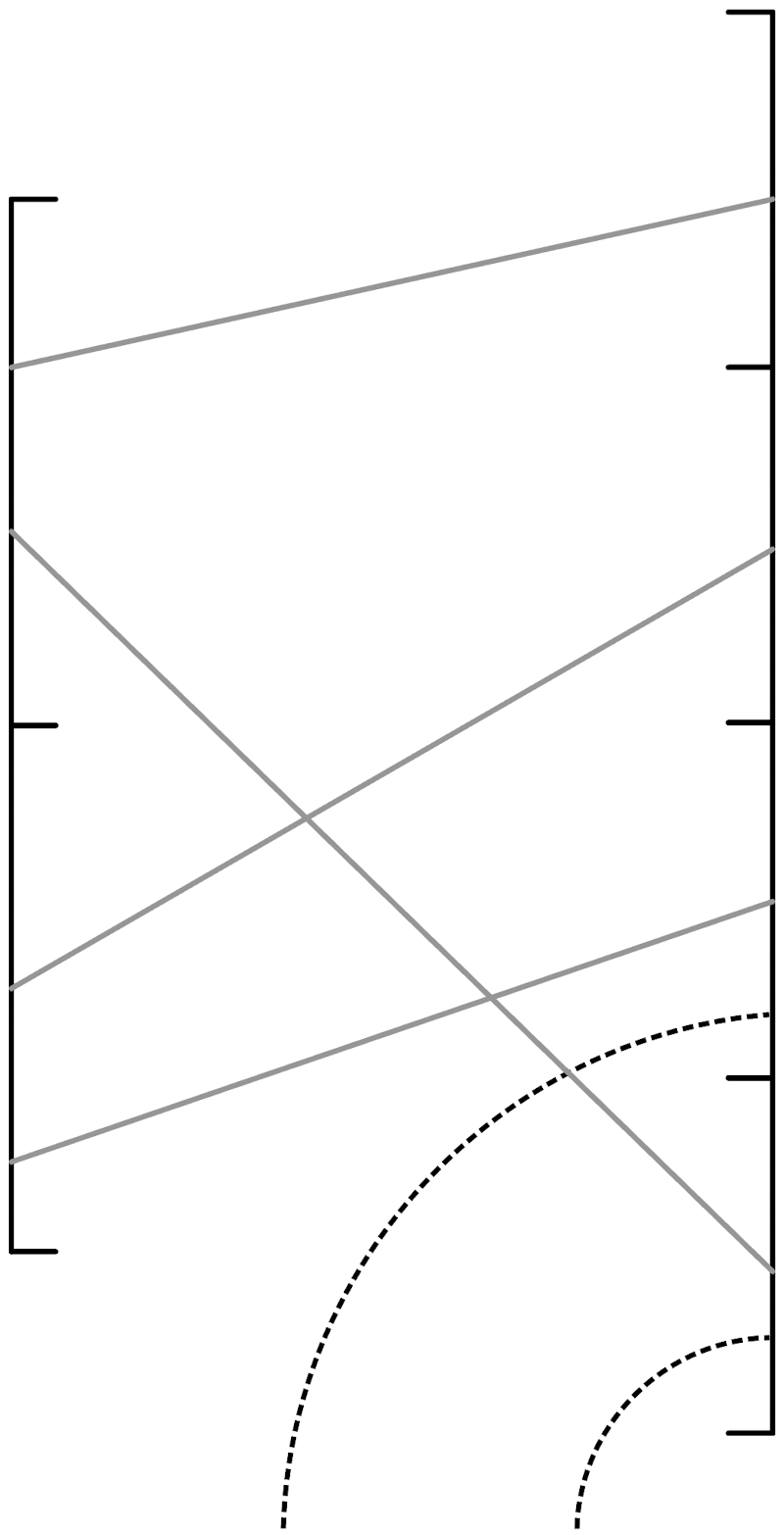}
        \put (-8,35) {$\boldsymbol{\beta}$}
        \put (-8,67) {$\boldsymbol{\beta}'$}
        \put (55,18) {$(\boldsymbol{\alpha}^{a,\mathrm{out}})'$}
        \put (55,40) {$\boldsymbol{\alpha}^{a,\mathrm{out}}$}
        \put (55,63) {$\boldsymbol{\alpha}^{a,\mathrm{in}}$}
        \put (55,88) {$(\boldsymbol{\alpha}^{a,\mathrm{in}})'$}
        \put (25,84) {$k'$}
        \put (8,62) {$n'-k'$}
        \put (35,49) {$k$}
        \put (7,21) {$n-k$}
        \put (17,-6) {$k$}
        \put (36,-6) {$k'$}    
    \end{overpic}
    \end{center}
    The diagram counts
    \[
    \mathrm{inv}(\sigma_{II^c \leftrightarrow \mathrm{std}})+\mathrm{inv}(\sigma_{I'(I')^c \leftrightarrow \mathrm{std}})+\mathrm{inv}(\sigma_{I^c I \leftrightarrow \mathrm{std}})+\mathrm{inv}(\sigma_{(I')^c I' \leftrightarrow \mathrm{std}})=k(n-k)+k'(n'-k');
    \]
    plus an additional
    \[
    k(n'-k')+(n-k)(n'-k')+k(n'-k')=(n-k)(n'-k') \mod 2.
    \]
    Lastly, the ``$ak$'' term is zero, since $a=0$, and the ``$n_1k$'' term is $(n+n')(k+k')$. Therefore, we have a total of
    \begin{align*}
        \mathrm{gr}_{\DA}(\x \sqcup \x')&=k+k'+k(n-k)+k'(n'-k')+(n-k)(n'-k')+(n+n')(k+k') \\
        &=kk' + \text{($k$-independent terms)} \mod 2. 
    \end{align*}
    Thus, 
    \begin{align*}
        \Vc^{\FN}_{\sut}(\gamma_{(F,\Lambda)[d],\,(F',\Lambda')[d']})((-1)^{k'd}\gamma_{I} \wedge \gamma_{I'})=(-1)^{k'd+kk'}\gamma_{I'} \wedge \gamma_I,
    \end{align*}
    which agrees with the other path on the nose. This proves that $\Vc^{\FN}_{\sut}$ is a symmetric monoidal functor. 
\end{proof}

\section{Relationship with SFH}\label{sec:RelationshipWithSFH}

In \cite{joiningandgluing}, Zarev outlines a general philosophy that bordered sutured Floer invariants should be understandable in terms of ordinary (non-bordered) sutured Floer invariants; idempotents in the bordered theory (for us: basis elements $\gamma^{\inrm}_I$ and $\gamma^{\out}_J$) correspond to different ways of extending the sutures on a sutured cobordism $(Y,\Gamma) \colon (F_0,\Lambda_0) \to (F_1,\Lambda_1)$ over $F_i$ to get an ordinary sutured 3-manifold. This philosophy suggests that, given basis elements $\gamma^{\inrm}_I$ and $\gamma^{\out}_J$, the matrix coefficient of $[\BSDA(Y,\Gamma;\Xi)]^{\Z}_{\comb}$ for these basis elements should be the Euler characteristic of a certain ordinary sutured Floer group. We verify this prediction here; below this analysis will help us in defining the $\Spinc$ version $[\BSDA(Y,\Gamma;\Xi)]^{\Z[H]}_{\comb}$ of $[\BSDA(Y,\Gamma;\Xi)]^{\Z}_{\comb}$.

\subsection{Zarev caps}\label{sec:ZarevCaps}

We now define a special class of bordered sutured manifolds---along with Heegaard diagrams for them---that give a standard way to build an ordinary sutured manifold from a bordered sutured one. 

\begin{definition}\label{def:ZarevCaps}
    Let $(Y,\Gamma)$ be a sutured cobordism from $(F_0,\Lambda_0)$ to $(F_1,\Lambda_1)$. Choose $\alpha$-arc diagrams $\Zc_i$ representing $(F_i,\Lambda_i)$. Let $S_0 \subset \mathbf{a}_0$ be a subset of the set of arcs of $\Zc_0$ and let $S_1 \subset \mathbf{a}_1$ be a subset of the set of arcs of $\Zc_1$. The \emph{right Zarev cap for $(F_0,\Lambda_0)$ with respect to $S_0$} (resp. \emph{left Zarev cap for $(F_1,\Lambda_1)$ with respect to $S_1$}, denoted $(Y^L_{S_1},\Gamma^L_{S_1})$), denoted $(Y^R_{S_0},\Gamma^R_{S_0})$, is the one-sided sutured cobordism from $(\emptyset,\emptyset)$ to $(F_0,\Lambda_0)$ (resp. $(F_1,\Lambda_1)$ to $(\emptyset,\emptyset)$) defined as follows.
    \begin{itemize}
        \item Let $(Y^R_{S_0})'$ (resp. $(Y^L_{S_1})'$) be the identity cobordism from $(F_0,\Lambda_0)$ (resp. $(F_1,\Lambda_1)$) to itself, i.e. the product manifold $(F_0 \times [0,1],\Lambda_0 \times [0,1])$, viewed with $F_0 \times \{0\}$ (resp. $F_1 \times \{0\}$) on the right and $F_0 \times \{1\}$ (resp. $F_0 \times \{0\}$) on the left. 
        \item Start by extending the sutures of the identity cobordism to sutures on $F_0 \times \{0\}$ (resp. $F_1 \times \{1\}$) by taking the sutures in $F_0 \times \{0\}$ (resp. $F_1 \times \{1\}$) to be a small parallel pushoff of $S^+_0 \times \{0\}$ into $F_0 \times \{0\}$ (resp. $S^+_1 \times \{1\}$ into $F_1 \times \{1\}$). These sutures define a sutured cobordism $((Y^R_{S_0})',(\Gamma^R_{S_0})')$ from $(\emptyset, \emptyset)$ to $(F_0,\Lambda_0)$ (resp. $((Y^L_{S_1})',(\Gamma^L_{S_1})')$ from $(F_1, \Lambda_1)$ to $(\emptyset, \emptyset)$) whose $R^+$ region, which we will call $(R^+_0)'$ (resp. $(R^+_1)'$), is such that $(R^+_0)' \cap (F_0 \times \{0\})$ is a thickening of $S^+_0$ into $F_0 \times \{0\}$ (resp. $(R^+_1)' \cap (F_1 \times \{1\})$ is a thickening of $S^+_1$ into $F_1 \times \{1\}$). The $R^-$ region covers all of $F_0 \times \{0\}$ (resp. $F_1 \times \{0\}$) except for this thickened neighborhood of $S^+_0$ (resp. $S^+_1$).
        \item For each arc $a_i \in S_0$ (resp. $a_i \in S_1$), modify the sutures $(\Gamma^R_{S_0})'$ (resp. $(\Gamma^L_{S_1})'$) so they follow along parallel to $a_i$, and a strip parallel to $a_i$ that was previously in the $R^-$ region is now in the $R^+$ region. The rightmost portion of Figure~\ref{fig:ZarevCapsGeneral} depicts a schematic for these modifications to $R^+$ in the case of a right Zarev cap; the left Zarev cap case is analogous. 
        \item Define $(Y^R_{S_0},\Gamma^R_{S_0})$ (resp. $(Y^L_{S_1},\Gamma^L_{S_1})$) as the sutured 3-manifold $((Y^R_{S_0})',(\Gamma^R_{S_0})')$ (resp. $((Y^L_{S_1})',(\Gamma^L_{S_1})')$) after these modifications are made to the sutures $(\Gamma^R_{S_0})'$ (resp. $(\Gamma^L_{S_1})'$). 
        \end{itemize}
    Let $(Y,\Gamma_{S_1,S_0})$ be the ordinary sutured 3-manifold defined as 
    \[
    (Y^L_{S_1},\Gamma^L_{S_1}) \circ (Y,\Gamma) \circ (Y^R_{S_0}, \Gamma^R_{S_0})=(Y^L_{S_1} \cup_{F_1} Y \cup_{F_1} Y^R_{S_0}, \Gamma^L_{S_1} \cup_{\Lambda_1} \Gamma \cup_{\Lambda_0} \Gamma^R_{S_0}).
    \]
    Let $R^+_{S_1,S_0}$ denote the $R^+$ subset of the boundary of $(Y,\Gamma_{S_1,S_0})$.
\end{definition}

\begin{figure}
    \centering
    \vspace{1.2em}
    \begin{overpic}[width=\textwidth]{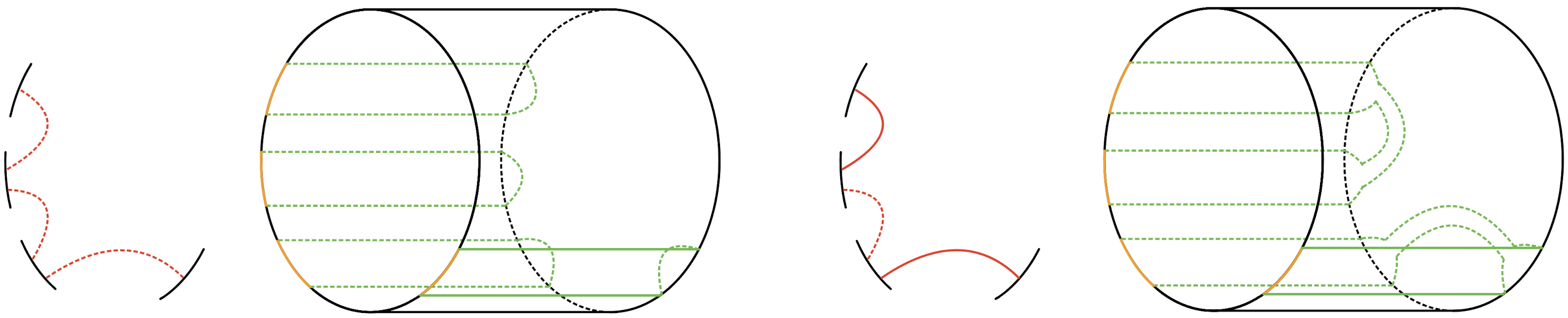}
        \put (4,11) {$\scalebox{0.8}{1}$}
        \put (4,7) {$\scalebox{0.8}{2}$}
        \put (7,5.5) {$\scalebox{0.8}{3}$}
        \put (3,-4) {$S_0=\emptyset$}
        \put (27,-4) {$(Y^R_{\emptyset},\Gamma^R_{\emptyset})$}
        \put (57,11) {$\scalebox{0.8}{1}$}
        \put (57,7) {$\scalebox{0.8}{2}$}
        \put (60,5.5) {$\scalebox{0.8}{3}$}
        \put (55,-4) {$S_0=\{1,3\}$}
        \put (79,-4) {$(Y^R_{\{1,3\}},\Gamma^R_{\{1,3\}})$}
    \end{overpic}
    \vspace{0.1in}
    \caption{Two right Zarev caps for the sutured surface $(F_0,\Lambda_0)$ parameterized by a fixed arc diagram $\Zc_0$ shown in the first and third columns, as in Definition~\ref{def:ZarevCaps}. The two caps correspond to different choices of the subset of arcs $S_0$. The cap on the left may also be viewed as the underlying manifold from which all nine right Zarev caps associated to this parameterization of the surface $(F_0,\Lambda_0)$ arise under the general construction of Definition~\ref{def:ZarevCaps}. As in Figure~\ref{fig:CobordismOrientations}, $R^+$ is left unshaded to avoid visual clutter.}
    \label{fig:ZarevCapsGeneral}
\end{figure}

There are standard Heegaard diagrams that can be used to represent Zarev caps. In our setting, it is most convenient to use $\beta$-bordered diagrams for Zarev caps; $\alpha$-bordered versions can be defined analogously.

\begin{definition}
    Let $S_1$ (resp. $S_0$) be a subset of the set of arcs $\mathbf{a}_1$ (resp. $\mathbf{a}_0$) of $\Zc_1$ (resp. $\Zc_0$). We define one-sided bordered sutured Heegaard diagrams $\Hc^L_{S_1}$ for left Zarev caps $(Y^L_{S_1},\Gamma^L_{S_1})$ (resp. $\Hc^R_{S_0}$ for right Zarev caps $(Y^R_{S_0},\Gamma^R_{S_0})$), $\beta$-bordered by $\Zc_1^*$ on the right (resp. $\Zc_0^*$ on the left), as follows: 
    \begin{itemize}
        \item Start with the half-identity diagram $\Hc^{\alpha \beta}_{1/2}$ for $\Zc_1$ (resp. $\Hc^{\beta \alpha}_{1/2}$ for $\Zc_0$), as in Example~\ref{ex:HalfIdentityDiags}, but with all $\alpha$ arcs deleted. We view $\Hc^{\alpha \beta}_{1/2}$ and $\Hc^{\beta \alpha}_{1/2}$ as having handle decompositions as in the second and fourth pictures of Figure~\ref{fig:Half identity diagram}, where the cores of the handles are the $\beta$ arcs of the diagram. In other words, start with a Heegaard surface $\Sigma \coloneq F(\Zc_1^*)$ (resp. $\Sigma \coloneq F(\Zc_0^*)$), which has one $2$-dimensional $1$-handle for each type-$(t=\beta)$ matching-arc in $\Zc_1^*$ (resp. $\Zc_0^*$), then declare the set $\boldsymbol{\beta}^a \subset \Hc^L_{S_1}$ (resp. $\boldsymbol{\beta}^a \subset \Hc^R_{S_0}$) of $\beta$-arcs to be the cores of these $1$-handles. The leftmost portion of the middle row of Figure~\ref{fig:ZarevCaps} depicts this initial diagram schematically. 
        \item Every type-$(t=\alpha)$ matching arc $a_i\in S_1$ (resp. $a_i\in S_0$) has a corresponding $2$-dimensional $1$-handle $h_i$ in $\Sigma$: the one whose core is $a_i^*$, the type-$(t=\beta)$ matching arc in $\mathbf{a}_1^*$ (resp. $\mathbf{a}_0^*$) corresponding to $a_i$. Now, for each $i$, attach an additional $2$-dimensional $1$-handle to $h_i \subset \Sigma$ in such a way that the attaching spheres of the new handle coincide with the endpoints of the co-core of $h_i$, as pictured on the rightmost portion of the middle row of Figure~\ref{fig:ZarevCaps}. 
        \item For each $i$, add an $\alpha$-circle running once along $h_i$, intersecting the corresponding $\beta$-arc exactly once.
    \end{itemize}
\end{definition}

It follows by construction that the diagrams $\Hc^L_{S_1}$ and $\Hc^R_{S_0}$ represent $(Y^L_{S_1},\Gamma^L_{S_1})$ and $(Y^R_{S_0},\Gamma^R_{S_0})$ respectively. Figure~\ref{fig:ZarevCaps} shows the diagram $\Hc^R_{S_0}$ representing a right Zarev cap.  

\begin{figure}
    \centering
    \vspace{1.2em}
    \begin{overpic}[width=0.93\textwidth]{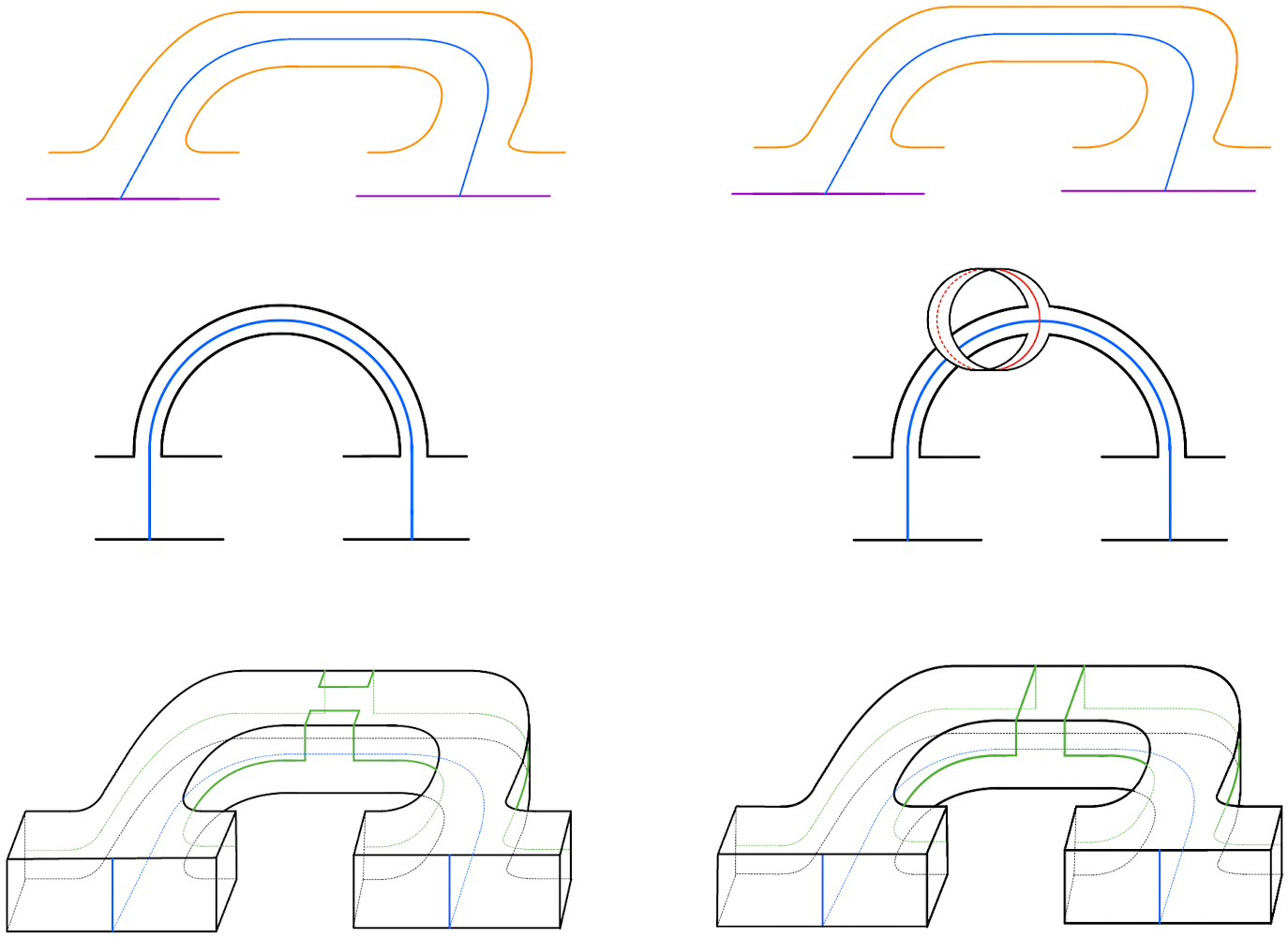}
        \put (-2.5,59) {$\cdots$}
        \put (22,59) {$\cdots$}
        \put (48,59) {$\cdots$}
        \put (76,59) {$\cdots$}
        \put (101,59) {$\cdots$}
        \put (8.5,54) {$a_i^*$}
        \put (63,54) {$a_j^*$}
        \put (-1.5,66.5) {$(F_0,\Lambda_0)$}
        \put (2,44) {$\Hc_{S_0}^{R}$}
        \put (-4,17) {$\left(Y^R_{S_0},\Gamma^R_{S_0}\right)$}
        \put (2,33) {$\cdots$}
        \put (20,33) {$\cdots$}  
        \put (50,33) {$\cdots$}
        \put (80,33) {$\cdots$}  
        \put (97,33) {$\cdots$}
        \put (22,25) {$\downarrow$}  
        \put (79,25) {$\downarrow$}
        \put (-4,5) {$\cdots$}
        \put (21,5) {$\cdots$}
        \put (48,5) {$\cdots$}
        \put (76,5) {$\cdots$}
        \put (101,5) {$\cdots$}
    \end{overpic}
    \vspace{0.1in}
    \caption{Top row: a sutured surface $(F_0,\Lambda_0)$ built from a type-$(t=\beta)$ arc diagram $\Zc_0^*$. Bottom row: the left Zarev cap $(Y^R_{S_0},\Gamma^R_{S_0})$ for $(F_0,\Lambda_0)$ with respect to $S_0$. Middle row: the Heegaard diagram $\Hc_{S_0}^{R}$ representing the Zarev cap in the bottom row. An additional $1$-handle and associated $\alpha$-circle as described in Definition~\ref{def:CappedHeegaardDiagram} are drawn on the far right, indicating that the matching arc $a_j$ belongs to the subset $S_0$; the absence of such a $1$-handle and $\alpha$-circle on the far left imply that $a_i \notin S_0$. Similar to Figure~\ref{fig:classification of surfaces}, $(F_0,\Lambda_0)$ and $\Hc_{S_0}^{R}$ have been rotated to conserve space. Again, we do not shade in $R^+$ to avoid visual clutter.}
    \label{fig:ZarevCaps}
\end{figure}

\begin{definition}\label{def:CappedHeegaardDiagram}
    Let $(Y,\Gamma)$ be a sutured cobordism from $(F_0,\Lambda_0)$ to $(F_1,\Lambda_1)$. By Corollary~\ref{cor:NormalizedHeegaardDiagrams}, there is an $\alpha$-$\alpha$ bordered sutured Heegaard diagram representing $(Y,\Gamma)$ of the form $\Hc_{\norm} = \Hc^{\alpha \beta}_{1/2} \cup_{\Zc_1^*} \Hc' \cup_{\Zc_0^*} \Hc^{\beta \alpha}_{1/2}$ where $\Hc'$ is a $\beta$-$\beta$ bordered sutured Heegaard diagram representing $(Y,\Gamma)$. Let $S_0$ (resp. $S_1$) be a subset of the set of arcs $\mathbf{a}_0$ (resp. $\mathbf{a}_1$) of $\Zc_0$ (resp. $\Zc_1$). Define the \emph{$(S_1,S_0)$–capped Heegaard diagram}
    \[
    \Hc_{S_1,S_0} := \Hc^L_{S_1} \cup_{\Zc_1^*} \Hc' \cup_{\Zc_0^*} \Hc^R_{S_0}.
    \]
\end{definition}

By construction, we have the following corollary. 
\begin{corollary}
    The diagram $\Hc_{S_1,S_0}$ of Definition~\ref{def:CappedHeegaardDiagram} is a sutured Heegaard diagram as in Definition~\ref{def:OtherTypesOfHeegaardDiagrams}(c) and represents the sutured manifold $(Y,\Gamma_{S_1,S_0})$ for any $S_0$ and $S_1$. 
\end{corollary}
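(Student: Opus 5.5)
The plan is to verify the two claims separately: first that $\Hc_{S_1,S_0}$ has no bordered boundary, and then that the sutured manifold it determines is obtained by gluing the pieces it is assembled from.

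For the first claim, the gluing operation of Definition~\ref{def:BorderedSuturedHDGluing} applies at both interfaces. The diagram $\Hc^L_{S_1}$ is $\beta$-bordered by $\Zc_1^*$, and $\Hc'$ is $\beta$-bordered by $\Zc_1^*$ on its left. Likewise $\Hc^R_{S_0}$ and $\Hc'$ are both $\beta$-bordered by $\Zc_0^*$ on their shared side. After gluing, every $\beta$-arc on an interface joins with the matching $\beta$-arc across it to form a closed $\beta$-circle. The caps contribute no further bordered boundary, because they are one-sided diagrams, from $\Zc_1^*$ to $\emptyset$ and from $\emptyset$ to $\Zc_0^*$. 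So $\Hc_{S_1,S_0}$ has $\Zc_0=\Zc_1=\emptyset$, all its $\alpha$ and $\beta$ curves are closed, and it is a sutured Heegaard diagram in the sense of Definition~\ref{def:OtherTypesOfHeegaardDiagrams}(c).

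For the second claim, I would apply the proposition following Definition~\ref{def:SuturedCobGluing}, which says $Y(\Hc'\cup_{\Zc}\Hc)\cong Y(\Hc')\cup_{F(\Zc)}Y(\Hc)$, once at each interface. Its proof treats $\alpha$- and $\beta$-interfaces alike. This gives
\[
Y(\Hc_{S_1,S_0}) \cong Y(\Hc^L_{S_1})\cup_{F(\Zc_1^*)} Y(\Hc')\cup_{F(\Zc_0^*)} Y(\Hc^R_{S_0}).
\]
Next I substitute three identifications into this formula:
\begin{itemize}
\item $F(\Zc_i^*)=F(\Zc_i)=(F_i,\Lambda_i)$;
\item $Y(\Hc')$ is $(Y,\Gamma)$, as in the remark after Corollary~\ref{cor:NormalizedHeegaardDiagrams};
\item $Y(\Hc^L_{S_1})$ and $Y(\Hc^R_{S_0})$ are the Zarev caps $(Y^L_{S_1},\Gamma^L_{S_1})$ and $(Y^R_{S_0},\Gamma^R_{S_0})$, as asserted right after their definition.
\end{itemize}
With these substitutions the right-hand side is exactly the composite $(Y,\Gamma_{S_1,S_0})$ of Definition~\ref{def:ZarevCaps}.

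The only step needing real care is the third identification, that the cap diagrams represent the caps with the correct sutures. I would check this locally for a single matching arc $a_i$ and then take disjoint unions over the arcs.

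If $a_i\notin S_i$, the relevant piece is the half-identity diagram with its $\alpha$-arc deleted. With no $\alpha$ curve there, the strip around the $\beta$-arc lies in $R^+$ on one side of $\Sigma\times[0,1]$, while the opposite side lies in $R^-$. This reproduces the unmodified sutures, which run parallel to $S^+$.

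If $a_i\in S_i$, the piece carries an extra 1-handle together with an $\alpha$-circle meeting the $\beta$-arc once. Attaching the $\alpha$ 2-handle to this circle cancels the new handle. Its boundary effect is the surgery that moves a strip parallel to $a_i$ from $R^-$ to $R^+$, which is precisely the suture modification in Definition~\ref{def:ZarevCaps}.

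Orientation conventions should not affect either case, since both are governed by Remark~\ref{rem:ChangeOfConventions}.
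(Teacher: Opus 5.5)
Your proposal is correct and is exactly the unpacking of the paper's one-line justification (``by construction''). Both interfaces are $\beta$-interfaces, so all arcs close up and $\Hc_{S_1,S_0}$ has no bordered boundary; then two applications of the gluing proposition after Definition~\ref{def:SuturedCobGluing}, together with $F(\Zc_i^*)=F(\Zc_i)$, $Y(\Hc')=(Y,\Gamma)$, and the paper's assertion that $\Hc^L_{S_1}$ and $\Hc^R_{S_0}$ represent the Zarev caps, give $(Y,\Gamma_{S_1,S_0})$ as in Definition~\ref{def:ZarevCaps}.

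One correction to your optional local check of the caps. The strip around a $\beta$-arc in $\Sigma\times\{0\}$ lies in the bordered surface of the cap, not in $R^+$. For an arc in $S_0$ or $S_1$, the strip that joins $R^+$ is the new $1$-handle of the Heegaard surface on the $\Sigma\times\{0\}$ side. The $2$-handle on the new $\alpha$-circle only surgers $R^-$, cutting it along an arc parallel to the matching arc, and it cancels that $1$-handle in the $3$-manifold.
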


\subsection{Idempotent-dependent CW complexes}\label{sec:IdempotentDepCWComplexes}

Given the setup of Definition~\ref{def:CappedHeegaardDiagram}, with subsets $S_i$ of the set of arcs $\mathbf{a}_i$ of $\Zc_i$, the constructions of Section~\ref{sec:CWDecomp} applied to the diagram $\Hc_{S_1,S_0}$ for $(Y,\Gamma_{S_1,S_0})$ give us a CW pair $(\mathfrak{Y}_{S_1,S_0}, \mathfrak{R}^+_{S_1,S_0})$, where $\mathfrak{Y}_{S_1,S_0}$ is homotopy equivalent to $Y$, depending on a choice of CW decomposition $\mathfrak{R}^+_{S_1,S_0}$ of $R^+_{S_1,S_0}$ (plus choices of orientations for $\alpha$ and $\beta$ circles of $\Hc_{S_1,S_0}$ that we will discuss below). Similarly, the diagram $\Hc_{\norm}$ for $(Y,\Gamma)$ (plus additional choices) gives a CW pair $(\mathfrak{Y},\mathfrak{R}^+)$ where $\mathfrak{Y}$ is homotopy equivalent to $Y$. We will make the following assumptions on the choices; these assumptions will let us relate $(\mathfrak{Y}_{S_1,S_0}, \mathfrak{R}^+_{S_1,S_0})$ and $(\mathfrak{Y},\mathfrak{R}^+)$.

First, note that $R^+_{S_1,S_0}$ can be viewed as $R^+$ with a strip (rectangle) glued on for each element of $S_1 \sqcup S_0$ as in the rightmost portion of Figure~\ref{fig:ZarevCapsGeneral} and the bottom right of Figure~\ref{fig:ZarevCaps}. Let $R^{+,\mathrm{strips}}_{S_1,S_0}$ denote the union of these strips, so that $R^+_{S_1,S_0} = R^+ \cup R^{+,\mathrm{strips}}_{S_1,S_0}$. The intersection $R^+ \cap R^{+,\mathrm{strips}}_{S_1,S_0}$ consists of two intervals for each element of $S_1 \sqcup S_0$.

\begin{figure}
    \centering
    \vspace{.5em}
    \begin{overpic}[width=0.85\textwidth]{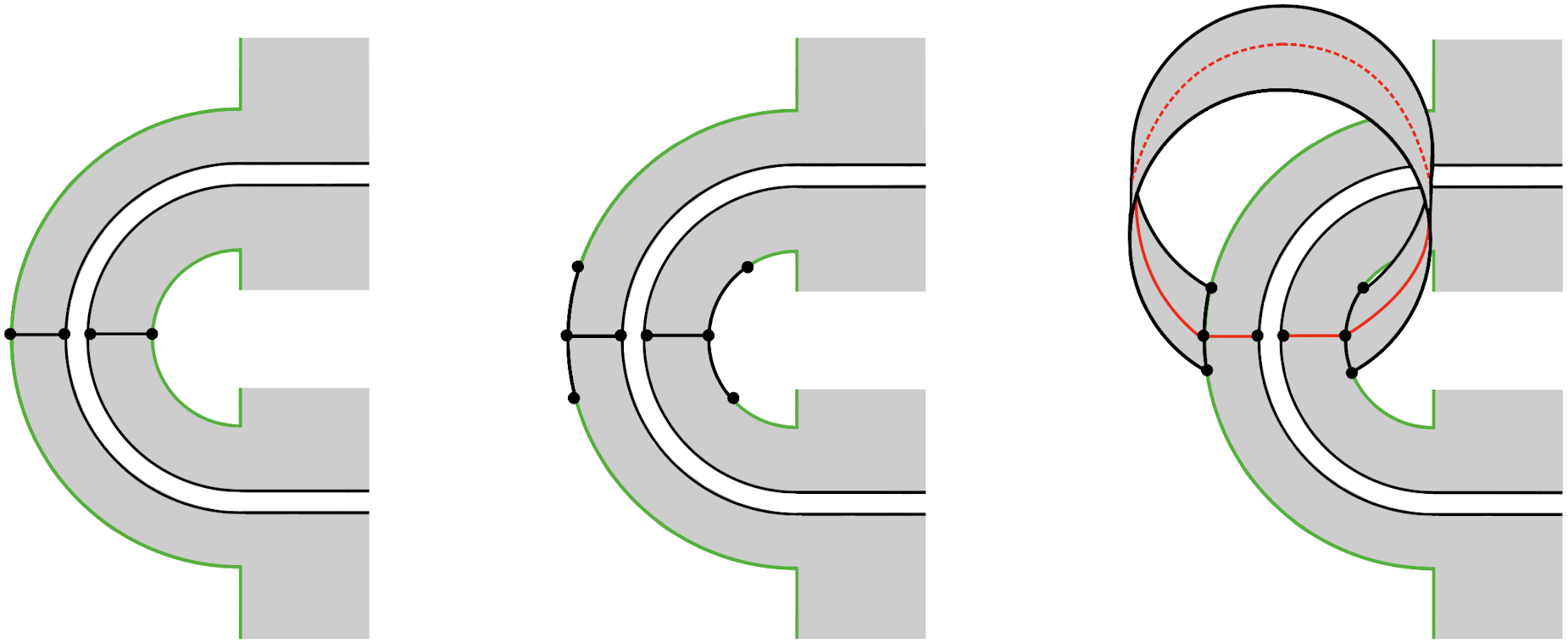}
        \put (19,18) {$\vdots$}
        \put (55,18) {$\vdots$}
        \put (95,18) {$\vdots$}
        \put (9,-6) {$\cup$ disks}
        \put (44,-6) {$\cup$ disks}
        \put (83,-6) {$\cup$ disks}
    \end{overpic}
    \vspace{2em}
    \caption{Left: CW decomposition $\mathfrak{R}^+$ of $R^+$. Middle: Adding extra cells to this CW decomposition. Right: adding $\mathfrak{R}^{+,\mathrm{strips}}_{S_1,S_0}$ to get $\mathfrak{R}^+_{S_1,S_0}$.}
    \label{fig:IdemDependentRPlus}
\end{figure}

\begin{definition}
    When choosing the CW decomposition $\mathfrak{R}^+$ of $R^+$ (see the left side of Figure~\ref{fig:IdemDependentRPlus}), include additional 0-cells and 1-cells as in the middle picture of Figure~\ref{fig:IdemDependentRPlus}, so that we have a CW decomposition of $R^+ \cap R^{+,\mathrm{strips}}_{S_1,S_0}$. Extend $\mathfrak{R}^+ \cap R^{+,\mathrm{strips}}_{S_1,S_0}$ to a CW decomposition $\mathfrak{R}^{+,\mathrm{strips}}_{S_1,S_0}$ of $R^{+,\mathrm{strips}}_{S_1,S_0}$ by adding three new 1-cells and two new 2-cells for each strip as in the right picture of Figure~\ref{fig:IdemDependentRPlus}; we have a CW decomposition $\mathfrak{R}^+_{S_1,S_0} = \mathfrak{R}^+ \cup \mathfrak{R}^{+,\mathrm{strips}}_{S_1,S_0}$ of $R^+_{S_1,S_0}$. Use this particular CW decomposition in defining the CW pair $(\mathfrak{Y}_{S_1,S_0}, \mathfrak{R}^+_{S_1,S_0})$.
\end{definition}

We have inclusions of subcomplexes
\[
\mathfrak{Y} \subset \mathfrak{Y} \cup \mathfrak{R}^{+,\mathrm{strips}}_{S_1,S_0} \subset \mathfrak{Y}_{S_1,S_0};
\]
note that $\mathfrak{Y}_{S_1,S_0}$ is obtained from $\mathfrak{Y} \cup \mathfrak{R}^{+,\mathrm{strips}}_{S_1,S_0}$ by adding one 2-cell for each element of $S_1 \sqcup S_0$. Specifically, if $\Hc_{\norm}$ has $a$ $\alpha$-circles and $b$ $\beta$ circles, then $\mathfrak{Y}_{S_1,S_0} \setminus \mathfrak{R}^+_{S_1,S_0}$ has $b$ 1-cells corresponding to the $\beta$-circles of $\mathcal{H}_{S_1,S_0}$, $a$ 2-cells corresponding to the $\alpha$-circles of $\Hc'$, and $|S_0|+|S_1|$ 2-cells corresponding to the $\alpha$-circles of $\Hc^L_{S_1} \sqcup \Hc^R_{S_0}$. These last $|S_0|+|S_1|$ 2-cells are the cells of $\mathfrak{Y}_{S_1,S_0} \setminus (\mathfrak{Y} \cup \mathfrak{R}^{+,\mathrm{strips}}_{S_1,S_0})$. We fix orientation choices as follows.

\begin{definition}\label{def:OrientationsForHJcI}
    Choose orientations of the $\alpha$ and $\beta$ circles of $\Hc_{\norm}$ satisfying the assumptions of Proposition~\ref{prop:ExpandingBasisElts}; note that the statement of this proposition also gives us orientations of the $\alpha$ arcs of $\Hc_{\norm}$. The additional orientation choices needed for $\Hc_{S_1,S_0}$ are a choice of orientation for the $\alpha$ circles corresponding to elements of $S_1 \sqcup S_0$, each of which has a corresponding $\alpha$ arc in $\Hc_{\norm}$. Assume that the orientations of these $\alpha$ circles of $\Hc_{S_1,S_0}$ are chosen compatibly with the orientations of the corresponding $\alpha$ arcs of $\Hc_{\norm}$. Equivalently, orient these $\alpha$ circles so that on the left (outgoing) side of $\Hc_{S_1,S_0}$, the local intersections in the Zarev cap diagram $\Hc^L_{S_1}$ have $\beta \cdot \alpha = -1$, while on the right (incoming) side of $\Hc_{S_1,S_0}$, the local intersections in the Zarev cap diagram $\Hc^R_{S_0}$ have $\beta \cdot \alpha = +1$.
\end{definition}

While the above orientation choices will be useful later, the following corollary holds for arbitrary choices of orientations.

\begin{corollary}\label{cor:IdemDependentCellularCxes}
    The cellular chain complex
    \[
    C_*^{\cell}(\mathfrak{Y} \cup \mathfrak{R}^{+,\mathrm{strips}}_{S_1,S_0}, \mathfrak{R}^+_{S_1,S_0})
    \]
    computes $H_*(Y,R^+)$. The cellular chain complex
    \[
    C_*^{\cell}(\mathfrak{Y}_{S_1,S_0}, \mathfrak{R}^+_{S_1,S_0})
    \]
    computes $H_*(Y,R^+_{S_1,S_0})$.
\end{corollary}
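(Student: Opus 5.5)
The first claim reduces to a cellular-homotopy statement. In $\mathfrak{Y}\cup\mathfrak{R}^{+,\mathrm{strips}}_{S_1,S_0}$, every cell outside $\mathfrak{R}^+_{S_1,S_0}$ is already a cell of $\mathfrak{Y}\setminus\mathfrak{R}^+$, since $\mathfrak{R}^+_{S_1,S_0}=\mathfrak{R}^+\cup\mathfrak{R}^{+,\mathrm{strips}}_{S_1,S_0}$. By excision for CW pairs, the inclusion
\[
(\mathfrak{Y},\mathfrak{R}^+)\hookrightarrow(\mathfrak{Y}\cup\mathfrak{R}^{+,\mathrm{strips}}_{S_1,S_0},\mathfrak{R}^+_{S_1,S_0})
\]
therefore induces an isomorphism of relative cellular chain complexes. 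More precisely, the quotients $\mathfrak{Y}/\mathfrak{R}^+$ and $(\mathfrak{Y}\cup\mathfrak{R}^{+,\mathrm{strips}})/\mathfrak{R}^+_{S_1,S_0}$ are homeomorphic as CW complexes, because $\mathfrak{Y}\cap\mathfrak{R}^{+,\mathrm{strips}}_{S_1,S_0}\subset\mathfrak{R}^+$; this intersection is the CW decomposition of $R^+\cap R^{+,\mathrm{strips}}$ that we built into $\mathfrak{R}^+$. The relative chain groups are thus generated by the same cells, and the boundary maps agree. Indeed, the attaching maps of the $\alpha$ 2-cells land in $\mathfrak{Y}$, and we only project modulo a larger subcomplex that contains no new cells of $\mathfrak{Y}$. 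By Section~\ref{sec:CWDecomp}, $C_*^{\cell}(\mathfrak{Y},\mathfrak{R}^+)$ computes $H_*(Y,R^+)$, so the first claim follows.

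For the second claim, we apply the general construction of Section~\ref{sec:CWDecomp} to the sutured Heegaard diagram $\Hc_{S_1,S_0}$. By the preceding corollary, $\Hc_{S_1,S_0}$ represents $(Y,\Gamma_{S_1,S_0})$. The construction then yields a space homotopy equivalent to $Y$ relative to $R^+_{S_1,S_0}$, presented as a nice sutured handle complex with 1-cells for the $\beta$-circles and 2-cells for the $\alpha$-circles, as in \cite[Remark 3.11]{FJR}. Its cellular chain complex relative to a CW decomposition of $R^+_{S_1,S_0}$ computes $H_*(Y,R^+_{S_1,S_0})$ via the canonical identifications cellular $\cong$ singular $\cong$ $H_*(Y,R^+_{S_1,S_0})$ used in Section~\ref{sec:CellOrientations}.

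The only point needing care is that the chosen decomposition $\mathfrak{R}^+_{S_1,S_0}=\mathfrak{R}^+\cup\mathfrak{R}^{+,\mathrm{strips}}_{S_1,S_0}$ is admissible. It must be a CW decomposition of $R^+_{S_1,S_0}$ in which the endpoints of the $\beta$ 1-cells and the attaching circles of the $\alpha$ 2-cells lie in the 1-skeleton, as required in Section~\ref{sec:CWDecomp}. The new $\alpha$ 2-cells from the Zarev caps attach along curves running over a strip and the corresponding $\beta$ 1-cell. We therefore check locally, using Figures~\ref{fig:ZarevCaps} and \ref{fig:IdemDependentRPlus}, that the three added 1-cells per strip together with that $\beta$ 1-cell form the attaching loop. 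This identification of the attaching circle with a cellular loop is the main obstacle: it is a concrete local picture check rather than a formal argument. If it fails for the specific cells, we fall back on cellular approximation: homotope the attaching maps into the 1-skeleton, which does not change the homotopy type of the pair and so does not affect the conclusion about homology. The homotopy-equivalence assertion itself is independent of this, since any CW decomposition of $R^+_{S_1,S_0}$ extending the $\beta$-cell endpoints works.
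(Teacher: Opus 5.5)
Your argument is correct and follows the paper's proof. For the first claim you use excision for CW pairs, via $\mathfrak{Y}\cap\mathfrak{R}^{+,\mathrm{strips}}_{S_1,S_0}\subset\mathfrak{R}^+$, so the relative cells and boundary maps coincide with those of $(\mathfrak{Y},\mathfrak{R}^+)$; for the second you apply the general construction of Section~\ref{sec:CWDecomp} to $\Hc_{S_1,S_0}$. The admissibility issue you flag is not a real obstacle, and no cellular approximation is needed: relative cellular chains only require each new $\alpha$ 2-cell to be attached to $\mathfrak{R}^+_{S_1,S_0}$ together with the $\beta$ 1-cells. Your local description is also slightly off: only one of the three new 1-cells per strip, the one crossing it along the core of the matching arc, lies on that attaching loop, alongside the two halves of the old $\alpha$ arc, two radial 1-cells in the $\beta$-disks, and the $\beta$ 1-cell.
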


\begin{proof}
    The second claim follows from the general setup of Section~\ref{sec:CWDecomp}. For the first claim, because $\mathfrak{R}^+_{S_1,S_0} = \mathfrak{R}^+ \cup \mathfrak{R}^{+,\mathrm{strips}}_{S_1,S_0}$ and $\mathfrak{Y} \cap \mathfrak{R}^{+,\mathrm{strips}}_{S_1,S_0} = \mathfrak{R}^+$, the claim follows from excision for CW pairs plus the fact from Section~\ref{sec:CWDecomp} that $C_*^{\cell}(\mathfrak{Y},\mathfrak{R}^+)$ computes $H_*(Y,R^+)$.
\end{proof}

\subsubsection{Canonical ambient CW complex}\label{sec:AmbientCWComplex}
Later it will be useful to have one CW complex that has all the various $\mathfrak{Y}_{J^c,I}$ as subcomplexes. Consider $I=\mathbf{a}_0 \subset \mathbf{a}_0$ and $J=\emptyset \subset \mathbf{a}_1$. These subsets maximize $|J^c|+|I|$, which equals the number of new $\alpha$-circles added to $\Hc^{\alpha \beta}_{1/2} \sqcup \Hc^{\beta \alpha}_{1/2}$ in the process of capping off. Thus, the associated CW complex $\mathfrak{Y}_{J^c,I}=\mathfrak{Y}_{\mathbf{a}_1,\mathbf{a}_0}$ contains the maximum possible number of extra $2$-cells, so that for any other $I,J$, we see that
\[
\mathfrak{Y}_{J^c,I} \subset \mathfrak{Y}_{\mathbf{a}_1,\mathbf{a}_0}.
\]
One could take this as an alternative definition of $\mathfrak{Y}_{J^c,I}$: start with the ambient CW complex $\mathfrak{Y}_{\mathbf{a}_1,\mathbf{a}_0}$ with $(n_1+n_0)$-many extra $2$-cells, and obtain $\mathfrak{Y}_{J^c,I}$ by omitting any $2$-cells corresponding to $\alpha$-circles not in $\Hc_{J^c,I}$. Likewise, we have inclusions
\[
\mathfrak{Y} \subset \mathfrak{Y}_{J^c,I} \subset \mathfrak{Y}_{\mathbf{a}_1,\mathbf{a}_0}
\]
of CW complexes.

\subsection{Matrix entries}\label{sec:MatrixEntries}
    
Let $(Y,\Gamma)$ be a sutured cobordism from $(F_0,\Lambda_0)$ to $(F_1,\Lambda_1)$. Fix a set of choices $\Xi_{\norm}$ as in Corollary~\ref{cor:CanChooseXiNormalized}, so that the Heegaard diagram for $(Y,\Gamma)$ is $\Hc_{\norm}=\Hc^{\alpha \beta}_{1/2} \cup_{\Zc_1^*} \Hc' \cup_{\Zc_0^*} \Hc^{\beta \alpha}_{1/2}$ where $\Hc'$ is a $\beta$-$\beta$ bordered diagram for $(Y,\Gamma)$. For basis elements $\gamma^{\inrm}_I$ of $\wedge^* H_1(F_0,S^+_0)$ and $\gamma^{\out}_J$ of $\wedge^* H_1(F_1,S^+_1)$, view $I$ and $J$ as sets of arcs of $\Zc_0$ and $\Zc_1$ respectively; let $J^c$ be the complement of $J$. The ordinary sutured Heegaard diagram $ \Hc_{J^c,I} \coloneq \Hc^L_{J^c} \cup_{\Zc_1^*} \Hc' \cup_{\Zc_0^*} \Hc^R_I$ represents $(Y,\Gamma_{J^c,I})$.

We note that the following lemma holds; the proof is evident.

\begin{lemma}\label{lem:GeneratorBijectionH-HLH'HR}
    The $\alpha$- and $\beta$-circles of $\Hc_{J^c,I}$ are in natural bijection with a subset of the $\alpha$- and $\beta$-curves of $\Hc_{\norm}$, namely those that are occupied in any generator $\x \in \mathfrak{S}(\Hc_{\norm})$ with $\overline{o}_L(\x) = J$ and $o_R(\x) = I$. 
\end{lemma}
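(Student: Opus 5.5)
The plan is to compare the curves of the two diagrams piece by piece, using the decompositions $\Hc_{\norm} = \Hc^{\alpha\beta}_{1/2}\cup_{\Zc_1^*}\Hc'\cup_{\Zc_0^*}\Hc^{\beta\alpha}_{1/2}$ and $\Hc_{J^c,I} = \Hc^L_{J^c}\cup_{\Zc_1^*}\Hc'\cup_{\Zc_0^*}\Hc^R_I$. They share the middle piece $\Hc'$. First, every $\alpha$-circle and $\beta$-circle lying inside $\Hc'$ is a curve of both diagrams. Every such curve is occupied by every generator of $\Hc_{\norm}$, since closed curves are occupied exactly once. So these curves match tautologically.

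Second, I would handle the $\beta$-circles that cross the interfaces $\Zc_i^*$. In $\Hc_{\norm}$, each $\beta$-arc of $\Hc'$ is glued to a $\beta$-arc of a half-identity diagram, giving a $\beta$-circle. In $\Hc_{J^c,I}$, it is instead glued to the corresponding $\beta$-arc of a Zarev cap. The cap diagrams are built from the half-identity diagrams by deleting the $\alpha$-arcs and keeping the $\beta$-arcs. So these glued $\beta$-circles correspond one-to-one, and again all of them are occupied by every generator of $\Hc_{\norm}$.

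Third, I would match the remaining $\alpha$-curves. In $\Hc_{J^c,I}$, the remaining $\alpha$-circles are the ones added in the caps: one for each arc in $J^c$ on the left and one for each arc in $I$ on the right. Each such circle runs along the handle $h_i$ and meets only the $\beta$-arc that is the core of $h_i$. The natural correspondence sends it to the $\alpha$-arc of $\Hc_{\norm}$ that meets that same $\beta$-circle, namely $\alpha^{a,\out}_j$ for $j\in J^c$ or $\alpha^{a,\inrm}_i$ for $i\in I$. By definition of $\overline{o}_L$ and $o_R$, a generator with $\overline{o}_L(\x)=J$ and $o_R(\x)=I$ occupies exactly the outgoing arcs indexed by $J^c$ and the incoming arcs indexed by $I$.

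Hence the image of the correspondence is precisely the set of curves of $\Hc_{\norm}$ occupied by any such generator. The main point needing care is that this set does not depend on the choice of $\x$, which the previous sentence shows. A secondary check is that intersection patterns near the interface agree. This holds because the cap's new $\alpha$-circle meets only its own $\beta$-arc, just as the $\alpha$-arc does in $\Hc^{\alpha\beta}_{1/2}$ or $\Hc^{\beta\alpha}_{1/2}$; that agreement will be what is needed for the later matrix comparisons.
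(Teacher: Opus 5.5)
Your proposal is correct and is exactly the piece-by-piece comparison the paper has in mind. The paper gives no proof, calling it evident. Your closing remark, that the correspondence of curves also matches the intersection points, is the extra fact the paper actually relies on when it uses this lemma in Corollary~\ref{cor:FirstMatrixEntryComp} to identify generators of $\Hc_{J^c,I}$ with generators $\x$ of $\Hc_{\norm}$ having $\overline{o}_L(\x)=J$ and $o_R(\x)=I$.
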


\begin{definition}\label{def:Xi_J^c,I}
    Given a set of choices $\Xi_{\norm}$ for $(Y,\Gamma)$, we can obtain a set of choices $\Xi_{J^c,I}$ for the purely sutured manifold $(Y,\Gamma_{J^c,I})$ as follows. 
    \begin{itemize}
        \item Use the Heegaard diagram $\Hc_{J^c,I}$ to represent $(Y,\Gamma_{J^c,I})$, as in Definition~\ref{def:CappedHeegaardDiagram}. 
        \item Let the choices of orientations of the $\alpha$- and $\beta$-circles of $\Hc_{J^c,I}$ be those chosen in Definition~\ref{def:OrientationsForHJcI} given the orientation choices included in $\Xi_{\norm}$.
        \item Let the $\beta$-circles of $\Hc_{J^c,I}$ have the same ordering as they have in $\Xi_{\norm}$. Order the $\alpha$-circles of $\Hc_{J^c,I}$ by putting the new $\alpha$ circles on the outgoing side first (ordered according to $J^c$), then the $\alpha$ circles of $\Hc_{\norm}$, then the new $\alpha$ circles on the incoming side (ordered according to $I$). 
    \end{itemize}
\end{definition}

\begin{corollary}\label{cor:FirstMatrixEntryComp}
    Let $E \in \Z$ denote the matrix entry of $[\BSDA(Y,\Gamma;\Xi_{\norm})]^{\Z}_{\comb}$ in column $\gamma^{\inrm}_I$ and row $\gamma^{\out}_J$. We have
    \[
    E = (-1)^{\mathrm{sgn}(\sigma_{JJ^c \leftrightarrow \std}) + ak + n_1 k} [\BSDA(Y,\Gamma_{J^c,I};\Xi_{J^c,I})]^{\Z}_{\comb}
    \]
    where $k = |I|$, $a$ is the number of $\alpha$-circles of $\Hc_{\mathrm{norm}}$, $n_1$ is the number of arcs of $\Zc_1$, and $\sigma_{JJ^c \leftrightarrow \std}$ is the permutation of $\{1,\ldots,n_1\}$ sending the elements of $J$ to $\{1,\ldots,|J|\}$ in order and sending the elements of $J^c$ to $\{|J|+1,\ldots,n_1\}$ in order.
\end{corollary}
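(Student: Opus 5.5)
The plan is to set up a bijection between the generators of $\Hc_{\norm}$ that contribute to the entry $E$ and the generators of the closed sutured diagram $\Hc_{J^c,I}$, and then compare the two gradings term by term.

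\textbf{Step 1: the bijection.} By Definition~\ref{def:BSDAZComb},
\[
E = \sum_{\x \in \mathfrak{S}(\Hc_{\norm}),\ o_R(\x) = I,\ \overline{o}_L(\x) = J} (-1)^{\mathrm{gr}_{\DA}(\x)}.
\]
Lemma~\ref{lem:GeneratorBijectionH-HLH'HR} says the curves occupied by such $\x$ correspond to the $\alpha$- and $\beta$-circles of $\Hc_{J^c,I}$. The occupied $\alpha$-arcs are those of $J^c$ on the outgoing side and $I$ on the incoming side. Each of these arcs meets exactly one $\beta$-circle (from the half-identity piece), at a single point. In $\Hc_{J^c,I}$ that $\beta$-circle meets the corresponding new $\alpha$-circle of the Zarev cap at a single point as well. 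Hence $\x \mapsto \x_{J^c,I}$ is a bijection onto $\mathfrak{S}(\Hc_{J^c,I})$, and by Definition~\ref{def:OrientationsForHJcI} it preserves every local intersection sign. Therefore $\sum_{x} i(x)$ agrees on both sides.

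\textbf{Step 2: matching the permutations.} Example~\ref{ex:BSDAForOrdinary} gives
\[
\mathrm{gr}_{\DA}(\x_{J^c,I}) = \sum_x i(x) + \mathrm{inv}(\sigma_{\x_{J^c,I}}),
\]
with no correction terms. The $\beta$ ordering is the same on both sides. The $\alpha$ ordering of Definition~\ref{def:Xi_J^c,I} puts the new outgoing circles first (ordered by $J^c$), then the $\alpha$-circles, then the new incoming circles (ordered by $I$). This is exactly the ordering of the occupied $\alpha$-curves of $\x$ inherited from $\Hc_{\norm}$, because $\sigma_{\x}$ only sees occupied curves, listed in the order (outgoing arcs, circles, incoming arcs) with unoccupied arcs omitted. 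So $\sigma_{\x} = \sigma_{\x_{J^c,I}}$ as permutations between ordered sets, and their inversion counts agree.

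\textbf{Step 3: collecting the leftover terms.} This leaves
\[
\mathrm{gr}_{\DA}(\x) - \mathrm{gr}_{\DA}(\x_{J^c,I}) = \mathrm{inv}(\sigma_{\overline{o}_L(\x)o_L(\x)\leftrightarrow\std}) + ak + n_1k,
\]
and the first term equals $\mathrm{inv}(\sigma_{JJ^c\leftrightarrow\std})$ since $\overline{o}_L(\x) = J$. This difference is independent of $\x$, so it factors out of the sum. What remains is $\sum_{\x_{J^c,I}} (-1)^{\mathrm{gr}_{\DA}(\x_{J^c,I})} = [\BSDA(Y,\Gamma_{J^c,I};\Xi_{J^c,I})]^{\Z}_{\comb}$. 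I read the exponent $\mathrm{sgn}$ in the statement as the inversion parity, which is consistent with this.

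\textbf{Main obstacle.} The only delicate point is Step 2. One has to check that deleting the unoccupied arcs does not change the relative order of the remaining $\alpha$-curves, and that the ordering of the new circles by $J^c$ and by $I$ matches the arc ordering within each block. Both follow directly from the conventions in Definition~\ref{def:Xi_J^c,I}, so this is bookkeeping rather than a real difficulty.
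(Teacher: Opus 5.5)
Your proposal is correct and follows the same route as the paper. The generator bijection of Lemma~\ref{lem:GeneratorBijectionH-HLH'HR} preserves each local intersection sign, by the orientation conventions of Definition~\ref{def:OrientationsForHJcI}. It also preserves the inversion count of $\sigma_{\x}$, by the orderings in Definition~\ref{def:Xi_J^c,I}.

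The only discrepancy left is therefore the $\x$-independent term $\mathrm{inv}(\sigma_{JJ^c\leftrightarrow\std}) + ak + n_1k$. It is absent for $\Hc_{J^c,I}$ because the relevant ``$k$'' and ``$n_1$'' there are zero. You spell out the bookkeeping that the paper's one-line proof leaves implicit, and your reading of $\mathrm{sgn}$ as inversion parity matches how the paper uses it later, for instance in Corollary~\ref{cor:MatrixEntryAndSFHOverZ}.
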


\begin{proof}
    By Lemma~\ref{lem:GeneratorBijectionH-HLH'HR} and the discussion of Example~\ref{ex:BSDAForOrdinary}, we can write
    \[
    [\BSDA(Y,\Gamma_{J^c,I};\Xi_{J^c,I})]^{\Z}_{\comb} = \sum_{\substack{\x \in \mathfrak{S}(\Hc): \\ \overline{o}_L(\x) = J, \\ o_R(\x) = I}} (-1)^{\left(\sum_{x \in \x} \beta_x \cdot \alpha_x \right) + \mathrm{sgn}(\sigma_{\x})};
    \]
    the terms $ak$ and $n_1 k$ in equation~\eqref{eq:DAGrading} do not appear in the exponent of the above equation because the relevant value of ``$k$'' would be zero. Plugging this formula into the statement of the corollary, we get the definition of $E$.
\end{proof}

\subsection{Balanced sutured manifolds} 

Now we relate the above discussion to sutured Floer homology, which is defined only for balanced sutured manifolds. We start by reviewing this condition.

\begin{definition}
    Let $(Y,\Gamma, R^+, R^-)$ be an ordinary (non-bordered) sutured 3-manifold. We say $(Y,\Gamma)$ is \emph{weakly balanced} if $\chi(R^+) = \chi(R^-)$ in each component of $Y$. We say $(Y,\Gamma)$ is \emph{balanced} if $(Y,\Gamma)$ is weakly balanced, has no closed components, and each component of $\partial Y$ intersects both $R^+$ and $R^-$ nontrivially (equivalently, intersects $\Gamma$ nontrivially).
\end{definition}

\begin{proposition}\label{prop:SameNumberAlphaBeta}
    For an ordinary (non-bordered) sutured Heegaard diagram $\Hc$ as in Definition~\ref{def:OtherTypesOfHeegaardDiagrams}(c), the corresponding sutured 3-manifold $(Y,\Gamma)$ is weakly balanced if and only if, on each component of the Heegaard surface, the number of $\alpha$-circles equals the number of $\beta$-circles. If $(Y,\Gamma)$ is weakly balanced, then it is balanced if and only if the Heegaard surface $\Sigma$ has no closed components and the $\alpha$ and $\beta$-circles of $\Hc$ each form linearly independent subsets of $H_1(\Sigma)$.
\end{proposition}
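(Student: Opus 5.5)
The plan is to reduce both claims to Euler-characteristic computations, using the handle decomposition from Section~\ref{sec:CWDecomp}. First we treat the case where the Heegaard surface $\Sigma$ is connected; the general case then follows componentwise, since each component of $Y$ is built from the components of $\Sigma$ lying in it.

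For the weakly balanced criterion, compute $\chi(R^+)$ and $\chi(R^-)$ in terms of $\Sigma$. By Definition~\ref{def:AssocBorderedSuturedCob}, $R^+$ is obtained from $\Sigma \times \{0\}$ by surgery along the $b$ $\beta$-circles. Each surgery removes an annulus and glues in two disks, which raises $\chi$ by $2$. Hence $\chi(R^+) = \chi(\Sigma) + 2b$, and likewise $\chi(R^-) = \chi(\Sigma) + 2a$. So $\chi(R^+) = \chi(R^-)$ exactly when $a = b$.

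The one subtlety is that components of $Y$ and components of $\Sigma$ correspond: $Y$ is $\Sigma \times I$ with 2-handles attached, and these do not change the components. Therefore the equality must hold on each component of $\Sigma$, which gives the first statement.

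For the balanced criterion, assume $a = b$ on each component, and check the remaining conditions.
\begin{itemize}
\item \emph{No closed components of $Y$.} Every component of $Y$ coming from a component of $\Sigma$ with nonempty boundary contains part of $\Gamma = \partial\Sigma \times \{1/2\}$. So a closed component of $\Sigma$ is exactly what can produce a component of $Y$ with no sutures. In that case $\partial Y$ there is either empty or a boundary component missing $\Gamma$, so $Y$ is not balanced.
\item \emph{Every boundary component of $Y$ meets $\Gamma$.} Here we use the standard fact, as in Juh\'asz's treatment of sutured diagrams, that $R^+$ has a closed component if and only if the $\beta$-circles are linearly dependent in $H_1(\Sigma)$. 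The reason is that a closed component of the surgered surface corresponds to a subsurface of $\Sigma$, disjoint from $\partial\Sigma$, whose boundary is a union of $\beta$-circles. Such a subsurface gives a nontrivial relation $\sum \pm[\beta_i] = 0$. Conversely, a relation among the $[\beta_i]$ yields such a subsurface, after cutting $\Sigma$ along the $\beta$'s and taking the complementary regions not touching $\partial\Sigma$.
\item The same argument applies to the $\alpha$-circles and $R^-$.
\end{itemize}
Closed components of $R^\pm$ are precisely the boundary components of $Y$ that miss $\Gamma$, when $\Sigma$ has no closed components. This gives the equivalence.

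The main obstacle is the homological step in the second bullet. One has to check carefully that, when $\partial\Sigma \neq \emptyset$ on every component, a dependence relation among the $\beta$-circles forces a region with no $\partial\Sigma$. This uses that $H_2(\Sigma) = 0$, so relations come only from 2-chains supported in complementary regions. The converse direction also needs care when a $\beta$-circle is separating and bounds on the side away from $\partial\Sigma$. I would handle both by working with $H_2(\Sigma, \boldsymbol{\beta})$ and the long exact sequence of the pair.
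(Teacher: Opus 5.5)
Your proposal is correct. Its first half matches the paper's argument: $\chi(R^+)=\chi(\Sigma)+2b$ and $\chi(R^-)=\chi(\Sigma)+2a$, applied componentwise because the 2-handles do not change the components of $\Sigma\times[0,1]$.

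For the balanced criterion the paper gives no direct argument. It reduces to connected $\Sigma$ and cites Juh\'asz's Proposition~2.9, adding only that independence in $H_1(\Sigma)$ and in $H_1(\Sigma;\Q)$ coincide. You instead re-derive that criterion, and the tool you settle on does the job:
\begin{itemize}
\item When no component of $\Sigma$ is closed, $H_2(\Sigma)=0$, so the exact sequence of the pair identifies $\ker\bigl(H_1(\boldsymbol{\beta})\to H_1(\Sigma)\bigr)$ with $H_2(\Sigma,\boldsymbol{\beta})$.
\item By excision and Lefschetz duality, $H_2(\Sigma,\boldsymbol{\beta})$ is free on the components of $\Sigma\setminus\nu(\boldsymbol{\beta})$ disjoint from $\partial\Sigma$.
\item These components correspond to the closed components of $R^+$, which are exactly the components of $\partial Y$ missing $\Gamma$. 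The same holds for $\boldsymbol{\alpha}$ and $R^-$.
\end{itemize}

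This is more reliable than the region-boundary heuristic in your bullet list, which needs one more sentence. The boundary of a region missing $\partial\Sigma$ gives only the trivial relation if every adjacent $\beta$-circle borders it from both sides. Excluding that case is exactly where the absence of closed components of $\Sigma$ is used.

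The paper's route is shorter. Yours is self-contained and shows where each hypothesis enters.
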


\begin{proof}
    For the first statement, we may assume without loss of generality that $\Sigma$ is connected, in which case the result follows from the first part of the proof of \cite[Proposition 2.9]{Juhasz}. In more detail, if we let $(Y_0 = \Sigma \times [0,1],\Gamma, R^+_0,R^-_0)$ be the product sutured manifold from which $(Y,\Gamma,R^+,R^-)$ is built by handle addition along the $\alpha$ and $\beta$-circles, then $R^+_0$ and $R^-_0$ are each homeomorphic to $\Sigma$, so $\chi(R^+_0) = \chi(R^-_0)$. Each $\alpha$ handle addition gives a surgery on $R^+_0$ that increases its Euler characteristic by $2$, and each $\beta$ handle addition gives a surgery on $R^-_0$ that increases its Euler characteristic by $2$; the first statement follows.

    For the second statement, we may also assume that $\Sigma$ is connected. In this case the result follows from the statement of \cite[Proposition 2.9]{Juhasz}; note that the $\alpha$- and $\beta$-circles of $\Hc$ form linearly independent subsets of $H_1(\Sigma)$ if and only if they form linearly independent subsets of $H_1(\Sigma;\Q)$.
\end{proof}

We can view an ordinary sutured 3-manifold $(Y,\Gamma)$ as a sutured cobordism from $(\emptyset, \emptyset)$ to $(\emptyset,\emptyset)$. Thus, as in Example~\ref{ex:BSDAForOrdinary}, given choices $\Xi$ we have a $\Z$-linear map $[\BSDA(Y,\Gamma;\Xi)]^{\Z}_{\comb}$ from $\wedge^0 H_1(\emptyset,\emptyset) \cong \Z$ to itself, i.e. an integer
\[
[\BSDA(Y,\Gamma;\Xi)]^{\Z}_{\comb} \in \Z.
\]

\begin{proposition}\label{prop:BSDAZeroUnlessBalanced}
    For an ordinary sutured 3-manifold $(Y,\Gamma)$, we have 
    \[
    [\BSDA(Y,\Gamma;\Xi)]^{\Z}_{\comb} = 0
    \]
    unless $(Y,\Gamma)$ is weakly balanced.
\end{proposition}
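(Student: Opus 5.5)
We argue by reducing to a componentwise count of $\alpha$- and $\beta$-circles. Fix choices $\Xi$, with sutured Heegaard diagram $\Hc$ and Heegaard surface $\Sigma$. By Example~\ref{ex:BSDAForOrdinary}, $[\BSDA(Y,\Gamma;\Xi)]^{\Z}_{\comb}$ equals a signed count of generators $\x \in \mathfrak{S}(\Hc)$. Since there are no $\alpha$- or $\beta$-arcs, such an $\x$ is exactly a choice of one intersection point on each $\alpha$-circle and each $\beta$-circle, with every circle occupied exactly once. Equivalently (Example~\ref{ex:BSDAForOrdinary}), $[\BSDA(Y,\Gamma;\Xi)]^{\Z}_{\comb} = \det M_{\Hc}$, which is interpreted as $0$ when $a \neq b$.

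The key observation is local to components of $\Sigma$. An $\alpha$-circle and a $\beta$-circle can only intersect if they lie on the same component of $\Sigma$. So a generator $\x$ restricts, on each component $\Sigma_j$ of $\Sigma$, to a bijection between the $\alpha$-circles and the $\beta$-circles lying on $\Sigma_j$. Hence $\mathfrak{S}(\Hc) = \emptyset$ unless, on every component of $\Sigma$, the number of $\alpha$-circles equals the number of $\beta$-circles. In that case the sum over generators is empty and the invariant is $0$.

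Now invoke the first statement of Proposition~\ref{prop:SameNumberAlphaBeta}: $(Y,\Gamma)$ is weakly balanced if and only if, on each component of the Heegaard surface, the numbers of $\alpha$- and $\beta$-circles agree. Taking the contrapositive, if $(Y,\Gamma)$ is not weakly balanced then some component $\Sigma_j$ has unequal counts. Then $\mathfrak{S}(\Hc)=\emptyset$, and $[\BSDA(Y,\Gamma;\Xi)]^{\Z}_{\comb}=0$.

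For a matrix-level version of the same argument: order the rows and columns of $M_{\Hc}$ by component, so that $M_{\Hc}$ is block diagonal. If some block is non-square, then either $a \ne b$ and the invariant is $0$ by convention, or $a=b$ and the block structure forces a determinant of $0$. The only care needed is that Proposition~\ref{prop:SameNumberAlphaBeta} is phrased per component of $\Sigma$, whereas weak balancedness is per component of $Y$. The proposition already mediates between the two, so there is no real obstacle.
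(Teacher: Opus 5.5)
Your proof is correct and follows the same route as the paper: use the first statement of Proposition~\ref{prop:SameNumberAlphaBeta} to get a component of the Heegaard surface with unequal numbers of $\alpha$- and $\beta$-circles, conclude that $\mathfrak{S}(\Hc)=\emptyset$, and hence that the invariant vanishes. You also make explicit why a componentwise mismatch rules out generators (circles on different components of $\Sigma$ cannot intersect), which the paper leaves implicit.
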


\begin{proof}
    Let $\Hc$ be a Heegaard diagram for $(Y,\Gamma)$. If $(Y,\Gamma)$ is not weakly balanced, Proposition~\ref{prop:SameNumberAlphaBeta} implies that the set of generators $\mathfrak{S}(\Hc)$ is empty, so $[\BSDA(Y,\Gamma;\Xi)]^{\Z}_{\comb} = 0$.
\end{proof}

\begin{remark}
    Suppose $(Y,\Gamma)$ is weakly balanced but not balanced, and let $\Hc$ be a sutured Heegaard diagram (with Heegaard surface $\Sigma$) representing $(Y,\Gamma)$. By Proposition~\ref{prop:SameNumberAlphaBeta}, either $\Sigma$ has some closed components or the set of $\alpha$ circles or $\beta$ circles is not linearly independent in $H_1(\Sigma)$. In the second case it will follow that $[\BSDA(Y,\Gamma;\Xi)]^{\Z}_{\comb} = 0$. 
    
    However, in the first case where $\Sigma$ has closed components, $[\BSDA(Y,\Gamma;\Xi)]^{\Z}_{\comb}$ may be nonzero. For example, take $\Sigma = \Sigma_g$ to be a closed orientable surface of genus $g$, and take $\Hc$ to have no $\alpha$ or $\beta$ circles. Then $Y = \Sigma \times [0,1]$ with no sutures, $R^+ = \Sigma \times \{0\}$, and $R^- = \Sigma \times \{1\}$. No data other than $\Sigma$ is needed for the choices $\Xi$, and $\mathfrak{S}(\Hc)$ has a unique element (the empty generator $\x$). It follows that $[\BSDA(Y,\Gamma;\Xi)]^{\Z}_{\comb} = 1 \in \Z$.
\end{remark}

Now let $(Y,\Gamma)$ be a sutured cobordism from $(F_0,\Lambda_0)$ to $(F_1,\Lambda_1)$. For arc diagrams $\Zc_i$ representing $(F_i,\Lambda_i)$ and basis elements $\gamma^{\inrm}_I$ for $\wedge^* H_1(F_0,S^+_0)$ and $\gamma^{\out}_J$ for $\wedge^* H_1(F_1,S^+_1)$, we consider the ordinary sutured manifold $(Y,\Gamma_{J^c,I})$.

\begin{corollary}\label{cor:J^c,IWeaklyBalChiEqualsZero}
    If $Y$ is connected, we have $\chi(Y,R^+_{J^c,I})=0$ if and only if $(Y,\Gamma_{J^c,I})$ is weakly balanced. If $Y$ is disconnected, then $(Y,\Gamma_{J^c,I})$ is weakly balanced if and only if this equation holds for each connected component of $Y$.
\end{corollary}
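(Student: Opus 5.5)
We relate $\chi(Y,R^+_{J^c,I})$ to the numbers of $\alpha$- and $\beta$-circles of the capped diagram $\Hc_{J^c,I}$, then apply Proposition~\ref{prop:SameNumberAlphaBeta}.

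Assume first that $Y$ is connected. By the second claim of Corollary~\ref{cor:IdemDependentCellularCxes}, the complex $C_*^{\cell}(\mathfrak{Y}_{J^c,I},\mathfrak{R}^+_{J^c,I})$ computes $H_*(Y,R^+_{J^c,I})$. This complex is concentrated in degrees $1$ and $2$. Its $1$-cells correspond to the $\beta$-circles of $\Hc_{J^c,I}$, and its $2$-cells to the $\alpha$-circles. Hence
\[
\chi(Y,R^+_{J^c,I}) = \#\{\alpha\text{-circles of }\Hc_{J^c,I}\} - \#\{\beta\text{-circles of }\Hc_{J^c,I}\},
\]
exactly as in the proof of Proposition~\ref{prop:DegreeOfBSDA}. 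So $\chi(Y,R^+_{J^c,I})=0$ if and only if $\Hc_{J^c,I}$ has equally many $\alpha$- and $\beta$-circles.

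Proposition~\ref{prop:SameNumberAlphaBeta} gives weak balancedness only when the counts agree on \emph{each component of the Heegaard surface}. This is the main obstacle: $\Sigma$ may be disconnected even though $Y$ is connected, and we only control the total count. I would handle it without working component by component on $\Sigma$, using weak balancedness directly. Since $Y$ is connected, $(Y,\Gamma_{J^c,I})$ is weakly balanced if and only if $\chi(R^+_{J^c,I})=\chi(R^-_{J^c,I})$. Start from $\Sigma\times[0,1]$, where $R^+_0\cong R^-_0\cong\Sigma$. As in the proof of Proposition~\ref{prop:SameNumberAlphaBeta}, each $\beta$ handle raises $\chi(R^+)$ by $2$ and each $\alpha$ handle raises $\chi(R^-)$ by $2$. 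The conventions of Definition~\ref{def:AssocBorderedSuturedCob} decide which region each handle type changes, but the conclusion is symmetric in the two. Summing over all components of $\Sigma$ then gives
\[
\chi(R^+_{J^c,I})-\chi(R^-_{J^c,I}) = \pm 2\,(\#\beta - \#\alpha).
\]
So equality of the total counts is equivalent to weak balancedness, and this argument never needs the per-surface-component condition.

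As a cross-check, I would also verify the formula through the half-lives/doubling identity $\chi(Y,R^+)=\chi(Y)-\chi(R^+)$ together with $2\chi(Y)=\chi(\partial Y)=\chi(R^+)+\chi(R^-)$, the second equality holding because $\Gamma$ consists of circles. Combining these gives $\chi(Y,R^+_{J^c,I})=\tfrac12(\chi(R^-_{J^c,I})-\chi(R^+_{J^c,I}))$ directly, so the first claim follows with no diagrammatic input at all. I would present this as the main proof and keep the handle count as motivation.

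For disconnected $Y$, weak balancedness is by definition a componentwise condition. The sutured manifold $(Y,\Gamma_{J^c,I})$ restricts to each component of $Y$, and relative Euler characteristic is additive over components. Applying the connected case to each component proves the second statement.
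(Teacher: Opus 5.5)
Your proposal is correct, and the argument you would present as the main proof differs from the paper's.

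\textbf{The paper's route.} The paper argues as in your first paragraph. The cell count behind Proposition~\ref{prop:DegreeOfBSDA}, applied to $\Hc_{J^c,I}$, gives $\chi(Y,R^+_{J^c,I})=\#\alpha-\#\beta$. Proposition~\ref{prop:SameNumberAlphaBeta} then turns equality of these counts into weak balancedness.

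\textbf{Your obstacle does not occur.} In $Y(\Hc)$ every $2$-handle is attached along a single connected circle. So the components of $Y$ are in bijection with those of the Heegaard surface, and connected $Y$ forces connected $\Sigma$. The paper can therefore invoke Proposition~\ref{prop:SameNumberAlphaBeta} directly. Your re-derivation of how the handles change $\chi(R^\pm)$ is harmless but not needed.

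\textbf{Your doubling argument.} You use $\chi(Y,R^+)=\chi(Y)-\chi(R^+)$ together with $2\chi(Y)=\chi(\partial Y)=\chi(R^+)+\chi(R^-)$. The last equality holds because $\Gamma_{J^c,I}$ consists only of circles once both bordered ends are capped. This yields $\chi(Y,R^+_{J^c,I})=\tfrac12\bigl(\chi(R^-_{J^c,I})-\chi(R^+_{J^c,I})\bigr)$ for any compact ordinary sutured manifold. The corollary then follows directly from the definition of weak balancedness, componentwise when $Y$ is disconnected, with no Heegaard diagram needed. It is consistent with the diagram count: $\chi(R^+)=\chi(\Sigma)+2b$ and $\chi(R^-)=\chi(\Sigma)+2a$ give $a-b$.

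\textbf{What each route buys.} The paper's route stays inside the cellular framework of $\Hc_{J^c,I}$, which is reused immediately afterwards. Corollary~\ref{cor:YSuturedWeaklyBalancedCond} counts circles of $\Hc_{J^c,I}$, and Proposition~\ref{prop:SpincAndLifts} uses $\chi(\mathfrak{Y}_{J^c,I},\mathfrak{R}^+_{J^c,I})=0$ for the CW pair. Your route is intrinsic and independent of diagrams, and it gives an explicit formula for the relative Euler characteristic.
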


\begin{proof}
    Without loss of generality assume that $Y$ is connected, and take a Heegaard diagram $\Hc_{J^c,I}$ as in Section~\ref{sec:MatrixEntries} to represent $(Y,\Gamma_{J^c,I})$. In Proposition~\ref{prop:DegreeOfBSDA} we argued that $\chi(Y,R^+_{J^c,I})$ equals the difference between the number of $\alpha$-circles and $\beta$-circles of $\Hc_{J^c,I}$, but by Proposition~\ref{prop:SameNumberAlphaBeta}, $(Y,\Gamma_{J^c,I})$ is weakly balanced if and only if these are equal, so it is weakly balanced if and only if $\chi(Y,R^+_{J^c,I})$ vanishes.
\end{proof}

Another way to formulate the weakly balanced condition for $(Y,\Gamma_{J^c,I})$ is as follows.

\begin{corollary}\label{cor:YSuturedWeaklyBalancedCond}
    If $(Y,\Gamma_{J^c,I})$ is connected, then $(Y,\Gamma_{J^c,I})$ is weakly balanced if and only if $|J| = |I| + c$ where $c := n_1 + \chi(Y,R^+)$. If $(Y,\Gamma_{J^c,I})$ is disconnected, then $(Y,\Gamma_{J^c,I})$ is weakly balanced if and only if the above equation holds for each connected component of $Y$.
\end{corollary}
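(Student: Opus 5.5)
The plan is to reduce directly to Corollary~\ref{cor:J^c,IWeaklyBalChiEqualsZero}: it suffices to compute $\chi(Y,R^+_{J^c,I})$ in terms of $\chi(Y,R^+)$, $|I|$ and $|J|$. Work component by component, so assume $Y$ is connected (the disconnected case then follows by applying the same computation on each component, with $n_1$, $c$, $I$, $J$ restricted to the arcs meeting that component).

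The Euler characteristic can be read off from any relative cell structure. Use the CW pairs of Section~\ref{sec:IdempotentDepCWComplexes}. By Corollary~\ref{cor:IdemDependentCellularCxes}, $C_*^{\cell}(\mathfrak{Y}\cup\mathfrak{R}^{+,\mathrm{strips}}_{J^c,I},\mathfrak{R}^+_{J^c,I})$ computes $H_*(Y,R^+)$, so its cells (relative) give $\chi(Y,R^+)$. The complex $\mathfrak{Y}_{J^c,I}$ is obtained from $\mathfrak{Y}\cup\mathfrak{R}^{+,\mathrm{strips}}_{J^c,I}$ by adding exactly one 2-cell for each element of $J^c\sqcup I$, and $C_*^{\cell}(\mathfrak{Y}_{J^c,I},\mathfrak{R}^+_{J^c,I})$ computes $H_*(Y,R^+_{J^c,I})$. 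Hence
\[
\chi(Y,R^+_{J^c,I}) = \chi(Y,R^+) + |J^c| + |I| = \chi(Y,R^+) + n_1 - |J| + |I| = c - |J| + |I|.
\]
Alternatively, one can count curves in $\Hc_{J^c,I}$ directly as in the proof of Proposition~\ref{prop:DegreeOfBSDA}: $\chi(Y,R^+_{J^c,I})$ is ($\#\alpha$-circles) minus ($\#\beta$-circles) of $\Hc_{J^c,I}$, i.e.\ $(a' + |J^c| + |I|) - b'$ where $a',b'$ count the circles of $\Hc'$ plus the glued ones, and $a'-b'$ recovers $\chi(Y,R^+)$ via $\Hc_{\norm}$ (whose $\alpha$-curves include the $n_1+n_0$ arcs but whose extra $\beta$-circles number $n_1+n_0$).

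Combining with Corollary~\ref{cor:J^c,IWeaklyBalChiEqualsZero}, $(Y,\Gamma_{J^c,I})$ is weakly balanced iff $c-|J|+|I|=0$, i.e.\ $|J|=|I|+c$. The only delicate point is bookkeeping the cell counts correctly (that the strips contribute no net relative cells and that exactly $|J^c|+|I|$ new 2-cells appear), which is exactly what Corollary~\ref{cor:IdemDependentCellularCxes} and the cell count preceding it provide; in the disconnected case one must note that $c$, $n_1$, $|I|$, $|J|$ are additive over components so the componentwise condition is the right one.
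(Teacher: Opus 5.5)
Your argument is correct and is essentially the paper's. The paper counts the $a'+|J^c|+|I|$ $\alpha$-circles and $b'+n_0+n_1$ $\beta$-circles of $\Hc_{J^c,I}$, applies Proposition~\ref{prop:SameNumberAlphaBeta} directly, and identifies $a'-b'-n_0-n_1$ with $\chi(Y,R^+)$ via the relative cells of $(\mathfrak{Y},\mathfrak{R}^+)$; this is exactly your ``alternatively'' count. Routing through Corollary~\ref{cor:J^c,IWeaklyBalChiEqualsZero} and the additivity $\chi(Y,R^+_{J^c,I})=\chi(Y,R^+)+|J^c|+|I|$ only repackages the same bookkeeping, since that corollary is itself proved by this circle count.
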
 
 
\begin{proof}
    We assume without loss of generality that $(Y,\Gamma_{J^c,I})$ is connected. Let $a'$ and $b'$ be the number of $\alpha$ and $\beta$-circles of $\Hc'$. There are $|J^c| = n_1 - |J|$ $\alpha$-circles in $\Hc^L_{J^c}$ and $|I|$ $\alpha$-circles in $\Hc^R_I$. There are also $n_0 + n_1$ $\beta$-circles of $\Hc_{J^c,I}$ that are glued from $\beta$-arcs.
        
    Thus, by Proposition~\ref{prop:SameNumberAlphaBeta}, $(Y,\Gamma_{J^c,I})$ is weakly balanced if and only if
    \[
    a' + n_1 - |J| + |I| = b' + n_0 + n_1,
    \]
    i.e.
    \begin{align*}
    |J| &= |I| + a' - b' - n_0 \\
        &= |I| + n_1 + (a' -b' - n_0 - n_1).
    \end{align*}
    We claim that $\chi(Y,R^+) = a' - b' - n_0 - n_1$. Indeed, $\chi(Y,R^+)$ can be computed as the number of 2-cells of $\mathfrak{Y} \setminus \mathfrak{R}^+$ minus the number of 1-cells of $\mathfrak{Y} \setminus \mathfrak{R}^+$. The number of 1-cells is $b' + n_0 + n_1$ and the number of 2-cells is $a'$, proving the claim.
\end{proof}

\subsection{Relating \texorpdfstring{$[\BSDA]$}{BSDA} to \texorpdfstring{$\chi(\SFH)$}{chi(SFH)}}\label{sec:RelWithSFH}

\subsubsection{Ordinary sutured 3-manifolds}\label{sec:RelWithSFHOrdinarySutured}

Suppose $(Y,\Gamma)$ is an ordinary (non-bordered) sutured 3-manifold that is weakly balanced; then by Juhasz \cite{Juhasz}, when $(Y,\Gamma)$ is balanced there is a sutured Floer homology group $\SFH(Y,\Gamma)$. We will use the conventions of Friedl--Juhasz--Rasmussen \cite{FJR} when discussing sutured Floer homology. For reasons discussed in Remark~\ref{rem:ChangeOfConventions}, when we make choices $\Xi$ (including a Heegaard diagram $\Hc$), we will be most interested in the relationship between the integer $[\BSDA(Y,\Gamma;\Xi)]^{\Z}_{\comb}$ and (in the balanced case) $\SFH$ of the sutured 3-manifold represented in the conventions of \cite{FJR} by the diagram $\Hc_{\alpha \leftrightarrow \beta}$ in which the roles of the $\alpha$ and $\beta$ curves of $\Hc$ have been reversed. Comparing Definition~\ref{def:AssocBorderedSuturedCob} with the second paragraph below Definition 2.4 of \cite{FJR}, this sutured 3-manifold is $(Y_{R^+ \leftrightarrow R^-},\Gamma_{R^+ \leftrightarrow R^-})$ where $Y_{R^+ \leftrightarrow R^-}$ is $Y$ with the same orientation (i.e. $Y_{R^+ \leftrightarrow R^-} = Y$) and $\Gamma_{R^+ \leftrightarrow R^-}$ is such that the roles of $R^+$ and $R^-$ are the reverse in $(Y,\Gamma_{R^+ \leftrightarrow R^-})$ of what they are in $(Y,\Gamma)$. 

We order and orient the $\alpha$ and $\beta$ curves of $\Hc_{\alpha \leftrightarrow \beta}$ as the $\beta$ and $\alpha$ curves of $\Hc$ respectively. By \cite[Section 2.4]{FJR} we get an orientation $\omega_{\Xi}$ of the real vector space $H_*(Y, R^-_{R^+ \leftrightarrow R^-};\R)$, or in other words of $H_*(Y,R^+;\R)$. In the special case where $\Hc$ has no $\alpha$ or $\beta$ curves, so $H_*(Y,R^+;\R)$ is the zero vector space, we will take $\omega_{\Xi}$ to be the positive orientation of the zero vector space.

By \cite[Definition 2.9]{FJR}, if $(Y,\Gamma_{R^+ \leftrightarrow R^-})$ is balanced (this holds if and only if $(Y,\Gamma)$ is balanced), there is a finitely generated abelian group $\SFH(Y, \Gamma_{R^+ \leftrightarrow R^-}, \omega_{\Xi})$, computed using the diagram $\Hc_{\alpha \leftrightarrow \beta}$, with a $\Z_2$ grading that is defined on the nose (not just up to overall shift).

\begin{remark}\label{rem:ChiSFHWhenNotBalancedNoSpinc}
    Below we will refer to $\chi(\SFH(Y,\Gamma_{R^+ \leftrightarrow R^-},\omega_{\Xi}))$ even when $(Y,\Gamma)$ is weakly balanced but not balanced. In this case there is no sutured Floer group $\SFH(Y,\Gamma_{R^+ \leftrightarrow R^-},\omega_{\Xi})$; instead we will interpret $\chi(\SFH(Y,\Gamma_{R^+ \leftrightarrow R^-},\omega_{\Xi}))$ as the sum over generators $\x$ of $\Hc_{\alpha \leftrightarrow \beta}$ of the quantity $(-1)^{b_1(Y,R^-_{R^+ \leftrightarrow R^-}) }m(\x)$ where $m(\x)$ is the sign specified by the formula in \cite[Lemma 2.8]{FJR}. Equivalently, we interpret $\chi(\SFH(Y,\Gamma_{R^+ \leftrightarrow R^-},\omega_{\Xi}))$ as the sum over generators $\x$ of $\Hc$ of the quantity $(-1)^{b_1(Y,R^+) + \sum_{x \in \x} i(x) + \mathrm{inv}(\sigma_{\x})}$; note that our intersection signs $\beta_x \cdot \alpha_x$ become ``$\alpha \cdot \beta$'' intersection signs in $\Hc_{\alpha \leftrightarrow \beta}$.
\end{remark}

\begin{proposition}\label{prop:BSDAandSFH}
    Given the above setup with $(Y,\Gamma)$ weakly balanced, we have an equality of integers
    \[
    [\BSDA(Y,\Gamma;\Xi)]^{\Z}_{\comb} = (-1)^{b_1(Y,R^+)}\chi(\SFH(Y_{R^+ \leftrightarrow R^-}, \Gamma_{R^+ \leftrightarrow R^-}, \omega_{\Xi}).
    \]
\end{proposition}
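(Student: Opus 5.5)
We compare both sides generator by generator. By Example~\ref{ex:BSDAForOrdinary}, with $F_0=F_1=\emptyset$ the correction terms $ak$, $n_1k$ and the idempotent inversion term in \eqref{eq:DAGrading} all vanish. Hence
\[
[\BSDA(Y,\Gamma;\Xi)]^{\Z}_{\comb}=\sum_{\x\in\mathfrak{S}(\Hc)}(-1)^{\sum_{x\in\x}i(x)+\mathrm{inv}(\sigma_{\x})}.
\]
Generators of $\Hc$ and of $\Hc_{\alpha\leftrightarrow\beta}$ coincide as sets of intersection points. Because we order and orient the $\alpha$ and $\beta$ curves of $\Hc_{\alpha\leftrightarrow\beta}$ as the $\beta$ and $\alpha$ curves of $\Hc$, the sign $m(\x)$ of \cite[Lemma 2.8]{FJR} is built from the same data. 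The $\alpha\cdot\beta$ intersection signs of $\Hc_{\alpha\leftrightarrow\beta}$ are exactly our $\beta_x\cdot\alpha_x$. The matching permutation of $\Hc_{\alpha\leftrightarrow\beta}$ is $\sigma_{\x}$ read with the roles of rows and columns exchanged, which has the same sign as $\sigma_{\x}$.

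So the plan has three steps.
\begin{enumerate}
\item Unpack the sign $m(\x)$ from \cite[Lemma 2.8]{FJR} in our conventions and check that $m(\x)=\pm(-1)^{\sum_x i(x)+\mathrm{inv}(\sigma_\x)}$, with a sign that is independent of $\x$.
\item Identify that global sign. The $\Z_2$-grading in \cite{FJR} is normalized by the orientation $\omega_\Xi$ of $H_*(Y,R^+;\R)$, and $\chi(\SFH)$ carries a normalization through $b_1$. I expect the global sign to be exactly $(-1)^{b_1(Y,R^-_{R^+\leftrightarrow R^-})}=(-1)^{b_1(Y,R^+)}$, which yields the stated identity.
\item In the weakly balanced but not balanced case, note that by Remark~\ref{rem:ChiSFHWhenNotBalancedNoSpinc} the right-hand side is defined as exactly this signed sum. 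The identity is then immediate, since $(-1)^{b_1}\cdot(-1)^{b_1}=1$.
\end{enumerate}

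If the \cite{FJR} formula is expressed instead via a determinant-type sign, I would argue at the level of matrices. By Proposition~\ref{prop:SignsInCellularDiff}, $\sum_{\x}$ of our signs is $\det M_{\Hc}$ (Example~\ref{ex:BSDAForOrdinary}). On the other side, \cite{FJR} shows $\chi(\SFH)$ equals, up to the $\omega_\Xi$-normalization, the torsion of $C_*^{\cell}(\mathfrak{Y},\mathfrak{R}^+)$ with its cell basis, which is $\det$ of the same boundary matrix when $a=b$. We then match the orientation conventions.

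The main obstacle is the sign bookkeeping in the first two steps. One issue is that our handle decomposition is relative to $R^+$ with $\beta$-circles as 1-handles, whereas \cite{FJR} work relative to $R^-$; the $\alpha\leftrightarrow\beta$ swap is designed to reconcile this. The other is pinning down exactly how $\omega_\Xi$ and the $(-1)^{b_1}$ convention enter the \cite{FJR} grading. I would first verify the sign in the trivial case where $\Hc$ has no curves: both sides are $1$ with $\omega_\Xi$ positive. I would then check that each individual choice in $\Xi$ changes both sides by the same sign. These choices are swapping two $\beta$ (or $\alpha$) circles in the ordering, and reversing the orientation of a single circle. In each case both $\det M_\Hc$ and $\omega_\Xi$ flip by the same sign, so consistency reduces to one base comparison.
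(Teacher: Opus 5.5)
Your plan is the paper's proof: a termwise comparison of Definition~\ref{def:BSDAZComb} with the sign $m(\x)$ of \cite[Lemma 2.8]{FJR} for $\Hc_{\alpha\leftrightarrow\beta}$, where the correction terms vanish (Example~\ref{ex:BSDAForOrdinary}) and the intersection and permutation signs match. The weakly balanced but not balanced case is handled by Remark~\ref{rem:ChiSFHWhenNotBalancedNoSpinc}, and the factor $(-1)^{b_1(Y,R^+)}$ comes directly from FJR's normalization recorded in that remark, so it needs no separate derivation. That matters, because your fallback of checking the curveless diagram and then varying orderings and orientations cannot fix the global sign: those moves never change the underlying diagram $\Hc$, so they never connect a general diagram to the curveless one.
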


\begin{proof}
    This follows by comparing \cite[Lemma 2.8]{FJR} with our Definition~\ref{def:BSDAZComb} (in the balanced case) or Remark~\ref{rem:ChiSFHWhenNotBalancedNoSpinc} (in the weakly balanced but not balanced case). 
\end{proof}

\subsubsection{Sutured cobordisms}

Now let $(Y,\Gamma)$ be a sutured cobordism from $(F_0,\Lambda_0)$ to $(F_1,\Lambda_1)$, and fix a set of choices $\Xi_{\norm}$ as in Corollary~\ref{cor:CanChooseXiNormalized}. Let $\gamma^{\inrm}_I$ and $\gamma^{\out}_J$ be basis elements of $\wedge^* H_1(F_0,S^+_0)$ and $\wedge^* H_1(F_1,S^+_1)$ respectively. 

As in Corollary~\ref{cor:FirstMatrixEntryComp}, let $E$ be the matrix entry of the map $[\BSDA(Y,\Gamma;\Xi_{\norm})]^{\Z}_{\comb}$ in column $\gamma^{\inrm}_I$ and row $\gamma^{\out}_J$.

\begin{corollary}\label{cor:MatrixEntryAndSFHOverZ}
    If $(Y,\Gamma_{J^c,I})$ is not weakly balanced, then $E = 0$. Otherwise, we have
    \[
    E = (-1)^{b_1(Y,R^+_{J^c,I}) + \mathrm{inv}(\sigma_{JJ^c \leftrightarrow \std}) + ak + n_1 k} \chi(\SFH(Y, \Gamma_{J^c,I;R^+ \leftrightarrow R^-}, \omega_{\Xi_{J^c,I}}))
    \]
    where $(Y,\Gamma_{J^c,I;R^+ \leftrightarrow R^-})$ is obtained from $(Y,\Gamma_{J^c,I})$ by reversing the roles of $R^+$ and $R^-$ while keeping the orientation of $Y$ unchanged.
\end{corollary}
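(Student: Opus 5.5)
The plan is to chain together three earlier results: Corollary~\ref{cor:FirstMatrixEntryComp}, Proposition~\ref{prop:BSDAZeroUnlessBalanced} and Proposition~\ref{prop:BSDAandSFH}.

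First, Corollary~\ref{cor:FirstMatrixEntryComp} expresses $E$, up to the explicit sign $(-1)^{\mathrm{inv}(\sigma_{JJ^c \leftrightarrow \std}) + ak + n_1 k}$, as the integer $[\BSDA(Y,\Gamma_{J^c,I};\Xi_{J^c,I})]^{\Z}_{\comb}$. (The ``$\mathrm{sgn}$'' written there should be read as the inversion count, as in equation~\eqref{eq:DAGrading}.) This integer is associated to the ordinary sutured manifold $(Y,\Gamma_{J^c,I})$ represented by the capped diagram $\Hc_{J^c,I}$.

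If $(Y,\Gamma_{J^c,I})$ is not weakly balanced, Proposition~\ref{prop:BSDAZeroUnlessBalanced} gives $[\BSDA(Y,\Gamma_{J^c,I};\Xi_{J^c,I})]^{\Z}_{\comb}=0$, and hence $E=0$. As a cross-check, Corollary~\ref{cor:YSuturedWeaklyBalancedCond} shows this is exactly the case $|J|\neq |I|+c$ on some component. The numbers of $\alpha$- and $\beta$-circles of $\Hc_{J^c,I}$ then differ on some component of the Heegaard surface, so no generator $\x$ with $\overline{o}_L(\x)=J$ and $o_R(\x)=I$ exists. This is consistent with the generator bijection of Lemma~\ref{lem:GeneratorBijectionH-HLH'HR}.

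If $(Y,\Gamma_{J^c,I})$ is weakly balanced, apply Proposition~\ref{prop:BSDAandSFH} to $(Y,\Gamma_{J^c,I})$ with choices $\Xi_{J^c,I}$. Its relative $R^+$ region is $R^+_{J^c,I}$, so this replaces the integer by $(-1)^{b_1(Y,R^+_{J^c,I})}\chi(\SFH(Y,\Gamma_{J^c,I;R^+\leftrightarrow R^-},\omega_{\Xi_{J^c,I}}))$. Combining the two signs yields the stated formula.

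The only point needing care is bookkeeping. We must check that the orientation $\omega_{\Xi_{J^c,I}}$ is the one produced by the ordering and orientation conventions of Definition~\ref{def:Xi_J^c,I} after swapping $\alpha\leftrightarrow\beta$. In the weakly-balanced-but-unbalanced case, $\chi(\SFH)$ must be read via Remark~\ref{rem:ChiSFHWhenNotBalancedNoSpinc}. Both points are built into the cited proposition, so no new computation is required.
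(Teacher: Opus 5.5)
Your proposal is correct and is exactly the paper's argument: the paper proves this corollary in one line by combining Corollary~\ref{cor:FirstMatrixEntryComp}, Proposition~\ref{prop:BSDAZeroUnlessBalanced}, and Proposition~\ref{prop:BSDAandSFH}, just as you do. Your reading of the $\mathrm{sgn}$ in Corollary~\ref{cor:FirstMatrixEntryComp} as the inversion count is also the intended one.
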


\begin{proof}
    This follows from Corollary~\ref{cor:FirstMatrixEntryComp}, Proposition~\ref{prop:BSDAZeroUnlessBalanced}, and Proposition~\ref{prop:BSDAandSFH}.
\end{proof}

We can also interpret $\A^{\Z}_{Y,\Gamma}(i_* \gamma^{\inrm}_I \wedge i_* \gamma^{\out}_{J^c})$ directly as a sign times the Euler characteristic of a sutured Floer homology group. Assume that $|J| = |I| + n_1 + \chi(Y,R^+)$, so that $|I| + |J^c| = -\chi(Y,R^+)$ and $\A^{\Z}_{Y,\Gamma}(i_* \gamma^{\inrm}_I \wedge i_* \gamma^{\out}_{J^c})$ makes sense.

\begin{corollary}\label{cor:SFHInterpOfAlexFunctor}
    If $(Y,\Gamma_{J^c,I})$ is not weakly balanced, then $\A^{\Z}_{Y,\Gamma}(i_* \gamma^{\inrm}_I \wedge i_* \gamma^{\out}_{J^c}) = 0$. Otherwise, we have
    \[
    \A^{\Z}_{Y,\Gamma}(i_* \gamma^{\inrm}_I \wedge i_* \gamma^{\out}_{J^c}) = (-1)^{b_1(Y,R^+_{J^c,I}) + ak + n_1k + ck} \chi(\SFH(Y, \Gamma_{J^c,I;R^+ \leftrightarrow R^-}, \omega_{\Xi_{J^c,I}}))
    \]
    up to the global (independent of $I$, $J$, and $k$) sign ambiguity in the definition of $\A^{\Z}_{Y,\Gamma}$.
\end{corollary}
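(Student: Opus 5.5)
The plan is to combine Corollary~\ref{cor:MatrixEntryAndSFHOverZ} with the identity between the matrix entry $E$ and the Alexander function that already appears in the proof of Theorem~\ref{thm:BSDAAlexanderZ}. Fix choices $\Xi_{\norm}$ as in Corollary~\ref{cor:CanChooseXiNormalized} with the orientation conventions of Proposition~\ref{prop:ExpandingBasisElts}. Let $E$ be the entry of $[\BSDA(Y,\Gamma;\Xi_{\norm})]^{\Z}_{\comb}$ in column $\gamma^{\inrm}_I$ and row $\gamma^{\out}_J$, with $k=|I|$ and $l=|J|=k+c$.

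The proof of Theorem~\ref{thm:BSDAAlexanderZ} shows that, up to a sign independent of $I$, $J$ and $k$,
\[
\A^{\Z}_{Y,\Gamma}(i_*\gamma^{\inrm}_I\wedge i_*\gamma^{\out}_{J^c}) = (-1)^{\mathrm{inv}(\sigma_{JJ^c\leftrightarrow\std})+ck}\,E .
\]
This holds both when $H_2(Y,R^+)\neq 0$, where both sides vanish, and when $H_2(Y,R^+)=0$. I would re-read that computation to confirm that the only discarded signs are global ones, such as $(-1)^{n_1-c}$ and the $k$-independent parts of $(-1)^{c(n_1-k-c)}$. Since $l=k+c$ is forced, any sign that depends only on $k$ and the fixed constants is legitimately global once $k$ is fixed. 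Corollary~\ref{cor:MatrixEntryAndSFHOverZ} still has to be applied with these same choices $\Xi_{\norm}$, but its sign involves only $k$ and fixed constants, so the two sign conventions are compatible.

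If $(Y,\Gamma_{J^c,I})$ is not weakly balanced, Corollary~\ref{cor:MatrixEntryAndSFHOverZ} gives $E=0$, so the Alexander function value also vanishes. Otherwise I substitute the formula for $E$ from Corollary~\ref{cor:MatrixEntryAndSFHOverZ}. The two factors $(-1)^{\mathrm{inv}(\sigma_{JJ^c\leftrightarrow\std})}$ cancel, leaving
\[
(-1)^{b_1(Y,R^+_{J^c,I})+ak+n_1k+ck}\,\chi(\SFH(Y,\Gamma_{J^c,I;R^+\leftrightarrow R^-},\omega_{\Xi_{J^c,I}})),
\]
which is the claim.

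The main obstacle is the exact dependence of the global sign on the indices. The statement fixes only a sign independent of $I$, $J$ and $k$, but the identity from Theorem~\ref{thm:BSDAAlexanderZ} was derived after discarding terms that are merely $k$-independent. I would verify that each discarded term is a polynomial in the constants $n_0,n_1,a,c$ alone. I would also check that Corollary~\ref{cor:MatrixEntryAndSFHOverZ} uses the same orderings as the normalized determinant in that proof; Definition~\ref{def:Xi_J^c,I} makes this true by construction.
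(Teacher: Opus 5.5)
Your proposal is correct and is essentially the paper's own proof. The paper likewise takes $\A^{\Z}_{Y,\Gamma}(i_*\gamma^{\inrm}_I\wedge i_*\gamma^{\out}_{J^c}) = (-1)^{\mathrm{inv}(\sigma_{JJ^c\leftrightarrow\std})+ck}E$ from the proof of Theorem~\ref{thm:BSDAAlexanderZ}, substitutes Corollary~\ref{cor:MatrixEntryAndSFHOverZ}, and the signs discarded along the way, namely $(-1)^{c(n_1-c)}$, $(-1)^{(n_1-c)(a+1)}$ and $(-1)^{n_1-c}$, do depend only on the constants $n_1$, $a$, $c$, as you planned to check.
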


\begin{proof}
    If $E$ is defined as above, then by the proof of Theorem~\ref{thm:BSDAAlexanderZ} we have
    \[
    \A^{\Z}_{Y,\Gamma}(i_* \gamma^{\inrm}_I \wedge i_* \gamma^{\out}_{J^c}) = (-1)^{\mathrm{inv}(\sigma_{JJ^c \leftrightarrow \std}) + ck} E
    \]
    Thus, by Corollary~\ref{cor:MatrixEntryAndSFHOverZ}, if $(Y,\Gamma_{J^c,I})$ is not weakly balanced then $\A^{\Z}_{Y,\Gamma}(i_* \gamma^{\inrm}_I \wedge i_* \gamma^{\out}_{J^c}) = 0$. If $(Y,\Gamma_{J^c,I})$ is weakly balanced, then 
    \begin{align*}
        \A^{\Z}_{Y,\Gamma}(i_* \gamma^{\inrm}_I \wedge i_* \gamma^{\out}_{J^c}) &= (-1)^{\mathrm{inv}(\sigma_{JJ^c \leftrightarrow \std}) + ck} E\\
        &= (-1)^{b_1(Y,R^+_{J^c,I}) + ak + n_1k + ck} \chi(\SFH(Y, \Gamma_{J^c,I;R^+ \leftrightarrow R^-}, \omega_{\Xi_{J^c,I}}));
    \end{align*}
    the second equality follows from Corollary~\ref{cor:MatrixEntryAndSFHOverZ}. 
         
    We also note that we can prove this corollary directly; as in the proof of Theorem~\ref{thm:BSDAAlexanderZ}, the definition of $\A^{\Z}_{Y,\Gamma}(i_* \gamma^{\inrm}_I \wedge i_* \gamma^{\out}_{J^c})$ gives us
    \[
    \A^{\Z}_{Y,\Gamma}(i_* \gamma^{\inrm}_I \wedge i_* \gamma^{\out}_{J^c})
    =\det \kbordermatrix{
    & \alpha^c & \alpha^{\inrm}_I & \alpha^{\out}_{J^c} \\
    \beta^{\out}  & * & 0 & -*_{J^c}\\
    \beta^c & * & 0 & 0\\
    \beta^{\inrm} & * & -*_I & 0}
    \]
    whereas computing $\chi(\SFH(Y, \Gamma_{J^c,I;R^+ \leftrightarrow R^-}, \omega_{\Xi_{J^c,I}}))$ directly from the definitions (Remark~\ref{rem:ChiSFHWhenNotBalancedNoSpinc} in the weakly balanced but not balanced case) gives
    \[
    \chi(\SFH(Y, \Gamma_{J^c,I;R^+ \leftrightarrow R^-}, \omega_{\Xi_{J^c,I}})) = (-1)^{b_1(Y,R^+_{J^c,I})} \det \kbordermatrix{
    & \alpha^{\out}_{J^c} & \alpha^c & \alpha^{\inrm}_I  \\
    \beta^{\out} & -*_{J^c}  & * & 0 \\
    \beta^c & 0 & * & 0\\
    \beta^{\inrm} & 0 & * & *_I} 
    \]
    The first determinant is $(-1)^{k + (n_1 - k - c)(a + k)}$ times the second determinant, and up to global terms that are independent of $k$, the sign is equal to $(-1)^{ak + n_1k + ck}$.
\end{proof}

\section{\texorpdfstring{$\Spinc$}{Spin-c} structures}\label{sec:SpincStructures}

\subsection{Relative \texorpdfstring{$\Spinc$}{Spin-c} structures for generators}

Let $(Y,\Gamma)$ be a sutured cobordism from $(F_0,\Lambda_0)$ to $(F_1,\Lambda_1)$ and fix the set of choices $\Xi_{\norm}$ as in Corollary~\ref{cor:CanChooseXiNormalized}, so that the Heegaard diagram representing $(Y,\Gamma)$ is $\Hc_{\norm}=\Hc^{\alpha \beta}_{1/2} \cup_{\Zc_1^*} \Hc' \cup_{\Zc_0^*} \Hc^{\beta \alpha}_{1/2}$ where $\Hc'$ is a $\beta$-$\beta$ bordered diagram for $(Y,\Gamma)$. On the boundary of the purely sutured manifold $(Y,\Gamma_{J^c,I})$, there is a canonical vector field $v_{J^c,I}$ (a section of $TY$ restricted to the boundary), defined up to a contractible space of choices as discussed in \cite[Notation 4.1]{Juhasz}. This vector field points out of $Y$ along $R^+$ and into $Y$ along $R^-$. Along the sutures $\Gamma$, it points tangentially to $Y$, directed from the $R^-$ region to the $R^+$ region.

By \cite[Definition 4.2]{Juhasz}, there is a set $\Spinc(Y, \partial Y, v_{J^c,I})$ of $\Spinc$ structures on $Y$ relative to the vector field $v_{J^c,I}$ on the boundary. By \cite[Lemma 2.3]{FJR}, this set is nonempty if and only if $(Y,\Gamma_{J^c,I})$ is weakly balanced, in which case it is an affine space for $H^2(Y, \partial Y) \cong H_1(Y)$. 

By \cite[Definition 4.5]{Juhasz} (see also \cite[Definition 2.2]{FJR}), each generator of the diagram $\Hc_{J^c,I}=\Hc^L_{J^c} \cup_{\Zc_1^*} \Hc' \cup_{\Zc_0^*} \Hc^R_I$ for $(Y,\Gamma_{J^c,I})$ gives us an element of $\Spinc(Y,\partial Y, v_{J^c,I})$. Under the bijection outlined in Lemma~\ref{lem:GeneratorBijectionH-HLH'HR}, we get the following association of $\Spinc$ structures to generators of $\Hc_{\norm}$.

\begin{definition}\label{def:SpincStrsOfGens}
    For a generator $\x$ of $\Hc_{\norm}=\Hc^{\alpha \beta}_{1/2} \cup_{\Zc_1^*} \Hc' \cup_{\Zc_0^*} \Hc^{\beta \alpha}_{1/2}$ with $o_L(\x) = J^c$ and $o_R(\x) = I$, and corresponding generator $\x_L \cup \x' \cup \x_R$ of $\Hc_{J^c,I}=\Hc^L_{J^c} \cup_{\Zc_1^*} \Hc' \cup_{\Zc_0^*} \Hc^R_I$, the \emph{relative $\Spinc$ structure} $\mathfrak{s}^{\rel}(\x)$ \emph{of} $\x$ is defined to be the element of $\Spinc(Y,\partial Y, v_{J^c,I})$ that is associated to $\x_L \cup \x' \cup \x_R$ by \cite[Definition 4.5]{Juhasz}.
\end{definition}

\begin{remark}\label{rem:MorseFunctionConventions}
    In \cite[Definition 4.5]{Juhasz}, the $\Spinc$ structure associated to a generator of a Heegaard diagram $\Hc_{J^c,I}$ for a sutured 3-manifold $(Y,\Gamma_{J^c,I})$ is obtained by choosing a Morse function $f$ on $Y$, compatible with the handle decomposition of $Y$ coming from $\Hc_{J^c,I}$, such that the $\alpha$ handles give index-1 critical points of $f$ and the $\beta$ handles give index-2 critical points for $f$. Then, for $\x \in \mathfrak{S}(\Hc_{J^c,I})$, the vector field is obtained by modifying the upward gradient flow of $f$ in a neighborhood of the gradient flow lines determined by $\x$ between index 1 and 2 critical points.

    By contrast, as in Remark~\ref{rem:ChangeOfConventions}, we will choose a Morse function $f$ on $Y$ such that the $\beta$ handles give index-1 critical points and the $\alpha$ handles give index-2 critical points. For $\x \in \mathfrak{S}(\Hc_{J^c,I})$, our vector field is obtained by modifying the downward gradient flow of $f$ rather than the upward gradient flow. Thus, despite our difference in conventions for whether $\alpha$ or $\beta$ handles correspond to index $1$ or $2$ critical points, the relative $\Spinc$ structures we will consider are the same as those in \cite{Juhasz}.
\end{remark}

\begin{remark}
    The canonical vector field on the boundary of $(Y,\Gamma_{J^c,I;R^+ \leftrightarrow R^-})$ is $-v_{J^c,I}$, where we multiply the vectors in $v_{J^c,I}$ by $-1$. If we view $\x_L \cup \x' \cup \x_R$ as a generator of the diagram $(\Hc_{J^c,I})_{\alpha \leftrightarrow \beta}$ for $(Y,\Gamma_{J^c,I;R^+ \leftrightarrow R^-})$, then its corresponding element of $\Spinc(Y,\partial Y, -v_{J^c,I})$ is $-\mathfrak{s}^{\rel}(\x)$ where the negative sign means ``multiply the nonvanishing vector field by $-1$.''
\end{remark}

\begin{remark}
    The above constructions assign, to each generator $\x$ of the bordered sutured Heegaard diagram $\Hc$, a homology class of nonvanishing vector fields on $Y$ with certain suture-determined behavior on $\partial Y$, including on the ``bordered'' parts $F_i$ of $\partial Y$. It is interesting to compare with \cite[Section 4.3]{LOT} and \cite[Section 4.5]{Zarev}, as well as \cite{GrippYangBordered}, where a generator $\x$ of $\Hc$ is also assigned a homology class (or homotopy class in \cite{GrippYangBordered}) of nonvanishing vector fields on $Y$ with certain behavior on $\partial Y$. In these references, the vector field on the boundary is slightly different than the one we use here. When modifying the gradient vector field to make it nonvanishing, the modifications in $F_i \subset \partial Y$ are performed only in a neighborhood of each of the boundary critical points of the Morse function $f$ on $Y$. Outside these neighborhoods, the vector field is still tangent to the boundary of $Y$ everywhere on $F_i$. By contrast, the vector fields we consider are only tangent to the boundary of $Y$ along the 1-dimensional sutures $\Gamma_{J^c,I}$.
\end{remark}

\subsection{\texorpdfstring{$\Spinc$}{Spin-c} version of \texorpdfstring{$[\BSDA]$}{[BSDA]} depending on reference \texorpdfstring{$\Spinc$}{Spin-c} structures}

For each choice of $I,J$ such that $(Y,\Gamma_{J^c,I})$ is weakly balanced (i.e. $|J| = |I| + c$ in each component of $Y$), fix a reference element $\mathfrak{s}_{J^c,I}$ of $\Spinc(Y, \partial Y, v_{J^c,I})$, which we call a \emph{reference $\Spinc$ structure}. We can use $-\mathfrak{s}_{J^c,I}$ as a reference element of $\Spinc(Y, \partial Y, -v_{J^c,I})$; we will let $\s_{\mathrm{ref}}$ denote this element, so that $\mathfrak{s}_{J^c,I} = -\s_{\mathrm{ref}} \in \Spinc(Y,\partial Y, v_{J^c,I})$. Given these choices, we can make the following definition. Let $H = H_1(Y)$.

\begin{definition}\label{def:IntrinsicDegrees}
 Define a grading by $H$ on generators $\x$ of $\Hc_{\norm}$ with $o_L(\x) = J^c$ and $o_R(\x) = I$ by comparing $\mathfrak{s}^{\rel}(\x)$ with the reference element $\mathfrak{s}_{J^c,I}$ of $\Spinc(Y, \partial Y, v_{J^c,I})$, or in other words
\[
\mathrm{gr}_{H} \x := \mathfrak{s}^{\rel}(\x) -\mathfrak{s}_{J^c,I} \in H.
\] 
\end{definition}

\begin{remark}
    In Definition~\ref{def:IntrinsicDegrees} we are writing $H$ additively; in the context of $\Z[H]$, by contrast, we write $H$ multiplicatively.
\end{remark}

\begin{definition}\label{def:BSDADepOnRefSpinc}
    Let $\Xi_{\mathfrak{s}}$ denote the following set of data: first, take any set of choices $\Xi_{\norm}$ satisfying the criteria of Corollary~\ref{cor:CanChooseXiNormalized}. Next, for each choice of $I,J$ such that $(Y,\Gamma_{J^c,I})$ is weakly balanced, make a choice of reference element $\mathfrak{s}_{J^c,I}$ of $\Spinc(Y, \partial Y, v_{J^c,I})$. Given these choices, let
    \[\BSDA(Y,\Gamma;\Xi_{\mathfrak{s}})]^{\Z[H]}_{\comb;\, \mathrm{concrete}} \colon \wedge^* H_1(F_0,S^+_0) \otimes_{\Z} \Z[H] \to \wedge^* H_1(F_1,S^+_1) \otimes_{\Z} \Z[H]
    \]
    denote the $\Z[H]$-linear map determined by its action on a basis element $\gamma_I^{\inrm}$ of $\wedge^* H_1(F_0,S^+_0)$, given by
    \[
    \gamma^{\inrm}_I \mapsto \sum_{\substack{\x \in \mathfrak{S}(\Hc_{\norm}): \\ \overline{o}_L(\x) = J, \\ o_R(\x) = I}} (-1)^{\mathrm{gr}_{\DA}(\x)} \cdot \left( \mathrm{gr}_H(\x) \in H \right) \cdot \gamma^{\out}_J.
    \]
    Note that for any $I,J$ such that some $\x \in \mathfrak{S}(\Hc_{\norm})$ exists with $\overline{o}_L(\x) = J$ and $o_R(\x) = I$, we have that $(Y,\Gamma_{J^c,I})$ is weakly balanced by Proposition~\ref{prop:SameNumberAlphaBeta}, so $\mathrm{gr}_H(\x)$ makes sense in the above formula.
\end{definition}

For any choice of reference element $\mathfrak{s}_{J^c,I}$ of $\Spinc(Y, \partial Y, v_{J^c,I})$, we now observe that the map $[\BSDA(Y,\Gamma;\Xi_{\mathfrak{s}})]^{\Z[H]}_{\comb,\mathrm{concrete}}$ specializes to $[\BSDA(Y,\Gamma;\Xi_{\norm})]^{\Z}_{\mathrm{comb}}$. Denote by 
\[
\varepsilon \colon \Z[H] \to \Z
\]
the augmentation map that sends $h \mapsto 1$ for all $h \in H$. 

\begin{corollary}\label{lem:BSDASpecializesToBSDAZ}
    There is an equality of $\Z$-linear maps (without any sign ambiguity) given by
    \[
    [\BSDA(Y,\Gamma;\Xi_{\mathfrak{s}})]^{\Z[H]}_{\comb,\mathrm{concrete}} \otimes_{\Z[H]} \varepsilon = [\BSDA(Y,\Gamma;\Xi_{\norm})]^{\Z}_{\comb}
    \]
\end{corollary}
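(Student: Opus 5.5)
The plan is to compare the two maps basis element by basis element. Both sides are defined on the basis $\{\gamma^{\inrm}_I\}$, once we identify $\wedge^* H_1(F_0,S^+_0)\otimes_{\Z[H]}\Z$ with $\wedge^* H_1(F_0,S^+_0)$, and similarly on the outgoing side. Tensoring a $\Z[H]$-linear map with the augmentation $\varepsilon$ means applying $\varepsilon$ to each matrix coefficient, which is an element of $\Z[H]$. The first step is therefore to record that, after this identification, the specialized map sends $\gamma^{\inrm}_I$ to
\[
\sum_{J}\ \sum_{\substack{\x \in \mathfrak{S}(\Hc_{\norm}): \\ \overline{o}_L(\x) = J,\ o_R(\x) = I}} (-1)^{\mathrm{gr}_{\DA}(\x)}\, \varepsilon\bigl(\mathrm{gr}_H(\x)\bigr)\, \gamma^{\out}_J .
\]

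Since $\varepsilon(h)=1$ for every group element $h\in H$, each term $\varepsilon(\mathrm{gr}_H(\x))$ equals $1$. The expression then becomes exactly the formula \eqref{eq:DefOfBSDA} defining $[\BSDA(Y,\Gamma;\Xi_{\norm})]^{\Z}_{\comb}(\gamma^{\inrm}_I)$. This holds because the sign $(-1)^{\mathrm{gr}_{\DA}(\x)}$ in Definition~\ref{def:BSDADepOnRefSpinc} is the same $\Z/2$ grading, computed from the same choices $\Xi_{\norm}$, and the generators are summed over the same index set.

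It remains to check that every generator appearing in \eqref{eq:DefOfBSDA} also appears in the $\Z[H]$ sum, with $\mathrm{gr}_H$ well-defined. The note after Definition~\ref{def:BSDADepOnRefSpinc} gives this: whenever such an $\x$ exists, $(Y,\Gamma_{J^c,I})$ is weakly balanced, so a reference $\Spinc$ structure $\mathfrak{s}_{J^c,I}$ has been chosen. For pairs $(I,J)$ with no generators, both coefficients are the empty sum, namely zero.

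No step here is a real obstacle. The only care needed is to confirm that the identification of $\bigl(\wedge^* H_1\otimes_\Z \Z[H]\bigr)\otimes_{\Z[H]}\Z$ with $\wedge^* H_1$ respects the chosen bases. This is what makes the equality hold on the nose, with no sign ambiguity.
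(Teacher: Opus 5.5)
Your proposal is correct and follows essentially the same argument as the paper: the augmentation sends every $\mathrm{gr}_H(\x)\in H$ to $1$, so the formula of Definition~\ref{def:BSDADepOnRefSpinc} collapses term by term to the defining formula \eqref{eq:DefOfBSDA}, with the same choices $\Xi_{\norm}$ and the same signs $(-1)^{\mathrm{gr}_{\DA}(\x)}$. Your remarks on the matching index sets, the weakly balanced condition, and the canonical identification of bases spell out what the paper's one-line proof leaves implicit.
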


\begin{proof}
    When we tensor over $\Z[H]$ with $\varepsilon$, we have $\mathrm{gr}_{H}(\x)=1$ for all $\x$. The result follows by comparing Definition~\ref{def:BSDADepOnRefSpinc} in this case with Definition~\ref{def:BSDAZComb} of $[\BSDA(Y,\Gamma;\Xi_{\norm})]^{\Z}_{\mathrm{comb}}$. 
\end{proof}

\subsection{Torsion interpretation of $[\BSDA(Y,\Gamma;\Xi_{\mathfrak{s}})]^{\Z[H]}_{\comb;\, \mathrm{concrete}}$}

We now relate the map from Definition~\ref{def:BSDADepOnRefSpinc} to the constructions of Friedl--Juhász--Rasmussen. In particular, we connect it to the $\Z[H]$-valued Euler characteristic of $\SFH$ \cite[below Definition 2.13]{FJR}, Friedl--Juhász--Rasmussen's $\Z$-valued torsion function $T_{M,\gamma,\omega}\colon \Spinc \to \Z$ \cite[above Remark 3.19]{FJR}, and their torsion invariant $\tau(M,\gamma,\s,\omega) \in \Z[H]$ \cite[Definition 3.18]{FJR}. 

This perspective leads to the proofs of our main results over $\Z[G]$ and $\Z[H]$ when $H$ has torsion: the proof of Corollary~\ref{cor:MatrixEntryAndCellBdryDet} relies on interpreting $[\BSDA(Y,\Gamma;\Xi_{\mathfrak{s}})]^{\Z[H]}_{\comb;\,\mathrm{concrete}}$ via Friedl--Juhász--Rasmussen's torsion function $\tau(M,\gamma,\s,\omega)$, and this corollary is a key input for the proofs of our main results in Section~\ref{sec:AlexFunctorsInOtherSettings}.

\begin{remark}\label{rem:ChiSFHWhenNotBalanced}
    Below we will refer to $\chi(\SFH(Y,\Gamma_{J^c,I;R^+ \leftrightarrow R^-},\s,\omega_{\Xi_{J^c,I}}))$; as in Remark~\ref{rem:ChiSFHWhenNotBalancedNoSpinc}, this expression only makes sense when $(Y,\Gamma_{J^c,I})$ is balanced. If $(Y,\Gamma_{J^c,I})$ is weakly balanced but not balanced, we will interpret $\chi(\SFH(Y,\Gamma_{J^c,I;R^+ \leftrightarrow R^-},\s,\omega_{\Xi_{J^c,I}}))$ as the sum over generators $\x$ of $\Hc_{J^c,I}$ with $-\s^{\mathrm{rel}}(\x) = \s$ of the quantity $(-1)^{b_1(Y,R^+_{J^c,I}) + \sum_{x \in \x} i(x) + \mathrm{inv}(\sigma_{\x})}$ as in Remark~\ref{rem:ChiSFHWhenNotBalancedNoSpinc}.
\end{remark}

\begin{lemma}\label{lem:MatrixEntrySFHSpinc}
    If $(Y,\Gamma_{J^c,I})$ is weakly balanced, then the matrix entry $E$ of the map 
    \[
    [\BSDA(Y,\Gamma;\Xi_{\mathfrak{s}})]^{\Z[H]}_{\comb;\, \mathrm{concrete}}
    \]
    in column $\gamma^{\inrm}_I \otimes 1$ and row $\gamma^{\out}_J \otimes 1$ is 
    \[
    E = (-1)^{b_1(Y,R^+_{J^c,I}) + \mathrm{inv}(\sigma_{JJ^c \leftrightarrow \std}) + n_1k + ak} \sum_{h \in H} \left(\chi(\SFH(Y,\Gamma_{J^c,I;R^+ \leftrightarrow R^-}, h^{-1} \cdot \s_{\mathrm{ref}}, \omega_{\Xi_{J^c,I}}))\right) \cdot h,
    \]
    where the right-hand side of the equation is interpreted as in Remark~\ref{rem:ChiSFHWhenNotBalanced} when $(Y,\Gamma_{J^c,I})$ is weakly balanced but not balanced.
\end{lemma}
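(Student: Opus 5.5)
The plan is to follow the proof of Corollary~\ref{cor:MatrixEntryAndSFHOverZ}, now keeping track of $\Spinc$ gradings. First, by Definition~\ref{def:BSDADepOnRefSpinc} the entry $E$ in column $\gamma^{\inrm}_I\otimes 1$ and row $\gamma^{\out}_J\otimes 1$ is
\[
E=\sum_{\substack{\x\in\mathfrak{S}(\Hc_{\norm}):\\ \overline{o}_L(\x)=J,\ o_R(\x)=I}} (-1)^{\mathrm{gr}_{\DA}(\x)}\,\mathrm{gr}_H(\x).
\]
Lemma~\ref{lem:GeneratorBijectionH-HLH'HR} gives a bijection between these generators and the generators $\x_L\cup\x'\cup\x_R$ of $\Hc_{J^c,I}$. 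We first compare signs. The calculation in Corollary~\ref{cor:FirstMatrixEntryComp} shows that
\[
(-1)^{\mathrm{gr}_{\DA}(\x)}=(-1)^{\mathrm{inv}(\sigma_{JJ^c\leftrightarrow\std})+ak+n_1k}\,(-1)^{\sum_{x} i(x)+\mathrm{inv}(\sigma_{\x_L\cup\x'\cup\x_R})}.
\]
This uses the ordering and orientation conventions of Definition~\ref{def:Xi_J^c,I}: the local intersection signs in the caps match those of the half-identity pieces, and the ordering of $\alpha$-circles puts the new left circles first and the new right circles last. This step is pure bookkeeping, and we will import it as is from that corollary.

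Next, we sort the sum by $\Spinc$ structure. By Definition~\ref{def:IntrinsicDegrees}, $\mathrm{gr}_H(\x)=h$ means $\s^{\rel}(\x)=\s_{J^c,I}+h$; in multiplicative notation, $\s^{\rel}(\x)=h\cdot\s_{J^c,I}$. By the remark on reversing $R^\pm$, the generator viewed in $(\Hc_{J^c,I})_{\alpha\leftrightarrow\beta}$ has $\Spinc$ structure $-\s^{\rel}(\x)$. Negating the vector field acts on the $H_1(Y)$-torsor by inverting differences, so $-(h\cdot\s_{J^c,I})=h^{-1}\cdot(-\s_{J^c,I})=h^{-1}\cdot\s_{\mathrm{ref}}$. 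We would justify this either through the conjugation symmetry $\overline{\s+h}=\overline{\s}-h$ of relative $\Spinc$ structures, or directly from the fact that the difference class of two generators, $\epsilon(\x,\y)$, changes sign when the vector field is negated. Grouping the generators with $\mathrm{gr}_H(\x)=h$, the coefficient of $h$ in $E$ becomes $(-1)^{\mathrm{inv}(\sigma_{JJ^c\leftrightarrow\std})+ak+n_1k}$ times the signed count of generators of $(\Hc_{J^c,I})_{\alpha\leftrightarrow\beta}$ in $\Spinc$ structure $h^{-1}\cdot\s_{\mathrm{ref}}$.

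Finally, we identify each signed count with $(-1)^{b_1(Y,R^+_{J^c,I})}\chi(\SFH(Y,\Gamma_{J^c,I;R^+\leftrightarrow R^-},h^{-1}\cdot\s_{\mathrm{ref}},\omega_{\Xi_{J^c,I}}))$. This is the $\Spinc$-refined version of Proposition~\ref{prop:BSDAandSFH}, which holds because the sign formula of \cite[Lemma 2.8]{FJR} is computed generator by generator and is compatible with the $\Spinc$ splitting. In the weakly balanced but not balanced case, the identification holds by definition, via Remark~\ref{rem:ChiSFHWhenNotBalanced}.

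The main obstacle is the $\Spinc$ sign convention in the second step: showing that negating the boundary vector field sends $h\cdot\s_{J^c,I}$ to $h^{-1}\cdot\s_{\mathrm{ref}}$ rather than to $h\cdot\s_{\mathrm{ref}}$. We also need to check that the convention swap in Remark~\ref{rem:MorseFunctionConventions} (downward gradient flow with $\beta$ handles of index one) does not introduce a second inversion. To settle this, we would check it on the difference classes $\epsilon(\x,\y)\in H_1(Y)$, which are built from paths along $\alpha$- and $\beta$-curves.
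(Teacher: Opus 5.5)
Your proposal is correct and follows essentially the same route as the paper. The paper likewise sorts the generators by $\mathrm{gr}_H$ and uses the fact that negating the vector field negates difference classes, so that $\mathrm{gr}_H(\x)=h$ if and only if $-\s^{\rel}(\x)=h^{-1}\cdot\s_{\mathrm{ref}}$, which is the $\Spinc$ structure of $\x$ viewed as a generator of $(\Hc_{J^c,I})_{\alpha\leftrightarrow\beta}$; it then takes the sign prefactor exactly as in Corollary~\ref{cor:MatrixEntryAndSFHOverZ}.

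The paper simply asserts the negation step that you flag as the main obstacle. Your worry about a second inversion is already settled: Definition~\ref{def:SpincStrsOfGens} defines $\s^{\rel}(\x)$ by applying Juh\'asz's construction directly to $\Hc_{J^c,I}$, and the remark following Remark~\ref{rem:MorseFunctionConventions} identifies the swapped-diagram structure as $-\s^{\rel}(\x)$.
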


\begin{proof}
    This is similar to Corollary~\ref{cor:MatrixEntryAndSFHOverZ}, which can be proved directly. For a generator $\x$ of $\Hc$, we have $\mathrm{gr}_H(\x) = h$ if and only if $\s^{\rel}(\x) - \s_{J^c,I} = h$, which holds if and only if $-\s^{\rel}(\x) -(-\s_{J^c,I}) = -h$. We can write this equation as $-\s^{\rel}(\x) = -h + \s_{\mathrm{ref}}$, or (in multiplicative notation) $-\s^{\rel}(\x) = h^{-1} \cdot \s_{\mathrm{ref}}$. Note that $-\s^{\rel}(\x)$ is the $\Spinc$ structure associated to $\x$ as a generator of $\Hc_{J^c,I;\alpha \leftrightarrow \beta}$. It follows that $\mathrm{gr}_H(\x) = h$ if and only if $\x$ contributes to $\chi(\SFH(Y,\Gamma_{J^c,I;R^+ \leftrightarrow R^-}, h^{-1} \cdot \s_{\mathrm{ref}}, \omega_{\Xi_{J^c,I}}))$. The lemma, including the sign out front, follows as in Corollary~\ref{cor:MatrixEntryAndSFHOverZ}.
\end{proof}

We now assume $Y$ is connected for simplicity.

\begin{corollary}\label{cor:MatrixEntryAndTFunction}
    If $Y$ is connected and $(Y,\Gamma_{J^c,I})$ is weakly balanced, then the matrix entry $E$ of the map 
    \[
    [\BSDA(Y,\Gamma;\Xi_{\mathfrak{s}})]^{\Z[H]}_{\comb;\, \mathrm{concrete}}
    \]
    in column $\gamma^{\inrm}_I \otimes 1$ and row $\gamma^{\out}_J \otimes 1$ is 
    \[
    E = (-1)^{b_1(Y,R^+_{J^c,I}) + \mathrm{inv}(\sigma_{JJ^c \leftrightarrow \std}) + n_1k + ak} \sum_{h \in H} \left( T_{(Y,\Gamma_{J^c,I;R^+\leftrightarrow R^-},\omega_{\Xi_{J^c,I}})}(h^{-1} \cdot \s_{\mathrm{ref}})\right) \cdot h
    \]
    where 
    \[
    T_{(Y,\Gamma_{J^c,I;R^+\leftrightarrow R^-},\omega_{\Xi_{J^c,I}})} \colon \Spinc(Y,\partial Y, -v_{J^c,I}) \to \Z
    \]
    is defined as in \cite[above Remark 3.19]{FJR} (note that the definition of $T_{(Y,\Gamma_{J^c,I;R^+\leftrightarrow R^-},\omega_{\Xi_{J^c,I}})}$ requires that $(Y,\Gamma_{J^c,I;R^+ \leftrightarrow R^-})$ has nonempty $R^+$ and $R^-$ boundary on each component of $Y$; this follows from the topological assumptions we imposed on $Y$ in Definition~\ref{def:SuturedCob}).
\end{corollary}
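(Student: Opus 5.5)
The plan is to start from Lemma~\ref{lem:MatrixEntrySFHSpinc}. It already expresses $E$, with the same sign prefactor, as $\sum_{h \in H} \chi(\SFH(Y,\Gamma_{J^c,I;R^+ \leftrightarrow R^-}, h^{-1}\cdot \s_{\mathrm{ref}}, \omega_{\Xi_{J^c,I}}))\cdot h$. Given that, it suffices to show the termwise identity
\[
\chi(\SFH(Y,\Gamma_{J^c,I;R^+ \leftrightarrow R^-}, \s, \omega_{\Xi_{J^c,I}})) = T_{(Y,\Gamma_{J^c,I;R^+\leftrightarrow R^-},\omega_{\Xi_{J^c,I}})}(\s)
\]
for every $\s \in \Spinc(Y,\partial Y,-v_{J^c,I})$, and then substitute $\s = h^{-1}\cdot\s_{\mathrm{ref}}$.

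In the balanced case this identity is the main theorem of Friedl--Juhasz--Rasmussen, namely \cite[Theorem 1]{FJR} and its refinement near \cite[Remark 3.19]{FJR}. It states that the Euler characteristic of $\SFH$ in a fixed relative $\Spinc$ structure, with the homology orientation $\omega$, equals the torsion function $T$. Two checks are needed before invoking it.

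First, the Heegaard diagram $(\Hc_{J^c,I})_{\alpha\leftrightarrow\beta}$ must be a legitimate diagram in the conventions of \cite{FJR} for $(Y,\Gamma_{J^c,I;R^+\leftrightarrow R^-})$. This is exactly the setup of Section~\ref{sec:RelWithSFHOrdinarySutured}.

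Second, the orientation $\omega_{\Xi_{J^c,I}}$ must be the one that \cite[Section 2.4]{FJR} attaches to this ordered, oriented diagram. With these conventions the sign $(-1)^{b_1}$ hidden in our interpretation of $\chi$ (Remark~\ref{rem:ChiSFHWhenNotBalancedNoSpinc}) should be the same $b_1$-sign as in \cite{FJR}. The $b_1(Y,R^+_{J^c,I})$ factor in front in Lemma~\ref{lem:MatrixEntrySFHSpinc} then stays untouched.

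The main obstacle is the weakly balanced but not balanced case, where $\SFH$ does not exist and $\chi$ is defined combinatorially by Remark~\ref{rem:ChiSFHWhenNotBalanced}. Here I would not cite the theorem of \cite{FJR}. Instead I would redo its proof, which is a Fox-calculus and determinant computation relating generators of a sutured Heegaard diagram to the torsion of the cellular chain complex of the nice sutured handle complex of \cite[Definition 3.8]{FJR}. That proof is combinatorial and uses only the handle complex coming from the diagram, not holomorphic disks or the balanced condition.

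By the hypotheses of Definition~\ref{def:SuturedCob}, each component of $Y$ meets both $R^\pm$, so the torsion $\tau(M,\gamma,\s,\omega)$ and $T$ are still defined. The generator expansion then matches the permutation expansion of the determinant of the lifted boundary matrix over $\Z[H]$. Connectivity of $Y$ is used so that $H_1(Y)$ acts freely and transitively on $\Spinc(Y,\partial Y,-v_{J^c,I})$, which makes $T$ a well-defined function of $h$.
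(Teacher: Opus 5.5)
This is essentially the paper's own argument: it combines Lemma~\ref{lem:MatrixEntrySFHSpinc} with \cite[Theorem 1]{FJR} in the balanced case, and in the weakly balanced but not balanced case it observes that the proof of \cite[Theorem 1]{FJR} uses balancedness only in \cite[Proposition 4.2]{FJR}, to identify the combinatorial generator sum with $\chi(\SFH)$, so the proof goes through with $\chi$ defined as in Remark~\ref{rem:ChiSFHWhenNotBalanced}. One small correction: $H_1(Y)$ acts freely and transitively on relative $\Spinc$ structures for any weakly balanced $(Y,\Gamma_{J^c,I})$, connected or not, so that is not the role of connectivity; the paper assumes $Y$ connected for simplicity, to stay in the connected maximal-abelian-cover setting in which \cite{FJR} define $\tau$ and $T$.
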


\begin{proof}
    When $(Y,\Gamma_{J^c,I})$ is balanced, this follows from Lemma~\ref{lem:MatrixEntrySFHSpinc} and \cite[Theorem 1]{FJR}. When $(Y,\Gamma_{J^c,I})$ is weakly balanced but not balanced, we note that the proof of \cite[Theorem 1]{FJR} generalizes to weakly balanced sutured 3-manifolds with $\chi(\SFH)$ defined as in Remark~\ref{rem:ChiSFHWhenNotBalanced}. Indeed, the proof of \cite[Theorem 1]{FJR} uses balancedness only in the proof of \cite[Proposition 4.2]{FJR}, where balancedness is used to argue that the combinatorial sum in Remark~\ref{rem:ChiSFHWhenNotBalanced} can be interpreted as $\chi(\SFH)$.
\end{proof}

\begin{corollary}\label{cor:MatrixEntryAndTauTorsion}
    If $Y$ is connected and $(Y,\Gamma_{J^c,I})$ is weakly balanced, then the matrix entry $E$ of the map 
    \[
    [\BSDA(Y,\Gamma;\Xi_{\mathfrak{s}})]^{\Z[H]}_{\comb;\, \mathrm{concrete}}
    \]
    in column $\gamma^{\inrm}_I \otimes 1$ and row $\gamma^{\out}_J \otimes 1$ is 
    \[
    E = (-1)^{b_1(Y,R^+_{J^c,I}) + \mathrm{inv}(\sigma_{JJ^c \leftrightarrow \std}) + n_1k + ak}  \tau(Y,\Gamma_{J^c,I;R^+\leftrightarrow R^-}, \s_{\mathrm{ref}},\omega_{\Xi_{J^c,I}})
    \]
    where $\tau(Y,\Gamma_{J^c,I;R^+\leftrightarrow R^-}, \s_{\mathrm{ref}},\omega_{\Xi_{J^c,I}}) \in \Z[H]$ is defined as in \cite[Definition 3.18]{FJR} (note that \emph{a priori} $\tau(Y,\Gamma_{J^c,I;R^+\leftrightarrow R^-}, \s_{\mathrm{ref}},\omega_{\Xi_{J^c,I}})$ is an element of a larger ring $Q(H)$ containing $\Z[H]$, but by \cite[discussion below Definition 3.18]{FJR}, since $(Y,\Gamma_{J^c,I;R^+ \leftrightarrow R^-})$ has nonempty $R^+$ and $R^-$ boundary on the unique component of $Y$, we have $\tau(Y,\Gamma_{J^c,I;R^+\leftrightarrow R^-}, \s_{\mathrm{ref}},\omega_{\Xi_{J^c,I}}) \in \Z[H]$).    
\end{corollary}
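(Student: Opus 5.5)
The plan is to deduce this directly from Corollary~\ref{cor:MatrixEntryAndTFunction}, so the work reduces to matching the FJR torsion invariant $\tau$ with the generating function of the torsion function $T$ relative to the reference structure $\s_{\mathrm{ref}}$. That corollary already gives, under the same hypotheses ($Y$ connected, $(Y,\Gamma_{J^c,I})$ weakly balanced), the identity
\[
E = (-1)^{b_1(Y,R^+_{J^c,I}) + \mathrm{inv}(\sigma_{JJ^c \leftrightarrow \std}) + n_1k + ak} \sum_{h \in H} T_{(Y,\Gamma_{J^c,I;R^+\leftrightarrow R^-},\omega_{\Xi_{J^c,I}})}(h^{-1}\cdot \s_{\mathrm{ref}})\, h .
\]
It therefore suffices to show
\[
\tau(Y,\Gamma_{J^c,I;R^+\leftrightarrow R^-}, \s_{\mathrm{ref}},\omega_{\Xi_{J^c,I}}) = \sum_{h\in H} T(h^{-1}\cdot\s_{\mathrm{ref}})\, h
\]
as elements of $\Z[H]$, where $T$ abbreviates $T_{(Y,\Gamma_{J^c,I;R^+\leftrightarrow R^-},\omega_{\Xi_{J^c,I}})}$.

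First, I would recall the FJR conventions. In \cite[Definition 3.18]{FJR}, $\tau(M,\gamma,\s,\omega)$ is defined as an element of $Q(H)$, via Turaev's maximal abelian torsion of $(M,R_-)$ with Euler structure determined by $\s$ and homology orientation $\omega$. Just above \cite[Remark 3.19]{FJR}, $T_{M,\gamma,\omega}(\s')$ is defined as the coefficient of this expansion; concretely,
\[
\tau(M,\gamma,\s,\omega)=\sum_{h\in H} T_{M,\gamma,\omega}(h\cdot \s)\,h
\quad\text{or}\quad
\sum_{h} T_{M,\gamma,\omega}(h^{-1}\cdot\s)\,h,
\]
depending on their sign convention for the $H$-action. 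The main check is to confirm which of the two it is.

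Second, I would use the equivariance of $\tau$: FJR show $\tau(M,\gamma,h\cdot\s,\omega)=h^{\pm1}\tau(M,\gamma,\s,\omega)$. Combined with their definition of $T$ as the coefficient of $1$ in $\tau(M,\gamma,\s',\omega)$, this gives the expansion with $h^{-1}$, matching the sum in Corollary~\ref{cor:MatrixEntryAndTFunction}. Since $R^\pm$ are nonempty on the unique component of $Y$, the FJR discussion below their Definition 3.18 places $\tau$ in $\Z[H]$, so the sum is finite and the identity holds in $\Z[H]$.

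The main obstacle is the convention mismatch between $h$ and $h^{-1}$. If FJR's convention turns out to give $\tau=\sum_h T(h\cdot\s)\,h$, I would instead trace the multiplicative-versus-additive conventions in Definition~\ref{def:IntrinsicDegrees} and Lemma~\ref{lem:MatrixEntrySFHSpinc}, where $\mathrm{gr}_H(\x)=h$ corresponds to $h^{-1}\cdot\s_{\mathrm{ref}}$. I would also re-express that sum using FJR's Theorem 1 identification of $T(\s')$ with $\chi(\SFH(\s'))$, checking that the conventions are consistent. Everything else is direct substitution.
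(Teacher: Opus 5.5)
Your reduction is the paper's: invoke Corollary~\ref{cor:MatrixEntryAndTFunction}, then show $\tau(\ldots,\s_{\mathrm{ref}},\ldots)=\sum_{h\in H}T(h^{-1}\cdot\s_{\mathrm{ref}})\,h$. This follows because $T$ is the coefficient of $\tau$ at $1$ and $\tau$ is $H$-equivariant. There are two loose ends, both handled in the paper's proof, which you need to close rather than hedge.

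First, the direction of equivariance cannot be repaired afterwards by re-tracing Definition~\ref{def:IntrinsicDegrees} or Lemma~\ref{lem:MatrixEntrySFHSpinc}. Those results already fix the right-hand side of Corollary~\ref{cor:MatrixEntryAndTFunction}. If $\tau$ satisfied $\tau(h\cdot\s)=h^{-1}\tau(\s)$, you would get $\sum_h T(h\cdot\s_{\mathrm{ref}})\,h$ instead, and the stated formula would be false (off by $h\mapsto h^{-1}$).

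The paper settles the direction from \cite[equation (2)]{FJR}, namely $\tau(X,Y,h\cdot\mathfrak{l},\omega)=h\,\tau(X,Y,\mathfrak{l},\omega)$ for the CW-pair torsion. This is transported to $\tau(M,\gamma,h\cdot\s,\omega)=h\,\tau(M,\gamma,\s,\omega)$ via \cite[Definition 3.17, Remark 3.11, and above Lemma 3.4]{FJR}. Then the coefficient of $\tau(\s)$ on $h$ equals the coefficient of $h^{-1}\tau(\s)=\tau(h^{-1}\cdot\s)$ on $1$, i.e.\ $T(h^{-1}\cdot\s)$, exactly as required.

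Second, that transport passes through the bijection $\Spinc\cong\Lift$, which requires $\mathfrak{Y}_{J^c,I}\neq\mathfrak{R}^+_{J^c,I}$. In the degenerate case (closed Heegaard surface, no $\alpha$ or $\beta$ circles), $\Lift$ is a single point while $\Spinc$ is an $H$-torsor. So you must check equivariance directly. Here FJR define the torsion of the Euler structure corresponding to $h\in H$ to be $+h$, using that $\omega_{\Xi_{J^c,I}}$ is the positive orientation of the zero vector space, as arranged in Section~\ref{sec:RelWithSFHOrdinarySutured}. Hence $\tau(h\cdot\s)=h\,\tau(\s)$ still holds, and your argument applies in this case too.
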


\begin{proof}
    The definition of $T_{(M,\gamma,\omega)}(\s)$ above Remark 3.19 in \cite{FJR} is that $T_{(M,\gamma,\omega)}(\s)$ is defined to be $\tau(M,\gamma,\s,\omega)_1$, the coefficient of $\tau(M,\gamma,\s,\omega) \in \Z[H]$ on $1 \in H$. Friedl--Juhasz--Rasmussen then note that one can use \cite[equation (2)]{FJR} to recover all of $\tau(M,\gamma,\s,\omega) \in \Z[H]$ from the function $T_{(M,\gamma,\omega)}$. This equation says that, for the related CW-pair torsion function $\tau(X,Y,\mathfrak{l},\omega)$ from \cite[Section 3.4]{FJR}, we have $\tau(X,Y,h \cdot \mathfrak{l}, \pm \omega) = \pm h \cdot \tau(X,Y,\mathfrak{l},\omega)$ for every $h \in H$. As long as $\Spinc(M,\gamma) \cong \Lift(X,Y)$ (i.e. $X \neq Y$), this equation, together with \cite[Definition 3.17]{FJR}, \cite[Remark 3.11]{FJR}, and \cite[above Lemma 3.4]{FJR}, gives $\tau(M,\gamma,h \cdot \s,\pm \omega) = \pm h \cdot \tau(M,\gamma, \s,\omega)$ for every $h \in H$. 
    
    When $X = Y$, then the definition of $\tau(X,Y,\mathfrak{e},\omega)$ in \cite[above Lemma 3.4]{FJR} is $+h \in \Z[H]$ (assuming $\omega$ is the positive orientation of the zero vector space, which holds for our orientation choices by the discussion of Section~\ref{sec:RelWithSFHOrdinarySutured}) where $\mathfrak{e} \in \mathrm{Eul}(X,Y)$ corresponds to $h \in H$ under the canonical identification $\mathrm{Eul}(X,Y) \cong H$. Thus we have $\tau(M,\gamma,h \cdot \s_{\mathrm{ref}},\omega) = h$, so the equation $\tau(M,\gamma,h \cdot \s,\pm \omega) = \pm h \cdot \tau(M,\gamma, \s,\omega)$ still holds for every $h \in H$.

    It follows that the coefficient of $\tau(M,\gamma,\s,\omega)$ on $h \in H$, or in other words the coefficient of $(h^{-1}\tau(M,\gamma,\s,\omega))$ on $1 \in \Z[H]$, is the same as the coefficient of $\tau(M,\gamma,h^{-1} \cdot \s,\omega))$ on $1 \in H$. By definition of $T_{(M,\gamma,\omega)}$, this coefficient is $T_{(M,\gamma,\omega)}(h^{-1} \cdot \s)$. We get 
    \[
    \tau(M,\gamma,\s,\omega) = \sum_{h \in H} ( T_{(M,\gamma,\omega)}(h^{-1} \cdot \s) ) \cdot h
    \]
    so this corollary follows from Corollary~\ref{cor:MatrixEntryAndTFunction}.
\end{proof}

\subsection{Covers and lifts}\label{sec:CoversAndLifts} 

\subsubsection{General choices}\label{sec:CWCoveringSpaceGeneral}

Let $(Y,\Gamma)$ be a connected sutured cobordism from $(F_0,\Lambda_0)$ to $(F_1,\Lambda_1)$, and let $\Xi$ be a set of general (non-normalized) choices as in Definition~\ref{def:BSDAZComb}. Section~\ref{sec:CWDecomp} gives us CW decompositions of spaces $\mathfrak{Y}$ and $\mathfrak{R}^+$ homotopy equivalent (or identical in the case of $\mathfrak{R}^+)$ to $Y$ and $R^+$. Recall that $\Xi$ includes the data of orderings and orientations on the closed $\alpha$ and $\beta$ circles of $\Hc$; these determine orientations of the cells of $\mathfrak{Y} \setminus \mathfrak{R}^+$ following the discussion in the beginning of Section~\ref{sec:CellOrientations}.

\begin{definition}
    Let $H = H_1(Y)$. Let $\pi  \colon \widehat{Y} \to Y$ denote the \emph{maximal abelian cover} of $Y$, so that the deck group of $\pi$ is $H$. For any subspace $X \subset Y$, define 
    \[
    \widehat{X} \coloneq \pi^{-1}(X).
    \]
    In particular, taking $X = \mathfrak{Y}$ and $X = \mathfrak{R}^+$, we have spaces $\widehat{X} = \widehat{\mathfrak{Y}}$ and $\widehat{\mathfrak{R}}^+$, which are CW complexes. Note that the orientations of the cells of $\mathfrak{Y} \setminus \mathfrak{R}^+$ naturally give us orientations of the cells of $\widehat{\mathfrak{Y}} \setminus \widehat{\mathfrak{R}}^+$ such that the projection map $\pi$ is orientation-preserving. 
    
    Let $G=H/\tors(H)$. Given $\widehat{Y}$, for the quotient by the free action of $\tors(H)$, we write
    \[
    \overline{Y} \coloneq \widehat{Y}/\tors(H).
    \]
    Then $p: \overline{Y} \to Y$ is the \emph{maximal free abelian cover} of $Y$, with deck group $G$. For any subspace $X \subset Y$, define 
    \[
    \overline{X} \coloneq p^{-1}(X).
    \]
    In particular, taking $X = \mathfrak{Y}$ and $X = \mathfrak{R}^+$, we have spaces $\overline{X} = \overline{\mathfrak{Y}}$ and $\overline{\mathfrak{R}}^+$, which are CW complexes. The orientations of the cells of $\mathfrak{Y} \setminus \mathfrak{R}^+$ give us orientations of the cells of $\overline{\mathfrak{Y}} \setminus \overline{\mathfrak{R}}^+$ such that the projection map $p$ is orientation-preserving.
\end{definition}

Let $a$ and $b$ be the numbers of $\alpha$ and $\beta$ circles of $\Hc$. Choose a lift of each cell of $\mathfrak{Y}$ not contained in $\mathfrak{R}^+$ to a cell of $\widehat{\mathfrak{Y}}$. We write $\widehat{e}_1,\dots,\widehat{e}_b$ for these such lifts of 1-cells $e_1,\dots,e_b$ and $\widehat{e}_{b+1},\dots,\widehat{e}_{b+a}$ for these such lifts of 2-cells $e_{b+1},\dots,e_{b+a}$. Then we have ordered $\Z[H]$-bases $\widehat{e}_1,\dots,\widehat{e}_b$ for $C^{\cell}_1(\widehat{\mathfrak{Y}},\widehat{\mathfrak{R}}^+)$ and $\widehat{e}_{b+1},\dots,\widehat{e}_{b+a}$ for $C^{\cell}_2(\widehat{\mathfrak{Y}},\widehat{\mathfrak{R}}^+)$. The cell groups $C^{\cell}_i(\widehat{\mathfrak{Y}},\widehat{\mathfrak{R}}^+)$ are zero for $i \neq 1,2$.

\begin{definition}
    Let $\widehat{M}_{\Hc}$ be the matrix for the cellular differential
    \[
    \widehat{\partial}_2 \colon C^{\cell}_2(\widehat{\mathfrak{Y}},\widehat{\mathfrak{R}}^+)\to C^{\cell}_1(\widehat{\mathfrak{Y}},\widehat{\mathfrak{R}}^+)
    \]
    in the above ordered bases.
\end{definition}

The matrix $\widehat{M}_{\Hc}$ has cokernel $H_1(\widehat{Y},\widehat{R}^+)$, and is thus a presentation matrix for $H_1(\widehat{Y},\widehat{R}^+)$ of deficiency $d = a - b = -\chi(Y,R^+)$. Note that the kernel of $\widehat{M}_{\Hc}$ is $H_2(\widehat{Y},\widehat{R}^+)$.

Now pass to the quotient $\overline{Y} =\widehat{Y}/\tors(H)$. We can pass our chosen bases to this quotient via the identification
\[
C^{\cell}_1(\widehat{\mathfrak{Y}},\widehat{\mathfrak{R}}^+) \otimes_{\Z[H]} \Z[G] \stackrel{\cong}{\longrightarrow} C^{\cell}_1(\overline{\mathfrak{Y}},\overline{\mathfrak{R}}^+) 
\]
determined by 
\[
\widehat{e}_i \otimes 1 \mapsto \overline{e}_i
\]
and similarly for $C^{\cell}_2(\widehat{\mathfrak{Y}},\widehat{\mathfrak{R}}^+)$, where we view $\Z[G]$ as a $\Z[H]$-module via the ring homomorphism $\Z[H]\to \Z[G]$ induced by the quotient map $H \to G$. 

\begin{definition}
    Let $\overline{M}_{\Hc}$ be the matrix for the cellular differential 
    \[
    \overline{\partial}_2 \colon C^{\cell}_2(\overline{\mathfrak{Y}},\overline{\mathfrak{R}}^+)\to C^{\cell}_1(\overline{\mathfrak{Y}},\overline{\mathfrak{R}}^+)
    \]
    in the above ordered bases. 
\end{definition}

The matrix $\overline{M}_{\Hc}$ has cokernel $H_1(\overline{Y},\overline{R}^+)$, and is thus a presentation matrix for $H_1(\overline{Y},\overline{R}^+)$ of deficiency $d = a - b = -\chi(Y,R^+)$. Note that the kernel of $\overline{M}_{\Hc}$ is $H_2(\overline{Y},\overline{R}^+)$.

\subsubsection{Normalized choices}\label{sec:CWCoveringSpaceNormalized}

Now assume that $\Xi$ is of the form $\Xi_{\norm}$ as in Corollary~\ref{cor:NormalizedHeegaardDiagrams}, so that the Heegaard diagram is of the form $\Hc_{\norm}=\Hc^{\alpha \beta}_{1/2} \cup_{\Zc_1^*} \Hc' \cup_{\Zc_0^*} \Hc^{\beta \alpha}_{1/2}$. Section~\ref{sec:CWDecomp} gives us CW subcomplexes $\mathfrak{F}_i$ and $\mathfrak{S}^+_i$ of $\mathfrak{Y}$ with $\mathfrak{F}_i \cap \mathfrak{R}^+ = \mathfrak{S}^+_i$. Letting $\pi \colon \widehat{Y} \to Y$ be the maximal abelian cover of $Y$, we have $\widehat{R}^+ = \widehat{\mathfrak{R}}^+ \subset \widehat{\mathfrak{Y}} \subset Y$ by taking preimages as above. We set $\widehat{F}_i = \pi^{-1}(F_i)$ and $\widehat{S}^+_i = \pi^{-1}(S^+_i) = \widehat{F}_i \cap \widehat{R}^+_i$. We also set $\widehat{\mathfrak{F}}_i = \pi^{-1}(\mathfrak{F}_i)$ and $\widehat{\mathfrak{S}}^+_i = \pi^{-1}(\mathfrak{S}^+_i) = \widehat{\mathfrak{F}}_i \cap \widehat{\mathfrak{R}}^+$. As in Corollary~\ref{cor:SingCellSquareCommutes}, the diagram 
    \[
    \xymatrix{
        H_1(\widehat{F}_i, \widehat{S}^+_i) \ar[r]^{i_*}  & H_1(\widehat{Y}, \widehat{R}^+)  \\
        H_1(\widehat{\mathfrak{F}}_i, \widehat{\mathfrak{S}}^+_i) \ar[r]_{i_*} \ar[u]^{\cong} & H_1(\widehat{\mathfrak{Y}}, \widehat{\mathfrak{R}}^+) \ar[u]_{i_*}
        }
    \]
commutes, where the map on the left is the hat-version of the composition of isomorphisms on the left of diagram~\eqref{eq:FSYRCommutativeDiag}.

Now, Section~\ref{sec:IdempotentDepCWComplexes} gives us larger idempotent-dependent CW pairs $(\mathfrak{Y}_{J^c,I}, \mathfrak{R}^+_{J^c,I})$ for each $I,J$, with each $\mathfrak{Y}_{J^c,I}$ homotopy equivalent to $Y$; orderings and orientations of cells for these CW complexes are chosen as in $\Xi_{J^c,I}$ from Definition~\ref{def:Xi_J^c,I}. As in Section~\ref{sec:AmbientCWComplex}, we will view $\mathfrak{Y}$ and all the complexes $\mathfrak{Y}_{J^c,I}$ as subcomplexes of one largest CW complex $\mathfrak{Y}_{\mathbf{a}_1,\mathbf{a}_0}$; note that the orderings and orientations encoded in $\Xi_{\norm}$ and $\Xi_{J^c,I}$ are the same as those inherited from $\Xi_{\mathbf{a}_1,\mathbf{a}_0}$.

\begin{figure}
    \centering
    \begin{overpic}[width=0.81\textwidth]
    {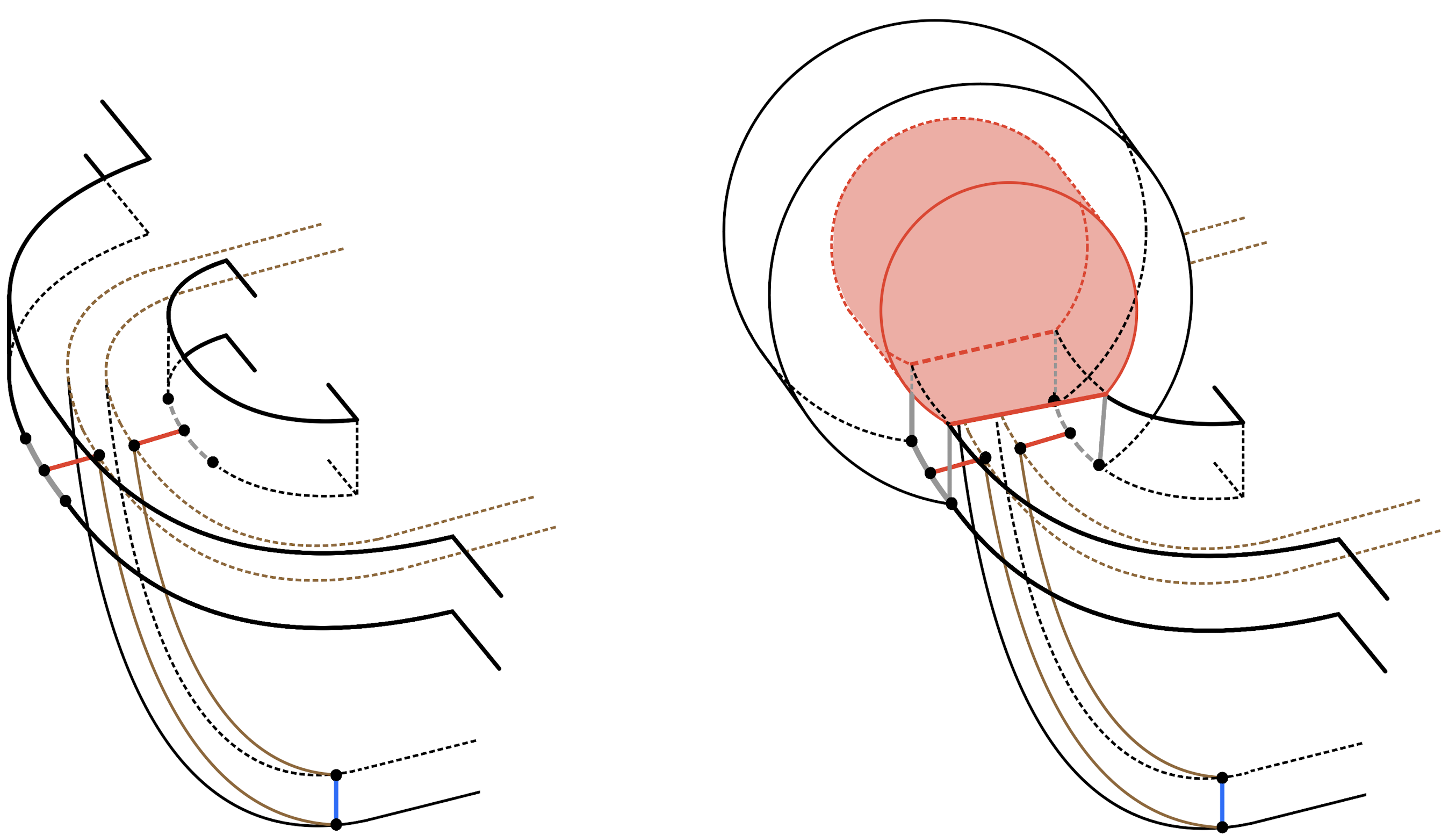}
    \put (3,10) {$Y$}
    \put (45,20) {$\longleftrightarrow$}
    \put (61,10) {$Y_{\mathbf{a}_1,\mathbf{a}_0}$}
    \put (18.7,33.8) {\rotatebox{-45}{$\ldots$}}
    \end{overpic}
    \vspace{1em}
    \caption{The homeomorphic 3-manifolds $Y$ (left) and $Y_{\mathbf{a}_1,\mathbf{a}_0}$ (right). In general, $Y_{\mathbf{a}_1,\mathbf{a}_0}$ differs from $Y$ by having extra ``blobs'' as shown in the picture, and the homeomorphism from $Y_{\mathbf{a}_1,\mathbf{a}_0}$ to $Y$ shrinks the blob into the bulk of $Y$. The inverse homeomorphism re-expands the blob. For clarity, on the right we omit portions of the diagram lying behind the foreground; otherwise these would appear as additional dotted curves and clutter the figure.}
    \label{fig:HomeoYYa1a0}
\end{figure}

Let $Y_{\mathbf{a}_1,\mathbf{a}_0}$ be the topological space associated to the sutured Heegaard diagram $\Hc_{\mathbf{a}_1,\mathbf{a}_0}$. We have a canonical homeomorphism $\varphi$ between $Y$, the topological space associated to the bordered sutured Heegaard diagram $\Hc_{\norm}$, and $Y_{\mathbf{a}_1,\mathbf{a}_0}$ as shown in Figure~\ref{fig:HomeoYYa1a0}. This homeomorphism $\varphi$ is the evident identity map on $\mathfrak{Y} \subset Y$, which gets sent to the corresponding subset $\mathfrak{Y} \subset  \mathfrak{Y}_{\mathbf{a}_1,\mathbf{a}_0}$ of $Y_{\mathbf{a}_1,\mathbf{a}_0}$ (we will call this latter subset $\mathfrak{Y}'$ rather than $\mathfrak{Y}$ temporarily). Given our chosen model $\pi \colon \widehat{Y} \to Y$ for the maximal abelian cover of $Y$, we use
\[
\pi_{\mathbf{a}_1,\mathbf{a}_0} \coloneq \widehat{Y} \xrightarrow{\pi} Y \xrightarrow{\varphi} Y_{\mathbf{a}_1,\mathbf{a}_0}
\]
as our model for the maximal abelian cover $\widehat{Y}_{\mathbf{a}_1,\mathbf{a}_0}$ of $Y_{\mathbf{a}_1,\mathbf{a}_0}$ (so we have $\widehat{Y}_{\mathbf{a}_1,\mathbf{a}_0} = \widehat{Y}$ as topological spaces). If we let $\widehat{\mathfrak{Y}}' = \pi_{\mathbf{a}_1,\mathbf{a}_0}^{-1}(\mathfrak{Y}')$, it follows from the fact that $\varphi|_{\mathfrak{Y}}$ is the evident identification of $\mathfrak{Y}$ with $\mathfrak{Y}'$ that we have a canonical identification $\widehat{\mathfrak{Y}} \cong \widehat{\mathfrak{Y}}'$. In other words, we can view $\widehat{\mathfrak{Y}}$ and its subcomplexes $\widehat{\mathfrak{R}}^+$, $\widehat{\mathfrak{F}}_i$, and $\widehat{\mathfrak{S}}^+_i$ as all being subcomplexes of $\widehat{\mathfrak{Y}}_{\mathbf{a}_1,\mathbf{a}_0}$, such that the map
\[
\widehat{i}_* \colon H_1(\widehat{\mathfrak{F}}_i,\widehat{\mathfrak{S}}^+_i) \to H_1(\widehat{\mathfrak{Y}},\widehat{\mathfrak{R}}^+)
\]
comes from the usual inclusion of subcomplexes of $\widehat{\mathfrak{Y}}_{\mathbf{a}_1,\mathbf{a}_0}$. 

We also have subcomplexes
\[
\widehat{\mathfrak{Y}} \subset \widehat{\mathfrak{Y}}_{J^c,I} \subset \widehat{\mathfrak{Y}}_{\mathbf{a}_1,\mathbf{a}_0}
\]
and
\[
\widehat{\mathfrak{R}}^+ \subset \widehat{\mathfrak{R}}^+_{J^c,I} \subset \widehat{\mathfrak{R}}^+_{\mathbf{a}_1,\mathbf{a}_0}
\]
for all $I$ and $J$, where $\widehat{\mathfrak{Y}}_{J^c,I} \coloneq \pi_{\mathbf{a}_1,\mathbf{a}_0}^{-1}(\mathfrak{Y}_{J^c,I})$ and $\widehat{\mathfrak{R}}^+_{J^c,I} \coloneq \pi_{\mathbf{a}_1,\mathbf{a}_0}^{-1}(\mathfrak{R}^+_{J^c,I})$. We note that
\[
\pi_{J^c,I} \coloneq \pi_{\mathbf{a}_1,\mathbf{a}_0} \colon \widehat{\mathfrak{Y}}_{J^c,I} \to \mathfrak{Y}_{J^c,I}
\]
is (a model for) the maximal abelian cover of $\mathfrak{Y}_{J^c,I}$, since $\mathfrak{Y}_{J^c,I}$ is a strong deformation retract of $Y_{\mathbf{a}_1,\mathbf{a}_0}$. Furthermore, $\widehat{\mathfrak{R}}^+_{J^c,I}$ is the preimage of $\mathfrak{R}^+_{J^c,I} \subset \mathfrak{Y}_{J^c,I}$ under the covering map $\pi_{J^c,I}$.

The above all has analogues for maximal free abelian covers. With $\overline{Y} \coloneq \widehat{Y} / \tors(H)$, we have $p \colon \overline{Y} \to Y$ as the maximal free abelian cover of $Y$, and we set $\overline{F}_i = p^{-1}(F_i)$, $\overline{S}^+_i = p^{-1}(S^+_i) = \overline{F}_i \cap \overline{R}^+_i$, $\overline{\mathfrak{F}}_i = p^{-1}(\mathfrak{F}_i)$, and $\overline{\mathfrak{S}}^+_i = p^{-1}(\mathfrak{S}^+_i) = \overline{\mathfrak{F}}_i \cap \overline{\mathfrak{R}}^+$. The diagram 
    \[
    \xymatrix{
        H_1(\overline{F}_i, \overline{S}^+_i) \ar[r]^{i_*}  & H_1(\overline{Y}, \overline{R}^+)  \\
        H_1(\overline{\mathfrak{F}}_i, \overline{\mathfrak{S}}^+_i) \ar[r]_{i_*} \ar[u]^{\cong} & H_1(\overline{\mathfrak{Y}}, \overline{\mathfrak{R}}^+) \ar[u]_{i_*}
        }
    \]
commutes. We use $\overline{Y}_{\mathbf{a}_1,\mathbf{a}_0} \coloneq \widehat{Y}_{\mathbf{a}_1,\mathbf{a}_0} / \tors(H)$ as our model for the maximal free abelian cover 
\[
p_{\mathbf{a}_1,\mathbf{a}_0} \colon \overline{Y}_{\mathbf{a}_1,\mathbf{a}_0} \to Y_{\mathbf{a}_1,\mathbf{a}_0}
\]
of $Y_{\mathbf{a}_1,\mathbf{a}_0}$. Let $\overline{\mathfrak{Y}}_{\mathbf{a}_1,\mathbf{a}_0} = p_{\mathbf{a}_1,\mathbf{a}_0}^{-1}(\mathfrak{Y}_{\mathbf{a}_1,\mathbf{a}_0})$; we can view $\overline{\mathfrak{Y}}$ and its subcomplexes $\overline{\mathfrak{R}}^+$, $\overline{\mathfrak{F}}_i$, and $\overline{\mathfrak{S}}^+_i$ as all being subcomplexes of $\overline{\mathfrak{Y}}_{\mathbf{a}_1,\mathbf{a}_0}$, such that the map
\[
\overline{i}_* \colon H_1(\overline{\mathfrak{F}}_i,\overline{\mathfrak{S}}^+_i) \to H_1(\overline{\mathfrak{Y}},\overline{\mathfrak{R}}^+)
\]
comes from the usual inclusion of subcomplexes of $\overline{\mathfrak{Y}}_{\mathbf{a}_1,\mathbf{a}_0}$. We have subcomplexes
\[
\overline{\mathfrak{Y}} \subset \overline{\mathfrak{Y}}_{J^c,I} \subset \overline{\mathfrak{Y}}_{\mathbf{a}_1,\mathbf{a}_0}
\]
and
\[
\overline{\mathfrak{R}}^+ \subset \overline{\mathfrak{R}}^+_{J^c,I} \subset \overline{\mathfrak{R}}^+_{\mathbf{a}_1,\mathbf{a}_0}
\]
for all $I$ and $J$, where $\overline{\mathfrak{Y}}_{J^c,I} \coloneq p_{\mathbf{a}_1,\mathbf{a}_0}^{-1}(\mathfrak{Y}_{J^c,I})$ and $\overline{\mathfrak{R}}^+_{J^c,I} \coloneq p_{\mathbf{a}_1,\mathbf{a}_0}^{-1}(\mathfrak{R}^+_{J^c,I})$. We note that
\[
p_{J^c,I} \coloneq p_{\mathbf{a}_1,\mathbf{a}_0} \colon \overline{\mathfrak{Y}}_{J^c,I} \to \mathfrak{Y}_{J^c,I}
\]
is (a model for) the maximal free abelian cover of $\mathfrak{Y}_{J^c,I}$ and that $\overline{\mathfrak{R}}^+_{J^c,I}$ is the preimage of $\mathfrak{R}^+_{J^c,I} \subset \mathfrak{Y}_{J^c,I}$ under the covering map $p_{J^c,I}$.

\subsubsection{Homology of lifted surfaces}\label{sec:LiftedSurfacesHomology}

Continue to assume that $\Xi$ is of the form $\Xi_{\norm}$ as in Corollary~\ref{cor:NormalizedHeegaardDiagrams}.
We record here the structure of $H_1(\widehat{F}_i,\widehat{S}^+_i)$ for later use; we study the outgoing case $i=1$ (the incoming $i=0$ case follows analogously). Recall that $H_1(F_1,S_1^+)$ is freely generated over $\Z$ by the matching arcs $\gamma_i^{\out}$ of $\Zc_1$ with chosen orientations coming from $\Xi_{\norm}$. As in Section~\ref{sec:CellularChainCx}, the identification $H_1(F_1,S_1^+) \cong H_1(\mathfrak{F}_1,\mathfrak{S}^+_1)$ sends $\gamma^{\out}_i$ to $-1$ times the 1-cell of $\mathfrak{F}_1 \setminus \mathfrak{S}^+_1$ which corresponding to the unique $\beta$ circle intersecting the outgoing $\alpha$ arc corresponding to $\gamma^{\out}_i$ nontrivially. Thus $H_1(\mathfrak{F}_1,\mathfrak{S}^+_1)\cong C^{\cell}_1(\mathfrak{F}_1,\mathfrak{S}^+_1)$ is freely generated by the $n_1$ relative $1$-cells of $(\mathfrak{F}_1,\mathfrak{S}^+_1)$ corresponding to $\beta$-circles that intersect $\Hc^{\alpha\beta}_{1/2}$ nontrivially; denote these by $e_i^{\out} \in \{e_1,\dots,e_b\}$. Thus, a $\Z[H]$ basis for $H_1(\widehat{F}_1,\widehat{S}^+_1)$ is given by choosing $n_1$ lifts $\widehat{e}_i^{\out}$ of each such $e_i^{\out}$ to a cell of $\widehat{\mathfrak{F}}_1$; as above, the orientations for the cells of $\mathfrak{F}_1$ naturally give us orientations of the cells of $\widehat{\mathfrak{F}}_1$ such that the projection map $\pi$ is orientation-preserving (recall that $\Xi_{\norm}$ assumes the orientation conventions of Proposition~\ref{prop:ExpandingBasisElts}). 

In Section~\ref{sec:BSDAIndofRefSpinc}, we will appeal to the following identification ($\Phi_0$ is defined similarly): 
\[
\Phi_1 \colon \wedge^* H_1(F_1,S^+_1) \otimes_{\Z} \Z[H] \stackrel{\cong}{\longrightarrow} \wedge^*_{\Z[H]} H_1(\widehat{F}_1, \widehat{S}^+_1).
\]
This map depends on a choice of $\Z[H]$-basis defined as above. Specifically, for a generator $\gamma_i^{\out} \in H_1(F_1,S_1^+)$, Proposition~\ref{prop:ExpandingBasisElts} established a correspondence that first sends
\begin{align*}
    H_1(F_1,S_1^+) &\stackrel{\cong}{\longrightarrow} H_1(\mathfrak{F}_1,\mathfrak{S}^+_1) \\
    \gamma_i^{\out} &\mapsto -e_i^{\out}
\end{align*}
then extends by $\Z$-linearity (note that we also showed there, $e_i^{\inrm}$ affords the same sign in the incoming case). Now, since $H_1(\widehat{F}_1,\widehat{S}^+_1)$ is freely generated over $\Z[H]$, for each $0 \leq l \leq n_1$, the elements
\[
\widehat{e}^{\out}_J = \widehat{e}_{j_1} \wedge \cdots \wedge \widehat{e}^{\out}_{j_l}
\]
for $J = (j_1,\ldots,j_l)$, $1 \leq j_1 < \cdots < j_l \leq n_1$, form a basis for $\wedge^l H_1(\widehat{F}_1,\widehat{S}^+_1)$ over $\Z[H]$. Then given a generator $\gamma^{\mathrm{out}}_J \in \wedge^* H_1(F_1,S^+_1)$, the identification $\Phi_1$ is characterized by sending
\[
\Phi_1(\gamma^{\out}_J \otimes 1_{\Z[H]})=-\widehat{e}_J^{\out}
\]
then extending by $\Z[H]$-linearity and extending over wedge products, where the sign appears since we have assumed $\widehat{e}_i^{\out} \stackrel{\pi}{\mapsto} e_i^{\out}$ is orientation-preserving for every $1\leq i \leq b$. 

Now pass to the quotient $\overline{Y} =\widehat{Y}/\tors(H)$. Focusing attention to the outgoing sutured surface (the incoming case follows analogously), we can pass our chosen bases to this quotient via the identification 
\[
C^{\cell}_1(\widehat{\mathfrak{F}}_1,\widehat{\mathfrak{S}}^+_1) \otimes_{\Z[H]} \Z[G] \stackrel{\cong}{\longrightarrow} C^{\cell}_1(\overline{\mathfrak{F}}_1,\overline{\mathfrak{S}}^+_1) 
\]
determined by 
\[
\widehat{e}_i^{\out} \otimes 1 \mapsto \overline{e}_i^{\out}.
\]
Thus, there is a module $H_1(\overline{F}_1,\overline{S}^+_1)$ freely generated over $\Z[G]$ by lifts of 1-cells $\widehat{e}_i^{\out}$ which have passed through the quotient; this implies identifications
\[
H_1(\widehat{F}_1, \widehat{S}^+_1) \otimes_{\Z[H]} \Z[G] \cong H_1(\overline{F}_1, \overline{S}^+_1),
\]
that are canonical (do not only hold up to deck transformation). We will write $\overline{e}_J^{\out}$ for a basis element of $\wedge^*_{\Z[G]} H_1(\overline{F}_1, \overline{S}^+_1)$, and likewise for the incoming case.

\subsubsection{Identifying lifts with relative $\Spinc$ structures}

We now apply the framework of \cite{FJR} to the CW pair $(X,Y)=(\mathfrak{Y}_{J^c,I},\mathfrak{R}^+_{J^c,I})$. Although the notions of lifts and Euler structures notions originate in the work of Turaev, we borrow Friedl--Juhász--Rasmussen's definitions to streamline the exposition. In particular, we appeal directly to the composite bijection established in \cite{FJR} and omit the intermediate constructions. We continue to assume that $Y$ is connected.

\begin{proposition}\label{prop:SpincAndLifts}
    For every $I,J$ such that $(Y,\Gamma_{J^c,I})$ is weakly balanced and $\mathfrak{Y}_{J^c,I} \neq \mathfrak{R}^+_{J^c,I}$, there is a $H$-equivariant bijection 
    \[
    \Spinc(Y,\partial Y, -v_{J^c,I}) \cong \Lift(\mathfrak{Y}_{J^c,I},\mathfrak{R}^+_{J^c,I})
    \]
    where $\Lift(X,Y)$ is defined for a CW pair $(X,Y)$ as in \cite[Definition 3.1]{FJR}. Note that by the discussion in Section~\ref{sec:CWCoveringSpaceNormalized}, we can understand $\Lift(\mathfrak{Y}_{J^c,I},\mathfrak{R}^+_{J^c,I})$ using $\widehat{\mathfrak{Y}}_{J^c,I}$ as our model for the maximal abelian cover of $\mathfrak{Y}_{J^c,I}$.
\end{proposition}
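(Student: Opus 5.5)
The plan is to build the bijection as a composite of bijections already established in Friedl--Juhász--Rasmussen, applied to the sutured manifold $(Y,\Gamma_{J^c,I;R^+\leftrightarrow R^-})$ together with the sutured handle complex coming from the diagram $\Hc_{J^c,I}$. In the conventions of \cite{FJR}, $\alpha$-handles are attached to a thickening of $R^-$. After swapping $\alpha$ and $\beta$ (the diagram $(\Hc_{J^c,I})_{\alpha\leftrightarrow\beta}$), the sutured manifold $(Y,\Gamma_{J^c,I;R^+\leftrightarrow R^-})$ becomes a nice sutured handle complex built from $R^+_{J^c,I}\times I$, with its new $R^-$ equal to our $R^+_{J^c,I}$. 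This is exactly the handle complex from Section~\ref{sec:CWDecomp}. Shrinking handles to cores then gives the CW pair $(\mathfrak{Y}_{J^c,I},\mathfrak{R}^+_{J^c,I})$, as in \cite[Remark 3.11]{FJR}. The boundary vector field for this swapped sutured manifold is $-v_{J^c,I}$, which is why the relevant $\Spinc$ set is $\Spinc(Y,\partial Y,-v_{J^c,I})$.

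The key steps run in the following order.

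First, using weak balancedness and \cite[Lemma 2.3]{FJR}, I would note that $\Spinc(Y,\partial Y,-v_{J^c,I})$ is a nonempty $H$-torsor.

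Second, I would invoke the FJR identification of relative $\Spinc$ structures with Euler structures $\Eul(X,Y)$ on the CW pair $(X,Y)=(\mathfrak{Y}_{J^c,I},\mathfrak{R}^+_{J^c,I})$, which is a Turaev-style $H$-equivariant bijection (\cite[Section 3]{FJR}, around Definition 3.17 and Remark 3.11).

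Third, I would use the identification $\Eul(X,Y)\cong\Lift(X,Y)$ from \cite[above Lemma 3.4]{FJR}. This step needs $X\neq Y$, and that is exactly why the hypothesis $\mathfrak{Y}_{J^c,I}\neq\mathfrak{R}^+_{J^c,I}$ appears. When $X=Y$ the identification degenerates to $\Eul(X,Y)\cong H$, as used in the proof of Corollary~\ref{cor:MatrixEntryAndTauTorsion}.

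Finally, I would check that each map intertwines the $H$-actions, so that the composite is $H$-equivariant. I would also check that $\Lift$ is computed correctly with $\widehat{\mathfrak{Y}}_{J^c,I}=\pi_{J^c,I}^{-1}(\mathfrak{Y}_{J^c,I})$ as the maximal abelian cover, which follows from Section~\ref{sec:CWCoveringSpaceNormalized}.

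I expect the main obstacle to be convention bookkeeping rather than new mathematics. Our handle complex is relative to $R^+$, with $\beta$-circles giving 1-cells, whereas FJR work relative to $R^-$. I therefore need to confirm that, after the $\alpha\leftrightarrow\beta$ and $R^+\leftrightarrow R^-$ swap (with the orientation of $Y$ unchanged, per Remark~\ref{rem:FJROppositeOrientation}), the CW pair FJR attach to $(Y,\Gamma_{J^c,I;R^+\leftrightarrow R^-})$ literally agrees with $(\mathfrak{Y}_{J^c,I},\mathfrak{R}^+_{J^c,I})$, including the extra strip cells of $\mathfrak{R}^{+,\mathrm{strips}}_{J^c,I}$. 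I also need to confirm that the downward-gradient convention of Remark~\ref{rem:MorseFunctionConventions} matches the FJR vector fields up to the global sign $-1$. For the strip cells, I would first try to argue that extra cells in the subcomplex do not change $\Lift$ or $\Eul$ up to canonical $H$-equivariant bijection, since FJR's bijections are natural under cellular subdivision of $Y$.
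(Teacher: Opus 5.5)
Your proposal is essentially the paper's proof. The bijection is the composite of three maps:
\begin{itemize}
\item the affine $H$-isomorphism $s_{Z_{J^c,I}}\colon \Spinc(Y,\partial Y,-v_{J^c,I})\to\Eul(\A_{J^c,I})$ of \cite[Definition 3.13]{FJR}, for the handle decomposition coming from Section~\ref{sec:CWDecomp};
\item the canonical identification $\Eul(\A_{J^c,I})\cong\Eul(\mathfrak{Y}_{J^c,I},\mathfrak{R}^+_{J^c,I})$ of \cite[Remark 3.11]{FJR};
\item the $\Lift$-to-$\Eul$ bijection of \cite[Definition 3.3]{FJR}.
\end{itemize}

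One point you should make explicit: the last step, and indeed the definition of $\Eul$ itself, needs $\chi(\mathfrak{Y}_{J^c,I},\mathfrak{R}^+_{J^c,I})=0$ in addition to $\mathfrak{Y}_{J^c,I}\neq\mathfrak{R}^+_{J^c,I}$. The paper derives this vanishing from weak balancedness via Corollary~\ref{cor:J^c,IWeaklyBalChiEqualsZero}.

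Your concern about the strip cells does not arise. $\Lift$ and $\Eul$ only involve the cells of $\mathfrak{Y}_{J^c,I}\setminus\mathfrak{R}^+_{J^c,I}$, and \cite[Remark 3.11]{FJR} permits any CW structure on the base surface.
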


\begin{proof}
    Recall that our definition of $\Spinc(Y,\partial Y, -v_{J^c,I})$ coincides with the definition of $\Spinc(Y,\Gamma_{J^c,I;R^+ \leftrightarrow R^-})$ from \cite[Definition 2.2]{FJR}, so the result follows by composing various bijections established in \cite{FJR}. The pair $Z_{J^c,I} = (\mathcal{A}_{J^c,I}, d)$, where $\mathcal{A}_{J^c,I}$ is the sutured handle complex associated to $(Y,\Gamma_{J^c,I;R^+ \leftrightarrow R^-})$ as in Section~\ref{sec:CWDecomp} and $d$ is the identity diffeomorphism on $Y$, satisfies the definition of a \emph{handle decomposition} of $(Y,\Gamma_{J^c,I;R^+ \leftrightarrow R^-})$ in the sense of \cite[Definition~3.12]{FJR}. Then \cite[Definition 3.13]{FJR} gives an affine $H$-isomorphism 
    \[
    s_{Z_{J^c,I}} \colon \Spinc(Y,\partial Y, -v_{J^c,I}) \to  \Eul(\A_{J^c,I}), 
    \]
    where $\Eul(\A_{J^c,I})$ is defined as in \cite[Definition 3.10]{FJR}. Next, by \cite[Remark 3.11]{FJR} there is a canonical bijection $\Eul(\A_{J^c,I})\cong\Eul(\mathfrak{Y}_{J^c,I},\mathfrak{R}^+_{J^c,I})$, where $\Eul(\mathfrak{Y},\mathfrak{R}^+_{J^c,I})$ is defined as in \cite[Definition 3.2]{FJR}. Finally, whenever $\chi(\mathfrak{Y}_{J^c,I},\mathfrak{R}^+_{J^c,I})=0$ and $\mathfrak{Y}_{J^c,I} \neq \mathfrak{R}^+_{J^c,I}$, \cite[Definition 3.3]{FJR} implies that 
    \[
    \Lift(\mathfrak{Y}_{J^c,I},\mathfrak{R}^+_{J^c,I}) \to \Eul(\mathfrak{Y}_{J^c,I},\mathfrak{R}^+_{J^c,I})
    \]
    is an $H$-equivariant bijection. Since we have assumed that $(Y,\Gamma_{J^c,I})$ is weakly balanced, Corollary~\ref{cor:J^c,IWeaklyBalChiEqualsZero} together with the homotopy equivalence of $\mathfrak{Y}_{J^c,I}$ relative $\mathfrak{R}^+_{J^c,I}$ with $Y$ relative $R^+_{J^c,I}$ gives us $\chi(Y,R^+_{J^c,I})=\chi(\mathfrak{Y}_{J^c,I},\mathfrak{R}^+_{J^c,I})=0$. We have also assumed $\mathfrak{Y}_{J^c,I} \neq \mathfrak{R}^+_{J^c,I}$, so the bijection holds. 
\end{proof}

\begin{remark}\label{rem:SpecialCaseLiftSpinc}
    The condition $\mathfrak{Y}_{J^c,I} \neq \mathfrak{R}^+_{J^c,I}$ in Proposition~\ref{prop:SpincAndLifts} always holds for all $I,J$ except in the degenerate case where the Heegaard surface $\Sigma$ is a closed surface $\Sigma_g$ of genus $g$ with no $\alpha$ or $\beta$ circles. In this case, we must have $I = J = \emptyset$, with $J^c = \emptyset$ as well; we have $\Spinc(\Sigma_g \times [0,1], \partial(\Sigma_g \times [0,1]), -v_{\emptyset,\emptyset}) \cong H_1(\Sigma_g) \cong \Z^{2g}$ while the corresponding set of lifts has only the trivial (empty) lift. There is a canonical element $\frac{\partial}{\partial t} \in \Spinc(\Sigma_g \times [0,1], \partial(\Sigma_g \times [0,1]), -v_{\emptyset,\emptyset})$ and we will use $\frac{\partial}{\partial t}$ in choosing the reference $\Spinc$ structure $\s_{\mathrm{ref}} = -\s_{\emptyset,\emptyset}$ (i.e. we will use $-\frac{\partial}{\partial t}$ in choosing $\s_{\emptyset,\emptyset}$). As discussed in \cite[below Definition 3.3]{FJR}, there is still a map from the one-point set of lifts into $\Spinc(\Sigma_g \times [0,1], \partial(\Sigma_g \times [0,1]), -v_{\emptyset,\emptyset})$, and its image is $\{\frac{\partial}{\partial t}\}$. Thus, even in this degenerate case, it holds that for all $I,J$ such that $(Y,\Gamma_{J^c,I})$ is weakly balanced, we have an element of $\Lift(\mathfrak{Y}_{J^c,I},\mathfrak{R}^+_{J^c,I})$ corresponding to $\s_{\mathrm{ref}} = -\s_{J^c,I} \in \Spinc(Y,\partial Y, -v_{J^c,I})$. This fact will appear in the statement of the below corollary.
\end{remark}

\begin{corollary}\label{cor:MatrixEntryAndCellBdryDet}
    If $(Y,\Gamma_{J^c,I})$ is weakly balanced, then the matrix entry $E$ of the map 
    \[
    [\BSDA(Y,\Gamma;\Xi_{\mathfrak{s}})]^{\Z[H]}_{\comb;\, \mathrm{concrete}}
    \]
    in column $\gamma^{\inrm}_I \otimes 1$ and row $\gamma^{\out}_J \otimes 1$ is 
    \[
    E = (-1)^{\mathrm{inv}(\sigma_{JJ^c \leftrightarrow \std}) + n_1 k + ak} \det \widehat{A}
    \]
    where $\widehat{A}$ is the matrix for the boundary map $\partial_2$ of the cellular chain complex of $(\widehat{\mathfrak{Y}}_{J^c,I},\widehat{\mathfrak{R}}^+_{J^c,I})$ in the bases for the chain groups $C_2$ and $C_1$ determined by any choice of lifts of the 2-cells and 1-cells of $\mathfrak{Y}_{J^c,I} \setminus \mathfrak{R}^+_{J^c,I}$ to 2-cells and 1-cells of $\widehat{\mathfrak{Y}}_{J^c,I} \setminus \widehat{\mathfrak{R}}^+_{J^c,I}$ compatible with the element $\mathfrak{l}$ of $\Lift(\mathfrak{Y}_{J^c,I},\mathfrak{R}^+_{J^c,I})$ corresponding to $\s_{\mathrm{ref}} = -\s_{J^c,I} \in \Spinc(Y,\partial Y, -v_{J^c,I})$. Note that if the Heegaard surface $\Sigma_{J^c,I}$ of $\Hc_{J^c,I;\alpha \leftrightarrow \beta}$ is closed and connected with no $\alpha$ or $\beta$ circles, so $\Lift(\mathfrak{Y}_{J^c,I},\mathfrak{R}^+_{J^c,I})$ has a unique element $\mathfrak{l}$, then the sense in which $\mathfrak{l}$ corresponds to $\s_{\mathrm{ref}}$ should be interpreted as in Remark~\ref{rem:SpecialCaseLiftSpinc}. We also define the determinant of the empty (size $0 \times 0$) matrix to be $1$. 
\end{corollary}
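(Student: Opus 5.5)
The plan is to start from Corollary~\ref{cor:MatrixEntryAndTauTorsion}, which gives
\[
E = (-1)^{b_1(Y,R^+_{J^c,I}) + \mathrm{inv}(\sigma_{JJ^c \leftrightarrow \std}) + n_1k + ak}\, \tau(Y,\Gamma_{J^c,I;R^+\leftrightarrow R^-}, \s_{\mathrm{ref}},\omega_{\Xi_{J^c,I}}).
\]
It then suffices to show that
\[
\tau(Y,\Gamma_{J^c,I;R^+\leftrightarrow R^-},\s_{\mathrm{ref}},\omega_{\Xi_{J^c,I}}) = (-1)^{b_1(Y,R^+_{J^c,I})}\det\widehat{A}.
\]
I will unwind the definition of $\tau$ in \cite[Definition 3.18]{FJR}. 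It is the CW torsion $\tau(X,Y,\mathfrak{l},\omega)$ of the pair $(\mathfrak{Y}_{J^c,I},\mathfrak{R}^+_{J^c,I})$, where $\mathfrak{l}$ is the lift matched with $\s_{\mathrm{ref}}$. The passage from $\Spinc$ structures to lifts uses the composite bijection of Proposition~\ref{prop:SpincAndLifts}, through $\Eul(\mathcal{A}_{J^c,I})$ and \cite[Remark 3.11]{FJR}. The handle complex here is the one relative to $R^+_{J^c,I}$ from Section~\ref{sec:CWDecomp}, which is exactly the one \cite{FJR} uses for $(Y,\Gamma_{J^c,I;R^+\leftrightarrow R^-})$, where it is relative to $R^-$ of the swapped manifold.

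Next I will compute the torsion. The relative cellular complex $C_*(\widehat{\mathfrak{Y}}_{J^c,I},\widehat{\mathfrak{R}}^+_{J^c,I})$ is concentrated in degrees $1$ and $2$. By Corollary~\ref{cor:J^c,IWeaklyBalChiEqualsZero} (weak balancedness), it has equal ranks there, so $\widehat{A}$ is square. The Turaev torsion of a two-term complex with bases given by the lifts $\mathfrak{l}$ is $\det\widehat{A}^{\pm1}$. The exponent and sign depend on the convention for which degree counts as odd, and there is a further sign $(-1)^{N}$ from the homology orientation $\omega$.

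When $\det\widehat{A}$ is a nonzerodivisor in $Q(H)$, the complex is acyclic over $Q(H)$. In that case the torsion in \cite[Section 3.4]{FJR} is $\det\widehat{A}$ (or its inverse, depending on convention), times the sign $\tau_0$ coming from comparing the cell basis over $\R$ with $\omega$. When the complex is not acyclic, both sides vanish: $\tau=0$ by definition, and $\det\widehat{A}=0$ since $\widehat{A}$ is then not injective over the field $Q(H)$. The degenerate case with no cells is handled by Remark~\ref{rem:SpecialCaseLiftSpinc}, where $\tau=+1=\det$ of the empty matrix.

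The main obstacle I expect is the sign $(-1)^{b_1(Y,R^+_{J^c,I})}$, together with the exponent convention; the latter must come out as $\det$ rather than $\det^{-1}$, since FJR's $\tau$ lies in $\Z[H]$ here. For the sign, I would first compare with the known $\Z$-level statement. Applying the augmentation $\varepsilon$ to both sides gives Corollary~\ref{cor:MatrixEntryAndSFHOverZ} combined with Corollary~\ref{cor:FirstMatrixEntryComp}: there $\chi(\SFH)$ picks up $(-1)^{b_1}$ relative to $\det$ of the unlifted matrix, which is $\varepsilon(\det\widehat{A})$ by Example~\ref{ex:BSDAForOrdinary}.

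The augmentation argument alone fails when $\varepsilon(\det\widehat{A})=0$, so I would use it only as a consistency check and fix the sign directly. FJR's $\tau_0$ is defined through the homology orientation $\omega_{\Xi_{J^c,I}}$. I expect $\tau_0$ to equal $(-1)^{b_1(Y,R^+_{J^c,I})}$ by the same computation that produces the factor $(-1)^{b_1}$ in \cite[Lemma 2.8]{FJR} versus \cite[Theorem 1]{FJR}. I would verify this by matching, cell by cell, the ordering and orientation choices in $\Xi_{J^c,I}$ with the $\alpha\leftrightarrow\beta$ swapped conventions used to define $\omega_{\Xi}$ in Section~\ref{sec:RelWithSFHOrdinarySutured}. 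Substituting $\tau=(-1)^{b_1}\det\widehat{A}$ into the first display then cancels the $b_1$ sign and gives the stated formula.
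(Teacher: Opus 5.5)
Your route is essentially the paper's: you reduce via Corollary~\ref{cor:MatrixEntryAndTauTorsion} and identify the sutured torsion with the CW-pair torsion of $(\mathfrak{Y}_{J^c,I},\mathfrak{R}^+_{J^c,I})$ through \cite[Definition 3.17, Remark 3.11]{FJR}, with the degenerate case handled by Remark~\ref{rem:SpecialCaseLiftSpinc}; the only difference is that the paper simply cites \cite[Lemma 3.6]{FJR} for the determinant evaluation of that CW-pair torsion, including the homology-orientation sign you propose to recompute cell by cell. If you do redo that step by hand, note that when $\tors(H)\neq 0$ the ring $Q(H)$ is a product of fields rather than a field, so your acyclic/non-acyclic dichotomy, and the simultaneous vanishing of $\tau$ and $\det\widehat{A}$, must be run componentwise (which still yields the same identity).
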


\begin{proof}
    If the Heegaard surface $\Sigma_{J^c,I}$ is closed with no $\alpha$ or $\beta$ circles, the equation in the corollary is $1 = 1$, so assume we are not in this degenerate case. By \cite[Lemma 3.6]{FJR} and Corollary~\ref{cor:MatrixEntryAndTauTorsion}, it suffices to show that
    \[
    \tau(Y,\Gamma_{J^c,I;R^+ \leftrightarrow R^-}, \s_{\mathrm{ref}}, \omega_{\Xi_{J^c,I}}) = \tau(\mathfrak{Y}_{J^c,I}, \mathfrak{R}^+_{J^c,I}, \mathfrak{l}, \omega_{\Xi_{J^c,I}}),
    \]
    where $\tau(\mathfrak{Y}_{J^c,I}, \mathfrak{R}^+_{J^c,I}, \mathfrak{l}, \omega_{\Xi_{J^c,I}})$ is the CW-pair torsion defined in \cite[Section 3.4]{FJR}. This follows from \cite[Definition 3.17]{FJR}, \cite[Remark 3.11]{FJR}, and \cite[above Lemma 3.4]{FJR}.
\end{proof}

\subsection{\texorpdfstring{$\Spinc$}{Spin-c} version of \texorpdfstring{$[\BSDA]$}{[BSDA]} independent of reference \texorpdfstring{$\Spinc$}{Spin-c} structures}\label{sec:BSDAIndofRefSpinc}

Continue to assume $Y$ is connected. We require throughout this section that $(Y,\Gamma)$ be equipped with a set of normalized choices $\Xi_{\norm}$ as in Corollary~\ref{cor:CanChooseXiNormalized}, so that we choose a Heegaard diagram of the form $\Hc_{\norm}=\Hc^{\alpha \beta}_{1/2} \cup_{\Zc_1^*} \Hc' \cup_{\Zc_0^*} \Hc^{\beta \alpha}_{1/2}$ for some $\beta$-$\beta$ bordered sutured Heegaard diagram $\Hc'$. We want to use the map 
\[
[\BSDA(Y,\Gamma;\Xi_{\mathfrak{s}})]^{\Z[H]}_{\comb,\mathrm{concrete}}
\]
of Definition~\ref{def:BSDADepOnRefSpinc}, depending on reference $\Spinc$ structures, to define a map that is independent of reference $\Spinc$ structures. 

\begin{definition}\label{def:BSDAIndOfRefSpinc}
    Define 
    \[              
    [\BSDA(Y,\Gamma;\Xi_{\norm})]^{\Z[H]}_{\comb} \colon \wedge^*_{\Z[H]} H_1(\widehat{F}_0, \widehat{S}^+_0) \to \wedge^*_{\Z[H]} H_1(\widehat{F}_1, \widehat{S}^+_1)
    \]
    as follows; first, for each $\beta$ circle and $\alpha$ circle of $\Hc_{\norm} = \Hc^{\alpha \beta}_{1/2} \cup_{\Zc_1^*} \Hc' \cup_{\Zc_0^*} \Hc^{\beta \alpha}_{1/2}$, corresponding to the 1-cells and 2-cells of $\mathfrak{Y}$ relative to $\mathfrak{R}^+$, choose a lifted 1-cell or 2-cell of $\widehat{\mathfrak{Y}}$. Now, for each $I$ and $J$, consider the relative 1-cells and 2-cells of the CW pair $(\mathfrak{Y}_{J^c I}, \mathfrak{R}^+_{J^c I})$, corresponding to the $\beta$ and $\alpha$ circles of the diagram $\Hc_{J^c,I}$. We have an inclusion of CW pairs $(\mathfrak{Y},\mathfrak{R}^+) \subset (\mathfrak{Y}_{J^c,I}, \mathfrak{R}^+_{J^c,I})$; under this inclusion, all 1-cells of $\mathfrak{Y}_{J^c,I}$ relative to $\mathfrak{R}^+_{J^c,I}$ come from 1-cells of $\mathfrak{Y}$ relative to $\mathfrak{R}^+$. The 2-cells of $\mathfrak{Y}_{J^c,I}$ relative to $\mathfrak{R}^+_{J^c,I}$ that do not come from 2-cells of $\mathfrak{Y}$ relative to $\mathfrak{R}^+$ are in bijection with the $\alpha$ circles of $\Hc_{J^c,I}$ that correspond to $\alpha$ arcs of $\Hc_{\norm}$, i.e. they are in bijection with the elements of $J^c \sqcup I$. Each such $\alpha$ circle of $\Hc_{J^c,I}$ has a partner $\beta$ circle of $\Hc_{J^c,I}$ corresponding to the same element of $J^c \sqcup I$; the partner $\beta$ circle corresponds to a 1-cell of $\mathfrak{Y}$ relative to $\mathfrak{R}^+$, so we have already chosen a lift of this 1-cell to $\widehat{\mathfrak{Y}} \subset \widehat{\mathfrak{Y}}_{J^c,I}$. There is a unique lift of the corresponding 2-cell of $\mathfrak{Y}_{J^c,I}$ to a 2-cell of $\widehat{\mathfrak{Y}}_{J^c,I}$ such that in the cellular differential $d_2$ for the CW pair $(\widehat{\mathfrak{Y}}_{J^c,I}, \widehat{\mathfrak{R}}^+_{J^c,I})$, the matrix entry from the 2-cell to the 1-cell is $-1$ (if the cells are on the outgoing / left side) or $+1$ (if the cells are on the incoming / right side). 

    The chosen lifts, viewed up to equivalence, give us an element of $\mathrm{Lift}(\mathfrak{Y}_{J^c,I}, \mathfrak{R}^+_{J^c,I})$ and thus, by Proposition~\ref{prop:SpincAndLifts}, an element $\s_{\mathrm{ref}}$ of $\Spinc(Y,\partial Y, -v_{J^c,I})$. Let $\s_{J^c,I} := -\s_{\mathrm{ref}} \in \Spinc(Y,\partial Y, v_{J^c,I})$. These reference elements $\s_{J^c,I}$ give us, by Definition~\ref{def:BSDADepOnRefSpinc}, a map
    \[ [\BSDA(Y,\Gamma;\Xi_{\mathfrak{s}})]^{\Z[H]}_{\comb;\, \mathrm{concrete}} \colon \wedge^* H_1(F_0,S^+_0) \otimes_{\Z} \Z[H] \to \wedge^* H_1(F_1,S^+_1) \otimes_{\Z} \Z[H]
    \]
    The chosen lifts (not viewed up to equivalence) also give us identifications
    \[
    \Phi_i \colon \wedge^* H_1(F_i,S^+_i) \otimes_{\Z} \Z[H] \stackrel{\cong}{\longrightarrow} \wedge^*_{\Z[H]} H_1(\widehat{F}_i, \widehat{S}^+_i).
    \]
    as in Section~\ref{sec:LiftedSurfacesHomology}. We let 
    \[              [\BSDA(Y,\Gamma;\Xi_{\norm})]^{\Z[H]}_{\comb} \colon \wedge^*_{\Z[H]} H_1(\widehat{F}_0, \widehat{S}^+_0) \to \wedge^*_{\Z[H]} H_1(\widehat{F}_1, \widehat{S}^+_1)
    \]
    be the map obtained from $[\BSDA(Y,\Gamma;\Xi_{\mathfrak{s}})]^{\Z[H]}_{\comb;\, \mathrm{concrete}}$ by pre- and post-composing with these identifications. 
\end{definition}

\begin{proposition}
    The map $[\BSDA(Y,\Gamma;\Xi_{\norm})]^{\Z[H]}_{\comb}$ of the previous definition is independent of the choices of lifts for cells of $\mathfrak{Y}$ relative to $\mathfrak{R}^+$, up to multiplication by an element of $\pm H$. 
\end{proposition}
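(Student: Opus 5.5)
The plan is to see how each ingredient of Definition~\ref{def:BSDAIndOfRefSpinc} changes when the chosen lift of a single relative cell of $(\mathfrak{Y},\mathfrak{R}^+)$ is multiplied by a deck transformation $h\in H$. Any two systems of lifts are connected by a sequence of such single-cell changes, so it is enough to show that one such change multiplies the resulting map by $h^{\pm 1}$, with the exponent the same for every $I,J$. There are three kinds of cell: a 2-cell coming from an $\alpha$ circle of $\Hc'$, a 1-cell coming from a $\beta$ circle of $\Hc'$, and a 1-cell $e^{\out}_i$ or $e^{\inrm}_i$ coming from a $\beta$ circle glued from arcs of a half-identity diagram. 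In the last case the partner 2-cell in each $\mathfrak{Y}_{J^c,I}$ with $i\in J^c\sqcup I$ is itself determined by the lift, through the normalization that its matrix entry be $\mp 1$.

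First I will fix $I,J$ with $(Y,\Gamma_{J^c,I})$ weakly balanced and track the reference structure. If the lift of one relative cell of $(\mathfrak{Y}_{J^c,I},\mathfrak{R}^+_{J^c,I})$ is translated by $h$, the element of $\Lift(\mathfrak{Y}_{J^c,I},\mathfrak{R}^+_{J^c,I})$ is translated by $h^{\pm1}$, with sign $(-1)^{\dim}$ according to the convention of \cite[Definition~3.1]{FJR}. By the $H$-equivariance in Proposition~\ref{prop:SpincAndLifts}, $\s_{\mathrm{ref}}$ is translated in the same way. Definition~\ref{def:IntrinsicDegrees} and Lemma~\ref{lem:MatrixEntrySFHSpinc} then show that the concrete matrix entry $E_{I,J}$ is multiplied by $h^{\pm1}$.

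Next I will sort the three kinds of cell.

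\textbf{Interior cell of $\Hc'$.} The cell lies in every $\mathfrak{Y}_{J^c,I}$, so every $E_{I,J}$ is multiplied by the same $h^{\epsilon}$. The identifications $\Phi_i$ do not involve this cell, so the whole map is multiplied by $h^{\epsilon}$. As a check, Corollary~\ref{cor:MatrixEntryAndCellBdryDet} writes $E_{I,J}$ as a determinant, and replacing the lift multiplies one row or column of $\widehat{A}$ by $h$.

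\textbf{Boundary 1-cell $e^{\out}_i$.} Translating its lift by $h$ also translates the partner 2-cell by $h$ whenever $i\in J^c$, because of the normalization of the matrix entry. The two translations contribute opposite exponents to the lift class, so $E_{I,J}$ is unchanged when $i\in J^c$ and is multiplied by $h^{-1}$ (say) when $i\in J$. On the other side, $\Phi_1$ changes only on basis vectors containing $\widehat e^{\out}_i$: it multiplies $\widehat e^{\out}_J$ by $h$ exactly when $i\in J$. The factor coming from $E_{I,J}$ and the factor coming from $\Phi_1$ therefore cancel for every $J$, and the conjugated map is unchanged.

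\textbf{Boundary 1-cell $e^{\inrm}_i$.} The argument is the same, with $i\in I$ playing the role of $i\in J^c$, and conjugating by $\Phi_0^{-1}$ instead of $\Phi_1$.

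The main obstacle is pinning down the sign and orientation conventions, so that the exponents really cancel in the boundary cases rather than doubling. To settle this I would compute $\widehat{A}$ directly. In the boundary case the row of the 1-cell and the column of its partner 2-cell are both rescaled. For $i\in J^c$ the diagonal $\mp1$ entry is fixed by the normalization, so the combined rescaling $h^{-1}\cdot h$ leaves $\det\widehat A$ unchanged. For $i\in J$ only the row is rescaled, so $\det\widehat A$ is multiplied by $h^{\pm1}$. I would then check that this exponent is the opposite of the one by which $\Phi_1$ rescales $\widehat e^{\out}_J$. Working with Corollary~\ref{cor:MatrixEntryAndCellBdryDet} this way avoids the Euler-structure bookkeeping of \cite{FJR} entirely. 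Finally, overall signs and the degenerate case of Remark~\ref{rem:SpecialCaseLiftSpinc} contribute at most a factor of $\pm1$, which is within the stated ambiguity.
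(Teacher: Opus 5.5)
Your overall strategy is the paper's: reduce to changing a single lift by some $h\in H$, and track its effect on two things. The first is the reference lift class, hence on $\s_{\mathrm{ref}}$ and the concrete entries $E_{I,J}$. The second is the identifications $\Phi_i$. You split into interior cells of $\Hc'$ and boundary $1$-cells, and your interior cases match the paper's: the map is multiplied by $h$ for a $2$-cell and by $h^{-1}$ for a $1$-cell. Your outgoing boundary case is also right. For $i\in J^c$ the partner $2$-cell is translated as well, and nothing changes. For $i\in J$, the factor $h^{-1}$ in $E_{I,J}$ cancels against $\Phi_1'(\gamma^{\out}_J\otimes 1)=h\,\Phi_1(\gamma^{\out}_J\otimes 1)$.

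The step that fails is the claim that the incoming case is ``the same.'' The two sides are asymmetric. The output basis vector $\gamma^{\out}_J$ is indexed by the \emph{uncapped} outgoing arcs, since the left cap is on $J^c$. The input basis vector $\gamma^{\inrm}_I$ is indexed by the \emph{capped} incoming arcs, since the right cap is on $I$. Now translate the lift of $\widehat e^{\inrm}_i$ by $h$.

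For $i\in I$, the partner $2$-cell is translated too, so $E_{I,J}$ is unchanged. But $\Phi_0'(\gamma^{\inrm}_I\otimes 1)=h\,\Phi_0(\gamma^{\inrm}_I\otimes 1)$, so precomposing with $(\Phi_0')^{-1}$ contributes $h^{-1}$. For $i\notin I$, $\Phi_0$ is unchanged, but $E_{I,J}$ picks up $h^{-1}$: the row of $\widehat A$ is rescaled with no compensating column. The two effects therefore occur in complementary cases and do not cancel. Instead, the conjugated map is multiplied by $h^{-1}$ uniformly in $I$ and $J$.

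This is exactly the paper's case (4). Because the factor is the same for all $I,J$, the proposition still follows within your framework of ``multiplication by $h^{\pm 1}$ with a uniform exponent.'' However, the cancellation you assert for the incoming side is false, and that case needs its own computation as above.
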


\begin{proof}
   The data of $[\BSDA(Y,\Gamma;\Xi_{\norm})]^{\Z[H]}_{\comb}$ that relies on our chosen lifts are the reference $\Spinc$ structures $\s_{J^c,I}$, which enable us to define the map $[\BSDA(Y,\Gamma;\Xi_{\mathfrak{s}})]^{\Z[H]}_{\comb,\mathrm{concrete}}$, and the $\Z[H]$ bases for $H_1(\widehat F_i,\widehat S_i^+)$, which enable the identifications $\Phi_i$ per Section~\ref{sec:LiftedSurfacesHomology}. By \cite[Definition~3.1]{FJR}, if $\widehat{e}$ is a particular choice of lift (not viewed up to equivalence) of a cell $e$, any other lift differs by a unique element of $H$. It suffices to analyze the effects of changing the lift of only one cell; this cell can either be (1) a 2-cell corresponding to an $\alpha$-circle of $\Hc$, (2) a 1-cell corresponding to a $\beta$-circle of $\Hc'$, (3) a 1-cell corresponding to a $\beta$-circle of $\Hc$ glued from an outgoing $\beta$-arc of $\Hc'$, or (4) a 1-cell corresponding to a $\beta$-circle of $\Hc$ glued from an incoming $\beta$-arc of $\Hc'$. Write $\Phi_i'$ and $\mathfrak{s}'_{J^c,I}$ for the new data determined by changing a particular chosen lift in each case, and let $[\BSDA(Y,\Gamma;\Xi_{\norm})]^{\Z[H]}_{\comb;\,\mathrm{new}}$ be the resulting map.

   \emph{Case (1): 2-cells.}: First consider lifts $\widehat{e}$ of 2-cells $e$ which come from $\alpha$ circles of $\Hc$. By Section~\ref{sec:LiftedSurfacesHomology}, these lifts do not interact with the $\Z[H]$-basis for $H_1(\widehat F_i,\widehat S_i^+)$, so $\Phi'_i=\Phi_i$ is automatic. Furthermore, if we replace $\widehat{e}$ with $h \widehat{e}$ for some fixed $h \in H$, then for all $I,J$, the resulting element of $\Lift(\mathfrak{Y}_{J^c,I},\mathfrak{R}^+_{J^c,I})$ gets multiplied by $h$. Thus the reference element $\s_{\mathrm{ref}}$ of $\Spinc(Y,\partial Y, -v_{J^c,I})$ gets multiplied by $h$, so the reference element $\s_{J^c,I}$ of $\Spinc(Y,\partial Y, v_{J^c,I})$ gets multiplied by $h^{-1}$. In additive notation, we have $\s'_{J^c,I} = \s_{J^c,I} - h$, so for each generator $\x$ of $\Hc$, we have $\mathrm{gr}'_H(\x) = \s^{\rel}(\x) - \s'_{J^c,I} = \s^{\rel(\x)} - (\s_{J^c,I} - h) = \mathrm{gr}_H(\x) + h$. Switching back to multiplicative notation, we get
   \[ [\BSDA(Y,\Gamma;\Xi_{\mathfrak{s}'})]^{\Z[H]}_{\comb;\,\mathrm{concrete}}=h \cdot [\BSDA(Y,\Gamma;\Xi_{\mathfrak{s}})]^{\Z[H]}_{\comb;\,\mathrm{concrete}},
   \]
   and since $\Phi'_i = \Phi_i$ we get 
   \[ [\BSDA(Y,\Gamma;\Xi_{\norm})]^{\Z[H]}_{\comb;\,\mathrm{new}}=h \cdot [\BSDA(Y,\Gamma;\Xi_{\norm})]^{\Z[H]}_{\comb}.
   \]   

   \emph{Case (2): 1-cells corresponding to $\beta$-circles of $\Hc'$.} This case is analogous to case (1), except that if we replace $\widehat{e}$ with $h\widehat{e}$ for some $h \in H$, then for all $I,J$, the resulting element of $\Lift(\mathfrak{Y}_{J^c,I},\mathfrak{R}^+_{J^c,I})$ gets multiplied by $h^{-1}$. We get
   \[ [\BSDA(Y,\Gamma;\Xi_{\norm})]^{\Z[H]}_{\comb;\,\mathrm{new}}=h^{-1} \cdot [\BSDA(Y,\Gamma;\Xi_{\norm})]^{\Z[H]}_{\comb}.
   \] 

   \emph{Case (3): 1-cells corresponding to outgoing $\beta$-arcs of $\Hc'$.} In this case, we consider two cases for $J$: either $J$ contains the outgoing $\beta$-arc, or it does not. First suppose it does not, so the outgoing $\beta$-arc is in $J^c$. In this case we have $\Phi'_1(\gamma^{\out}_{J} \otimes 1_{\Z[H]}) = \Phi_1(\gamma^{\out}_{J} \otimes 1_{\Z[H]})$, and we still have $\Phi'_0 = \Phi_0$. For any $I$, there is also a 2-cell $e'$ of $\mathfrak{Y}_{J^c,I} \setminus \mathfrak{R}^+_{J^c,I}$ corresponding to an $\alpha$-circle of $\Hc_{J^c,I}$ that intersects the $\beta$-circle once in the Zarev cap piece of the diagram. The lift $\widehat{e'}$ that we use for $e'$ is determined by the lift $\widehat{e}$ for $e$ and the condition that the matrix entry from $\widehat{e'}$ to $\widehat{e}$ in the cellular boundary map $\partial_2$ for $C_*(\mathfrak{Y}_{J^c,I},\mathfrak{R}^+_{J^c,I})$ is $-1$. Thus, since we are changing $\widehat{e}$ to $h\widehat{e}$, we are also changing $\widehat{e'}$ to $h\widehat{e'}$. The reference element of $\Lift(\mathfrak{Y}_{J^c,I},\mathfrak{R}^+_{J^c,I})$ is unchanged (multiplying the lift of a 2-cell by $h$ multiplies the element of $\Lift(\mathfrak{Y}_{J^c,I},\mathfrak{R}^+_{J^c,I})$ by $h$, while multiplying the lift of a 1-cell by $h$ multiplies the element of $\Lift(\mathfrak{Y}_{J^c,I},\mathfrak{R}^+_{J^c,I})$ by $h^{-1}$). It follows that the coefficient of $[\BSDA(Y,\Gamma;\Xi_{\norm})]^{\Z[H]}_{\comb;\,\mathrm{new}}(\Phi_0(\gamma^{\inrm}_I \otimes 1_{\Z[H]}))$ on $\Phi_1(\gamma^{\out}_J \otimes 1_{\Z[H]})$ agrees with the coefficient of $[\BSDA(Y,\Gamma;\Xi_{\norm})]^{\Z[H]}_{\comb}(\Phi_0(\gamma^{\inrm}_I \otimes 1_{\Z[H]}))$ on $\Phi_1(\gamma^{\out}_J \otimes 1_{\Z[H]})$.

   Now consider $J$ such that the outgoing $\beta$ arc is in $J$. In this case we have $\Phi'_1(\gamma^{\out}_J \otimes 1_{\Z[H]}) = h \cdot \Phi_1(\gamma^{\out}_J \otimes 1_{\Z[H]})$, and we still have $\Phi'_0 = \Phi_0$. For any $I$, our reference element of $\Lift(\mathfrak{Y}_{J^c,I},\mathfrak{R}^+_{J^c,I})$ gets multiplied by $h^{-1}$ since we multiplied the lift of a 1-cell by $h$ with no compensating change to the lifts of 2-cells. We get that the coefficient $C'_{\mathrm{concrete}}$ of $[\BSDA(Y,\Gamma;\Xi_{\mathfrak{s}'})]^{\Z[H]}_{\comb;\,\mathrm{concrete}} (\gamma^{\inrm}_I \otimes 1_{\Z[H]})$ on $\gamma^{\out}_J \otimes 1_{\Z[H]}$ is equal to $h^{-1}$ times the coefficient $C_{\mathrm{concrete}}$ of $[\BSDA(Y,\Gamma;\Xi_{\mathfrak{s}})]^{\Z[H]}_{\comb;\,\mathrm{concrete}} (\gamma^{\inrm}_I \otimes 1_{\Z[H]})$ on $\gamma^{\out}_J \otimes 1_{\Z[H]}$.

   The coefficient of $[\BSDA(Y,\Gamma;\Xi_{\norm})]^{\Z[H]}_{\comb;\,\mathrm{new}}(\Phi_0(\gamma^{\inrm}_I \otimes 1_{\Z[H]}))$ on $\Phi'_1(\gamma^{\out}_J \otimes 1_{\Z[H]})$ is, by definition, the coefficient of $[\BSDA(Y,\Gamma;\Xi_{\mathfrak{s}'})]^{\Z[H]}_{\comb;\,\mathrm{concrete}} (\gamma^{\inrm}_I \otimes 1_{\Z[H]})$ on $\gamma^{\out}_J \otimes 1_{\Z[H]}$, i.e. it is $C'_{\mathrm{concrete}} = h^{-1} \cdot C_{\mathrm{concrete}}$. Since $\Phi'_1(\gamma^{\out}_J \otimes 1_{\Z[H]}) = h \cdot \Phi_1(\gamma^{\out}_J \otimes 1_{\Z[H]})$, the coefficient of $[\BSDA(Y,\Gamma;\Xi_{\norm})]^{\Z[H]}_{\comb;\,\mathrm{new}}(\Phi_0(\gamma^{\inrm}_I \otimes 1_{\Z[H]}))$ on $\Phi_1(\gamma^{\out}_J \otimes 1_{\Z[H]})$ is $h \cdot h^{-1} \cdot C_{\mathrm{concrete}} = C_{\mathrm{concrete}}$, the same as the coefficient of $[\BSDA(Y,\Gamma;\Xi_{\norm})]^{\Z[H]}_{\comb}(\Phi_0(\gamma^{\inrm}_I \otimes 1_{\Z[H]}))$ on $\Phi_1(\gamma^{\out}_J \otimes 1_{\Z[H]})$.

   Putting these two cases for $J$ together, we get
   \[ [\BSDA(Y,\Gamma;\Xi_{\norm})]^{\Z[H]}_{\comb;\,\mathrm{new}}=[\BSDA(Y,\Gamma;\Xi_{\norm})]^{\Z[H]}_{\comb}.
   \] 

   \emph{Case (4): 1-cells corresponding to incoming $\beta$-arcs of $\Hc'$.} Here we consider two cases for $I$: either $I$ contains the incoming $\beta$ arc, or it does not. First suppose the incoming $\beta$ arc is in $I$. In this case we have $\Phi'_0(\gamma^{\inrm}_I \otimes 1_{\Z[H]}) = h \cdot \Phi_0(\gamma^{\inrm}_I \otimes 1_{\Z[H]})$, and we also have $\Phi'_1 = \Phi_1$. For any $J$, there is also a 2-cell of $\Lift(\mathfrak{Y}_{J^c,I},\mathfrak{R}^+_{J^c,I})$ whose lift also gets multiplied by $h$ as in the first part of case (3), so the reference element of $\Lift(\mathfrak{Y}_{J^c,I},\mathfrak{R}^+_{J^c,I})$ is unchanged for every $J$. We get
   \[
   [\BSDA(Y,\Gamma;\Xi_{\mathfrak{s}'})]^{\Z[H]}_{\comb;\,\mathrm{concrete}} (\gamma^{\inrm}_I \otimes 1_{\Z[H]}) = [\BSDA(Y,\Gamma;\Xi_{\mathfrak{s}})]^{\Z[H]}_{\comb;\,\mathrm{concrete}} (\gamma^{\inrm}_I \otimes 1_{\Z[H]}).
   \]
   It follows that
   \begin{align*}
   &[\BSDA(Y,\Gamma;\Xi_{\norm})]^{\Z[H]}_{\comb;\,\mathrm{new}} (\Phi_0(\gamma^{\inrm}_I \otimes 1_{\Z[H]})) \\ &=\Phi'_1([\BSDA(Y,\Gamma;\Xi_{\s'})]^{\Z[H]}_{\comb;\,\mathrm{concrete}} ((\Phi'_0)^{-1}(\Phi_0(\gamma^{\inrm}_I \otimes 1_{\Z[H]})))) \\
   &= \Phi'_1([\BSDA(Y,\Gamma;\Xi_{\s'})]^{\Z[H]}_{\comb;\,\mathrm{concrete}} (h^{-1} \cdot (\gamma^{\inrm}_I \otimes 1_{\Z[H]})))) \\
   &= h^{-1} \cdot \Phi'_1([\BSDA(Y,\Gamma;\Xi_{\s'})]^{\Z[H]}_{\comb;\,\mathrm{concrete}} (\gamma^{\inrm}_I \otimes 1_{\Z[H]}))) \\
   &= h^{-1} \cdot \Phi_1([\BSDA(Y,\Gamma;\Xi_{\s})]^{\Z[H]}_{\comb;\,\mathrm{concrete}} (\gamma^{\inrm}_I \otimes 1_{\Z[H]}))) \\
   &= h^{-1} \cdot [\BSDA(Y,\Gamma;\Xi_{\norm})]^{\Z[H]}_{\comb} (\Phi_0(\gamma^{\inrm}_I \otimes 1_{\Z[H]})).
   \end{align*}

   Now suppose the incoming $\beta$ arc is not in $I$; we have $\Phi'_0(\gamma^{\inrm}_I \otimes 1_{\Z[H]}) = \Phi_0(\gamma^{\inrm}_I \otimes 1_{\Z[H]})$, and we also have $\Phi'_1 = \Phi_1$. For any $J$, our reference element of $\Lift(\mathfrak{Y}_{J^c,I},\mathfrak{R}^+_{J^c,I})$ gets multiplied by $h^{-1}$. As in case (2), we get
   \[
   [\BSDA(Y,\Gamma;\Xi_{\mathfrak{s}'})]^{\Z[H]}_{\comb;\,\mathrm{concrete}} (\gamma^{\inrm}_I \otimes 1_{\Z[H]}) = h^{-1} \cdot [\BSDA(Y,\Gamma;\Xi_{\mathfrak{s}})]^{\Z[H]}_{\comb;\,\mathrm{concrete}} (\gamma^{\inrm}_I \otimes 1_{\Z[H]}).
   \]
   It follows that
   \begin{align*}
   &[\BSDA(Y,\Gamma;\Xi_{\norm})]^{\Z[H]}_{\comb;\,\mathrm{new}} (\Phi_0(\gamma^{\inrm}_I \otimes 1_{\Z[H]})) \\ &=\Phi'_1([\BSDA(Y,\Gamma;\Xi_{\s'})]^{\Z[H]}_{\comb;\,\mathrm{concrete}} ((\Phi'_0)^{-1}(\Phi_0(\gamma^{\inrm}_I \otimes 1_{\Z[H]})))) \\
   &= \Phi'_1([\BSDA(Y,\Gamma;\Xi_{\s'})]^{\Z[H]}_{\comb;\,\mathrm{concrete}} (\gamma^{\inrm}_I \otimes 1_{\Z[H]}))) \\
   &= \Phi'_1(h^{-1} \cdot [\BSDA(Y,\Gamma;\Xi_{\s})]^{\Z[H]}_{\comb;\,\mathrm{concrete}} (\gamma^{\inrm}_I \otimes 1_{\Z[H]}))) \\
   &= h^{-1} \cdot \Phi_1([\BSDA(Y,\Gamma;\Xi_{\s})]^{\Z[H]}_{\comb;\,\mathrm{concrete}} (\gamma^{\inrm}_I \otimes 1_{\Z[H]}))) \\
   &= h^{-1} \cdot [\BSDA(Y,\Gamma;\Xi_{\norm})]^{\Z[H]}_{\comb} (\Phi_0(\gamma^{\inrm}_I \otimes 1_{\Z[H]})).
   \end{align*}

   Putting these two cases for $I$ together, we get
   \[ [\BSDA(Y,\Gamma;\Xi_{\norm})]^{\Z[H]}_{\comb;\,\mathrm{new}}= h^{-1} \cdot [\BSDA(Y,\Gamma;\Xi_{\norm})]^{\Z[H]}_{\comb}.
   \] 
\end{proof}

\section{Sutured Alexander functors with twisted coefficients}\label{sec:AlexFunctorsInOtherSettings}

Let $H=H_1(Y)$ and $G=H/\tors(H)$. In this section we study versions of the sutured Alexander functor of Section~\ref{sec:SuturedAlexanderOverZ} over the rings $\Z[G]$ and $\Q[H]$. Both cases have their own advantages and disadvantages, and both interact with $[\BSDA]$ in a slightly different way. While the arguments in both settings follow the same general pattern, they require alternate
algebraic approaches, so we treat them separately.

The $\Z[G]$ case follows as a straightforward generalization to the case over $\Z$ and provides the closest link to the classical Alexander polynomial. The $\Q[H]$ version is algebraically our most flexible one, allowing for torsion in $H$. However, when $H$ has torsion, the theory no longer falls within the scope of \cite{FMFunctorial} and should be viewed as strictly a generalization of their constructions. 

\begin{remark}
    Throughout this section we continue to use the term ``Alexander functor'' for maps that parallel $\mathsf{A}_{\Z}(Y,\Gamma)$, even though we are not making any claim to their being strictly functorial (see Remark~\ref{rem:CohomologyClasses}). Rather than defining a functor on a cobordism category, we construct invariants $\mathsf{A}_{\Z[G]}(Y,\Gamma)$, $\mathsf{A}_{\Z[H]}(Y,\Gamma)$, $\mathsf{A}_{\F_{\chi_i}[G]}(Y,\Gamma)$, and $\mathsf{A}_{\Q[H]}(Y,\Gamma)$ each defined (up to an appropriate notions of indeterminacy) for an individual cobordism $Y$, with no composition properties studied. 
\end{remark}

\subsection{The Alexander functor over $\Z[G]$}\label{sec:AlexanderFunctorZG}

Since $G$ is free abelian and finitely generated, $\Z[G]$ is an integral domain, so the construction proceeds in close analogy with the case over $\Z$. This case is also the one most directly related to the Alexander polynomial: when $Y$ is the complement of a knot in $S^3$ with meridional sutures $\Gamma$, so $G \cong \Z$ and $\Z[G] \cong \Z[t,t^{-1}]$, the map $\mathsf{A}_{\Z[G]}$ will recover the Alexander polynomial of the knot---see \cite[Proposition~3.3]{FMFunctorial}. 

Let $(Y,\Gamma)$ be a connected sutured cobordism from $(F_0,\Lambda_0)$ to $(F_1,\Lambda_1)$. Fix a general (non-normalized) set of choices $\Xi$ as in Definition~\ref{def:BSDAZComb}, including an $\alpha$-$\alpha$ bordered Heegaard diagram $\Hc$ for $(Y,\Gamma)$; let $a$ be the number of $\alpha$-circles of $\Hc$ and let $b$ the the number of $\beta$-circles of $\Hc$. We set $d = -\chi(Y,R^+)$. Recall that 
\[
d = \rank H_1(Y,R^+) - \rank H_2(Y,R^+)=b-a.
\]

Following Section~\ref{sec:CWCoveringSpaceGeneral}, we have CW decompositions $\overline{\mathfrak{Y}}$ and $\overline{\mathfrak{R}}^+$ of spaces homotopy equivalent to $Y$ and $R^+$. After choosing lifts as in Sectin~\ref{sec:CWCoveringSpaceGeneral}, we have preferred basis elements $\overline{e}_i$ of $C^{\cell}_1(\overline{\mathfrak{Y}},\overline{\mathfrak{R}}^+)$ and $C^{\cell}_2(\overline{\mathfrak{Y}},\overline{\mathfrak{R}}^+)$, and in these bases we have a deficiency-$d$ presentation matrix $\overline{M}_{\Hc}$ for $H_1(\overline{Y},\overline{R}^+)$ with kernel $H_2(\overline{Y},\overline{R}^+)$. This implies the following fact; we omit the proof. 

\begin{lemma}\label{lem:InjectivePresentationMatrixZ[G]}
    If $H_2(\overline{Y},\overline{R}^+)=0$, then the deficiency-$d$ presentation matrix $\overline{M}_{\Hc}$ for $H_1(\overline{Y},\overline{R}^+)$ is injective; in particular, $d \geq 0$. If $H_2(\overline{Y},\overline{R}^+) \neq 0$, the matrix $\overline{M}_{\Hc}$ is never injective. 
\end{lemma}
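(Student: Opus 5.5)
The lemma is the $\Z[G]$-analogue of Lemma~\ref{lem:InjectivePresentationMatrixZ}, and the plan is to transport that argument verbatim, using only that $\Z[G]$ is an integral domain and that $\overline{M}_{\Hc}$ is the matrix of $\overline{\partial}_2$ in a cellular chain complex concentrated in degrees $1$ and $2$. First, as recorded in Section~\ref{sec:CWCoveringSpaceGeneral}, $C^{\cell}_*(\overline{\mathfrak{Y}},\overline{\mathfrak{R}}^+)$ is free over $\Z[G]$ of rank $b$ in degree $1$ and rank $a$ in degree $2$, and it vanishes in degree $3$. Hence
\[
H_2(\overline{Y},\overline{R}^+) \cong \ker \overline{\partial}_2 = \ker \overline{M}_{\Hc},
\]
using the homotopy equivalence of pairs $(\overline{\mathfrak{Y}},\overline{\mathfrak{R}}^+)\simeq(\overline{Y},\overline{R}^+)$ lifted from $(\mathfrak{Y},\mathfrak{R}^+)\simeq(Y,R^+)$. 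This identification immediately gives both directions of the statement: $\overline{M}_{\Hc}$ is injective if and only if $H_2(\overline{Y},\overline{R}^+)=0$. In particular, when $H_2\neq 0$, no choice of $\Xi$ makes $\overline{M}_{\Hc}$ injective.

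For the inequality $d\ge 0$ when $\overline{M}_{\Hc}$ is injective, we tensor with the fraction field $Q=\mathrm{Frac}(\Z[G])$. This is legitimate because $\Z[G]$ is a domain ($G$ is finitely generated free abelian, so $\Z[G]$ is a Laurent polynomial ring), and $Q$ is a flat $\Z[G]$-module. An injective map $\Z[G]^a\to\Z[G]^b$ therefore stays injective after applying $\otimes_{\Z[G]} Q$, which gives an injective linear map $Q^a\to Q^b$. Consequently $a\le b$, i.e.\ $d=b-a\ge 0$.

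There is essentially no obstacle here. The only point requiring care is that the deficiency $d$ is computed from the cell counts $b-a=-\chi(Y,R^+)$, independent of covers, and this was already noted above the lemma. One could alternatively phrase the injectivity claim via a nonzero maximal minor, but the flatness argument is cleaner.
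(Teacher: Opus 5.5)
Your proposal is correct and follows the same route the paper intends: the paper omits the proof, noting only that $\overline{M}_{\Hc}$ is a deficiency-$d$ presentation matrix whose kernel is $H_2(\overline{Y},\overline{R}^+)$, since the relative cellular complex of the cover is free and concentrated in degrees $1$ and $2$. Your passage to $\mathrm{Frac}(\Z[G])$ to deduce $a \le b$ from injectivity is a clean way to supply the step ``in particular, $d \geq 0$'', which the paper leaves implicit.
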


\begin{definition}\label{def:AlexFunctionZ[G]}
    Let $d = -\chi(Y,R^+)$. Define the \emph{$\Z[G]$-valued Alexander function}
    \[
    \mathcal{A}^{\Z[G]}_{Y,\Gamma} \colon \wedge^d_{\Z[G]} H_1(\overline{Y}, \overline{R}^+) \to \Z[G],
    \]
    well-defined up to (global) multiplication by $\pm G \subset \Z[G]$, as follows. If $H_2(\overline{Y},\overline{R}^+)$ is nonzero, define $\mathcal{A}^{\Z[G]}_{Y,\Gamma}$ to be the zero function. Otherwise, $H_1(\overline{Y}, \overline{R}^+)$ admits a deficiency-$d$ presentation matrix $\overline{M}$; in this case define $\mathcal{A}^{\Z[G]}_{Y,\Gamma}(u_1 \wedge \cdots \wedge u_d)$ as follows. Choose any expression $\overline{u}_i$ of $u_i$ as a $\Z[G]$-linear combination of the generators, form a square matrix from $\overline{M}$ by adding new columns $\overline{u}_1, \ldots, \overline{u}_d$ to the right, and take the determinant, an element of $\Z[G]$. 
\end{definition}

\begin{lemma}\label{lem:AlexanderFunctionZ[G]WellDefined}
    The Alexander function $\A^{\Z[G]}_{Y,\Gamma}$ is independent of the choice of representatives $\overline{u}_1, \ldots, \overline{u}_d$ used to compute the determinant. 
\end{lemma}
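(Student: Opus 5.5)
The plan is to mirror the proof of Lemma~\ref{lem:AlexanderFunctionZWellDefined} verbatim, replacing $\Z$ by the commutative ring $\Z[G]$. The determinant argument only uses multilinearity and the alternating property of the determinant, and these hold over any commutative ring. So no integral-domain or field hypothesis is needed here.

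First I fix an injective deficiency-$d$ presentation matrix $\overline{M}$ for $H_1(\overline{Y},\overline{R}^+)$, say with $a$ columns and $b$ rows. This gives the exact sequence
\[
0 \to \Z[G]^a \xrightarrow{\overline{M}} \Z[G]^b \xrightarrow{\overline{\pi}} H_1(\overline{Y},\overline{R}^+) \to 0 .
\]
If $H_2(\overline{Y},\overline{R}^+) \neq 0$, the function is zero by definition and there is nothing to prove, so I assume otherwise.

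Next, suppose $\overline{u}_i$ and $\overline{u}_i'$ are two lifts of the same $u_i$ through $\overline{\pi}$. Their difference lies in $\ker \overline{\pi} = \im \overline{M}$, so $\overline{u}_i = \overline{u}_i' + \overline{M} v_i$ for some $v_i \in \Z[G]^a$. The vector $\overline{M} v_i$ is a $\Z[G]$-linear combination of the columns of $\overline{M}$. By multilinearity of $\det$ in the $i$-th appended column, the determinant with $\overline{u}_i$ equals the determinant with $\overline{u}_i'$, plus a $\Z[G]$-combination of determinants that contain a repeated column. Each of those extra determinants vanishes by the alternating property. Hence
\[
\det[\overline{M} \mid \overline{u}_1 \cdots \overline{u}_i \cdots \overline{u}_d] = \det[\overline{M} \mid \overline{u}_1 \cdots \overline{u}_i' \cdots \overline{u}_d]
\]
on the nose, not merely up to a unit.

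Finally, I change the representatives one column at a time for $i=1,\dots,d$, which gives the full claim. I also note that the resulting value is multilinear and alternating in $(u_1,\dots,u_d)$, again by the same determinant properties. So it descends to a well-defined function on $\wedge^d_{\Z[G]} H_1(\overline{Y},\overline{R}^+)$ for this fixed $\overline{M}$; independence of $\overline{M}$ is a separate statement. There is no real obstacle. The only point needing care is to confirm that the vanishing of a determinant with a repeated column holds over an arbitrary commutative ring, not only over a field. This follows from the Leibniz expansion, where terms cancel in pairs.
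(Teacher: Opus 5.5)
Your proposal is correct and takes the same approach as the paper. The paper's proof just says that the argument of Lemma~\ref{lem:AlexanderFunctionZWellDefined} carries over unchanged to $\Z[G]$: two lifts differ by an element of $\im \overline{M}$, and adding a combination of columns of $\overline{M}$ to an appended column leaves the determinant unchanged. Your added remarks, that only commutativity is used and that a repeated column forces a zero determinant over any commutative ring, are accurate and spell out what the paper leaves implicit.
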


\begin{proof}
    The proof of Lemma~\ref{lem:AlexanderFunctionZWellDefined} applies unchanged in this setting; one simply works over $\Z[G]$ rather than $\Z$. 
\end{proof}

\begin{proposition}\label{prop:AlexanderFunctionZ[G]IndependentOfM}
    Up to overall multiplication by $\pm G \subset \Z[G]$, the Alexander function $\mathcal{A}^{\Z[G]}_{Y,\Gamma}$ of Definition~\ref{def:AlexFunctionZ[G]} only depends on the finitely presented $\Z[G]$-module $H_1(\overline{Y}, \overline{R}^+)$ and the deficiency $d$, and does not depend on the specific choice of injective presentation matrix $\overline{M}$. 
\end{proposition}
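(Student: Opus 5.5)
We follow the proof of Proposition~\ref{prop:AlexanderFunctionZIndependentOfM} essentially verbatim, working over the ring $\Z[G]$ instead of $\Z$, and only the final unit bookkeeping changes. Let $\overline{M}$ and $\widetilde{\overline{M}}$ be two injective deficiency-$d$ presentation matrices for $H_1(\overline{Y},\overline{R}^+)$. They give short exact sequences
\[
0 \to \Z[G]^a \xrightarrow{\overline{M}} \Z[G]^b \xrightarrow{\pi} H_1(\overline{Y},\overline{R}^+) \to 0
\]
and the analogue with $\tilde a,\tilde b$. Since free $\Z[G]$-modules are projective, we can choose lifts $\eta$ and $\eta'$ with $\pi\circ\eta = \tilde\pi$ and $\tilde\pi\circ\eta' = \pi$. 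We then form the Schanuel-type block matrices $M_1$ and $M_2$ exactly as before.

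The verification that $\im M_1 = \ker(\pi\oplus\tilde\pi)$, and that $M_1$ is injective, is purely module-theoretic and carries over unchanged. The comparisons (I) and (II), namely $\A_{\overline{M}} = \pm\A_{M_1}$ and $\A_{M_2} = \pm\A_{\widetilde{\overline{M}}}$, use two facts that hold over any commutative ring:
\begin{itemize}
\item column permutation of a block matrix, which produces the signs $(-1)^{\tilde b d}$ and $(-1)^{bd}$;
\item the block-triangular determinant formula $\det\begin{bmatrix} A & * \\ 0 & I\end{bmatrix} = \det A$.
\end{itemize}
Combined with Lemma~\ref{lem:AlexanderFunctionZ[G]WellDefined}, which lets us pad representatives by zeros or apply the swap $S$, this gives (I) and (II).

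For step (III), we build the same commutative ladder with the swap $S$. We obtain the unique $\varphi\colon \Z[G]^{a+\tilde b}\to\Z[G]^{\tilde a+b}$ with $M_2\varphi = SM_1$, and $\varphi$ is an isomorphism by the same injectivity and surjectivity arguments. These arguments use only exactness and the injectivity of $M_1$ and $M_2$. Taking determinants then gives
\[
\det[M_1\mid \overline{u}'] = \det(\varphi)\det(S^{-1})\det[M_2\mid S\overline{u}'].
\]

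The one genuinely new point, and the main obstacle, is to identify the unit $\det(\varphi)\det(S^{-1})$. Here $\det(S^{-1}) = \pm1$, and $\det\varphi$ is a unit of $\Z[G]$ because $\varphi$ is invertible over a commutative ring. Since $G$ is finitely generated free abelian, $\Z[G]$ is a Laurent polynomial ring $\Z[t_1^{\pm1},\dots,t_r^{\pm1}]$. Its units are exactly $\pm G$, which we see by comparing leading and trailing monomials in some lexicographic order, using that it is an integral domain. So the total discrepancy is an element of $\pm G$, and it is independent of $u_1\wedge\cdots\wedge u_d$ because $\varphi$ depends only on the two presentations.

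Chaining (I), (III) and (II) then gives $\A_{\overline{M}} = \epsilon g\cdot\A_{\widetilde{\overline{M}}}$ for a single $\epsilon g\in\pm G$. This shows that, up to $\pm G$, $\mathcal{A}^{\Z[G]}_{Y,\Gamma}$ depends only on the module $H_1(\overline{Y},\overline{R}^+)$ and on $d$.
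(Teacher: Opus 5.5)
Your proposal is correct and follows the paper's proof: the paper likewise carries the Schanuel-type argument of Proposition~\ref{prop:AlexanderFunctionZIndependentOfM} over to $\Z[G]$ and notes that the only change is that the unit $\det(\varphi)\det(S^{-1})$ now lies in $(\Z[G])^{\times} = \pm G$. Your leading/trailing-monomial argument that $(\Z[G])^{\times}=\pm G$ fills in a fact the paper states without proof.
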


\begin{proof}
    With $\Z[G]$ coefficients, the proof of Proposition~\ref{prop:AlexanderFunctionZIndependentOfM} largely carries over, with only minor modifications. For the steps ($\A^{\Z[G]}_{Y,\Gamma,\overline{M}} \sim \A^{\Z[G]}_{Y,\Gamma,\overline{M}_1}$) and ($\A^{\Z[G]}_{Y,\Gamma,\overline{M}_2} \sim \A^{\Z[G]}_{Y,\Gamma,\widetilde{\overline{M}}}$): over any commutative ring the determinants of column-swapped matrices differ only by some sign. Since $\pm 1 \in \pm G$, this same argument still holds over $\Z[G]$. 
    
    For ($\A^{\Z[G]}_{Y,\Gamma,\overline{M}_1} \sim \A^{\Z[G]}_{Y,\Gamma,\overline{M}_2}$): when we get to the last step where
    \[
    \det[M_1|\overline{u}_1 \cdots \overline{u}_d] =u \cdot \det [M_2|\overline{u}_1 \cdots \overline{u}_d],
    \]
    with $u \in (\Z[G])^{\times}$, since the group of units in $\Z[G]$ is $\pm G$, it follows that the Alexander functions differ by multiplication by an element of $\pm G$ that is independent of $u_1 \wedge \cdots \wedge u_d$.
\end{proof}

For the remainder of this section, suppose that we equip $(Y,\Gamma)$ with a set of normalized choices $\Xi_{\norm}$ as in Corollary~\ref{cor:NormalizedHeegaardDiagrams}. In this case, we are in the setting of Section~\ref{sec:CWCoveringSpaceNormalized}, and so we have modules $H_1(\overline{F}_0,\overline{S}^+_0)$ and $H_1(\overline{F}_1,\overline{S}^+_1)$ with preferred cellular bases $\overline{e}^{\inrm}$ and $\overline{e}^{\out}$ respectively (once we choose lifts of cells as above). Denote by $\overline{i}_*$ either of the maps 
\[
H_1(\overline{F}_i,\overline{S}^+_i) \to H_1(\overline{Y},\overline{R}^+)
\]
induced by the inclusion of pairs $\overline{i}:(\overline{F}_i,\overline{S}^+_i) \to (\overline{Y},\overline{R}^+)$ for $i=0,1$. 

\begin{proposition}\label{prop:AlexanderFunctorZ[G]Defn}
    There is a $\Z[G]$-linear map
    \[
    \mathsf{A}_{\Z[G]}(Y,\Gamma) \colon \wedge^*_{\Z[G]} H_1(\overline{F}_0,\overline{S}^+_0) \to \wedge^*_{\Z[G]} H_1(\overline{F}_1,\overline{S}^+_1),
    \]
    unique up to multiplication by $\pm G$ and homogeneous of degree $c = n_1 + \chi(Y,R^+)$, such that
    \[
    \omega(\wedge(\mathsf{A}_{\Z[G]}(Y,\Gamma) \otimes \id)(x \otimes y)) = \A^{\Z[G]}_{Y,\Gamma}\left(\overline{i}_* x \wedge \overline{i}_* y \right),
    \]
    where $x \in \wedge^*_{\Z[G]} H_1(\overline{F}_0,\overline{S}^+_0)$, $y \in \wedge^*_{\Z[G]} H_1(\overline{F}_1,\overline{S}^+_1)$, the map
    \[
    \wedge \colon \wedge^p_{\Z[G]} H_1(\overline{F}_1,\overline{S}^+_1) \otimes \wedge^q_{\Z[G]} H_1(\overline{F}_1,\overline{S}^+_1) \to \wedge^{n_1}_{\Z[G]} H_1(\overline{F}_1,\overline{S}^+_1)
    \]
    sends $z \otimes w$ to $z \wedge w$ when $p + q = n_1$ and is zero otherwise, $\omega$ denotes an arbitrary volume form on the free $\Z[G]$-module $ H_1(\overline{F}_1,\overline{S}^+_1)$, the tensor product is computed according to the super-sign rule
    \[
    (\mathsf{A}_{\Z[G]}(Y,\Gamma) \otimes \id)(x \otimes y)=(-1)^{|\mathsf{A}_{\Z[G]}(Y,\Gamma)||y|} \mathsf{A}_{\Z[G]}(Y,\Gamma)(x) \otimes y
    \]
    with $|\mathsf{A}_{\Z[G]}(Y,\Gamma)| = c$, and the Alexander functor $\A^{\Z[G]}_{Y,\Gamma}$ is defined to be zero on inputs in $\wedge^{d'} H_1(\overline{Y},\overline{R}^+)$ for $d' \neq d$.
\end{proposition}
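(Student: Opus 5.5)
The plan is to transplant the proof of Proposition~\ref{prop:AlexanderFunctorZDefn} to $\Z[G]$ coefficients, using that $\Z[G]$ is a commutative integral domain with unit group $\pm G$. First fix the preferred $\Z[G]$-bases $\overline{e}^{\inrm}_1,\dots,\overline{e}^{\inrm}_{n_0}$ of $H_1(\overline{F}_0,\overline{S}^+_0)$ and $\overline{e}^{\out}_1,\dots,\overline{e}^{\out}_{n_1}$ of $H_1(\overline{F}_1,\overline{S}^+_1)$ from Section~\ref{sec:LiftedSurfacesHomology}. These are free modules, so the wedge powers $\overline{e}^{\inrm}_I$ and $\overline{e}^{\out}_J$ give $\Z[G]$-bases of $\wedge^*_{\Z[G]}$. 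Then normalize the volume form by $\omega(\overline{e}^{\out}_1\wedge\cdots\wedge\overline{e}^{\out}_{n_1})=1$. As over $\Z$, this gives $\omega(\overline{e}^{\out}_{J'}\wedge\overline{e}^{\out}_{J^c})=0$ unless $J'=J$, and $(-1)^{\mathrm{inv}(\sigma_{JJ^c\leftrightarrow\std})}$ when $J'=J$.

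For uniqueness, plug $x=\overline{e}^{\inrm}_I$ and $y=\overline{e}^{\out}_{J^c}$ into the defining identity and apply the super-sign rule. Exactly as before, this forces the coefficient of any solution on $\overline{e}^{\out}_J$ in $\mathsf{A}_{\Z[G]}(Y,\Gamma)(\overline{e}^{\inrm}_I)$ to be
\[
a_{I,J}=(-1)^{\mathrm{inv}(\sigma_{JJ^c\leftrightarrow\std})+c(n_1-l)}\,\A^{\Z[G]}_{Y,\Gamma}(\overline{i}_*\overline{e}^{\inrm}_I\wedge\overline{i}_*\overline{e}^{\out}_{J^c}),
\]
where $l=|J|$. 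The map is determined by these coefficients, so it is unique once $\A^{\Z[G]}_{Y,\Gamma}$ is fixed.

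For existence, define $\mathsf{A}_{\Z[G]}(Y,\Gamma)$ $\Z[G]$-linearly by these formulas. The identity then holds for arbitrary $x$ and $y$ by the same chain of equalities as over $\Z$. That chain uses only that $\overline{i}_*$, $\omega$ and $\mathsf{A}\otimes\id$ are $\Z[G]$-linear and that $\A^{\Z[G]}_{Y,\Gamma}$ is $\Z[G]$-multilinear. Multilinearity is clear from the determinant definition together with Lemma~\ref{lem:AlexanderFunctionZ[G]WellDefined}, since a determinant is linear in each appended column. If $H_2(\overline{Y},\overline{R}^+)\neq 0$, everything is zero and the map is zero.

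For the degree, I would rerun the counting argument verbatim: the right side vanishes unless $|x|+|y|=d$, and the left side vanishes unless $\deg\mathsf{A}(x)=n_1-|y|$. Together these give degree $n_1-d=c$.

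For the indeterminacy, there are two sources. By Proposition~\ref{prop:AlexanderFunctionZ[G]IndependentOfM}, $\A^{\Z[G]}_{Y,\Gamma}$ is well defined up to a global factor in $\pm G$, which rescales every $a_{I,J}$ by that same factor. Any other volume form differs from $\omega$ by a unit of $\Z[G]$, and the units of $\Z[G]$ are exactly $\pm G$ because $G$ is finitely generated free abelian. So the map is unique up to $\pm G$.

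The one step I expect to need care is the choice of representatives of $\overline{i}_*\overline{e}^{\inrm}_I$ and $\overline{i}_*\overline{e}^{\out}_{J^c}$ inside the presentation $\overline{M}$. These must be expressed in the generators of a chosen injective presentation. Lemma~\ref{lem:AlexanderFunctionZ[G]WellDefined} shows the resulting value does not depend on that choice, so no extra indeterminacy beyond $\pm G$ enters.
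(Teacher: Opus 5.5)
Your proposal is correct and matches the paper's proof. The paper also just transports the argument of Proposition~\ref{prop:AlexanderFunctorZDefn} to $\Z[G]$ coefficients, using the preferred bases $\overline{e}^{\inrm}_I$, $\overline{e}^{\out}_J$ and the normalization $\omega(\overline{e}^{\out}_1\wedge\cdots\wedge\overline{e}^{\out}_{n_1})=1$, and attributes the $\pm G$ ambiguity to the units of $\Z[G]$; you make both sources of that ambiguity explicit (the global $\pm G$ factor in $\A^{\Z[G]}_{Y,\Gamma}$ and the choice of volume form), whereas the paper's text names only the volume form.
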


\begin{proof}
    After making the following adjustments, the proof of Proposition~\ref{prop:AlexanderFunctorZDefn} carries over: replace each $F_i$, $S_i^+$, $Y$ and $R^+$ with $\overline{F}_i$, $\overline{S}_i^+$, $\overline{Y}$ and $\overline{R}^+$ respectively, work with $\Z[G]$-coefficients, replace $i_*$ by $\overline{i}_*$, and replace the $\Z$-basis elements $\gamma^{\mathrm{in}}_I$ for $\wedge^* H_1(F_0, S^+_0)$ and $\gamma^{\mathrm{out}}_J$ for $\wedge^* H_1(F_1, S^+_1)$ by our preferred $\Z[G]$-basis elements $\overline{e}_I^{\inrm}$ for $\wedge^*_{\Z[G]} H_1(\overline{F}_0,\overline{S}^+_0)$ and $\overline{e}_J^{\out}$ for $\wedge^*_{\Z[G]} H_1(\overline{F}_1,\overline{S}^+_1)$. For convenience we can choose the volume form $\omega$ such that
    \[
    \omega(\overline{e}^{\out}_1 \wedge \cdots \wedge \overline{e}^{\out}_{n_1}) = 1 \in \Z[G]; 
    \]any other choice of volume form differs from $\omega$ by a unit in $\Z[G]$, i.e. an element of $\pm G$, and so the result follows. 
\end{proof}

\begin{remark}\label{rem:AlexanderZ[G]SpecialCaseOfFM}
    Note that our Alexander function $\mathcal{A}^{\Z[G]}_{Y,\Gamma}$ and Alexander functor $\mathsf{A}_{\Z[G]}(Y,\Gamma)$ specialize to the Alexander function $\mathcal{A}^M_{\varphi}$ of \cite[Definition~2.2]{FMFunctorial} and Alexander functor $\mathsf{A}(M,\varphi)$ of \cite[Section~2.2]{FMFunctorial} respectively. Indeed, we recover these as special cases by assuming the topological restrictions of Remark~\ref{rem:AlexanderZSpecialCaseOfFM} and setting Florens-Massuyeau's group ``$G$'' equal to our free abelian group $G=H/\tors(H)$. 
\end{remark}

Write 
\[
[\BSDA(Y,\Gamma;\Xi_{\norm})]^{\Z[G]}_{\mathrm{comb}}
\]
for the extension of scalars for $[\BSDA(Y,\Gamma;\Xi_{\norm})]^{\Z[H]}_{\mathrm{comb}}$ along $\Z[H] \to \Z[G]$. That is, 
\[
[\BSDA(Y,\Gamma;\Xi_{\norm})]^{\Z[G]}_{\mathrm{comb}} \coloneq [\BSDA(Y,\Gamma;\Xi_{\norm})]^{\Z[H]}_{\mathrm{comb}} \otimes_{\Z[H]} 1_{\Z[G]},
\]
which is a map 
\[
[\BSDA(Y,\Gamma;\Xi_{\norm})]^{\Z[G]}_{\mathrm{comb}} \colon \wedge^*_{\Z[H]} H_1(\widehat{F}_0, \widehat{S}^+_0) \otimes_{\Z[H]} \Z[G] \to \wedge^*_{\Z[H]} H_1(\widehat{F}_1, \widehat{S}^+_1) \otimes_{\Z[H]} \Z[G].
\]
Section~\ref{sec:LiftedSurfacesHomology} established that $H_1(\widehat{F}_i, \widehat{S}^+_i)$ is freely generated over $\Z[H]$, so we have
\[
\wedge^*_{\Z[H]} H_1(\widehat{F}_i, \widehat{S}^+_i) \otimes_{\Z[H]} \Z[G] \cong \wedge^*_{\Z[G]} \left( H_1(\widehat{F}_i, \widehat{S}^+_i) \otimes_{\Z[H]} \Z[G]\right). 
\]
Furthermore, as in the beginning of this section we have an identification
\begin{equation}\label{eq:OverlineFFromWidehatF}
    H_1(\widehat{F}_i, \widehat{S}^+_i) \otimes_{\Z[H]} \Z[G] \cong H_1(\overline{F}_i, \overline{S}^+_i).
\end{equation}
Putting this all together, we may equivalently regard $[\BSDA(Y,\Gamma;\Xi_{\norm})]^{\Z[G]}_{\mathrm{comb}}$ as a map
\[
\wedge^*_{\Z[G]} H_1(\overline{F}_0,\overline{S}^+_0) \to \wedge^*_{\Z[G]} H_1(\overline{F}_1,\overline{S}^+_1),
\]
similarly to the Alexander functor $\mathsf{A}_{\Z[G]}(Y,\Gamma)$. 

\begin{theorem}\label{thm:BSDAAlexanderZG}
    The map $[\BSDA(Y,\Gamma;\Xi_{\norm})]^{\Z[G]}_{\mathrm{comb}}(Y,\Gamma)$ defined above agrees with $\mathsf{A}_{\Z[G]}(Y,\Gamma)$ up to multiplication by $\pm G$. 
\end{theorem}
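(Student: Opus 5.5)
The plan is to mirror the proof of Theorem~\ref{thm:BSDAAlexanderZ}, replacing integer determinants by $\Z[G]$-valued determinants of lifted cellular boundary matrices. Both sides are maps between free $\Z[G]$-modules with the preferred bases $\overline{e}^{\inrm}_I$ and $\overline{e}^{\out}_J$ of Section~\ref{sec:LiftedSurfacesHomology}. By Proposition~\ref{prop:AlexanderFunctorZ[G]Defn}, $\mathsf{A}_{\Z[G]}(Y,\Gamma)$ is characterized up to $\pm G$ by its matrix coefficients. Using the volume form $\omega(\overline{e}^{\out}_1\wedge\cdots\wedge\overline{e}^{\out}_{n_1})=1$, these coefficients are
\[
a_{I,J}=(-1)^{\mathrm{inv}(\sigma_{JJ^c\leftrightarrow\std})+c(n_1-l)}\A^{\Z[G]}_{Y,\Gamma}(\overline{i}_*\overline{e}^{\inrm}_I\wedge\overline{i}_*\overline{e}^{\out}_{J^c}).
\]
So it suffices to show that, for each $I,J$ with $|J|=|I|+c$, the matrix entry $E$ of $[\BSDA(Y,\Gamma;\Xi_{\norm})]^{\Z[G]}_{\comb}$ equals this expression times a unit $\pm g$ that does not depend on $I,J$. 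When $|J|\neq |I|+c$, both sides vanish for degree reasons, by Proposition~\ref{prop:DegreeOfBSDA} and Proposition~\ref{prop:AlexanderFunctorZ[G]Defn}.

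\textbf{Step 1: the BSDA side.} Apply Corollary~\ref{cor:MatrixEntryAndCellBdryDet}. It writes the entry of the concrete map in terms of $\det\widehat{A}$, where $\widehat{A}$ is the boundary matrix of $(\widehat{\mathfrak{Y}}_{J^c,I},\widehat{\mathfrak{R}}^+_{J^c,I})$. The lifts used there are exactly those fixed in Definition~\ref{def:BSDAIndOfRefSpinc}, since the reference $\Spinc$ structure was defined from those lifts. Conjugating by $\Phi_0,\Phi_1$ contributes only the signs $-1$ per basis vector, which give a factor $(-1)^{k+l}$; since $l=k+c$, this is a global sign. Passing to $\Z[G]$ via $\Z[H]\to\Z[G]$ turns $\det\widehat{A}$ into $\det\overline{A}$, the boundary matrix of the lifted pair in $\overline{\mathfrak{Y}}_{J^c,I}$. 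Next I will expand $\overline{A}$ in block form. Its rows are $\beta^{\out},\beta^c,\beta^{\inrm}$. Its columns are the new Zarev-cap 2-cells for $J^c$, then the columns of $\overline{M}_{\Hc_{\norm}}$, then the cap 2-cells for $I$. By the normalization in Definition~\ref{def:BSDAIndOfRefSpinc}, the cap columns have a single entry $-1$ (outgoing) or $+1$ (incoming) in the row of the partner $\beta$-cell. This is exactly the $\Z[G]$-analogue of the determinant in \eqref{eq:EAsDeterminantZ}.

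\textbf{Step 2: the Alexander side.} Compute $\A^{\Z[G]}_{Y,\Gamma}$ using the presentation matrix $\overline{M}_{\Hc_{\norm}}$; this is allowed by Proposition~\ref{prop:AlexanderFunctionZ[G]IndependentOfM}. The representatives of $\overline{i}_*\overline{e}^{\inrm}_i$ and $\overline{i}_*\overline{e}^{\out}_j$ are the unit columns on the corresponding lifted $\beta$-cells. This uses the commuting square for $\overline{\mathfrak{F}}_i\subset\overline{\mathfrak{Y}}$ in Section~\ref{sec:CWCoveringSpaceNormalized}, together with the fact that the same chosen lifts define the bases. The resulting determinant is the column-permuted version of Step 1's determinant, with the unit columns possibly negated. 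The sign bookkeeping is then identical to the last display in the proof of Theorem~\ref{thm:BSDAAlexanderZ}, so the two sides agree up to a sign independent of $k$.

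\textbf{Degenerate case and main obstacle.} If $H_2(\overline{Y},\overline{R}^+)\neq 0$, Lemma~\ref{lem:InjectivePresentationMatrixZ[G]} says the columns of $\overline{M}_{\Hc_{\norm}}$ are dependent over $\Z[G]$, hence over its fraction field. These columns appear inside $\overline{A}$, so $\det\overline{A}=0$, and both sides vanish. The main obstacle is Step 1: I must check that the lifts implicit in Corollary~\ref{cor:MatrixEntryAndCellBdryDet} and the lifts used for $\Phi_i$ and for the representatives in Step 2 are literally the same, not merely equivalent. The point of the check is that any discrepancy should be a single deck element $g$ uniform in $I,J$. I would verify this case by case on the cell types, as in the proof of the preceding proposition. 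The fallback is to use that proposition's independence result to reduce to convenient lift choices.
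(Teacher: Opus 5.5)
Your proposal is correct and follows essentially the same route as the paper. You fix one set of lifts, use Corollary~\ref{cor:MatrixEntryAndCellBdryDet} to write each matrix entry as a signed $\det\widehat{A}$ (middle block $\widehat{M}_{\Hc_{\norm}}$, cap columns with entries $\mp 1$), push this to $\Z[G]$, and dispose of the case $H_2(\overline{Y},\overline{R}^+)\neq 0$ by linear dependence over the domain $\Z[G]$; otherwise you compare with the Alexander determinant computed from $\overline{M}_{\Hc_{\norm}}$, using the sign bookkeeping of Theorem~\ref{thm:BSDAAlexanderZ}.

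The ``main obstacle'' you flag is not really one. The paper fixes the lifts once, and by Definition~\ref{def:BSDAIndOfRefSpinc} those same lifts determine the maps $\Phi_i$, the reference class $\mathfrak{l}$, and the unit-column representatives on the Alexander side, so no case-by-case check or fallback is needed.
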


\begin{proof}
    The proof closely parallels the proof of Theorem~\ref{thm:BSDAAlexanderZ}. As in the definition of the map $[\BSDA(Y,\Gamma;\Xi_{\norm})]^{\Z[H]}_{\mathrm{comb}}(Y,\Gamma)$, fix lifts of the cells of $\mathfrak{Y} \setminus \mathfrak{R}^+$ to cells of $\widehat{\mathfrak{Y}} \setminus \widehat{\mathfrak{R}}^+$, determining reference $\Spinc$ structures $\s_{J^c,I}$ and identifications
    \[
    \Phi_i \colon \wedge^* H_1(F_i,S^+_i) \otimes_{\Z} \Z[H] \xrightarrow{\cong} \wedge^*_{\Z[H]} H_1(\widehat{F}_i,\widehat{S}^+_i).
    \]
    Let $\gamma^{\inrm}_I$ and $\gamma^{\out}_J$ be basis elements of $\wedge^* H_1(F_0,S^+_0)$ and $\wedge^* H_1(F_1,S^+_1)$ respectively. Let $k = |I|$ and $l = |J|$, and assume that $l = k+c$ where $c = n_1 + \chi(Y,R^+)$ as usual. We have corresponding basis elements $-\widehat{e}^{\inrm}_I := \Phi_0(\gamma^{\inrm}_I \otimes 1_{\Z[H]})$ of $\wedge^*_{\Z[H]} H_1(\widehat{F}_0,\widehat{S}^+_0)$ and $-\widehat{e}^{\out}_J := \Phi_1(\gamma^{\out}_J \otimes 1_{\Z[H]})$ of $\wedge^*_{\Z[H]} H_1(\widehat{F}_1,\widehat{S}^+_1)$. Let $\overline{e}^{\inrm}_I$ and $\overline{e}^{\out}_J$ be the basis elements of $\wedge^*_{\Z[G]} H_1(\overline{F}_0,\overline{S}^+_0)$ and $\wedge^*_{\Z[G]} H_1(\overline{F}_1,\overline{S}^+_1)$ corresponding to $\widehat{e}^{\inrm}_I \otimes 1_{\Z[G]}$ and $\widehat{e}^{\out}_J \otimes 1_{\Z[G]}$ under the identifications of equation~\eqref{eq:OverlineFFromWidehatF}.
    
    Let $\omega_{\Z}$ denote the volume form on $H_1(F_1,S^+_1)$ defined by $\omega_{\Z}(\gamma^{\out}_1 \wedge \cdots \wedge \gamma^{\out}_{n_1}) = +1$. By tensoring with $\Z[H]$, we get a volume form $\omega_{\Z[H];\,\mathrm{concrete}}$ on the free $\Z[H]$-module $H_1(F_1,S^+_1) \otimes_{\Z} \Z[H]$. Via $\Phi_1$, we get a volume form $\omega_{\Z[H]}$ on the free $\Z[H]$-module $H_1(\widehat{F}_1,\widehat{S}^+_1)$. Tensor over $\Z[H]$ with $\Z[G]$ to get a volume form on the free $\Z[G]$-module $H_1(\widehat{F}_1,\widehat{S}^+_1) \otimes_{\Z[H]} \Z[G]$, which we identify with $H_1(\overline{F}_1,\overline{S}^+_1)$ using equation~\eqref{eq:OverlineFFromWidehatF}. We will use the resulting volume form on $H_1(\overline{F}_1,\overline{S}^+_1)$ as our choice of $\omega$.

    We will compute
    \begin{equation}\label{eq:TargetQuantityZ[G]}
        \omega(\wedge([\BSDA(Y,\Gamma;\Xi_{\norm})]^{\Z[G]}_{\comb} \otimes \id ))
    (-\overline{e}_I^{\inrm} \otimes -\overline e^{\out}_{J^c})
    \end{equation}
    and show the result agrees with $\mathcal{A}^{\Z[G]}_{Y,\Gamma}\left(\overline{i}_* (-\overline{e}_I^{\inrm}) \wedge \overline{i}_* (-\overline{e}_{J^c}^{\out})\right)$ up to an overall sign that is independent of $I$ and $J$ (the more general $\pm G$ ambiguity in the statement of the theorem is still necessary because we have fixed choices of lifts to get to this point). Observe that the quantity~\eqref{eq:TargetQuantityZ[G]} agrees with
    \begin{align*}              &\omega\left(\wedge\left(([\BSDA(Y,\Gamma;\Xi_{\norm})]^{\Z[H]}_{\comb}\otimes 1_{\Z[G]})\otimes \id\right)((-\widehat{e}_I^{\inrm} \otimes 1_{\Z[G]}) \otimes (-\widehat e^{\out}_{J^c} \otimes 1_{\Z[G]}))\right) \\
        &=\omega\Big(\wedge\Big(((\Phi_1 \circ[\BSDA(Y,\Gamma;\Xi_{\mathfrak s})]^{\Z[H]}_{\comb; \mathrm{concrete}}\circ \Phi_0^{-1})\otimes 1_{\Z[G]})\otimes \id\Big) \\
        &\qquad (\Phi_0(\gamma^{\inrm}_I \otimes 1_{\Z[H]}) \otimes 1_{\Z[G]}) \otimes (\Phi_1(\gamma^{\out}_{J^c} \otimes 1_{\Z[H]}) \otimes 1_{\Z[G]})\Big) \\
        &= (-1)^{ck} \omega \Big( (\Phi_1([\BSDA(Y,\Gamma;\Xi_{\mathfrak s})]^{\Z[H]}_{\comb;\,\mathrm{concrete}} (\gamma_I^{\inrm}\otimes 1_{\Z[H]})) \otimes 1_{\Z[G]}) \wedge (\Phi_1(\gamma^{\out}_{J^c} \otimes 1_{\Z[H]}) \otimes 1_{\Z[G]}) \Big) \\
        &= (-1)^{ck} \omega \Big(\Phi_1\Big(([\BSDA(Y,\Gamma;\Xi_{\mathfrak s})]^{\Z[H]}_{\comb;\,\mathrm{concrete}} (\gamma_I^{\inrm}\otimes 1_{\Z[H]})) \wedge (\gamma^{\out}_{J^c} \otimes 1_{\Z[H]})\Big) \otimes 1_{\Z[G]} \Big) \\
        &= (-1)^{ck} \left[\omega_{\Z[H]} \Big(\Phi_1\Big(([\BSDA(Y,\Gamma;\Xi_{\mathfrak s})]^{\Z[H]}_{\comb;\,\mathrm{concrete}} (\gamma_I^{\inrm}\otimes 1_{\Z[H]})) \wedge (\gamma^{\out}_{J^c} \otimes 1_{\Z[H]})\Big) \Big) \right]_{\Z[G]} \\
        &= (-1)^{ck} \left[\omega_{\Z[H];\,\mathrm{concrete}} \Big(([\BSDA(Y,\Gamma;\Xi_{\mathfrak s})]^{\Z[H]}_{\comb;\,\mathrm{concrete}} (\gamma_I^{\inrm}\otimes 1_{\Z[H]})) \wedge (\gamma^{\out}_{J^c} \otimes 1_{\Z[H]}) \Big) \right]_{\Z[G]} \\
    \end{align*}
    up to an overall $I,J$-independent sign, where we use our super sign convention in the third step. Thus, up to $I,J$-independent sign, \eqref{eq:TargetQuantityZ[G]} equals $(-1)^{ck + \mathrm{inv}(\sigma_{J J^c \leftrightarrow \std})}$ times the matrix entry $E \in \Z[H]$ of the map $[\BSDA(Y,\Gamma;\Xi_{\mathfrak s})]^{\Z[H]}_{\comb;\,\mathrm{concrete}}$ in column $\gamma_I^{\inrm}\otimes 1_{\Z[H]}$ and row $\gamma_{J}^{\out}\otimes 1_{\Z[H]}$, passed through the quotient map from $\Z[H]$ to $\Z[G]$.
    
    By Corollary~\ref{cor:MatrixEntryAndCellBdryDet}, for $I,J$ such that $(Y,\Gamma_{J^c,I})$ is weakly balanced ($|J|=|I|+c$), the matrix entry $E$ of $[\BSDA(Y,\Gamma;\Xi_{\mathfrak{s}})]^{\Z[H]}_{\comb;\, \mathrm{concrete}}$ in column $\gamma^{\inrm}_I \otimes 1_{\Z[H]}$ and row $\gamma^{\out}_J \otimes 1_{\Z[H]}$ is 
    \[
    E = (-1)^{\mathrm{inv}(\sigma_{JJ^c \leftrightarrow \std}) + n_1 k + ak} \det \widehat{A}
    \]
    up to $I,J$-independent sign, where $\widehat{A}$ is the matrix representing the boundary map $\partial_2$ of the chain complex for $(\widehat{\mathfrak{Y}}_{J^c,I},\widehat{\mathfrak{R}}^+_{J^c,I})$ in the bases determined by any set of lifts that represents the fixed class $\mathfrak{l} \in \Lift(\mathfrak{Y}_{J^c,I},\mathfrak{R}^+_{J^c,I})$ corresponding to the reference $\Spinc$ structure $\s_{\mathrm{ref}} = -\s_{J^c,I} \in \Spinc(Y,\partial Y, -v_{J^c,I})$, where $\s_{J^c,I}$ is being used to compute $[\BSDA(Y,\Gamma;\Xi_{\mathfrak{s}})]^{\Z[H]}_{\comb;\, \mathrm{concrete}}$. As in the proof of Theorem~\ref{thm:BSDAAlexanderZ}, we can write the matrix $\widehat{A}$ as $\kbordermatrix{
      & \alpha^{\out}_{J^c} & \alpha^c & \alpha^{\inrm}_I \\
    \beta^{\out} & -*_{J^c} & * & 0\\
    \beta^c & 0 & * & 0\\
    \beta^{\inrm} & 0 & * & *_I}$, where the middle three blocks are the matrix $\widehat{M}_{\Hc_{\norm}}$ and the entries of $*_{J^c}$ and $*_I$ are all either 0 or 1 as in Theorem~\ref{thm:BSDAAlexanderZ}. It follows that \eqref{eq:TargetQuantityZ[G]} equals
    \[
    (-1)^{ck + n_1k + ak} \det \left[ \kbordermatrix{
      & \alpha^{\out}_{J^c} & \alpha^c & \alpha^{\inrm}_I \\
    \beta^{\out} & -*_{J^c} & * & 0\\
    \beta^c & 0 & * & 0\\
    \beta^{\inrm} & 0 & * & *_I} \right]_{\Z[G]} =: (-1)^{ck + n_1k + ak} \det \widehat{A}_{\Z[G]}
    \]
    up to $I,J$-independent sign.
    
    The proof now breaks up into two cases. Suppose that $H_2(\overline{Y},\overline{R}^+) \neq 0$. Then $\mathsf{A}_{\Z[G]}(Y,\Gamma)$ is the zero map by definition. On the other hand, Lemma~\ref{lem:InjectivePresentationMatrixZ[G]} implies that $\overline{M}_{\Hc_{\norm}}$ is not injective; equivalently the columns of $\overline{M}_{\Hc_{\norm}}$ are $\Z[G]$-linearly dependent. Thus the columns of $\widehat{A}_{\Z[G]}$ are $\Z[G]$-linearly dependent, so since $\Z[G]$ is a domain, we have $\det \widehat{A}_{\Z[G]} = 0$. It follows that \eqref{eq:TargetQuantityZ[G]} is zero for all $I$ and $J$, so $[\BSDA(Y,\Gamma;\Xi_{\norm})]^{\Z[H]}_{\comb}=0$. 
    
    So suppose instead that $H_2(\overline{Y},\overline{R}^+) = 0$. Then $\overline{M}_{\Hc_{\norm}}$ is injective.     By Proposition~\ref{prop:AlexanderFunctionZ[G]IndependentOfM}, we are free to choose the presentation matrix $\overline{M}_{\Hc_{\norm}}$ to compute $\mathcal{A}^{\Z[G]}_{Y,\Gamma}\left(\overline{i}_* (-\overline{e}_I^{\inrm}) \wedge \overline{i}_* (-\overline{e}_{J^c}^{\out})\right)$; by the discussion in Section~\ref{sec:LiftedSurfacesHomology}, we can understand $i_* (-\overline{e}_I^{\inrm})$ and $i_* (-\overline{e}_{J^c}^{\out})$ in terms of
    \[
    i_*: H_1^{\cell}(\overline{\mathfrak{F}}_i,\overline{\mathfrak{S}}^+_i) \to H_1^{\cell}(\overline{\mathfrak{Y}}, \overline{\mathfrak{R}}^+).
    \]
    We get 
    \begin{align*}
    \mathcal{A}^{\Z[G]}_{Y,\Gamma}\left(\overline{i}_* (-\overline{e}_I^{\inrm}) \wedge \overline{i}_* (-\overline{e}_{J^c}^{\out})\right) &= \det \left[\kbordermatrix{
       & \alpha^c & \alpha^{\inrm}_I & \alpha^{\out}_{J^c} \\
    \beta^{\out}  & * & 0 & -*_{J^c}\\
    \beta^c & * & 0 & 0\\
    \beta^{\inrm} & * & -*_I & 0} \right]_{\Z[G]} \\
    &= (-1)^{ck+n_1 k + ak} \det \left[ \kbordermatrix{
      & \alpha^{\out}_{J^c} & \alpha^c & \alpha^{\inrm}_I \\
    \beta^{\out} & -*_{J^c} & * & 0\\
    \beta^c & 0 & * & 0\\
    \beta^{\inrm} & 0 & * & *_I} \right]_{\Z[G]}
    \end{align*}
    up to $I,J$-independent sign as in the proof of Theorem~\ref{thm:BSDAAlexanderZ}. Thus, the quantity \eqref{eq:TargetQuantityZ[G]} agrees with $\mathcal{A}^{\Z[G]}_{Y,\Gamma}\left(\overline{i}_* (-\overline{e}_I^{\inrm}) \wedge \overline{i}_* (-\overline{e}_{J^c}^{\out})\right)$ up to an overall sign that is independent of $I$ and $J$, as desired.
\end{proof}

Since we have related $[\BSDA(Y,\Gamma;\Xi_{\norm})]^{\Z[G]}_{\mathrm{comb}}$ to a quantity independent (up to multiplication by $\pm G$) of the set of choices $\Xi_{\norm}$, we are able to define the following invariant. 

\begin{definition}\label{def:BSDAIndependentZ[G]}
    Let $(Y,\Gamma)$ be a sutured cobordism.
    We define
    \[
        [\BSDA(Y,\Gamma)]^{\Z[G]}_{\comb}
    \]
    to be the map
    \[
        [\BSDA(Y,\Gamma;\Xi_{\norm})]^{\Z[G]}_{\comb}
        \colon \wedge^*_{\Z[G]} H_1(\overline{F}_0,\overline{S^+}_0) \to \wedge^*_{\Z[G]} H_1(\overline{F}_1,\overline{S^+}_1)
    \]
    for any choice of $\Xi_{\norm}$ as in Corollary~\ref{cor:CanChooseXiNormalized}. By Theorem~\ref{thm:BSDAAlexanderZG}, this map is well-defined up to multiplication by $\pm G$. 
\end{definition}

\begin{remark}\label{rem:ZHAlexanderWhenH2Zero}
    Suppose instead that throughout this section we worked over $\Z[H]$, rather than $\Z[G]=\Z[H/\tors(H)]$, allowing for torsion in $H$. Recall that in this case the ring $\Z[H]$ is not an integral domain in general; this causes issues for the most naive possible generalization of Section~\ref{sec:AlexanderFunctorZG} as follows. As noted in Remark~\ref{rem:IntroWhyPassToQ[H]}, if $H_2(\widehat{Y},\widehat{R}^+) \neq 0$, we can no longer argue as in the proof of Theorem~\ref{thm:BSDAAlexanderZG} that $\Z[H]$-linear dependence forces all maximal minors to vanish over $\Z[H]$ (i.e. the determinant to vanish over $\Z[H]$), and so if an Alexander functor were possible to define in this setting, the desired main result would only when $H_2(\widehat{Y},\widehat{R}^+)=0$.
    
    Fortunately, it is also precisely when $H_2(\widehat{Y},\widehat{R}^+)=0$, that the presentation matrix $\widehat{M}_{\Hc}$ for $H_1(\widehat{Y},\widehat{R}^+)$ from Section~\ref{sec:CWCoveringSpaceGeneral} is injective, and under this restriction the Schanuel's-lemma-inspired arguments of Proposition~\ref{lem:InjectivePresentationMatrixZ[G]} go through. So one can in fact obtain a $\Z[H]$-valued Alexander function $\A_{Y,\Gamma}^{\Z[H]}$ well-defined up to multiplication by units in $\Z[H]$, together with an associated Alexander functor $\mathsf{A}_{\Z[H]}(Y,\Gamma)$ unique up to units in $\Z[H]$, which agrees with $[\BSDA(Y,\Gamma;\Xi_{\norm})]^{\Z[H]}_{\comb}$ up to units in $\Z[H]$. The familiar line of reasoning would also culminate in an invariance statement similar to Definition~\ref{def:BSDAIndependentZ[G]}, but this and all the preceding statements would carry the caveat that $H_2(\widehat{Y},\widehat{R}^+)$ must vanish for the underlying input cobordism $(Y,\Gamma)$. 
    
    This restrictive version is not the strongest theory one might hope for. An additional difficulty in working over $\Z[H]$ is that its units are generally poorly behaved; Remark~\ref{rem:Z[H]IndependenceAndHeegaardMoves} points to one possible approach to this which we do not pursue here. The passage to $\Q[H]$ below allows us to treat cobordisms $(Y,\Gamma)$ for which $H_2(\widehat{Y},\widehat{R}^+)\neq 0$, makes the vanishing behavior of the Alexander functor more precise in this setting (see Remark~\ref{rem:AlexanderQHVanishingCriteria}), and yields a more controlled unit ambiguity. These improvements ultimately rely on the structure theory of $\Q[H]$.
\end{remark}

\subsection{The Alexander functor over $\Q[H]$}

This section provides the most general comparison between the map constructed in Definition~\ref{def:BSDAIndOfRefSpinc} and a Florens--Massuyeau style Alexander functor. We continue to assume that $Y$ is connected. 

Following the theories of Reidemeister and Turaev torsion, a standard workaround for the failure of $\Z[H]$ to be a domain when $H$ has torsion is to instead work over $\Q[H] \supset \Z[H]$, which admits a canonical decomposition as a direct sum of domains. Assuming such a decomposition, we will define the Alexander function componentwise, and then ultimately assemble these components to define an Alexander functor unique up to multiplication by units in $\Q[H]$. This restores enough algebraic control for us to make a meaningful connection with $  [\BSDA(Y,\Gamma;\Xi_{\norm})]^{\Z[H]}_{\comb}$ in the case that $H_2(\widehat{Y},\widehat{R}^+) \neq 0$. 

Set $T \coloneq \tors(H)$, and fix a splitting of $H$ as $T \oplus G$ (explicitly, a homomorphism $s \colon H \to T$ such that $s|_T = \id_T$). As in the introduction, fix a set $(\chi_1, \ldots, \chi_m)$ of representatives modulo equivalence for characters $\chi_i \colon T \to \C^*$. These extend uniquely to ring homomorphisms $\chi_i:\Z[T]\to \C$ whose images are cyclotomic fields $\F_{\chi_i}$. By the universal property for group rings, each $\chi_i$ gives a unique ring homomorphism 
\begin{align*}
    \Z[T][G] &\to \F_{\chi_i}[G] \\
    \sum_{g \in G} r_g g &\mapsto \sum_{g \in G} \chi_i\left(r_g\right) g. 
\end{align*}
Composing this with the isomorphism $\Z[H] \cong \Z[T][G]$ sending $h$ to $s(h)[h]$, where $[h]$ is the equivalence class of $h$ in $G = H/T$, yields a canonical ring homomorphism
\[
\varphi_i \colon \Z[H] \to \F_{\chi_i}[G]
\]
that makes $\F_{\chi_i}[G]$ an $\Z[H]$-algebra. Note that different choices of splitting $H \cong T \oplus G$ will produce different algebra actions. As in the introduction, there is an isomorphism of $\Q$-algebras (and $\Z[H]$-modules)
\begin{equation}\label{eq:HDecompStateSpace}
     \Q[H] \xrightarrow{(\varphi_i)_i} \bigoplus_{i=1}^m \F_{\chi_i}[G].
\end{equation}

\begin{definition}
    Let 
    \[
    H_*(Y,R^+;\Q[H]) := H_*(C_*^{\sing}(\widehat{Y}, \widehat{R}^+) \otimes_{\Z[H]} \Q[H]).
    \]
    For each $i$, let
    \[
    H_*(Y,R^+;\F_{\chi_i}[G]) := H_*(C_*^{\sing}(\widehat{Y}, \widehat{R}^+) \otimes_{\Z[H]} \F_{\chi_i}[G]), 
    \]
    where the $\Z[H]$-module structure on $\F_{\chi_i}[G]$ is determined by a choice of splitting $H \cong T \oplus G$ (so $H_*(Y,R^+;\F_{\chi_i}[G])$ depends on this choice of splitting).
\end{definition}

Note that we have an identification $H_1(Y,R^+;\Q[H]) \cong \bigoplus_i H_1(Y,R^+;\F_{\chi_i}[G])$ of modules over $\Q[H] \cong \bigoplus_i \F_{\chi_i}[G]$; the $\Q[H]$ action on the left-hand side intertwines the $\bigoplus_i \F_{\chi_i}[G]$ action on the right-hand side via the isomorphism $(\varphi_i)_i$ from $\Q[H]$ to $\bigoplus_i \F_{\chi_i}[G]$. Thus we have an isomorphism 
\begin{equation}\label{eq:AlexFunctorQ[H]DomainIdent}
\wedge^d_{\Q[H]} H_1(Y,R^+;\Q[H]) \cong \bigoplus_i \wedge^d_{\F_{\chi_i}[G]} H_1(Y,R^+;\F_{\chi_i}[G])
\end{equation}
of modules over $\Q[H] \cong \bigoplus_i \F_{\chi_i}[G]$ with the same intertwining property.

Now fix a general (non-normalized) set of choices $\Xi$ for $(Y,\Gamma)$ as in Definition~\ref{def:BSDAZComb}, including an $\alpha$-$\alpha$ bordered Heegaard diagram $\Hc$ for $(Y,\Gamma)$. By Sections~\ref{sec:CWDecomp} and ~\ref{sec:CWCoveringSpaceGeneral}, we have CW pairs $(\mathfrak{Y},\mathfrak{R}^+)$ and $(\widehat{\mathfrak{Y}},\widehat{\mathfrak{R}}^+)$, and a relative cellular chain complex $C_*(\widehat{\mathfrak{Y}},\widehat{\mathfrak{R}}^+)$ viewed as a chain complex of $\Z[H]$--modules. Passing to twisted coefficients via
\[
C^{\cell}_*(\mathfrak{Y},\mathfrak{R}^+;\Q[H])
\coloneq
C^{\cell}_*(\widehat{\mathfrak{Y}},\widehat{\mathfrak{R}}^+) \otimes_{\Z[H]} \Q[H]
\]
and
\[
C^{\cell}_*(\mathfrak{Y},\mathfrak{R}^+;\F_{\chi_i}[G])
\coloneq
C^{\cell}_*(\widehat{\mathfrak{Y}},\widehat{\mathfrak{R}}^+) \otimes_{\Z[H]} \F_{\chi_i}[G],
\]
we have canonical identifications
\[
H_*(Y,R^+;\Q[H]) \cong H_*(C_*^{\cell}(\widehat{\mathfrak{Y}},\widehat{\mathfrak{R}}^+;\Q[H]))
\]
and 
\[
H_*(Y,R^+;\F_{\chi_i}[G]) \cong H_*(C_*^{\cell}(\widehat{\mathfrak{Y}},\widehat{\mathfrak{R}}^+;\F_{\chi_i}[G]))).
\]

Recall by Section~\ref{sec:CWCoveringSpaceGeneral}, a basis $\{\widehat{e}_j\}$ over $\Z[H]$ for the chain groups of $C_*^{\cell}(\widehat{\mathfrak{Y}},\widehat{\mathfrak{R}}^+)$ (free $\Z[H]$-modules) is obtained by fixing preferred lifts of the cells of $(\mathfrak{Y},\mathfrak{R}^+)$. We get bases $\{\widehat{e}_j \otimes 1_{\Q[H]}\}$ and $\{\widehat{e}_j \otimes 1_{\F_{\chi_i}[G]}\}$ for the chain groups of $C^{\cell}_*(\mathfrak{Y},\mathfrak{R}^+;\Q[H])$ and $C^{\cell}_*(\mathfrak{Y},\mathfrak{R}^+;\F_{\chi_i}[G])$ respectively.

Over $\Q[H]$ (resp. $\F_{\chi_i}[G]$ for each $1 \leq i \leq n$), with respect to such fixed bases $\{\widehat{e}_j \otimes 1_{\Q[H]}\}$ (resp. $\{\widehat{e}_j \otimes 1_{\F_{\chi_i}[G]}\}$), there is a matrix $\widehat{M}_{\Hc}^{\;\Q[H]}$ (resp. $\widehat{M}_{\Hc}^{\;\F_{\chi_i}[G]}$) for the cellular differentials
\[
\widehat{\partial}_2^{\;\Q[H]} \colon C^{\cell}_2(\mathfrak{Y},\mathfrak{R}^+;\Q[H]) \to C^{\cell}_1(\mathfrak{Y},\mathfrak{R}^+;\Q[H])
\]
and
\[
\widehat{\partial}_2^{\;\F_{\chi_i}[G]} \colon C^{\cell}_2(\mathfrak{Y},\mathfrak{R}^+;\F_{\chi_i}[G]) \to C^{\cell}_1(\mathfrak{Y},\mathfrak{R}^+;\F_{\chi_i}[G]).
\]

\begin{remark}\label{rem:ExplicitIdentForAlexFunction}
We have 
\[
H_1(Y,R^+;\Q[H]) \cong \mathrm{coker} \widehat{M}_{\Hc}^{\;\Q[H]}
\]
and 
\[
H_1(Y,R^+;\F_{\chi_i}[G]) \cong \mathrm{coker} \widehat{M}_{\Hc}^{\;\F_{\chi_i}[G]};
\]
using these identifications, we can understand the isomorphism of equation~\eqref{eq:AlexFunctorQ[H]DomainIdent} concretely as follows. A $\Q[H]$-linear combination of basis elements $\widehat{e}_j \otimes 1_{\Q[H]}$ of $C_1^{\cell}(\mathfrak{Y},\mathfrak{R}^+;\Q[H])$ gets sent to an element of the direct sum whose $i$-th component is the linear combination of basis elements $\widehat{e}_j \otimes 1_{\F_{\chi_i}[G]}$ of $C_1^{\cell}(\mathfrak{Y},\mathfrak{R}^+;\F_{\chi_i}[G])$ whose coefficients are obtained from the original $\Q[H]$ coefficients by applying the ring homomorphism $\varphi \colon \Q[H] \to \F_{\chi_i}[G]$.
\end{remark}

Now, for the definition of Alexander functions, $\widehat{M}_{\Hc}^{\;\F_{\chi_i}[G]}$ serves as a valid deficiency-$d$ presentation matrix for $H_1(Y,R^+;\F_{\chi_i}[G])$, which is injective when $H_2(Y,R^+;\F_{\chi_i}[G])=0$, since its kernel is identified with $H_2(Y,R^+;\F_{\chi_i}[G])$. We record this fact below and omit the proof in the familiar flow. 

\begin{lemma}\label{lem:InjectivePresentationMatrixFchii[G]}
    For each $1\leq i \leq m$, if $H_2(Y,R^+;\F_{\chi_i}[G])=0$, then $d \geq 0$ and the deficiency-$d$ presentation matrix $\widehat{M}_{\Hc}^{\;\F_{\chi_i}[G]}$ for $H_2(Y,R^+;\F_{\chi_i}[G])$ is injective. If $H_2(Y,R^+;\F_{\chi_i}[G]) \neq 0$, then $\widehat{M}_{\Hc}^{\;\F_{\chi_i}[G]}$ is never injective.
\end{lemma}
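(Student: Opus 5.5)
This is the $\F_{\chi_i}[G]$-analogue of Lemma~\ref{lem:InjectivePresentationMatrixZ} and Lemma~\ref{lem:InjectivePresentationMatrixZ[G]}, and I would prove it the same way. The main step is to identify $\widehat{M}_{\Hc}^{\;\F_{\chi_i}[G]}$ as the only nonzero differential of a two-term complex of free modules. By Section~\ref{sec:CWCoveringSpaceGeneral}, $C^{\cell}_*(\widehat{\mathfrak{Y}},\widehat{\mathfrak{R}}^+)$ is concentrated in degrees $1$ and $2$, with free $\Z[H]$-bases of sizes $b$ and $a$ given by the chosen lifts. Tensoring over $\Z[H]$ with $\F_{\chi_i}[G]$ (via $\varphi_i$) keeps the complex concentrated in degrees $1$ and $2$. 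The degree-$1$ and degree-$2$ chain groups become free $\F_{\chi_i}[G]$-modules of ranks $b$ and $a$, and $\widehat{\partial}_2^{\;\F_{\chi_i}[G]}$ is represented by $\widehat{M}_{\Hc}^{\;\F_{\chi_i}[G]}$.

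Since there are no $3$-cells, we get
\[
H_2(Y,R^+;\F_{\chi_i}[G]) = \ker \widehat{M}_{\Hc}^{\;\F_{\chi_i}[G]}.
\]
Since there are no $0$-cells, we get
\[
H_1(Y,R^+;\F_{\chi_i}[G]) = \mathrm{coker}\, \widehat{M}_{\Hc}^{\;\F_{\chi_i}[G]},
\]
which is Remark~\ref{rem:ExplicitIdentForAlexFunction}. The matrix therefore presents $H_1(Y,R^+;\F_{\chi_i}[G])$ with $b$ generators and $a$ relations, so its deficiency is $b-a=-\chi(Y,R^+)=d$, by the cell count in the proof of Proposition~\ref{prop:DegreeOfBSDA}. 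The statement's ``presentation matrix for $H_2$'' should read $H_1$.

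Both injectivity claims follow immediately from the kernel identification. If $H_2(Y,R^+;\F_{\chi_i}[G])=0$, the matrix is injective. If it is nonzero, the kernel is nonzero and the matrix is never injective, for any choice of lifts. Changing the lifts multiplies rows and columns by units of $\pm H$ mapped under $\varphi_i$, which preserves injectivity.

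The only step needing real care is $d\ge 0$ when the matrix is injective. I would argue as follows. The ring $\F_{\chi_i}[G]$ is a Laurent polynomial ring over a field, hence an integral domain. Pass to its fraction field $K$. An injective map $\F_{\chi_i}[G]^a \to \F_{\chi_i}[G]^b$ stays injective after $\otimes K$, because localization is exact. This gives an injective $K$-linear map $K^a\to K^b$, so $a\le b$ and $d=b-a\ge 0$.
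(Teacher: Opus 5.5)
Your proof is correct and follows the same route as the paper, which omits the argument as routine. In both, $\widehat{M}_{\Hc}^{\;\F_{\chi_i}[G]}$ is the only differential of the two-term complex $C^{\cell}_*(\widehat{\mathfrak{Y}},\widehat{\mathfrak{R}}^+)\otimes_{\Z[H]}\F_{\chi_i}[G]$, so its kernel is $H_2$ and its cokernel is $H_1$; your fraction-field argument supplies the $d\geq 0$ detail the paper leaves implicit, and you are right that ``presentation matrix for $H_2$'' in the statement is a typo for $H_1$.
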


\begin{definition}\label{def:AlexanderFunctionFchii[G]}
    Let $d = -\chi(Y,R^+)$. Given a fixed splitting of $H$ as $T \oplus G$, for $1 \leq i \leq n$ define the \emph{$\F_{\chi_i}[G]$-valued Alexander function} 
    \[
    \A^{\F_{\chi_i}[G]}_{Y,\Gamma} \colon \wedge^d_{\F_{\chi_i}[G]} H_1(Y,R^+;\F_{\chi_i}[G]) \to \F_{\chi_i}[G],
    \]
    well-defined up to overall multiplication by elements of $(\F_{\chi_i}^* \cdot G) \subset \F_{\chi_i}[G]$, as follows. If $H_2(Y,R^+;\F_{\chi_i}[G])\neq 0$, define $ \A^{\F_{\chi_i}[G]}_{Y,\Gamma}$ to be the zero function. Otherwise, $H_1(Y,R^+;\F_{\chi_i}[G])$ admits a deficiency-$d$ presentation matrix $\widehat{M}$, and so $\mathcal{A}^{\F_{\chi_i}[G]}_{Y,\Gamma}(u_1 \wedge \cdots \wedge u_d)$ may be defined as follows. Choose any expression $\overline{u}_i$ of $u_i$ as a $\F_{\chi_i}[G]$-linear combination of the generators, form a square matrix from $\widehat{M}$ by adding new columns $\overline{u}_1, \ldots, \overline{u}_d$ to the right, and take the determinant, an element of $\F_{\chi_i}[G]$. We define $\A^{\F_{\chi_i}[G]}_{Y,\Gamma}(u_1\wedge \cdots \wedge u_d)$ to be this element of $\F_{\chi_i}[G]$.
\end{definition}

As in the previous cases, the Alexander function $\A^{\F_{\chi_i}[G]}_{Y,\Gamma}(u_1 \wedge \cdots \wedge u_d)$ does not depend on the choice of representatives $\overline{u}_i$ of $u_i$; the proof of Lemma~\ref{lem:AlexanderFunctionZWellDefined} applies unchanged with $\F_{\chi_i}[G]$ coefficients. 

\begin{proposition}\label{prop:AlexanderFunctionFchii[G]IndependentOfM}
    Given a fixed splitting of $H$ as $T \oplus G$, then up to overall multiplication by $\F_{\chi_i}^* \cdot G \subset \F_{\chi_i}[G]$, the Alexander function $\A^{\F_{\chi_i}[G]}_{Y,\Gamma}$ of Definition~\ref{def:AlexanderFunctionFchii[G]} only depends on the finitely-presented $\F_{\chi_i}[G]$-module $H_1(Y,R^+;\F_{\chi_i}[G])$ and the deficiency $d$, and does not depend on the specific choice of injective presentation matrix $\widehat{M}$. 
\end{proposition}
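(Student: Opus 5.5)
The plan is to transcribe the Schanuel-lemma argument of Proposition~\ref{prop:AlexanderFunctionZIndependentOfM} over the commutative ring $R := \F_{\chi_i}[G]$, in the same way Proposition~\ref{prop:AlexanderFunctionZ[G]IndependentOfM} transcribed it over $\Z[G]$. The only genuinely new input is a determination of the unit group of $R$.

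First I dispose of the case $H_2(Y,R^+;\F_{\chi_i}[G]) \neq 0$: there the function is defined to be zero, so there is nothing to check.

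Otherwise, let $M$ and $\widetilde{M}$ be two injective deficiency-$d$ presentation matrices over $R$, with surjections $\pi$ and $\tilde\pi$ onto $H_1(Y,R^+;\F_{\chi_i}[G])$. Since free $R$-modules are projective, I choose lifts $\eta,\eta'$ and form the block matrices $M_1 = \begin{bmatrix} M & -\eta\\ 0 & I\end{bmatrix}$ and $M_2 = \begin{bmatrix} \widetilde{M} & -\eta' \\ 0 & I\end{bmatrix}$. The verification that these are injective presentations of the same module, via $\im M_1 = \ker(\pi\oplus\tilde\pi)$, is purely module-theoretic and goes through verbatim. So do steps (I) and (II): padding the representatives $\overline{u}_i$ by zeros and permuting columns changes the determinant only by signs $(-1)^{\tilde b d}$ and $(-1)^{bd}$. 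For step (III), the swap $S$ and the comparison map $\varphi$ are constructed exactly as before. Injectivity and surjectivity of $\varphi$ use only exactness and the injectivity of $M_1$ and $M_2$, so $\varphi$ is an isomorphism of free $R$-modules. Hence $\det\varphi\cdot\det S^{-1}$ lies in $R^\times$, and it does not depend on $u_1\wedge\cdots\wedge u_d$. Lemma~\ref{lem:AlexanderFunctionZWellDefined}, which holds unchanged over $R$, lets me use the representatives $S\overline{u}'_i$. Therefore the two Alexander functions differ by a global unit of $R$.

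The remaining step, which I expect to be the main obstacle, is to show that $R^\times = \F_{\chi_i}^*\cdot G$. Here $R$ is the group ring of the finitely generated free abelian group $G\cong\Z^r$ over the field $\F_{\chi_i}$, so $R \cong \F_{\chi_i}[t_1^{\pm1},\dots,t_r^{\pm1}]$ is a Laurent polynomial ring over a field. This is an integral domain in which degree (with respect to a monomial order, or multidegree in each variable) is additive under multiplication. So if $uv = 1$, then both $u$ and $v$ must be monomials, i.e. of the form $c\,g$ with $c\in\F_{\chi_i}^*$ and $g\in G$. I would spell this out by choosing a total order on $\Z^r$ compatible with addition and comparing leading and trailing terms of $u$, $v$, and $uv$. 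Combining this with step (III) shows that the indeterminacy is exactly global multiplication by an element of $\F_{\chi_i}^*\cdot G$, which proves the proposition.

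Finally, because the splitting $H\cong T\oplus G$ is fixed throughout, the module structure on $H_1(Y,R^+;\F_{\chi_i}[G])$ is fixed. So no additional dependence enters, and the statement is made only relative to that splitting.
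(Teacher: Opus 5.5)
Your proposal is correct and follows essentially the same route as the paper: it likewise transcribes the Schanuel-lemma argument of Proposition~\ref{prop:AlexanderFunctionZIndependentOfM} over $\F_{\chi_i}[G]$, then uses that the unit group of $\F_{\chi_i}[G]$ is $\F_{\chi_i}^*\cdot G$. The only difference is that the paper simply asserts this unit-group fact, whereas you supply its standard proof by comparing leading and trailing terms in the Laurent polynomial ring $\F_{\chi_i}[t_1^{\pm1},\dots,t_r^{\pm1}]$.
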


\begin{proof}
    The proof of Proposition~\ref{prop:AlexanderFunctionZ[G]IndependentOfM} largely carries over in this setting, with only minor modifications. For the steps ($\A^{\F_{\chi_i}[G]}_{Y,\Gamma,\widehat{M}} \sim \A^{\F_{\chi_i}[G]}_{Y,\Gamma,\widehat{M}_1}$) and ($\A^{\F_{\chi_i}[G]}_{Y,\Gamma,\widehat{M}_2} \sim \A^{\F_{\chi_i}[G]}_{Y,\Gamma,\widetilde{\widehat{M}}}$): we note that $\pm 1$ live in $\F_{\chi_i}^*$, since $\F_{\chi_i}$ is a subfield of $\Q$, so the column-swapped determinant comparisons hold up to an element $\pm 1 \cdot e_{G} \in \F_{\chi_i}^* \cdot G$. For the step ($\A^{\F_{\chi_i}[G]}_{Y,\Gamma,\widehat{M}_1} \sim \A^{\F_{\chi_i}[G]}_{Y,\Gamma,\widehat{M}_2}$), we have 
    \[
    \det[M_1|\overline{u}_1 \cdots \overline{u}_d]=u \cdot \det [M_2|\overline{u}_1 \cdots \overline{u}_d]
    \]
    with $u \in (\F_{\chi_i}[G])^{\times}$. Since the group of units in $\F_{\chi_i}[G]$ is $\F_{\chi_i}^* \cdot G$, the Alexander functions differ by multiplication by an element of $\F_{\chi_i}^* \cdot G$ that is independent of $u_1 \wedge \cdots \wedge u_d$.
\end{proof}

Recall that for a commutative product ring $R=\prod_i^n R_i$, the units form a subgroup $(R)^{\times}=(\prod_i^n R_i)^{\times}=\prod_i^n (R_i)^{\times}$. That is, units of $R$ are tuples $(u_1,\dots,u_n)$ such that each $u_i$ is a unit in $R_i$. Under the identification of equation~\eqref{eq:HDecompStateSpace}, this implies that the group of units in $\Q[H]=\bigoplus_{i=1}^m \F_{\chi_i}[G]$ is 
\[
(\Q[H])^{\times}=\left(\prod_{i=1}^m \F_{\chi_i}[G]\right)^{\times}=\prod_{i=1}^m (\F_{\chi_i}[G])^{\times}=\prod_{i=1}^m \F_{\chi_i}^* \cdot G
\]
That is, they are tuples $(u_1,\dots,u_m)$ with each $u_i \in \F_{\chi_i}^* \cdot G$. 

\begin{definition}\label{def:AlexFunctionQ[H]}
    Define the \emph{$\Q[H]$-valued Alexander function}
    \[
    \A^{\Q[H]}_{Y,\Gamma} \colon \wedge^d_{\Q[H]} H_1(Y, R^+; \Q[H]) \to \Q[H],
    \]
    well-defined up to (global) multiplication by units in $\Q[H]$, to be the composition
    \begin{align*}
    \wedge^d_{\Q[H]} H_1(Y, R^+; \Q[H]) &\xrightarrow{\cong} \bigoplus_i \wedge^d_{\F_{\chi_i}[G]} H_1(Y, R^+; \F_{\chi_i}[G]) \\
    &\xrightarrow{\bigoplus_i \A^{\F_{\chi_i}[G]}_{Y,\Gamma}} \bigoplus_i \F_{\chi_i}[G]\\
    &\xrightarrow{\cong} \Q[H]
    \end{align*}
    where all arrows are defined using the same choice of splitting $H \cong T \oplus G$.
\end{definition}

\begin{remark}\label{rem:AlexanderQHVanishingCriteria}
    As anticipated by Remark~\ref{rem:ZHAlexanderWhenH2Zero}, this definition shows that the vanishing behavior of $\A^{\Q[H]}_{Y,\Gamma}$ is determined componentwise: whenever $H_2(Y,R^+;\F_{\chi_i}[G])\neq 0$ for some $i$, the $i^{th}$ component function of $\A^{\Q[H]}_{Y,\Gamma}$ is zero; only when $H_2(Y,R^+;\F_{\chi_j}[G])= 0$ for some $j$ does an injective deficiency-$d$ presentation matrix exist for that $j$, allowing the $j^{th}$ component function of $\A^{\Q[H]}_{Y,\Gamma}$ to act non-trivially. That is, $\A^{\Q[H]}_{Y,\Gamma}$ vanishes automatically if and only if $H_2(Y,R^+;\F_{\chi_i}[G])$ is nonzero \emph{for all} $1 \leq i \leq n$. A similar vanishing criterion holds for the Alexander functor over $\Q[H]$ defined below; we use this criterion in the proof of Theorem~\ref{thm:BSDAAlexanderQ[H]}.
\end{remark}

\begin{proposition}\label{prop:AlexanderQ[H]IndOfSplitting}
    Choose a different splitting $H \cong T \oplus G$, and in turn, fix new identifications in \eqref{eq:HDecompStateSpace}. Denote by $(\A^{\Q[H]}_{Y,\Gamma})'$ the function defined above with respect to this new splitting. Then, as maps up to multiplication by units in $\Q[H]$, we have
    \[
    \A^{\Q[H]}_{Y,\Gamma}=\left(\A^{\Q[H]}_{Y,\Gamma}\right)'.
    \]    
\end{proposition}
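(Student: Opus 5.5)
Two splittings $s,s'\colon H\to T$ differ by a homomorphism $\delta = s'-s\colon H\to T$ that vanishes on $T$, so $\delta$ factors through $G$. The plan is to write the new ring homomorphisms $\varphi_i'$ as the old ones twisted by an automorphism of $\F_{\chi_i}[G]$. Concretely, for $h\in H$ with class $[h]\in G$ we have
\[
\varphi_i'(h) = \chi_i(s'(h))[h] = \chi_i(\delta([h]))\,\chi_i(s(h))[h].
\]
Define $\theta_i\colon \F_{\chi_i}[G]\to\F_{\chi_i}[G]$ as the $\F_{\chi_i}$-algebra map sending $g\mapsto \chi_i(\delta(g))\,g$. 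Since $\chi_i\circ\delta\colon G\to \F_{\chi_i}^*$ is a character, $\theta_i$ is a ring automorphism, and $\varphi_i'=\theta_i\circ\varphi_i$. (With the same character representatives $\chi_i$, the decomposition \eqref{eq:HDecompStateSpace} changes only by the automorphism $\bigoplus_i\theta_i$ of $\bigoplus_i\F_{\chi_i}[G]$.)

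Next, I compute both Alexander functions with the same cellular presentation from a fixed $\Xi$, as justified by Proposition~\ref{prop:AlexanderFunctionFchii[G]IndependentOfM}. By Remark~\ref{rem:ExplicitIdentForAlexFunction}, the $i$-th component is obtained by applying $\varphi_i$ (respectively $\varphi_i'$) entrywise to $\widehat{M}_{\Hc}^{\Q[H]}$ and to the $\Q[H]$-coefficient columns $\overline{u}_j$ of the inputs, then taking the determinant. Since $\theta_i$ is a ring homomorphism, it commutes with determinants. Hence for $u\in\wedge^d_{\Q[H]}H_1(Y,R^+;\Q[H])$,
\[
\bigl(\A^{\F_{\chi_i}[G]}_{Y,\Gamma}\bigr)'(\text{$i$-th component of } u) = \theta_i\bigl(\A^{\F_{\chi_i}[G]}_{Y,\Gamma}(\text{$i$-th component of } u)\bigr).
\]
The vanishing cases match as well: $\theta_i$ induces an isomorphism $H_2(Y,R^+;\F_{\chi_i}[G])\cong H_2(Y,R^+;\F_{\chi_i}[G])'$ of the twisted modules, so one of them is zero exactly when the other is. Assembling the components, $(\A^{\Q[H]}_{Y,\Gamma})'(u) = \Theta(\A^{\Q[H]}_{Y,\Gamma}(u))$, where $\Theta := (\varphi')^{-1}\circ(\bigoplus\theta_i)\circ\varphi$. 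Since $(\varphi')^{-1}\circ\bigoplus_i\theta_i = \varphi^{-1}$, the two compositions cancel and $\Theta$ is the identity map on $\Q[H]$.

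The main obstacle is checking this cancellation and bookkeeping the identifications carefully. The domain identification \eqref{eq:AlexFunctorQ[H]DomainIdent} also changes, and one must verify that the statement is about functions on $\wedge^d_{\Q[H]}H_1(Y,R^+;\Q[H])$ itself, so that the twist $\bigoplus\theta_i$ appears on both the input side and the output side and cancels. I expect this to yield equality on the nose for a fixed presentation, and therefore equality up to units in $\Q[H]$ once we account for the indeterminacy in Proposition~\ref{prop:AlexanderFunctionFchii[G]IndependentOfM}. That indeterminacy is a tuple in $\prod_i\F_{\chi_i}^*G$, which is a unit of $\Q[H]$, and $\theta_i$ preserves $\F_{\chi_i}^*G$, so the unit ambiguity is respected.
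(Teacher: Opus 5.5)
Your proposal is correct and is essentially the paper's argument. The paper likewise writes $\varphi_i' = \rho_i\circ\varphi_i$ with $\rho_i(g) = (\chi_i\circ u)(g)\,g$ for $u = s's^{-1}\colon G\to T$ (your $\theta_i$ and $\delta$). It uses that $\rho_i$ is a ring automorphism both to match the vanishing of $H_2(Y,R^+;\F_{\chi_i}[G])$ and to pull $\rho_i$ out of the determinant. It then notes that the inverse of $(\varphi_i')_i$ equals the inverse of $(\varphi_i)_i$ precomposed with $(\rho_i^{-1})_i$, so the twists cancel, just as in your computation of $\Theta$.
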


\begin{proof}
    The splittings are determined by homomorphisms $s, s' \colon H \to T$ such that $s|_T = \id_T$ and $s'|_T = \id_T$. The ring homomorphism $\varphi_i \colon \Z[H] \to \F_{\chi_i}[G]$ determined by $s$ sends a basis element $h$ of $\Z[H]$ to $(\chi_i(s(h)))[h] \in \F_{\chi_i}[G]$, where $[h]$ denotes the equivalence class of $h$ in $G = H / T$. Similarly, the ring homomorphism $\varphi'_i \colon \Z[H] \to \F_{\chi_i}[G]$ determined by $s'$ sends $h$ to $(\chi_i(s'(h)))[h] \in \F_{\chi_i}[G]$.

    Writing $H$ multiplicatively, the map $s's^{-1} \colon H \to T$ vanishes on $T \subset H$, so it descends to a map $u \coloneq s's^{-1} \colon G \to T$. In other words, for $h \in H$ we have $u([h]) = s'(h) s(h)^{-1}$. We have
    \begin{align*}
    \varphi_i'(h) &= (\chi_i(s'(h))[h] \\
    &= (\chi_i(u([h])s(h)))[h] \\
    &= (\chi_i \circ u)([h]) \chi_i(s(h))[h] \\
    &= (\chi_i \circ u)([h]) \varphi_i(h).
    \end{align*}
    Define $\rho_i \colon \F_{\chi_i}[G] \to \F_{\chi_i}[G]$ by, for a basis element $g$ of $\F_{\chi_i}[G]$, setting $\rho_i(g) = (\chi_i \circ u)(g)g$. We have
    \begin{align*}
        \rho_i(g_1g_2) &= (\chi_i \circ u)(g_1 g_2) g_1 g_2 \\
        &= (\chi_i(u(g_1)))(\chi_i(u(g_2))) g_1 g_2 \\
        &=\rho_i(g_1) \rho_i(g_2),
    \end{align*}
    so $\rho_i$ is a ring homomorphism; the second step uses that $u \colon G \to T$ and $\chi_i \colon T \to \F_{\chi_i}^*$ are group homomorphisms. In fact, $\rho_i$ is a ring isomorphism from $\F_{\chi_i}[G]$ to itself; its inverse is $\rho_i^{-1}(g) = ((\chi_i \circ u)(g))^{-1} g$. We also have
    \begin{align*}
        (\rho_i \circ \varphi_i)(h) &= \rho_i((\chi_i(s(h)))[h]) \\
        &= (\chi_i(s(h)))\rho_i([h]) \\
        &=(\chi_i(s(h))) (\chi_i \circ u)([h])[h] \\
        &=(\chi_i \circ u)([h]) \varphi_i(h) \\
        &= \varphi'_i(h),
    \end{align*}
    so $\varphi'_i = \rho_i \circ \varphi_i$ and $\varphi_i = \rho_i^{-1} \circ \varphi'_i$.

    Now consider the CW pair $(\mathfrak{Y},\mathfrak{R}^+)$ associated to some set of choices $\Xi$. Fix lifts of the cells of $\mathfrak{Y} \setminus \mathfrak{R}^+$ to cells of $\widehat{\mathfrak{Y}} \setminus \widehat{\mathfrak{R}}^+$, so that we have a matrix $\widehat{M}_{\Hc}$ with entries in $\Z[H]$.

    We first show that $H_2(Y,R^+;\F_{\chi_i}[G]) = 0$ when $H_2(Y,R^+;\F_{\chi_i}[G])$ is defined using $s$ if and only if it is zero when defined using $s'$. Indeed, using $s$, we have $H_2(Y,R^+;\F_{\chi_i}[G]) = 0$ if and only if the matrix $\widehat{M}^{\;\F_{\chi_i}[G]}_{\Hc}$ defined via the homomorphism $\varphi_i$ has linearly independent columns (over $\F_{\chi_i}[G]$). Using $s$, we have $H_2(Y,R^+;\F_{\chi_i}[G]) = 0$ if and only if the matrix $\widehat{M}^{\;\F_{\chi_i}[G]}_{\Hc}$ defined via the homomorphism $\varphi'_i$ has linearly dependent columns. The matrix with entries determined by $\varphi_i$ has linearly independent columns if and only if the matrix with entries determined by $\varphi'_i = \rho_i \circ \varphi_i$ has linearly dependent columns, since $\rho_i$ is a ring isomorphism from $\F_{\chi_i}[G]$ to itself.
        
    Now let $u_1 \wedge \cdots \wedge u_d \in \wedge^d_{\Q[H]} H_1(Y,R^+;\Q[H])$. Choose elements $\overline{u}_i$ in $C_1^{\cell}(\mathfrak{Y},\mathfrak{R}^+;\Q[H])$ representing $u_i \in H_1(Y,R+;\Q[H])$; each $\overline{u}_i$ is a $\Q[H]$-linear combination of basis elements $\widehat{e}_j \otimes 1_{\Q[H]}$. Applying the ring homomorphisms $\varphi_i$ or $\varphi'_i$ to the coefficients, we get $\F_{\chi_i}[G]$-linear combinations of basis elements $\widehat{e}_j \otimes 1_{\F_{\chi_i}[G]}$ of $C_1^{\cell}(\mathfrak{Y},\mathfrak{R}^+;\F_{\chi_i}[G])$, representing elements of $H_1(Y,R^+;\F_{\chi_i}[G])$. By Remark~\ref{rem:ExplicitIdentForAlexFunction} these elements are the components of the first map in Definition~\ref{def:AlexFunctionQ[H]} determined by $s$ or $s'$.

    We next claim that applying the first two maps of Definition~\ref{def:AlexFunctionQ[H]} for $s'$ to $u_1 \wedge \cdots \wedge u_d$ gives the same answer as applying the first two maps of Definition~\ref{def:AlexFunctionQ[H]} to $u_1 \wedge \cdots \wedge u_d$, then applying the ring isomorphism $\rho_i \colon \F_{\chi_i}[G] \to \F_{\chi_i}[G]$ to the $i^{th}$ component of the result. By the above discussion it suffices to consider $i$ such that $H_2(Y,R^+;\F_{\chi_i}[G]) = 0$. In this case, $\A^{\F_{\chi_i}[G]}_{Y,\Gamma}$ (computed using $s$) is the determinant of the $\Q[H]$-matrix $[(\widehat{M}_{\Hc} \otimes_{\Z[H]} \Q[H]) | \overline{u}_1 \cdots \overline{u}_d]$ after applying the ring homomorphism $\varphi_i$ to the entries, and $\A^{\F_{\chi_i}[G]}_{Y,\Gamma}$ (computed using $s'$) is the same except we apply $\varphi'_i$ rather than $\varphi$ to the entries. In this second case, we can apply $\varphi_i$ to the entries first, and then apply $\rho_i$, then finally take the determinant. Since $\rho_i$ is a ring homomorphism, this is the same as applying $\varphi_i$ to the entries, taking the determinant, and applying $\rho_i$ to the result, proving the claim.
    
    Now we analyze the last map of Definition~\ref{def:AlexFunctionQ[H]}. This map is the inverse of the isomorphism $(\varphi_i)_i$ or $(\varphi'_i)_i$ from $\Q[H]$ to $\bigoplus_i \F_{\chi_i}[G]$. For the isomorphism without the inverse, the $s'$-version is obtained from the $s$-version by postcomposing with 
    \[
    (\rho_i)_i \colon \bigoplus_i \F_{\chi_i}[G] \to \bigoplus_i \F_{\chi_i}[G].
    \]
    Thus, the $s'$-version of the inverse is obtained from the $s$-version of the inverse by precomposing with 
    \[
    (\rho_i^{-1})_i \colon \bigoplus_i \F_{\chi_i}[G] \to \bigoplus_i \F_{\chi_i}[G].
    \]
    It follows from the above claim that the composition of all three maps of Definition~\ref{def:AlexFunctionQ[H]} in the $s$ case agrees with the composition of all three maps in the $s'$ case as desired.
\end{proof}

Now we define an Alexander functor over $\Q[H]$. For the remainder of this section, suppose that we equip $(Y,\Gamma)$ with a set of normalized choices $\Xi_{\norm}$ as in Corollary~\ref{cor:NormalizedHeegaardDiagrams}. In this case, we are in the setting of Section~\ref{sec:CWCoveringSpaceNormalized}, and so we have modules $H_1(\widehat{F}_0,\widehat{S}^+_0)$ and $H_1(\widehat{F}_1,\widehat{S}^+_1)$ with preferred cellular bases $\widehat{e}^{\inrm}$ and $\widehat{e}^{\out}$ respectively. 

\begin{definition}
    Define
    \[
    H_*(F_i,S^+_i;\Q[H]) \coloneq  H_*(C_*^{\sing}(\widehat{F},\widehat{S}^+_i) \otimes_{\Z[H]} \Q[H]).
    \]
\end{definition}

Given our choices $\Xi_{\norm}$, if we let
\[
C_*^{\cell}(\mathfrak{F}_i,\mathfrak{S}^+_i;\Q[H]) := C_*^{\cell}(\widehat{\mathfrak{F}}_i, \widehat{\mathfrak{S}}^+_i) \otimes_{\Z[H]} \Q[H],
\]
then we have a canonical identification
\[
H_*(F_i,S^+_i;\Q[H]) \cong H_*(C_*^{\cell}(\mathfrak{F}_i,\mathfrak{S}^+_i;\Q[H]),
\]
Note that the differential in $C_*^{\cell}(\widehat{\mathfrak{F}}_i,\widehat{\mathfrak{S}}^+_i)$ is zero, so we have canonical identifications 
\[
H_*(F_i,S^+_i;\Q[H]) \cong H_*(C_*^{\cell}(\widehat{\mathfrak{F}}_i,\widehat{\mathfrak{S}}^+_i)) \otimes_{\Z[H]} \Q[H] \cong H_*(\widehat{F}_i, \widehat{S}^+_i) \otimes_{\Z[H]} \Q[H].
\]
If we choose lifts of the cells of $\mathfrak{Y} \setminus \mathfrak{R}^+$ to cells of $\widehat{\mathfrak{Y}} \setminus \widehat{\mathfrak{R}}^+$, we get (in particular) lifts of the cells of $\mathfrak{F}_i \setminus \mathfrak{S}^+_i$ to cells of $\widehat{\mathfrak{F}}_i \setminus \widehat{\mathfrak{S}}^+_i$ and thus identifications
\[
H_*(F_i,S^+_i;\Q[H]) \cong H_*(F_i,S^+_i) \otimes_{\Z} \Q[H].
\]

We write
\[
\widehat{i}_{\#} \colon C_1^{\sing}(\widehat{F},\widehat{S}^+) \to C_1^{\sing}(\widehat{Y}, \widehat{R}^+)
\]
for the $\Z[H]$-linear chain map induced by the inclusion of pairs $\widehat{i} \colon (\widehat{F},\widehat{S}^+) \to (\widehat{Y}, \widehat{R}^+)$. Tensoring with $\id_{\Q[H]}$ gives a map
\[
\widehat{i}_{\#} \otimes_{\Z[H]} \id_{\Q[H]} \colon C_1^{\sing}(F,S^+;\Q[H]) \to C_1^{\sing}(Y,R^+;\Q[H]).
\]
This induces a map on the level of homology given by
\[
\widehat{i}_* \coloneq \left(\widehat{i}_{\#} \otimes_{\Z[H]} \id_{\Q[H]}\right)_* \colon H_1(F,S^+;\Q[H]) \to H_1(Y,R^+;\Q[H]). 
\] 

\begin{proposition}\label{prop:AlexanderFunctorFchii[G]Defn}
    There exists a $\Q[H]$-linear map 
    \[
    \mathsf{A}_{\Q[H]}(Y,\Gamma) \colon \wedge^*_{\Q[H]} H_1(F_0,S^+_0;\Q[H]) \to \wedge^*_{\Q[H]} H_1(F_1,S^+_1;\Q[H])
    \]
    unique up to multiplication by units in $\Q[H]$ and homogeneous of degree $c = n_1 + \chi(Y,R^+)$, defined by
    \[
    \omega(\wedge(\mathsf{A}_{\Q[H]}(Y,\Gamma) \otimes \id)(x \otimes y)) = \A^{\Q[H]}_{Y,\Gamma}\left(\widehat{i}_* x \wedge \widehat{i}_* y \right),
    \]
    where $x \in \wedge^*_{\Q[H]} H_1(F_0,S^+_0;\Q[H])$, $y \in \wedge^*_{\Q[H]} H_1(F_1,S^+_1;\Q[H])$, the map
    \[
    \wedge \colon \wedge^p_{\Q[H]} H_1(F_1,S^+_1;\Q[H]) \otimes \wedge^q_{\Q[H]} H_1(F_1,S^+_1;\Q[H]) \to \wedge^{n_1}_{\Q[H]} H_1(F_1,S^+_1;\Q[H])
    \]
    sends $z \otimes w$ to $z \wedge w$ when $p+q = n_1$ and is zero otherwise, $\omega$ denotes an arbitrary volume form on the free $\Q[H]$-module $H_1(\widehat{F}_1,\widehat{S}^+_1) \otimes_{\Z[H]} \Q[H]$, the tensor product of maps is computed according to the super-sign rule
    \[
    (\mathsf{A}_{\Q[H]}(Y,\Gamma) \otimes \id)(x \otimes y)=(-1)^{|\mathsf{A}_{\Q[H]}(Y,\Gamma)||y|} \mathsf{A}_{\Q[H]}(Y,\Gamma)(x) \otimes y
    \]
    with $|\mathsf{A}_{\Q[H]}(Y,\Gamma)| = c$, and the Alexander functor $\A^{\Q[H]}_{Y,\Gamma}$ is defined to be zero on inputs in $\wedge^{dl} H_1(Y,R^+;\Q[H])$ for $d' \neq d$.
\end{proposition}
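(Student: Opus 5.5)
The plan is to follow the template of the proofs of Proposition~\ref{prop:AlexanderFunctorZDefn} and Proposition~\ref{prop:AlexanderFunctorZ[G]Defn}, working over $\Q[H]$ instead of $\Z$ or $\Z[G]$. The free $\Q[H]$-modules $H_1(F_i,S^+_i;\Q[H])\cong H_1(\widehat{F}_i,\widehat{S}^+_i)\otimes_{\Z[H]}\Q[H]$ have bases given by the lifted cells $\widehat{e}^{\inrm}_j\otimes 1$ and $\widehat{e}^{\out}_j\otimes 1$ (Section~\ref{sec:LiftedSurfacesHomology}). This yields bases $\widehat{e}^{\inrm}_I$ and $\widehat{e}^{\out}_J$ of the exterior powers. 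Fix the volume form $\omega$ by $\omega(\widehat{e}^{\out}_1\wedge\cdots\wedge\widehat{e}^{\out}_{n_1})=1$. Then, exactly as in the $\Z$ case,
\[
\omega(\widehat{e}^{\out}_{J'}\wedge\widehat{e}^{\out}_{J^c})=\delta_{J,J'}(-1)^{\mathrm{inv}(\sigma_{JJ^c\leftrightarrow\std})}.
\]

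\emph{Uniqueness.} Write $\mathsf{A}_{\Q[H]}(Y,\Gamma)(\widehat{e}^{\inrm}_I)=\sum_{J'}a_{I,J'}\widehat{e}^{\out}_{J'}$ and substitute $x=\widehat{e}^{\inrm}_I$, $y=\widehat{e}^{\out}_{J^c}$ into the defining relation. The super-sign rule and the computation above force
\[
a_{I,J}=(-1)^{\mathrm{inv}(\sigma_{JJ^c\leftrightarrow\std})+c(n_1-l)}\,\A^{\Q[H]}_{Y,\Gamma}(\widehat{i}_*\widehat{e}^{\inrm}_I\wedge\widehat{i}_*\widehat{e}^{\out}_{J^c}).
\]
This pins down the map once a representative of $\A^{\Q[H]}_{Y,\Gamma}$ is fixed. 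Changing the representative by a unit in $\Q[H]$ (Proposition~\ref{prop:AlexanderFunctionFchii[G]IndependentOfM} componentwise, assembled in Definition~\ref{def:AlexFunctionQ[H]}; splitting independence is Proposition~\ref{prop:AlexanderQ[H]IndOfSplitting}) multiplies every $a_{I,J}$ by the same unit. The $\F_{\chi_i}[G]$-valued functions are well defined independently of the representatives $\overline{u}_i$, as the text notes after Definition~\ref{def:AlexanderFunctionFchii[G]}. Hence $\A^{\Q[H]}_{Y,\Gamma}$ is a well-defined $\Q[H]$-multilinear alternating function, up to a global unit.

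\emph{Existence.} Define $\mathsf{A}_{\Q[H]}(Y,\Gamma)$ by these coefficients and extend $\Q[H]$-linearly. To check the relation for arbitrary $x,y$, use multilinearity of $\A^{\Q[H]}_{Y,\Gamma}$ and $\Q[H]$-linearity of $\widehat{i}_*$, $\omega$, $\wedge$ and of the super tensor product, as in the displayed chain of equalities in the $\Z$ proof. The one point needing care is that $\A^{\Q[H]}_{Y,\Gamma}$ is $\Q[H]$-multilinear on $\wedge^d_{\Q[H]}$. This follows because each component $\A^{\F_{\chi_i}[G]}_{Y,\Gamma}$ is a determinant, hence $\F_{\chi_i}[G]$-multilinear and alternating, and because the isomorphisms in Definition~\ref{def:AlexFunctionQ[H]} intertwine the $\Q[H]$- and $\bigoplus_i\F_{\chi_i}[G]$-actions (equation~\eqref{eq:AlexFunctorQ[H]DomainIdent}).

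\emph{Degree and volume forms.} The degree count is identical to the $\Z$ case: $\A^{\Q[H]}_{Y,\Gamma}$ is supported on $\wedge^d$ with $d=n_1-c$, and $\omega$ has degree $-n_1$, which forces the degree to be $c$. Any other volume form differs by a unit of $\Q[H]$, which rescales all coefficients uniformly. The only genuinely new step, and the one I expect to be the main obstacle, is the multilinearity and unit-ambiguity bookkeeping across the product decomposition $\Q[H]\cong\bigoplus_i\F_{\chi_i}[G]$. In particular, one must confirm that the ambiguity is a single tuple $(u_i)_i\in\prod_i\F_{\chi_i}^*\cdot G=(\Q[H])^\times$ applied simultaneously to all coefficients. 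I would handle this by working one component at a time, where the argument is literally the $\F_{\chi_i}[G]$ version of the $\Z[G]$ proof, and then reassembling.
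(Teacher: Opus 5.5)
Your proposal is correct and follows the paper's route: the paper proves this by transporting the $\Z[G]$ (hence $\Z$) argument verbatim to $\Q[H]$, using the lifted cellular bases $\widehat{e}^{\inrm}_I\otimes 1$ and $\widehat{e}^{\out}_J\otimes 1$, the volume form normalized on $\widehat{e}^{\out}_1\wedge\cdots\wedge\widehat{e}^{\out}_{n_1}$, and the observation that any other volume form differs by a unit of $\Q[H]$. Your extra remarks on the $\Q[H]$-multilinearity of $\A^{\Q[H]}_{Y,\Gamma}$ via the componentwise decomposition, and on the ambiguity being a single element of $(\Q[H])^\times=\prod_i \F_{\chi_i}^*\cdot G$, simply make explicit what the paper leaves implicit.
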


\begin{proof}
    After making the following adjustments, the proof of Proposition~\ref{prop:AlexanderFunctorZ[G]Defn} carries over to this setting: replace each $\overline{F}_i$, $\overline{S}_i^+$, $\overline{Y}$ and $\overline{R}^+$ with $\widehat{F}_i$, $\widehat{S}_i^+$, $\widehat{Y}$ and $\widehat{R}^+$ respectively, work with $\Q[H]$-coefficients, replace $\overline{i}_*$ by $\widehat{i}_*$, and replace the $\Z[G]$-basis elements $\overline{e}_I^{\inrm}$ for $\wedge^*_{\Z[G]} H_1(\overline{F}_0,\overline{S}^+_0)$ and $\overline{e}_J^{\out}$ for $\wedge^*_{\Z[G]} H_1(\overline{F}_1,\overline{S}^+_1)$ with our preferred $\Q[H]$-basis elements $\widehat{e}_I^{\inrm} \otimes 1_{\Q[H]}$ for $\wedge^*_{\Q[H]} H_1(F_0,S^+_0;\Q[H])$ and $\widehat{e}_J^{\out}\otimes 1_{\Q[H]}$ for $\wedge^*_{\Q[H]} H_1(F_1,S^+_1;\Q[H])$. Without loss of generality, we choose the volume form $\omega$ such that 
    \[
    \omega((\widehat{e}_1^{\out}\otimes 1) \wedge \cdots \wedge (\widehat{e}_{n_1}^{\out}\otimes 1)) = 1;
    \]
    any other choice of volume form will differ from $\omega$ by a unit in $\Q[H]$. The result follows. 
\end{proof}

We are almost ready to state our main theorem over $\Q[H]$. Let $[\BSDA(Y,\Gamma;\Xi_{\norm})]^{\Q[H]}_{\mathrm{comb}}$ indicate the extension of scalars for the map $[\BSDA(Y,\Gamma;\Xi_{\norm})]^{\Z[H]}_{\mathrm{comb}}$ defined in Definition~\ref{def:BSDAIndOfRefSpinc} along the canonical inclusion $\Z[H] \hookrightarrow \Q[H]$. That is,
\[
[\BSDA(Y,\Gamma;\Xi_{\norm})]^{\Z[H]}_{\mathrm{comb}} \otimes_{\Z[H]} 1_{\Q[H]} \coloneq [\BSDA(Y,\Gamma;\Xi_{\norm})]^{\Q[H]}_{\mathrm{comb}}. 
\]
This is a map 
\[
\left(\wedge^*_{\Z[H]} H_1(\widehat{F}_0, \widehat{S}^+_0)\right) \otimes_{\Z[H]} \Q[H] \to \left(\wedge^*_{\Z[H]} H_1(\widehat{F}_1, \widehat{S}^+_1)\right) \otimes_{\Z[H]} \Q[H],
\]
which we can view as a map
\[
\wedge^*_{\Q[H]} H_1(F_0,S^+_0;\Q[H]) \to \wedge^*_{\Q[H]} H_1(F_1,S^+_1;\Q[H])
\]
whose domain and codomain match those of $\mathsf{A}_{\Q[H]}(Y,\Gamma)$. 

\begin{theorem}\label{thm:BSDAAlexanderQ[H]}
    The map $[\BSDA(Y,\Gamma;\Xi_{\norm})]^{\Q[H]}_{\comb}$ described above agrees with $\mathsf{A}_{\Q[H]}(Y,\Gamma)$ up to multiplication by units in $\Q[H]$. 
\end{theorem}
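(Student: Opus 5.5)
The plan follows the proof of Theorem~\ref{thm:BSDAAlexanderZG} almost verbatim, except that the final comparison of determinants is carried out componentwise over the decomposition~\eqref{eq:HDecompStateSpace}.

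First, fix lifts of the cells of $\mathfrak{Y}\setminus\mathfrak{R}^+$ as in Definition~\ref{def:BSDAIndOfRefSpinc}. These lifts determine the reference $\Spinc$ structures $\s_{J^c,I}$ and the identifications $\Phi_i$. Choose the volume form $\omega$ on $H_1(F_1,S^+_1;\Q[H])$ induced from $\omega_{\Z}(\gamma^{\out}_1\wedge\cdots\wedge\gamma^{\out}_{n_1})=1$ via $\Phi_1\otimes\Q[H]$. For basis elements with $|I|=k$ and $|J|=k+c$, repeat the chain of equalities from the proof of Theorem~\ref{thm:BSDAAlexanderZG}, with $\Z[G]$ replaced by $\Q[H]$. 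This shows that
\[
\omega\bigl(\wedge([\BSDA(Y,\Gamma;\Xi_{\norm})]^{\Q[H]}_{\comb}\otimes\id)(-\widehat e^{\inrm}_I\otimes-\widehat e^{\out}_{J^c})\bigr)
\]
equals $(-1)^{ck+\mathrm{inv}(\sigma_{JJ^c\leftrightarrow\std})}E$, up to an $I,J$-independent sign, where $E\in\Z[H]\subset\Q[H]$ is the concrete matrix entry. By Corollary~\ref{cor:MatrixEntryAndCellBdryDet}, this equals $(-1)^{ck+n_1k+ak}\det\widehat A$, where
\[
\widehat A=\begin{bmatrix} -*_{J^c} & \widehat M_{\Hc_{\norm}} & 0 \\ & & *_I\end{bmatrix}
\]
is the bordered matrix over $\Z[H]$ appearing in Theorem~\ref{thm:BSDAAlexanderZ}. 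When $(Y,\Gamma_{J^c,I})$ is not weakly balanced, both sides vanish for degree reasons.

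Second, apply the isomorphism $(\varphi_i)_i\colon\Q[H]\to\bigoplus_i\F_{\chi_i}[G]$ for a fixed splitting; by Proposition~\ref{prop:AlexanderQ[H]IndOfSplitting} the choice of splitting is irrelevant. Since each $\varphi_i$ is a ring homomorphism, $\varphi_i(\det\widehat A)=\det\varphi_i(\widehat A)$. Moreover, $\varphi_i(\widehat M_{\Hc_{\norm}})=\widehat M^{\;\F_{\chi_i}[G]}_{\Hc_{\norm}}$. Through Remark~\ref{rem:ExplicitIdentForAlexFunction} and the cellular description of $\widehat i_*$ from Section~\ref{sec:LiftedSurfacesHomology}, the $i$th component of $\A^{\Q[H]}_{Y,\Gamma}(\widehat i_*(-\widehat e^{\inrm}_I)\wedge\widehat i_*(-\widehat e^{\out}_{J^c}))$ is computed as follows.
\begin{itemize}
\item If $H_2(Y,R^+;\F_{\chi_i}[G])=0$, then $\widehat M^{\;\F_{\chi_i}[G]}_{\Hc_{\norm}}$ is injective by Lemma~\ref{lem:InjectivePresentationMatrixFchii[G]}, and it may be used to compute the component by Proposition~\ref{prop:AlexanderFunctionFchii[G]IndependentOfM}. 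The component is then the determinant of the column-permuted bordered matrix. The same sign bookkeeping as in Theorem~\ref{thm:BSDAAlexanderZ} shows it equals $(-1)^{ck+n_1k+ak}\det\varphi_i(\widehat A)$, up to an $I,J$-independent sign.
\item If $H_2(Y,R^+;\F_{\chi_i}[G])\neq0$, the component is zero by definition. In this case the columns of $\widehat M^{\;\F_{\chi_i}[G]}_{\Hc_{\norm}}$ are dependent over the domain $\F_{\chi_i}[G]$, so $\det\varphi_i(\widehat A)=0$ as well.
\end{itemize}
Hence the two sides agree in every component, up to a global sign and the componentwise unit ambiguities.

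Third, I would conclude that the two maps satisfy the same characterizing equation of Proposition~\ref{prop:AlexanderFunctorFchii[G]Defn}. Therefore they agree up to a unit of $\Q[H]$: a tuple $(u_i)$ with $u_i\in\F_{\chi_i}^*\cdot G$, where the $u_i$ collect the choice of presentation matrix in each component.

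The main obstacle is the degenerate component case. A determinant over $\Z[H]$ can be nonzero while some of its images under $\varphi_i$ vanish. Working componentwise, as above, avoids needing $\Z[H]$ to be a domain, but one must check carefully that the unit ambiguities can be chosen independently of $I$ and $J$ in each component simultaneously. I would handle this by fixing one presentation matrix, $\widehat M_{\Hc_{\norm}}$ itself, for all components at once. Then the only ambiguity left is the global $\pm1$ together with the fixed-lift ambiguity, which is itself an element of $\pm H$ and hence a unit of $\Q[H]$.
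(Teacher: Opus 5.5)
Your proposal is correct and follows essentially the same route as the paper's proof. The paper writes the target quantity as $(-1)^{ck+n_1k+ak}\det\widehat A$ via Corollary~\ref{cor:MatrixEntryAndCellBdryDet} and then checks equality with the Alexander function componentwise under $(\varphi_i)_i$ for a fixed splitting. It splits into the cases $H_2(Y,R^+;\F_{\chi_i}[G])\neq 0$ (use the domain property) and $H_2(Y,R^+;\F_{\chi_i}[G])=0$ (use the single presentation matrix $\widehat M^{\;\F_{\chi_i}[G]}_{\Hc_{\norm}}$ in every component), exactly as you do, so only an $I,J$-independent sign remains.
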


\begin{proof}
   The proof is similar to Theorem~\ref{thm:BSDAAlexanderZG}. Fix lifts of the cells of $\mathfrak{Y} \setminus \mathfrak{R}^+$ to cells of $\widehat{\mathfrak{Y}} \setminus \widehat{\mathfrak{R}}^+$ as in the proof of Theorem~\ref{thm:BSDAAlexanderZG}, so that we have basis elements $\widehat{e}^{\inrm}_I$ of $\wedge^*_{\Z[H]} H_1(\widehat{F}_0,\widehat{S}^+_0)$ and $\widehat{e}^{\out}_J$ of $\wedge^*_{\Z[H]} H_1(\widehat{F}_1,\widehat{S}^+_1)$. Let $\widehat{e}^{\inrm;\Q[H]}_I$ and $\widehat{e}^{\out;\Q[H]}_J$ be the basis elements of $\wedge^*_{\Q[H]} H_1(F_0,S^+_0;\Q[H])$ and $\wedge^*_{\Q[H]} H_1(F_1,S^+_1;\Q[H])$ corresponding to $\widehat{e}^{\inrm}_I \otimes 1_{\Q[H]}$ and $\widehat{e}^{\out}_J \otimes 1_{\Q[H]}$ under the canonical isomorphisms \[
    \wedge^*_{\Q[H]} H_1(F_i,S^+_i;\Q[H]) \cong \left( \wedge^*_{\Z[H]} H_1(\widehat{F}_i,\widehat{S}^+_i) \right) \otimes_{\Z[H]} \Q[H].
    \]
    Let $\omega_{\Z}$, $\omega_{\Z[H];\mathrm{concrete}}$, and $\omega_{\Z[H]}$ be defined as in the proof of Theorem~\ref{thm:BSDAAlexanderZG}. Tensor $\omega_{\Z[H]}$ over $\Z[H]$ with $\Q[H]$ to get a volume form on the free $\Q[H]$-module 
    \[
    H_1(\widehat{F}_1,\widehat{S}^+_1) \otimes_{\Z[H]} \Q[H] \cong H_1(F_1,S^+_1;\Q[H]). 
    \]
    We use the resulting volume form on $H_1(F_1,S^+_1;\Q[H])$ as our choice of $\omega$.

    We will compute \begin{equation}\label{eq:TargetQuantityQ[H]}
        \omega(\wedge([\BSDA(Y,\Gamma;\Xi_{\norm})]^{\Q[H]}_{\comb} \otimes \id ))
    (-\widehat{e}_I^{\inrm;\Q[H]} \otimes -\widehat{e}^{\out;\Q[H]}_{J^c})
    \end{equation}
    and show the result agrees with $\mathcal{A}^{\Q[H]}_{Y,\Gamma}\left(\widehat{i}_* (-\widehat{e}_I^{\inrm;\Q[H]}) \wedge \widehat{i}_* (-\widehat{e}_{J^c}^{\out;\Q[H]})\right)$ up to an overall sign that is independent of $I$ and $J$. This is an equality of elements of $\Q[H]$; we will prove it by applying the isomorphism $\Q[H] \xrightarrow{(\varphi_i)_i} \bigoplus_i \F_{\chi_i}[G]$ to both sides, for any fixed choice of splitting $H = T \oplus G$, and checking that the $\F_{\chi_i}[G]$-components of the two sides agree for each $i$.
    
    As in the proof of Theorem~\ref{thm:BSDAAlexanderZG}, the quantity~\eqref{eq:TargetQuantityQ[H]} agrees with
    \[
    (-1)^{ck + n_1k + ak} \det \left[ \kbordermatrix{
      & \alpha^{\out}_{J^c} & \alpha^c & \alpha^{\inrm}_I \\
    \beta^{\out} & -*_{J^c} & * & 0\\
    \beta^c & 0 & * & 0\\
    \beta^{\inrm} & 0 & * & *_I} \right]_{\Q[H]} =: (-1)^{ck + n_1k + ak} \det \widehat{A}_{\Q[H]}
    \]
    up to $I,J$-independent sign. Let $\widehat{A}$ denote the matrix inside $[\cdot ]_{\Q[H]}$, so that $\widehat{A}_{\Q[H]} = [\widehat{A}]_{\Q[H]}$; the middle column of  blocks of $\widehat{A}$ (denoted $*$) form the matrix $\widehat{M}_{\Hc_{\norm}}$. Thus, if we apply $\varphi_i$ to the quantity \eqref{eq:TargetQuantityQ[H]}, we get $(-1)^{ck + n_1 k + ak} \det \widehat{A}_{\F_{\chi_i}[G]}$ where $\widehat{A}_{\F_{\chi_i}[G]} := [\widehat{A}]_{\F_{\chi_i}[G]}$ (obtained from $\widehat{A}$ by applying $\varphi_i$ to the entries).

    First consider $i$ such that $H_2(Y,R^+;\F_{\chi_i}[G]) \neq 0$; in this case 
    \[
    \varphi_i\left(\mathcal{A}^{\Q[H]}_{Y,\Gamma}\left(\widehat{i}_* (-\widehat{e}_I^{\inrm;\Q[H]}) \wedge \widehat{i}_* (-\widehat{e}_{J^c}^{\out;\Q[H]})\right)\right) = 0
    \]
    by definition, so we want to see that $\det \widehat{A}_{\F_{\chi_i}[G]} = 0$. The kernel of $\widehat{M}_{\Hc_{\norm}}^{\F_{\chi_i}[G]}$ is isomorphic to $H_2(Y,R^+;\F_{\chi_i}[G])$ which is assumed to be nonzero, so the columns of $\widehat{A}_{\F_{\chi_i}[G]}$ are linearly dependent over $\F_{\chi_i}[G]$. Since $\F_{\chi_i}[G]$ is a domain, $\det \widehat{A}_{\F_{\chi_i}[G]} = 0$ as desired.

    Now consider $i$ such that $H_2(Y,R^+;\F_{\chi_i}[G]) = 0$. In this case we can use the presentation matrix $\widehat{M}_{\Hc_{\norm}}^{\F_{\chi_i}[G]}$ to compute $\varphi_i\left(\A^{\Q[H]}_{Y,\Gamma}\left(\widehat{i}_* (-\widehat{e}^{\inrm;\Q[H]}_I) \wedge \widehat{i}_* (-\widehat{e}^{\out;\Q[H]}_{J^c})\right)\right)$. We get 
    \begin{align*}
        \varphi_i\left(\A^{\Q[H]}_{Y,\Gamma}\left(\widehat{i}_* (-\widehat{e}^{\inrm;\Q[H]}_I) \wedge \widehat{i}_* (-\widehat{e}^{\out;\Q[H]}_{J^c})\right)\right) &= \det \left[\kbordermatrix{
           & \alpha^c & \alpha^{\inrm}_I & \alpha^{\out}_{J^c} \\
        \beta^{\out}  & * & 0 & -*_{J^c}\\
        \beta^c & * & 0 & 0\\
        \beta^{\inrm} & * & -*_I & 0} \right]_{\F_{\chi_i}[G]} \\
        &= (-1)^{ck+n_1 k + ak} \det \left[ \kbordermatrix{
          & \alpha^{\out}_{J^c} & \alpha^c & \alpha^{\inrm}_I \\
        \beta^{\out} & -*_{J^c} & * & 0\\
        \beta^c & 0 & * & 0\\
        \beta^{\inrm} & 0 & * & *_I} \right]_{\F_{\chi_i[G]}},
    \end{align*}
    agreeing with the quantity \eqref{eq:TargetQuantityQ[H]} as desired.
\end{proof}

Since we have related $[\BSDA(Y,\Gamma;\Xi_{\norm})]^{\Q[H]}_{\mathrm{comb}}$ to a quantity independent (up to multiplication by units in $\Q[H]$) of the set of choices $\Xi_{\norm}$, we are able to define the following invariant. 

\begin{definition}
    Let $(Y,\Gamma)$ be a sutured cobordism
    We define
    \[
        [\BSDA(Y,\Gamma)]^{\Q[H]}_{\comb}
    \]
    to be the map
    \[
        [\BSDA(Y,\Gamma;\Xi_{\norm})]^{\Q[H]}_{\comb}
        \colon \wedge^*_{\Q[H]} H_1(F_0,S^+_0;\Q[H]) \to \wedge^*_{\Q[H]} H_1(F_1,S^+_1;\Q[H]).
    \]
    for any choice of $\Xi_{\norm}$ as in Corollary~\ref{cor:CanChooseXiNormalized}. By Theorem~\ref{thm:BSDAAlexanderQ[H]}, this map is well-defined up to multiplication by units in $\Q[H]$. 
\end{definition}

\bibliographystyle{alpha}
\bibliography{ref}

\end{document}